\newlength{\templength}
\newlength{\templengtha}
\newlength{\templengthb}
\newlength{\templengthc}
\newlength{\templengthd}
\newlength{\templengthe}
\newlength{\templengthf}
\newlength{\templengths}
\newlength{\templengthw}
\definecolor{mathcolor}{RGB}{0,75,105} 
\definecolor{argcolor}{RGB}{0,105,50} 
\definecolor{speccolor}{RGB}{0,40,125} 
\newcommand{\dismath} [1] {{\color{mathcolor}#1}}
\newcommand{\distext} [1] {{\color{speccolor}#1}}
\newcommand{\argument}[1] {{\color{argcolor!50!black}{#1}}}
\setlist{midsep}
\newcommand{\defnotion}       [1]       {\distext{\textit{#1\/}}}
\newcommand{\remember}        [1]       {}
  \newcommand{\Rem}           [1]       {\remember{#1}}
\newcommand{\set}             [1]       {\dismath{\{ \argument{#1} \}}}
\newcommand{\tuple}           [1]       {\dismath{(\argument{#1})}}
\newcommand{\otuple}          [1]       {\dismath{\langle\argument{#1}\rangle}}
\newcommand{\intmodels}       [0]   {\mathrel{\dismath{|\hspace{-1.25pt}|\hspace{-2.8pt}{-}}}}
\newcommand{\clmodels}        [0]   
                                    {\mathrel{\dismath{|\hspace{-1.25pt}{\models}}}}
  \newcommand{\imodels}   [0]{\intmodels}
\newcommand{\nameKFrame}      [1]       {\dismath{\mathfrak{#1}}}       
\newcommand{\nameKModel}      [1]       {\dismath{\mathfrak{#1}}}       
\newcommand{\kframe}          [1]       {\nameKFrame{#1}}       
\newcommand{\kmodel}          [1]       {\nameKModel{#1}}       
\newcommand{\kFrame}          [1]       {\dismath{\bfrak{#1}}}       
\newcommand{\kModel}          [1]       {\dismath{\bfrak{#1}}}       
\newcommand{\cModel}          [1]       {\dismath{\bm{#1}}}      
\newcommand{\logic}           [1]       {\dismath{\mathbf{#1}}}         
\newcommand{\lang}            [1]       {\dismath{\mathcal{#1}}}        
\newcommand{\sclass}          [1]       {\dismath{\mathscr{#1}}}        
  \newcommand{\scls}[1]{\sclass{#1}}
  \newcommand{\Scls}[1]{\Sclass{#1}}
\newcommand{\numbers}         [1]       {\dismath{\mathds{#1}}}
\newcommand{\numN}                      {\numbers{N}}
\newcommand{\numNp}                     {\numbers{N}^+}
\newcommand{\numNpp}                    {\numbers{N}^{++}}
\newcommand{\Pow}             [1]       {\dismath{\mathscr{P}(\argument{#1})}}  
\newcommand{\boldfrak}        [1]       {\boldsymbol{\frak{#1}}}
\newcommand{\bfrak}           [1]       {\boldfrak{#1}}
\newcommand{\implication}               {\to}
  \newcommand{\imp} {\implication}
\newcommand{\conjunction}             {\wedge}   
  \newcommand{\con} {\conjunction}
\newcommand{\disjunction}               {\vee}
  \newcommand{\dis} {\disjunction}
\newcommand{\equivalence}               {\leftrightarrow}
  \newcommand{\lra} {\equivalence}
\newcommand{\leftsquare}[1]{%
\begin{tikzpicture}[scale=#1]
\draw (0,0)--(0,1)--(1,1)--(1,0)--cycle;
\draw [fill, color=black!25] (0,0)--(0.5,0.5)--(0,1);
\draw (0,0)--(0.5,0.5)--(0,1);
\draw (0,1)--(0,0);
\end{tikzpicture}
}
\newcommand{\rightsquare}[1]{%
\begin{tikzpicture}[scale=#1]
\draw (0,0)--(0,1)--(1,1)--(1,0)--cycle;
\draw [fill, color=black!25] (1,0)--(0.5,0.5)--(1,1);
\draw (1,0)--(0.5,0.5)--(1,1);
\draw (1,1)--(1,0);
\end{tikzpicture}
}
\newcommand{\upsquare}[1]{%
\begin{tikzpicture}[scale=#1]
\draw (0,0)--(0,1)--(1,1)--(1,0)--cycle;
\draw [fill, color=black!25] (0,1)--(0.5,0.5)--(1,1);
\draw (0,1)--(0.5,0.5)--(1,1);
\draw (1,1)--(0,1);
\end{tikzpicture}
}
\newcommand{\downsquare}[1]{%
\begin{tikzpicture}[scale=#1]
\draw (0,0)--(0,1)--(1,1)--(1,0)--cycle;
\draw [fill, color=black!25] (0,0)--(0.5,0.5)--(1,0);
\draw (0,0)--(0.5,0.5)--(1,0);
\draw (1,0)--(0,0);
\end{tikzpicture}
}
\newcommand{\leftsq} {\mathop{\leftsquare {0.180}}}
\newcommand{\rightsq}{\mathop{\rightsquare{0.180}}}
\newcommand{\upsq}   {\mathop{\upsquare   {0.180}}}
\newcommand{\downsq} {\mathop{\downsquare {0.180}}}
\renewcommand{\iff}                     {\mathrel{\dismath{\Longleftrightarrow}}} 
\newcommand{\imply}                     {\mathrel{\dismath{\Longrightarrow}}}     
\newcommand{\bydef}                     {\mathrel{\dismath{\leftrightharpoons}}}  
\newcommand{\mref}{\@ifnextchar({\mref@i}{\mref@i({\Box},{p})}}
\def\mref@i(#1,#2){#1#2 \implication #2}
    \newcommand{\mrefp}{\@ifnextchar({\mrefp@i}{\mrefp@i({p})}}
    \def\mrefp@i(#1){\mref(\Box,#1)}
\newcommand{\FOref}{\@ifnextchar({\FOref@i}{\FOref@i({x},{P})}}
\def\FOref@i(#1,#2){\forall #1\,#2(#1,#1)}
    \newcommand{\FOrefp}{\@ifnextchar({\FOrefp@i}{\FOrefp@i({P})}}
    \def\FOrefp@i(#1){\FOref(x,#1)}
\newcommand{\FOrefi}{\@ifnextchar({\FOrefi@i}{\FOrefi@i({x},{P})}}
\def\FOrefi@i(#1,#2){\forall #1\,#1#2#1}
    \newcommand{\FOrefip}{\@ifnextchar({\FOrefip@i}{\FOrefip@i({P})}}
    \def\FOrefip@i(#1){\FOrefi(x,#1)}
\newcommand{\mtra}{\@ifnextchar({\mtra@i}{\mtra@i({\Box},{p})}}
\def\mtra@i(#1,#2){#1#2 \implication #1#1#2}
    \newcommand{\mtrap}{\@ifnextchar({\mtrap@i}{\mtrap@i({p})}}
    \def\mtrap@i(#1){\mtra(\Box,#1)}
\newcommand{\FOtra}{\@ifnextchar({\FOtra@i}{\FOtra@i({x},{y},{z},{P})}}
\def\FOtra@i(#1,#2,#3,#4){\forall #1\forall #2\forall #3\,(#4(#1,#2)\conjunction #4(#2,#3) \implication #4(#1,#3))}
    \newcommand{\FOtrap}{\@ifnextchar({\FOtrap@i}{\FOtrap@i({P})}}
    \def\FOtrap@i(#1){\FOtra(x,y,z,#1)}
\newcommand{\FOtrai}{\@ifnextchar({\FOtrai@i}{\FOtrai@i({x},{y},{z},{P})}}
\def\FOtrai@i(#1,#2,#3,#4){\forall #1\forall #2\forall #3\,(#1#4#2\conjunction #2#4#3 \implication #1#4#3)}
    \newcommand{\FOtraip}{\@ifnextchar({\FOtraip@i}{\FOtraip@i({P})}}
    \def\FOtraip@i(#1){\FOtrai(x,y,z,#1)}
\newcommand{\msym}{\@ifnextchar({\msym@i}{\msym@i({\Box},{\Diamond},{p})}}
\def\msym@i(#1,#2,#3){#3 \implication #1#2#3}
    \newcommand{\msymp}{\@ifnextchar({\msymp@i}{\msymp@i({p})}}
    \def\msymp@i(#1){\msym(\Box,\Diamond,#1)}
\newcommand{\FOsym}{\@ifnextchar({\FOsym@i}{\FOsym@i({x},{y},{P})}}
\def\FOsym@i(#1,#2,#3){\forall #1\forall #2\,(#3(#1,#2)\implication #3(#2,#1))}
    \newcommand{\FOsymp}{\@ifnextchar({\FOsymp@i}{\FOsymp@i({P})}}
    \def\FOsymp@i(#1){\FOsym(x,y,#1)}
\newcommand{\FOsymi}{\@ifnextchar({\FOsymi@i}{\FOsymi@i({x},{y},{P})}}
\def\FOsymi@i(#1,#2,#3){\forall #1\forall #2\,(#1#3#2 \implication #2#3#1)}
    \newcommand{\FOsymip}{\@ifnextchar({\FOsymip@i}{\FOsymip@i({P})}}
    \def\FOsymip@i(#1){\FOsymi(x,y,#1)}
\newcommand{\meuc}{\@ifnextchar({\meuc@i}{\meuc@i({\Box},{\Diamond},{p})}}
\def\meuc@i(#1,#2,#3){#2#3 \implication #1#2#3}
    \newcommand{\meucp}{\@ifnextchar({\meucp@i}{\meucp@i({p})}}
    \def\meucp@i(#1){\meuc(\Box,\Diamond,#1)}
\newcommand{\FOeuc}{\@ifnextchar({\FOeuc@i}{\FOeuc@i({x},{y},{z},{P})}}
\def\FOeuc@i(#1,#2,#3,#4){\forall #1\forall #2\forall #3\,(#4(#1,#2)\con #4(#1,#3)\implication #4(#2,#3))}
    \newcommand{\FOeucp}{\@ifnextchar({\FOeucp@i}{\FOeucp@i({P})}}
    \def\FOeucp@i(#1){\FOeuc(x,y,z,#1)}
\newcommand{\FOeuci}{\@ifnextchar({\FOeuci@i}{\FOeuci@i({x},{y},{z},{P})}}
\def\FOeuci@i(#1,#2,#3,#4){\forall #1\forall #2\forall #3\,(#1#4#2 \con #1#4#3 \implication #2#4#3)}
    \newcommand{\FOeucip}{\@ifnextchar({\FOeucip@i}{\FOeucip@i({P})}}
    \def\FOeucip@i(#1){\FOeuci(x,y,z,#1)}
\newcommand{\mser}{\@ifnextchar({\mser@i}{\mser@i({\Box},{\Diamond},{p})}}
\def\mser@i(#1,#2,#3){#1#3 \implication #2#3}
    \newcommand{\mserp}{\@ifnextchar({\mserp@i}{\mserp@i({p})}}
    \def\mserp@i(#1){\mser(\Box,\Diamond,#1)}
\newcommand{\FOser}{\@ifnextchar({\FOser@i}{\FOser@i({x},{y},{P})}}
\def\FOser@i(#1,#2,#3){\forall #1\exists #2\,#3(#1,#2)}
    \newcommand{\FOserp}{\@ifnextchar({\FOserp@i}{\FOserp@i({P})}}
    \def\FOserp@i(#1){\FOser(x,y,#1)}
\newcommand{\FOseri}{\@ifnextchar({\FOseri@i}{\FOseri@i({x},{y},{P})}}
\def\FOseri@i(#1,#2,#3){\forall #1\exists #2\,#1#3#2}
    \newcommand{\FOserip}{\@ifnextchar({\FOserip@i}{\FOserip@i({P})}}
    \def\FOserip@i(#1){\FOseri(x,y,#1)}
\newcommand{\mla}{\@ifnextchar({\mla@i}{\mla@i({\Box},{p})}}
\def\mla@i(#1,#2){#1(#1#2 \implication #2) \implication #1#2}
    \newcommand{\mlap}{\@ifnextchar({\mlap@i}{\mlap@i({p})}}
    \def\mlap@i(#1){\mla(\Box,#1)}
\newcommand{\mgrz}{\@ifnextchar({\mgrz@i}{\mgrz@i({\Box},{p})}}
\def\mgrz@i(#1,#2){#1(#1(#2 \implication #1#2) \implication #2) \implication #2}
    \newcommand{\mgrzp}{\@ifnextchar({\mgrzp@i}{\mgrzp@i({p})}}
    \def\mgrzp@i(#1){\mgrz(\Box,#1)}
\newcommand{\mwgrz}{\@ifnextchar({\mwgrz@i}{\mwgrz@i({\Box},{p})}}
\def\mwgrz@i(#1,#2){#1^+(#1(#2 \implication #1#2) \implication #2) \implication #2}
    \newcommand{\mwgrzp}{\@ifnextchar({\mwgrzp@i}{\mwgrzp@i({p})}}
    \def\mwgrzp@i(#1){\mwgrz(\Box,#1)}
\newcommand{\ExtDiamond}{\@ifnextchar({\ExtDiamond@i}{\ExtDiamond@i({p})}}
\def\ExtDiamond@i(#1){{\Diamond\!\!\!\!\Diamond}_{#1}}
\renewcommand{\dismath} [1] {#1}
\renewcommand{\distext} [1] {#1}
\renewcommand{\argument}[1] {#1}
\newcounter{\theequation}[section]
\renewcommand{\theequation}{\thesection.\arabic{equation}}
\newcommand{\md}       [0]{\mathop{\mbox{\textnormal{\texttt{md}}}}}
\newcommand{\sub}      [0]{\mathop{\mathit{sub}}}
\renewcommand{\kModel} [1]{\kmodel{#1}}
\renewcommand{\Scls}   [1]{\scls{#1}}
\newcommand{\ckf}      [0]{\mathop{\mbox{\textnormal{\texttt{KF}}}}}
\newcommand{\caf}      [0]{\mathop{\mbox{\textnormal{\texttt{AF}}}}}
\newcommand{\fin}      [0]{\mathop{\mbox{\textnormal{\texttt{fin}}}}}
\newcommand{\QML}      [0]{\mathop{\mbox{\textnormal{\texttt{QML}}}}}
\newcommand{\QMLext}   [2]{\mathop{\mbox{\textnormal{\texttt{QML}}}^{\mathit{#1}}_{\mathit{#2}}}}
\newcommand{\QMLe}     [0]{\QMLext{e}{all}}
\newcommand{\QMLc}     [0]{\QMLext{c}{all}}
\newcommand{\QMLed}    [0]{\QMLext{e}{dfin}}
\newcommand{\QMLcd}    [0]{\QMLext{c}{dfin}}
\newcommand{\QMLew}    [0]{\QMLext{e}{wfin}}
\newcommand{\QMLcw}    [0]{\QMLext{c}{wfin}}
\newcommand{\QSIL}     [0]{\mathop{\mbox{\textnormal{\texttt{QSIL}}}}}
\newcommand{\QSILext}  [2]{\mathop{\mbox{\textnormal{\texttt{QSIL}}}^{\mathit{#1}}_{\mathit{#2}}}}
\newcommand{\QSILed}   [0]{\QSILext{e}{dfin}}
\newcommand{\QSILcd}   [0]{\QSILext{c}{dfin}}
\newcommand{\QSILew}   [0]{\QSILext{e}{wfin}}
\newcommand{\QSILcw}   [0]{\QSILext{c}{wfin}}
\newcommand{\aug}      [2]{\mathop{\mbox{\textnormal{\texttt{aug}}}^{\mathit{#1}}_{\mathit{#2}}}}
\def\thmstyle{\it} 
\def\@begintheorem#1#2{\it \trivlist \item[\hskip
        \labelsep{\bf #1\ #2.}]\thmstyle}
\def\@opargbegintheorem#1#2#3{\it \trivlist \item[\hskip
        \labelsep{\bf #1\ #2\ (#3).}]\thmstyle}
\newtheorem{theorem}{{\indent}Theorem}[section]
\newtheorem{lemma}[theorem]{{\indent}Lemma}
\newtheorem{proposition}[theorem]{{\indent}Proposition}
\newtheorem{sublemma}[theorem]{{\indent}Sublemma}
\newtheorem{corollary}[theorem]{{\indent}Corollary}
\newtheorem{remark}[theorem]{{\indent}Remark}
\newtheorem{conjecture}[theorem]{{\indent}Conjecture}
\newcommand{\numeral}[1]{\overline{#1}} 
\definecolor{cg0}{gray}{1.00}          
\definecolor{cg1}{gray}{0.90}          
\definecolor{cg2}{gray}{0.80}          
\definecolor{cg3}{gray}{0.70}          
\definecolor{cg4}{gray}{0.60}          
\definecolor{cg5}{gray}{0.50}          
\definecolor{cg6}{gray}{0.40}          
\definecolor{cg7}{gray}{0.30}          
\definecolor{cg8}{gray}{0.20}          
\definecolor{cg9}{gray}{0.10}          
\newcommand{\nicearrow}[3]
{
}
\newcommand{\drawtileflattm} [9]
{
\draw [white, opacity = 0, name path = diag 1] #1--#3;
\draw [white, opacity = 0, name path = diag 2] #2--#4;
\draw [name intersections = {of = diag 1 and diag 2, by = {tcenter}}];

\foreach \c in {0.96}
{
  \coordinate (ttl)     at ($(tcenter)+\c*#4-\c*(tcenter)$);
  \coordinate (tbr)     at ($(tcenter)+\c*#2-\c*(tcenter)$);
  \coordinate (tbl)     at ($(tcenter)+\c*#1-\c*(tcenter)$);
  \coordinate (ttr)     at ($(tcenter)+\c*#3-\c*(tcenter)$);
  \coordinate (sttl)    at ($(tcenter)+0.4*(ttl)-0.4*(tcenter)$);
  \coordinate (stbr)    at ($(tcenter)+0.4*(tbr)-0.4*(tcenter)$);
  \coordinate (stbl)    at ($(tcenter)+0.4*(tbl)-0.4*(tcenter)$);
  \coordinate (sttr)    at ($(tcenter)+0.4*(ttr)-0.4*(tcenter)$);
}


\coordinate (ledge) at ($0.5*#4+0.5*#1$);
\coordinate (bedge) at ($0.5*#1+0.5*#2$);
\coordinate (redge) at ($0.5*#2+0.5*#3$);
\coordinate (tedge) at ($0.5*#3+0.5*#4$);



\draw (ttl)--(tbl)--(tbr)--(ttr)--cycle;
\draw (sttl)--(stbl)--(stbr)--(sttr)--cycle;
\draw (ttl)--(sttl);
\draw (ttr)--(sttr);
\draw (tbl)--(stbl);
\draw (tbr)--(stbr);

\node [] at (tcenter) {\phantom{$y^l_l$}{#9}\phantom{$y^l_l$}};
\node [fill = white, draw = black!25, rounded corners] at (ledge)   {{$\phantom{iR^i_iy}$}};
\node [fill = white, draw = black!25, rounded corners] at (bedge)   {{$\phantom{iR^i_iy}$}};
\node [fill = white, draw = black!25, rounded corners] at (redge)   {{$\phantom{iR^i_iy}$}};
\node [fill = white, draw = black!25, rounded corners] at (tedge)   {{$\phantom{iR^i_iy}$}};
\node [] at (ledge)   {\phantom{$y^l_l$}{#5}\phantom{$y^l_l$}};
\node [] at (bedge)   {\phantom{$y^l_l$}{#6}\phantom{$y^l_l$}};
\node [] at (redge)   {\phantom{$y^l_l$}{#7}\phantom{$y^l_l$}};
\node [] at (tedge)   {\phantom{$y^l_l$}{#8}\phantom{$y^l_l$}};
}
\newcommand{\drawtileflattms} [4]
{
\draw [white, opacity = 0, name path = diag 1] #1--#3;
\draw [white, opacity = 0, name path = diag 2] #2--#4;
\draw [name intersections = {of = diag 1 and diag 2, by = {tcenter}}];

\foreach \c in {0.96}
{
  \coordinate (ttl)     at ($(tcenter)+\c*#4-\c*(tcenter)$);
  \coordinate (tbr)     at ($(tcenter)+\c*#2-\c*(tcenter)$);
  \coordinate (tbl)     at ($(tcenter)+\c*#1-\c*(tcenter)$);
  \coordinate (ttr)     at ($(tcenter)+\c*#3-\c*(tcenter)$);
  \coordinate (sttl)    at ($(tcenter)+0.4*(ttl)-0.4*(tcenter)$);
  \coordinate (stbr)    at ($(tcenter)+0.4*(tbr)-0.4*(tcenter)$);
  \coordinate (stbl)    at ($(tcenter)+0.4*(tbl)-0.4*(tcenter)$);
  \coordinate (sttr)    at ($(tcenter)+0.4*(ttr)-0.4*(tcenter)$);
}


\coordinate (ledge) at ($0.5*#4+0.5*#1$);
\coordinate (bedge) at ($0.5*#1+0.5*#2$);
\coordinate (redge) at ($0.5*#2+0.5*#3$);
\coordinate (tedge) at ($0.5*#3+0.5*#4$);



\draw (ttl)--(tbl)--(tbr)--(ttr)--cycle;
\draw (sttl)--(stbl)--(stbr)--(sttr)--cycle;
\draw (ttl)--(sttl);
\draw (ttr)--(sttr);
\draw (tbl)--(stbl);
\draw (tbr)--(stbr);

}
\newcommand{\drawtileflattmslanted} [4]
{
\draw [white, opacity = 0, name path = diag 1] #1--#3;
\draw [white, opacity = 0, name path = diag 2] #2--#4;
\draw [name intersections = {of = diag 1 and diag 2, by = {tcenter}}];

\foreach \c in {0.96}
{
  \coordinate (ttl)     at ($(tcenter)+\c*#4-\c*(tcenter)$);
  \coordinate (tbr)     at ($(tcenter)+\c*#2-\c*(tcenter)$);
  \coordinate (tbl)     at ($(tcenter)+\c*#1-\c*(tcenter)$);
  \coordinate (ttr)     at ($(tcenter)+\c*#3-\c*(tcenter)$);
  \coordinate (sttl)    at ($(tcenter)+0.4*(ttl)-0.4*(tcenter)$);
  \coordinate (stbr)    at ($(tcenter)+0.4*(tbr)-0.4*(tcenter)$);
  \coordinate (stbl)    at ($(tcenter)+0.4*(tbl)-0.4*(tcenter)$);
  \coordinate (sttr)    at ($(tcenter)+0.4*(ttr)-0.4*(tcenter)$);
}


\coordinate (ledge) at ($0.5*#4+0.5*#1$);
\coordinate (bedge) at ($0.5*#1+0.5*#2$);
\coordinate (redge) at ($0.5*#2+0.5*#3$);
\coordinate (tedge) at ($0.5*#3+0.5*#4$);



\draw (ttl)--(tbl)--(tbr)--(ttr)--cycle;
\draw (sttl)--(stbl)--(stbr)--(sttr)--cycle;
\draw (ttl)--(sttl);
\draw (ttr)--(sttr);
\draw (tbl)--(stbl);
\draw (tbr)--(stbr);
}
\newcommand{\circleone} [4]
{
\draw [fill = #3] #1 circle [radius = #2];
}
\newcommand{\circletwo} [4]
{
\circleone{#1}{#2}{#3}{#4}
\draw [] #1 circle [radius = #2+#4];
}
\newcommand{\circlethree} [4]
{
\circletwo{#1}{#2}{#3}{#4}
\draw [] #1 circle [radius = #2+2*#4];
}
\newcommand{\circletile} [6]
{
\filldraw[fill=#3, draw=black] #1 -- ($#1+#2*(-0.707,+0.707)$)
                               arc [start angle=135, end angle=225, radius=#2] -- cycle;
\filldraw[fill=#4, draw=black] #1 -- ($#1+#2*(-0.707,-0.707)$)
                               arc [start angle=225, end angle=315, radius=#2] -- cycle;
\filldraw[fill=#5, draw=black] #1 -- ($#1+#2*(+0.707,-0.707)$)
                               arc [start angle=-45, end angle= 45, radius=#2] -- cycle;
\filldraw[fill=#6, draw=black] #1 -- ($#1+#2*(+0.707,+0.707)$)
                               arc [start angle= 45, end angle=135, radius=#2] -- cycle;
}
\newcommand{\drawtileflatsmall} [8]
{
\coordinate (stcenter) at ($0.5*#1+0.5*#3$);

\fill [#5, fill opacity=0.75] #1--(stcenter)--#4--cycle;
\fill [#6, fill opacity=0.75] #2--(stcenter)--#1--cycle;
\fill [#7, fill opacity=0.75] #3--(stcenter)--#2--cycle;
\fill [#8, fill opacity=0.75] #4--(stcenter)--#3--cycle;

\draw #1 -- #3;
\draw #2 -- #4;
\draw #1 -- #2 -- #3 -- #4 -- cycle;
}
\newcommand{\drawhalftileflatsmallh} [6]
{
\coordinate (stcenter) at ($0.5*#1+0.5*#3$);

\fill [#5, fill opacity=0.75] #1--(stcenter)--#4--cycle;
\fill [#6, fill opacity=0.75] #3--(stcenter)--#2--cycle;

\draw #1 -- #3 -- #2 -- #4 -- cycle;
}
\newcommand{\drawhalftileflatsmallv} [6]
{
\coordinate (stcenter) at ($0.5*#1+0.5*#3$);

\fill [#5, fill opacity=0.75] #2--(stcenter)--#1--cycle;
\fill [#6, fill opacity=0.75] #4--(stcenter)--#3--cycle;

\draw #1 -- #3 -- #4 -- #2 -- cycle;
}
\begin{document}


\title{Recursive inseparability \\ 
of classical theories of a binary predicate \\ 
and non-classical logics of a unary predicate\thanks{The work is supported by the Basic Research Program of the HSE University.}}
\author{Mikhail Rybakov}
\affil{{Higher School of Modern Mathematics MIPT}, {HSE University}}
\date{}



\maketitle

\begin{abstract}
The paper considers algorithmic properties of classical and non-classical first-order logics and theories in bounded languages. 
The main idea is to prove the undecidability of various fragments of classical and non-classical first-order logics and theories indirectly~--- by extracting it as a consequence of the recursive inseparability of  special problems associated with them. 
%
%
First, we propose a domino problem, which makes it possible to catch the recursive inseparability of two sets. Second, using this problem, we prove that the classical first-order logic of a binary predicate and the theory of its finite models where the predicate is symmetric and irreflexive are recursively inseparable in a language with a single binary predicate letter and three variables (without constants and equality). Third, we prove, for an infinite class of logics, that the monadic fragment of a modal predicate logic and the logic of the class of its finite Kripke frames are recursively inseparable in languages with a single unary predicate letter and two individual variables; the same result is obtained if we replace the condition of finiteness of frames with the condition of finiteness of domains allowed in frames. Forth, we expand the results to a wide class of superintuitionistic predicate logics. In particular, it is proved that the positive fragments of the intuitionistic predicate logic and the logic of the class of finite intuitionistic Kripke frames are recursively inseparable in the language with a single unary predicate letter and two individual variables. The technique used and the results obtained allow us to answer some additional questions about the decidability of special monadic fragments of some modal and superintuitionistic predicate logics.
\end{abstract}



\newpage

\tableofcontents

\newpage

\section{Introduction}
\label{sec:introduction}
\setcounter{equation}{0}

\subsection{Issues under consideration}

We consider algorithmic properties of classical and non-classical first-order logics and theories in restricted languages. It is well known that the classical predicate logic $\logic{QCl}$ is undecidable~\cite{Church36}, and to prove this, it is sufficient to use a single binary predicate letter and three individual variables~\cite[Section~4.8~(ii)]{TG87} (see also~\cite{Suranyi43} for three variables and~\cite[Chapter~21]{BBJ07} for a single binary predicate letter); the same is true for many classical theories~\cite{ELTT:1965, Sper:2016, MR:2022:DoklMath, MR:2023:LI, MR:2023:HeraldTSU}. At the same time, the monadic fragment of $\logic{QCl}$ is decidable. The decidability remains if we add the equality~\cite[Chapter~21]{BBJ07}, allow only formulas with at most two variables~\cite{Mortimer75,GKV97} or use only formulas of guarded fragments~\cite{Gradel99}. In general, the classical decision problem~\cite{BGG97} has been transformed into a classification problem when the purpose of the study is to search for boundaries within which the decidability or undecidability is still preserved. As for non-classical logics, they are usually undecidable even if the language contains only monadic predicate letters~\cite{Kripke62,MMO65,MR:2002:LI,MR:2017:LI} or two individual variables~\cite{KKZ05} or even both a single monadic letter and two-three individual variables~\cite{RSh19SL,RShJLC20a,RShJLC21b}; known results on decidability are obtained under fairly strong restrictions on language or semantics~\cite{HWZ00, HWZ01, WZ01, WZ02, MR:2017:LI, RShsubmitted, ARSh:2023:arXiv}. 
In this paper, some general methods for obtaining results on the undecidability of fragments of classical and non-classical logics and theories will be proposed. With their help, we will get answers to some questions.

The paper is based on three questions, and our purpose is, in particular, to answer them. In addition, there are several other issues that have also been resolved.

The first question was posed by Evgeny Zolin in 2019 during a conference and consists of the following:
\begin{itemize}
\item
\textit{Are the monadic fragments of modal predicate logics defined by classes of finite Kripke frames decidable in languages with finitely many individual variables?}
\end{itemize}
The question is answered only partially. So, it is proved that the monadic fragment of a logic defined by a finite Kripke frame is decidable~\cite{MR:2017:LI}; the same is true for every logic $\logic{QAlt}_n$, where $n\in\numN$,\footnote{We use $\numN$, $\numNp$, and $\numNpp$ for, respectively, the sets of natural numbers including~$0$, positive natural numbers, and positive natural numbers without~$1$.} defined as a logic of the class of Kripke frames where each world sees at most $n$ worlds~\cite{MR:2017:LI}, even if we add the equality to the language~\cite{RShsubmitted}. Also, it is proved that the monadic fragments of logics defined by the classes of finite Kripke frames of logics such as $\logic{QK}$, $\logic{QT}$, $\logic{QD}$, $\logic{QKB}$, $\logic{QKTB}$, $\logic{QK4}$, $\logic{QS4}$, and some others are undecidable in languages with a single unary predicate letter and three individual variables~\cite{RShJLC20a}; similar results are obtained for superintuitionistic predicate logics~\cite{RShJLC21b}. It should be noted that the methods used in~\cite{RShJLC20a,RShJLC21b} give us nothing about logics such as $\logic{QS5}$, $\logic{QK45}$, $\logic{QKD45}$, $\logic{QK4B}$ and logics containing formulas bounding the depth of Kripke frames; 
in addition, they do not cover fragments with two individual variables.
%
%
As a result, we must admit that the question has not received a satisfactory answer. We shall fix~it.

%
The second question initially stimulated the author's interest and was later posed by Stanislav Speranski during a private conversation in 2022, concerning some broader, more general issues:
\begin{itemize}
\item
\textit{Is the classical theory of finite models for the symmetric irreflexive binary relation decidable in a language with a single binary predicate letter and three individual variables?}
\end{itemize}
It is known that the theory of the symmetric irreflexive binary relation is undecidable~\cite{NerodeShore80,Kremer97,Sper:2016} and even in a language with a single binary predicate letter and three individual variables (without constants and equality)~\cite{MR:2022:DoklMath}. Although the methods of~\cite{MR:2022:DoklMath} allow us to prove undecidability for many classical theories of a binary predicate, in particular, defined by classes of finite models, they are not directly applicable to the theory asked. Notice that if the fragment of this theory is undecidable, then we can readily answer the question on the decidability of logics defined by finite Kripke frames of $\logic{QS5}$, $\logic{QK45}$, $\logic{QKD45}$, or $\logic{QK4B}$ in languages with a single unary predicate letter and three variables.

The third question was posed by Valentin Shehtman during a conference in 2023, and was as follows:
\begin{itemize}
\item
\textit{Are the monadic fragment of a modal predicate logic and the monadic fragment of the logic of its finite Kripke frames recursively separable?}
\end{itemize}
%
This question provided a broader perspective on the situation, based on the insights gained from the previous two questions.
For the classical first-order logic, the Trakhtenbrot theorem~\cite{Trakhtenbrot50,Trakhtenbrot53} says that $\logic{QCl}$ and the theory of finite models are recursively inseparable and, as a corollary, undecidable. Thus, this theorem shows us another way of proving the undecidability. We are going to go this way to answer all the questions raised (and not only).

First of all, we answer the question of Stanislav Speranski. To this end, we propose a tiling problem allowing us to catch recursive inseparability, and then describe it by first-order formulas with a single binary predicate letter and three individual variables; as a result, we obtain a proof of the Trakhtenbrot theorem for this language. After this is done, we modify the construction so that it can be expanded to the theory of symmetric irreflexive binary predicate. Then we use the results to answer the question of Valentin Shehtman and, as a corollary, to the question of Evgeny Zolin in the case of three variables in the language. Along the way, we get the answers to some close questions. In particular, we answer a question similar to that posed by Valentin Shehtman:
\begin{itemize}
\item
\textit{Are the monadic fragment of a modal predicate logic and the monadic fragment of the logic of its frames with finite domains recursively separable?}
\end{itemize}
Also, using the technique of~\cite{KKZ05}, we then show that for many logics the number of variables used to prove the recursive inseparability of certain problems can be reduced to two, which gives us a more exhaustive answer to the question of Evgeny Zolin.

In addition, we answer similar questions for the intuitionistic predicate logic $\logic{QInt}$ and some of its extensions. So, it is known that the intuitionistic predicate logic is undecidable in the language with a single unary predicate letter and two individual variables~\cite{RSh19SL} and the logic of the class of finite intuitionistic frames is undecidable (even not recursively enumerable) in the language with a single unary predicate letter and three individual variables~\cite{RShJLC21b}. 
Below, we get the answer to the following question:
\begin{itemize}
\item
\textit{Are the intuitionistic predicate logic and the logic of its finite Kripke frames recursively separable in the language containing a single unary predicate letter and two individual variables?}
\end{itemize}
We shall show that the answer is ``No'' even for the positive fragments; and the same for the intuitionistic predicate logic and the logic of its frames with finite domains. It is not difficult to show that the intuitionistic predicate logic can be replaced by any superintuitionistic predicate logic contained in $\logic{QKC}$, the predicate logic of the weak law of the excluded middle, 
without affecting the truth of the results.
This is so since the positive fragments of $\logic{QInt}$ and $\logic{QKC}$ coincide. It should be noted that this observation was significantly used in earlier works~\cite{RSh19SL,RShJLC21b}, where the undecidability of the monadic fragments was obtained only for the superintuitionistic logics contained in~$\logic{QKC}$. Thus, the following question remained unanswered:
\begin{itemize}
\item
\textit{Is there a superintuitionistic predicate logic not contained in\/~$\logic{QKC}$ such that every its superintuitionistic sublogic is undecidable in the language with a single unary predicate letter and two individual variables?}
\end{itemize}
Using an approach based on obtaining the recursive inseparability of problems, we show that there are infinitely many such logics. Moreover, for each of them, the logic and the logic of its finite Kripke frames are recursively inseparable in a language with a single unary predicate letter and two individual variables.

Another goal of the author is to obtain very simple, clear, and short proofs of the results obtained. This seems important because earlier papers containing solutions to close issues often also contain too many technical details. The abundance of such details makes it difficult to identify essential points in order to use them in other studies, and also creates certain difficulties for including the results obtained in the educational process. With this in mind, the reader will find quite a few simple statements below, leading step by step to solving the questions posed.
The results were presented in~\cite{MR:2023:MR}.

\subsection{Short explanation of the technical part}

The technical basis of the constructions used below is made up of well-known ideas that interact and intertwine in a certain way. So, we use Turing machines, tiling problems, relativization, translations, embeddings, and semantical methods. Now we pay attention only to the ideas of simulating predicate letters in the modal first-order language by formulas with a single unary predicate letter.

In earlier author's papers (including joint works) considering issues of algorithmic complexity of non-classical logics of a unary predicate, some ideas originating in works related to the complexity of propositional logics~\cite{BS93,Spaan93,Halpern95,ChRyb03,Rybakov06,Rybakov07,Rybakov08} were used in a straightforward way~\cite{MR:2002:LI,RSh19SL,RShJLC20a,RSh20AiML,RShJLC21b,RShJLC21c,RybIGPL22}. This led to the fact that in order to prove that the fragment of a logic is undecidable, first an undecidable problem was modeled in it using an unlimited set of unary predicate letters (originally, propositional variables), and then all these predicate letters were simulated by formulas containing a single unary predicate letter (originally, a single propositional variable). So, to simulate propositional variables $p_1,\ldots,p_n$, formulas like $\Diamond^k p$, where $1\leqslant k\leqslant n$, were used (slightly more complicated, but it does not matter now) and to simulate $P_k(x)$, an analog like $\Diamond^k P(x)$ was taken. 
%
%
Making such a choice facilitated the transfer of techniques from the field of propositional logic to the field of predicate logic quite easily.
But we had to pay for this choice, for example, by losing the logics containing formulas bounding the depth of Kripke frames. We are going to win them back.

\begin{figure}
\centering
\begin{tikzpicture}[scale=2.25]

\coordinate (w0) at (0,0*0.4);
\coordinate (w1) at (0,1*0.4);
\coordinate (w2) at (0,2*0.4);
\coordinate (w3) at (0,3*0.4);
\coordinate (w4) at (0,4*0.4);

\shade [ball color=black] (w0) circle [radius = 1.75pt];
\shade [ball color=black] (w1) circle [radius = 1.75pt];
\shade [ball color=black] (w3) circle [radius = 1.75pt];
\shade [ball color=black] (w4) circle [radius = 1.75pt];
\draw  [>=latex, ->, shorten >= 4pt, shorten <= 4pt, color=black] (w0)--(w1);
\draw  [>=latex, ->, shorten >= 8pt, shorten <= 4pt, color=black] (w1)--(w2);
\draw  [>=latex, ->, shorten >= 4pt, shorten <= 8pt, color=black] (w2)--(w3);
\draw  [>=latex, ->, shorten >= 4pt, shorten <= 4pt, color=black] (w3)--(w4);

\node [] at ($(w2)+(0,0.032)$) {$\vdots$};
\draw (0.64,0.2) arc [start angle = -124, end angle = 221, x radius = 0.84, y radius = 0.64] -- ($(w0)+(0.032,0.016)$) -- cycle;

\coordinate (a1) at (0.60+0*0.18,1*0.4);
\coordinate (a2) at (0.60+1*0.18,1*0.4);
\coordinate (a3) at (0.60+2*0.18,1*0.4);
\coordinate (a4) at (0.60+3*0.18,1*0.4);
\coordinate (a5) at (0.60+4*0.18,1*0.4);
\coordinate (a6) at (0.60+5*0.18,1*0.4);

\coordinate (w0') at (3,0*0.4);

\coordinate (b1) at ($(a1)+(w0')-(w0)+(0,0.18)$);
\coordinate (b2) at ($(b1)+(0,0.18)$);
\coordinate (b3) at ($(b2)+(0,0.18)$);
\coordinate (b4) at ($(b3)+(0,0.18)$);

\foreach \x in {1,3,4}
{
\filldraw [color=black!64] ($(b1)+\x*(0.18,0)$) circle [x radius = 0.04, y radius = 0.04];
\filldraw [color=black!64] ($(b3)+\x*(0.18,0)$) circle [x radius = 0.04, y radius = 0.04];
\filldraw [color=black!64] ($(b4)+\x*(0.18,0)$) circle [x radius = 0.04, y radius = 0.04];
\draw [color=black!64] ($(a1)+(w0')-(w0)+\x*(0.18,0)$)--($(b1)+\x*(0.18,0)$);
\draw [color=black!64] ($(b3)+\x*(0.18,0)$)--($(b4)+\x*(0.18,0)$);
\draw [color=black!64, densely dashed, shorten <= 1pt] ($(b1)+\x*(0.18,0)$)--($(b3)+\x*(0.18,0)$);
}

\foreach \x in {0,1}
{
\draw  [>=latex, ->, shorten >= 6pt, shorten <= 2pt, color=black!64] ($(a1)+\x*(w0')-\x*(w0)$)--($(a6)+(0.09,0)+\x*(w0')-\x*(w0)$);
\filldraw [color=black!64] ($(a1)+\x*(w0')-\x*(w0)$) circle [x radius = 0.04, y radius = 0.04];
\filldraw [color=black!64] ($(a2)+\x*(w0')-\x*(w0)$) circle [x radius = 0.04, y radius = 0.04];
\filldraw [color=black!64] ($(a3)+\x*(w0')-\x*(w0)$) circle [x radius = 0.04, y radius = 0.04];
\filldraw [color=black!64] ($(a4)+\x*(w0')-\x*(w0)$) circle [x radius = 0.04, y radius = 0.04];
\filldraw [color=black!64] ($(a5)+\x*(w0')-\x*(w0)$) circle [x radius = 0.04, y radius = 0.04];
\node [color=black!64] at ($(a6)+(0.16,0)+\x*(w0')-\x*(w0)$) {$\cdots$};
}

\shade [ball color=black] (w0') circle [radius = 1.75pt];

\draw ($(0.64,0.2)+(w0')-(w0)$) arc [start angle = -124, end angle = 221, x radius = 0.84, y radius = 0.64] -- ($(w0')+(0.032,0.016)$) -- cycle;

\draw [dashed, rounded corners] ($(w1)+(-0.16,-0.18)$) 
                             -- ($(w1)+(+0.16,-0.18)$) 
                             -- ($(w4)+(+0.16,+0.16)$)
                             -- ($(w4)+(-0.16,+0.16)$)
                             -- cycle;

\draw [dashed, >=latex, ->, shorten >= 0pt, shorten <= 16pt] ($(w2)+(0.06,0)$) .. controls ($(w2)+(0.21,1.0)$) and ($(w2)+(0.8,1.0)$) .. ($(w2)+(1.0,0)$);

\node [] at ($(w2)+(2.5,-0.1)$) {$\Longrightarrow$};

\node [below left ] at (w0)  {$w$};
\node [right]       at (w0)  {{}\hspace{2.8em}the domain of~$w$};
\node [below left ] at (w0') {$w$};
\node [right]       at (w0') {{}\hspace{2.8em}the domain of~$w$};

\end{tikzpicture}
\caption{Simulating worlds by individuals}
\label{fig:0}
\end{figure}
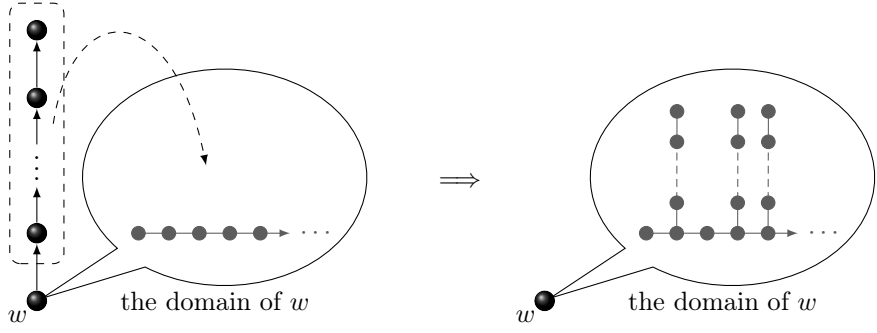

The idea is to move some simulating constructions from the modal predicate language into its classical fragment. For the semantic part of such modeling, this means that we replace some structures formed by possible worlds with structures formed by elements in the domain of a frame. For example, the formula $\Diamond^k P(x)$, being true at a world $w$ of a Kripke model, says that there exists a sequence of $k+1$ worlds starting with~$w$, and its last element satisfies~$P(x)$. We can say something similar using a binary predicate letter~$R$: 
$$
\begin{array}{lcl}
\varphi_k(x) 
  & = 
  & \exists x_1\ldots\exists x_k\,(R(x,x_1)\wedge R(x_1,x_2)\wedge\ldots\wedge R(x_{k-1},x_k)\wedge P(x_k)),
\end{array}
$$
see Figure~\ref{fig:0}. The last formula is modality-free, but it contains a binary predicate letter and $k$ new individual variables. We are to eliminate them. To eliminate all except two variables, we can just alternate the variables. So, the formula
$$
\begin{array}{lcl}
\varphi_3(x) 
  & = 
  & \exists x_1\exists x_2\exists x_3\,(R(x,x_1)\wedge R(x_1,x_2)\wedge R(x_{2},x_3)\wedge P(x_3))
\end{array}
$$
is equivalent in $\logic{QCl}$ to the formula
$$
\begin{array}{lcl}
\varphi'_3(x) 
  & = 
  & \exists y\,(R(x,y)\wedge \exists x\, (R(y,x)\wedge \exists y\,( R(x,y)\wedge P(y))))
\end{array}
$$
containing only two individual variables; similarly for any other $k\in\numNp$. To eliminate the binary predicate letter $R$, following the Kripke construction~\cite{Kripke62}, we can simulate $R(x,y)$ by a formula like $\Diamond(P_1(x)\wedge P_2(y))$ or even $\Diamond(P(x)\wedge \Diamond P(y))$. Notice that in the last case we lose extensions of $\logic{QT}$ and logics of depth two (if the depth of a singleton is one). If $R$ corresponds to a symmetric irreflexive binary relation, then formula $R(x,y)$ can be simulated also by a formula like $\neg\Diamond(P(x)\wedge P(y))$ or a similar formula~\cite{MR:2017:LI}, and we do not lose that logics. Thus, to answer the questions concerning monadic fragments of logics of frames with finite domains (and, as we shall see, of classes of finite Kripke frames, too), it is useful to know properties of the theory of finite symmetric irreflexive binary predicate.

Of course, in the constructions below, there are some additional details that we have to take into account, and this example just illustrates some simple principles that we are going to use throughout the paper.

\subsection{Structure of the paper}

We begin with describing preliminary constructions is Section~\ref{sec:prelim} we shall use throughout the paper. Namely, we shall define a convenient modification of Turing machines and then a suitable domino problem that actually describes computations of such machines. 

Then, in Section~\ref{sec:classical}, we use the domino problem to prove a generalization of the Trakhtenbrot theorem for a number of theories of a binary predicate in the language containing a single binary predicate letter and three variables. Our main aim is to prove that the classical predicate logic and the theory of finite models for symmetric irreflexive binary relation are recursively inseparable in this language. 

Section~\ref{sec:modal} contains corollaries of the results presented in Section~\ref{sec:classical} and additional theorems. So, using the mentioned results, we shall prove that almost all modal predicate logics and logics of their finite Kripke frames (or the ones of frames with finite domains) are recursively inseparable in the language containing a single unary predicate letter and three individual variables; 
%
%
in addition, with certain exceptions, similar conclusions will be drawn
for logics in the language with a single unary predicate letter and two individual variables.

In Section~\ref{sec:int}, we show that the same is true also for a number of superintuitionistic predicate logics and even for their positive fragments. 

We make some remarks about complexity in Section~\ref{sec:complexity} and then conclude in Section~\ref{sec:conclusion}.

\section{Preliminary constructions}
\label{sec:prelim}
\setcounter{equation}{0}

\subsection{Turing machines}

For our purposes, we define a modification of Turing machines~\cite{LP98,Sipser12}.
We consider single-tape deterministic Turing machines. Their special features are a finite set of halting states instead of one halting state and instructions beginning with halting states. These instructions work infinitely when we reach a halting state; they are useful to make the tilings we shall consider infinite.

Thus, a \defnotion{Turing machine} is a tuple $M = \langle \Sigma, Q, q_0, F, \delta \rangle$, where $\Sigma$ is a finite alphabet such that $\Box, \# \in \Sigma$ ($\Box$ is the \defnotion{blank symbol} and $\#$ is the \defnotion{end tape marker symbol}), $Q$ is a finite set of \defnotion{states}; $q_0 \in Q$ is the \defnotion{initial state}; $F$ is a set of \defnotion{halting states}, $F\subseteq Q$; and $\delta$ is a \defnotion{program}.  The program $\delta$ is a function $\delta\colon Q\times\Sigma\to Q\times\Sigma\times\{L,S,R\}$; it satisfies some conditions defined below. If $\delta\colon{q}{s}\mapsto{q'}{s'}{\Delta}$, then 
\begin{itemize}
\item $q' \ne q_0$; 
\item $s=\#$ if, and only if, $s'=\#$; 
\item $\Delta\ne L$ whenever $s=\#$; 
\item $q'=q$, $s'=s$, and $\Delta=S$ whenever $q\in F$.
\end{itemize}

A \defnotion{configuration} of a machine $M = \langle \Sigma, Q, q_0, F, \delta \rangle$ is an $\omega$-word $vqv'$, where $q \in Q$ and $vv'=a_0a_1a_2\ldots{}$ is an $\omega$-word over $\Sigma$ satisfying the following conditions: 
\begin{itemize}
\item there exists $k\in\numNp$ such that $a_i = \Box$, for every $i\geqslant k$;
\item $a_i = \#$ if, and only if, $i=0$.
\end{itemize}

A Turing machine $M$ can be thought of as a computing device equipped with a tape divided into an infinite sequence of cells $c_0,c_1,c_2,\ldots{}$, each containing a symbol from $\Sigma$, with one cell being scanned by a movable head.  Then, a configuration $vqv'$ of $M$ represents a computation instant at which the tape contains the symbols of the word $vv'$, $M$ is in state $q$, and the head is scanning the cell containing the first symbol of~$v'$.  An \defnotion{instruction} $\delta\colon{q}{s}\mapsto{q'}{s'}{\Delta}$ is applicable to this configuration just in case $M$ is in state $q$ and is scanning a cell containing~$s$.  As a result of applying this instruction, $M$ enters state $q'$, replaces $s$ with $s'$ in the cell, and either moves one cell to the left or to the right or stays put, depending on whether $\Delta$ is $L$, $R$ or~$S$, respectively. Given a word $x$ over $\Sigma\setminus\{\Box,\#\}$ as an \defnotion{input}, $M$ consecutively executes the instructions of~$\delta$ starting from the configuration $q_0\# x \Box\Box\Box\ldots{}$; if $M$ reaches a configuration whose state component $q$ is in $F$, then $M$ \defnotion{halts} on $x$, which is denoted by $q!M(x)$; if $M$ does not halt on $x$, we write $\neg !M(x)$. Without a loss of generality, we may assume that the cell being scanned when $M$ halts is $c_0$ (which contains~$\#$). Notice that then instruction $\delta\colon q\#\mapsto {q\#}S$ can be applied providing us with the same configuration. This means that even if $M$ halts, we may consider the infinite computation of $M$, in which $M$ loops the same halting configuration.

We will use Turing machines as a computational model for partial recursive functions on~$\numN$. To do this, we have to encode natural numbers in some finite alphabet. For $m\in\numN$, let 
$$
\begin{array}{lcl}
\numeral{m} & = & \underbrace{||\ldots|}_{\mathclap{\mbox{$m$ times}}},
\end{array}
$$
i.e., $\numeral{0}$ is the empty word, $\numeral{1}={|}$, $\numeral{2}={||}$, $\numeral{3}={|||}$, etc.; the word $\numeral{m}$ is the \defnotion{code} of~$m$.

\subsection{Recursive separability}

Let $X$ and $Y$ be subsets of $\numN$ such that $X\cap Y = \varnothing$. Then $X$ and $Y$ are called \defnotion{recursively separable} if for some recursive subset $Z$ of $\numN$ both $X\subseteq Z$ and $Y\cap Z=\varnothing$ hold; if there is no such $Z$, then $X$ and $Y$ are called \defnotion{recursively inseparable}. 

We shall deal with theories and logics regarding them as special sets of formulas. Usually, theories intersect; therefore, the notion of recursive separability is not applicable directly to them (i.e., to their G\"{o}del numbers). However, it is applicable if we consider theories $T_1$ and $T_2$ such that $T_1\subseteq T_2$ or $T_2\subseteq T_1$. If, say, $T_1\subseteq T_2$, then we may consider $T_1$-validity problem and $T_2$-refutability problem whose intersection is, clearly, empty.  

This observation leads to the following natural expansion of the definition of recursive separability. Let $X$ and $Y$ be subsets of $\numN$ such that $X\subset Y$. Then we call $X$ and $Y$ \defnotion{recursively separable} if $X$ and $\numN\setminus Y$ are recursively separable, i.e., $X\subseteq Z\subseteq Y$ for some recursive~$Z$; otherwise we call $X$ and $Y$ \defnotion{recursively inseparable}.



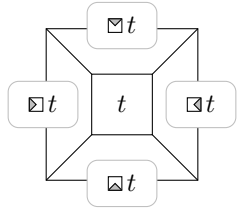
\begin{figure}
\centering
\begin{tikzpicture}[scale=2.1]
\drawtileflattm{(0,0)}{(1,0)}{(1,1)}{(0,1)}{$\leftsq t$}{$\downsq t$}{$\rightsq t$}{$\upsq t$}{$t$}
\end{tikzpicture}
\caption{Tile with marks}
\label{fig:1}
\end{figure}

It is known that there exist non-intersecting recursively enumerable sets $\mathbb{X}$ and $\mathbb{Y}$ which are recursively inseparable~\cite[Theorem~3.3]{Papadimitriou}. Then there is a partial recursive function $f_{\mathbb{XY}}\colon\numN\to\numN$ distinguishing $\mathbb{X}$ and~$\mathbb{Y}$:
$$
\begin{array}{lcl}
f_{\mathbb{XY}}(x) 
  & = 
  & \left\{
      \begin{array}{rl}
        0 & \mbox{if $x\in \mathbb{X}$;} \\
        1 & \mbox{if $x\in \mathbb{Y}$;} \\
        \mbox{undefined} & \mbox{if $x\not\in \mathbb{X}\cup\mathbb{Y}$.} \\
      \end{array}
    \right.
\end{array}
$$
This function is computable by some Turing machine $M_0 = \langle \Sigma_0, Q_0, q_0, F_0, \delta_0 \rangle$. We may assume that $M_0$ has two halting states $q_{\mathbb{X}}$ and $q_{\mathbb{Y}}$ such that, for every $m\in \numN$,
\begin{itemize}
\item
if $m\in\mathbb{X}$, then $q_{\mathbb{X}}!M_0(\numeral{m})$;
\item
if $m\in\mathbb{Y}$, then $q_{\mathbb{Y}}!M_0(\numeral{m})$;
\end{itemize}
also, notice that
\begin{itemize}
\item
if $m\not\in\mathbb{X}\cup\mathbb{Y}$, then $\neg !M_0(\numeral{m})$.
\end{itemize}

For further constructions, we could explicitly specify $\mathbb{X}$, $\mathbb{Y}$, and $M_0$ explicitly; but we shall give general constructions that allow us to vary them. However, for the rest of the text, let $\mathbb{X}$, $\mathbb{Y}$, and $M_0$ be fixed.

\subsection{Tiling problem we shall consider}
\label{subsec:tiling_problems}


For our purposes, let us represent $M_0$ by sets of square tile types~--- one set for every input $\numeral{m}$, where $m\in\numN$. The aim is to replace the computations of $M_0$ with $\numN\times\numN$ tilings.\footnote{For tiling problems consult~\cite{Harel86}.}

We may think of a \defnotion{tile} as a $1 \times 1$ square, with a fixed orientation, whose edges are marked by words over some finite alphabet (we will use $\Sigma_0\cup Q_0$ enriched by some technical symbols). 
A~\defnotion{tile type} $t$ consists of a specification of a \defnotion{mark} (i.e., a word) for each edge; we write $\leftsq t$, $\rightsq t$, $\upsq t$, and $\downsq t$ for the marks of, respectively, the left, the right, the top, and the bottom edges of the tiles of type~$t$, see Figure~\ref{fig:1}.

Let $T = \{t_0, \ldots, t_{n}\}$ be a set of tile types. Informally, a \defnotion{$T$-tiling} is an arrangement of tiles, whose types are in~$T$, on a grid so that the edge marks of adjacent tiles match, both horizontally and vertically. Formally, define $T$-tiling as a function $f\colon \numN \times \numN \to T$ such that for all $i, j \in \numN$,
\begin{itemize}[leftmargin=3em]
\item[$(1)$] $\rightsq f(i,j) = \leftsq f(i+1,j)$;
\item[$(2)$] $\upsq f(i,j) = \downsq f(i,j+1)$;
\end{itemize}
see Figure~\ref{fig:2}.

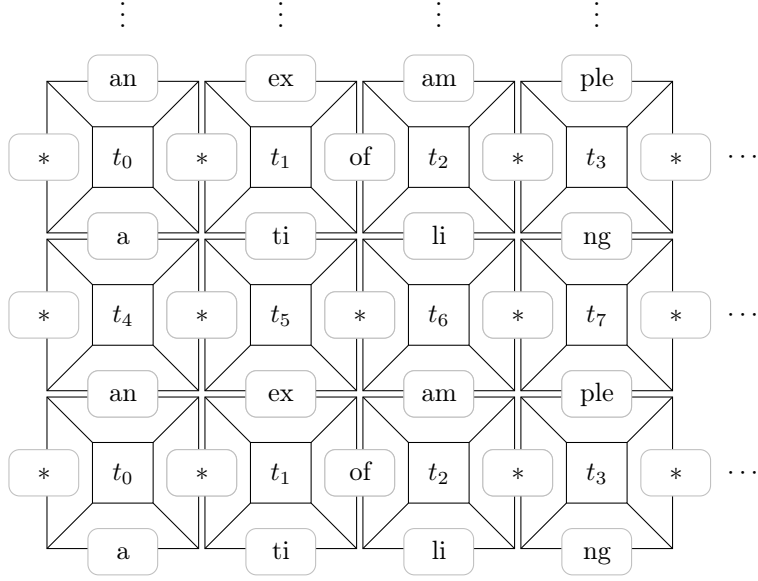
\begin{figure}
\centering
\begin{tikzpicture}[scale=2.1]

\drawtileflattm{(0,0)}{(1,0)}{(1,1)}{(0,1)}{$\ast$}{a} {$\ast$}{an} {$t_0$}
\drawtileflattm{(1,0)}{(2,0)}{(2,1)}{(1,1)}{$\ast$}{ti}{of}    {ex} {$t_1$}
\drawtileflattm{(2,0)}{(3,0)}{(3,1)}{(2,1)}{of}    {li}{$\ast$}{am} {$t_2$}
\drawtileflattm{(3,0)}{(4,0)}{(4,1)}{(3,1)}{$\ast$}{ng}{$\ast$}{ple}{$t_3$}

\drawtileflattm{(0,1)}{(1,1)}{(1,2)}{(0,2)}{$\ast$}{an} {$\ast$}{a} {$t_4$}
\drawtileflattm{(1,1)}{(2,1)}{(2,2)}{(1,2)}{$\ast$}{ex} {$\ast$}{ti}{$t_5$}
\drawtileflattm{(2,1)}{(3,1)}{(3,2)}{(2,2)}{$\ast$}{am} {$\ast$}{li}{$t_6$}
\drawtileflattm{(3,1)}{(4,1)}{(4,2)}{(3,2)}{$\ast$}{ple}{$\ast$}{ng}{$t_7$}

\drawtileflattm{(0,2)}{(1,2)}{(1,3)}{(0,3)}{$\ast$}{a} {$\ast$}{an} {$t_0$}
\drawtileflattm{(1,2)}{(2,2)}{(2,3)}{(1,3)}{$\ast$}{ti}{of}    {ex} {$t_1$}
\drawtileflattm{(2,2)}{(3,2)}{(3,3)}{(2,3)}{of}    {li}{$\ast$}{am} {$t_2$}
\drawtileflattm{(3,2)}{(4,2)}{(4,3)}{(3,3)}{$\ast$}{ng}{$\ast$}{ple}{$t_3$}

\node [right=16pt] at (4,0.5) {$\cdots$};
\node [right=16pt] at (4,1.5) {$\cdots$};
\node [right=16pt] at (4,2.5) {$\cdots$};

\node [above=16pt] at (0.5,3) {$\vdots$};
\node [above=16pt] at (1.5,3) {$\vdots$};
\node [above=16pt] at (2.5,3) {$\vdots$};
\node [above=16pt] at (3.5,3) {$\vdots$};

\end{tikzpicture}
\caption{Example of a tiling}
\label{fig:2}
\end{figure}

Now, we define special tile types for $M_0$ and its possible inputs. We start with $t_0$, which is defined by
\begin{itemize}
\item $\leftsq t_0 = \otimes$, $\rightsq t_0 = \numeral{0}$, $\upsq t_0 = q_0\#$, $\downsq t_0 = \otimes$,
\end{itemize}
where $\otimes$ is a new symbol. Next, for every $s\in\Sigma\setminus\{\#\}$, define $t_s^\ast$ by
\begin{itemize}
\item $\leftsq t_s^\ast = \rightsq t_s^\ast = \ast$, $\upsq t_s^\ast = \downsq t_s^\ast = s$,
\end{itemize}
where $\ast$ is a new symbol, and $t_{\#}^\ast$ by
\begin{itemize}
\item $\leftsq t_{\#}^\ast = \otimes$, $\rightsq t_{\#}^\ast = \ast$, $\upsq t_{\#}^\ast = \downsq t_{\#}^\ast = \#$,
\end{itemize}
see Figure~\ref{fig:3}. 

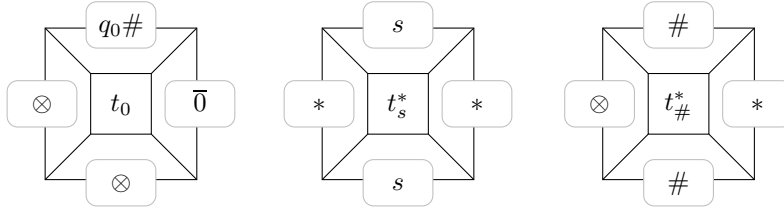
\begin{figure}
\centering
\begin{tikzpicture}[scale=2.1]
\drawtileflattm{(0,0)}{(1,0)}{(1,1)}{(0,1)}{{$\otimes$}}{{$\otimes$}}{{$\numeral{0}$}}{{$q_0\#$}}{$t_0$}
\end{tikzpicture}
\hspace{1em}
\begin{tikzpicture}[scale=2.1]
\drawtileflattm{(0,0)}{(1,0)}{(1,1)}{(0,1)}{{$\ast$}}{{$s$}}{{$\ast$}}{{$s$}}{$t_s^\ast$}
\end{tikzpicture}
\hspace{1em}
\begin{tikzpicture}[scale=2.1]
\drawtileflattm{(0,0)}{(1,0)}{(1,1)}{(0,1)}{{$\otimes$}}{{$\#$}}{{$\ast$}}{{$\#$}}{$t_{\#}^\ast$}
\end{tikzpicture}
\caption{Tile types $t_0$, $t_s^\ast$, and $t_{\#}^\ast$}
\label{fig:3}
\end{figure}

For every $q\in Q_0$ and $s\in\Sigma_0$, we define the tile types according to instruction $\delta\colon qs\mapsto q's'\Delta$. There are three cases. 

First: $\delta_0\colon qs\mapsto q's'S$, i.e., $\Delta=S$. Then define $t_{qs}$ by
\begin{itemize}
\item $\leftsq t_{qs} = \rightsq t_{qs} = \ast$, $\upsq t_{qs} = q's'$, $\downsq t_{qs} = qs$, where $s\ne\#$;
\item $\leftsq t_{q\#} = \otimes$, $\rightsq t_{q\#} = \ast$, $\upsq t_{q\#} = q'\#$, $\downsq t_{q\#} = q\#$,
\end{itemize}
see Figure~\ref{fig:4}. 

\begin{figure}
\centering
\begin{tikzpicture}[scale=2.1]
\drawtileflattm{(0,0)}{(1,0)}{(1,1)}{(0,1)}{{$\ast$}}{{$qs$}}{{$\ast$}}{{$q's'$}}{$t_{qs}$}
\end{tikzpicture}
\hspace{1em}
\begin{tikzpicture}[scale=2.1]
\drawtileflattm{(0,0)}{(1,0)}{(1,1)}{(0,1)}{{$\otimes$}}{{$q\#$}}{{$\ast$}}{{$q'\#$}}{$t_{q\#}$}
\end{tikzpicture}
\caption{Tile types for instructions $\delta_0\colon qs\mapsto q's'S$ and $\delta_0\colon q\#\mapsto q'\#S$}
\label{fig:4}
\end{figure}

Second: $\delta_0\colon qs\mapsto q's'R$, i.e., $\Delta=R$. Then define $t_{qs}$ and $t_{qs}^{a}$, for every $a\in\Sigma_0\setminus\{\#\}$, by
\begin{itemize}
\item 
$\leftsq t_{qs} = \ast$,
$\rightsq t_{qs} = qs$, 
$\upsq t_{qs} = s'$, 
$\downsq t_{qs} = qs$, where $s\ne\#$;
\item 
$\leftsq t_{q\#} = \otimes$,
$\rightsq t_{q\#} = q\#$, 
$\upsq t_{q\#} = \#$, 
$\downsq t_{q\#} = q\#$;
\item 
$\leftsq t_{qs}^{a} = qs$,
$\rightsq t_{qs}^{a} = \ast$, 
$\upsq t_{qs}^{a} = q'a$, 
$\downsq t_{qs}^{a} = a$,
\end{itemize}
see Figure~\ref{fig:5}. 

\begin{figure}
\centering
\begin{tikzpicture}[scale=2.1]
\drawtileflattm{(0,0)}{(1,0)}{(1,1)}{(0,1)}{{$\ast$}}{{$qs$}}{{$qs$}}{{$s'$}}{$t_{qs}$}
\drawtileflattm{(1,0)}{(2,0)}{(2,1)}{(1,1)}{{$qs$}}{{$a$}}{{$\ast$}}{{$q'a$}}{$t_{qs}^{a}$}
\end{tikzpicture}
\hspace{1em}
\begin{tikzpicture}[scale=2.1]
\drawtileflattm{(0,0)}{(1,0)}{(1,1)}{(0,1)}{{$\otimes$}}{{$q\#$}}{{$q\#$}}{{$\#$}}{$t_{q\#}$}
\drawtileflattm{(1,0)}{(2,0)}{(2,1)}{(1,1)}{{$q\#$}}{{$a$}}{{$\ast$}}{{$q'a$}}{$t_{q\#}^{a}$}
\end{tikzpicture}
\caption{Tile types for instructions $\delta_0\colon qs\mapsto q's'R$ and $\delta_0\colon q\#\mapsto q'\#R$}
\label{fig:5}
\end{figure}

Third: $\delta_0\colon qs\mapsto q's'L$, i.e., $\Delta=L$. Notice that $s\ne\#$. Then define $t_{qs}$ and $t_{qs}^{a}$, for every $a\in\Sigma_0$, by
\begin{itemize}
\item 
$\leftsq t_{qs} = qs$,
$\rightsq t_{qs} = \ast$, 
$\upsq t_{qs} = s'$, 
$\downsq t_{qs} = qs$;
\item 
$\leftsq t_{qs}^{a} = \ast$,
$\rightsq t_{qs}^{a} = qs$, 
$\upsq t_{qs}^{a} = q'a$, 
$\downsq t_{qs}^{a} = a$, where $a\ne\#$;
\item 
$\leftsq t_{qs}^{\#} = \otimes$,
$\rightsq t_{qs}^{\#} = qs$, 
$\upsq t_{qs}^{\#} = q'\#$, 
$\downsq t_{qs}^{\#} = \#$,
\end{itemize}
see Figure~\ref{fig:6}. 

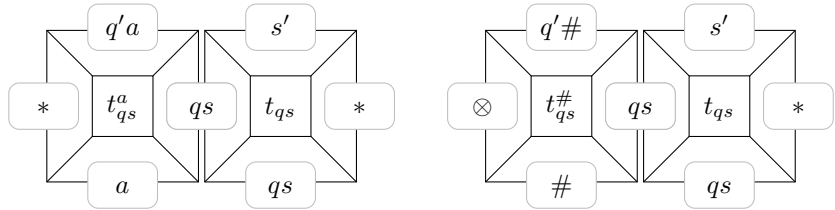
\begin{figure}
\centering
\begin{tikzpicture}[scale=2.1]
\drawtileflattm{(0,0)}{(1,0)}{(1,1)}{(0,1)}{{$\ast$}}{{$a$}}{{$qs$}}{{$q'a$}}{$t_{qs}^{a}$}
\drawtileflattm{(1,0)}{(2,0)}{(2,1)}{(1,1)}{{$qs$}}{{$qs$}}{{$\ast$}}{{$s'$}}{$t_{qs}$}
\end{tikzpicture}
\hspace{1em}
\begin{tikzpicture}[scale=2.1]
\drawtileflattm{(0,0)}{(1,0)}{(1,1)}{(0,1)}{{$\otimes$}}{{$\#$}}{{$qs$}}{{$q'\#$}}{$t_{qs}^{\#}$}
\drawtileflattm{(1,0)}{(2,0)}{(2,1)}{(1,1)}{{$qs$}}{{$qs$}}{{$\ast$}}{{$s'$}}{$t_{qs}$}
\end{tikzpicture}
\caption{Tile types for instruction $\delta_0\colon qs\mapsto q's'L$}
\label{fig:6}
\end{figure}

Let ${T}_{M_0}$ be the set of all tile types defined above; notice that ${T}_{M_0}$ is finite. The set ${T}_{M_0}$ contains a description of $M_0$, but does not contain any tile types that allow us to simulate inputs for~$M_0$. To fix this, define the tile types $t_\Box^{\ast\ast}$, $t_k^\otimes$, and $t_k^{\ast\ast}$, where $k\in\numN$, by
\begin{itemize}
\item 
$\leftsq  t_\Box^{\ast\ast} = \rightsq t_\Box^{\ast\ast} = {\ast}{\ast}$, 
$\upsq    t_\Box^{\ast\ast} = \Box$, 
$\downsq  t_\Box^{\ast\ast} = \otimes$;
\item 
$\leftsq  t_k^\otimes = \numeral{k}$,
$\rightsq t_k^\otimes = \numeral{k+1}$, 
$\upsq    t_k^\otimes = |$, 
$\downsq  t_k^\otimes = \otimes$;
\item 
$\leftsq  t_k^{\ast\ast} = \numeral{k}$,
$\rightsq t_k^{\ast\ast} = {\ast}{\ast}$, 
$\upsq    t_k^{\ast\ast} = \Box$, 
$\downsq  t_k^{\ast\ast} = \otimes$,
\end{itemize}
see Figure~\ref{fig:7}. 

\begin{figure}
\centering
\begin{tikzpicture}[scale=2.1]
\drawtileflattm{(0,0)}{(1,0)}{(1,1)}{(0,1)}{${\ast}{\ast}$}{$\otimes$}{${\ast}{\ast}$}{$\Box$}{$t_\Box^{\ast\ast}$}
\end{tikzpicture}
\hspace{1em}
\begin{tikzpicture}[scale=2.1]
\drawtileflattm{(0,0)}{(1,0)}{(1,1)}{(0,1)}{$\numeral{k}$}{$\otimes$}{$\numeral{k\,{+}\,1}$}{$|$}{$t_k^\otimes$}
\end{tikzpicture}
\hspace{1em}
\begin{tikzpicture}[scale=2.1]
\drawtileflattm{(0,0)}{(1,0)}{(1,1)}{(0,1)}{$\numeral{k}$}{$\otimes$}{${\ast}{\ast}$}{$\Box$}{$t_k^{\ast\ast}$}
\end{tikzpicture}
\caption{Tile types $t_\Box^{\ast\ast}$, $t_k^\otimes$, and $t_k^{\ast\ast}$}
\label{fig:7}
\end{figure}

Now, we can simulate any initial configuration of~$M_0$. Indeed, to simulate a configuration $C = q_0\#\numeral{k}\Box\Box\Box\ldots{}$, take the row of tiles whose types are
$$
t_0^{\phantom{i}}, t_0^\otimes, \ldots, t_{k-1}^\otimes, t_k^{\ast\ast}, 
t_\Box^{\ast\ast}, t_\Box^{\ast\ast}, t_\Box^{\ast\ast}, \ldots{},
$$
and then
$$
\begin{array}{lcl}
C & = & 
\upsq t_0^{\phantom{i}}\ \upsq t_0^\otimes\ \ldots\ \upsq t_{k-1}^\otimes\ \upsq t_k^{\ast\ast}\ 
\upsq t_\Box^{\ast\ast}\ \upsq t_\Box^{\ast\ast}\ \upsq t_\Box^{\ast\ast}\ \ldots{},
\end{array}
$$
see Figure~\ref{fig:8}. 

\begin{figure}
\centering
\begin{tikzpicture}[scale=2.1]
\drawtileflattm{(0,0)}{(1,0)}{(1,1)}{(0,1)}{${\otimes}$}{${\otimes}$}{$\numeral{0}$}{$q_0\#$}{$t_0$}
\drawtileflattm{(1,0)}{(2,0)}{(2,1)}{(1,1)}{${\numeral{0}}$}{${\otimes}$}{$\numeral{1}$}{$|$}{$t_0^\otimes$}
\drawtileflattm{(3-.2,0)}{(4-.2,0)}{(4-.2,1)}{(3-.2,1)}{${\numeral{k\,{-}\,1}}$}{${\otimes}$}{$\numeral{k}$}{$|$}{$t_{k-1}^\otimes$}
\drawtileflattm{(4-.2,0)}{(5-.2,0)}{(5-.2,1)}{(4-.2,1)}{${\numeral{k}}$}{${\otimes}$}{${\ast}{\ast}$}{$\Box$}{$t_{k}^{\ast\ast}$}
\drawtileflattm{(5-.2,0)}{(6-.2,0)}{(6-.2,1)}{(5-.2,1)}{${\ast}{\ast}$}{${\otimes}$}{${\ast}{\ast}$}{$\Box$}{$t_{\Box}^{\ast\ast}$}
\node [] at (2.42,0.5) {$\cdots$};
\node [] at (6.2,0.5) {$\cdots$};
\end{tikzpicture}
\caption{Simulation of the initial configuration with input $\numeral{k}$}
\label{fig:8}
\end{figure}
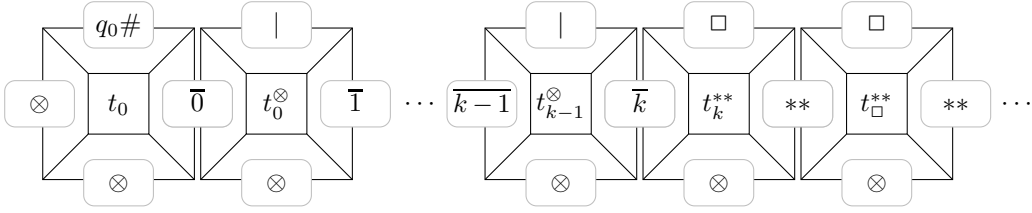

We shall use the new tile types together with the tile types of ${T}_{M_0}$ to simulate the computations of $M_0$ with the inputs we choose. To this end, let
$$
\begin{array}{lcl}
T_n & = & {T}_{M_0}\cup\{t_{k}^\otimes : k<n\}\cup\{t_{n}^{\ast\ast}\}\cup\{t_{\Box}^{\ast\ast}\}.
\end{array}
$$

\begin{proposition}
\label{prop:fn}
There exists a unique $T_n$-tiling $f\colon\numN\times\numN\to T_n$ such that $f(0,0)=t_0$. Moreover, if $C_0,C_1,C_2,\ldots{}$ is the computation of $M_0$ on the input $\numeral{n}$, then
$$
\begin{array}{lcl}
C_m & = & \upsq f(0,m)\ \upsq f(1,m)\ \upsq f(2,m)\ \ldots,
\end{array}
$$
for every $m\in\numN$.
\end{proposition}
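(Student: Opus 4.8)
\emph{Proof proposal.} The plan is to show that the single constraint $f(0,0)=t_0$ rigidly propagates through the whole grid, row by row, and that the top edge of the $m$-th row reads off the $m$-th configuration of $M_0$ on input $\numeral{n}$. The cleanest route is one induction on the row index $m$ establishing simultaneously: $(\mathrm{i})$ any $T_n$-tiling $f$ with $f(0,0)=t_0$ is uniquely determined on row $m$; $(\mathrm{ii})$ the top marks $\upsq f(0,m)\,\upsq f(1,m)\,\upsq f(2,m)\,\ldots$ spell exactly $C_m$; and $(\mathrm{iii})$ for $m\geqslant1$ the bottom marks of row $m$ spell $C_{m-1}$, while for $m=0$ every bottom mark equals $\otimes$. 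Existence of $f$ then comes for free: the same induction, read constructively, lays down at each row the tile sequence dictated by $(\mathrm{i})$--$(\mathrm{iii})$, and one checks that consecutive tiles satisfy $(1)$ and that the row meshes vertically, via $(2)$, with the row below (already built at the previous step).

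For the base case $m=0$ I would start at $f(0,0)=t_0$ and follow the forced chain: $\rightsq t_0=\numeral{0}$, and the only tile of $T_n$ with left mark $\numeral{0}$ is $t_0^\otimes$ (or, if $n=0$, $t_n^{\ast\ast}$); iterating, the row is forced through $t_0,t_0^\otimes,t_1^\otimes,\ldots,t_{n-1}^\otimes$, then $t_n^{\ast\ast}$ (the unique tile with left mark $\numeral{n}$), after which the only tile with left mark $\ast\ast$ is $t_\Box^{\ast\ast}$, so every further cell is $t_\Box^{\ast\ast}$. The top marks then read $q_0\#,\,|,\ldots,|,\,\Box,\Box,\ldots$ with exactly $n$ bars, i.e.\ $q_0\#\,\numeral{n}\,\Box\Box\Box\ldots=C_0$, and all bottom marks are $\otimes$.

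For the inductive step, assume row $m$ is forced with top marks spelling $C_m$, the head sitting over the column $k$ whose top mark is the syllable $q a_k$ and whose other columns $i$ carry single symbols $a_i$. By $(2)$ the bottom marks of row $m+1$ must be precisely this sequence. I would then reconstruct row $m+1$: column $k$ needs a tile with bottom mark $q a_k$, and the unique such tile of $T_n$ is the instruction tile $t_{q a_k}$ for $\delta_0\colon q a_k\mapsto q's'\Delta$; its side marks next force the cell the head enters ($t_{q a_k}^{a_{k+1}}$ if $\Delta=R$, $t_{q a_k}^{a_{k-1}}$ if $\Delta=L$, nothing extra if $\Delta=S$), whose top mark is the new head syllable $q' a_{k\pm1}$, while the top mark over column $k$ becomes $s'$ (or $q's'$ for $\Delta=S$). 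Every remaining column $i$ carries a tile whose bottom mark is $a_i$ and which shows $\ast$ (or, in column $0$, $\otimes$) towards its already-fixed neighbour; the only candidate is the transit tile $t_{a_i}^\ast$ (respectively $t_\#^\ast$ in column $0$ when $a_0=\#$, and $t_{q a_1}^{\#}$ in column $0$ when the head leaves column $1$ leftwards), whose top mark again equals $a_i$. Reading off the top marks gives the configuration $M_0$ produces from $C_m$ in one step, namely $C_{m+1}$.

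The main obstacle is the uniqueness part of the inductive step, i.e.\ ruling out ``spurious'' tiles that wear a state--symbol syllable or $\otimes$ on a side but sit neither next to the head nor in column $0$. This is settled by a short look-ahead: such a tile would force a neighbouring cell to carry a syllable as its bottom mark, yet by $(2)$ the bottom row of row $m+1$ reproduces $C_m$, whose only syllable is at column $k$ --- contradiction. Woven through the argument are the standing restrictions on $\delta_0$, which are exactly what makes the simulation faithful: $q'\neq q_0$ stops a head tile from manufacturing an initial configuration above it; $s=\#\Leftrightarrow s'=\#$ together with the ban on $L$-moves scanning $\#$ keeps the marker $\#$ glued to column $0$ and the head off the left edge; and the self-looping of halting states ($q\in F\Rightarrow\delta_0\colon qs\mapsto qsS$) ensures the computation, hence the tiling, is total on $\numN\times\numN$. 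With these in hand the induction yields at once the uniqueness of $f$ and the required identity $C_m=\upsq f(0,m)\,\upsq f(1,m)\,\ldots$ for all $m$.
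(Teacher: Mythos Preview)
Your proposal is correct and follows exactly the approach the paper indicates: the paper's entire proof reads ``Induction on~$m$'', and what you have written is a faithful, detailed unpacking of that induction, including the row-$0$ base case, the instruction-driven propagation at the head column, the transit tiles elsewhere, and the look-ahead argument ruling out spurious head tiles via the bottom-mark constraint. There is nothing to add.
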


\begin{proof}
Induction on~$m$.
\end{proof}

Due to Proposition~\ref{prop:fn}, a $T_n$-tiling $f_n\colon\numN\times\numN\to T_n$ such that $f_n(0,0)=t_0$ is defined uniquely; we call it the \defnotion{special $T_n$-tiling}.
Let, for convenience,
$$
\begin{array}{lcl}
t_{1} = t_{q_{\mathbb{X}}\#} & \mbox{and} & t_{2} = t_{q_{\mathbb{Y}}\#}.
\end{array}
$$
From Proposition~\ref{prop:fn} we obtain that, for every $n\in\numN$,
\begin{equation}
\label{eq:fn}
\begin{array}{lcl}
n\in\mathbb{X} 
  & \iff 
  & \mbox{there exists $m\in\numN$ such that $f_n(0,m) = t_{1}$;} 
  \\
n\in\mathbb{Y} 
  & \iff 
  & \mbox{there exists $m\in\numN$ such that $f_n(0,m) = t_{2}$.} 
  \\
\end{array}
\end{equation}

In the following, it does not matter to us how exactly the tile types of $T_n$ are constructed; we just enumerate them so that we could refer to~$(\ref{eq:fn})$. So, let 
$$
\begin{array}{lcl}
T_n & = & \{t^n_0,\ldots,t^n_{k_n}\} 
\end{array}
$$
and assume that
$$
\begin{array}{lcl}
t^n_0 = t_0^{\phantom{n}}, 
  & t^n_1 = t_1^{\phantom{n}}, 
  & t^n_2 = t_2^{\phantom{n}}; 
\end{array}
$$
the enumeration of all other tile types of $T_n$ is arbitrary. We shall use $T_n$ to describe the properties $(\ref{eq:fn})$ of the special $T_n$-tiling by formulas of different languages. This will allow us to prove that the validity and the refutability problems for certain theories and logics are recursively inseparable, and hence the theories and the logics are undecidable or even not recursively enumerable.

\section{Superclassical logics and theories}
\label{sec:classical}
\setcounter{equation}{0}

\subsection{Syntax and semantics}

We assume that the first-order language $\lang{L}$ contains countably many individual variables, countably many predicate letters of every arity, the constant $\bot$, the binary connectives $\wedge$, $\vee$, $\to$, the quantifier symbols~$\forall$ and~$\exists$. Formulas in $\lang{L}$, or \defnotion{$\lang{L}$-formulas}, as well as the symbols $\neg$ and $\leftrightarrow$, are defined in the usual way; in particular, $\neg\varphi = \varphi\to \bot$, $\top=\neg\bot$, and $\varphi \leftrightarrow \psi = (\varphi\to \psi)\wedge (\psi\to\varphi)$. From now on, we identify the language $\lang{L}$ with the set of $\lang{L}$-formulas. For a formula $\varphi$, let $\sub\varphi$ denote the set of subformulas of~$\varphi$.

A \defnotion{model} is a tuple $\cModel{M} = \langle \mathcal{D},\mathcal{I}\rangle$, where $D$ is a non-empty set of \defnotion{individuals}, or the \defnotion{domain} of $\cModel{M}$, and $\mathcal{I}$ an \defnotion{interpretation of predicate letters} in $\mathcal{D}$, i.e., a function assigning to an \mbox{$n$-ary} predicate letter $P$ an \mbox{$n$-ary} relation $\mathcal{I}(P)$ on~$\mathcal{D}$.

An \defnotion{assignment} in a model $\cModel{M} = \langle \mathcal{D},\mathcal{I}\rangle$ is a function $g$ associating with every individual variable $x$ an individual $g(x)\in \mathcal{D}$. As before, $g\stackrel{x}{=}h$ means that the assignment $g$ differs from the assignment $h$ in at most the value of~$x$.

The truth of a formula $\varphi$ in a model $\cModel{M} = \langle \mathcal{D},\mathcal{I}\rangle$ under an assignment $g$ is defined recursively:
\settowidth{\templength}{\mbox{$\cModel{M}\models^g\varphi'$ and $\cModel{M}\models^g\varphi''$;}}
\settowidth{\templengtha}{\mbox{$w$}}
\settowidth{\templengthb}{\mbox{$\cModel{M}\models^{h}\varphi'$, for every assignment $h$ such that $h \stackrel{x}{=} g$}}
\settowidth{\templengthc}{\mbox{$\cModel{M}\models^g P(x_1,\ldots,x_n)$}}
$$
\begin{array}{lcl}
\cModel{M}\models^g P(x_1,\ldots,x_n)
  & \leftrightharpoons
  & \parbox{\templengthb}{$\langle g(x_1),\ldots,g(x_n)\rangle \in \mathcal{I}(P)$,} \\
\end{array}
$$
\mbox{where $P$ is an $n$-ary predicate letter;}
\settowidth{\templength}{\mbox{$\cModel{M}\models^g\varphi'$ and $\cModel{M}\models^g\varphi''$;}}
\settowidth{\templengtha}{\mbox{$w$}}
\settowidth{\templengthb}{\mbox{$\cModel{M}\models^{g}\varphi'\to\varphi''$}}
\settowidth{\templengthc}{\mbox{$\cModel{M}\models^g P(x_1,\ldots,x_n)$}}
$$
\begin{array}{lcl}
\parbox{\templengthc}{{}\hfill\parbox{\templengthb}{$\cModel{M} \not\models^g \bot;$}}
  \\
\parbox{\templengthc}{{}\hfill\parbox{\templengthb}{$\cModel{M}\models^g\varphi' \wedge \varphi''$}}
  & \leftrightharpoons
  & \parbox[t]{\templength}{$\cModel{M}\models^g\varphi'$ and $\cModel{M}\models^g\varphi''$;}
  \\
\parbox{\templengthc}{{}\hfill\parbox{\templengthb}{$\cModel{M}\models^g\varphi' \vee \varphi''$}}
  & \leftrightharpoons
  & \parbox[t]{\templength}{$\cModel{M}\models^g\varphi'$\hfill or\hfill $\cModel{M}\models^g\varphi''$;}
  \\
\parbox{\templengthc}{{}\hfill\parbox{\templengthb}{$\cModel{M}\models^g\varphi' \to \varphi''$}}
  & \leftrightharpoons
  & \parbox[t]{\templength}{$\cModel{M}\not\models^g\varphi'$\hfill or\hfill $\cModel{M}\models^g\varphi''$;}
  \\
\parbox{\templengthc}{{}\hfill\parbox{\templengthb}{$\cModel{M}\models^g\forall x\,\varphi'$}}
  & \leftrightharpoons
  & \mbox{$\cModel{M}\models^{h}\varphi'$, for every assignment $h$ such that $h \stackrel{x}{=} g$;}
  \\
\parbox{\templengthc}{{}\hfill\parbox{\templengthb}{$\cModel{M}\models^g\exists x\,\varphi'$}}
  & \leftrightharpoons
  & \mbox{$\cModel{M}\models^{h}\varphi'$, for some assignment $h$ such that $h \stackrel{x}{=} g$.}
\end{array}
$$

For a formula $\varphi(x_1,\ldots,x_n)$ with its free individual variables in the list $x_1, \ldots, x_n$ and individuals $a_1, \ldots, a_n$ of $\cModel{M}$, we write $\cModel{M} \models \varphi (a_1, \ldots, a_n)$ if $\cModel{M} \models^g \varphi (x_1, \ldots, x_n)$, for an assignment~$g$ such that $g(x_k) = a_k$, for every $k\in\{1,\ldots,n\}$.  This notation is unambiguous since the languages we consider lack constants and the truth value of $\varphi(x_1, \ldots, x_n)$ does not depend on the values of variables different from $x_1, \ldots, x_n$.

For a model $\cModel{M}$, a class $\scls{C}$ of models, a formula $\varphi$, and a set of formulas~$X$, define
\settowidth{\templengtha}{\mbox{$\cModel{M}\models^{h}\varphi'$, for every $h$ such that $h \stackrel{x}{=} g$}}
\settowidth{\templengthd}{\mbox{$\cModel{M}$}}
\settowidth{\templengthb}{\mbox{$\cModel{M}\models X$}}
$$
\begin{array}{lcl}
\parbox{\templengthc}{{}\hfill\parbox{\templengthb}{$\cModel{M}\models \varphi$}}
  & \leftrightharpoons
  & \parbox[t]{\templengtha}{$\cModel{M}\models^g\varphi$, for every assignment~$g$;}
  \\
\parbox{\templengthc}{{}\hfill\parbox{\templengthb}{$\cModel{M}\models X$}}
  & \leftrightharpoons
  & \parbox[t]{\templengtha}{$\cModel{M}\models\varphi$, for every $\varphi\in X$;}
  \\
\parbox{\templengthc}{{}\hfill\parbox{\templengthb}{$\parbox{\templengthd}{\hfill$\scls{C}$}\models \varphi$}}
  & \leftrightharpoons
  & \parbox[t]{\templengtha}{$\cModel{M}\models\varphi$, for every $\cModel{M}\in \scls{C}$.}
  \\
\end{array}
$$

If $\cModel{M}\models \varphi$, we say that $\varphi$ is \defnotion{true} in $\cModel{M}$; otherwise $\varphi$ is \defnotion{refuted} in $\cModel{M}$.

We say that a model $\cModel{M} = \langle \mathcal{D},\mathcal{I}\rangle$ is \defnotion{finite} if $\mathcal{D}$ is finite.

For a class $\scls{C}$ of models, define $\mathit{Th}(\scls{C}) = \{\varphi : \scls{C}\models\varphi\}$. We call $\mathit{Th}(\scls{C})$ the \defnotion{theory} of the class~$\scls{C}$.
For a theory $\Gamma$, let $\Gamma_{\mathit{fin}}$ denote the theory of all finite models of~$\Gamma$. 
A model $\cModel{M}$ is a \defnotion{$\Gamma$-model} if $\cModel{M}\models\Gamma$.
For a formula $\varphi$, let 
$$
\begin{array}{lcl}
\Gamma\models\varphi & \bydef & \mbox{$\cModel{M}\models\varphi$, for every $\Gamma$-model $\cModel{M}$;}
\smallskip\\
\Gamma\uplus\varphi & = & \{\psi\in\lang{L} : \Gamma\cup\{\varphi\}\models\psi\}.
\end{array}
$$
Below we do not differ $\Gamma$ and $\Gamma\uplus\top$ (i.e., $\{\psi\in\lang{L} : \Gamma\models\psi\}$).

A~closed $\lang{L}$-formula $\varphi$ is called 
\begin{itemize}
\item 
\defnotion{$\Gamma$-valid} if $\Gamma\models\varphi$; 
\item
\defnotion{$\Gamma$-satisfiable}, or \defnotion{$\Gamma$-consistent}, if $\Gamma\not\models\neg\varphi$;
\item
\defnotion{$\Gamma$-refutable} if $\Gamma\not\models\varphi$.
\end{itemize}

Define logics $\logic{QCl}$ and $\logic{QCl}_{\mathit{fin}}$ as, respectively, the theory of the class of all models and the theory of the class of all finite models. 

\subsection{Trakhtenbrot theorem for three variables}

The Trakhtenbrot theorem~\cite{Trakhtenbrot50,Trakhtenbrot53} states that $\logic{QCl}$ and $\logic{QCl}_{\mathit{fin}}$ are recursively inseparable (or equivalently, $\logic{QCl}$-validity and $\logic{QCl}_{\mathit{fin}}$-refutability problems are). 
We give a very short proof of the theorem and then obtain corollaries that relate theories of a binary predicate. The proof is based on the simulation of the special $T_n$-tiling, for every $n\in\numN$, by $\lang{L}$-formulas. To this end, we will use a binary predicate letter~$P$ to describe some connections between the elements of the models (the elements are viewed as tile holders) and unary letters $P_0,P_1,P_2,\ldots{}$ to say, tiles of which types are held by the elements.

\begin{figure}
\centering
\begin{tikzpicture}[scale=2.1, color=black!32]
\coordinate (g00) at (0,0);
\coordinate (g10) at (1,0);
\coordinate (g20) at (2,0);
\coordinate (g30) at (3,0);
\coordinate (g40) at (4,0);
\coordinate (g50) at (5,0);
\coordinate (g01) at (0,1);
\coordinate (g11) at (1,1);
\coordinate (g21) at (2,1);
\coordinate (g31) at (3,1);
\coordinate (g41) at (4,1);
\coordinate (g51) at (5,1);
\coordinate (g02) at (0,2);
\coordinate (g12) at (1,2);
\coordinate (g22) at (2,2);
\coordinate (g32) at (3,2);
\coordinate (g42) at (4,2);
\coordinate (g52) at (5,2);
\coordinate (g03) at (0,3);
\coordinate (g13) at (1,3);
\coordinate (g23) at (2,3);
\coordinate (g33) at (3,3);
\coordinate (g43) at (4,3);
\coordinate (g53) at (5,3);
\coordinate (g04) at (0,4);
\coordinate (g14) at (1,4);
\coordinate (g24) at (2,4);
\coordinate (g34) at (3,4);
\coordinate (g44) at (4,4);
\coordinate (g54) at (5,4);
\coordinate (g05) at (0,5);
\coordinate (g15) at (1,5);
\coordinate (g25) at (2,5);
\coordinate (g35) at (3,5);
\coordinate (g45) at (4,5);
\coordinate (g55) at (5,5);

\coordinate (c1) at ($(g00)-(0.5,0.5)$);
\coordinate (c2) at ($(c1)+(1,0)$);
\coordinate (c3) at ($(c2)+(0,1)$);
\coordinate (c4) at ($(c3)-(1,0)$);
\drawtileflattm{(c1)}{(c2)}{(c3)}{(c4)}{$\otimes$}{$\otimes$}{$\numeral{0}$}{$q_0\#$}{}
\coordinate (c1) at ($(g20)-(0.5,0.5)$);
\coordinate (c2) at ($(c1)+(1,0)$);
\coordinate (c3) at ($(c2)+(0,1)$);
\coordinate (c4) at ($(c3)-(1,0)$);
\drawtileflattm{(c1)}{(c2)}{(c3)}{(c4)}{$\numeral{n}$}{$\otimes$}{${\ast}{\ast}$}{$\Box$}{}
\coordinate (c1) at ($(g40)-(0.5,0.5)$);
\coordinate (c2) at ($(c1)+(1,0)$);
\coordinate (c3) at ($(c2)+(0,1)$);
\coordinate (c4) at ($(c3)-(1,0)$);
\drawtileflattm{(c1)}{(c2)}{(c3)}{(c4)}{${\ast}{\ast}$}{$\otimes$}{${\ast}{\ast}$}{$\Box$}{}
\coordinate (c1) at ($(g50)-(0.5,0.5)$);
\coordinate (c2) at ($(c1)+(1,0)$);
\coordinate (c3) at ($(c2)+(0,1)$);
\coordinate (c4) at ($(c3)-(1,0)$);
\drawtileflattm{(c1)}{(c2)}{(c3)}{(c4)}{${\ast}{\ast}$}{$\otimes$}{${\ast}{\ast}$}{$\Box$}{}

\coordinate (c1) at ($(g01)-(0.5,0.5)$);
\coordinate (c2) at ($(c1)+(1,0)$);
\coordinate (c3) at ($(c2)+(0,1)$);
\coordinate (c4) at ($(c3)-(1,0)$);
\drawtileflattm{(c1)}{(c2)}{(c3)}{(c4)}{$\otimes$}{$q_0\#$}{$q_0\#$}{$\#$}{}
\coordinate (c1) at ($(g21)-(0.5,0.5)$);
\coordinate (c2) at ($(c1)+(1,0)$);
\coordinate (c3) at ($(c2)+(0,1)$);
\coordinate (c4) at ($(c3)-(1,0)$);
\drawtileflattm{(c1)}{(c2)}{(c3)}{(c4)}{$\ast$}{$\Box$}{${\ast}$}{$\Box$}{}
\coordinate (c1) at ($(g41)-(0.5,0.5)$);
\coordinate (c2) at ($(c1)+(1,0)$);
\coordinate (c3) at ($(c2)+(0,1)$);
\coordinate (c4) at ($(c3)-(1,0)$);
\drawtileflattm{(c1)}{(c2)}{(c3)}{(c4)}{${\ast}$}{$\Box$}{${\ast}$}{$\Box$}{}
\coordinate (c1) at ($(g51)-(0.5,0.5)$);
\coordinate (c2) at ($(c1)+(1,0)$);
\coordinate (c3) at ($(c2)+(0,1)$);
\coordinate (c4) at ($(c3)-(1,0)$);
\drawtileflattm{(c1)}{(c2)}{(c3)}{(c4)}{${\ast}$}{$\Box$}{${\ast}$}{$\Box$}{}

\coordinate (c1) at ($(g03)-(0.5,0.5)$);
\coordinate (c2) at ($(c1)+(1,0)$);
\coordinate (c3) at ($(c2)+(0,1)$);
\coordinate (c4) at ($(c3)-(1,0)$);
\drawtileflattm{(c1)}{(c2)}{(c3)}{(c4)}{$\otimes$}{$q_{\mathbb{Y}}\#$}{$\ast$}{$q_{\mathbb{Y}}\#$}{}
\coordinate (c1) at ($(g23)-(0.5,0.5)$);
\coordinate (c2) at ($(c1)+(1,0)$);
\coordinate (c3) at ($(c2)+(0,1)$);
\coordinate (c4) at ($(c3)-(1,0)$);
\drawtileflattm{(c1)}{(c2)}{(c3)}{(c4)}{$\ast$}{$s$}{${\ast}$}{$s$}{}
\coordinate (c1) at ($(g43)-(0.5,0.5)$);
\coordinate (c2) at ($(c1)+(1,0)$);
\coordinate (c3) at ($(c2)+(0,1)$);
\coordinate (c4) at ($(c3)-(1,0)$);
\drawtileflattm{(c1)}{(c2)}{(c3)}{(c4)}{${\ast}$}{$\Box$}{${\ast}$}{$\Box$}{}
\coordinate (c1) at ($(g53)-(0.5,0.5)$);
\coordinate (c2) at ($(c1)+(1,0)$);
\coordinate (c3) at ($(c2)+(0,1)$);
\coordinate (c4) at ($(c3)-(1,0)$);
\drawtileflattm{(c1)}{(c2)}{(c3)}{(c4)}{${\ast}$}{$\Box$}{${\ast}$}{$\Box$}{}

\coordinate (c1) at ($(g04)-(0.5,0.5)$);
\coordinate (c2) at ($(c1)+(1,0)$);
\coordinate (c3) at ($(c2)+(0,1)$);
\coordinate (c4) at ($(c3)-(1,0)$);
\drawtileflattm{(c1)}{(c2)}{(c3)}{(c4)}{$\otimes$}{$q_{\mathbb{Y}}\#$}{$\ast$}{$q_{\mathbb{Y}}\#$}{}
\coordinate (c1) at ($(g24)-(0.5,0.5)$);
\coordinate (c2) at ($(c1)+(1,0)$);
\coordinate (c3) at ($(c2)+(0,1)$);
\coordinate (c4) at ($(c3)-(1,0)$);
\drawtileflattm{(c1)}{(c2)}{(c3)}{(c4)}{$\ast$}{$s$}{${\ast}$}{$s$}{}
\coordinate (c1) at ($(g44)-(0.5,0.5)$);
\coordinate (c2) at ($(c1)+(1,0)$);
\coordinate (c3) at ($(c2)+(0,1)$);
\coordinate (c4) at ($(c3)-(1,0)$);
\drawtileflattm{(c1)}{(c2)}{(c3)}{(c4)}{${\ast}$}{$\Box$}{${\ast}$}{$\Box$}{}
\coordinate (c1) at ($(g54)-(0.5,0.5)$);
\coordinate (c2) at ($(c1)+(1,0)$);
\coordinate (c3) at ($(c2)+(0,1)$);
\coordinate (c4) at ($(c3)-(1,0)$);
\drawtileflattm{(c1)}{(c2)}{(c3)}{(c4)}{${\ast}$}{$\Box$}{${\ast}$}{$\Box$}{}

\begin{scope}[color=black, thick]
\filldraw [] (g00) circle [radius=1.75pt];
\filldraw [] (g20) circle [radius=1.75pt];
\filldraw [] (g40) circle [radius=1.75pt];
\filldraw     [] (g50) circle [radius=1.75pt];
\filldraw [] (g01) circle [radius=1.75pt];
\filldraw [] (g21) circle [radius=1.75pt];
\filldraw [] (g41) circle [radius=1.75pt];
\filldraw     [] (g51) circle [radius=1.75pt];
\filldraw [] (g03) circle [radius=1.75pt];
\filldraw [] (g23) circle [radius=1.75pt];
\filldraw [] (g43) circle [radius=1.75pt];
\filldraw     [] (g53) circle [radius=1.75pt];
\filldraw     [] (g04) circle [radius=1.75pt];
\filldraw     [] (g24) circle [radius=1.75pt];
\filldraw     [] (g44) circle [radius=1.75pt];
\filldraw     [] (g54) circle [radius=1.75pt];

\node [] at ($(g10)+(0.02,0)$) {$\cdots$};
\node [] at ($(g30)+(0.02,0)$) {$\cdots$};
\node [] at ($(g11)+(0.02,0)$) {$\cdots$};
\node [] at ($(g31)+(0.02,0)$) {$\cdots$};
\node [] at ($(g13)+(0.02,0)$) {$\cdots$};
\node [] at ($(g33)+(0.02,0)$) {$\cdots$};
\node [] at ($(g14)+(0.02,0)$) {$\cdots$};
\node [] at ($(g34)+(0.02,0)$) {$\cdots$};
\node [] at ($(g02)+(0,0.04)$) {$\vdots$};
\node [] at ($(g22)+(0,0.04)$) {$\vdots$};
\node [] at ($(g42)+(0,0.04)$) {$\vdots$};
\node [] at ($(g52)+(0,0.04)$) {$\vdots$};

\node [] at ($(g12)+(0.02,0)$) {$\cdots$};
\node [] at ($(g32)+(0.02,0)$) {$\cdots$};
\end{scope}

\begin{scope}[>=latex, ->, shorten >= 4.25pt, shorten <= 4.25pt, color=black]
\draw [] (g00)--(g01);
\draw [] (g20)--(g21);
\draw [] (g40)--(g41);
\draw [] (g50)--(g51);
\draw [<->] (g03)--(g04);
\draw [<->] (g23)--(g24);
\draw [<->] (g43)--(g44);
\draw [<->] (g53)--(g54);
\draw [<->] (g40)--(g50);
\draw [<->] (g41)--(g51);
\draw [<->] (g43)--(g53);
\draw [<->] (g44)--(g54);
\end{scope}

\begin{scope}[>=latex, ->, shorten >= 12pt, shorten <= 4.25pt, color=black]
\draw [] (g00)--(g10);
\draw [] (g01)--(g11);
\draw [] (g03)--(g13);
\draw [] (g04)--(g14);
\draw [] (g20)--(g30);
\draw [] (g21)--(g31);
\draw [] (g23)--(g33);
\draw [] (g24)--(g34);
\draw [] (g01)--(g02);
\draw [] (g21)--(g22);
\draw [] (g41)--(g42);
\draw [] (g51)--(g52);
\end{scope}

\begin{scope}[>=latex, ->, shorten >= 4.25pt, shorten <= 12pt, color=black]
\draw [] (g10)--(g20);
\draw [] (g11)--(g21);
\draw [] (g13)--(g23);
\draw [] (g14)--(g24);
\draw [] (g30)--(g40);
\draw [] (g31)--(g41);
\draw [] (g33)--(g43);
\draw [] (g34)--(g44);
\draw [] (g02)--(g03);
\draw [] (g22)--(g23);
\draw [] (g42)--(g43);
\draw [] (g52)--(g53);
\end{scope}

\begin{scope}[color=black]
\node [below=2pt] at (g00) {$0$};
\node [below=2pt] at (g20) {$n+1$};
\node [below=2pt] at (g40) {$r+1$};
\node [below=2pt] at (g50) {$r+2$};
\node [left =2pt] at (g00) {$0$};
\node [left =2pt] at (g01) {$1$};
\node [left =2pt] at (g03) {$r+1$};
\node [left =2pt] at (g04) {$r+2$};
\end{scope}

\end{tikzpicture}
\caption{Finite model for the special $T_n$-tiling with $n\in\mathbb{Y}$}
\label{fig:9}
\end{figure}
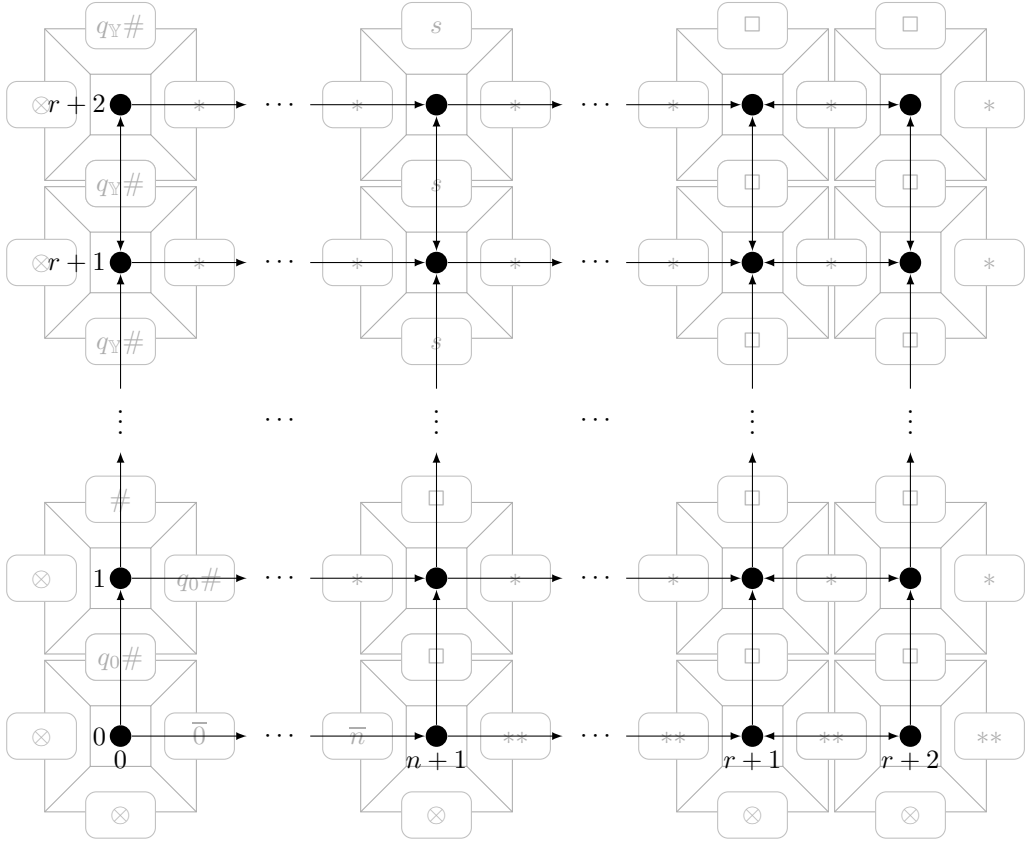

Let us introduce abbreviations:
$$
\begin{array}{rcl}
~~H_n(x,y) 
  & = 
  & \displaystyle
    P(x,y) \wedge
    \bigvee\big\{P_i(x)\wedge P_j(y) : \mbox{$i,j\in\{0,\ldots,k_n\}$ and $\rightsq t^n_i = \leftsq t^n_j$}\big\};
\smallskip\\
~~V_n(x,y) 
  & = 
  & \displaystyle
    P(x,y) \wedge
    \bigvee\big\{P_i(x)\wedge P_j(y) : \mbox{$i,j\in\{0,\ldots,k_n\}$ and $\upsq t^n_i = \downsq t^n_j$}\big\}. 
\end{array}
$$
Using them, define formulas describing conditions for the special $T_n$-tiling:
$$
\begin{array}{lcl}
\mathit{TC}_0 
  & = 
  & \displaystyle
    \forall x\,\bigvee\limits_{i=0}^{k_n} \Big(P_i(x)\wedge\bigwedge\limits_{j\ne i}\neg P_j(x)\Big);
  \smallskip\\
\mathit{TC}_1 
  & = 
  & \forall x\exists y\, H_n(x,y);
  \smallskip\\
\mathit{TC}_2 
  & = 
  & \forall x\exists y\, V_n(x,y);
  \smallskip\\
\mathit{TC}_3 
  & = 
  & \forall x\forall y\,(\exists z\, (H_n(x,z)\wedge V_n(z,y)) \leftrightarrow \exists z\, (V_n(x,z)\wedge H_n(z,y))); 
  \smallskip\\
\mathit{TC}_4 
  & = 
  & \exists x\,P_0(x). 
\end{array}
$$
Then, we can say that we are given the special $T_n$-tiling:  
$$
\begin{array}{lcl}
\mathit{Tiling}_n 
  & = 
  & \mathit{TC}_0 
    \wedge \mathit{TC}_1 
    \wedge \mathit{TC}_2 
    \wedge \mathit{TC}_3 
    \wedge \mathit{TC}_4.
\end{array}
$$
Also, it is easy to say that $n\in\mathbb{X}$ or that $n\in\mathbb{Y}$:
$$
\begin{array}{lcl}
\mathit{Tiling}_n^{\mathbb{X}} 
  & = 
  & \mathit{Tiling}_n \to \exists x\,P_1(x);
  \smallskip\\
\mathit{Tiling}_n^{\mathbb{Y}} 
  & = 
  & \mathit{Tiling}_n \to \exists x\,P_2(x).
\end{array}
$$
In fact, to prove the Trakhtenbrot theorem, we need just one of these formulas. To obtain the proof, it is sufficient to make two observations concerning any (say, the first) of them; we present the observations in two lemmas below.

\begin{lemma}
\label{lem:Trakhtenbrot:lem1}
If $n\in\mathbb{X}$, then $\mathit{Tiling}_n^{\mathbb{X}}\in\logic{QCl}$.
\end{lemma}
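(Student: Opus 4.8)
Since $\mathit{Tiling}_n^{\mathbb{X}}=\mathit{Tiling}_n\to\exists x\,P_1(x)$ and all the formulas involved are closed, it suffices to fix an arbitrary model $\cModel{M}=\langle\mathcal{D},\mathcal{I}\rangle$ with $\cModel{M}\models\mathit{Tiling}_n$ and to produce an element of $\mathcal{D}$ satisfying $P_1$. The plan is to show that any such $\cModel{M}$ literally ``contains'' the special $T_n$-tiling $f_n$. From $\mathit{TC}_0$ one obtains a well-defined function $\tau\colon\mathcal{D}\to\{0,\ldots,k_n\}$ with $\cModel{M}\models P_{\tau(a)}(a)$ and $\cModel{M}\not\models P_j(a)$ for $j\ne\tau(a)$; by the definitions of $H_n$ and $V_n$, whenever $\cModel{M}\models H_n(a,b)$ one has $\rightsq t^n_{\tau(a)}=\leftsq t^n_{\tau(b)}$, and whenever $\cModel{M}\models V_n(a,b)$ one has $\upsq t^n_{\tau(a)}=\downsq t^n_{\tau(b)}$.

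Next I would build a map $\iota\colon\numN\times\numN\to\mathcal{D}$ with $\tau(\iota(0,0))=0$ and $\cModel{M}\models H_n(\iota(i,j),\iota(i+1,j))$, $\cModel{M}\models V_n(\iota(i,j),\iota(i,j+1))$ for all $i,j$. The cell $\iota(0,0)$ is furnished by $\mathit{TC}_4$ together with $\mathit{TC}_0$; the bottom row $\iota(0,0),\iota(1,0),\iota(2,0),\ldots$ is obtained by iterating $\mathit{TC}_1$; and each next row is obtained from the previous one, using $\mathit{TC}_2$ to place a vertical successor over an already placed cell and $\mathit{TC}_3$ to extend the new row so that every cell of it is simultaneously a horizontal successor of its neighbour in the same row and a vertical successor of the cell beneath it. Setting $g(i,j)=t^n_{\tau(\iota(i,j))}$, the two displayed relations together with the previous paragraph show that $g$ satisfies conditions $(1)$ and $(2)$ of Subsection~\ref{subsec:tiling_problems}, so $g$ is a $T_n$-tiling with $g(0,0)=t_0$; hence, by Proposition~\ref{prop:fn}, $g=f_n$.

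Finally, since $n\in\mathbb{X}$, the equivalence $(\ref{eq:fn})$ yields $m\in\numN$ with $f_n(0,m)=t_1$, so $\tau(\iota(0,m))$ is the index of $t_1$ and $\cModel{M}\models P_1(\iota(0,m))$; thus $\cModel{M}\models\exists x\,P_1(x)$, and since $\cModel{M}$ was arbitrary, $\mathit{Tiling}_n^{\mathbb{X}}\in\logic{QCl}$. The one non-routine step is the construction of $\iota$: because $\mathit{TC}_1$ and $\mathit{TC}_2$ guarantee only the \emph{existence} of horizontal and vertical successors, not their uniqueness, the inductive step that produces the next row must be arranged carefully so that the part of the grid already fixed is not disturbed; $\mathit{TC}_3$ is exactly the confluence law which makes ``go up then right'' agree with ``go right then up'', and I expect organizing this propagation correctly (if necessary, via finite approximations and the finiteness of $T_n$) to be the main obstacle. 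Everything else is bookkeeping with $\tau$, $H_n$, $V_n$, Proposition~\ref{prop:fn} and $(\ref{eq:fn})$.
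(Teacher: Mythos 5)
Your overall frame (extract from an arbitrary model of $\mathit{Tiling}_n$ a grid of elements realizing the special $T_n$-tiling, read off tile types via $\mathit{TC}_0$, then invoke Proposition~\ref{prop:fn} and $(\ref{eq:fn})$) is the same as the paper's, but the one step you yourself single out as the main obstacle is where your argument breaks, and the mechanism you propose for it misreads $\mathit{TC}_3$. $\mathit{TC}_3$ is not a confluence law for \emph{given} successors: it does not say that whenever $H_n(a,b)$ and $V_n(a,a')$ hold for already-chosen $b$ and $a'$ there is a common corner $c$ with $H_n(a',c)\wedge V_n(b,c)$. It only says that an ``up then right'' two-step path from $x$ to $y$ exists iff a ``right then up'' path from $x$ to the \emph{same} $y$ exists; the element it produces is always an intermediate $z$ between two already fixed endpoints, never a common successor of two fixed elements. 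Hence your scheme --- fix the whole bottom row by iterating $\mathit{TC}_1$, lift each row by $\mathit{TC}_2$, and then extend the new row left to right so that each new cell is simultaneously an $H_n$-successor of its left neighbour and a $V_n$-successor of the fixed cell beneath it --- needs exactly the square-completion inference the axioms do not license: if you take a fresh horizontal successor of $\iota(i,j+1)$ via $\mathit{TC}_1$ and push it through $\mathit{TC}_3$, the witness you get back need not be related to the already chosen $\iota(i+1,j)$, and symmetrically in the other direction.

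The paper's proof is arranged precisely to avoid this. It grows finite squares diagonally: at stage $k$ it first manufactures the new corner, obtaining $b,c$ with $H_n(a_k^k,b)\wedge V_n(b,c)$ from $\mathit{TC}_1$ and $\mathit{TC}_2$ (so $a_{k+1}^k=b$, $a_{k+1}^{k+1}=c$) and $a_k^{k+1}$ from $\mathit{TC}_3$, and then fills the new column and the new row sweeping \emph{from the corner back toward the origin}: each fresh element is introduced as the middle $z$ of a path between two elements fixed earlier (e.g.\ $a_{k+1}^{j}$ is some $z$ with $H_n(a_k^j,z)\wedge V_n(z,a_{k+1}^{j+1})$), which is exactly the quantifier shape $\mathit{TC}_3$ delivers, while the remaining adjacency constraints on that element are imposed only on elements chosen at later steps. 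Your closing appeal to ``finite approximations and the finiteness of $T_n$'' does not substitute for this: the choices live in a possibly infinite domain and no compactness argument is actually given. So the gap is genuine; to close it, replace the row-by-row propagation by the diagonal, corner-anchored construction (or an equivalent rearrangement in which every new element enters with its two constraints of the form ``target of one relation from a fixed element, source of the other relation to a fixed element'').
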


\begin{proof}
Let $\cModel{M}=\langle \mathcal{D},\mathcal{I}\rangle$ be a model such that $\cModel{M}\models\mathit{Tiling}_n$. We show that then $\cModel{M}\models \exists x\,P_1(x)$.
To this end, for all $i,j\in\numN$ we pick out an element $a_{i}^{j}\in \mathcal{D}$ so that, for all $i,j\in\numN$,
$$
\begin{array}{lcl}
\cModel{M}\models H_n(a_i^j,a_{i+1}^j) 
  & \mbox{and} 
  & \cModel{M}\models V_n(a_i^j,a_i^{j+1}).
\end{array}
$$


Since $\cModel{M}\models \mathit{TC}_4$, there exists $a_0^0\in D$ such that $\cModel{M}\models P_0(a_0^0)$. 

Let $k$ be a number from $\numN$. Suppose that for all $i,j\in \{0,\ldots,k\}$, the element $a_i^j$ is defined; we have to define, for all $i,j\in \{0,\ldots,k\}$, the elements $a_{k+1}^j$, $a_i^{k+1}$, and $a_{k+1}^{k+1}$.

We start with $a_{k+1}^{k+1}$. Due to $\mathit{TC}_1$ and $\mathit{TC}_2$, there are $b,c\in \mathcal{D}$ such that $\cModel{M}\models H_n(a_k^k,b)\wedge V_n(b,c)$. Then take $a_{k+1}^{k+1}=c$. Also, let $a_{k+1}^{k}=b$.
Due to $\mathit{TC}_3$, there is $b'\in \mathcal{D}$ such that $\cModel{M}\models V_n(a_k^k,b')\wedge H_n(b',c)$; let $a_{k}^{k+1}=b'$. 

Let $a_{k+1}^{j+1}$ be defined so that $\cModel{M}\models V_n(a_k^j,a_k^{j+1})\wedge H_n(a_k^{j+1},a_{k+1}^{j+1})$. Due to $\mathit{TC}_3$, there is $b\in \mathcal{D}$ such that $\cModel{M}\models H_n(a_k^j,b')\wedge V_n(b',a_{k+1}^{j+1})$; let $a_{k+1}^{j}=b$. This allows us to define $a_{k+1}^j$, for every $j\in \{0,\ldots,k\}$.

Let $a_{i+1}^{k+1}$ be defined so that $\cModel{M}\models H_n(a_i^k,a_{i+1}^{k})\wedge V_n(a_{i+1}^{k},a_{i+1}^{k+1})$. Due to $\mathit{TC}_3$, there is $b\in \mathcal{D}$ such that $\cModel{M}\models V_n(a_i^k,b')\wedge H_n(b',a_{i+1}^{k+1})$; let $a_{i}^{k+1}=b$. This allows us to define $a_i^{k+1}$, for every $i\in \{0,\ldots,k\}$.

For any $i,j\in \numN$, due to $\mathit{TC}_0$, there exists a unique $m\in\{0,\ldots,k_n\}$ such that $\cModel{M}\models P_m(a_i^j)$. Then, put $f(i,j)=t^n_m$.
It should be clear that $f\colon\numN\times\numN\to T_n$ is a $T_n$-tiling. Indeed, the conditions $(1)$ and~$(2)$ hold:
$$
\begin{array}{lcl}
\cModel{M}\models H_n(a_i^j,a_{i+1}^j) & \Longrightarrow & \rightsq f(i,j) = \leftsq f(i+1,j);
\smallskip\\
\cModel{M}\models \hfill V_n(a_i^j,a_i^{j+1}) & \Longrightarrow & \upsq f(i,j) = \downsq f(i,j+1).
\end{array}
$$

Notice that $f(0,0)=t^n_0=t_0$ since $\cModel{M}\models P_0(a_0^0)$. Then, Proposition~\ref{prop:fn} guarantees that $f$ is the special $T_n$-tiling~$f_n$. Since $n\in\mathbb{X}$, by $({\ast})$ we obtain that there exists $m\in\numN$ such that $f_n(0,m)=t_1=t^n_1$. This means that $\cModel{M}\models P_1(a_0^m)$, and hence, $\cModel{M}\models \exists x\,P_1(x)$.

Thus, $\mathit{Tiling}_n^{\mathbb{X}}\in\logic{QCl}$.
\end{proof}

\begin{lemma}
\label{lem:Trakhtenbrot:lem2}
If $n\in\mathbb{Y}$, then $\mathit{Tiling}_n^{\mathbb{X}}\not\in\logic{QCl}_{\mathit{fin}}$.
\end{lemma}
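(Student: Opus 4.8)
The plan is to produce a finite model $\cModel{M}$ with $\cModel{M}\models\mathit{Tiling}_n$ but $\cModel{M}\not\models\exists x\,P_1(x)$; then $\cModel{M}\not\models\mathit{Tiling}_n^{\mathbb{X}}$, and since $\cModel{M}$ is finite this yields $\mathit{Tiling}_n^{\mathbb{X}}\notin\logic{QCl}_{\mathit{fin}}$. Since $n\in\mathbb{Y}$ we have $q_{\mathbb{Y}}!M_0(\numeral{n})$, so the computation $C_0,C_1,C_2,\ldots{}$ of $M_0$ on $\numeral{n}$ reaches a halting configuration, which then loops forever: there is $r\in\numN$ with $C_m=C_r$ for every $m\geqslant r$. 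As the head advances by at most one cell per step and the tape is blank past some cell, the special $T_n$-tiling $f_n$ is eventually constant in each coordinate: there is $N\in\numNp$ with $f_n(i,m)=f_n(i,r)$ whenever $m\geqslant r$, and $f_n(i,m)=f_n(N,m)$ whenever $i\geqslant N$. Also $f_n(0,0)=t_0$; and because $\mathbb{X}\cap\mathbb{Y}=\varnothing$ we have $n\notin\mathbb{X}$, so by $(\ref{eq:fn})$ the tile $t_1$ never occurs as a value of $f_n$.

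The model $\cModel{M}$ is obtained by folding the ``active'' part of $f_n$ onto a finite grid, as depicted in Figure~\ref{fig:9}: the domain is a copy of $\{0,\ldots,N{+}1\}^2$, one extra row and column past the stabilization level; the unary letter $P_\ell$ is interpreted as $\{(i,j):f_n(\min(i,N),\min(j,N))=t^n_\ell\}$; and $P$ is the ``grid successor'' relation, with the last two columns and the last two rows glued into $2$-cycles, so that the stabilized border matches itself (horizontally for the blank columns, vertically for the repeated halting row). Granting that this model works, the verification of $\mathit{TC}_0$, $\mathit{TC}_1$, $\mathit{TC}_2$, $\mathit{TC}_4$ is routine: $\mathit{TC}_0$ because $f_n$ is a function; $\mathit{TC}_1$ and $\mathit{TC}_2$ because $f_n$ is a $T_n$-tiling together with the self-matching of the border tiles; $\mathit{TC}_4$ because $(0,0)$ carries $t_0$; and $\cModel{M}\not\models\exists x\,P_1(x)$ since $t_1$ is never used. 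The lemma then follows.

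The delicate point — and the one where the construction must be arranged exactly right — is $\mathit{TC}_3$, i.e.\ that in $\cModel{M}$ the relations ``reachable by an $H_n$-step followed by a $V_n$-step'' and ``reachable by a $V_n$-step followed by an $H_n$-step'' coincide. The danger is that $H_n$ and $V_n$ are read off purely from the edge marks, so a single $P$-edge can accidentally witness both $H_n$ and $V_n$, which would open a leak in the commuting square; keeping the square closed is precisely what constrains the choice of the auxiliary marks $\otimes$, $\ast$, $\ast\ast$, the way the blank region and the halting row are tiled, and the way the border $2$-cycles are attached. I expect that once the construction is pinned down so that, along every $P$-edge actually present in $\cModel{M}$, the $H_n$/$V_n$ labelling behaves coherently (and on the stabilized border, where $H_n$ and $V_n$ unavoidably coincide, both sides of the biconditional collapse to plain $P$-distance two), the remaining checks are bookkeeping — so essentially the whole weight of the proof sits in establishing $\mathit{TC}_3$ for the folded model.
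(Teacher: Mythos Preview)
Your approach is exactly the paper's: fold the eventually-periodic tiling onto a finite grid with $2$-cycles at the border, label each point by its $f_n$-tile, and verify $\mathit{TC}_0$--$\mathit{TC}_4$. The paper does precisely this and then simply asserts ``by the definition of $\cModel{M}$, we obtain $\cModel{M}\models\mathit{Tiling}_n$'' with no separate argument for $\mathit{TC}_3$. You go further than the paper in flagging $\mathit{TC}_3$ as the delicate step and in naming the danger --- a $P$-edge that accidentally witnesses both $H_n$ and $V_n$. But your expectation that the construction can be ``pinned down'' so that the biconditional holds is not borne out by the model you (and the paper) describe.

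Concretely: take an interior point $(i,j)$ where the head sits executing a right move $\delta_0\colon qs\mapsto q's'R$, so $f_n(i,j)=t_{qs}$ with $\rightsq t_{qs}=qs$ and $\upsq t_{qs}=s'$; if the head moves right again at the next step, then $f_n(i,j{+}1)=f_n(i,j{+}2)=t_{s'}^\ast$, each with $\leftsq=\rightsq=\ast$ and $\upsq=\downsq=s'$. The vertical edge $(i,j{+}1)\to(i,j{+}2)$ is then a spurious $H_n$-edge ($\ast=\ast$), so for $x=(i,j)$, $y=(i,j{+}2)$ the $V_n$-then-$H_n$ side of $\mathit{TC}_3$ holds via $z=(i,j{+}1)$. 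The $H_n$-then-$V_n$ side fails: $H_n((i,j),(i,j{+}1))$ is false since $qs\ne\ast$, and from the only other $P$-successor $(i{+}1,j)$ there is no $P$-edge to $(i,j{+}2)$. So the folded grid of Figure~\ref{fig:9} does not satisfy $\mathit{TC}_3$; the leak you anticipate is real and is not closed by the construction as stated, in your write-up or in the paper's. (By contrast, the paper's later $\mathit{Tiling}'_n$ adds direction tags $R_1,\ldots,R_4,U_1,\ldots,U_4$ and uses $H'_n=H_n\wedge\mathit{RD}$, $V'_n=V_n\wedge\mathit{UD}$, which separates horizontal from vertical by construction; for $\mathit{Tiling}_n$ itself, some such device is missing.)
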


\begin{proof}
Let $n\in\mathbb{Y}$. By $({\ast})$, there exists $m\in \numN$ such that $f_n(0,m)=t_2$. Then $M_0$ loops the halting configuration, and, for all $i,j\in\numN$,
$$
\begin{array}{lcl}
f_n(i,m+j) & = & f_n(0,m).
\end{array}
$$
Let $r=\max\{m,n+1\}$. 
Since the head of a Turing machine cannot scan cells numbered grater than the number of steps in the computation (i.e., greater than~$m$) and every cell numbered grater than~$n$ contains $\Box$ at the initial configuration, we conclude that, for all $i,j\in\numN^+$,
$$
\begin{array}{lcl}
f_n(r+i,0) & = & t^{\ast\ast}_\Box; \smallskip\\
f_n(r+i,j) & = & t^{\ast}_\Box.
\end{array}
$$
Thus, we can describe the special $T_n$-tiling by a finite model. Let 
$$
\begin{array}{lcl}
\mathcal{D} & = & \{0,\ldots,r+2\}\times\{0,\ldots,r+2\}. \\
\end{array}
$$
Define a model $\cModel{M} = \otuple{\mathcal{D},\mathcal{I}}$ by 
$$
\begin{array}{lcl}
\cModel{M}\models P(\otuple{i,j},\otuple{i',j'}) 
  & \iff 
  & \mbox{either $i'=i+1$ and $j'=j$,}
  \\
  &&\mbox{or $i'=i$ and $j'=j+1$,}
  \\
  &&\mbox{or $i=r+2$, $i'=r+1$, and $j'=j$,}
  \\
  &&\mbox{or $i'=i$, $j=r+2$, and $j'=r+1$;}
  \smallskip\\
\cModel{M}\models P_k(\otuple{i,j}) 
  & \iff 
  & f_n(i,j) = t_k^n,
\end{array}
$$
see Figure~\ref{fig:9}.

Then, by the definition of $\cModel{M}$, we obtain $\cModel{M}\models\mathit{Tiling}_n$. Since $n\not\in\mathbb{X}$, there is no $i,j\in\numN$ such that $f_n(i,j)=t_1$, therefore, $\cModel{M}\not\models\exists x\,P_1(x)$.

Hence, $\mathit{Tiling}_n^{\mathbb{X}}\not\in\logic{QCl}_{\mathit{fin}}$.
\end{proof}

Then, we readily obtain the Trakhtenbrot theorem.

\begin{theorem}[Trakhtenbrot]
\label{th:Trakhtenbrot}
Logics\/ $\logic{QCl}$ and\/ $\logic{QCl}_{\mathit{fin}}$ are recursively inseparable in a language containing a binary predicate letter, an infinite supply of unary predicate letters, and three individual variables.
\end{theorem}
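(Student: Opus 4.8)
The plan is to read Lemmas~\ref{lem:Trakhtenbrot:lem1} and~\ref{lem:Trakhtenbrot:lem2} as the two halves of a many-one reduction and then package them into a single inseparability argument. The one ingredient still needed is the observation that $n\mapsto\mathit{Tiling}_n^{\mathbb{X}}$ is \emph{effective}: the tile set $T_n=T_{M_0}\cup\{t_k^\otimes:k<n\}\cup\{t_n^{\ast\ast}\}\cup\{t_\Box^{\ast\ast}\}$ is obtained from the fixed machine $M_0$ by a trivial computation, and from the enumeration $t^n_0,\dots,t^n_{k_n}$ of $T_n$ the formulas $H_n$, $V_n$, $\mathit{TC}_0,\dots,\mathit{TC}_4$, $\mathit{Tiling}_n$, and finally $\mathit{Tiling}_n^{\mathbb{X}}$ arise by pure syntactic substitution. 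Hence there is a total recursive map sending $n$ to the G\"{o}del number of $\mathit{Tiling}_n^{\mathbb{X}}$.

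Suppose, for a contradiction, that $\logic{QCl}$ and $\logic{QCl}_{\mathit{fin}}$ are recursively separable. Since $\logic{QCl}\subseteq\logic{QCl}_{\mathit{fin}}$, this means there is a recursive set $Z$ with $\logic{QCl}\subseteq Z\subseteq\logic{QCl}_{\mathit{fin}}$. Put $Z'=\{n\in\numN:\mathit{Tiling}_n^{\mathbb{X}}\in Z\}$; by the effectivity just noted, $Z'$ is recursive. If $n\in\mathbb{X}$, then $\mathit{Tiling}_n^{\mathbb{X}}\in\logic{QCl}\subseteq Z$ by Lemma~\ref{lem:Trakhtenbrot:lem1}, so $n\in Z'$, whence $\mathbb{X}\subseteq Z'$. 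If $n\in\mathbb{Y}$, then $\mathit{Tiling}_n^{\mathbb{X}}\notin\logic{QCl}_{\mathit{fin}}\supseteq Z$ by Lemma~\ref{lem:Trakhtenbrot:lem2}, so $n\notin Z'$, whence $\mathbb{Y}\cap Z'=\varnothing$. Thus $Z'$ is a recursive set separating $\mathbb{X}$ and $\mathbb{Y}$, contradicting their recursive inseparability. Therefore no such $Z$ exists, which is exactly the assertion of the theorem.

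Finally one checks that every formula in play lies in the advertised language. Each of $H_n$, $V_n$, $\mathit{TC}_0,\dots,\mathit{TC}_4$ uses only the single binary letter $P$, the unary letters $P_0,\dots,P_{k_n}$, and the variables $x,y$; a third variable occurs solely in $\mathit{TC}_3$, where $z$ serves as the intermediate corner of the commuting-square condition, and reusing $x,y,z$ across its two existential blocks keeps the whole sentence within three variables. So $\mathit{Tiling}_n^{\mathbb{X}}$ is a sentence over one binary and $k_n+1$ unary predicate letters with at most three variables, which yields the stated bounds, the ``infinite supply'' of unary letters being needed only because $k_n$ grows with $n$. Since Lemmas~\ref{lem:Trakhtenbrot:lem1} and~\ref{lem:Trakhtenbrot:lem2} are already established, no real obstacle remains at this level; the delicate work --- zipping the four-neighbour data into a genuine $T_n$-tiling via $\mathit{TC}_3$, and verifying that the wrap-around grid $\{0,\dots,r+2\}^2$ models $\mathit{Tiling}_n$ --- has been absorbed into those lemmas, and the only point one must not skip here is the effectivity of $n\mapsto\mathit{Tiling}_n^{\mathbb{X}}$.
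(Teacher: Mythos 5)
Your proposal is correct and follows exactly the paper's route: the paper proves the theorem by declaring it immediate from Lemmas~\ref{lem:Trakhtenbrot:lem1} and~\ref{lem:Trakhtenbrot:lem2}, and the pullback argument you spell out (effectivity of $n\mapsto\mathit{Tiling}_n^{\mathbb{X}}$ plus recursive inseparability of $\mathbb{X}$ and $\mathbb{Y}$) is precisely the implicit content of that ``immediate'' step, including the correct observation that only $\mathit{TC}_3$ needs the third variable.
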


\begin{proof}
Immediate from Lemmas~\ref{lem:Trakhtenbrot:lem1} and~\ref{lem:Trakhtenbrot:lem2}.
\end{proof}

Below, we refine this statement and expand it on a lot of theories of a binary predicate.

\subsection{Binary relation and three variables}

We show how to eliminate all unary predicate letters by simulating them with formulas containing the binary letter~$P$. First of all, let us make a relativization. Let $G$ be a new unary predicate letter; define the quantifiers $\forall_G$ and $\exists_G$
by
$$
\begin{array}{lclclcl}
\forall_Gx\,\varphi & = & \forall x\,(G(x)\to\varphi); 
\smallskip\\
\exists_Gx\,\varphi & = & \exists x\,(G(x)\hfill\wedge\hfill\varphi). 
\end{array}
$$
For an $\lang{L}$-formula~$\varphi$, denote by $\varphi_G$ the formula obtained from $\varphi$ by replacing each quantifier $\forall x$ or $\exists x$ with $\forall_G x$ or $\exists_G x$, respectively.

The following lemma is obvious.

\begin{lemma}
\label{lem:relativization:G}
Let $\varphi$ be a closed $\lang{L}$-formula without occurrences of~$G$. Then
$$
\begin{array}{lclclcl}
\varphi \in \logic{QCl} 
  & \iff 
  & \exists x\,G(x) \to \varphi_G \in \logic{QCl}; 
  \smallskip\\
\varphi \in \logic{QCl}_{\mathit{fin}} 
  & \iff 
  & \exists x\,G(x) \to \varphi_G \in \logic{QCl}_{\mathit{fin}}. 
\end{array}
$$
\end{lemma}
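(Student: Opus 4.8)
The plan is the textbook relativization argument, organized around one auxiliary induction and two model transformations. First I would prove the \emph{relativization lemma}: for any model $\cModel{M}=\langle\mathcal{D},\mathcal{I}\rangle$ with $\mathcal{I}(G)\neq\varnothing$, let $\cModel{M}^{G}$ be the model whose domain is $\mathcal{I}(G)$ and which interprets every $n$-ary predicate letter $P$ by $\mathcal{I}(P)\cap\mathcal{I}(G)^{n}$; then for every $\lang{L}$-formula $\psi$ without occurrences of $G$ and every assignment $g$ with $g(x)\in\mathcal{I}(G)$ for all $x$, one has $\cModel{M}\models^{g}\psi_{G}$ if and only if $\cModel{M}^{G}\models^{g}\psi$. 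This is immediate by induction on the build-up of $\psi$: the atomic and the $\bot,\wedge,\vee,\to$ clauses hold because $\mathcal{I}$ and the interpretation of $\cModel{M}^{G}$ assign the same tuples to every predicate letter other than $G$ (and $\psi$ has none of $G$), while the clauses for $\forall x$ and $\exists x$ are precisely the definitions of $\forall_{G}x$ and $\exists_{G}x$ once we recall that $\mathcal{I}(G)$ is exactly the domain of $\cModel{M}^{G}$. Since $\varphi$ is closed, $\varphi_{G}$ is closed as well, so neither side of the equivalence we care about depends on $g$; in particular the constraint $g(x)\in\mathcal{I}(G)$ is harmless.

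For the direction ``$\varphi\in\logic{QCl}\Longrightarrow\exists x\,G(x)\to\varphi_{G}\in\logic{QCl}$'': take any model $\cModel{M}$ and assignment $g$ with $\cModel{M}\models^{g}\exists x\,G(x)$. Then $\mathcal{I}(G)\neq\varnothing$, so $\cModel{M}^{G}$ is a genuine model, and from $\varphi\in\logic{QCl}$ we get $\cModel{M}^{G}\models\varphi$; the relativization lemma then yields $\cModel{M}\models^{g}\varphi_{G}$. Hence $\cModel{M}\models\exists x\,G(x)\to\varphi_{G}$, and as $\cModel{M}$ and $g$ were arbitrary this gives the claim. If $\cModel{M}$ is finite then so is $\cModel{M}^{G}$ (its domain is a subset of $\mathcal{D}$), so the very same argument works with $\logic{QCl}_{\mathit{fin}}$ in place of $\logic{QCl}$.

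For the direction ``$\exists x\,G(x)\to\varphi_{G}\in\logic{QCl}\Longrightarrow\varphi\in\logic{QCl}$'': let $\cModel{N}=\langle\mathcal{E},\mathcal{J}\rangle$ be an arbitrary model and expand it to $\cModel{N}^{+}$ by putting $\mathcal{J}(G)=\mathcal{E}$ and keeping all other interpretations. Then $\cModel{N}^{+}\models\exists x\,G(x)$ trivially, so $\cModel{N}^{+}\models\varphi_{G}$; but $(\cModel{N}^{+})^{G}=\cModel{N}$ as $\lang{L}$-models (restricting to $\mathcal{J}(G)=\mathcal{E}$ changes nothing), so the relativization lemma gives $\cModel{N}\models\varphi$. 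As $\cModel{N}$ was arbitrary, $\varphi\in\logic{QCl}$; and if $\cModel{N}$ is finite, so is $\cModel{N}^{+}$, which handles $\logic{QCl}_{\mathit{fin}}$.

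I do not expect a genuine obstacle: the statement is flagged as obvious, and the only point needing a moment's care is the bookkeeping of assignments in the quantifier step of the induction — making sure the witness (for $\exists$) or the arbitrary element (for $\forall$) is drawn from $\mathcal{I}(G)$ exactly when the relativized quantifier is unfolded — which is precisely what the definitions of $\forall_{G}$ and $\exists_{G}$ are designed to encode, and which never bites at the top level because $\varphi$ is closed.
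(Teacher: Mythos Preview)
Your argument is correct and is essentially the same as the paper's: the paper sketches exactly the two model transformations you use (restricting to the submodel on the $G$-elements in one direction, and expanding by setting $G$ true everywhere in the other), phrasing both via the contrapositive and omitting the inductive relativization lemma you spell out. The paper explicitly flags the lemma as obvious and leaves the details to the reader, so your more careful write-up is a faithful fleshing-out of the intended proof.
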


\begin{proof}
We give a sketch of the proof, leaving the details to the reader.

Let $\cModel{M}\not\models\varphi$, for some model~$\cModel{M}$. Modify $\cModel{M}$ to $\cModel{M}'$ so that $\cModel{M}'\models\forall x\,G(x)$. Then, clearly, $\cModel{M}'\not\models \exists x\,G(x) \to \varphi_G$.

Let $\cModel{M}\not\models \exists x\,G(x) \to \varphi_G$, for some model~$\cModel{M}$. Take the submodel $\cModel{M}'$ of $\cModel{M}$ formed by the elements of $\cModel{M}$ on which $G$ is true. Such elements exist since $\cModel{M}\models \exists x\,G(x)$. Then, clearly, $\cModel{M}'\not\models\varphi$. 
Notice that if $\cModel{M}$ is finite, then $\cModel{M}'$ is finite, too.
\end{proof}

Let us define formulas we shall use to simulate the unary predicate letters:
$$
\begin{array}{rcl}
\varepsilon^y(x)   & = & \neg\exists y\,P(x,y); \smallskip\\
\tau_0^y(x)        & = & \exists y\,(\neg G(y)\wedge P(x,y)\wedge \varepsilon^x(y)); \smallskip\\
\tau_{k+1}^y(x)    & = & \exists y\,(\neg G(y)\wedge P(x,y)\wedge \tau_k^x(y)); \smallskip\\
\mathit{tile}_k(x) & = & \tau_k^y(x); \smallskip\\
\mathit{tile}_k(y) & = & \tau_k^x(y); \smallskip\\
\mathit{tile}_k(z) & = & \tau_k^x(z), \smallskip\\
\end{array}
$$
where $k\in\numN$, and
$$
\begin{array}{rcl}
\gamma^y(x)        & = & \neg P(x,x)\wedge \exists y\,(P(x,y)\wedge P(y,y)); \smallskip\\
\mathit{grid}(x)   & = & \gamma^y(x); \smallskip\\
\mathit{grid}(y)   & = & \gamma^x(y); \smallskip\\
\hspace{2em}
\mathit{grid}(z)   & = & \gamma^x(z). \smallskip\\
\end{array}
$$



Finally, let us define a function~$S_0$ associating with each formula $\mathit{Tiling}_n^{\mathbb{X}}$ a special formula that contains no predicate letters except the binary letter~$P$. Define $S_0\mathit{Tiling}_n^{\mathbb{X}}$ to be obtained from $\exists x\,G(x)\to (\mathit{Tiling}_n^{\mathbb{X}})_G^{\phantom{i}}$ by replacing each occurrence of $P_m(x)$, $P_m(y)$, and $P_m(z)$ with $\mathit{tile}_m(x)$, $\mathit{tile}_m(y)$, and $\mathit{tile}_m(z)$, respectively, where $m\in\{0,\ldots,k_n\}$, and then each occurrence of $G(x)$, $G(y)$, and $G(z)$ with $\mathit{grid}(x)$, $\mathit{grid}(y)$, and $\mathit{grid}(z)$, respectively. Notice that these replacements are formula substitutions\footnote{Such substitutions are known as \defnotion{indirect} substitutions: substituting formulas we use formulas that are congruent to those we substitute.\label{footnote:1}} and contain three individual variables.

\begin{lemma}
\label{lem:Trakhtenbrot:lem1:binP}
If $n\in\mathbb{X}$, then $S_0\mathit{Tiling}_n^{\mathbb{X}}\in\logic{QCl}$.
\end{lemma}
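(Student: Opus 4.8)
The plan is to reduce the claim to Lemma~\ref{lem:Trakhtenbrot:lem1} via the relativization Lemma~\ref{lem:relativization:G} and a semantic analysis of the simulating formulas $\mathit{tile}_k$ and $\mathit{grid}$. Suppose $n\in\mathbb{X}$ and let $\cModel{M}=\langle\mathcal{D},\mathcal{I}\rangle$ be any model; we must show $\cModel{M}\models S_0\mathit{Tiling}_n^{\mathbb{X}}$. Assume $\cModel{M}\models\exists x\,\mathit{grid}(x)$ and that $\cModel{M}$ satisfies the $\mathit{grid}$-relativized, $\mathit{tile}$-substituted version of $\mathit{Tiling}_n$ (otherwise the implication holds trivially); the goal is to deduce $\cModel{M}\models\exists x\,(\mathit{grid}(x)\wedge\mathit{tile}_1(x))$.

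First I would isolate the ``grid'' elements: let $\mathcal{D}_G=\{a\in\mathcal{D}:\cModel{M}\models\gamma^y(a)\}$, i.e.\ the elements $a$ with $\neg P(a,a)$ but $P(a,b)\wedge P(b,b)$ for some $b$. The key point is that for $a\in\mathcal{D}_G$, the formulas $\mathit{tile}_k(a)=\tau_k^y(a)$ behave like a system of mutually exclusive unary predicates: $\tau_0^y(a)$ asserts the existence of a $P$-successor $b$ of $a$ with $b\notin\mathcal{D}_G$ (since $\neg G$ becomes $\neg\mathit{grid}$ after substitution) and $b$ is a $P$-dead end ($\varepsilon^x(b)$), while $\tau_{k+1}^y(a)$ asserts a non-grid successor $b$ with $\tau_k^x(b)$, so it says $a$ reaches a dead end along a non-grid $P$-path of length $k+1$. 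I would verify that this reading is forced: a grid element $a$ satisfies $P(a,a_0)\wedge P(a_0,a_0)$ for some $a_0$, so $a_0\notin\mathcal{D}_G$ (as $\neg P(a_0,a_0)$ fails) — this gives us the ``non-grid witnesses'' needed to instantiate the $\tau$-chains. The substituted formula $\mathit{TC}_0$ then forces that each grid element $a$ satisfies exactly one $\mathit{tile}_m(a)$, which is exactly the bookkeeping needed to define $f(i,j)=t^n_m$. With this in hand, the argument of Lemma~\ref{lem:Trakhtenbrot:lem1} applies verbatim: using the substituted $\mathit{TC}_1,\ldots,\mathit{TC}_4$ one picks out elements $a_i^j\in\mathcal{D}_G$ with the horizontal/vertical matching conditions, Proposition~\ref{prop:fn} identifies the induced tiling with the special $T_n$-tiling $f_n$, and $n\in\mathbb{X}$ together with~$(\ref{eq:fn})$ yields some $m$ with $f_n(0,m)=t_1$, i.e.\ $\cModel{M}\models\mathit{tile}_1(a_0^m)$ with $a_0^m\in\mathcal{D}_G$; hence $\cModel{M}\models\exists x\,(\mathit{grid}(x)\wedge\mathit{tile}_1(x))$, which is $(\exists x\,P_1(x))_{\mathit{grid}}$ after substitution, so $\cModel{M}\models S_0\mathit{Tiling}_n^{\mathbb{X}}$.

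The one genuinely new point over Lemma~\ref{lem:Trakhtenbrot:lem1} — and the step I expect to be the main obstacle — is checking that the substitution is \emph{sound}: that replacing $P_m(x)$ by $\mathit{tile}_m(x)$ and $G(x)$ by $\mathit{grid}(x)$ (an indirect substitution in the sense of footnote~\ref{footnote:1}, i.e.\ up to variable renaming among the three available variables $x,y,z$) really does produce a formula whose truth in $\cModel{M}$ matches the truth of $\exists x\,G(x)\to(\mathit{Tiling}_n^{\mathbb{X}})_G$ in the derived structure where $\mathcal{I}(G)=\mathcal{D}_G$ and $\mathcal{I}(P_m)=\{a:\cModel{M}\models\tau_m^y(a)\}$. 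Here one must be careful that $\mathit{tile}_m(x)$, $\mathit{tile}_m(y)$, $\mathit{tile}_m(z)$ are the congruent variants $\tau_m^y(x)$, $\tau_m^x(y)$, $\tau_m^x(z)$ (and likewise for $\mathit{grid}$), so that at most the three variables $x,y,z$ ever occur, no clash is introduced, and each $\tau_m$ genuinely expresses a property of the single free variable independent of the other two. Since $\mathbb{X}$ is merely recursively enumerable, there is no effectivity concern: we only need the implication $n\in\mathbb{X}\Rightarrow S_0\mathit{Tiling}_n^{\mathbb{X}}\in\logic{QCl}$, which is a purely semantic statement, and the whole argument above is a routine — if slightly tedious — unwinding of the definitions of $\varepsilon$, $\tau_k$, $\gamma$ and the observation that grid elements always supply the required non-grid successors.
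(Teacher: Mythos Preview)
Your semantic argument is viable, but it does far more work than the paper's proof, which is a single line: by Lemma~\ref{lem:Trakhtenbrot:lem1} we have $\mathit{Tiling}_n^{\mathbb{X}}\in\logic{QCl}$; by Lemma~\ref{lem:relativization:G} then $\exists x\,G(x)\to(\mathit{Tiling}_n^{\mathbb{X}})_G\in\logic{QCl}$; and since the passage from that formula to $S_0\mathit{Tiling}_n^{\mathbb{X}}$ is a formula substitution (as the paper explicitly notes just before the lemma), closure of $\logic{QCl}$ under Substitution gives the result immediately.

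The step you flag as ``the main obstacle'' --- checking that the substitution is sound --- is in fact no obstacle at all in this direction. Validity in $\logic{QCl}$ is preserved under substitution automatically; there is nothing to verify about how $\mathit{tile}_m$ or $\mathit{grid}$ behave in an arbitrary model, and none of your analysis of $\mathcal{D}_G$, non-grid successors, or the $\tau$-chains is needed. (That kind of analysis \emph{is} needed for the companion Lemma~\ref{lem:Trakhtenbrot:lem2:binP}, where one must produce a concrete countermodel in which the simulating formulas have the intended extensions; you may be conflating the two directions.) Your construction of a derived structure with $\mathcal{I}(G)=\mathcal{D}_G$ and $\mathcal{I}(P_m)=\{a:\cModel{M}\models\mathit{tile}_m(a)\}$ is precisely a hand-rolled instance of the general substitution lemma, so what you propose is correct but redundant.
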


\begin{proof}
Follows from Lemmas~\ref{lem:Trakhtenbrot:lem1} and~\ref{lem:relativization:G} taking into account that $\logic{QCl}$ is closed by Substitution.
\end{proof}

\begin{lemma}
\label{lem:Trakhtenbrot:lem2:binP}
If $n\in\mathbb{Y}$, then $S_0\mathit{Tiling}_n^{\mathbb{X}}\not\in\logic{QCl}_{\mathit{fin}}$.
\end{lemma}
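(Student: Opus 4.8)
The plan is to recycle the finite model produced in the proof of Lemma~\ref{lem:Trakhtenbrot:lem2} and to decorate it with two kinds of gadgets so that, on the old elements, the simulating formulas $\mathit{grid}(\cdot)$ and $\mathit{tile}_m(\cdot)$ behave exactly as the atoms $G(\cdot)$ and $P_m(\cdot)$ did there. Fix $n\in\mathbb{Y}$ and let $\cModel{M}=\langle\mathcal{D},\mathcal{I}\rangle$ be that model, so $\cModel{M}\models\mathit{Tiling}_n$, $\cModel{M}\not\models\exists x\,P_1(x)$ (whence $\cModel{M}\not\models\mathit{Tiling}_n^{\mathbb{X}}$), and $\cModel{M}$ has no $P$-loops. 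I would build $\cModel{M}'$ as follows: add one fresh element $\ell$, and for each $a=\langle i,j\rangle\in\mathcal{D}$ let $m_a$ be the index with $f_n(i,j)=t^n_{m_a}$ and add a fresh private chain $c^a_1,\dots,c^a_{m_a+1}$; let $P^{\cModel{M}'}$ consist of $P^{\cModel{M}}$ together with $P(\ell,\ell)$, the edges $P(a,\ell)$ for every $a\in\mathcal{D}$, and the chain edges $P(a,c^a_1),P(c^a_1,c^a_2),\dots,P(c^a_{m_a},c^a_{m_a+1})$, and nothing else. In particular $\cModel{M}'$ is finite, no new edge joins two elements of $\mathcal{D}$ (so $P^{\cModel{M}'}\cap(\mathcal{D}\times\mathcal{D})=P^{\cModel{M}}$), and each $c^a_{m_a+1}$ is a dead end.

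Next I would verify the behaviour of $\mathit{grid}$ and $\mathit{tile}_m$ in $\cModel{M}'$. Every $a\in\mathcal{D}$ satisfies $\mathit{grid}(x)=\gamma^y(x)$: $\neg P(a,a)$ holds and $\ell$ witnesses $\exists y\,(P(a,y)\wedge P(y,y))$; no new element satisfies $\mathit{grid}$, since $\ell$ carries a loop and each $c^a_i$ has at most one $P$-successor, which carries no loop. Hence the extension of $\mathit{grid}$ in $\cModel{M}'$ is exactly $\mathcal{D}$. Then, for $a=\langle i,j\rangle\in\mathcal{D}$ and any $m'\in\{0,\dots,k_n\}$, unfolding $\tau^{\cdot}_{m'}$ shows that $\cModel{M}'\models\mathit{tile}_{m'}(a)$ iff $m'=m_a$: a witnessing chain is a $P$-path of $m'{+}1$ edges starting at $a$, all of whose nodes after $a$ fail $\mathit{grid}$ and whose last node is a dead end; since the $\mathit{grid}$-elements are precisely $\mathcal{D}$ such a path cannot re-enter $\mathcal{D}$, the branch through $\ell$ never reaches a dead end, and the only remaining branch runs along $a,c^a_1,\dots,c^a_{m_a+1}$, so the unique available length is $m_a{+}1$. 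Consequently the extension of $\mathit{tile}_m$ meets $\mathcal{D}$ in exactly $\{a=\langle i,j\rangle : f_n(i,j)=t^n_m\}=\mathcal{I}(P_m)$.

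Finally I would push this through the substitution. Let $\cModel{M}''$ have the same domain and binary relation as $\cModel{M}'$, but with $G$ interpreted by the $\mathit{grid}$-extension of $\cModel{M}'$ (that is, by $\mathcal{D}$) and each $P_m$ by the $\mathit{tile}_m$-extension of $\cModel{M}'$. Since $S_0\mathit{Tiling}_n^{\mathbb{X}}$ is obtained from $\exists x\,G(x)\to(\mathit{Tiling}_n^{\mathbb{X}})_G$ by the indirect substitution of $\mathit{grid}$ for $G$ and of $\mathit{tile}_m$ for $P_m$ (footnote~\ref{footnote:1}), we have $\cModel{M}'\models S_0\mathit{Tiling}_n^{\mathbb{X}}$ iff $\cModel{M}''\models\exists x\,G(x)\to(\mathit{Tiling}_n^{\mathbb{X}})_G$. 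Now $\cModel{M}''\models\exists x\,G(x)$ because $\mathcal{D}\ne\varnothing$; and since $\mathit{Tiling}_n^{\mathbb{X}}$ contains no $G$ and all its quantifiers get relativized, the relativization identity used in the proof of Lemma~\ref{lem:relativization:G} gives that $\cModel{M}''\models(\mathit{Tiling}_n^{\mathbb{X}})_G$ iff the submodel of $\cModel{M}''$ formed by the elements satisfying $G$, i.e.\ by $\mathcal{D}$, satisfies $\mathit{Tiling}_n^{\mathbb{X}}$. By the two checks above (and $P^{\cModel{M}'}\cap(\mathcal{D}\times\mathcal{D})=P^{\cModel{M}}$) that submodel is precisely $\cModel{M}$, which refutes $\mathit{Tiling}_n^{\mathbb{X}}$. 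Hence $\cModel{M}''\not\models\exists x\,G(x)\to(\mathit{Tiling}_n^{\mathbb{X}})_G$, so $\cModel{M}'\not\models S_0\mathit{Tiling}_n^{\mathbb{X}}$; as $\cModel{M}'$ is finite, $S_0\mathit{Tiling}_n^{\mathbb{X}}\notin\logic{QCl}_{\mathit{fin}}$.

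I expect the main obstacle to be the second verification --- determining $\mathit{tile}_{m'}(a)$ for all $m'$ simultaneously. The point is that one must exclude every \emph{spurious} $\tau$-chain: chains that cycle forever at $\ell$, chains that would wander into the grid, and chains stopping at a proper prefix of the intended one. This is exactly where the design decisions ``$\cModel{M}$ has no loops'', ``the chains are private to each $a$'', and ``each chain terminates in a dead end'' are used; relax any of them and the correspondence between $\mathit{tile}_m$ and $P_m$ breaks. The remaining work --- computing the $\mathit{grid}$-extension, the relativization identity, and the appeal to the substitution property --- is routine once the indirect-substitution convention is in force.
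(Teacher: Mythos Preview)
Your proof is correct and follows essentially the same approach as the paper's: start from the finite model of Lemma~\ref{lem:Trakhtenbrot:lem2}, attach to each grid point a looping witness for $\mathit{grid}$ and a private dead-ending chain of the right length for $\mathit{tile}_m$, then invoke the substitution/relativization argument. The only difference is cosmetic --- you share a single looping element $\ell$ among all grid points, whereas the paper gives each $a$ its own $e_\ast^a$; both choices work for the same reasons.
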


\begin{proof}
Let $n\in\mathbb{Y}$. By Lemma~\ref{lem:Trakhtenbrot:lem2}, $\mathit{Tiling}_n^{\mathbb{X}}\not\in\logic{QCl}_{\mathit{fin}}$, and by Lemma~\ref{lem:relativization:G},
$$
\begin{array}{lcl}
\exists x\,G(x)\to (\mathit{Tiling}_n^{\mathbb{X}})_G^{\phantom{i}}
  & \not\in
  & \logic{QCl}_{\mathit{fin}},
\end{array}
$$  
hence, there exists a finite model $\cModel{M}=\otuple{\mathcal{D},\mathcal{I}}$ such that 
$$
\begin{array}{c}
\cModel{M} ~\not\models~ \exists x\,G(x)\to (\mathit{Tiling}_n^{\mathbb{X}})_G^{\phantom{i}}.
\end{array}
$$
Without a loss of generality we may assume that $\cModel{M}\models\forall x\,G(x)$.
We extend $\cModel{M}$ to a bigger finite model refuting the formula $S_0\mathit{Tiling}_n^{\mathbb{X}}$.

\begin{figure}
\centering
\begin{tikzpicture}[scale=3.2, rectnode/.style={rectangle, thick, draw=black!60, dashed, rounded corners = 2pt}]

\coordinate (c1) at (-1.4,0.30); 
\coordinate (c2) at (-0.9,-0.3); 
\coordinate (c3) at (0.2,-0.05); 
\coordinate (c4) at (-0.4,0.5); 

\coordinate (c12) at ($0.5*(c1)+0.5*(c2)$);
\coordinate (c23) at ($0.5*(c2)+0.5*(c3)$);
\coordinate (c34) at ($0.5*(c3)+0.5*(c4)$);
\coordinate (c41) at ($0.5*(c4)+0.5*(c1)$);

\coordinate (vecX1) at ($(c3)-(c2)$);
\coordinate (vecX2) at ($(c4)-(c1)$);
\coordinate (vecY1) at ($(c1)-(c2)$);
\coordinate (vecY2) at ($(c4)-(c3)$);

\coordinate (c1r) at (c4);
\coordinate (c2r) at (c3);
\coordinate (c3r) at ($(c3)+0.96*(vecX1)$);
\coordinate (c4r) at ($(c4)+0.96*(vecX2)$);

\coordinate (c1u) at ($(c1)+0.84*(vecY1)$);
\coordinate (c2u) at (c1);
\coordinate (c3u) at (c4);
\coordinate (c4u) at ($(c4)+0.84*(vecY2)$);

\coordinate (c12r) at ($0.5*(c1r)+0.5*(c2r)$);
\coordinate (c23r) at ($0.5*(c2r)+0.5*(c3r)$);
\coordinate (c34r) at ($0.5*(c3r)+0.5*(c4r)$);
\coordinate (c41r) at ($0.5*(c4r)+0.5*(c1r)$);

\coordinate (c12u) at ($0.5*(c1u)+0.5*(c2u)$);
\coordinate (c23u) at ($0.5*(c2u)+0.5*(c3u)$);
\coordinate (c34u) at ($0.5*(c3u)+0.5*(c4u)$);
\coordinate (c41u) at ($0.5*(c4u)+0.5*(c1u)$);

\draw [white, opacity = 0, name path = dg 1] (c1)--(c3);
\draw [white, opacity = 0, name path = dg 2] (c2)--(c4);
\draw [name intersections = {of = dg 1 and dg 2, by = {c}}];
\draw [white, opacity = 0, name path = dg 3] (c12)--(c34);
\draw [white, opacity = 0, name path = dg 4] (c23)--(c41);
\draw [name intersections = {of = dg 3 and dg 4, by = {d}}];

\draw [white, opacity = 0, name path = dgr 1] (c1r)--(c3r);
\draw [white, opacity = 0, name path = dgr 2] (c2r)--(c4r);
\draw [name intersections = {of = dgr 1 and dgr 2, by = {cr}}];
\draw [white, opacity = 0, name path = dgr 3] (c12r)--(c34r);
\draw [white, opacity = 0, name path = dgr 4] (c23r)--(c41r);
\draw [name intersections = {of = dgr 3 and dgr 4, by = {dr}}];

\draw [white, opacity = 0, name path = dgu 1] (c1u)--(c3u);
\draw [white, opacity = 0, name path = dgu 2] (c2u)--(c4u);
\draw [name intersections = {of = dgu 1 and dgu 2, by = {cu}}];
\draw [white, opacity = 0, name path = dgu 3] (c12u)--(c34u);
\draw [white, opacity = 0, name path = dgu 4] (c23u)--(c41u);
\draw [name intersections = {of = dgu 3 and dgu 4, by = {du}}];

\coordinate (diffR) at ($(cr)-(c)$);
\coordinate (diffU) at ($(cu)-(c)$);
\coordinate (corR) at (0,0.012);
\coordinate (corU) at (0,0.032);

\coordinate (em) at ($(c)+1*(0,0.25)$);
\coordinate (e2) at ($(c)+2*(0,0.25)$);
\coordinate (e1) at ($(c)+3*(0,0.25)$);
\coordinate (e0) at ($(c)+4*(0,0.25)$);

\coordinate (ekn) at ($(c)-1*(0,0.25)$);
\coordinate (ek2) at ($(c)-2*(0,0.25)$);
\coordinate (ek1) at ($(c)-3*(0,0.25)$);
\coordinate (ek0) at ($(c)-4*(0,0.25)$);

\shade [ball color=white] (ekn) circle [radius = 1.5pt];
\draw  [>=latex, ->, shorten >= 3pt, shorten <= 2.5pt, color=black] (c)--(ekn);

\begin{scope}[color=black!42]
\shade [ball color=white] ($(ekn)+(diffR)+(corR)$) circle [radius = 0.96*1.5pt];
\filldraw [color=white, opacity = 0.5] ($(ekn)+(diffR)+(corR)$) circle [radius = 0.96*1.55pt];
\draw  [>=latex, ->, shorten >= 3pt, shorten <= 4pt] (cr)--($(ekn)+(diffR)+(corR)$);
\shade [ball color=white] ($(ekn)+(diffU)+(corU)$) circle [radius = 0.90*1.5pt];
\filldraw [color=white, opacity = 0.5] ($(ekn)+(diffU)+(corU)$) circle [radius = 0.90*1.55pt];
\draw  [>=latex, ->, shorten >= 3pt, shorten <= 4pt] (cu)--($(ekn)+(diffU)+(corU)$);
\end{scope}

\begin{scope}[color=black!42]
\drawtileflattmslanted{(c1)}{(c2)}{(c3)}{(c4)}
\end{scope}

\begin{scope}[color=black!25]
\drawtileflattmslanted{(c1r)}{(c2r)}{(c3r)}{(c4r)}
\drawtileflattmslanted{(c1u)}{(c2u)}{(c3u)}{(c4u)}
\end{scope}

\begin{scope}[>=latex, ->, shorten >= -7.5pt, shorten <= 4pt, color=black!32]
\draw [] (cr)--($(c34r)+(cr)-(dr)$);
\draw [] (cr)--($(c41r)+(cr)-(dr)$);
\draw [] (cu)--($(c34u)+(cu)-(du)$);
\draw [] (cu)--($(c41u)+(cu)-(du)$);
\end{scope}

\shade [ball color=black!64] (cr) circle [radius = 0.96*1.5pt];
\filldraw [color=white, opacity = 0.5] (cr) circle [radius = 0.96*1.55pt];
\shade [ball color=black!64] (cu) circle [radius = 0.90*1.5pt];
\filldraw [color=white, opacity = 0.5] (cu) circle [radius = 0.90*1.55pt];

\begin{scope}[>=latex, ->, shorten >= 3pt, shorten <= -7.5pt, color=black!32]
\draw [] ($(c12u)+(cu)-(du)$)--(cu);
\draw [] ($(c23r)+(cr)-(dr)$)--(cr);
\end{scope}

\begin{scope}[>=latex, ->, shorten >= 3pt, shorten <= 2pt, color=black!84]
\draw [] (c)--(cr); 
\draw [] (c)--(cu); 
\end{scope}

\shade [ball color=black!64] (c) circle [radius = 1.5pt];

\draw  [>=latex, ->, shorten >= 3pt, shorten <= 2.5pt, color=black] (c)--(em);
\shade [ball color=black] (em) circle [radius = 1.5pt];
\draw  [>=latex, ->, shorten >= 7pt, shorten <= 2.5pt, color=black] (em)--(e2);
\node  [] at ($(e2)+(0,0.03)$) {$\vdots$};
\draw  [>=latex, ->, shorten >= 3pt, shorten <= 6.5pt, color=black] (e2)--(e1);
\shade [ball color=black] (e1) circle [radius = 1.5pt];
\draw  [>=latex, ->, shorten >= 3pt, shorten <= 2.5pt, color=black] (e1)--(e0);
\shade [ball color=black] (e0) circle [radius = 1.5pt];

\begin{scope}[color=black!42]
\filldraw  [color=white] ($(em)+(diffR)-(corR)$) circle [radius = 0.96*1.5pt];
\draw  [>=latex, ->, shorten >= 3pt, shorten <= 2.5pt] (cr)--($(em)+(diffR)-(corR)$);
\shade [ball color=black!84] ($(em)+(diffR)-(corR)$) circle [radius = 0.96*1.5pt];
\filldraw [color=white, opacity = 0.5] ($(em)+(diffR)-(corR)$) circle [radius = 0.96*1.55pt];
\draw  [>=latex, ->, shorten >= 7pt, shorten <= 2.5pt] ($(em)+(diffR)-(corR)$)--($(e2)+(diffR)-(corR)-(corR)$);
\node  [] at ($(e2)+(diffR)+(0,0.03)-(corR)-(corR)$) {$\vdots$};
\draw  [>=latex, ->, shorten >= 3pt, shorten <= 7.5pt] ($(e2)+(diffR)-(corR)-(corR)$)--($(e1)+(diffR)-(corR)-(corR)-(corR)$);
\shade [ball color=black!84] ($(e1)+(diffR)-(corR)-(corR)-(corR)$) circle [radius = 0.96*1.5pt];
\filldraw [color=white, opacity = 0.5] ($(e1)+(diffR)-(corR)-(corR)-(corR)$) circle [radius = 0.96*1.55pt];
\draw  [>=latex, ->, shorten >= 3pt, shorten <= 2.5pt] ($(e1)+(diffR)-(corR)-(corR)-(corR)$)--($(e0)+(diffR)-(corR)-(corR)-(corR)-(corR)$);
\shade [ball color=black!84] ($(e0)+(diffR)-(corR)-(corR)-(corR)-(corR)$) circle [radius = 0.96*1.5pt];
\filldraw [color=white, opacity = 0.5] ($(e0)+(diffR)-(corR)-(corR)-(corR)-(corR)$) circle [radius = 0.96*1.55pt];
\filldraw  [color=white] ($(em)+(diffU)-(corU)$) circle [radius = 0.90*1.5pt];
\draw  [>=latex, ->, shorten >= 3pt, shorten <= 2.5pt] (cu)--($(em)+(diffU)-(corU)$);
\shade [ball color=black!84] ($(em)+(diffU)-(corU)$) circle [radius = 0.90*1.5pt];
\filldraw [color=white, opacity = 0.5] ($(em)+(diffU)-(corU)$) circle [radius = 0.90*1.55pt];
\draw  [>=latex, ->, shorten >= 7pt, shorten <= 2.5pt] ($(em)+(diffU)-(corU)$)--($(e2)+(diffU)-(corU)-(corU)$);
\node  [] at ($(e2)+(diffU)+(0,0.03)-(corU)-(corU)$) {$\vdots$};
\draw  [>=latex, ->, shorten >= 3pt, shorten <= 7.5pt] ($(e2)+(diffU)-(corU)-(corU)$)--($(e1)+(diffU)-(corU)-(corU)-(corU)$);
\shade [ball color=black!84] ($(e1)+(diffU)-(corU)-(corU)-(corU)$) circle [radius = 0.90*1.5pt];
\filldraw [color=white, opacity = 0.5] ($(e1)+(diffU)-(corU)-(corU)-(corU)$) circle [radius = 0.90*1.55pt];
\draw  [>=latex, ->, shorten >= 3pt, shorten <= 2.5pt] ($(e1)+(diffU)-(corU)-(corU)-(corU)$)--($(e0)+(diffU)-(corU)-(corU)-(corU)-(corU)$);
\shade [ball color=black!84] ($(e0)+(diffU)-(corU)-(corU)-(corU)-(corU)$) circle [radius = 0.90*1.5pt];
\filldraw [color=white, opacity = 0.5] ($(e0)+(diffU)-(corU)-(corU)-(corU)-(corU)$) circle [radius = 0.90*1.55pt];
\end{scope}

\begin{scope}[>=latex, ->, shorten >= 3pt, shorten <= -7.5pt, color=black!84]
\draw [] ($(c12)+(c)-(d)$)--(c);
\draw [] ($(c23)+(c)-(d)$)--(c);
\end{scope}

\node [right] at ($(c)+(0.03,-0.03)$) {$a$} ;
\node [right] at ($(ekn)+(0.03,-0.03)$) {$e_\ast^a$} ;
\node [right] at ($(em)+(0.03,-0.03)$) {$e_m^a$} ;
\node [right] at ($(e1)+(0.03,-0.03)$) {$e_1^a$} ;
\node [right] at ($(e0)+(0.03,-0.03)$) {$e_0^a$} ;




\end{tikzpicture}
\caption{Simulation of $P_m(a)$ and $G(a)$}
\label{fig:10}
\end{figure}
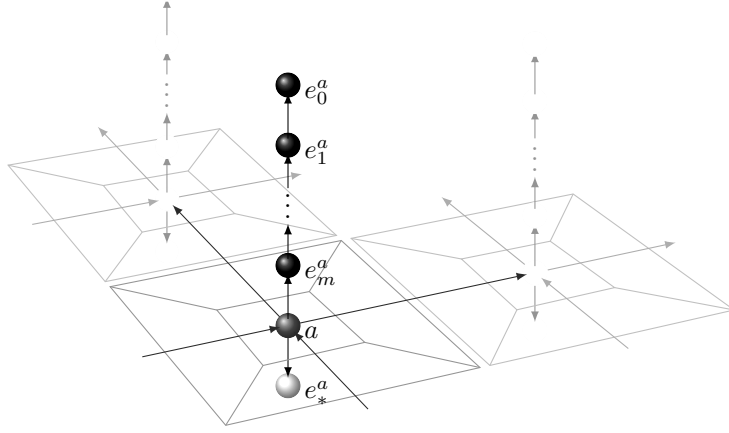

For every $a\in \mathcal{D}$ such that $\cModel{M}\models P_m(a)$, for some $m\in\{0,\ldots,k_n\}$, add to $\mathcal{D}$ new elements $e_\ast^a, e_0^a,\ldots,e_m^a$ and extend $\mathcal{I}(P)$ with $\otuple{a,e_\ast^a},\otuple{e_\ast^a,e_\ast^a},\otuple{a,e_m^a},\otuple{e_m^a,e_{m-1}^a},\ldots,\otuple{e_1^a,e_0^a}$; let $\cModel{M}'=\otuple{\mathcal{D}',\mathcal{I}'}$ be the resulting model; see Figure~\ref{fig:10}. By the definition of $\cModel{M}'$, for every $e\in \mathcal{D}'$,
$$
\begin{array}{rcl}
  \phantom{\cModel{M}\models P_m(a)}
  \cModel{M}'\models \mathit{grid}(e)
    & \iff
    & \mbox{$e\in \mathcal{D}$}
      \phantom{\cModel{M}'\models \mathit{tile}_m(a).}
    \\
\end{array}
$$
and, for every~$m\in\{0,\ldots,k_n\}$ and every $a\in \mathcal{D}$,
$$
\begin{array}{rcl}
  \phantom{\cModel{M}'\models \mathit{grid}(e)}
  \cModel{M}\models P_m(a)
    & \iff
    & \cModel{M}'\models \mathit{tile}_m(a).
      \phantom{\mbox{$e\in \mathcal{D}$}}
\end{array}
$$
Using these, we obtain that $\cModel{M}'\not\models S_0\mathit{Tiling}_n^{\mathbb{X}}$. Notice that model $\cModel{M}'$ is finite. 
Thus, $S_0\mathit{Tiling}_n^{\mathbb{X}}\not\in\logic{QCl}_{\mathit{fin}}$.
\end{proof}

As a result, we obtain the following refinement of the Trakhtenbrot theorem.

\begin{theorem} 
\label{th:Trakhtenbrot:bin:P}
Logics\/ $\logic{QCl}$ and\/ $\logic{QCl}_{\mathit{fin}}$ are recursively inseparable in a language containing a binary predicate letter and three individual variables.
\end{theorem}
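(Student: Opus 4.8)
The plan is to run the same reduction that produced Theorem~\ref{th:Trakhtenbrot}, but now feeding it the predicate-letter-free formulas $S_0\mathit{Tiling}_n^{\mathbb{X}}$ delivered by Lemmas~\ref{lem:Trakhtenbrot:lem1:binP} and~\ref{lem:Trakhtenbrot:lem2:binP}. First I would check that the assignment $n\mapsto S_0\mathit{Tiling}_n^{\mathbb{X}}$ is a total recursive function from $\numN$ to (G\"odel numbers of) closed $\lang{L}$-formulas built from the single binary letter $P$ and three individual variables. Indeed, $T_n$ is finite and effectively listed (Proposition~\ref{prop:fn} and the construction preceding it), so $H_n$, $V_n$, the formulas $\mathit{TC}_0,\ldots,\mathit{TC}_4$, and hence $\mathit{Tiling}_n^{\mathbb{X}}$, are computable from $n$; relativising by $G$ and performing the indirect substitutions of $\mathit{tile}_m$ for $P_m$ and of $\mathit{grid}$ for $G$ is again effective.

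Next I would argue by contradiction. Suppose $\logic{QCl}$ and $\logic{QCl}_{\mathit{fin}}$ are recursively separable; since every finite model is a model we have $\logic{QCl}\subseteq\logic{QCl}_{\mathit{fin}}$, so this means there is a recursive $Z$ with $\logic{QCl}\subseteq Z\subseteq\logic{QCl}_{\mathit{fin}}$. Put $Z'=\{\,n\in\numN : S_0\mathit{Tiling}_n^{\mathbb{X}}\in Z\,\}$; this set is recursive because $Z$ is recursive and $n\mapsto S_0\mathit{Tiling}_n^{\mathbb{X}}$ is computable. By Lemma~\ref{lem:Trakhtenbrot:lem1:binP}, $n\in\mathbb{X}$ gives $S_0\mathit{Tiling}_n^{\mathbb{X}}\in\logic{QCl}\subseteq Z$, so $\mathbb{X}\subseteq Z'$; by Lemma~\ref{lem:Trakhtenbrot:lem2:binP}, $n\in\mathbb{Y}$ gives $S_0\mathit{Tiling}_n^{\mathbb{X}}\notin\logic{QCl}_{\mathit{fin}}\supseteq Z$, so $Z'\cap\mathbb{Y}=\varnothing$. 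Then $Z'$ recursively separates $\mathbb{X}$ and $\mathbb{Y}$, contradicting their recursive inseparability. Hence no such $Z$ exists, which is the assertion of the theorem.

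The only step needing real attention is the effectivity bookkeeping in the first paragraph; all the mathematical substance has already been absorbed into the two lemmas (the tiling encoding of $M_0$ on input $\numeral n$, the construction of a finite model when $M_0$ halts in $q_{\mathbb Y}$, and the simulation of the unary letters and of $G$ by $P$-formulas over three variables). That is precisely why deleting the infinite supply of unary predicate letters from the statement of Theorem~\ref{th:Trakhtenbrot} costs nothing, and the proof of the present theorem is then immediate from Lemmas~\ref{lem:Trakhtenbrot:lem1:binP} and~\ref{lem:Trakhtenbrot:lem2:binP}.
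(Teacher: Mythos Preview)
Your proposal is correct and follows exactly the paper's approach: the paper's proof is simply ``Immediate from Lemmas~\ref{lem:Trakhtenbrot:lem1:binP} and~\ref{lem:Trakhtenbrot:lem2:binP}'', and you have merely unpacked the standard recursive-inseparability argument that this one-liner abbreviates. The effectivity bookkeeping you flag is indeed routine and implicit in the paper's treatment.
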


\begin{proof}
Immediate from Lemmas~\ref{lem:Trakhtenbrot:lem1:binP} and~\ref{lem:Trakhtenbrot:lem2:binP}.
\end{proof}

But our aim is not Theorem~\ref{th:Trakhtenbrot:bin:P}; we shall prove a similar statement for some special theories of a binary predicate 
by adapting
constructions from the proof of the theorem.

\subsection{Expanding Trakhtenbrot theorem on other theories}
\label{subsec:Trakhtenbrot:theories}

In fact, Theorem~\ref{th:Trakhtenbrot:bin:P} says that the graph theory and the theory of finite graphs are recursively inseparable in the language with three variables (and a binary letter corresponding to the edges of graphs). 
%
%
Note that we can impose additional conditions on theories, for example, by requiring graphs to be planar, bipartite, or connected. Despite these additional limitations, the theorem remains true due to the properties of the finite models used to refute 
$S_0\mathit{Tiling}_n^{\mathbb{X}}$ with $n\in\mathbb{Y}$. 
In fact, we can add some other conditions, claiming graphs to be also antisymmetric, serial, transitive or intransitive, etc.; it is possible to use ideas from~\cite{MR:2022:DoklMath}. But we are interested in two theories not covered by the results of~\cite{MR:2022:DoklMath}: the theory $\logic{SIB}$ of a symmetric irreflexive binary relation and the theory $\logic{SRB}$ of a symmetric reflexive binary relation. Both theories are undecidable in the language with three variables~\cite{MR:2022:DoklMath} but the methods of~\cite{MR:2022:DoklMath} do not give anything about the decidability of $\logic{SIB}_{\mathit{fin}}$ and $\logic{SRB}_{\mathit{fin}}$. We shall show that $\logic{SIB}$ and $\logic{SIB}_{\mathit{fin}}$ are recursively inseparable and, as a corollary, $\logic{SIB}_{\mathit{fin}}$ is not recursively enumerable; similarly for $\logic{SRB}$ and $\logic{SRB}_{\mathit{fin}}$. 
%
%
This result will also be very useful for solving some problems posed in modal and superintuitionistic predicate logics.

We shall deal with $\logic{SIB}$ and $\logic{SIB}_{\mathit{fin}}$; the results for $\logic{SRB}$ and $\logic{SRB}_{\mathit{fin}}$ will be obtained as a corollary. Notice that $\logic{SIB}$ and $\logic{SIB}_{\mathit{fin}}$ are the theories of simple graphs, all and finite, respectively. Also, $\logic{SIB}$ is a fragment of the theory $\logic{QCl}\uplus\bm{sib}$, there 
$$
\begin{array}{lcl}
\bm{sib} 
  & = 
  & \forall x\forall y\,(P(x,y)\to P(y,x)) 
    \wedge \forall x\,\neg P(x,x)
\end{array}
$$
and the language of $\logic{SIB}$ contains~$P$.

To prove an analogue of Theorem~\ref{th:Trakhtenbrot:bin:P} for $\logic{SIB}$ and $\logic{SIB}_{\mathit{fin}}$, we have to solve the following problems: 
\begin{itemize}
\item 
We have to redefine $\mathit{Tiling}_n^{\mathbb{X}}$, where $n\in\numN$. If $n\in\mathbb{X}$, then $\mathit{Tiling}_n^{\mathbb{X}}\in\logic{QCl}\uplus\bm{sib}$ since $\mathit{Tiling}_n^{\mathbb{X}}\in\logic{QCl}$ by Lemma~\ref{lem:Trakhtenbrot:lem2}, therefore, an analogue of Lemma~\ref{lem:Trakhtenbrot:lem2} for $\logic{QCl}\uplus\bm{sib}$ holds. But if $n\in\mathbb{Y}$, then we cannot guarantee that there exists a finite model of $\logic{QCl}\uplus\bm{sib}$ refuting $\mathit{Tiling}_n^{\mathbb{X}}$. This is because now $H_n(a,b)$ and $H_n(b,a)$, for some elements $a$ and $b$ of a model, may lead to a contradiction with $\mathit{TC}_2\wedge \mathit{TC}_3$. As a result, an analog to Lemma~\ref{lem:Trakhtenbrot:lem2} for $\logic{QCl}_{\mathit{fin}}\uplus\bm{sib}$ fails.
\item
If it works, we still have to redefine $\mathit{tile}_k(x)$, where $k\in\numN$. The formula $\mathit{tile}_k(x)$ says that it is possible to reach a final element in $k+1$ steps; but this is impossible since the final elements of a symmetric relation are singletons. 
%
%
Throughout the process, it is crucial for us to redefine $\mathit{tile}_k(x)$ using only two variables.
\end{itemize}

Let us take new unary predicate letters $R_1$, $R_2$, $R_3$, $R_4$ and $U_1$, $U_2$, $U_3$, $U_4$. We shall use $R_1$, $R_2$, $R_3$, $R_4$ to simulate a directed ``horizontal'' $P$-shift (to the right) and $U_1$, $U_2$, $U_3$, $U_4$ to simulate a directed ``vertical'' $P$-shift (upward). We define the following directed $P$-shift conditions:
$$
\begin{array}{lcl}
\mathit{DSR} 
  & = 
  & \displaystyle
    \forall x\,\Big( \bigvee\limits_{\mathclap{i=1}}^4 \big( R_i(x) 
    \wedge \bigwedge\limits_{\mathclap{j\ne i}}\neg R_j(x) \big) \Big); \smallskip\\
\mathit{DSU} 
  & = 
  & \displaystyle
    \forall x\,\Big( \bigvee\limits_{\mathclap{i=1}}^4 \big( U_i(x) 
    \hfill\wedge\hfill \bigwedge\limits_{\mathclap{j\ne i}}\neg U_j(x) \big) \Big). \smallskip\\
\end{array}
$$
Let also
$$
\begin{array}{lcl}
\mathit{RD}(x,y) 
  & = 
  & (R_1(x)\wedge R_2(y))\vee (R_2(x)\wedge R_3(y))\vee (R_3(x)\wedge R_4(y))\vee (R_4(x)\wedge R_1(y)); \smallskip\\
\mathit{UD}(x,y) 
  & = 
  & (U_1(x)\hfill\wedge\hfill U_2(y))\hfill\vee\hfill (U_2(x)\hfill\wedge\hfill U_3(y))\hfill\vee\hfill (U_3(x)\hfill\wedge\hfill U_4(y))\hfill\vee\hfill (U_4(x)\hfill\wedge\hfill U_1(y)). \smallskip\\
\end{array}
$$
Informally, $\mathit{RD}(x,y)$ means that $x$ and $y$ can be viewed as candidates for $P$-moving in the right direction from $x$ to~$y$; similarly, $\mathit{UD}(x,y)$ means that $x$ and $y$ can be viewed as candidates for $P$-moving in the upward direction from $x$ to~$y$.

\begin{figure}
\centering
\begin{tikzpicture}[scale=2.1]

\coordinate (h0) at (-2.55,0.75-0.075);
\coordinate (h1) at ($(h0)+(0.5-0.05,0)$);
\coordinate (h2) at ($(h1)+(0.5-0.05,0)$);
\coordinate (h3) at ($(h2)+(0.5-0.05,0)$);

\coordinate (v0) at (-3.45,0);
\coordinate (v1) at ($(v0)+(0,0.5-0.05)$);
\coordinate (v2) at ($(v1)+(0,0.5-0.05)$);
\coordinate (v3) at ($(v2)+(0,0.5-0.05)$);

\coordinate (c1) at ($(h0)+(-0.2*0.96,-0.2*0.96)$);
\coordinate (c2) at ($(c1) +( 0.4*0.96, 0.0)$);
\coordinate (c3) at ($(c2) +( 0.0, 0.4*0.96)$);
\coordinate (c4) at ($(c3) +(-0.4*0.96, 0.0)$);
\drawhalftileflatsmallh{(c1)}{(c2)}{(c3)}{(c4)}{cg6}{cg0}
\coordinate (c1) at ($(h1)+(-0.2*0.96,-0.2*0.96)$);
\coordinate (c2) at ($(c1) +( 0.4*0.96, 0.0)$);
\coordinate (c3) at ($(c2) +( 0.0, 0.4*0.96)$);
\coordinate (c4) at ($(c3) +(-0.4*0.96, 0.0)$);
\drawhalftileflatsmallh{(c1)}{(c2)}{(c3)}{(c4)}{cg0}{cg2}
\coordinate (c1) at ($(h2)+(-0.2*0.96,-0.2*0.96)$);
\coordinate (c2) at ($(c1) +( 0.4*0.96, 0.0)$);
\coordinate (c3) at ($(c2) +( 0.0, 0.4*0.96)$);
\coordinate (c4) at ($(c3) +(-0.4*0.96, 0.0)$);
\drawhalftileflatsmallh{(c1)}{(c2)}{(c3)}{(c4)}{cg2}{cg4}
\coordinate (c1) at ($(h3)+(-0.2*0.96,-0.2*0.96)$);
\coordinate (c2) at ($(c1) +( 0.4*0.96, 0.0)$);
\coordinate (c3) at ($(c2) +( 0.0, 0.4*0.96)$);
\coordinate (c4) at ($(c3) +(-0.4*0.96, 0.0)$);
\drawhalftileflatsmallh{(c1)}{(c2)}{(c3)}{(c4)}{cg4}{cg6}

\coordinate (c1) at ($(v0)+(-0.2*0.96,-0.2*0.96)$);
\coordinate (c2) at ($(c1) +( 0.4*0.96, 0.0)$);
\coordinate (c3) at ($(c2) +( 0.0, 0.4*0.96)$);
\coordinate (c4) at ($(c3) +(-0.4*0.96, 0.0)$);
\drawhalftileflatsmallv{(c1)}{(c2)}{(c3)}{(c4)}{cg6}{cg0}
\coordinate (c1) at ($(v1)+(-0.2*0.96,-0.2*0.96)$);
\coordinate (c2) at ($(c1) +( 0.4*0.96, 0.0)$);
\coordinate (c3) at ($(c2) +( 0.0, 0.4*0.96)$);
\coordinate (c4) at ($(c3) +(-0.4*0.96, 0.0)$);
\drawhalftileflatsmallv{(c1)}{(c2)}{(c3)}{(c4)}{cg0}{cg2}
\coordinate (c1) at ($(v2)+(-0.2*0.96,-0.2*0.96)$);
\coordinate (c2) at ($(c1) +( 0.4*0.96, 0.0)$);
\coordinate (c3) at ($(c2) +( 0.0, 0.4*0.96)$);
\coordinate (c4) at ($(c3) +(-0.4*0.96, 0.0)$);
\drawhalftileflatsmallv{(c1)}{(c2)}{(c3)}{(c4)}{cg2}{cg4}
\coordinate (c1) at ($(v3)+(-0.2*0.96,-0.2*0.96)$);
\coordinate (c2) at ($(c1) +( 0.4*0.96, 0.0)$);
\coordinate (c3) at ($(c2) +( 0.0, 0.4*0.96)$);
\coordinate (c4) at ($(c3) +(-0.4*0.96, 0.0)$);
\drawhalftileflatsmallv{(c1)}{(c2)}{(c3)}{(c4)}{cg4}{cg6}

\node [below=14pt] at (h0) {$R_1$};
\node [below=14pt] at (h1) {$R_2$};
\node [below=14pt] at (h2) {$R_3$};
\node [below=14pt] at (h3) {$R_4$};

\node [left =14pt] at (v0) {$U_1$};
\node [left =14pt] at (v1) {$U_2$};
\node [left =14pt] at (v2) {$U_3$};
\node [left =14pt] at (v3) {$U_4$};

\node [right=22.5pt] at (h3) {$=$};
\node [right=20.0pt] at ($0.5*(v1)+0.5*(v2)$) {$\times$};

\coordinate (g00) at (0,0);
\coordinate (g10) at (0.5-0.05,0);
\coordinate (g20) at (1-0.10,0);
\coordinate (g30) at (1.5-0.15,0);
\coordinate (g01) at (0,0.5-0.05);
\coordinate (g11) at (0.5-0.05,0.5-0.05);
\coordinate (g21) at (1-0.10,0.5-0.05);
\coordinate (g31) at (1.5-0.15,0.5-0.05);
\coordinate (g02) at (0,1-0.10);
\coordinate (g12) at (0.5-0.05,1-0.10);
\coordinate (g22) at (1-0.10,1-0.10);
\coordinate (g32) at (1.5-0.15,1-0.10);
\coordinate (g03) at (0,1.5-0.15);
\coordinate (g13) at (0.5-0.05,1.5-0.15);
\coordinate (g23) at (1-0.10,1.5-0.15);
\coordinate (g33) at (1.5-0.15,1.5-0.15);

\node [below=14pt] at (g00) {$R_1$};
\node [below=14pt] at (g10) {$R_2$};
\node [below=14pt] at (g20) {$R_3$};
\node [below=14pt] at (g30) {$R_4$};
\node [left =14pt] at (g00) {$U_1$};
\node [left =14pt] at (g01) {$U_2$};
\node [left =14pt] at (g02) {$U_3$};
\node [left =14pt] at (g03) {$U_4$};

\coordinate (c1) at ($(g00)+(-0.2*0.96,-0.2*0.96)$);
\coordinate (c2) at ($(c1) +( 0.4*0.96, 0.0)$);
\coordinate (c3) at ($(c2) +( 0.0, 0.4*0.96)$);
\coordinate (c4) at ($(c3) +(-0.4*0.96, 0.0)$);
\drawtileflatsmall{(c1)}{(c2)}{(c3)}{(c4)}{cg6}{cg6}{cg0}{cg0}
\coordinate (c1) at ($(g10)+(-0.2*0.96,-0.2*0.96)$);
\coordinate (c2) at ($(c1) +( 0.4*0.96, 0.0)$);
\coordinate (c3) at ($(c2) +( 0.0, 0.4*0.96)$);
\coordinate (c4) at ($(c3) +(-0.4*0.96, 0.0)$);
\drawtileflatsmall{(c1)}{(c2)}{(c3)}{(c4)}{cg0}{cg6}{cg2}{cg0}
\coordinate (c1) at ($(g20)+(-0.2*0.96,-0.2*0.96)$);
\coordinate (c2) at ($(c1) +( 0.4*0.96, 0.0)$);
\coordinate (c3) at ($(c2) +( 0.0, 0.4*0.96)$);
\coordinate (c4) at ($(c3) +(-0.4*0.96, 0.0)$);
\drawtileflatsmall{(c1)}{(c2)}{(c3)}{(c4)}{cg2}{cg6}{cg4}{cg0}
\coordinate (c1) at ($(g30)+(-0.2*0.96,-0.2*0.96)$);
\coordinate (c2) at ($(c1) +( 0.4*0.96, 0.0)$);
\coordinate (c3) at ($(c2) +( 0.0, 0.4*0.96)$);
\coordinate (c4) at ($(c3) +(-0.4*0.96, 0.0)$);
\drawtileflatsmall{(c1)}{(c2)}{(c3)}{(c4)}{cg4}{cg6}{cg6}{cg0}

\coordinate (c1) at ($(g01)+(-0.2*0.96,-0.2*0.96)$);
\coordinate (c2) at ($(c1) +( 0.4*0.96, 0.0)$);
\coordinate (c3) at ($(c2) +( 0.0, 0.4*0.96)$);
\coordinate (c4) at ($(c3) +(-0.4*0.96, 0.0)$);
\drawtileflatsmall{(c1)}{(c2)}{(c3)}{(c4)}{cg6}{cg0}{cg0}{cg2}
\coordinate (c1) at ($(g11)+(-0.2*0.96,-0.2*0.96)$);
\coordinate (c2) at ($(c1) +( 0.4*0.96, 0.0)$);
\coordinate (c3) at ($(c2) +( 0.0, 0.4*0.96)$);
\coordinate (c4) at ($(c3) +(-0.4*0.96, 0.0)$);
\drawtileflatsmall{(c1)}{(c2)}{(c3)}{(c4)}{cg0}{cg0}{cg2}{cg2}
\coordinate (c1) at ($(g21)+(-0.2*0.96,-0.2*0.96)$);
\coordinate (c2) at ($(c1) +( 0.4*0.96, 0.0)$);
\coordinate (c3) at ($(c2) +( 0.0, 0.4*0.96)$);
\coordinate (c4) at ($(c3) +(-0.4*0.96, 0.0)$);
\drawtileflatsmall{(c1)}{(c2)}{(c3)}{(c4)}{cg2}{cg0}{cg4}{cg2}
\coordinate (c1) at ($(g31)+(-0.2*0.96,-0.2*0.96)$);
\coordinate (c2) at ($(c1) +( 0.4*0.96, 0.0)$);
\coordinate (c3) at ($(c2) +( 0.0, 0.4*0.96)$);
\coordinate (c4) at ($(c3) +(-0.4*0.96, 0.0)$);
\drawtileflatsmall{(c1)}{(c2)}{(c3)}{(c4)}{cg4}{cg0}{cg6}{cg2}

\coordinate (c1) at ($(g02)+(-0.2*0.96,-0.2*0.96)$);
\coordinate (c2) at ($(c1) +( 0.4*0.96, 0.0)$);
\coordinate (c3) at ($(c2) +( 0.0, 0.4*0.96)$);
\coordinate (c4) at ($(c3) +(-0.4*0.96, 0.0)$);
\drawtileflatsmall{(c1)}{(c2)}{(c3)}{(c4)}{cg6}{cg2}{cg0}{cg4}
\coordinate (c1) at ($(g12)+(-0.2*0.96,-0.2*0.96)$);
\coordinate (c2) at ($(c1) +( 0.4*0.96, 0.0)$);
\coordinate (c3) at ($(c2) +( 0.0, 0.4*0.96)$);
\coordinate (c4) at ($(c3) +(-0.4*0.96, 0.0)$);
\drawtileflatsmall{(c1)}{(c2)}{(c3)}{(c4)}{cg0}{cg2}{cg2}{cg4}
\coordinate (c1) at ($(g22)+(-0.2*0.96,-0.2*0.96)$);
\coordinate (c2) at ($(c1) +( 0.4*0.96, 0.0)$);
\coordinate (c3) at ($(c2) +( 0.0, 0.4*0.96)$);
\coordinate (c4) at ($(c3) +(-0.4*0.96, 0.0)$);
\drawtileflatsmall{(c1)}{(c2)}{(c3)}{(c4)}{cg2}{cg2}{cg4}{cg4}
\coordinate (c1) at ($(g32)+(-0.2*0.96,-0.2*0.96)$);
\coordinate (c2) at ($(c1) +( 0.4*0.96, 0.0)$);
\coordinate (c3) at ($(c2) +( 0.0, 0.4*0.96)$);
\coordinate (c4) at ($(c3) +(-0.4*0.96, 0.0)$);
\drawtileflatsmall{(c1)}{(c2)}{(c3)}{(c4)}{cg4}{cg2}{cg6}{cg4}

\coordinate (c1) at ($(g03)+(-0.2*0.96,-0.2*0.96)$);
\coordinate (c2) at ($(c1) +( 0.4*0.96, 0.0)$);
\coordinate (c3) at ($(c2) +( 0.0, 0.4*0.96)$);
\coordinate (c4) at ($(c3) +(-0.4*0.96, 0.0)$);
\drawtileflatsmall{(c1)}{(c2)}{(c3)}{(c4)}{cg6}{cg4}{cg0}{cg6}
\coordinate (c1) at ($(g13)+(-0.2*0.96,-0.2*0.96)$);
\coordinate (c2) at ($(c1) +( 0.4*0.96, 0.0)$);
\coordinate (c3) at ($(c2) +( 0.0, 0.4*0.96)$);
\coordinate (c4) at ($(c3) +(-0.4*0.96, 0.0)$);
\drawtileflatsmall{(c1)}{(c2)}{(c3)}{(c4)}{cg0}{cg4}{cg2}{cg6}
\coordinate (c1) at ($(g23)+(-0.2*0.96,-0.2*0.96)$);
\coordinate (c2) at ($(c1) +( 0.4*0.96, 0.0)$);
\coordinate (c3) at ($(c2) +( 0.0, 0.4*0.96)$);
\coordinate (c4) at ($(c3) +(-0.4*0.96, 0.0)$);
\drawtileflatsmall{(c1)}{(c2)}{(c3)}{(c4)}{cg2}{cg4}{cg4}{cg6}
\coordinate (c1) at ($(g33)+(-0.2*0.96,-0.2*0.96)$);
\coordinate (c2) at ($(c1) +( 0.4*0.96, 0.0)$);
\coordinate (c3) at ($(c2) +( 0.0, 0.4*0.96)$);
\coordinate (c4) at ($(c3) +(-0.4*0.96, 0.0)$);
\drawtileflatsmall{(c1)}{(c2)}{(c3)}{(c4)}{cg4}{cg4}{cg6}{cg6}
\end{tikzpicture}

\bigskip

\begin{tikzpicture}[scale=2.1]
\drawtileflattm{(2.2,0)}{(3.2,0)}{(3.2,1)}{(2.2,1)}{$\leftsq t$}{$\downsq t$}{$\rightsq t$}{$\upsq t$}{$t$}
\node [] at (3.5+0.1,0.5) {$+$};
\coordinate (c1) at ($(4,0.5)+(-0.2*0.96,-0.2*0.96)$);
\coordinate (c2) at ($(c1) +( 0.4*0.96, 0.0)$);
\coordinate (c3) at ($(c2) +( 0.0, 0.4*0.96)$);
\coordinate (c4) at ($(c3) +(-0.4*0.96, 0.0)$);
\drawtileflatsmall{(c1)}{(c2)}{(c3)}{(c4)}{cg2}{cg0}{cg4}{cg2}
\node [below = 14pt] at (4,0.5) {$(R_3,U_2)$};
\node [] at (4.5-0.1,0.5) {$=$};
\drawtileflattm{(5-0.2,0)}{(6-0.2,0)}{(6-0.2,1)}{(5-0.2,1)}{$\leftsq t$}{$\downsq t$}{$\rightsq t$}{$\upsq t$}{}
\coordinate (c1) at ($(5.5-0.2,0.5)+(-0.2*0.96,-0.2*0.96)$);
\coordinate (c2) at ($(c1) +( 0.4*0.96, 0.0)$);
\coordinate (c3) at ($(c2) +( 0.0, 0.4*0.96)$);
\coordinate (c4) at ($(c3) +(-0.4*0.96, 0.0)$);
\drawtileflatsmall{(c1)}{(c2)}{(c3)}{(c4)}{cg2}{cg0}{cg4}{cg2}

\end{tikzpicture}
\caption{``Hidden'' tiling built-in the special $T_n$-tiling}
\label{fig:12}
\end{figure}
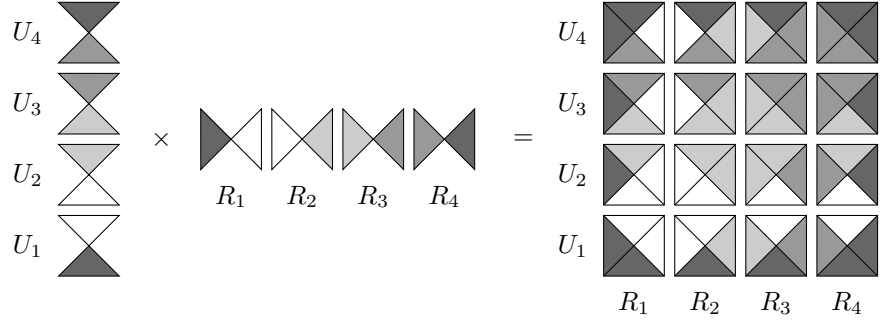

Next, we redefine the formulas $H_n(x,y)$, $V_n(x,y)$, $\mathit{TC}_1$, $\mathit{TC}_2$, and $\mathit{TC}_3$
by
$$
\begin{array}{rcl}
H'_n(x,y) 
  & = 
  & H_n(x,y) \wedge \mathit{RD}(x,y); \smallskip\\
V'_n(x,y) 
  & = 
  & V_n(x,y) \wedge \mathit{UD}(x,y); \smallskip\\
\mathit{TC}'_1 
  & = 
  & \forall x\exists y\, H'_n(x,y);
  \smallskip\\
\mathit{TC}'_2 
  & = 
  & \forall x\exists y\, V'_n(x,y);
  \smallskip\\
\mathit{TC}'_3 
  & = 
  & \forall x\forall y\,(\exists z\, (H'_n(x,z)\wedge V'_n(z,y)) \leftrightarrow \exists z\, (V'_n(x,z)\wedge H'_n(z,y)). 
  \smallskip\\
\end{array}
$$

Using the formulas defined above, we can redefine the formula $\mathit{Tiling}_n$ describing the special $T_n$-tiling. Let
$$
\begin{array}{rcl}
\mathit{Tiling}'_n 
  & = 
  & \mathit{TC}_0 
    \wedge \mathit{TC}'_1 
    \wedge \mathit{TC}'_2 
    \wedge \mathit{TC}'_3 
    \wedge \mathit{TC}_4
    \wedge \mathit{DSR}
    \wedge \mathit{DSU}.   
\end{array}
$$

We may think of $\mathit{Tiling}'_n$ as a formula describing two independent tilings: one of them is the special $T_n$-tiling, another is a ``hidden'' tiling generated by sixteen tile types each of which is defined by a pair $(R_i,U_j)$, where $i,j\in\{1,2,3,4\}$, see Figure~\ref{fig:12} (the edges of the tiles are marked by four colors instead of words).

Finally, we define a modification of $\mathit{Tiling}_n^{\mathbb{X}}$ by
$$
\begin{array}{lcl}
\mathit{MTiling}_n^{\mathbb{X}} 
  & = 
  & \mathit{Tiling}'_n \to \exists x\,P_1(x).
  \smallskip\\
\end{array}
$$

\begin{lemma}
\label{lem:Trakhtenbrot:lem1:sib}
If $n\in\mathbb{X}$, then $\mathit{MTiling}_n^{\mathbb{X}}\in\logic{QCl}$.
\end{lemma}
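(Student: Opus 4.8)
The plan is to adapt, essentially verbatim, the proof of Lemma~\ref{lem:Trakhtenbrot:lem1}. The one new observation needed is that the primed shift formulas entail the unprimed ones: since $H'_n(x,y)=H_n(x,y)\wedge\mathit{RD}(x,y)$ and $V'_n(x,y)=V_n(x,y)\wedge\mathit{UD}(x,y)$, we have $H'_n(x,y)\to H_n(x,y)$ and $V'_n(x,y)\to V_n(x,y)$ in $\logic{QCl}$. Note also that the conjuncts $\mathit{DSR}$ and $\mathit{DSU}$ of $\mathit{Tiling}'_n$ play no role in this lemma; they are needed only in the companion lemma that produces a finite symmetric irreflexive model when $n\in\mathbb{Y}$.

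So let $\cModel{M}=\langle\mathcal{D},\mathcal{I}\rangle$ be a model with $\cModel{M}\models\mathit{Tiling}'_n$; it suffices to show $\cModel{M}\models\exists x\,P_1(x)$. Exactly as in the proof of Lemma~\ref{lem:Trakhtenbrot:lem1}, I would construct elements $a_i^j\in\mathcal{D}$, for all $i,j\in\numN$, such that $\cModel{M}\models H'_n(a_i^j,a_{i+1}^j)$ and $\cModel{M}\models V'_n(a_i^j,a_i^{j+1})$ for all $i,j\in\numN$: use $\mathit{TC}_4$ to pick $a_0^0$ with $\cModel{M}\models P_0(a_0^0)$, use $\mathit{TC}'_1$ and $\mathit{TC}'_2$ to obtain horizontal and vertical successors, and use the confluence axiom $\mathit{TC}'_3$ (its ``$\Rightarrow$'' direction: a witness for a horizontal-then-vertical path yields a witness for the corresponding vertical-then-horizontal path) to close up the grid. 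The induction step is literally the one in Lemma~\ref{lem:Trakhtenbrot:lem1}, with $H_n,V_n,\mathit{TC}_1,\mathit{TC}_2,\mathit{TC}_3$ replaced throughout by $H'_n,V'_n,\mathit{TC}'_1,\mathit{TC}'_2,\mathit{TC}'_3$.

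With the grid in hand, I would invoke $\mathit{TC}_0$ (unchanged in $\mathit{Tiling}'_n$) to define, for each $(i,j)$, the unique $m\in\{0,\ldots,k_n\}$ with $\cModel{M}\models P_m(a_i^j)$, and set $f(i,j)=t^n_m$. Since $H'_n\to H_n$ and $V'_n\to V_n$, the matching conditions $(1)$ and $(2)$ hold, so $f\colon\numN\times\numN\to T_n$ is a $T_n$-tiling, and $f(0,0)=t^n_0=t_0$ because $\cModel{M}\models P_0(a_0^0)$. By Proposition~\ref{prop:fn}, $f$ is the special $T_n$-tiling $f_n$. Since $n\in\mathbb{X}$, by $(\ref{eq:fn})$ there is $m\in\numN$ with $f_n(0,m)=t_1=t^n_1$, hence $\cModel{M}\models P_1(a_0^m)$, so $\cModel{M}\models\exists x\,P_1(x)$. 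Therefore $\mathit{MTiling}_n^{\mathbb{X}}\in\logic{QCl}$.

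There is no real obstacle: the statement reduces routinely to Lemma~\ref{lem:Trakhtenbrot:lem1}. The only point requiring a moment of care is that $\mathit{TC}'_3$ still suffices to assemble a globally consistent grid once the horizontal and vertical successor relations are the primed ones; but this is the same bookkeeping as in the original proof, since $\mathit{TC}'_3$ has exactly the same logical shape as $\mathit{TC}_3$.
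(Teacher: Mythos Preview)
Your proposal is correct and takes essentially the same approach as the paper: the paper's proof is the single line ``Just follow, step by step, to the proof of Lemma~\ref{lem:Trakhtenbrot:lem1},'' and you have simply spelled out what that step-by-step adaptation looks like. Your additional observation that $H'_n\to H_n$ and $V'_n\to V_n$ (so the tiling conditions $(1)$ and $(2)$ are inherited) is exactly the small bookkeeping point implicit in the paper's instruction.
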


\begin{proof}
Just follow, step by step, to the proof of Lemma~\ref{lem:Trakhtenbrot:lem1}.
\end{proof}

\begin{corollary}
\label{cor:lem:Trakhtenbrot:lem1:sib}
If $n\in\mathbb{X}$, then $\mathit{MTiling}_n^{\mathbb{X}}\in\logic{QCl}\uplus\bm{sib}$.
\end{corollary}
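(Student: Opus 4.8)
The plan is to derive this as an immediate consequence of Lemma~\ref{lem:Trakhtenbrot:lem1:sib} together with the monotonicity of the semantic consequence relation. Recall that $\logic{QCl}$ is, by definition, the theory of the class of \emph{all} models; hence every model is a $\logic{QCl}$-model, and therefore the $\logic{QCl}\cup\{\bm{sib}\}$-models are precisely the $\bm{sib}$-models, i.e.\ the models in which $P$ is interpreted by a symmetric irreflexive binary relation.

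Now suppose $n\in\mathbb{X}$. By Lemma~\ref{lem:Trakhtenbrot:lem1:sib} we have $\mathit{MTiling}_n^{\mathbb{X}}\in\logic{QCl}$, which means $\cModel{M}\models\mathit{MTiling}_n^{\mathbb{X}}$ for \emph{every} model $\cModel{M}$. In particular this holds for every $\bm{sib}$-model, so $\logic{QCl}\cup\{\bm{sib}\}\models\mathit{MTiling}_n^{\mathbb{X}}$, that is, $\mathit{MTiling}_n^{\mathbb{X}}\in\logic{QCl}\uplus\bm{sib}$, as required.

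There is essentially no obstacle at the level of the corollary itself; all the work has already been invested in Lemma~\ref{lem:Trakhtenbrot:lem1:sib}, whose proof follows the pattern of Lemma~\ref{lem:Trakhtenbrot:lem1} step by step: inside an arbitrary model of $\mathit{Tiling}'_n$ one builds elements $a_i^j$ witnessing the $H'_n$- and $V'_n$-successor conditions, reads off a $T_n$-tiling $f$ from the unique $P_m$-label of each $a_i^j$ given by $\mathit{TC}_0$, checks that the matching conditions $(1)$ and $(2)$ hold (using the $H_n$/$V_n$ conjuncts of $H'_n$/$V'_n$), invokes Proposition~\ref{prop:fn} to identify $f$ with $f_n$, and concludes from $n\in\mathbb{X}$ and $(\ref{eq:fn})$ that some $a_0^m$ satisfies $P_1$. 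The extra conjuncts $\mathit{DSR}$, $\mathit{DSU}$ (and the predicates packaged in $\mathit{RD}$, $\mathit{UD}$) play no role in this direction; they will only matter later, when the finite countermodel in the $n\in\mathbb{Y}$ case must respect symmetry and irreflexivity of $P$.
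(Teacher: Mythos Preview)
Your proof is correct and matches the paper's intended reasoning: the paper states the corollary without proof, treating it as immediate from Lemma~\ref{lem:Trakhtenbrot:lem1:sib} together with the inclusion $\logic{QCl}\subseteq\logic{QCl}\uplus\bm{sib}$, exactly as you argue. Your additional paragraph recapitulating the proof of Lemma~\ref{lem:Trakhtenbrot:lem1:sib} is accurate but unnecessary for the corollary itself.
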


\begin{lemma}
\label{lem:Trakhtenbrot:lem2:sib}
If $n\in\mathbb{Y}$, then $\mathit{MTiling}_n^{\mathbb{X}}\not\in\logic{QCl}_{\mathit{fin}}\uplus\bm{sib}$.
\end{lemma}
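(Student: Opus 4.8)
The goal is to produce, for $n\in\mathbb{Y}$, a \emph{finite} model $\cModel{M}'=\otuple{\mathcal D',\mathcal I'}$ satisfying $\bm{sib}$ (so $\mathcal I'(P)$ is symmetric and irreflexive) which refutes $\mathit{MTiling}_n^{\mathbb{X}}$, i.e.\ satisfies $\mathit{Tiling}'_n$ but not $\exists x\,P_1(x)$. The plan is to start from the finite model $\cModel{M}$ built in the proof of Lemma~\ref{lem:Trakhtenbrot:lem2}: its domain is a square grid $\mathcal D=\{0,\dots,r+2\}\times\{0,\dots,r+2\}$, with $P$-edges going one step right, one step up, plus the two ``wrap-around'' edges at the top column $r{+}2\to r{+}1$ and the rightmost row $r{+}2\to r{+}1$; and $P_k(\otuple{i,j})$ iff $f_n(i,j)=t^n_k$. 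This model has $\cModel{M}\models\mathit{Tiling}_n$ and $\cModel{M}\not\models\exists x\,P_1(x)$ (since $n\notin\mathbb X$). The first step is to verify that, after a suitable choice of the $R_i,U_j$ valuations, $\cModel{M}$ still satisfies the new conjuncts $\mathit{DSR}$, $\mathit{DSU}$, and the primed versions $\mathit{TC}'_1,\mathit{TC}'_2,\mathit{TC}'_3$; this is exactly the ``hidden tiling'' of Figure~\ref{fig:12}: set $R_i(\otuple{i',j'})$ iff $i'\equiv i-1\pmod 4$ (so each column of the grid carries one $R_k$, cycling $R_1R_2R_3R_4R_1\dots$) and $U_j(\otuple{i',j'})$ iff $j'\equiv j-1\pmod 4$. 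Then a right $P$-step increases $i'$ by one, which advances the $R$-index by one, so $\mathit{RD}$ holds along horizontal $P$-edges; similarly for $\mathit{UD}$ along vertical edges. The wrap-around edges must also be checked: the top wrap $\otuple{r+2,j}\to\otuple{r+1,j}$ is a horizontal $P$-edge, so we need $\mathit{RD}(\otuple{r+2,j},\otuple{r+1,j})$, i.e.\ the $R$-index at column $r{+}1$ is one more (mod $4$) than at column $r{+}2$; this forces a congruence condition on $r$, which I would secure by enlarging $r$ slightly (replace $r$ by the least $r'\geqslant r$ with $r'\equiv$ the right residue mod $4$) so that the cyclic labellings match up across the wrap edges in both directions. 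The $T_n$-tiling part of the grid is unaffected by this enlargement because the extra rows/columns are all filled with $t_\Box^{\ast\ast}$ and $t_\Box^{\ast}$ tiles, exactly as in Lemma~\ref{lem:Trakhtenbrot:lem2}.

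The second and main step is to replace $P$ by a symmetric irreflexive relation without destroying the directed structure. The directedness has already been externalized into the $R_i,U_j$ predicates: the formulas $H'_n,V'_n$ use $\mathit{RD}(x,y)$ / $\mathit{UD}(x,y)$ to recover orientation from the \emph{unordered} pair $\{x,y\}$. So the plan is: take $\mathcal I'(P)$ to be the \emph{symmetric closure} of $\mathcal I(P)$ (and check it is still irreflexive — it is, since the original $P$ had no loops and a symmetric edge between distinct points introduces none). One then must verify that $\cModel{M}'\models\mathit{Tiling}'_n$. The only conjuncts that could be broken by adding reverse edges are those of the form $\forall x\exists y\,H'_n(x,y)$ etc.\ and $\mathit{TC}'_3$: adding edges can only \emph{add} witnesses, so the $\exists$-conjuncts $\mathit{TC}'_1,\mathit{TC}'_2,\mathit{TC}_4$ survive, and $\mathit{TC}_0,\mathit{DSR},\mathit{DSU}$ are about the unary predicates only, hence unchanged. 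For $\mathit{TC}'_3$ — the commutativity square — one must check that no \emph{spurious} witness appears: if $\cModel{M}'\models H'_n(x,z)\wedge V'_n(z,y)$ then in particular $\mathit{RD}(x,z)$ and $\mathit{UD}(z,y)$ hold, and since each element carries a unique $R$-index and a unique $U$-index (by $\mathit{DSR}$, $\mathit{DSU}$) and $H_n,V_n$ also pin down the $P_k$-types, the pair of indices forces the direction; combined with $H_n(x,z)$ being a genuine (possibly reversed) $P$-edge, a short case analysis shows $x,z,y$ must be the genuine grid neighbours $\otuple{i,j},\otuple{i+1,j},\otuple{i+1,j+1}$ (a reversed edge would mismatch the cyclic $R$- or $U$-labelling), so the right-hand disjunct of $\mathit{TC}'_3$ has the corresponding genuine witness $\otuple{i,j+1}$, and vice versa. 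Hence $\cModel{M}'\models\mathit{Tiling}'_n$, while $\cModel{M}'\not\models\exists x\,P_1(x)$ as before, and $\cModel{M}'$ is finite and satisfies $\bm{sib}$, giving $\mathit{MTiling}_n^{\mathbb{X}}\notin\logic{QCl}_{\mathit{fin}}\uplus\bm{sib}$.

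\textbf{Main obstacle.} The delicate point is precisely the verification that making $P$ symmetric does not create a ``false'' tiling — i.e.\ that the unordered-edge reading of $H'_n,V'_n$ together with the four-periodic $R_i,U_j$ labels genuinely reconstructs the oriented grid, including correct behaviour at the wrap-around edges where the periodicity has to close up. This is where the choice of the $R_i,U_j$ valuation and the (small) adjustment of $r$ to fix residues mod $4$ really matters; the rest is the routine transfer of the argument of Lemma~\ref{lem:Trakhtenbrot:lem2}. I would organise the write-up as: (i) fix the enlarged $r$ and the cyclic unary valuations; (ii) define $\cModel{M}'$ with symmetric $P$; (iii) a short lemma ``every $H'_n$-pair / $V'_n$-pair in $\cModel{M}'$ is a genuine oriented grid edge'', proved by the index/type bookkeeping; (iv) conclude $\cModel{M}'\models\mathit{Tiling}'_n$, $\cModel{M}'\not\models\exists x\,P_1(x)$.
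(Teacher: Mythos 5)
Your overall strategy (externalize the orientation into the cyclic $R_i,U_j$ labels and then make $P$ symmetric) is the right one and matches the paper's idea, but there is a genuine gap at exactly the point you flag as the main obstacle, and your proposed fix does not repair it. In the model of Lemma~\ref{lem:Trakhtenbrot:lem2} the wrap-around edges run between the \emph{adjacent} columns $r{+}1$ and $r{+}2$ (and likewise for rows). After symmetrization, the directed pair $\otuple{r+1,j}\to\otuple{r+2,j}$ and $\otuple{r+2,j}\to\otuple{r+1,j}$ collapses to a single undirected edge, and $\mathit{TC}'_1$ forces contradictory constraints on it: the element in column $r{+}1$ needs an $H'_n$-witness, which (given your column-constant labelling) must be column $r{+}2$ with $R$-index one higher, while the element in column $r{+}2$ has column $r{+}1$ as its only horizontal neighbour and so needs that column's index to be one higher — i.e.\ $2\equiv 0\pmod 4$. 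No adjustment of the residue of $r$ helps: more generally, on a path of columns $0,1,\dots,r{+}2$ there is \emph{no} assignment of labels in $\mathbb{Z}/4$ under which every column has a neighbour with index one higher (the endpoint forces a strictly decreasing chain of constraints back to column $0$, where it contradicts itself). So with the grid shape you keep, $\mathit{TC}'_1$ (and symmetrically $\mathit{TC}'_2$) necessarily fails at the boundary, regardless of $r\bmod 4$.

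What is actually needed is a structural change to the wrap, which is how the paper proceeds: enlarge the domain to $\{0,\ldots,r+4\}\times\{0,\ldots,r+4\}$ and let the wrap edge connect $r{+}4$ back to $r{+}1$ (in each coordinate), so that the boundary columns/rows $r{+}1,\dots,r{+}4$ form a cycle of length $4$. Then the period-$4$ labels $R_k$ (determined by $i\bmod 4$) and $U_k$ (by $j\bmod 4$) close up consistently around the wrap with no condition on $r$, the extra columns and rows carry only blank tiles so the encoded tiling is unchanged, and the relation can be taken symmetric and irreflexive outright (which also keeps all $P$-cycles of even length, a fact the paper needs later). With that replacement of $2$-cycles by $4$-cycles your remaining steps (symmetry/irreflexivity, preservation of the $\exists$-conjuncts, the check that $\mathit{TC}'_3$ acquires no spurious witnesses, and $\not\models\exists x\,P_1(x)$) go through as you describe.
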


\begin{proof}
Let us follow to the proof of Lemma~\ref{lem:Trakhtenbrot:lem2} indicating significant changes.

Let $n\in\mathbb{Y}$, $f_n(0,m)=t_2$, for some $m\in \numN$, and $r=\max\{m,n+1\}$.

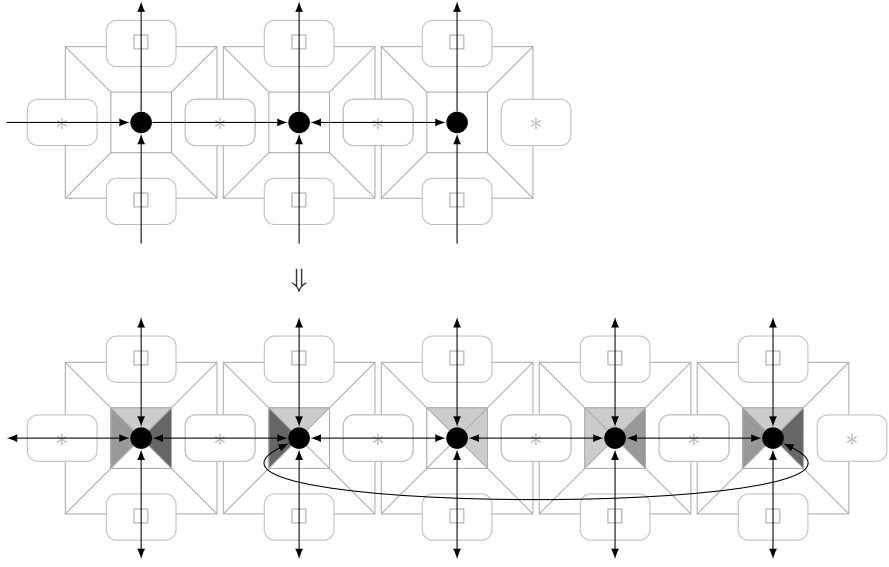
\begin{figure}
\centering
\begin{tikzpicture}[scale=2.1]

\begin{scope}[color=black!32]
\drawtileflattm{(2-0.5,2)}{(3-0.5,2)}{(3-0.5,3)}{(2-0.5,3)}{${\ast}$}{${\Box}$}{$\ast$}{$\Box$}{}
\drawtileflattm{(3-0.5,2)}{(4-0.5,2)}{(4-0.5,3)}{(3-0.5,3)}{${\ast}$}{${\Box}$}{$\ast$}{$\Box$}{}
\drawtileflattm{(4-0.5,2)}{(5-0.5,2)}{(5-0.5,3)}{(4-0.5,3)}{${\ast}$}{${\Box}$}{$\ast$}{$\Box$}{}
\drawtileflattm{(2-0.5,0)}{(3-0.5,0)}{(3-0.5,1)}{(2-0.5,1)}{${\ast}$}{${\Box}$}{$\ast$}{$\Box$}{}
\drawtileflattm{(3-0.5,0)}{(4-0.5,0)}{(4-0.5,1)}{(3-0.5,1)}{${\ast}$}{${\Box}$}{$\ast$}{$\Box$}{}
\drawtileflattm{(4-0.5,0)}{(5-0.5,0)}{(5-0.5,1)}{(4-0.5,1)}{${\ast}$}{${\Box}$}{$\ast$}{$\Box$}{}
\drawtileflattm{(5-0.5,0)}{(6-0.5,0)}{(6-0.5,1)}{(5-0.5,1)}{${\ast}$}{${\Box}$}{$\ast$}{$\Box$}{}
\drawtileflattm{(6-0.5,0)}{(7-0.5,0)}{(7-0.5,1)}{(6-0.5,1)}{${\ast}$}{${\Box}$}{$\ast$}{$\Box$}{}
\coordinate (c1) at ($(2,0.5)+(-0.2*0.96,-0.2*0.96)$);
\coordinate (c2) at ($(c1) +( 0.4*0.96, 0.0)$);
\coordinate (c3) at ($(c2) +( 0.0, 0.4*0.96)$);
\coordinate (c4) at ($(c3) +(-0.4*0.96, 0.0)$);
\drawtileflatsmall{(c1)}{(c2)}{(c3)}{(c4)}{cg4}{cg0}{cg6}{cg2}
\coordinate (c1) at ($(3,0.5)+(-0.2*0.96,-0.2*0.96)$);
\coordinate (c2) at ($(c1) +( 0.4*0.96, 0.0)$);
\coordinate (c3) at ($(c2) +( 0.0, 0.4*0.96)$);
\coordinate (c4) at ($(c3) +(-0.4*0.96, 0.0)$);
\drawtileflatsmall{(c1)}{(c2)}{(c3)}{(c4)}{cg6}{cg0}{cg0}{cg2}
\coordinate (c1) at ($(4,0.5)+(-0.2*0.96,-0.2*0.96)$);
\coordinate (c2) at ($(c1) +( 0.4*0.96, 0.0)$);
\coordinate (c3) at ($(c2) +( 0.0, 0.4*0.96)$);
\coordinate (c4) at ($(c3) +(-0.4*0.96, 0.0)$);
\drawtileflatsmall{(c1)}{(c2)}{(c3)}{(c4)}{cg0}{cg0}{cg2}{cg2}
\coordinate (c1) at ($(5,0.5)+(-0.2*0.96,-0.2*0.96)$);
\coordinate (c2) at ($(c1) +( 0.4*0.96, 0.0)$);
\coordinate (c3) at ($(c2) +( 0.0, 0.4*0.96)$);
\coordinate (c4) at ($(c3) +(-0.4*0.96, 0.0)$);
\drawtileflatsmall{(c1)}{(c2)}{(c3)}{(c4)}{cg2}{cg0}{cg4}{cg2}
\coordinate (c1) at ($(6,0.5)+(-0.2*0.96,-0.2*0.96)$);
\coordinate (c2) at ($(c1) +( 0.4*0.96, 0.0)$);
\coordinate (c3) at ($(c2) +( 0.0, 0.4*0.96)$);
\coordinate (c4) at ($(c3) +(-0.4*0.96, 0.0)$);
\drawtileflatsmall{(c1)}{(c2)}{(c3)}{(c4)}{cg4}{cg0}{cg6}{cg2}
\end{scope}

\node [] at (3.0,1.5) {$\Downarrow$};

\begin{scope}[color=black, thick]
\filldraw     [] (2.5-0.5,2.5) circle [radius=1.75pt];
\filldraw     [] (3.5-0.5,2.5) circle [radius=1.75pt];
\filldraw     [] (4.5-0.5,2.5) circle [radius=1.75pt];
\filldraw [] (2.5-0.5,0.5) circle [radius=1.75pt];
\filldraw [] (3.5-0.5,0.5) circle [radius=1.75pt];
\filldraw [] (4.5-0.5,0.5) circle [radius=1.75pt];
\filldraw [] (5.5-0.5,0.5) circle [radius=1.75pt];
\filldraw [] (6.5-0.5,0.5) circle [radius=1.75pt];
\end{scope}

\begin{scope}[>=latex, <->, shorten >= 4.25pt, shorten <= -16pt, color=black]
\draw [->, shorten <= -21pt] (2.0-0.5,2.5)--(2.5-0.5,2.5);
\draw [->,                 ] (2.5-0.5,2.0)--(2.5-0.5,2.5);
\draw [->,                 ] (3.5-0.5,2.0)--(3.5-0.5,2.5);
\draw [->,                 ] (4.5-0.5,2.0)--(4.5-0.5,2.5);
\draw [->,  shorten <= 4.25pt] (2.5-0.5,2.5)--(3.5-0.5,2.5);
\draw [<->, shorten <= 4.25pt] (3.5-0.5,2.5)--(4.5-0.5,2.5);
\draw [shorten <= -21pt    ] (2.0-0.5,0.5)--(2.5-0.5,0.5);
\draw [                    ] (2.5-0.5,0.0)--(2.5-0.5,0.5);
\draw [                    ] (3.5-0.5,0.0)--(3.5-0.5,0.5);
\draw [                    ] (4.5-0.5,0.0)--(4.5-0.5,0.5);
\draw [                    ] (5.5-0.5,0.0)--(5.5-0.5,0.5);
\draw [                    ] (6.5-0.5,0.0)--(6.5-0.5,0.5);
\draw [<->, shorten <= 4.25pt] (2.5-0.5,0.5)--(3.5-0.5,0.5);
\draw [<->, shorten <= 4.25pt] (3.5-0.5,0.5)--(4.5-0.5,0.5);
\draw [<->, shorten <= 4.25pt] (4.5-0.5,0.5)--(5.5-0.5,0.5);
\draw [<->, shorten <= 4.25pt] (5.5-0.5,0.5)--(6.5-0.5,0.5);
\draw [<->, shorten <= 4.25pt] (3.5-0.5,0.5) .. controls (2,0) and (7,0) .. (6.5-0.5,0.5);
\end{scope}

\begin{scope}[>=latex, <->, shorten >= -16pt, shorten <= 4.25pt, color=black]
\draw [->] (2.5-0.5,2.5)--(2.5-0.5,3.0);
\draw [->] (3.5-0.5,2.5)--(3.5-0.5,3.0);
\draw [->] (4.5-0.5,2.5)--(4.5-0.5,3.0);
\draw [  ] (2.5-0.5,0.5)--(2.5-0.5,1.0);
\draw [  ] (3.5-0.5,0.5)--(3.5-0.5,1.0);
\draw [  ] (4.5-0.5,0.5)--(4.5-0.5,1.0);
\draw [  ] (5.5-0.5,0.5)--(5.5-0.5,1.0);
\draw [  ] (6.5-0.5,0.5)--(6.5-0.5,1.0);
\end{scope}

\end{tikzpicture}
\caption{Replacement of cycles}
\label{fig:13}
\end{figure}

We define a model similar to the model in the proof of Lemma~\ref{lem:Trakhtenbrot:lem2} but with cycles of length~$4$ instead of length~$2$ (see Figure~\ref{fig:13} for ``horizontal'' cycles). Let
$$
\begin{array}{lcl}
D & = & \{0,\ldots,r+4\}\times\{0,\ldots,r+4\}. \\
\end{array}
$$
Define a model $\cModel{M} = \otuple{\mathcal{D},\mathcal{I}}$ by 
$$
\begin{array}{lcl}
\cModel{M}\models P(\otuple{i,j},\otuple{i',j'}) 
  & \iff 
  & \mbox{either $|i'-i|=1$ and $j'=j$,}
  \\
  &&\mbox{or $i'=i$ and $|j'-j|=1$,}
  \\
  &&\mbox{or $\{i',i\}=\{r+1,r+4\}$ and $j'=j$,}
  \\
  &&\mbox{or $i'=i$ and $\{j',j\}=\{r+1,r+4\}$;}
  \smallskip\\
\cModel{M}\models P_k(\otuple{i,j}) 
  & \iff 
  & f_n(i,j) = t_k^n;
  \smallskip\\
\cModel{M}\models R_k(\otuple{i,j}) 
  & \iff 
  & |i-k+1|\mathrel{\vdots} 4;
  \smallskip\\
\cModel{M}\models U_k(\otuple{i,j}) 
  & \iff 
  & |j-k+1|\mathrel{\vdots} 4,
\end{array}
$$
see Figure~\ref{fig:14}.

Then, by the definition of $\cModel{M}$, we obtain $\cModel{M}\models\mathit{Tiling}'_n$. Since $n\not\in\mathbb{X}$, there is no $i,j\in\numN$ such that $f_n(i,j)=t_1$, therefore, $\cModel{M}\not\models\exists x\,P_1(x)$.

Notice that $\cModel{M}\models\bm{sib}$.
Hence, $\mathit{MTiling}_n^{\mathbb{X}}\not\in\logic{QCl}_{\mathit{fin}}\uplus\bm{sib}$.
\end{proof}

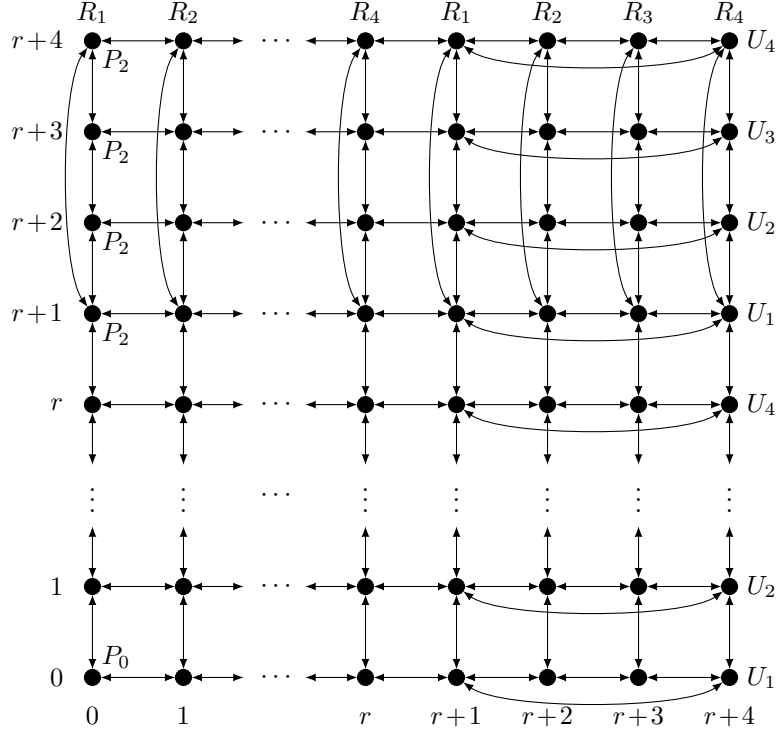
\begin{figure}
\centering
\begin{tikzpicture}[scale=1.21]

\foreach \x in {0,1,3,4,5,6,7}
\foreach \y in {0,1,3,4,5,6,7}
\filldraw [] (\x,\y) circle [radius=2.5pt];

\foreach \x in {0,1,3,4,5,6,7}
\foreach \y in {0,3,4,5,6}
\draw [>=latex, <->, shorten >= 3pt, shorten <= 3pt] (\x,\y)--(\x,\y+1);

\foreach \y in {0,1,3,4,5,6,7}
\foreach \x in {0,3,4,5,6}
\draw [>=latex, <->, shorten >= 3pt, shorten <= 3pt] (\x,\y)--(\x+1,\y);

\foreach \x in {0,1,3,4,5,6,7}   \node [] at (\x,2.04) {$\vdots$};
\foreach \y in {0,1,2,3,4,5,6,7} \node [] at (2.04,\y) {$\cdots$};

\foreach \t in {0,1,3,4,5,6,7} 
{
\draw [>=latex, <->, shorten >= 3pt, shorten <= 3pt] (\t,1)--(\t,1.75);
\draw [>=latex, <->, shorten >= 3pt, shorten <= 3pt] (\t,3)--(\t,2.25);
\draw [>=latex, <->, shorten >= 3pt, shorten <= 3pt] (1,\t)--(1.75,\t);
\draw [>=latex, <->, shorten >= 3pt, shorten <= 3pt] (3,\t)--(2.25,\t);
\draw [>=latex, <->, shorten >= 3pt, shorten <= 3pt] (\t,7) .. controls (\t-0.36,6.5) and (\t-0.36,4.5) .. (\t,4);
\draw [>=latex, <->, shorten >= 3pt, shorten <= 3pt] (7,\t) .. controls (6.5,\t-0.36) and (4.5,\t-0.36) .. (4,\t);
}

\node [below = 7.5pt] at (0,0) {$0$};
\node [below = 7.5pt] at (1,0) {$1$};
\node [below = 7.5pt] at (3,0) {$\phantom{1}r\phantom{1}$};
\node [below = 7.5pt] at (4,0) {$r\hspace{1pt}{+}\hspace{1pt}1$};
\node [below = 7.5pt] at (5,0) {$r\hspace{1pt}{+}\hspace{1pt}2$};
\node [below = 7.5pt] at (6,0) {$r\hspace{1pt}{+}\hspace{1pt}3$};
\node [below = 7.5pt] at (7,0) {$r\hspace{1pt}{+}\hspace{1pt}4$};
\node [left  = 7.5pt] at (0,0) {$0$};
\node [left  = 7.5pt] at (0,1) {$1$};
\node [left  = 7.5pt] at (0,3) {$r$};
\node [left  = 7.5pt] at (0,4) {$r\hspace{1pt}{+}\hspace{1pt}1$};
\node [left  = 7.5pt] at (0,5) {$r\hspace{1pt}{+}\hspace{1pt}2$};
\node [left  = 7.5pt] at (0,6) {$r\hspace{1pt}{+}\hspace{1pt}3$};
\node [left  = 7.5pt] at (0,7) {$r\hspace{1pt}{+}\hspace{1pt}4$};

\node [above right] at (0,0) {$P_0$};
\node [below right] at (0,7) {$P_2$};
\node [below right] at (0,6) {$P_2$};
\node [below right] at (0,5) {$P_2$};
\node [below right] at (0,4) {$P_2$};

\node [above=3pt] at (0,7) {$R_1$};
\node [above=3pt] at (1,7) {$R_2$};
\node [above=3pt] at (3,7) {$R_4$};
\node [above=3pt] at (4,7) {$R_1$};
\node [above=3pt] at (5,7) {$R_2$};
\node [above=3pt] at (6,7) {$R_3$};
\node [above=3pt] at (7,7) {$R_4$};
\node [right=3pt] at (7,0) {$U_1$};
\node [right=3pt] at (7,1) {$U_2$};
\node [right=3pt] at (7,3) {$U_4$};
\node [right=3pt] at (7,4) {$U_1$};
\node [right=3pt] at (7,5) {$U_2$};
\node [right=3pt] at (7,6) {$U_3$};
\node [right=3pt] at (7,7) {$U_4$};

\end{tikzpicture}
\caption{Finite sib-model for the special $T_n$-tiling with $n\in\mathbb{Y}$ (here $r+1\mathrel{\vdots}4$)}
\label{fig:14}
\end{figure}

\begin{remark}
\label{rem:lem:Trakhtenbrot:lem2:sib}
Observe that the length of every $P$-cycle in the model $\cModel{M}$ defined in the proof of Lemma~\ref{lem:Trakhtenbrot:lem2:sib} is even.
\end{remark}

\begin{corollary}
\label{cor:lem:Trakhtenbrot:lem2:sib}
If $n\in\mathbb{Y}$, then $\mathit{MTiling}_n^{\mathbb{X}}\not\in\logic{QCl}_{\mathit{fin}}$.
\end{corollary}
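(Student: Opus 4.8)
The plan is to obtain this statement from Lemma~\ref{lem:Trakhtenbrot:lem2:sib} essentially for free, by a monotonicity argument on the classes of models involved. Recall that $\logic{QCl}_{\mathit{fin}}\uplus\bm{sib}$ is, by definition, the set of $\lang{L}$-formulas validated by every finite model of $\bm{sib}$, whereas $\logic{QCl}_{\mathit{fin}}$ is the set of $\lang{L}$-formulas validated by \emph{every} finite model. Since every finite $\bm{sib}$-model is in particular a finite model, the class of finite $\bm{sib}$-models is a subclass of the class of all finite models, and hence $\logic{QCl}_{\mathit{fin}}\subseteq\logic{QCl}_{\mathit{fin}}\uplus\bm{sib}$. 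Contraposing this inclusion: a formula lying outside the larger theory also lies outside the smaller one. So, for $n\in\mathbb{Y}$, Lemma~\ref{lem:Trakhtenbrot:lem2:sib} gives $\mathit{MTiling}_n^{\mathbb{X}}\not\in\logic{QCl}_{\mathit{fin}}\uplus\bm{sib}$, and therefore $\mathit{MTiling}_n^{\mathbb{X}}\not\in\logic{QCl}_{\mathit{fin}}$, which is what we want.

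Equivalently, and more concretely, one may simply point to the witness already produced: the finite model $\cModel{M}=\otuple{\mathcal{D},\mathcal{I}}$ built in the proof of Lemma~\ref{lem:Trakhtenbrot:lem2:sib} validates $\mathit{Tiling}'_n$ but refutes $\exists x\,P_1(x)$, hence refutes $\mathit{MTiling}_n^{\mathbb{X}}$; as a finite model it directly witnesses $\mathit{MTiling}_n^{\mathbb{X}}\not\in\logic{QCl}_{\mathit{fin}}$, the fact that it happens to validate $\bm{sib}$ being irrelevant for this weaker conclusion. I do not expect any obstacle here: all the real work — in particular the replacement of $2$-cycles by $4$-cycles and the interpretation of the auxiliary letters $R_i$, $U_i$ so that $\bm{sib}$ holds while $\mathit{Tiling}'_n$ is still captured — was already carried out in Lemma~\ref{lem:Trakhtenbrot:lem2:sib}. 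The only point to observe is the trivial one that dropping the constraint $\bm{sib}$ only enlarges the admissible class of (finite) models and hence only shrinks the associated theory. Together with Corollary~\ref{cor:lem:Trakhtenbrot:lem1:sib} (and Lemma~\ref{lem:Trakhtenbrot:lem1:sib}, giving $\mathit{MTiling}_n^{\mathbb{X}}\in\logic{QCl}$ for $n\in\mathbb{X}$), this corollary is exactly what is needed to read off the recursive inseparability results for $\logic{QCl}$ and $\logic{QCl}_{\mathit{fin}}$, and subsequently for $\logic{SIB}$ and $\logic{SIB}_{\mathit{fin}}$.
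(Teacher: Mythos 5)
Your proposal is correct and matches the paper's (implicit) argument: the corollary is stated without proof precisely because the finite model constructed in the proof of Lemma~\ref{lem:Trakhtenbrot:lem2:sib} already refutes $\mathit{MTiling}_n^{\mathbb{X}}$, and the fact that it additionally validates $\bm{sib}$ is irrelevant, i.e.\ one just uses $\logic{QCl}_{\mathit{fin}}\subseteq\logic{QCl}_{\mathit{fin}}\uplus\bm{sib}$. Your second, concrete formulation (pointing directly at the finite countermodel) is the cleanest reading and is exactly what the paper intends.
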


\begin{remark}
Theorem~\ref{th:Trakhtenbrot} follows from Lemma~\ref{lem:Trakhtenbrot:lem1:sib} and Corollary~\ref{cor:lem:Trakhtenbrot:lem2:sib}.
\end{remark}

\begin{corollary}
\label{cor:G:lem:Trakhtenbrot:lem2:sib}
If $n\in\mathbb{Y}$, then $(\mathit{MTiling}_n^{\mathbb{X}})_G^{\phantom{i}}\not\in\logic{QCl}_{\mathit{fin}}\uplus\bm{sib}$.
\end{corollary}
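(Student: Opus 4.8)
The plan is to obtain this immediately from Lemma~\ref{lem:Trakhtenbrot:lem2:sib} by a trivial relativization step, so the argument should be a one‑paragraph reduction. Since $n\in\mathbb{Y}$, Lemma~\ref{lem:Trakhtenbrot:lem2:sib} hands us a finite model $\cModel{M}=\otuple{\mathcal{D},\mathcal{I}}$ with $\cModel{M}\models\bm{sib}$ and $\cModel{M}\not\models\mathit{MTiling}_n^{\mathbb{X}}$, and the letter $G$ does not occur in $\mathit{MTiling}_n^{\mathbb{X}}$. I would expand $\cModel{M}$ to $\cModel{M}'=\otuple{\mathcal{D},\mathcal{I}'}$ by keeping $\mathcal{I}'$ equal to $\mathcal{I}$ on every predicate letter except $G$ and setting $\mathcal{I}'(G)=\mathcal{D}$, so that $\cModel{M}'\models\forall x\,G(x)$.

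The routine observation I would record is that in any model validating $\forall x\,G(x)$ the relativization is vacuous: by induction on the structure of a formula, $\cModel{M}'\models^g\psi_G\iff\cModel{M}'\models^g\psi$ for every $G$‑free $\lang{L}$‑formula $\psi$ and every assignment $g$, because $G(x)\to\chi$ and $G(x)\wedge\chi$ reduce to $\chi$ whenever $G$ is everywhere true, while the atomic and propositional cases are untouched by relativization. Taking $\psi=\mathit{MTiling}_n^{\mathbb{X}}$ and noting that $\mathcal{I}$ and $\mathcal{I}'$ agree on all of its letters, I get $\cModel{M}'\not\models(\mathit{MTiling}_n^{\mathbb{X}})_G$. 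Since $\bm{sib}$ mentions only $P$, whose interpretation was left untouched, $\cModel{M}'\models\bm{sib}$; and $\cModel{M}'$ is still finite. Hence $(\mathit{MTiling}_n^{\mathbb{X}})_G\not\in\logic{QCl}_{\mathit{fin}}\uplus\bm{sib}$, as required.

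There is essentially no obstacle here: the only points to be careful about are that expanding a model by interpreting the fresh letter $G$ as the whole domain keeps it finite and keeps $\bm{sib}$ true (the latter precisely because $\bm{sib}$ constrains $P$ only). An alternative, equally short route is to note that Lemma~\ref{lem:relativization:G} holds verbatim with $\logic{QCl}$, $\logic{QCl}_{\mathit{fin}}$ replaced by $\logic{QCl}\uplus\bm{sib}$, $\logic{QCl}_{\mathit{fin}}\uplus\bm{sib}$ — a substructure of a symmetric irreflexive binary relation is again symmetric and irreflexive, so the refutation‑preserving direction still yields a finite $\bm{sib}$‑model — and then to combine it with Lemma~\ref{lem:Trakhtenbrot:lem2:sib}.
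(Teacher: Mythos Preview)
Your proof is correct and follows exactly the paper's approach: the paper's one-line proof reads ``It is sufficient to expand the model $\cModel{M}$ defined in the proof of Lemma~\ref{lem:Trakhtenbrot:lem2:sib} so that the formula $\forall x\,G(x)$ becomes true,'' and you have simply spelled out this expansion and the routine verification that relativization is vacuous under $\forall x\,G(x)$.
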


\begin{proof}
It is sufficient to expand the model $\cModel{M}$ defined in the proof of Lemma~\ref{lem:Trakhtenbrot:lem2:sib} so that the formula $\forall x\,G(x)$ becomes true.
\end{proof}

To simulate unary predicate letters $P_m$, $R_i$, and $U_j$, we define formulas similar to $\mathit{tile}_k(x)$. 
Let
$$
\begin{array}{lcl}
\pi_0(x) 
  & = 
  & G(x); 
  \smallskip\\
\pi_1^{y}(x) 
  & = 
  & \exists y\,(P(x,y)\wedge G(y)); 
  \smallskip\\
\pi_{k+1}^{y}(x) 
  & = 
  & \exists y\,(P(x,y)\wedge \pi^x_k(y)),
  \smallskip\\
\end{array}
$$
where $k\in\numNp$. Informally, $\pi_k^y(x)$ says that there is a $P$-path of length $k$ from $x$ to some element of a grid. For every $k\in\numNpp$, define
$$
\begin{array}{lcl}
\mathit{path}_{k}^y(x) 
  & = 
  & \pi_k^y(x) \wedge \neg \pi_{k-1}^y(x) \wedge \neg \pi_{k-2}^y(x) 
    \wedge \forall y\,(P(x,y)\to \pi_{k-1}^x(y)).
\end{array}
$$
Formula $\mathit{path}_{k}(x)$ says that the length of a shortest $P$-path from $x$ to an element of a grid is~$k$ and, moreover, every element $P$-adjacent to $x$ is closer to the grid than~$x$. Next, let
$$
\begin{array}{lcl}
\sigma^y_{k,0}(x) 
  & = 
  & \mathit{path}_{k}^y(x);
  \smallskip\\
\sigma_{k,i+1}^{y}(x) 
  & = 
  & \exists y\,(P(x,y)\wedge \sigma_{k,i}^{x}(y)),
  \smallskip\\
\end{array}
$$
where $k\in\numNpp$ and $i\in\numN$. The formula $\sigma^y_{k,i}(x)$ says that there is a $P$-path of length~$i$ from~$x$ to an element such that a shortest path from it to a grid is of length~$k$ and every element $P$-adjacent to it is closer to the grid. Define
$$
\begin{array}{lcl}
\mathit{tile}'_k(x) 
  & = 
  & \sigma_{k+2,k+2}^y(x);
  \smallskip\\
\mathit{tile}'_k(y) 
  & = 
  & \sigma_{k+2,k+2}^x(y);
  \smallskip\\
\mathit{tile}'_k(z) 
  & = 
  & \sigma_{k+2,k+2}^x(z),
\end{array}
$$
where $k\in\numN$.

\begin{remark}
\label{rem:tile:x&y:2}
Formulas $\mathit{tile}'_k(x)$ and $\mathit{tile}'_k(y)$ contain no variables except $x$ and~$y$; also they contain no predicate letters except $P$ and~$G$.
\end{remark}

Now, let us define formulas simulating $G(u)$ with $u\in\{x,y,z\}$; the formulas are similar to $\mathit{grid}(u)$ but slightly more complicated and contain three variables:
$$
\begin{array}{rcl}
\mathit{triangle}^{yz}(x) 
  & = 
  & \exists y\exists z\,(P(x,y)\wedge P(y,z)\wedge P(z,x));
  \smallskip\\
\mathit{grid}'(x) 
  & = 
  & \neg \mathit{triangle}^{yz}(x) 
    \wedge \exists y\,(P(x,y)\wedge \mathit{triangle}^{xz}(y));
  \smallskip\\
\mathit{grid}'(y) 
  & = 
  & \neg \mathit{triangle}^{xz}(y) 
    \wedge \exists y\,(P(y,x)\wedge \mathit{triangle}^{yz}(x));
  \smallskip\\
\mathit{grid}'(z) 
  & = 
  & \neg \mathit{triangle}^{xy}(z) 
    \wedge \exists y\,(P(z,x)\wedge \mathit{triangle}^{yz}(x)).
\end{array}
$$

Let us define a function~$S_1$, similar to~$S_0$, associating with each formula $\mathit{MTiling}_n^{\mathbb{X}}$ a formula containing no predicate letters except~$P$. Define $S_1\mathit{MTiling}_n^{\mathbb{X}}$ as the formula obtained from $\exists x\,G(x)\to (\mathit{MTiling}_n^{\mathbb{X}})_G^{\phantom{i}}$ by replacing 
\begin{itemize}
\item
each occurrence of $P_m(x)$, $P_m(y)$, and $P_m(z)$ with $\mathit{tile}'_m(x)$, $\mathit{tile}'_m(y)$, and $\mathit{tile}'_m(z)$, respectively, where $m\in\{0,\ldots,k_n\}$;
\item 
then each occurrence of $R_i(x)$, $R_i(y)$, $R_i(z)$ with $\mathit{tile}'_{k_n+i}(x)$, $\mathit{tile}'_{k_n+i}(y)$, $\mathit{tile}'_{k_n+i}(z)$, respectively, where $1\leqslant i\leqslant 4$;
\item
then each occurrence of $U_j(x)$, $U_j(y)$, $U_j(z)$ with $\mathit{tile}'_{k_n+j+4}(x)$, $\mathit{tile}'_{k_n+j+4}(y)$, $\mathit{tile}'_{k_n+j+4}(z)$, respectively, where $1\leqslant j\leqslant 4$;
\item
and then each occurrence of $G(x)$, $G(y)$, and $G(z)$ with 
$\mathit{grid}'(x)$, $\mathit{grid}'(y)$, and $\mathit{grid}'(z)$, respectively. 
\end{itemize}
Notice that these replacements are formula substitutions.\footnote{See footnote~\ref{footnote:1}.}

\begin{lemma}
\label{lem:Trakhtenbrot:lem1:sib:binP}
If $n\in\mathbb{X}$, then $S_1\mathit{MTiling}_n^{\mathbb{X}}\in\logic{QCl}$.
\end{lemma}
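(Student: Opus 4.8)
The plan is to run the same three-step argument that proves Lemma~\ref{lem:Trakhtenbrot:lem1:binP}, with $\mathit{Tiling}_n^{\mathbb{X}}$, $S_0$ and Lemma~\ref{lem:Trakhtenbrot:lem1} replaced throughout by $\mathit{MTiling}_n^{\mathbb{X}}$, $S_1$ and Lemma~\ref{lem:Trakhtenbrot:lem1:sib}. Assume $n\in\mathbb{X}$. By Lemma~\ref{lem:Trakhtenbrot:lem1:sib} we have $\mathit{MTiling}_n^{\mathbb{X}}\in\logic{QCl}$. Since $\mathit{MTiling}_n^{\mathbb{X}}$ is a closed $\lang{L}$-formula with no occurrence of $G$, Lemma~\ref{lem:relativization:G} applies and yields $\exists x\,G(x)\to(\mathit{MTiling}_n^{\mathbb{X}})_G^{\phantom{i}}\in\logic{QCl}$.

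Next I would observe that $S_1\mathit{MTiling}_n^{\mathbb{X}}$ arises from $\exists x\,G(x)\to(\mathit{MTiling}_n^{\mathbb{X}})_G^{\phantom{i}}$ by a single uniform substitution of unary predicate letters by formulas: $P_m\mapsto\mathit{tile}'_m$ for $m\in\{0,\ldots,k_n\}$, $R_i\mapsto\mathit{tile}'_{k_n+i}$ for $1\leqslant i\leqslant 4$, $U_j\mapsto\mathit{tile}'_{k_n+j+4}$ for $1\leqslant j\leqslant 4$, and $G\mapsto\mathit{grid}'$. As $\logic{QCl}$ is closed under substitution, this immediately gives $S_1\mathit{MTiling}_n^{\mathbb{X}}\in\logic{QCl}$, which is the assertion. (The substituted formulas introduce no individual variables beyond $x$, $y$, $z$, so $S_1\mathit{MTiling}_n^{\mathbb{X}}$ still lies in the three-variable fragment; this is not part of the present statement but will matter later.)

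The one point that needs care — and the place I expect any real work to go — is the verification that the replacement defining $S_1$ is a legitimate \emph{indirect} substitution rather than a naive textual one. Concretely, one has to check that, for each $k$, the three variants $\mathit{tile}'_k(x)$, $\mathit{tile}'_k(y)$, $\mathit{tile}'_k(z)$ are pairwise congruent (they differ only in the choice of the principal variable and of the bound variable alternated in the $\pi$- and $\sigma$-clauses), and similarly that $\mathit{grid}'(x)$, $\mathit{grid}'(y)$, $\mathit{grid}'(z)$ are pairwise congruent; and that, at every occurrence of a unary letter in $(\mathit{MTiling}_n^{\mathbb{X}})_G^{\phantom{i}}$ (e.g.\ the occurrence of $G(y)$ created inside $\forall x(G(x)\to\exists y(G(y)\wedge\ldots))$ when relativizing $\mathit{TC}'_1$), the auxiliary bound variables of the inserted formula can be chosen fresh relative to the surrounding quantifier prefix, so that no free variable of the inserted formula is captured. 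Since each unary letter is applied to a single variable and the inserted formulas need at most two further variables, this renaming is always available within $x$, $y$, $z$, and then the soundness of substitution for $\logic{QCl}$ closes the argument. This is precisely the phenomenon recorded in the footnote on indirect substitutions, and once it is spelled out the proof collapses to the one-line combination above.
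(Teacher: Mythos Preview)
Your proposal is correct and follows exactly the paper's approach: the paper's proof is the one-line ``Follows from Lemmas~\ref{lem:Trakhtenbrot:lem1:sib} and~\ref{lem:relativization:G} taking into account that $\logic{QCl}$ is closed by Substitution.'' Your additional discussion of why the replacement defining $S_1$ is a legitimate indirect substitution is precisely what the paper relegates to the footnote on indirect substitutions, so you have simply made explicit what the paper leaves implicit.
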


\begin{proof}
Follows from Lemmas~\ref{lem:Trakhtenbrot:lem1:sib} and~\ref{lem:relativization:G} taking into account that $\logic{QCl}$ is closed by Substitution.
\end{proof}


\begin{lemma}
\label{lem:Trakhtenbrot:lem2:sib:binP}
If $n\in\mathbb{Y}$, then $S_1\mathit{MTiling}_n^{\mathbb{X}}\not\in\logic{SIB}_{\mathit{fin}}$.
\end{lemma}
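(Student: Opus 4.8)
The plan is to imitate the proof of Lemma~\ref{lem:Trakhtenbrot:lem2:binP}, replacing the old gadgets $\mathit{grid},\mathit{tile}_k$ by the new $\mathit{grid}',\mathit{tile}'_k$ and the old witnessing model by the finite symmetric irreflexive model built in Lemma~\ref{lem:Trakhtenbrot:lem2:sib}. So fix $n\in\mathbb{Y}$ and let $\cModel{M}=\langle\mathcal{D},\mathcal{I}\rangle$ be the finite $\bm{sib}$-model constructed in the proof of Lemma~\ref{lem:Trakhtenbrot:lem2:sib}, augmented as in the proof of Corollary~\ref{cor:G:lem:Trakhtenbrot:lem2:sib} so that $\cModel{M}\models\forall x\,G(x)$; thus $\cModel{M}\models\mathit{Tiling}'_n$, $\cModel{M}\not\models\exists x\,P_1(x)$, $\cModel{M}$ is a finite simple graph with no isolated vertex, and, by Remark~\ref{rem:lem:Trakhtenbrot:lem2:sib}, it has no triangle. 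For each $a\in\mathcal{D}$ let $p_a,i_a,j_a$ be the unique indices with $\cModel{M}\models P_{p_a}(a)\wedge R_{i_a}(a)\wedge U_{j_a}(a)$ (they exist and are unique since $\cModel{M}\models\mathit{TC}_0\wedge\mathit{DSR}\wedge\mathit{DSU}$), so $p_a\leqslant k_n$ and $1\leqslant i_a,j_a\leqslant 4$.

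Next I would extend $\cModel{M}$ to a finite simple graph $\cModel{M}'=\langle\mathcal{D}',\mathcal{I}'\rangle$ by attaching to each $a\in\mathcal{D}$, using pairwise fresh vertices: (i) a \emph{marker triangle} — three new vertices $m^1_a,m^2_a,m^3_a$, the triangle on them, and the extra edge $am^1_a$; and (ii) for each of the three numbers $p_a+2$, $k_n+i_a+2$, $k_n+j_a+6$ a \emph{pendant path} of that length out of $a$ (a path of new vertices whose first vertex is joined to $a$). Finitely many vertices and edges are added, and every new edge joins distinct vertices, so $\cModel{M}'$ is a finite model with $P$ symmetric and irreflexive, i.e. $\cModel{M}'\models\bm{sib}$.

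The heart of the matter is two facts about $\cModel{M}'$, each proved by the kind of elementary case analysis used for $\mathit{grid}$ and $\mathit{tile}_k$ in Lemma~\ref{lem:Trakhtenbrot:lem2:binP}, only a little longer. \textbf{(a)} The only triangles of $\cModel{M}'$ are the marker triangles $\{m^1_a,m^2_a,m^3_a\}$ — the grid part has none, pendant paths are acyclic, and no triangle can run through a grid vertex or a pendant vertex since the ``other'' marker vertices are adjacent to nothing outside their gadget. Hence $\{e\in\mathcal{D}':\cModel{M}'\models\mathit{grid}'(e)\}=\mathcal{D}$: a grid vertex $a$ lies in no triangle but has the triangle-vertex $m^1_a$ among its neighbours; a marker vertex lies in a triangle; a pendant vertex lies in no triangle and has no triangle-vertex among its neighbours. \textbf{(b)} Interpreting $G$ as $\mathit{grid}'$ (so $G$ is realised exactly on $\mathcal{D}$), the distance of $e\in\mathcal{D}'$ to $\mathcal{D}$ equals $0$ on $\mathcal{D}$, $1$ on each $m^1_a$, $2$ on each $m^2_a,m^3_a$, and $i$ on the $i$-th vertex of a pendant path; since $\cModel{M}$ has no isolated vertex, $\pi_\ell(e)$ holds iff $\ell\geqslant\mathrm{dist}(e,\mathcal{D})$, so $\mathit{path}_\ell(e)$ holds iff $e$ is the far endpoint of a pendant path of length exactly $\ell$ — those being the only vertices at distance $\ell$ all of whose neighbours are strictly closer (note $m^2_a,m^3_a$ are mutually at distance $2$, so they fail $\mathit{path}_2$). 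Consequently, for $a\in\mathcal{D}$, $\cModel{M}'\models\mathit{tile}'_m(a)$ iff $a$ carries a pendant path of length $m+2$, because a length-$(m+2)$ walk from $a$ ending at such an endpoint is forced to be a shortest path and so that endpoint sits on $a$'s own pendant. Since $p_a+2\leqslant k_n+2<k_n+i_a+2\leqslant k_n+6<k_n+j_a+6$, the three attached lengths are distinct, so $\cModel{M}'\models\mathit{tile}'_m(a)$ exactly when $m\in\{p_a,\;k_n+i_a,\;k_n+j_a+4\}$; on the blocks $\{0,\dots,k_n\}$, $\{k_n{+}1,\dots,k_n{+}4\}$, $\{k_n{+}5,\dots,k_n{+}8\}$ this says precisely that in $\cModel{M}'$ the formula $\mathit{tile}'_m$ agrees, on $\mathcal{D}$, with the letters $P_m$, $R_{m-k_n}$, $U_{m-k_n-4}$ of $\cModel{M}$ respectively.

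I would then finish exactly as in Lemma~\ref{lem:Trakhtenbrot:lem2:binP}. All quantifiers of $(\mathit{MTiling}_n^{\mathbb{X}})_G$ are relativised to $G$, which $S_1$ turns into relativisation to $\mathit{grid}'$; by \textbf{(a)} this confines quantification in $\cModel{M}'$ to $\mathcal{D}$, and by \textbf{(a)}, \textbf{(b)} the substituted atoms $\mathit{tile}'_m$ and $\mathit{grid}'$ agree, on $\mathcal{D}$, with $P_m$, $R_i$, $U_j$ and $G$ of $\cModel{M}$; since $\cModel{M}\models\forall x\,G(x)$ it follows that $\cModel{M}'\models\Phi$ iff $\cModel{M}\models\mathit{MTiling}_n^{\mathbb{X}}$, where $\Phi$ denotes $(\mathit{MTiling}_n^{\mathbb{X}})_G$ after the substitutions. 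But $\cModel{M}\models\mathit{Tiling}'_n$ and $\cModel{M}\not\models\exists x\,P_1(x)$, so $\cModel{M}\not\models\mathit{MTiling}_n^{\mathbb{X}}$; hence $\cModel{M}'\not\models\Phi$, while $\cModel{M}'\models\exists x\,\mathit{grid}'(x)$ because $\mathcal{D}\neq\varnothing$, so $\cModel{M}'\not\models S_1\mathit{MTiling}_n^{\mathbb{X}}$. As $S_1\mathit{MTiling}_n^{\mathbb{X}}$ contains only the binary letter $P$ (all of $P_m,R_i,U_j,G$ having been substituted away) and $\cModel{M}'$ is a finite model of $\bm{sib}$, this yields $S_1\mathit{MTiling}_n^{\mathbb{X}}\notin\logic{SIB}_{\mathit{fin}}$. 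The main obstacle is step \textbf{(b)}: the symmetric setting rules out the directed ``dead-end'' device of $\mathit{tile}_k$, so the label of $a$ has to be encoded by the \emph{lengths} of several pendant paths read through graph distance, and one must verify that attaching three pendant paths of distinct lengths to the same vertex — and a marker triangle to every vertex — produces no spurious realiser of $\mathit{path}_\ell$ or $\mathit{grid}'$: that a length-$\ell$ walk which must end at a distance-$\ell$ vertex is forced to be a shortest path, that an interior vertex of a longer pendant path fails $\mathit{path}_\ell$ because it has a neighbour farther from $\mathcal{D}$, and that the two non-$m^1_a$ marker vertices lie at distance $2$ and hence fail $\mathit{path}_2$.
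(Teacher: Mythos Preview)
Your argument is correct and follows essentially the same strategy as the paper: attach a triangle to each grid vertex so that $\mathit{grid}'$ singles out~$\mathcal{D}$ (using that the base graph from Lemma~\ref{lem:Trakhtenbrot:lem2:sib} is bipartite), and encode $P_m,R_i,U_j$ via distance-based gadgets read by~$\mathit{tile}'_k$. The only difference is the shape of the distance gadget. You hang three separate pendant paths of lengths $p_a{+}2$, $k_n{+}i_a{+}2$, $k_n{+}j_a{+}6$ off each~$a$, so that the $\mathit{path}_\ell$-vertices are exactly the far endpoints; the paper instead hangs a \emph{single} path $e_0^a,\ldots,e_{k_n+10}^a$ and adds one leaf $e_P^a,e_R^a,e_U^a$ at positions $m,\,k_n{+}i,\,k_n{+}j{+}4$, so that the $\mathit{path}_\ell$-vertices are these leaves (the path is made two steps longer than necessary precisely so its own endpoint lies beyond the range of any~$\mathit{tile}'_k$ used). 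Both gadgets make the same distance argument go through---a walk of length $m{+}2$ from $a\in\mathcal{D}$ to a vertex at distance $m{+}2$ from~$\mathcal{D}$ must be a geodesic, pinning the endpoint to $a$'s own gadget---so the difference is purely cosmetic.
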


\begin{proof}
Let $n\in\mathbb{Y}$. Then $\cModel{M}\not\models\mathit{Tiling}_n^{\mathbb{X}}$, where $\cModel{M}=\otuple{\mathcal{D},\mathcal{I}}$ is the model defined in the proof of Lemma~\ref{lem:Trakhtenbrot:lem2:sib}. Without a loss of generality, we may assume that $\cModel{M}\models\forall x\,G(x)$, see the proof of Corollary~\ref{cor:G:lem:Trakhtenbrot:lem2:sib}. Then,
$$
\begin{array}{c}
\cModel{M} ~\not\models~ \exists x\,G(x)\to (\mathit{MTiling}_n^{\mathbb{X}})_G^{\phantom{i}}.
\end{array}
$$  
We extend $\cModel{M}$ to a finite model refuting $S_1\mathit{MTiling}_n^{\mathbb{X}}$.

\begin{figure}
\centering
\begin{tikzpicture}[scale=3.2, rectnode/.style={rectangle, thick, draw=black!60, dashed, rounded corners = 2pt}]

\coordinate (c1) at (-1.4,0.30); 
\coordinate (c2) at (-0.9,-0.3); 
\coordinate (c3) at (0.2,-0.05); 
\coordinate (c4) at (-0.4,0.5); 

\coordinate (c12) at ($0.5*(c1)+0.5*(c2)$);
\coordinate (c23) at ($0.5*(c2)+0.5*(c3)$);
\coordinate (c34) at ($0.5*(c3)+0.5*(c4)$);
\coordinate (c41) at ($0.5*(c4)+0.5*(c1)$);

\coordinate (vecX1) at ($(c3)-(c2)$);
\coordinate (vecX2) at ($(c4)-(c1)$);
\coordinate (vecY1) at ($(c1)-(c2)$);
\coordinate (vecY2) at ($(c4)-(c3)$);

\coordinate (c1r) at (c4);
\coordinate (c2r) at (c3);
\coordinate (c3r) at ($(c3)+0.96*(vecX1)$);
\coordinate (c4r) at ($(c4)+0.96*(vecX2)$);

\coordinate (c1u) at ($(c1)+0.84*(vecY1)$);
\coordinate (c2u) at (c1);
\coordinate (c3u) at (c4);
\coordinate (c4u) at ($(c4)+0.84*(vecY2)$);

\coordinate (c12r) at ($0.5*(c1r)+0.5*(c2r)$);
\coordinate (c23r) at ($0.5*(c2r)+0.5*(c3r)$);
\coordinate (c34r) at ($0.5*(c3r)+0.5*(c4r)$);
\coordinate (c41r) at ($0.5*(c4r)+0.5*(c1r)$);

\coordinate (c12u) at ($0.5*(c1u)+0.5*(c2u)$);
\coordinate (c23u) at ($0.5*(c2u)+0.5*(c3u)$);
\coordinate (c34u) at ($0.5*(c3u)+0.5*(c4u)$);
\coordinate (c41u) at ($0.5*(c4u)+0.5*(c1u)$);

\draw [white, opacity = 0, name path = dg 1] (c1)--(c3);
\draw [white, opacity = 0, name path = dg 2] (c2)--(c4);
\draw [name intersections = {of = dg 1 and dg 2, by = {c}}];
\draw [white, opacity = 0, name path = dg 3] (c12)--(c34);
\draw [white, opacity = 0, name path = dg 4] (c23)--(c41);
\draw [name intersections = {of = dg 3 and dg 4, by = {d}}];

\draw [white, opacity = 0, name path = dgr 1] (c1r)--(c3r);
\draw [white, opacity = 0, name path = dgr 2] (c2r)--(c4r);
\draw [name intersections = {of = dgr 1 and dgr 2, by = {cr}}];
\draw [white, opacity = 0, name path = dgr 3] (c12r)--(c34r);
\draw [white, opacity = 0, name path = dgr 4] (c23r)--(c41r);
\draw [name intersections = {of = dgr 3 and dgr 4, by = {dr}}];

\draw [white, opacity = 0, name path = dgu 1] (c1u)--(c3u);
\draw [white, opacity = 0, name path = dgu 2] (c2u)--(c4u);
\draw [name intersections = {of = dgu 1 and dgu 2, by = {cu}}];
\draw [white, opacity = 0, name path = dgu 3] (c12u)--(c34u);
\draw [white, opacity = 0, name path = dgu 4] (c23u)--(c41u);
\draw [name intersections = {of = dgu 3 and dgu 4, by = {du}}];

\coordinate (diffR) at ($(cr)-(c)$);
\coordinate (diffU) at ($(cu)-(c)$);
\coordinate (corR) at (0,0.012);
\coordinate (corU) at (0,0.032);

\coordinate (em)  at ($(c)+1*(0,0.29)$);
\coordinate (e2)  at ($(c)+2*(0,0.29)$);
\coordinate (e1)  at ($(c)+3*(0,0.29)$);
\coordinate (e0)  at ($(c)+4*(0,0.29)$);
\coordinate (e0') at ($(e0)+ (0,0.21)$);

\coordinate (em') at ($(em)+(0.2,0.048)$);
\coordinate (e2') at ($(e2)+(0.2,0.048)$);
\coordinate (e1') at ($(e1)+(0.2,0.048)$);

\coordinate (ekn) at ($(c)-1*(0,0.29)$);
\coordinate (ek2) at ($(c)-2*(-0.28*0.25,0.285*0.85)$);
\coordinate (ek1) at ($(c)-2*(+0.26*0.25,0.325*0.85)$);

\coordinate (rem)  at ($(cr)+1*(0,0.29)-1*(corR)$);
\coordinate (re2)  at ($(cr)+2*(0,0.29)-2*(corR)$);
\coordinate (re1)  at ($(cr)+3*(0,0.29)-3*(corR)$);
\coordinate (re0)  at ($(cr)+4*(0,0.29)-4*(corR)$);
\coordinate (re0') at ($(re0)+ (0,0.21)-1*(corR)$);

\coordinate (rem') at ($(rem)+0.96*(0.2,0.048)$);
\coordinate (re2') at ($(re2)+0.96*(0.2,0.048)$);
\coordinate (re1') at ($(re1)+0.96*(0.2,0.048)$);

\coordinate (rekn) at ($(cr)-1*(0,0.29)+1*(corR)$);
\coordinate (rek2) at ($(cr)-2*(-0.28*0.25,0.285*0.85)+1.7*(corR)$);
\coordinate (rek1) at ($(cr)-2*(+0.26*0.25,0.325*0.85)+1.7*(corR)$);

\coordinate (uem)  at ($(cu)+1*(0,0.29)-1*(corU)$);
\coordinate (ue2)  at ($(cu)+2*(0,0.29)-2*(corU)$);
\coordinate (ue1)  at ($(cu)+3*(0,0.29)-3*(corU)$);
\coordinate (ue0)  at ($(cu)+4*(0,0.29)-4*(corU)$);
\coordinate (ue0') at ($(ue0)+ (0,0.21)-1*(corU)$);

\coordinate (uem') at ($(uem)+0.96*(0.2,0.040)$);
\coordinate (ue2') at ($(ue2)+0.96*(0.2,0.040)$);
\coordinate (ue1') at ($(ue1)+0.96*(0.2,0.040)$);

\coordinate (uekn) at ($(cu)-1*(0,0.29)+1*(corU)$);
\coordinate (uek2) at ($(cu)-2*(-0.28*0.25,0.285*0.85)+1.7*(corU)$);
\coordinate (uek1) at ($(cu)-2*(+0.26*0.25,0.320*0.85)+1.7*(corU)$);

\shade [ball color=black] (ek2) circle [radius = 1.5pt];
\draw  [>=latex, <->, shorten >= 3.5pt, shorten <= 3.5pt, color=black] (ekn)--(ek2);
\shade [ball color=black] (ekn) circle [radius = 1.5pt];
\draw  [>=latex, <->, shorten >= 3pt, shorten <= 3pt, color=black] (c)--(ekn);
\draw  [>=latex, <->, shorten >= 3.5pt, shorten <= 3.5pt, color=black] (ek2)--(ek1);
\draw  [>=latex, <->, shorten >= 3.5pt, shorten <= 3.5pt, color=black] (ekn)--(ek1);
\shade [ball color=black] (ek1) circle [radius = 1.5pt];

\begin{scope}[color=black!42]
\shade [ball color=black!84] (rek2) circle [radius = 0.96*1.5pt];
\filldraw [color=white, opacity = 0.5] (rek2) circle [radius = 0.96*1.55pt];
\draw  [>=latex, <->, shorten >= 3.5pt, shorten <= 3.5pt] (rekn)--(rek2);
\shade [ball color=black!84] (rekn) circle [radius = 0.96*1.5pt];
\filldraw [color=white, opacity = 0.5] (rekn) circle [radius = 0.96*1.55pt];
\draw  [>=latex, <->, shorten >= 3pt, shorten <= 3pt] (cr)--(rekn);
\draw  [>=latex, <->, shorten >= 3.5pt, shorten <= 3.5pt] (rek2)--(rek1);
\draw  [>=latex, <->, shorten >= 3.5pt, shorten <= 3.5pt] (rekn)--(rek1);
\shade [ball color=black!84] (rek1) circle [radius = 0.96*1.5pt];
\filldraw [color=white, opacity = 0.5] (rek1) circle [radius = 0.96*1.55pt];
\shade [ball color=black!84] (uek2) circle [radius = 0.90*1.5pt];
\filldraw [color=white, opacity = 0.5] (uek2) circle [radius = 0.90*1.55pt];
\draw  [>=latex, <->, shorten >= 3.5pt, shorten <= 3.5pt] (uekn)--(uek2);
\shade [ball color=black!84] (uekn) circle [radius = 0.90*1.5pt];
\filldraw [color=white, opacity = 0.5] (uekn) circle [radius = 0.90*1.55pt];
\draw  [>=latex, <->, shorten >= 3pt, shorten <= 3pt] (cu)--(uekn);
\draw  [>=latex, <->, shorten >= 3.5pt, shorten <= 3.5pt] (uek2)--(uek1);
\draw  [>=latex, <->, shorten >= 3.5pt, shorten <= 3.5pt] (uekn)--(uek1);
\shade [ball color=black!84] (uek1) circle [radius = 0.90*1.5pt];
\filldraw [color=white, opacity = 0.5] (uek1) circle [radius = 0.90*1.55pt];
\end{scope}

\begin{scope}[color=black!42]
\drawtileflattmslanted{(c1)}{(c2)}{(c3)}{(c4)}
\end{scope}

\begin{scope}[color=black!25]
\drawtileflattmslanted{(c1r)}{(c2r)}{(c3r)}{(c4r)}
\drawtileflattmslanted{(c1u)}{(c2u)}{(c3u)}{(c4u)}
\end{scope}

\begin{scope}[>=latex, <->, shorten >= -7.5pt, shorten <= 4pt, color=black!32]
\draw [] (cr)--($(c34r)+(cr)-(dr)$);
\draw [] (cr)--($(c41r)+(cr)-(dr)$);
\draw [] (cu)--($(c34u)+(cu)-(du)$);
\draw [] (cu)--($(c41u)+(cu)-(du)$);
\end{scope}

\shade [ball color=black!64] (cr) circle [radius = 0.96*1.5pt];
\filldraw [color=white, opacity = 0.5] (cr) circle [radius = 0.96*1.55pt];
\shade [ball color=black!64] (cu) circle [radius = 0.90*1.5pt];
\filldraw [color=white, opacity = 0.5] (cu) circle [radius = 0.90*1.55pt];

\begin{scope}[>=latex, <->, shorten >= 3pt, shorten <= -7.5pt, color=black!32]
\draw [] ($(c12u)+(cu)-(du)$)--(cu);
\draw [] ($(c23r)+(cr)-(dr)$)--(cr);
\end{scope}

\begin{scope}[>=latex, <->, shorten >= 3pt, shorten <= 2pt, color=black!84]
\draw [] (c)--(cr); 
\draw [] (c)--(cu); 
\end{scope}

\shade [ball color=black!64] (c) circle [radius = 1.5pt];

\draw  [>=latex, <->, shorten >= 3pt, shorten <= 3pt, color=black, densely dashed] (c)--(em);
\shade [ball color=black] (em') circle [radius = 1.5pt];
\draw  [>=latex, <->, shorten >= 3.5pt, shorten <= 3.5pt, color=black] (em)--(em');
\shade [ball color=black] (em) circle [radius = 1.5pt];
\draw  [>=latex, <->, shorten >= 3pt, shorten <= 3pt, color=black, densely dashed] (em)--(e2);
\shade [ball color=black] (e2') circle [radius = 1.5pt];
\draw  [>=latex, <->, shorten >= 3.5pt, shorten <= 3.5pt, color=black] (e2)--(e2');
\shade [ball color=black] (e2) circle [radius = 1.5pt];
\draw  [>=latex, <->, shorten >= 3pt, shorten <= 3pt, color=black, densely dashed] (e2)--(e1);
\shade [ball color=black] (e1') circle [radius = 1.5pt];
\draw  [>=latex, <->, shorten >= 3.5pt, shorten <= 3.5pt, color=black] (e1)--(e1');
\shade [ball color=black] (e1) circle [radius = 1.5pt];
\draw  [>=latex, <->, shorten >= 3pt, shorten <= 3pt, color=black, densely dashed] (e1)--(e0);
\shade [ball color=black] (e0) circle [radius = 1.5pt];
\draw  [>=latex, <->, shorten >= 3pt, shorten <= 3pt, color=black] (e0)--(e0');
\shade [ball color=black] (e0') circle [radius = 1.5pt];

\begin{scope}[color=black!42]
\draw  [>=latex, <->, shorten >= 3pt, shorten <= 3pt, densely dashed] (cr)--(rem);
\shade [ball color=black!84] (rem') circle [radius = 0.96*1.5pt];
\filldraw [color=white, opacity = 0.5] (rem') circle [radius = 0.96*1.55pt];
\draw  [>=latex, <->, shorten >= 3.5pt, shorten <= 3.5pt] (rem)--(rem');
\shade [ball color=black!84] (rem) circle [radius = 0.96*1.5pt];
\filldraw [color=white, opacity = 0.5] (rem) circle [radius = 0.96*1.55pt];
\draw  [>=latex, <->, shorten >= 3pt, shorten <= 3pt, densely dashed] (rem)--(re2);
\shade [ball color=black!84] (re2') circle [radius = 0.96*1.5pt];
\filldraw [color=white, opacity = 0.5] (re2') circle [radius = 0.96*1.55pt];
\draw  [>=latex, <->, shorten >= 3.5pt, shorten <= 3.5pt] (re2)--(re2');
\shade [ball color=black!84] (re2) circle [radius = 0.96*1.5pt];
\filldraw [color=white, opacity = 0.5] (re2) circle [radius = 0.96*1.55pt];
\draw  [>=latex, <->, shorten >= 3pt, shorten <= 3pt, densely dashed] (re2)--(re1);
\shade [ball color=black!84] (re1') circle [radius = 0.96*1.5pt];
\filldraw [color=white, opacity = 0.5] (re1') circle [radius = 0.96*1.55pt];
\draw  [>=latex, <->, shorten >= 3.5pt, shorten <= 3.5pt] (re1)--(re1');
\shade [ball color=black!84] (re1) circle [radius = 0.96*1.5pt];
\filldraw [color=white, opacity = 0.5] (re1) circle [radius = 0.96*1.55pt];
\draw  [>=latex, <->, shorten >= 3pt, shorten <= 3pt, densely dashed] (re1)--(re0);
\shade [ball color=black!84] (re0) circle [radius = 0.96*1.5pt];
\filldraw [color=white, opacity = 0.5] (re0) circle [radius = 0.96*1.55pt];
\draw  [>=latex, <->, shorten >= 3pt, shorten <= 3pt] (re0)--(re0');
\shade [ball color=black!84] (re0') circle [radius = 0.96*1.5pt];
\filldraw [color=white, opacity = 0.5] (re0') circle [radius = 0.96*1.55pt];
\draw  [>=latex, <->, shorten >= 3pt, shorten <= 3pt, densely dashed] (cu)--(uem);
\shade [ball color=black!84] (uem') circle [radius = 0.90*1.5pt];
\filldraw [color=white, opacity = 0.5] (uem') circle [radius = 0.90*1.55pt];
\draw  [>=latex, <->, shorten >= 3.5pt, shorten <= 3.5pt] (uem)--(uem');
\shade [ball color=black!84] (uem) circle [radius = 0.90*1.5pt];
\filldraw [color=white, opacity = 0.5] (uem) circle [radius = 0.90*1.55pt];
\draw  [>=latex, <->, shorten >= 3pt, shorten <= 3pt, densely dashed] (uem)--(ue2);
\shade [ball color=black!84] (ue2') circle [radius = 0.90*1.5pt];
\filldraw [color=white, opacity = 0.5] (ue2') circle [radius = 0.90*1.55pt];
\draw  [>=latex, <->, shorten >= 3.5pt, shorten <= 3.5pt] (ue2)--(ue2');
\shade [ball color=black!84] (ue2) circle [radius = 0.90*1.5pt];
\filldraw [color=white, opacity = 0.5] (ue2) circle [radius = 0.90*1.55pt];
\draw  [>=latex, <->, shorten >= 3pt, shorten <= 3pt, densely dashed] (ue2)--(ue1);
\shade [ball color=black!84] (ue1') circle [radius = 0.90*1.5pt];
\filldraw [color=white, opacity = 0.5] (ue1') circle [radius = 0.90*1.55pt];
\draw  [>=latex, <->, shorten >= 3.5pt, shorten <= 3.5pt] (ue1)--(ue1');
\shade [ball color=black!84] (ue1) circle [radius = 0.90*1.5pt];
\filldraw [color=white, opacity = 0.5] (ue1) circle [radius = 0.90*1.55pt];
\draw  [>=latex, <->, shorten >= 3pt, shorten <= 3pt, densely dashed] (ue1)--(ue0);
\shade [ball color=black!84] (ue0) circle [radius = 0.90*1.5pt];
\filldraw [color=white, opacity = 0.5] (ue0) circle [radius = 0.90*1.55pt];
\draw  [>=latex, <->, shorten >= 3pt, shorten <= 3pt] (ue0)--(ue0');
\shade [ball color=black!84] (ue0') circle [radius = 0.90*1.5pt];
\filldraw [color=white, opacity = 0.5] (ue0') circle [radius = 0.90*1.55pt];
\end{scope}

\begin{scope}[>=latex, <->, shorten >= 3pt, shorten <= -7.5pt, color=black!84]
\draw [] ($(c12)+(c)-(d)$)--(c);
\draw [] ($(c23)+(c)-(d)$)--(c);
\end{scope}

\node [right, color=black] at ($(c)  +( 0.03,-0.03)$) {$a$} ;
\node [right, color=black] at ($(em) +(-0.01,-0.08)$) {$e_m^a$} ;
\node [right, color=black] at ($(e2) +(-0.01,-0.08)$) {$e_{k_n+i}^a$} ;
\node [right, color=black] at ($(e1) +(-0.01,-0.08)$) {$e_{k_n+j+4}^a$} ;
\node [right, color=black] at ($(e0) +(-0.01,+0.06)$) {$e_{k_n+9}^a$} ;
\node [right, color=black] at ($(e0')+(-0.01,+0.06)$) {$e_{k_n+10}^a$} ;
\node [right, color=black] at ($(em') +(0.02,-0.01)$) {$e_P^a$} ;
\node [right, color=black] at ($(e2') +(0.02,-0.01)$) {$e_R^a$} ;
\node [right, color=black] at ($(e1') +(0.02,-0.01)$) {$e_U^a$} ;

\node [right, color=black] at ($(ekn)+(-0.01,+0.06)$) {$e_0^a$} ;
\node [right, color=black] at ($(ek2)+(-0.01,-0.08)$) {$e_2^a$} ;
\node [right, color=black] at ($(ek1)+(-0.01,-0.08)$) {$e_1^a$} ;




\end{tikzpicture}
\caption{Simulation of $P_m(a)$, $R_i(a)$, $U_j(a)$, and $G(a)$}
\label{fig:15}
\end{figure}

For every individual $a\in \mathcal{D}$, let us add new elements
$c_0^a, c_1^a, c_2^a$; 
$e_0^a,\ldots,e_{k_n+10}^a$;
$e_P^a, e_R^a, e_U^a$
to $\mathcal{D}$,
expand $\mathcal{I}(P)$ with 
\begin{itemize}
\item
$\otuple{a,c_0^a},\otuple{c_0^a,c_1^a},\otuple{c_1^a,c_2^a},\otuple{c_2^a,c_0^a}$;
\item
$\otuple{a,e_0^a},\otuple{e_0^a,e_1^a},\ldots,\otuple{e_{k_n+9}^a,e_{k_n+10}^a}$;
\item
$\otuple{e_m^a,e_P^a}$ if $\cModel{M}\models P_m(a)$ with $m\in\{0,\ldots,k_n\}$;
\item
$\otuple{e_{k_n+i}^a,e_R^a}$ if $\cModel{M}\models R_i(a)$ with $i\in\{1,2,3,4\}$;
\item
$\otuple{e_{k_n+j+4}^a,e_U^a}$ if $\cModel{M}\models U_j(a)$ with $j\in\{1,2,3,4\}$,
\end{itemize}
and then take the symmetric closure of the resulting relation.
Let $\cModel{M}'=\otuple{\mathcal{D}',\mathcal{I}'}$ be the resulting model; see Figure~\ref{fig:15}. 

By the definition of $\cModel{M}'$, for every $a\in \mathcal{D}'$,
$$
\begin{array}{lcl}
  \cModel{M}'\models \mathit{grid}'(a)
    & \iff
    & \mbox{$a\in \mathcal{D}$.}
    \smallskip\\
\end{array}
$$
Indeed, if $a\in \mathcal{D}$, then $\cModel{M}'\models \mathit{grid}'(a)$ since
$$
\cModel{M}'~\models~ P(a,c_0^a)\wedge P(c_0^a,c_1^a)\wedge P(c_1^a,c_2^a)\wedge P(c_2^a,c_0^a).
$$
For the converse implication, assume that $\cModel{M}'\models \mathit{grid}'(a)$. Then there is an element $c\in \mathcal{D}'$ such that $\cModel{M}'\models P(a,c)$ and $c$ is in a cycle of length~$3$. But the length of every $P$-cycle in $\mathcal{D}$ is even (see Remark~\ref{rem:lem:Trakhtenbrot:lem2:sib}), therefore, $c\in \{c_0^b,c_1^b,c_2^b\}$, for some $b\in \mathcal{D}$. Observe that $\cModel{M}'\not\models \mathit{grid}'(c_i^b)$, for every $i\in\{0,1,2\}$, and hence, $a\not\in\{c_0^b,c_1^b,c_2^b\}$. Then, by the construction of $\cModel{M}'$, we conclude that $a=b$ and $c=c_0^b$, so, in particular, $a\in \mathcal{D}$.

Also, it is not hard to check that, by the definition of $\cModel{M}'$, for every $i\in\{0,\ldots,k_n\}$, $j\in\{1,2,3,4\}$, and $a\in \mathcal{D}$,
$$
\begin{array}{lcl}
  \cModel{M}\models \hfill P_i\hfill(a)
    & \iff
    & \cModel{M}'\models \mathit{tile}'_i(a);
    \smallskip\\
  \cModel{M}\models \hfill R_j(a)
    & \iff
    & \cModel{M}'\models \mathit{tile}'_{k_n+j}(a);
    \smallskip\\
  \cModel{M}\models \hfill U_j(a)
    & \iff
    & \cModel{M}'\models \mathit{tile}'_{k_n+j+4}(a).
\end{array}
$$
Using these, we obtain that $\cModel{M}'\not\models S_1\mathit{MTiling}_n^{\mathbb{X}}$. Notice that the model $\cModel{M}'$ is finite. 
Thus, $S_1\mathit{MTiling}_n^{\mathbb{X}}\not\in\logic{SIB}_{\mathit{fin}}$.
\end{proof}

As a result, we obtain an analogue of the Trakhtenbrot theorem for the theory of symmetric irreflexive binary relation when the language contains only three variables (and a single binary predicate letter).

\begin{theorem} 
\label{th:Trakhtenbrot:bin:sib:P}
Theories\/ $\logic{QCl}^{\mathit{bin}}$ and\/ $\logic{SIB}_{\mathit{fin}}$ are recursively inseparable in a language containing a binary predicate letter and three individual variables.
\end{theorem}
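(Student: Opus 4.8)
The plan is to read the statement off the two lemmas just established, exactly as Theorem~\ref{th:Trakhtenbrot:bin:P} was read off Lemmas~\ref{lem:Trakhtenbrot:lem1:binP} and~\ref{lem:Trakhtenbrot:lem2:binP}. First I would note that $n\mapsto S_1\mathit{MTiling}_n^{\mathbb{X}}$ is a total computable function from $\numN$ into the set of closed formulas of the three-variable fragment whose only non-logical symbol is the binary letter~$P$; this is immediate from the explicit definition of $S_1$ and from Remark~\ref{rem:tile:x&y:2} and the form of $\mathit{grid}'$. Next, since every formula valid in all models is in particular valid in every finite symmetric irreflexive model, we have $\logic{QCl}^{\mathit{bin}}\subseteq\logic{SIB}_{\mathit{fin}}$, so ``recursively separable'' for this pair means precisely: there exists a recursive set $Z$ with $\logic{QCl}^{\mathit{bin}}\subseteq Z\subseteq\logic{SIB}_{\mathit{fin}}$.

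Then I would combine the two lemmas. If $n\in\mathbb{X}$, Lemma~\ref{lem:Trakhtenbrot:lem1:sib:binP} gives $S_1\mathit{MTiling}_n^{\mathbb{X}}\in\logic{QCl}$, and since this formula lives in the $\{P\}$-fragment with three variables, $S_1\mathit{MTiling}_n^{\mathbb{X}}\in\logic{QCl}^{\mathit{bin}}\subseteq Z$. If $n\in\mathbb{Y}$, Lemma~\ref{lem:Trakhtenbrot:lem2:sib:binP} gives $S_1\mathit{MTiling}_n^{\mathbb{X}}\notin\logic{SIB}_{\mathit{fin}}\supseteq Z$, hence $S_1\mathit{MTiling}_n^{\mathbb{X}}\notin Z$. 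So a decision procedure for $Z$, composed with the reduction $n\mapsto S_1\mathit{MTiling}_n^{\mathbb{X}}$, would produce a recursive set separating $\mathbb{X}$ from $\mathbb{Y}$, contradicting the recursive inseparability of $\mathbb{X}$ and~$\mathbb{Y}$. Therefore no such $Z$ exists, which is the claim. As a by-product, since $\logic{QCl}^{\mathit{bin}}$ is recursively enumerable while the pair is recursively inseparable, $\logic{SIB}_{\mathit{fin}}$ is not recursively enumerable; and $\logic{SRB}$, $\logic{SRB}_{\mathit{fin}}$ can be handled afterwards by the standard complementation trick on~$P$.

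All the real work sits in the two lemmas, so the step I expect to be the genuine obstacle — already handled above — is Lemma~\ref{lem:Trakhtenbrot:lem2:sib:binP}: one must extend the finite $\bm{sib}$-model of Lemma~\ref{lem:Trakhtenbrot:lem2:sib} by the gadgets of Figure~\ref{fig:15} so that (i) taking the symmetric irreflexive closure introduces no spurious short cycles; (ii) $\mathit{grid}'$ picks out exactly the old grid elements — which works precisely because every $P$-cycle of the base model has even length (Remark~\ref{rem:lem:Trakhtenbrot:lem2:sib}), so a length-$3$ ``triangle'' neighbour of an element must be one of the newly added $c_i^a$; and (iii) the formulas $\mathit{tile}'_k$, which read off the shortest-path distance to the grid, correctly recover $P_m$, $R_i$, $U_j$ at the original elements. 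Granting these, the theorem is immediate.

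\begin{proof}
Immediate from Lemmas~\ref{lem:Trakhtenbrot:lem1:sib:binP} and~\ref{lem:Trakhtenbrot:lem2:sib:binP}, together with the inclusion $\logic{QCl}^{\mathit{bin}}\subseteq\logic{SIB}_{\mathit{fin}}$: if a recursive $Z$ with $\logic{QCl}^{\mathit{bin}}\subseteq Z\subseteq\logic{SIB}_{\mathit{fin}}$ existed, then $n\in\mathbb{X}$ would imply $S_1\mathit{MTiling}_n^{\mathbb{X}}\in Z$ and $n\in\mathbb{Y}$ would imply $S_1\mathit{MTiling}_n^{\mathbb{X}}\notin Z$, so the recursive set $\{n : S_1\mathit{MTiling}_n^{\mathbb{X}}\in Z\}$ would separate $\mathbb{X}$ from~$\mathbb{Y}$, a contradiction.
\end{proof}
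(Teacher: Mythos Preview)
Your proof is correct and follows exactly the paper's approach: the paper's proof is literally ``Immediate from Lemmas~\ref{lem:Trakhtenbrot:lem1:sib:binP} and~\ref{lem:Trakhtenbrot:lem2:sib:binP}'', and you have simply made the standard separating-set argument explicit. Your surrounding commentary on where the real work lies (Lemma~\ref{lem:Trakhtenbrot:lem2:sib:binP}, even cycles, the $\mathit{grid}'$/$\mathit{tile}'_k$ gadgets) is accurate and matches the paper's construction.
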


\begin{proof}
Immediate from Lemmas~\ref{lem:Trakhtenbrot:lem1:sib:binP} and~\ref{lem:Trakhtenbrot:lem2:sib:binP}.
\end{proof}

This theorem provides us with an answer to the question raised by S.\,Speranski: is $\logic{SIB}_{\mathit{fin}}$ decidable in the language with three variables? As we can see, it is undecidable; moreover, it is not recursively enumerable (since $\logic{QCl}$ is recursively enumerable) and even $\Pi^0_1$-complete (we will make a general remark on this below, see Proposition~\ref{prop:Pi01} and Corollary~\ref{cor:sib:srb:prop:Pi01}).

\begin{corollary} 
\label{cor:a:th:Trakhtenbrot:bin:sib:P}
Theories\/ $\logic{SIB}$ and\/ $\logic{SIB}_{\mathit{fin}}$ are recursively inseparable in a language containing a binary predicate letter and three individual variables.
\end{corollary}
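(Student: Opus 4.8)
The plan is to reuse verbatim the two lemmas that already power Theorem~\ref{th:Trakhtenbrot:bin:sib:P}, noting that they in fact localize the single formula $S_1\mathit{MTiling}_n^{\mathbb{X}}$ between $\logic{SIB}$ and $\logic{SIB}_{\mathit{fin}}$, not merely between $\logic{QCl}^{\mathit{bin}}$ and $\logic{SIB}_{\mathit{fin}}$. First I would record that every finite simple graph is a simple graph, so $\logic{SIB}\subseteq\logic{SIB}_{\mathit{fin}}$; hence the extended notion of recursive inseparability for $X\subset Y$ applies, and what has to be shown is that $\logic{SIB}$ and $\numN\setminus\logic{SIB}_{\mathit{fin}}$ (equivalently, the $\logic{SIB}$-validity and $\logic{SIB}_{\mathit{fin}}$-refutability problems) are recursively inseparable, with witnesses living in the language whose only predicate letter is the binary $P$ and whose only individual variables are $x,y,z$.

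For the positive side: if $n\in\mathbb{X}$, then Lemma~\ref{lem:Trakhtenbrot:lem1:sib:binP} gives $S_1\mathit{MTiling}_n^{\mathbb{X}}\in\logic{QCl}$. Since $S_1\mathit{MTiling}_n^{\mathbb{X}}$ contains no predicate letters except the binary $P$ and involves only the three variables $x,y,z$, and since a formula valid in all models is in particular valid in every simple graph, we obtain $S_1\mathit{MTiling}_n^{\mathbb{X}}\in\logic{SIB}$. For the negative side: if $n\in\mathbb{Y}$, then Lemma~\ref{lem:Trakhtenbrot:lem2:sib:binP} gives $S_1\mathit{MTiling}_n^{\mathbb{X}}\not\in\logic{SIB}_{\mathit{fin}}$ outright.

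It remains to run the standard reduction argument. Let $f$ be the recursive function sending $n$ to the G\"{o}del number of $S_1\mathit{MTiling}_n^{\mathbb{X}}$. Suppose, for contradiction, that some recursive set $Z$ satisfies $\logic{SIB}\subseteq Z\subseteq\logic{SIB}_{\mathit{fin}}$. Then $Z'=f^{-1}(Z)$ is recursive; by the positive side $\mathbb{X}\subseteq Z'$ (because $\logic{SIB}\subseteq Z$), and by the negative side $Z'\cap\mathbb{Y}=\varnothing$ (because $Z\subseteq\logic{SIB}_{\mathit{fin}}$). Thus $Z'$ would recursively separate $\mathbb{X}$ and $\mathbb{Y}$, contradicting their recursive inseparability. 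Hence no such $Z$ exists, which is exactly the assertion of the corollary.

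There is essentially no obstacle beyond Theorem~\ref{th:Trakhtenbrot:bin:sib:P}: the only point that needs a word is the trivial inclusion $\logic{QCl}^{\mathit{bin}}\subseteq\logic{SIB}$, which upgrades the left-hand theory of that theorem from $\logic{QCl}^{\mathit{bin}}$ to $\logic{SIB}$ at no cost (a smaller ``valid in all models'' set sits inside the larger ``valid on the subclass of simple graphs'' set), so the same pair of witnessing lemmas does double duty. As a by-product, since $\logic{SIB}$ is recursively enumerable, $\logic{SIB}_{\mathit{fin}}$ is not recursively enumerable.
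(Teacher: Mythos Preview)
Your proof is correct and takes essentially the same approach as the paper: both rely on the trivial inclusion $\logic{QCl}^{\mathit{bin}}\subseteq\logic{SIB}$ to upgrade Theorem~\ref{th:Trakhtenbrot:bin:sib:P}. The paper compresses this into a one-line contrapositive (any recursive separator of $\logic{SIB}$ and $\logic{SIB}_{\mathit{fin}}$ would already separate $\logic{QCl}^{\mathit{bin}}$ and $\logic{SIB}_{\mathit{fin}}$), whereas you unpack it by returning to Lemmas~\ref{lem:Trakhtenbrot:lem1:sib:binP} and~\ref{lem:Trakhtenbrot:lem2:sib:binP} and spelling out the reduction to~$\mathbb{X},\mathbb{Y}$ explicitly.
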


\begin{proof}
If $\logic{SIB}$ and $\logic{SIB}_{\mathit{fin}}$ are recursively separable, then $\logic{QCl}^{\mathit{bin}}$ and $\logic{SIB}_{\mathit{fin}}$ should be recursively separable, since $\logic{QCl}^{\mathit{bin}}\subseteq \logic{SIB}$, which contradicts Theorem~\ref{th:Trakhtenbrot:bin:sib:P}. 
\end{proof}

Now, let us turn to $\logic{SRB}$ and~$\logic{SRB}_{\mathit{fin}}$. Observe that a binary relation is irreflexive and symmetric if, and only if, its complement is reflexive and symmetric. 
For a formula $\varphi$ in the language containing a binary predicate letter~$P$, define ${\sim}\varphi$ as the formula obtained from $\varphi$ by substituting $\neg P(x_1,x_2)$ instead of $P(x_1,x_2)$, i.e., by placing $\neg$ before every occurrence of $P$ in~$\varphi$. Then, clearly, 
$$
\begin{array}{lcl}
  \varphi\in\logic{SIB}
    & \iff
    & {\sim}\varphi\in\logic{SRB};
    \smallskip\\
  \varphi\in\logic{SIB}_{\mathit{fin}}
    & \iff
    & {\sim}\varphi\in\logic{SRB}_{\mathit{fin}},
    \smallskip\\
\end{array}
$$

This observation yields us to the following corollaries.

\begin{corollary} 
\label{cor:srb:th:Trakhtenbrot:bin:sib:P}
Theories\/ $\logic{QCl}^{\mathit{bin}}$ and\/ $\logic{SRB}_{\mathit{fin}}$ are recursively inseparable in a language containing a binary predicate letter and three individual variables.
\end{corollary}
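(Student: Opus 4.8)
The plan is to reduce the statement to Theorem~\ref{th:Trakhtenbrot:bin:sib:P} through the complementation translation ${\sim}$ defined just above, which puts $\neg$ before every occurrence of $P$. The crucial facts are already on the table: since a binary relation is symmetric and irreflexive exactly when its complement is symmetric and reflexive, and since interpreting $P$ by a relation $S$ in ${\sim}\varphi$ amounts to interpreting $P$ by the complement of $S$ in $\varphi$ (the complement of a finite model's relation again living on a finite domain), we have $\varphi\in\logic{SIB}_{\mathit{fin}}\iff{\sim}\varphi\in\logic{SRB}_{\mathit{fin}}$ for every formula $\varphi$ in the language with the single binary letter $P$. Moreover ${\sim}\varphi$ is obtained from $\varphi$ by substituting $\neg P(x_1,x_2)$ for $P(x_1,x_2)$, so, as $\logic{QCl}$ is closed under substitution and the substituted formula again uses only $P$, we get $\varphi\in\logic{QCl}^{\mathit{bin}}\Rightarrow{\sim}\varphi\in\logic{QCl}^{\mathit{bin}}$; finally $\logic{QCl}^{\mathit{bin}}\subseteq\logic{SRB}_{\mathit{fin}}$, so the pair $\logic{QCl}^{\mathit{bin}},\logic{SRB}_{\mathit{fin}}$ is a legitimate instance of the extended notion of recursive separability.

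Then I would argue by contradiction. Assume $\logic{QCl}^{\mathit{bin}}$ and $\logic{SRB}_{\mathit{fin}}$ are recursively separable, say $\logic{QCl}^{\mathit{bin}}\subseteq Z\subseteq\logic{SRB}_{\mathit{fin}}$ with $Z$ recursive, and set $Z^{\sim}=\{\varphi:{\sim}\varphi\in Z\}$. Since $\varphi\mapsto{\sim}\varphi$ is computable, $Z^{\sim}$ is recursive. If $\varphi\in\logic{QCl}^{\mathit{bin}}$ then ${\sim}\varphi\in\logic{QCl}^{\mathit{bin}}\subseteq Z$, hence $\varphi\in Z^{\sim}$; and if $\varphi\in Z^{\sim}$ then ${\sim}\varphi\in Z\subseteq\logic{SRB}_{\mathit{fin}}$, hence $\varphi\in\logic{SIB}_{\mathit{fin}}$ by the equivalence above. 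Thus $\logic{QCl}^{\mathit{bin}}\subseteq Z^{\sim}\subseteq\logic{SIB}_{\mathit{fin}}$, so $Z^{\sim}$ recursively separates $\logic{QCl}^{\mathit{bin}}$ and $\logic{SIB}_{\mathit{fin}}$, contradicting Theorem~\ref{th:Trakhtenbrot:bin:sib:P}. Equivalently, one can skip the contradiction and directly observe that $n\mapsto{\sim}(S_1\mathit{MTiling}_n^{\mathbb{X}})$ reduces the pair $(\mathbb{X},\mathbb{Y})$ to $(\logic{QCl}^{\mathit{bin}},\logic{SRB}_{\mathit{fin}})$, using Lemmas~\ref{lem:Trakhtenbrot:lem1:sib:binP} and~\ref{lem:Trakhtenbrot:lem2:sib:binP} together with closure of $\logic{QCl}$ under the $\neg P/P$ substitution and the $\logic{SIB}_{\mathit{fin}}\leftrightarrow\logic{SRB}_{\mathit{fin}}$ equivalence.

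I do not expect a genuine obstacle here: all the real work sits in Theorem~\ref{th:Trakhtenbrot:bin:sib:P}, and ${\sim}$ introduces neither new predicate letters nor new variables, so the bound ``a single binary predicate letter and three individual variables'' is preserved automatically. The only points needing a little care are the asymmetric direction in which the $\logic{SIB}_{\mathit{fin}}\leftrightarrow\logic{SRB}_{\mathit{fin}}$ equivalence is invoked, and the fact that one should appeal to closure of $\logic{QCl}$ under the $\neg P/P$ substitution rather than treating ${\sim}$ as a literal involution on formulas (it is one only up to the provable equivalence of $\neg\neg P(x_1,x_2)$ with $P(x_1,x_2)$, which is all one needs, but is worth stating precisely).
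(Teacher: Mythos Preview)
Your proposal is correct and follows essentially the same approach as the paper: the corollary is stated immediately after the observation that $\varphi\in\logic{SIB}_{\mathit{fin}}\iff{\sim}\varphi\in\logic{SRB}_{\mathit{fin}}$ (and the companion equivalence for $\logic{SIB}$, $\logic{SRB}$), and the paper simply says this observation ``yields'' the corollary from Theorem~\ref{th:Trakhtenbrot:bin:sib:P}. Your writeup spells out the routine reduction via $Z^{\sim}$ (or equivalently via $n\mapsto{\sim}S_1\mathit{MTiling}_n^{\mathbb{X}}$) that the paper leaves implicit; the remark that ${\sim}$ introduces no new letters or variables is exactly what is needed to preserve the language bound.
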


\begin{corollary} 
\label{cor:a:srb:th:Trakhtenbrot:bin:sib:P}
Theories\/ $\logic{SRB}$ and\/ $\logic{SRB}_{\mathit{fin}}$ are recursively inseparable in a language containing a binary predicate letter and three individual variables.
\end{corollary}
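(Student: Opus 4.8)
The plan is to deduce Corollary~\ref{cor:a:srb:th:Trakhtenbrot:bin:sib:P} from results already in hand by the same purely recursion-theoretic bookkeeping that turned Theorem~\ref{th:Trakhtenbrot:bin:sib:P} into Corollary~\ref{cor:a:th:Trakhtenbrot:bin:sib:P}; no new model construction is needed. The primary route I would take goes through Corollary~\ref{cor:srb:th:Trakhtenbrot:bin:sib:P}. Note first the trivial inclusion $\logic{QCl}^{\mathit{bin}}\subseteq\logic{SRB}$: a formula valid in \emph{all} models (in the binary-predicate, three-variable language) is in particular valid in all models whose binary letter is interpreted by a symmetric reflexive relation. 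Now suppose, for contradiction, that $\logic{SRB}$ and $\logic{SRB}_{\mathit{fin}}$ are recursively separable, so that there is a recursive $Z$ with $\logic{SRB}\subseteq Z\subseteq\logic{SRB}_{\mathit{fin}}$. Combining this with the inclusion just noted yields $\logic{QCl}^{\mathit{bin}}\subseteq Z\subseteq\logic{SRB}_{\mathit{fin}}$, i.e.\ $\logic{QCl}^{\mathit{bin}}$ and $\logic{SRB}_{\mathit{fin}}$ are recursively separable, contradicting Corollary~\ref{cor:srb:th:Trakhtenbrot:bin:sib:P}. This is the entire argument, and it mirrors the proof of Corollary~\ref{cor:a:th:Trakhtenbrot:bin:sib:P} verbatim.

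As an alternative, and to make the dependence on the $\logic{SIB}$-results explicit, I would point out the one-line pullback along the operation ${\sim}$. Recall ${\sim}$ is the effective syntactic transformation placing $\neg$ before every occurrence of $P$; it does not change the language (still a single binary letter and three variables), it is computable, and the equivalences $\varphi\in\logic{SIB}\iff{\sim}\varphi\in\logic{SRB}$ and $\varphi\in\logic{SIB}_{\mathit{fin}}\iff{\sim}\varphi\in\logic{SRB}_{\mathit{fin}}$ hold because a binary relation is irreflexive and symmetric precisely when its complement is reflexive and symmetric. Given a recursive $Z$ with $\logic{SRB}\subseteq Z\subseteq\logic{SRB}_{\mathit{fin}}$, set $Z'=\{\varphi:{\sim}\varphi\in Z\}$; then $Z'$ is recursive since ${\sim}$ is computable, and the two equivalences give $\logic{SIB}\subseteq Z'\subseteq\logic{SIB}_{\mathit{fin}}$, contradicting Corollary~\ref{cor:a:th:Trakhtenbrot:bin:sib:P}.

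There is essentially no obstacle here: the substantive work — simulating $M_0$ by finite $\logic{SIB}$-models, eliminating the auxiliary unary letters, and keeping to three variables — was already carried out in Theorem~\ref{th:Trakhtenbrot:bin:sib:P} and Lemmas~\ref{lem:Trakhtenbrot:lem1:sib:binP}--\ref{lem:Trakhtenbrot:lem2:sib:binP}, and the passage from $\logic{SIB}$ to $\logic{SRB}$ was reduced to the observation about complements of relations. The only points worth a sentence in the write-up are (i) the inclusion $\logic{QCl}^{\mathit{bin}}\subseteq\logic{SRB}$ (equivalently, that ${\sim}$ is language-preserving and computable) and (ii) that $Z\mapsto\{\varphi:{\sim}\varphi\in Z\}$ preserves recursiveness; both are immediate. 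I would present the first route as the proof and mention the ${\sim}$-pullback in a remark.
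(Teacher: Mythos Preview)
Your proposal is correct and matches the paper's approach: the paper states Corollary~\ref{cor:a:srb:th:Trakhtenbrot:bin:sib:P} without explicit proof, presenting it as an immediate consequence of the ${\sim}$-observation, and your primary route via Corollary~\ref{cor:srb:th:Trakhtenbrot:bin:sib:P} and the inclusion $\logic{QCl}^{\mathit{bin}}\subseteq\logic{SRB}$ is verbatim the argument the paper gives for Corollary~\ref{cor:a:th:Trakhtenbrot:bin:sib:P}. Both your routes are valid and both are implicit in the paper's presentation.
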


We can generalize these statements to a lot of other theories.

\begin{theorem}
\label{th:Trakhtenbrot:binP:gen:sib:srb}
Let\/ $\Gamma$ and\/ $\Gamma'$ be theories of a binary predicate such that\/ $\logic{QCl}^{\mathit{bin}}\subseteq \Gamma\subseteq \Gamma'$ and also\/ $\Gamma'\subseteq \logic{SIB}$ or\/ $\Gamma'\subseteq \logic{SRB}$. Then\/ $\Gamma$ and\/ $\Gamma'_{\mathit{fin}}$ are recursively inseparable in a language with three variables.
\end{theorem}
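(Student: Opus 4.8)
The plan is to reduce the statement, exactly as for Theorem~\ref{th:Trakhtenbrot:bin:sib:P}, to the recursive inseparability of $\mathbb{X}$ and $\mathbb{Y}$, reusing the formulas $S_1\mathit{MTiling}_n^{\mathbb{X}}$ (for the case $\Gamma'\subseteq\logic{SIB}$) and their complement-images ${\sim}S_1\mathit{MTiling}_n^{\mathbb{X}}$ (for the case $\Gamma'\subseteq\logic{SRB}$). I would first record that the claim is meaningful: since $\Gamma\subseteq\Gamma'$ and every $\Gamma'$-valid formula holds in every finite $\Gamma'$-model, we have $\Gamma\subseteq\Gamma'\subseteq\Gamma'_{\mathit{fin}}$; also $n\mapsto S_1\mathit{MTiling}_n^{\mathbb{X}}$ is computable, and $S_1\mathit{MTiling}_n^{\mathbb{X}}$ involves no predicate letter besides the binary $P$ and only three individual variables (by Remark~\ref{rem:tile:x&y:2} and the definitions of $\mathit{grid}'$ and $S_1$).

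For the case $\Gamma'\subseteq\logic{SIB}$: if $n\in\mathbb{X}$, then $S_1\mathit{MTiling}_n^{\mathbb{X}}\in\logic{QCl}$ by Lemma~\ref{lem:Trakhtenbrot:lem1:sib:binP}, and, being a formula of the binary language, it lies in $\logic{QCl}^{\mathit{bin}}\subseteq\Gamma$. If $n\in\mathbb{Y}$, then the finite model $\cModel{M}'$ built in the proof of Lemma~\ref{lem:Trakhtenbrot:lem2:sib:binP} refutes $S_1\mathit{MTiling}_n^{\mathbb{X}}$; I would check that $\cModel{M}'$ interprets $P$ as a symmetric irreflexive relation (the base model satisfies $\bm{sib}$, each added $P$-edge joins distinct vertices, and the symmetric closure is taken), so $\cModel{M}'$ is a finite $\logic{SIB}$-model and hence a finite $\Gamma'$-model, because $\Gamma'\subseteq\logic{SIB}$ forces the class of $\Gamma'$-models to contain every $\logic{SIB}$-model. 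Therefore $S_1\mathit{MTiling}_n^{\mathbb{X}}\notin\Gamma'_{\mathit{fin}}$.

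For the case $\Gamma'\subseteq\logic{SRB}$ I would work with $\psi_n={\sim}S_1\mathit{MTiling}_n^{\mathbb{X}}$. Replacing $P(x_1,x_2)$ by $\neg P(x_1,x_2)$ is a predicate substitution and $\logic{QCl}$ is closed by Substitution, so for $n\in\mathbb{X}$ we get ${\sim}S_1\mathit{MTiling}_n^{\mathbb{X}}\in\logic{QCl}^{\mathit{bin}}\subseteq\Gamma$. For $n\in\mathbb{Y}$, the equivalence $\varphi\in\logic{SIB}_{\mathit{fin}}\iff{\sim}\varphi\in\logic{SRB}_{\mathit{fin}}$ together with Lemma~\ref{lem:Trakhtenbrot:lem2:sib:binP} gives ${\sim}S_1\mathit{MTiling}_n^{\mathbb{X}}\notin\logic{SRB}_{\mathit{fin}}$; the finite $\logic{SRB}$-model witnessing this is the complement of $\cModel{M}'$, which is a finite $\Gamma'$-model since $\Gamma'\subseteq\logic{SRB}$, so ${\sim}S_1\mathit{MTiling}_n^{\mathbb{X}}\notin\Gamma'_{\mathit{fin}}$.

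In both cases a computable map $n\mapsto\psi_n$ sends $\mathbb{X}$ into $\Gamma$ and $\mathbb{Y}$ into the complement of $\Gamma'_{\mathit{fin}}$, so if $\Gamma$ and $\Gamma'_{\mathit{fin}}$ were recursively separable, say $\Gamma\subseteq Z\subseteq\Gamma'_{\mathit{fin}}$ with $Z$ recursive, then $\{n:\psi_n\in Z\}$ would be a recursive set separating $\mathbb{X}$ and $\mathbb{Y}$, which is a contradiction; hence $\Gamma$ and $\Gamma'_{\mathit{fin}}$ are recursively inseparable in a language with three variables. The main obstacle I anticipate is not a new idea but the bookkeeping in the two cases: one must verify that the finite refuting models of Lemma~\ref{lem:Trakhtenbrot:lem2:sib:binP} really are $\logic{SIB}$- (resp. $\logic{SRB}$-) models, so that the one-sided hypothesis on $\Gamma'$ is applied in the correct direction, and that ${\sim}$ commutes with the relevant constructions and preserves being a formula of the binary language.
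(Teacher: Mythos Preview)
Your argument is correct and follows essentially the same route as the paper: both proofs rest on the sandwich $\logic{QCl}^{\mathit{bin}}\subseteq\Gamma$ and $\Gamma'_{\mathit{fin}}\subseteq\logic{SIB}_{\mathit{fin}}$ (respectively $\logic{SRB}_{\mathit{fin}}$), together with the recursive inseparability of the outer pair. The paper's own proof is simply more modular---it invokes Theorem~\ref{th:Trakhtenbrot:bin:sib:P} and Corollary~\ref{cor:srb:th:Trakhtenbrot:bin:sib:P} as black boxes and derives the contradiction from the inclusions in two lines---whereas you unpack those results by returning to the concrete formulas $S_1\mathit{MTiling}_n^{\mathbb{X}}$ and ${\sim}S_1\mathit{MTiling}_n^{\mathbb{X}}$ and the specific refuting models; the content is the same.
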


\begin{proof}
Assume, for the sake of contradiction, that $\Gamma$ and $\Gamma'_{\mathit{fin}}$ are recursively separable in such a language. If $\Gamma'\subseteq \logic{SIB}$, then $\logic{QCl}^{\mathit{bin}}$ and\/ $\logic{SIB}_{\mathit{fin}}$ should be recursively separable; if $\Gamma'\subseteq \logic{SRB}$, then $\logic{QCl}^{\mathit{bin}}$ and\/ $\logic{SRB}_{\mathit{fin}}$ should be recursively separable. In any case, we obtain a contradiction.
\end{proof}

\subsection{Remarks on complexity}
\label{subsec:remarks-on-complexity}

We make remarks on the complexity of the theories $\Gamma$ and $\Gamma'_{\mathit{fin}}$ in the statement of Theorem~\ref{th:Trakhtenbrot:binP:gen:sib:srb}. It is not difficult to show that $\Gamma$ is $\Sigma^0_1$-hard and $\Gamma'_{\mathit{fin}}$ is $\Pi^0_1$-hard. We explain this in terms of the constructions described above. To this end, consider, instead of $\mathbb{X}$ and $\mathbb{Y}$ a $\Sigma^0_1$-complete set $\mathbb{A}$ of natural numbers and the empty set which we denote~$\mathbb{B}$. Notice that these sets, unlike $\mathbb{X}$ and $\mathbb{Y}$, are recursively separable. We are interested in the function $f_{\mathbb{AB}}\colon\numN\to\numN$ defined by
$$
\begin{array}{lcl}
f_{\mathbb{AB}}(x) 
  & = 
  & \left\{
      \begin{array}{rl}
        0 & \mbox{if $x\in \mathbb{A}$;} \\
        1 & \mbox{if $x\in \mathbb{B}$;} \\
        \mbox{undefined} & \mbox{if $x\not\in \mathbb{A}\cup\mathbb{B}$.} \\
      \end{array}
    \right.
\end{array}
$$
This function is computable since $\mathbb{A}$ is recursively enumerable and, taking into account that $\mathbb{B}$ is empty,
$$
\begin{array}{lcl}
f_{\mathbb{AB}}(x) 
  & = 
  & \left\{
      \begin{array}{rl}
        0 & \mbox{if $x\in \mathbb{A}$;} \\
        \mbox{undefined} & \mbox{if $x\not\in \mathbb{A}.\phantom{{}\cup\mathbb{B}}$} \\
      \end{array}
    \right.
\end{array}
$$
So, it is computable by a Turing machine
$M_1=\tuple{\Sigma_1,Q_1,q_0,F_1,\delta_1}$ with halting states $q_{\mathbb{A}},q_{\mathbb{B}}\in F_1$ such that
\begin{itemize}
\item if $m\in\mathbb{A}$, then $q_{\mathbb{A}}!M_1(\numeral{m})$;
\item if $m\in\mathbb{B}$, then $q_{\mathbb{B}}!M_1(\numeral{m})$;
\item if $m\not\in\mathbb{A}\cup\mathbb{B}$, then $\neg !M_1(\numeral{m})$;
\end{itemize}
notice that, since $\mathbb{B}$ is empty, $q_{\mathbb{B}}!M_1(\numeral{m})$ never heppens and $m\not\in\mathbb{A}\cup\mathbb{B}$ means that $m\not\in\mathbb{A}$.
Then, we can repeat the construction with tiles for~$M_1$ and define, for every $n\in\numN$, a set $T'_n$ of tile types and a special $T'_n$-tiling described by formulas $\mathit{MTiling}_n^{\mathbb{A}}$ and $\mathit{MTiling}_n^{\mathbb{B}}$ so that
\begin{equation}
\label{eq:ast:1}
\begin{array}{rcl}
  n\in\mathbb{A}
    & \Longrightarrow
    & S_1\mathit{MTiling}_n^{\mathbb{A}} \in \logic{QCl}^{\mathit{bin}}; \phantom{{\sim}}
    \smallskip \\
  n\in\mathbb{A}
    & \Longrightarrow
    & S_1\mathit{MTiling}_n^{\mathbb{B}} \not\in \logic{SIB}_{\mathit{fin}}; \phantom{{\sim}}\phantom{{\logic{R}}} 
    \smallskip \\
\end{array}
\end{equation}
and
\begin{equation}
\label{eq:ast:2}
\begin{array}{rcl}
  n\in\mathbb{A}
    & \Longrightarrow
    & {\sim}S_1\mathit{MTiling}_n^{\mathbb{A}} \in \logic{QCl}^{\mathit{bin}}; 
    \smallskip \\
  n\in\mathbb{A}
    & \Longrightarrow
    & {\sim}S_1\mathit{MTiling}_n^{\mathbb{B}} \not\in \logic{SRB}_{\mathit{fin}}; \phantom{{\logic{I}}}
    \smallskip \\
\end{array}
\end{equation}
we leave the details to the reader. 

Observe that
\begin{equation}
\label{eq:ast:3}
\begin{array}{rcl}
  n\not\in\mathbb{A}
    & \Longrightarrow
    & \phantom{{\sim}}S_1\mathit{MTiling}_n^{\mathbb{A}} \not\in \logic{SIB}; 
    \smallskip \\
  n\not\in\mathbb{A}
    & \Longrightarrow
    & {\sim}S_1\mathit{MTiling}_n^{\mathbb{A}} \not\in \logic{SRB}. \phantom{{\logic{I}_{\mathit{fin}}}}
    \smallskip \\
\end{array}
\end{equation}
Indeed, if $n\not\in\mathbb{A}$, then $\neg !M_1(\numeral{n})$, and the special $T'_n$-tiling does not contain a tile of a type $t$ with $\upsq t = \# q_{\mathbb{A}}$, and we can use the tiling to obtain a required countermodels for $S_1\mathit{MTiling}_n^{\mathbb{A}}$ and for ${\sim}S_1\mathit{MTiling}_n^{\mathbb{A}}$.

Then, using the first lines of $(\ref{eq:ast:1})$ and $(\ref{eq:ast:2})$, we obtain that, for every theory $\Gamma$ such that $\logic{QCl}^{\mathit{bin}}\subseteq \Gamma \subseteq \logic{SIB}$,
$$
\begin{array}{rcl}
  n\in\mathbb{A}
    & \iff
    & S_1\mathit{MTiling}_n^{\mathbb{A}} \in \Gamma \phantom{{\sim}.}
\end{array}
$$
and, using the first line of $(\ref{eq:ast:2})$ and the second line of $(\ref{eq:ast:3})$, we obtain that, for every theory $\Gamma$ such that $\logic{QCl}^{\mathit{bin}}\subseteq \Gamma \subseteq \logic{SRB}$,
$$
\begin{array}{rcl}
  n\in\mathbb{A}
    & \iff
    & {\sim}S_1\mathit{MTiling}_n^{\mathbb{A}} \in \Gamma.
\end{array}
$$

These observations yield the following proposition.

\begin{proposition}
\label{prop:Sigma01}
If\/ $\logic{QCl}^{\mathit{bin}}\subseteq \Gamma$ and also\/ $\Gamma\subseteq \logic{SIB}$ or\/ $\Gamma\subseteq \logic{SRB}$, then\/ $\Gamma$ is\/ $\Sigma^0_1$-hard in a language with a binary predicate letter and three individual variables.
\end{proposition}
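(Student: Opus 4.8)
The plan is to turn the equivalences $(\ref{eq:ast:1})$--$(\ref{eq:ast:3})$ set up just above into an explicit many-one reduction from the $\Sigma^0_1$-complete set $\mathbb{A}$. First I fix the machine $M_1$ computing $f_{\mathbb{AB}}$ and the associated uniform sequence of formulas $S_1\mathit{MTiling}_n^{\mathbb{A}}$ (for the $\logic{SIB}$ case) and ${\sim}S_1\mathit{MTiling}_n^{\mathbb{A}}$ (for the $\logic{SRB}$ case). The first point to record is that the maps $n\mapsto S_1\mathit{MTiling}_n^{\mathbb{A}}$ and $n\mapsto{\sim}S_1\mathit{MTiling}_n^{\mathbb{A}}$ are total recursive: the tile set $T'_n$ is produced from $M_1$ and $\numeral{n}$ by an explicit procedure, the formula $\mathit{Tiling}'_n$ is assembled by a primitive recursion over the (effectively enumerated) tile types, the relativization by $G$ is syntactic, and the substitutions defining $S_1$ (and the $\sim$-translation) are syntactic. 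So both maps are genuine candidates for $m$-reductions.

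Next I would assemble the biconditionals. If $\logic{QCl}^{\mathit{bin}}\subseteq\Gamma\subseteq\logic{SIB}$, then the first line of $(\ref{eq:ast:1})$ gives $n\in\mathbb{A}\Rightarrow S_1\mathit{MTiling}_n^{\mathbb{A}}\in\logic{QCl}^{\mathit{bin}}\subseteq\Gamma$, while the first line of $(\ref{eq:ast:3})$ gives $n\notin\mathbb{A}\Rightarrow S_1\mathit{MTiling}_n^{\mathbb{A}}\notin\logic{SIB}\supseteq\Gamma$; hence
$$
n\in\mathbb{A}\iff S_1\mathit{MTiling}_n^{\mathbb{A}}\in\Gamma ,
$$
which is the reduction we want. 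Symmetrically, if $\logic{QCl}^{\mathit{bin}}\subseteq\Gamma\subseteq\logic{SRB}$, the first line of $(\ref{eq:ast:2})$ and the second line of $(\ref{eq:ast:3})$ give $n\in\mathbb{A}\iff{\sim}S_1\mathit{MTiling}_n^{\mathbb{A}}\in\Gamma$, using the $\logic{SIB}$/$\logic{SRB}$ duality recorded before Corollary~\ref{cor:srb:th:Trakhtenbrot:bin:sib:P}. Since $\mathbb{A}$ is $\Sigma^0_1$-complete and the reduction map is recursive, $\Gamma$ is $\Sigma^0_1$-hard; and every formula in the range of the reduction contains only the binary letter $P$ and the three variables $x,y,z$, so the hardness holds in that language, which is Proposition~\ref{prop:Sigma01}.

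The part that actually carries content, and which I would treat as the main obstacle, is establishing $(\ref{eq:ast:1})$--$(\ref{eq:ast:3})$ for $M_1$ in the first place (the excerpt leaves this ``to the reader''). The validity directions ($n\in\mathbb{A}$) are literal re-runs of the proofs of Lemmas~\ref{lem:Trakhtenbrot:lem1:sib} and~\ref{lem:Trakhtenbrot:lem1:sib:binP} with $M_1$ in place of $M_0$, using that $q_{\mathbb{A}}!M_1(\numeral n)$ forces a tile of type $t_1=t_{q_{\mathbb{A}}\#}$ into the zeroth column of the unique $T'_n$-tiling. The refutation direction $(\ref{eq:ast:3})$ is where one must be slightly careful: now $n\notin\mathbb{A}$ means $\neg!M_1(\numeral n)$, so $M_1$ runs forever and there is no finite model to fall back on; but since we only need $S_1\mathit{MTiling}_n^{\mathbb{A}}\notin\logic{SIB}$ (not $\notin\logic{SIB}_{\mathit{fin}}$), it suffices to take the infinite undirected grid $\numN\times\numN$ with the column/row colourings $R_i$, $U_j$ (mod~$4$), which is symmetric and irreflexive, satisfies $\mathit{Tiling}'_n$, and contains no $t_1$-tile; then one extends it by the triangle gadgets and paths of the proof of Lemma~\ref{lem:Trakhtenbrot:lem2:sib:binP} (taking the symmetric closure) to refute $S_1\mathit{MTiling}_n^{\mathbb{A}}$ while remaining a $\logic{SIB}$-model. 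The $\sim$-translation then transfers everything to $\logic{SRB}$ verbatim. Granting $(\ref{eq:ast:1})$--$(\ref{eq:ast:3})$, which the excerpt already states, the proof is exactly the two-line reduction argument above.
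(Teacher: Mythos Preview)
Your proposal is correct and follows essentially the same approach as the paper: the paper derives the two biconditionals displayed just before the proposition from $(\ref{eq:ast:1})$--$(\ref{eq:ast:3})$ exactly as you do, and then records the proposition as an immediate consequence (``These observations yield the following proposition''), relying on the $\Sigma^0_1$-completeness of~$\mathbb{A}$ and the computability of $n\mapsto S_1\mathit{MTiling}_n^{\mathbb{A}}$ (respectively $n\mapsto{\sim}S_1\mathit{MTiling}_n^{\mathbb{A}}$). Your elaboration on how to verify $(\ref{eq:ast:3})$ for~$M_1$ via the infinite grid model, extended by the gadgets of Lemma~\ref{lem:Trakhtenbrot:lem2:sib:binP}, is a faithful unpacking of what the paper sketches in one sentence.
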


\begin{remark}
\label{rem:prop:Sigma01}
If, additionally, $\Gamma$ is recursively enumerable, then\/ $\Gamma$ is\/ $\Sigma^0_1$-complete.
\end{remark}

\begin{corollary}
\label{cor:sib:srb:prop:Sigma01}
Theories\/ $\logic{QCl}^{\mathit{bin}}$, $\logic{SIB}$, and\/ $\logic{SRB}$ are\/ $\Sigma^0_1$-complete in a language with a binary predicate letter and three individual variables.
\end{corollary}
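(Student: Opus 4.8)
The plan is to derive this statement from Proposition~\ref{prop:Sigma01} together with the routine observation that all three theories are recursively enumerable, invoking Remark~\ref{rem:prop:Sigma01} to pass from $\Sigma^0_1$-hardness plus membership in $\Sigma^0_1$ to $\Sigma^0_1$-completeness.

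First I would check that Proposition~\ref{prop:Sigma01} applies to each of $\logic{QCl}^{\mathit{bin}}$, $\logic{SIB}$, and $\logic{SRB}$. For $\Gamma=\logic{QCl}^{\mathit{bin}}$ we have trivially $\logic{QCl}^{\mathit{bin}}\subseteq\Gamma$, and $\Gamma\subseteq\logic{SIB}$ since $\logic{SIB}$ is obtained from $\logic{QCl}^{\mathit{bin}}$ by adjoining the axiom $\bm{sib}$, which only enlarges the set of valid formulas. For $\Gamma=\logic{SIB}$ the inclusions $\logic{QCl}^{\mathit{bin}}\subseteq\Gamma\subseteq\logic{SIB}$ are immediate, and likewise $\logic{QCl}^{\mathit{bin}}\subseteq\logic{SRB}\subseteq\logic{SRB}$. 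Hence Proposition~\ref{prop:Sigma01} yields that each of the three theories is $\Sigma^0_1$-hard in the language with a single binary predicate letter and three individual variables.

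Next I would verify recursive enumerability. The classical predicate logic $\logic{QCl}$ is recursively enumerable (enumerate derivations in any complete proof system for first-order logic), and the condition that a formula uses only the binary predicate letter $P$ and the three fixed variables is decidable, so $\logic{QCl}^{\mathit{bin}}$ is recursively enumerable. The theory $\logic{SIB}$ consists of those $P$-formulas $\varphi$ with $\logic{QCl}\uplus\bm{sib}\models\varphi$; it is thus obtained from $\logic{QCl}$ by adjoining a single recursive axiom and restricting to a decidable set of formulas, so it too is recursively enumerable. The same reasoning applies verbatim to $\logic{SRB}$, which arises from $\logic{QCl}$ by adjoining the single axiom expressing reflexivity and symmetry of $P$ and restricting to $P$-formulas (alternatively, one may invoke the computable map $\varphi\mapsto{\sim}\varphi$ from the excerpt linking $\logic{SIB}$ and $\logic{SRB}$).

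Combining the two facts, each of $\logic{QCl}^{\mathit{bin}}$, $\logic{SIB}$, $\logic{SRB}$ is $\Sigma^0_1$-hard and lies in $\Sigma^0_1$, hence $\Sigma^0_1$-complete, as claimed. There is no genuine obstacle: the substantive work is already done in Proposition~\ref{prop:Sigma01} (ultimately in the constructions underlying Theorem~\ref{th:Trakhtenbrot:bin:sib:P}), and the present statement only adds the bookkeeping of the inclusions among the theories together with the standard fact that a theory finitely axiomatized over classical first-order logic, read off in a decidable sublanguage, is recursively enumerable.
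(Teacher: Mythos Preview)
Your proposal is correct and follows exactly the approach implicit in the paper: the corollary is placed immediately after Proposition~\ref{prop:Sigma01} and Remark~\ref{rem:prop:Sigma01}, and is meant to be read as their direct application together with the standard observation that the three theories are recursively enumerable. Your verification of the inclusions needed for Proposition~\ref{prop:Sigma01} and of recursive enumerability is accurate and fills in precisely the bookkeeping the paper leaves to the reader.
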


To prove $\Pi^0_1$-hardness of $\Gamma'$, we proceed in a similar way. 

%
Before proceeding, note that there is a minor difficulty that we need to overcome.
By the second line of~(\ref{eq:ast:1}), $n\in\mathbb{A}$ implies $S_1\mathit{MTiling}_n^{\mathbb{B}} \not\in \logic{SIB}_{\mathit{fin}}$. In addition, we want $n\not\in\mathbb{A}$ to imply $\mathit{MTiling}_n^{\mathbb{B}} \in \logic{QCl}_{\mathit{fin}}$ (and then $S_1\mathit{MTiling}_n^{\mathbb{B}} \in \logic{QCl}^{\mathit{bin}}_{\mathit{fin}}$). But the last implication can be wrong. Indeed, imagine that $M_1$, starting with input $\numeral{n}$, turns out to be in the same configuration twice (and hence, we obtain a computation with an infinite cycle). Then, as we have seen above, there exists a finite model for the special $T'_n$-tiling, and it refutes $\mathit{MTiling}_n^{\mathbb{B}}$. To avoid this, we have to ``remove'' all cycles in computations of~$M_1$ (except artificial ones involving~$q_{\mathbb{A}}$). Fortunately, this is possible.
To show this, consider a Turing machine~$M$ executing $M_1$ step by step and saving all configurations of the computation of~$M_1$ (with the content of the tape from the leftmost cell to the last non-empty cell or the scanned) into a sequence. For every new configuration of $M_1$, machine $M$ compares it with every one of the sequence; if two configurations are equal and their state components are different from~$q_{\mathbb{A}}$, then $M$ stops the computation of $M_1$ and then starts moving the head to the right on each step. If $M$ sees a new configuration with $q_{\mathbb{A}}$, then $M$ deletes the sequence and then executes $M_1$ on the input (without any controlling actions). Then, clearly, $M$ computes the same function as~$M_1$ and its computations do not contain cycles (except involving~$q_{\mathbb{A}}$). So, without a loss of generality we may assume that $M_1$ already satisfies this property. 

In this case, it is not hard to see that
\begin{equation}
\label{eq:ast:4}
\begin{array}{rcl}
  n\not\in\mathbb{A}
    & \Longrightarrow
    & \phantom{{\sim}}S_1\mathit{MTiling}_n^{\mathbb{B}} \in \logic{QCl}^{\mathit{bin}}_{\mathit{fin}}; 
    \smallskip \\
  n\not\in\mathbb{A}
    & \Longrightarrow
    & {\sim}S_1\mathit{MTiling}_n^{\mathbb{B}} \in \logic{QCl}^{\mathit{bin}}_{\mathit{fin}}. 
    \smallskip \\
\end{array}
\end{equation}
Indeed, let $n\not\in\mathbb{A}$. Then $\neg !M_1(\numeral{n})$, therefore, the computation of $M_1$ on the input $\numeral{n}$ is infinite and acyclic. This means that every model validating the formula describing the special $T'_n$\nobreakdash-tiling, i.e., the premise of $\mathit{MTiling}_n^{\mathbb{B}}$, is infinite. Thus, the premise of formula $\mathit{MTiling}_n^{\mathbb{B}}$ is refuted in every finite model; therefore, $\mathit{MTiling}_n^{\mathbb{B}}$ is true in every finite model. Then, $\mathit{MTiling}_n^{\mathbb{B}}\in \logic{QCl}^{\mathit{bin}}_{\mathit{fin}}$. Hence, $S_1\mathit{MTiling}_n^{\mathbb{B}}\in \logic{QCl}^{\mathit{bin}}_{\mathit{fin}}$ and ${\sim}S_1\mathit{MTiling}_n^{\mathbb{B}}\in \logic{QCl}^{\mathit{bin}}_{\mathit{fin}}$. 

Using $(\ref{eq:ast:4})$ and the second lines of $(\ref{eq:ast:1})$ and $(\ref{eq:ast:2})$, we obtain that, for every theory $\Gamma$ such that $\logic{QCl}^{\mathit{bin}}_{\mathit{fin}}\subseteq \Gamma \subseteq \logic{SIB}_{\mathit{fin}}$,
$$
\begin{array}{rcl}
  n\not\in\mathbb{A}
    & \iff
    & S_1\mathit{MTiling}_n^{\mathbb{B}} \in \Gamma \phantom{{\sim}.}
    \smallskip \\
\end{array}
$$
and, for every theory $\Gamma$ such that $\logic{QCl}^{\mathit{bin}}_{\mathit{fin}}\subseteq \Gamma \subseteq \logic{SRB}_{\mathit{fin}}$,
$$
\begin{array}{rcl}
  n\not\in\mathbb{A}
    & \iff
    & {\sim}S_1\mathit{MTiling}_n^{\mathbb{B}} \in \Gamma. 
    \smallskip \\
\end{array}
$$

These observations lead to another proposition.

\begin{proposition}
\label{prop:Pi01}
If\/ $\logic{QCl}^{\mathit{bin}}\subseteq \Gamma$ and also\/ $\Gamma\subseteq \logic{SIB}$ or\/ $\Gamma\subseteq \logic{SRB}$, then\/ $\Gamma_{\mathit{fin}}$ is\/ $\Pi^0_1$-hard in a language with a binary predicate letter and three individual variables.
\end{proposition}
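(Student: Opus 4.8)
The plan is to exhibit a computable many-one reduction of a fixed $\Pi^0_1$-complete set to $\Gamma_{\mathit{fin}}$, reusing the construction set up above. Since $\mathbb{A}$ is $\Sigma^0_1$-complete, its complement $\numN\setminus\mathbb{A}$ is $\Pi^0_1$-complete, so it suffices to reduce $\numN\setminus\mathbb{A}$ to $\Gamma_{\mathit{fin}}$.

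First I would check that the hypotheses on $\Gamma$ lift to $\Gamma_{\mathit{fin}}$. From $\logic{QCl}^{\mathit{bin}}\subseteq\Gamma$ every finite $\Gamma$-model is in particular a finite model, so $\logic{QCl}^{\mathit{bin}}_{\mathit{fin}}\subseteq\Gamma_{\mathit{fin}}$; from $\Gamma\subseteq\logic{SIB}$ (resp.\ $\Gamma\subseteq\logic{SRB}$) every finite $\logic{SIB}$-model (resp.\ $\logic{SRB}$-model) is a finite $\Gamma$-model, so $\Gamma_{\mathit{fin}}\subseteq\logic{SIB}_{\mathit{fin}}$ (resp.\ $\Gamma_{\mathit{fin}}\subseteq\logic{SRB}_{\mathit{fin}}$). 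Hence $\logic{QCl}^{\mathit{bin}}_{\mathit{fin}}\subseteq\Gamma_{\mathit{fin}}\subseteq\logic{SIB}_{\mathit{fin}}$ in the first case and $\logic{QCl}^{\mathit{bin}}_{\mathit{fin}}\subseteq\Gamma_{\mathit{fin}}\subseteq\logic{SRB}_{\mathit{fin}}$ in the second.

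Next I would invoke the equivalences established just above the proposition: for every $\Gamma$ with $\logic{QCl}^{\mathit{bin}}_{\mathit{fin}}\subseteq\Gamma\subseteq\logic{SIB}_{\mathit{fin}}$ one has $n\notin\mathbb{A}\iff S_1\mathit{MTiling}_n^{\mathbb{B}}\in\Gamma$, and for every $\Gamma$ with $\logic{QCl}^{\mathit{bin}}_{\mathit{fin}}\subseteq\Gamma\subseteq\logic{SRB}_{\mathit{fin}}$ one has $n\notin\mathbb{A}\iff {\sim}S_1\mathit{MTiling}_n^{\mathbb{B}}\in\Gamma$. The map $n\mapsto S_1\mathit{MTiling}_n^{\mathbb{B}}$ (and likewise $n\mapsto{\sim}S_1\mathit{MTiling}_n^{\mathbb{B}}$) is computable, since $T'_n$ is obtained effectively from $n$, the formula $\mathit{MTiling}_n^{\mathbb{B}}$ is built from $T'_n$ by an effective procedure, the relativization $(\cdot)_G$ and the substitution defining $S_1$ are effective, and ${\sim}$ merely inserts negations before the occurrences of $P$. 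Composing with a computable coding to G\"odel numbers thus yields a many-one reduction of the $\Pi^0_1$-complete set $\numN\setminus\mathbb{A}$ either to $\Gamma_{\mathit{fin}}$ (when $\Gamma\subseteq\logic{SIB}$) or to $\Gamma_{\mathit{fin}}$ via ${\sim}S_1\mathit{MTiling}_n^{\mathbb{B}}$ (when $\Gamma\subseteq\logic{SRB}$), so $\Gamma_{\mathit{fin}}$ is $\Pi^0_1$-hard; and all formulas involved contain a single binary predicate letter and three individual variables, as required.

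The only genuinely delicate point lies in the preceding development rather than in the proof itself, namely the standing assumption that $M_1$ has been arranged so that its computations contain no cycle except the artificial one involving $q_{\mathbb{A}}$. This is exactly what makes the second line of $(\ref{eq:ast:4})$ correct: when $n\notin\mathbb{A}$ the computation of $M_1$ on $\numeral{n}$ is infinite \emph{and acyclic}, so the special $T'_n$-tiling forces any model of the premise of $\mathit{MTiling}_n^{\mathbb{B}}$ to be infinite, whence that premise is refuted in every finite model and $\mathit{MTiling}_n^{\mathbb{B}}$ (hence $S_1\mathit{MTiling}_n^{\mathbb{B}}$ and ${\sim}S_1\mathit{MTiling}_n^{\mathbb{B}}$) belongs to $\logic{QCl}^{\mathit{bin}}_{\mathit{fin}}$. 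Beyond recalling this, I expect no further obstacle: the argument is a routine transfer of the recursive-inseparability bookkeeping of Lemmas~\ref{lem:Trakhtenbrot:lem1:sib:binP} and~\ref{lem:Trakhtenbrot:lem2:sib:binP} to the one-sided $\Pi^0_1$ setting.
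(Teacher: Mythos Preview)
Your proposal is correct and follows essentially the same approach as the paper: the proposition is obtained directly from the equivalences displayed just above it (derived from (\ref{eq:ast:4}) together with the second lines of (\ref{eq:ast:1}) and (\ref{eq:ast:2})), yielding a many-one reduction of $\numN\setminus\mathbb{A}$ to $\Gamma_{\mathit{fin}}$ via $n\mapsto S_1\mathit{MTiling}_n^{\mathbb{B}}$ or $n\mapsto{\sim}S_1\mathit{MTiling}_n^{\mathbb{B}}$ according to the case. Your explicit verification that the hypotheses on $\Gamma$ lift to the required sandwiching $\logic{QCl}^{\mathit{bin}}_{\mathit{fin}}\subseteq\Gamma_{\mathit{fin}}\subseteq\logic{SIB}_{\mathit{fin}}$ (resp.\ $\logic{SRB}_{\mathit{fin}}$) is a detail the paper leaves implicit but is indeed needed to apply those equivalences.
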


\begin{remark}
\label{rem:prop:Pi01}
If, additionally, the class of finite\/ $\Gamma$-models is recursively enumerable, then\/ $\Gamma_{\mathit{fin}}$ is\/ $\Pi^0_1$\nobreakdash-complete.
\end{remark}

\begin{corollary}
\label{cor:sib:srb:prop:Pi01}
Theories\/ $\logic{QCl}^{\mathit{bin}}_{\mathit{fin}}$, $\logic{SIB}_{\mathit{fin}}$, and\/ $\logic{SRB}_{\mathit{fin}}$ are\/ $\Pi^0_1$-complete in a language with a binary predicate letter and three individual variables.
\end{corollary}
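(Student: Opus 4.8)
The plan is to derive Corollary~\ref{cor:sib:srb:prop:Pi01} by combining Proposition~\ref{prop:Pi01} with Remark~\ref{rem:prop:Pi01}, checking in each of the three cases that the relevant theory meets the hypotheses of the proposition and the additional recursive‑enumerability condition of the remark. For the hardness direction I would instantiate $\Gamma$ in Proposition~\ref{prop:Pi01} by $\logic{QCl}^{\mathit{bin}}$, by $\logic{SIB}$, and by $\logic{SRB}$ in turn, so that $\Gamma_{\mathit{fin}}$ becomes, respectively, $\logic{QCl}^{\mathit{bin}}_{\mathit{fin}}$, $\logic{SIB}_{\mathit{fin}}$, and $\logic{SRB}_{\mathit{fin}}$. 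Each instance satisfies $\logic{QCl}^{\mathit{bin}}\subseteq\Gamma$ together with $\Gamma\subseteq\logic{SIB}$ or $\Gamma\subseteq\logic{SRB}$: the inclusions $\logic{QCl}^{\mathit{bin}}\subseteq\logic{SIB}$ and $\logic{QCl}^{\mathit{bin}}\subseteq\logic{SRB}$ hold because every symmetric irreflexive (resp.\ symmetric reflexive) model is in particular a model, so a formula true in all models is true in all such models. Hence Proposition~\ref{prop:Pi01} gives that each of the three theories is $\Pi^0_1$‑hard in the language with a single binary predicate letter and three individual variables.

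For the matching upper bound I would note that, for each of $\logic{QCl}^{\mathit{bin}}$, $\logic{SIB}$, $\logic{SRB}$, the class of finite $\Gamma$‑models is decidable: a finite structure in the binary language is a $\Gamma$‑model iff $P$ satisfies, respectively, no condition, the conjunction of symmetry and irreflexivity, or the conjunction of symmetry and reflexivity, and these are first‑order conditions that can be checked effectively on a finite domain. In particular each such class is recursively enumerable, so Remark~\ref{rem:prop:Pi01} applies and each $\Gamma_{\mathit{fin}}$ is $\Pi^0_1$. Concretely, $\varphi\notin\Gamma_{\mathit{fin}}$ holds iff some finite $\Gamma$‑model refutes $\varphi$, which is $\Sigma^0_1$ since one can enumerate all finite structures, decide membership in the $\Gamma$‑model class, and decide $\cModel{M}\models\varphi$; thus $\Gamma_{\mathit{fin}}$ is co‑r.e., i.e.\ $\Pi^0_1$. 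Putting the two bounds together yields $\Pi^0_1$‑completeness for all three theories.

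I do not expect any real obstacle here: the substantive work—the acyclic‑Turing‑machine device that removes spurious cycles from computations of $M_1$ and the tiling/substitution machinery underlying Proposition~\ref{prop:Pi01}—has already been carried out. The only points needing any attention are the two trivial inclusions $\logic{QCl}^{\mathit{bin}}\subseteq\logic{SIB}$ and $\logic{QCl}^{\mathit{bin}}\subseteq\logic{SRB}$, and the equally trivial decidability of the three finite‑model classes, both of which are immediate, so the corollary follows at once from Proposition~\ref{prop:Pi01} and Remark~\ref{rem:prop:Pi01}.
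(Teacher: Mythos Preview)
Your proposal is correct and follows exactly the approach implicit in the paper: the corollary is stated without proof precisely because it is immediate from Proposition~\ref{prop:Pi01} and Remark~\ref{rem:prop:Pi01} upon taking $\Gamma$ to be $\logic{QCl}^{\mathit{bin}}$, $\logic{SIB}$, and $\logic{SRB}$ in turn, and noting that the corresponding classes of finite models are (trivially) decidable and hence recursively enumerable.
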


\subsection{Positive formulas}
\label{sec:3:subsec:positive}

All the results above can be obtained using positive formulas only; we make a short remark on this (in fact, we will need positive formulas only below when dealing with superintuitionistic logics).
We call an $\lang{L}$-formula \defnotion{positive} if it does not contain occurrences of~$\bot$. Note that positive formulas do not contain occurrences of $\neg$ as well, since it is an abbreviation defined with the use of~$\bot$.

We just simulate $\bot$ by the formula 
$$
\begin{array}{rcl}
\mathit{false} 
  & = 
  & \forall x\forall y\,P(x,y).
\end{array}
$$
For an $\lang{L}$-formula $\varphi$, let $\varphi^+$ be the formula obtained from $\varphi$ by replacing every occurrence of~$\bot$ with $\mathit{false}$. 

\pagebreak[3]

\begin{samepage}
\begin{lemma}
\label{lem:false}
Let $\cModel{M}\not\models\mathit{false}$. Then, for every formula~$\varphi$,
$$
\begin{array}{rcl}
\cModel{M}\models\varphi 
  & \iff 
  & \cModel{M}\models\varphi^+.
\end{array}
$$
\end{lemma}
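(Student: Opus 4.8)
The plan is to prove a slightly stronger statement by a straightforward induction on the construction of $\varphi$: for every assignment $g$ in $\cModel{M}$,
$$
\begin{array}{rcl}
\cModel{M}\models^g\varphi & \iff & \cModel{M}\models^g\varphi^+.
\end{array}
$$
The lemma is then the special case in which we quantify over all assignments~$g$.

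First I would record the one observation that makes everything work: $\mathit{false} = \forall x\forall y\,P(x,y)$ is a \emph{closed} formula, so its truth value in $\cModel{M}$ is independent of the assignment. Since $\cModel{M}\not\models\mathit{false}$ by hypothesis, this gives $\cModel{M}\not\models^g\mathit{false}$ for \emph{every} assignment~$g$. Hence $\mathit{false}$ and $\bot$ have exactly the same (empty) set of satisfying assignments in $\cModel{M}$, i.e.\ $\mathit{false}$ is a faithful surrogate for~$\bot$ in this particular model.

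Then I would run the induction. Base cases: if $\varphi$ is atomic, say $P(x_1,\ldots,x_n)$ or $Q(x_1,\ldots,x_n)$ for any predicate letter, then $\varphi^+=\varphi$ and there is nothing to prove; if $\varphi=\bot$, then $\varphi^+=\mathit{false}$, and both are refuted under every $g$ by the observation above. For the inductive step one uses that the substitution $(\,\cdot\,)^+$ commutes with every connective and quantifier, $(\psi\wedge\chi)^+=\psi^+\wedge\chi^+$, $(\psi\vee\chi)^+=\psi^+\vee\chi^+$, $(\psi\to\chi)^+=\psi^+\to\chi^+$, $(\forall x\,\psi)^+=\forall x\,\psi^+$, $(\exists x\,\psi)^+=\exists x\,\psi^+$, together with the fact that the truth definition is compositional; each case then follows at once from the induction hypothesis (in the quantifier cases applied to all $h$ with $h\stackrel{x}{=}g$). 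The abbreviations $\neg\psi=\psi\to\bot$, $\top=\neg\bot$, and $\psi\leftrightarrow\chi$ need no separate treatment, since they are already covered by the $\to$, $\wedge$, and $\bot$ cases.

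There is no genuine obstacle here; the proof is routine. The only point deserving (minimal) care is the one isolated in the second paragraph — that $\mathit{false}$, being a sentence, is refuted under \emph{every} assignment as soon as it is refuted in $\cModel{M}$ — which is precisely the property that makes $\mathit{false}$ behave like $\bot$ throughout the induction, and which fails if $\cModel{M}\models\mathit{false}$.
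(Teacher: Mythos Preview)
Your proof is correct and is essentially the same as the paper's, just spelled out in more detail: the paper's one-line proof ``Obvious since $\cModel{M}\models\mathit{false}\leftrightarrow\bot$'' is precisely your key observation, and the induction you carry out is the standard justification for substitution of equivalents that the paper leaves implicit.
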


\nopagebreak[3]

\begin{proof}
Obvious since $\cModel{M}\models\mathit{false}\lra\bot$.
\end{proof}
\end{samepage}

\pagebreak[3]

\begin{corollary}
\label{cor1:lem:false}
Let $\cModel{M}\not\models S_1\mathit{MTiling}_n^{\mathbb{X}}$. Then, $\cModel{M}\not\models (S_1\mathit{MTiling}_n^{\mathbb{X}})^+$.
\end{corollary}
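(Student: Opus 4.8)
The plan is to derive the corollary as a one-line consequence of Lemma~\ref{lem:false}, after first checking that the hypothesis $\cModel{M}\not\models S_1\mathit{MTiling}_n^{\mathbb{X}}$ already forces $\cModel{M}\not\models\mathit{false}$. Granting this side condition, Lemma~\ref{lem:false} applied to $\varphi = S_1\mathit{MTiling}_n^{\mathbb{X}}$ yields $\cModel{M}\models S_1\mathit{MTiling}_n^{\mathbb{X}} \iff \cModel{M}\models (S_1\mathit{MTiling}_n^{\mathbb{X}})^+$, and since the left-hand side fails by assumption, so does the right-hand side, which is exactly the claim.

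So the only thing to verify is the implication: if $\cModel{M}\models\mathit{false}$, then $\cModel{M}\models S_1\mathit{MTiling}_n^{\mathbb{X}}$. First I would recall that, by construction, $S_1\mathit{MTiling}_n^{\mathbb{X}}$ is obtained from $\exists x\,G(x)\to (\mathit{MTiling}_n^{\mathbb{X}})_G$ with the leading $G(x)$ replaced by $\mathit{grid}'(x)$, so it is literally an implication with antecedent $\exists x\,\mathit{grid}'(x)$. It therefore suffices to show that $\cModel{M}\models\mathit{false}$ forces $\cModel{M}\not\models\exists x\,\mathit{grid}'(x)$. Assume $\cModel{M}\models\forall x\forall y\,P(x,y)$. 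Then for every element $a$ of $\cModel{M}$ we have $\cModel{M}\models P(a,a)$, whence $\cModel{M}\models\mathit{triangle}^{yz}(a)$ (take the witnesses $y=z=a$); consequently $\cModel{M}\not\models\neg\mathit{triangle}^{yz}(a)$, and since $\mathit{grid}'(a)$ has $\neg\mathit{triangle}^{yz}(a)$ as a conjunct, $\cModel{M}\not\models\mathit{grid}'(a)$. Thus $\cModel{M}\not\models\exists x\,\mathit{grid}'(x)$, and $S_1\mathit{MTiling}_n^{\mathbb{X}}$ is vacuously true in $\cModel{M}$.

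Combining the pieces: from $\cModel{M}\not\models S_1\mathit{MTiling}_n^{\mathbb{X}}$ we obtain, by contraposition of the implication just proved, $\cModel{M}\not\models\mathit{false}$; then Lemma~\ref{lem:false} finishes the argument. There is essentially no obstacle here — the proof is a direct application of Lemma~\ref{lem:false} once the side condition $\cModel{M}\not\models\mathit{false}$ is secured, and the only mildly delicate point is the observation that $\mathit{grid}'$ (hence the antecedent of the relativized tiling formula) collapses to falsehood whenever $P$ is interpreted as the total relation, so that no model of $\mathit{false}$ can refute $S_1\mathit{MTiling}_n^{\mathbb{X}}$.
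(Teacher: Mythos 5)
Your proof is correct and follows the same route as the paper: the paper's proof of Corollary~\ref{cor1:lem:false} simply observes that any model refuting $S_1\mathit{MTiling}_n^{\mathbb{X}}$ satisfies the hypothesis $\cModel{M}\not\models\mathit{false}$ of Lemma~\ref{lem:false} and then applies that lemma, exactly as you do. Your explicit check of the side condition (that a total $P$ puts every element on a triangle, kills $\mathit{grid}'$, and hence makes the implication $S_1\mathit{MTiling}_n^{\mathbb{X}}$ vacuously true) is just a spelled-out version of the observation the paper leaves implicit.
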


\begin{proof}
Observe that $\cModel{M}$ satisfies the condition of Lemma~\ref{lem:false}.
\end{proof}

\begin{corollary}
\label{cor2:lem:false}
Let $\cModel{M}\not\models {\sim}S_1\mathit{MTiling}_n^{\mathbb{X}}$. Then, $\cModel{M}\not\models ({\sim}S_1\mathit{MTiling}_n^{\mathbb{X}})^+$.
\end{corollary}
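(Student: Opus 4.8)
The plan is to mirror the one-line proof of Corollary~\ref{cor1:lem:false}: it suffices to verify that $\cModel{M}$ satisfies the hypothesis of Lemma~\ref{lem:false} — that is, $\cModel{M}\not\models\mathit{false}$ — and then to apply that lemma to the formula $\varphi={\sim}S_1\mathit{MTiling}_n^{\mathbb{X}}$, which turns $\cModel{M}\not\models{\sim}S_1\mathit{MTiling}_n^{\mathbb{X}}$ directly into $\cModel{M}\not\models({\sim}S_1\mathit{MTiling}_n^{\mathbb{X}})^+$. Two small observations make this legitimate. First, ${\sim}S_1\mathit{MTiling}_n^{\mathbb{X}}$ is still an $\lang{L}$-formula (the symbol $\neg$ being an abbreviation built from $\bot$), so Lemma~\ref{lem:false}, which is stated for every formula, applies once the side condition holds. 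Second, in $({\sim}S_1\mathit{MTiling}_n^{\mathbb{X}})^+$ the superscript~$+$ is applied after~$\sim$, so $({\sim}S_1\mathit{MTiling}_n^{\mathbb{X}})^+$ is literally $\varphi^+$ for $\varphi={\sim}S_1\mathit{MTiling}_n^{\mathbb{X}}$ (in particular the $P$'s occurring inside the copies of $\mathit{false}$ are the positive ones).

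The key step is checking $\cModel{M}\not\models\mathit{false}$, for which I would use the shape of the refuted formula. By the definition of $S_1$, the formula $S_1\mathit{MTiling}_n^{\mathbb{X}}$ has the form $\exists x\,\mathit{grid}'(x)\to(\cdots)$, hence ${\sim}S_1\mathit{MTiling}_n^{\mathbb{X}}$ has the form $\exists x\,{\sim}\mathit{grid}'(x)\to(\cdots)$, where ${\sim}\mathit{grid}'(x)$ is obtained from $\mathit{grid}'(x)$ by prefixing $\neg$ to every occurrence of~$P$; explicitly its second conjunct is $\exists y\,(\neg P(x,y)\wedge{\sim}\mathit{triangle}^{xz}(y))$. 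Since $\cModel{M}$ refutes the closed formula ${\sim}S_1\mathit{MTiling}_n^{\mathbb{X}}$, we must have $\cModel{M}\models\exists x\,{\sim}\mathit{grid}'(x)$; picking a witness $a$ and then, via the second conjunct, a $b$ with $\cModel{M}\models\neg P(a,b)$, we conclude $\langle a,b\rangle\notin\mathcal{I}(P)$, so $\cModel{M}\not\models\forall x\forall y\,P(x,y)=\mathit{false}$. Thus the side condition of Lemma~\ref{lem:false} is satisfied.

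Finally, Lemma~\ref{lem:false} applied to $\varphi={\sim}S_1\mathit{MTiling}_n^{\mathbb{X}}$ gives $\cModel{M}\models\varphi\iff\cModel{M}\models\varphi^+$, and together with the hypothesis $\cModel{M}\not\models\varphi$ this yields $\cModel{M}\not\models({\sim}S_1\mathit{MTiling}_n^{\mathbb{X}})^+$, which is the assertion. I do not expect a real obstacle here: the only mildly delicate point is the bookkeeping in the middle paragraph — confirming that the relativization guard $\exists x\,G(x)$ really becomes $\exists x\,\mathit{grid}'(x)$ after all substitutions in the definition of $S_1$, and that ${\sim}\mathit{grid}'(x)$ cannot hold when $\mathcal{I}(P)$ is the full Cartesian square — but this is a routine unwinding of the definitions, exactly parallel to (and no harder than) the corresponding observation underlying Corollary~\ref{cor1:lem:false}.
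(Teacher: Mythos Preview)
Your proposal is correct and follows the same approach as the paper, which merely writes ``The same'' (i.e., observe that $\cModel{M}$ satisfies the condition of Lemma~\ref{lem:false}). Your middle paragraph simply spells out in detail why $\cModel{M}\not\models\mathit{false}$ follows from the hypothesis---a routine unwinding that the paper leaves implicit.
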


\begin{proof}
The same.
\end{proof}

\begin{corollary}
\label{cor3:lem:false}
Let $\cModel{M}$ be a model and $\varphi$ be a formula containing no predicate letters except~$P$ such that $\cModel{M}\not\models\varphi^+$. Then $\cModel{M}\not\models\varphi$.
\end{corollary}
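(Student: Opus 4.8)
The plan is to reduce this immediately to Lemma~\ref{lem:false}, which already gives $\cModel{M}\models\varphi \iff \cModel{M}\models\varphi^+$ whenever $\cModel{M}\not\models\mathit{false}$. So the only thing left to verify is that the extra hypothesis $\cModel{M}\not\models\varphi^+$ already guarantees $\cModel{M}\not\models\mathit{false}$, and then Lemma~\ref{lem:false} finishes the argument.

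First I would handle the degenerate case in which $\mathit{false}=\forall x\forall y\,P(x,y)$ is true in $\cModel{M}$, i.e.\ in which $\mathcal{I}(P)=\mathcal{D}\times\mathcal{D}$. I claim that in such a model every positive $\lang{L}$-formula whose only predicate letter is $P$ is true, and in fact true under every assignment. This is a routine induction on the structure of the formula: each atom $P(t,t')$ holds under every assignment because $\mathcal{I}(P)$ is the full relation; $\wedge$ and $\vee$ clearly preserve ``holds under every assignment''; $\to$ does too, since by the induction hypothesis its consequent holds under every assignment; and $\forall x$ and $\exists x$ preserve it as well (for $\exists$ one uses that $\mathcal{D}\ne\varnothing$). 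Since $\varphi^+$ contains no occurrence of $\bot$, and hence none of $\neg$, it is a positive formula, and by construction its only predicate letter is $P$; therefore $\cModel{M}\models\mathit{false}$ would imply $\cModel{M}\models\varphi^+$.

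Taking the contrapositive of this, the hypothesis $\cModel{M}\not\models\varphi^+$ forces $\cModel{M}\not\models\mathit{false}$. Now Lemma~\ref{lem:false} applies and yields $\cModel{M}\models\varphi\iff\cModel{M}\models\varphi^+$; combined with $\cModel{M}\not\models\varphi^+$ this gives $\cModel{M}\not\models\varphi$, as required. The only (minor) obstacle is the inductive verification in the degenerate case --- in particular one must keep in mind that positive formulas still allow $\to$, so the implication step should be justified by its \emph{consequent} being true rather than by any monotonicity; everything else is immediate.
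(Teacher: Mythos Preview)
Your proof is correct and follows essentially the same approach as the paper: both reduce to Lemma~\ref{lem:false} after observing that if $\cModel{M}\models\mathit{false}$ then every positive formula in the binary letter~$P$ is true in~$\cModel{M}$. You simply spell out the induction that the paper leaves implicit.
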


\begin{proof}
Observe that $\cModel{M}$ satisfies the condition of Lemma~\ref{lem:false}, since otherwise every positive formula containing $P$ as the unique predicate letter should be true in~$\cModel{M}$. Then, by Lemma~\ref{lem:false}, $\cModel{M}\not\models \varphi$.
\end{proof}

Corollaries~\ref{cor1:lem:false}--\ref{cor3:lem:false} ensure that we can replace $S_1\mathit{MTiling}_n^{\mathbb{X}}$ and ${\sim}S_1\mathit{MTiling}_n^{\mathbb{X}}$ with $(S_1\mathit{MTiling}_n^{\mathbb{X}})^+$ and $({\sim}S_1\mathit{MTiling}_n^{\mathbb{X}})^+$, respectively, in all constructions above. Let us formulate statements immediately following from this observation.

\begin{theorem}
\label{th:Trakhtenbrot:binP:gen:sib:srb:pos}
Let\/ $\Gamma$ and\/ $\Gamma'$ be theories of a binary predicate such that\/ $\logic{QCl}^{\mathit{bin}}\subseteq \Gamma\subseteq \Gamma'$ and also\/ $\Gamma'\subseteq \logic{SIB}$ or\/ $\Gamma'\subseteq \logic{SRB}$. Then the positive fragments of\/ $\Gamma$ and\/ $\Gamma'_{\mathit{fin}}$ are recursively inseparable in a language with three variables.
\end{theorem}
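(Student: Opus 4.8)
The plan is to repeat, almost verbatim, the proof of Theorem~\ref{th:Trakhtenbrot:binP:gen:sib:srb}, but with each formula $S_1\mathit{MTiling}_n^{\mathbb{X}}$ replaced by its positive companion $(S_1\mathit{MTiling}_n^{\mathbb{X}})^+$ when $\Gamma'\subseteq\logic{SIB}$, and each ${\sim}S_1\mathit{MTiling}_n^{\mathbb{X}}$ replaced by $({\sim}S_1\mathit{MTiling}_n^{\mathbb{X}})^+$ when $\Gamma'\subseteq\logic{SRB}$. Both replacements are positive $\lang{L}$-formulas containing no predicate letter other than the binary~$P$ and using only the three variables already present in $S_1\mathit{MTiling}_n^{\mathbb{X}}$ (the formula $\mathit{false}$ introduces no new ones), and the maps $n\mapsto(S_1\mathit{MTiling}_n^{\mathbb{X}})^+$ and $n\mapsto({\sim}S_1\mathit{MTiling}_n^{\mathbb{X}})^+$ are total recursive. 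So it suffices to re-prove the two halves of the simulation for these positive formulas and then quote the usual separation argument.

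For the first half ($n\in\mathbb{X}$) I would argue as follows. By Lemma~\ref{lem:Trakhtenbrot:lem1:sib:binP}, $S_1\mathit{MTiling}_n^{\mathbb{X}}\in\logic{QCl}$; since ${\sim}$ is a substitution and $\logic{QCl}$ is closed under substitution, also ${\sim}S_1\mathit{MTiling}_n^{\mathbb{X}}\in\logic{QCl}$. To transfer this to the positive versions, let $\cModel{M}$ be any model over the language whose only predicate letter is~$P$. If $\cModel{M}\not\models\mathit{false}$, then $\cModel{M}$ satisfies the hypothesis of Lemma~\ref{lem:false}, so $\cModel{M}\models(S_1\mathit{MTiling}_n^{\mathbb{X}})^+$ and $\cModel{M}\models({\sim}S_1\mathit{MTiling}_n^{\mathbb{X}})^+$; if $\cModel{M}\models\mathit{false}$, then $P$ is interpreted by the full relation, and an easy induction shows that every positive formula over $\{P\}$ is then true in~$\cModel{M}$ (this is precisely the observation used in the proof of Corollary~\ref{cor3:lem:false}). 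Hence both $(S_1\mathit{MTiling}_n^{\mathbb{X}})^+$ and $({\sim}S_1\mathit{MTiling}_n^{\mathbb{X}})^+$ lie in $\logic{QCl}^{\mathit{bin}}$, and therefore in~$\Gamma$.

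For the second half ($n\in\mathbb{Y}$), in the case $\Gamma'\subseteq\logic{SIB}$ take the finite model $\cModel{M}'$ furnished by Lemma~\ref{lem:Trakhtenbrot:lem2:sib:binP}, which satisfies $\bm{sib}$ and refutes $S_1\mathit{MTiling}_n^{\mathbb{X}}$; by Corollary~\ref{cor1:lem:false}, $\cModel{M}'\not\models(S_1\mathit{MTiling}_n^{\mathbb{X}})^+$, and since $\Gamma'\subseteq\logic{SIB}$ the $\bm{sib}$-model $\cModel{M}'$ is a finite $\Gamma'$-model, whence $(S_1\mathit{MTiling}_n^{\mathbb{X}})^+\notin\Gamma'_{\mathit{fin}}$. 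In the case $\Gamma'\subseteq\logic{SRB}$ I would pass through the equivalences $\varphi\in\logic{SIB}\iff{\sim}\varphi\in\logic{SRB}$ and $\varphi\in\logic{SIB}_{\mathit{fin}}\iff{\sim}\varphi\in\logic{SRB}_{\mathit{fin}}$ together with Corollary~\ref{cor2:lem:false}: for $n\in\mathbb{Y}$ one obtains a finite $\logic{SRB}$-model — hence a finite $\Gamma'$-model — refuting $({\sim}S_1\mathit{MTiling}_n^{\mathbb{X}})^+$, so $({\sim}S_1\mathit{MTiling}_n^{\mathbb{X}})^+\notin\Gamma'_{\mathit{fin}}$.

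Finally, suppose for contradiction that the positive fragments of $\Gamma$ and $\Gamma'_{\mathit{fin}}$ were recursively separable in the language with a binary predicate letter and three variables; since $\Gamma\subseteq\Gamma'\subseteq\Gamma'_{\mathit{fin}}$, this gives a recursive set $Z$ of positive three-variable $\{P\}$-formulas with (the positive fragment of~$\Gamma$) $\subseteq Z\subseteq$ (the positive fragment of~$\Gamma'_{\mathit{fin}}$). Pulling $Z$ back along the recursive map $n\mapsto(S_1\mathit{MTiling}_n^{\mathbb{X}})^+$ (in the $\logic{SIB}$ case) or $n\mapsto({\sim}S_1\mathit{MTiling}_n^{\mathbb{X}})^+$ (in the $\logic{SRB}$ case) produces a recursive $Z'\subseteq\numN$ with $\mathbb{X}\subseteq Z'$ and $Z'\cap\mathbb{Y}=\varnothing$, contradicting the recursive inseparability of $\mathbb{X}$ and~$\mathbb{Y}$. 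The only points needing care are checking that the $\bot$-to-$\mathit{false}$ replacement preserves membership in $\logic{QCl}^{\mathit{bin}}$ — handled above by the full-relation model — and keeping the two symmetric $\logic{SIB}$/$\logic{SRB}$ cases in step via the ${\sim}$-translation; I do not expect any genuine obstacle, since all the substantive work was already done in Lemmas~\ref{lem:Trakhtenbrot:lem1:sib:binP}--\ref{lem:Trakhtenbrot:lem2:sib:binP} and Corollaries~\ref{cor1:lem:false}--\ref{cor3:lem:false}.
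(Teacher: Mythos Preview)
Your proposal is correct and follows essentially the same approach as the paper: the paper simply notes that Corollaries~\ref{cor1:lem:false}--\ref{cor3:lem:false} allow one to replace $S_1\mathit{MTiling}_n^{\mathbb{X}}$ and ${\sim}S_1\mathit{MTiling}_n^{\mathbb{X}}$ by their positive versions throughout the proof of Theorem~\ref{th:Trakhtenbrot:binP:gen:sib:srb}, and you have spelled out exactly this replacement. Your handling of the $n\in\mathbb{X}$ case via the full-relation model is precisely the content of Corollary~\ref{cor3:lem:false}, so you could shorten that step by citing it directly.
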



\begin{proposition}
\label{prop:Sigma01:pos}
If\/ $\logic{QCl}^{\mathit{bin}}\subseteq \Gamma$ and also\/ $\Gamma\subseteq \logic{SIB}$ or\/ $\Gamma\subseteq \logic{SRB}$, then the positive fragment of\/ $\Gamma$ is\/ $\Sigma^0_1$-hard in a language with a binary predicate letter and three individual variables.
\end{proposition}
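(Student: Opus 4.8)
The plan is to mirror, almost verbatim, the argument behind Proposition~\ref{prop:Sigma01}, replacing every formula that occurs there with its positive counterpart under the operation $(\cdot)^+$ and transferring the relevant membership and non-membership facts through the corollaries of Lemma~\ref{lem:false}. Concretely, I would keep the $\Sigma^0_1$-complete set $\mathbb{A}$, the empty set $\mathbb{B}$, the machine $M_1$, the tilings $T'_n$, and the formulas $S_1\mathit{MTiling}_n^{\mathbb{A}}$ (in the $\logic{SIB}$ case) or ${\sim}S_1\mathit{MTiling}_n^{\mathbb{A}}$ (in the $\logic{SRB}$ case); recall that from the first lines of $(\ref{eq:ast:1})$ and $(\ref{eq:ast:2})$ together with $(\ref{eq:ast:3})$ we get that $n\in\mathbb{A}$ implies membership in $\logic{QCl}^{\mathit{bin}}$, while $n\notin\mathbb{A}$ implies non-membership in $\logic{SIB}$ (resp.\ $\logic{SRB}$), so that $n\in\mathbb{A}\iff S_1\mathit{MTiling}_n^{\mathbb{A}}\in\Gamma$ whenever $\logic{QCl}^{\mathit{bin}}\subseteq\Gamma\subseteq\logic{SIB}$ (resp.\ with ${\sim}$ and $\logic{SRB}$).

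Next I would apply $(\cdot)^+$. Every formula in sight contains $P$ as its only predicate letter, so Corollary~\ref{cor3:lem:false} applies: a model refutes $\varphi^+$ only if it refutes $\varphi$; contrapositively, $\varphi\in\logic{QCl}$ entails $\varphi^+\in\logic{QCl}$, hence $\varphi^+\in\logic{QCl}^{\mathit{bin}}$. This preserves the membership direction, $n\in\mathbb{A}\Rightarrow(S_1\mathit{MTiling}_n^{\mathbb{A}})^+\in\logic{QCl}^{\mathit{bin}}\subseteq\Gamma$ (and likewise for ${\sim}$). For the non-membership direction, if $n\notin\mathbb{A}$ then by $(\ref{eq:ast:3})$ there is a model $\cModel{M}$ with $\cModel{M}\models\bm{sib}$ (so $\cModel{M}\not\models\mathit{false}$) refuting $S_1\mathit{MTiling}_n^{\mathbb{A}}$; by Corollary~\ref{cor1:lem:false} (Corollary~\ref{cor2:lem:false} in the $\logic{SRB}$ case) $\cModel{M}$ refutes the positive version as well, so $(S_1\mathit{MTiling}_n^{\mathbb{A}})^+\notin\logic{SIB}$ (resp.\ $\notin\logic{SRB}$), hence $\notin\Gamma$. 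Combining the two directions, $n\in\mathbb{A}\iff(S_1\mathit{MTiling}_n^{\mathbb{A}})^+\in\Gamma$ (resp.\ with ${\sim}$), and $n\mapsto(S_1\mathit{MTiling}_n^{\mathbb{A}})^+$ is computable, yielding positive formulas over a single binary predicate letter and three individual variables. This is a many-one reduction of the $\Sigma^0_1$-complete set $\mathbb{A}$ to the positive fragment of $\Gamma$, which gives the claimed $\Sigma^0_1$-hardness.

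Since all the ingredients have already been isolated — the reduction extracted from $(\ref{eq:ast:1})$–$(\ref{eq:ast:3})$ and the positivity transfer lemmas — there is no genuine obstacle here; the only point deserving care is that $(\cdot)^+$ must preserve \emph{both} directions of the equivalence at once: membership via the validity-preservation implicit in Corollary~\ref{cor3:lem:false}, and non-membership via Corollaries~\ref{cor1:lem:false} and~\ref{cor2:lem:false}. This is exactly the observation recorded just before Theorem~\ref{th:Trakhtenbrot:binP:gen:sib:srb:pos}, so the proof reduces to citing Proposition~\ref{prop:Sigma01}'s construction together with Corollaries~\ref{cor1:lem:false}--\ref{cor3:lem:false}.
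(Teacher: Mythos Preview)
Your proposal is correct and matches the paper's approach exactly: the paper does not give a separate proof of Proposition~\ref{prop:Sigma01:pos} but simply remarks that Corollaries~\ref{cor1:lem:false}--\ref{cor3:lem:false} allow one to replace $S_1\mathit{MTiling}_n^{\mathbb{X}}$ and ${\sim}S_1\mathit{MTiling}_n^{\mathbb{X}}$ by their positive versions ``in all constructions above,'' so that Proposition~\ref{prop:Sigma01:pos} is the positive analogue of Proposition~\ref{prop:Sigma01}. You have spelled out precisely this transfer, including the two directions (membership via Corollary~\ref{cor3:lem:false}, non-membership via Corollaries~\ref{cor1:lem:false}/\ref{cor2:lem:false}), which is exactly what the paper leaves implicit.
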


\begin{corollary}
\label{cor:sib:srb:prop:Sigma01:pos}
The positive fragments of\/ $\logic{QCl}^{\mathit{bin}}$, $\logic{SIB}$, and\/ $\logic{SRB}$ are\/ $\Sigma^0_1$-complete in a language with a binary predicate letter and three individual variables.
\end{corollary}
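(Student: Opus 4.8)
The plan is to derive the statement from the hardness result already in hand, Proposition~\ref{prop:Sigma01:pos}, together with a routine matching upper bound, exactly in the way Corollary~\ref{cor:sib:srb:prop:Sigma01} was obtained from Proposition~\ref{prop:Sigma01} and Remark~\ref{rem:prop:Sigma01}. Each of the three theories $\Gamma\in\{\logic{QCl}^{\mathit{bin}},\logic{SIB},\logic{SRB}\}$ satisfies the hypotheses of Proposition~\ref{prop:Sigma01:pos}: we have $\logic{QCl}^{\mathit{bin}}\subseteq\Gamma$, and $\Gamma\subseteq\logic{SIB}$ or $\Gamma\subseteq\logic{SRB}$ (for $\Gamma=\logic{QCl}^{\mathit{bin}}$ both of the latter inclusions hold, so either disjunct may be used). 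Hence the positive fragment of each such $\Gamma$ is $\Sigma^0_1$-hard in the language with a single binary predicate letter and three individual variables.

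It then remains to check membership in $\Sigma^0_1$. First I would note that $\logic{QCl}^{\mathit{bin}}$, $\logic{SIB}$, and $\logic{SRB}$ are themselves recursively enumerable: $\logic{QCl}^{\mathit{bin}}$ is the restriction of $\logic{QCl}$ to the relevant sublanguage, and $\logic{QCl}$ is r.e.\ by the completeness theorem; $\logic{SIB}$ is the restriction of $\logic{QCl}\uplus\bm{sib}$, which is r.e.\ for the same reason; and $\logic{SRB}$ is the image of $\logic{SIB}$ under the effective syntactic translation ${\sim}$, hence r.e.\ as well. Since whether a given $\lang{L}$-formula is positive (i.e.\ $\bot$-free) and whether it lies in the chosen sublanguage are both decidable conditions, the positive fragment of each of these r.e.\ sets is again r.e., i.e.\ $\Sigma^0_1$. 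Combining this with the preceding paragraph yields $\Sigma^0_1$-completeness of all three positive fragments.

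The only point that might look like an obstacle — that Proposition~\ref{prop:Sigma01:pos} really does speak about \emph{positive} formulas — has already been dealt with: Corollaries~\ref{cor1:lem:false}, \ref{cor2:lem:false}, and~\ref{cor3:lem:false}, which rest on Lemma~\ref{lem:false} and the substitution of $\mathit{false}$ for $\bot$, show that in every reduction above one may pass from $S_1\mathit{MTiling}_n^{\mathbb{X}}$ and ${\sim}S_1\mathit{MTiling}_n^{\mathbb{X}}$ to their $(\cdot)^+$-versions without affecting validity in $\logic{QCl}^{\mathit{bin}}$ (resp.\ $\logic{QCl}^{\mathit{bin}}_{\mathit{fin}}$) or refutability in $\logic{SIB}$ and $\logic{SRB}$. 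So no new combinatorial content is needed; the corollary is a bookkeeping consequence of Proposition~\ref{prop:Sigma01:pos} and the trivial r.e.\ upper bound, and I expect the write-up to be only a few lines.
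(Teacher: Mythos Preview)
Your proposal is correct and matches the paper's approach: the corollary is stated without proof, being immediate from Proposition~\ref{prop:Sigma01:pos} for hardness together with the recursive enumerability of $\logic{QCl}^{\mathit{bin}}$, $\logic{SIB}$, and $\logic{SRB}$ (and hence of their positive fragments), exactly as Corollary~\ref{cor:sib:srb:prop:Sigma01} follows from Proposition~\ref{prop:Sigma01} and Remark~\ref{rem:prop:Sigma01}.
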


\begin{proposition}
\label{prop:Pi01:pos}
If\/ $\logic{QCl}^{\mathit{bin}}\subseteq \Gamma$ and also\/ $\Gamma\subseteq \logic{SIB}$ or\/ $\Gamma\subseteq \logic{SRB}$, then the positive fragment of\/ $\Gamma_{\mathit{fin}}$ is\/ $\Pi^0_1$-hard in a language with a binary predicate letter and three individual variables.
\end{proposition}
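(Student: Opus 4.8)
The plan is to rerun the proof of Proposition~\ref{prop:Pi01} without change, replacing throughout the formula $S_1\mathit{MTiling}_n^{\mathbb{B}}$ by its positive image $(S_1\mathit{MTiling}_n^{\mathbb{B}})^+$ in the case $\Gamma'\subseteq\logic{SIB}$, and ${\sim}S_1\mathit{MTiling}_n^{\mathbb{B}}$ by $({\sim}S_1\mathit{MTiling}_n^{\mathbb{B}})^+$ in the case $\Gamma'\subseteq\logic{SRB}$. The only thing to verify is that this replacement leaves every step intact, which is precisely what Corollaries~\ref{cor1:lem:false}--\ref{cor3:lem:false} provide.

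First I would record that $(S_1\mathit{MTiling}_n^{\mathbb{B}})^+$ is, for each $n\in\numN$, a positive $\lang{L}$-formula still containing no predicate letter besides~$P$ and no individual variable besides the three used throughout~--- the operation $(\cdot)^+$ only inserts copies of $\mathit{false}=\forall x\forall y\,P(x,y)$ --- and that it is computed from~$n$ by a total algorithm; the same for $({\sim}S_1\mathit{MTiling}_n^{\mathbb{B}})^+$. The key step is then the equivalence, valid for every model $\cModel{M}$,
$$
\cModel{M}\models (S_1\mathit{MTiling}_n^{\mathbb{B}})^+ \iff \cModel{M}\models S_1\mathit{MTiling}_n^{\mathbb{B}},
$$
and the corresponding one with a leading~${\sim}$. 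The implication from left to right is the contrapositive of Corollary~\ref{cor1:lem:false} (resp.~\ref{cor2:lem:false}), whose proof uses only the syntactic shape of the formula and hence applies verbatim to the $\mathbb{B}$-construction built from $M_1$; the implication from right to left is the contrapositive of Corollary~\ref{cor3:lem:false} applied to $\varphi=S_1\mathit{MTiling}_n^{\mathbb{B}}$ (resp. $\varphi={\sim}S_1\mathit{MTiling}_n^{\mathbb{B}}$), which is admissible because that formula has $P$ as its only predicate letter. Passing from models to classes, it follows that for every theory $\Delta$ one has $(S_1\mathit{MTiling}_n^{\mathbb{B}})^+\in\Delta$ iff $S_1\mathit{MTiling}_n^{\mathbb{B}}\in\Delta$, and likewise with~${\sim}$.

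Next I would feed this into the bookkeeping of Proposition~\ref{prop:Pi01}. From $\logic{QCl}^{\mathit{bin}}\subseteq\Gamma\subseteq\logic{SIB}$ (resp. $\Gamma\subseteq\logic{SRB}$) one has $\logic{QCl}^{\mathit{bin}}_{\mathit{fin}}\subseteq\Gamma_{\mathit{fin}}\subseteq\logic{SIB}_{\mathit{fin}}$ (resp. $\subseteq\logic{SRB}_{\mathit{fin}}$). Combining the second line of~$(\ref{eq:ast:1})$ with the first line of~$(\ref{eq:ast:4})$, now read with the positive formula, yields for every $n\in\numN$
$$
n\notin\mathbb{A} \iff (S_1\mathit{MTiling}_n^{\mathbb{B}})^+\in\Gamma_{\mathit{fin}},
$$
and the analogous equivalence with~${\sim}$ in the $\logic{SRB}$ case, from the second lines of~$(\ref{eq:ast:2})$ and~$(\ref{eq:ast:4})$. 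Since $\mathbb{A}$ is $\Sigma^0_1$-complete, $\numN\setminus\mathbb{A}$ is $\Pi^0_1$-complete, so $n\mapsto(S_1\mathit{MTiling}_n^{\mathbb{B}})^+$ is a computable many-one reduction of a $\Pi^0_1$-complete set into the set of positive $\lang{L}$-formulas over~$P$ with three variables belonging to $\Gamma_{\mathit{fin}}$; that set is therefore $\Pi^0_1$-hard, which is the assertion.

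I do not expect a genuine obstacle here: the whole argument is a corollary of Proposition~\ref{prop:Pi01} and of Corollaries~\ref{cor1:lem:false}--\ref{cor3:lem:false}. The only point that deserves a moment of care is that those corollaries are stated for the $\mathbb{X}$-variant, whereas the $\Pi^0_1$ argument needs the $\mathbb{B}$-variant associated with $M_1$; but their proofs go through $\mathit{false}\lra\bot$ on models not validating $\mathit{false}$, and the finite countermodels built in the proof of Lemma~\ref{lem:Trakhtenbrot:lem2:sib:binP} and in the derivation of~$(\ref{eq:ast:4})$ may be taken not to validate $\mathit{false}$, so the transfer is immediate. The remainder is clerical.
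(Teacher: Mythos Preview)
Your proposal is correct and is precisely the approach the paper takes: the paper states that Corollaries~\ref{cor1:lem:false}--\ref{cor3:lem:false} ``ensure that we can replace $S_1\mathit{MTiling}_n^{\mathbb{X}}$ and ${\sim}S_1\mathit{MTiling}_n^{\mathbb{X}}$ with $(S_1\mathit{MTiling}_n^{\mathbb{X}})^+$ and $({\sim}S_1\mathit{MTiling}_n^{\mathbb{X}})^+$, respectively, in all constructions above,'' and then records Proposition~\ref{prop:Pi01:pos} as an immediate consequence without further argument. Your write-up simply spells out this replacement in the $\mathbb{B}$-variant used for Proposition~\ref{prop:Pi01}, including the observation that the corollaries transfer to that variant, which is exactly the intended route.
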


\begin{corollary}
\label{cor:sib:srb:prop:Pi01:pos}
The positive fragments of\/ $\logic{QCl}^{\mathit{bin}}_{\mathit{fin}}$, $\logic{SIB}_{\mathit{fin}}$, and\/ $\logic{SRB}_{\mathit{fin}}$ are\/ $\Pi^0_1$-complete in a language with a binary predicate letter and three individual variables.
\end{corollary}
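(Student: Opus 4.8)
The plan is to combine a hardness result that is already available with a short upper-bound argument, exactly mirroring the proof of Corollary~\ref{cor:sib:srb:prop:Pi01}. For $\Pi^0_1$-hardness I would invoke Proposition~\ref{prop:Pi01:pos} three times, with $\Gamma$ taken to be $\logic{QCl}^{\mathit{bin}}$, then $\logic{SIB}$, then $\logic{SRB}$. In each case the hypotheses hold trivially: $\logic{QCl}^{\mathit{bin}}\subseteq\Gamma$ and $\Gamma\subseteq\logic{SIB}$ or $\Gamma\subseteq\logic{SRB}$. Hence the positive fragment of $\Gamma_{\mathit{fin}}$ is $\Pi^0_1$-hard in the language with a single binary predicate letter and three individual variables, for each of the three theories at once.

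For the matching upper bound I would argue that each of $\logic{QCl}^{\mathit{bin}}_{\mathit{fin}}$, $\logic{SIB}_{\mathit{fin}}$, $\logic{SRB}_{\mathit{fin}}$ is a $\Pi^0_1$ set, and that passing to the positive fragment preserves this. The two key observations are: (i) for each of the three theories $\Gamma$ the class of finite $\Gamma$-models is recursively enumerable — one enumerates all finite structures over a binary predicate letter and, in the cases of $\logic{SIB}$ and $\logic{SRB}$, keeps those in which $\mathcal{I}(P)$ is symmetric and irreflexive (respectively symmetric and reflexive), these being decidable conditions on a finite structure; and (ii) for a fixed finite model $\cModel{M}$ and a fixed $\lang{L}$-formula $\varphi$, the relation $\cModel{M}\models\varphi$ is decidable. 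Consequently ``$\varphi\notin\Gamma_{\mathit{fin}}$'' is equivalent to ``there exists a finite $\Gamma$-model refuting $\varphi$'', which is a $\Sigma^0_1$ condition, so $\Gamma_{\mathit{fin}}$ is $\Pi^0_1$. The positive fragment of $\Gamma_{\mathit{fin}}$ is its intersection with the set of positive $\lang{L}$-formulas, i.e.\ those without occurrences of $\bot$; this set is decidable, as is the restriction to three individual variables, so intersecting with it keeps the set $\Pi^0_1$.

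Combining the two halves yields $\Pi^0_1$-completeness of the positive fragments of all three finite-model theories in the language with a binary predicate letter and three individual variables. There is essentially no obstacle: the only point deserving a word of care is that restricting to positive formulas neither raises nor lowers the complexity, which holds because ``being positive'' is a decidable syntactic property; the rest is the standard Trakhtenbrot-style fact that finite refutability over a recursively enumerable class of finite structures is $\Sigma^0_1$, already noted implicitly in Remark~\ref{rem:prop:Pi01}.
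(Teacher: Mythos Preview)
Your proposal is correct and follows essentially the same approach the paper intends: the corollary is stated without proof, immediately after Proposition~\ref{prop:Pi01:pos}, and is meant to follow from that proposition (for hardness) together with the recursive enumerability of the relevant classes of finite models (for the upper bound, as noted in Remark~\ref{rem:prop:Pi01}). Your explicit verification that intersecting with the decidable set of positive three-variable formulas preserves membership in $\Pi^0_1$ is a reasonable elaboration of what the paper leaves implicit.
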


\section{Modal predicate logics}
\label{sec:modal}
\setcounter{equation}{0}

\subsection{Syntax and semantics}

The modal predicate language $\lang{ML}$ extends $\lang{L}$ by adding a modality $\Box$ (``necessity'') allowing, for a formula~$\varphi$,  to construct the formula~$\Box\varphi$. Formulas in $\lang{ML}$ are called \defnotion{modal predicate formulas}, or \defnotion{$\lang{ML}$-formulas}. Define $\Diamond$ (``possibility'') by $\Diamond\varphi=\neg\Box\neg\varphi$. 
From now on, we identify the language $\lang{ML}$ with the set of $\lang{ML}$-formulas.

\Rem{
Define the \defnotion{modal depth} $\md\varphi$ of an $\lang{ML}$\nobreakdash-formula~$\varphi$ as the largest number of nested modalities in~$\varphi$, i.e., 
$$
\begin{array}{lcll}
\md\varphi & = & 0 & \mbox{if $\varphi$ is atomic;}\\
\md\varphi & = & \max\{\md\varphi_1,\md\varphi_2\} & \mbox{if $\varphi=\varphi_1\wedge\varphi_2$, $\varphi=\varphi_1\vee\varphi_2$ or $\varphi=\varphi_1\to\varphi_2$;}\\
\md\varphi & = & \md\varphi_1 & \mbox{if $\varphi=\forall x\,\varphi_1$ or $\varphi=\exists x\,\varphi_1$;}\\
\md\varphi & = & \md\varphi_1 + 1 & \mbox{if $\varphi=\Box\varphi_1$.}
\end{array}
$$
}

We call a set $L$ of $\lang{ML}$-formulas \defnotion{modal predicate logic} if $L$ is closed under Substitution.\footnote{For predicate substitution see~\cite{GShS}.} In fact, below we shall consider only modal predicate logics of some special classes; in particular, they contain $\logic{QCl}$ and are closed, additionally, under Modus Ponens and Generalization. 

To study modal predicate logics we shall use Kripke semantics. Here, we describe the expanding domains semantics only; it is enough for a wide class of logics~\cite{GShS}. 

A \defnotion{Kripke frame} is a pair $\kframe{F} = \langle W,R \rangle$, where $W$ is a non-empty set of \defnotion{possible worlds} and $R$ is a binary \defnotion{accessibility relation} on~$W$. Speaking of Kripke frames, we use the standard notation $R(w) = \{ w' \in W: w R w'\}$, so $w' \in R(w)$ means the same as $w R w'$. We say that $\kframe{F}$ is finite if $W$ is finite; we say that $\kframe{F}$ is reflexive, symmetric, anti-symmetric, transitive, serial, etc.\ if $R$~is. 

A \defnotion{Kripke frame with domains}, or, for short, an \defnotion{augmented frame}, is a pair $\kFrame{F} = \langle \kframe{F}, D \rangle$, where $\kframe{F}$ is a Kripke frame and $D$ is a \defnotion{domain function} $D\colon W\to \Pow{\mathcal{D}}$ associating with every world $w\in W$ a non-empty subset of a non-empty set $\mathcal{D}$ of \defnotion{individuals}. The set $D(w)$, also denoted by $D_w$, is called \defnotion{the domain of the world\/~$w$}. Sets of the form $D_w$ are also called \defnotion{local domains} of $\kFrame{F}$ and $\mathcal{D}$ is called \defnotion{the global domain} of $\kFrame{F}$; we assume that
$$
\begin{array}{rcl}
\mathcal{D} & = & \displaystyle\bigcup\limits_{\mathclap{w\in W}}D_w.
\end{array}
$$
The augmented frame $\kFrame{F} = \langle \kframe{F}, D \rangle$ is also denoted by~$\kframe{F}_D$. We say that $\kframe{F}_D$ is \defnotion{based on $\kframe{F}$}. 
The global domain of $\kframe{F}_D$ is denoted by~$D^+$~\cite{GShS}.

We say that an augmented frame $\kframe{F}_D$ based on a Kripke frame $\kframe{F}=\otuple{W,R}$ satisfies the \defnotion{expanding domain condition} if, for all $u,w \in W$,
$$
\begin{array}{lcl}
  uRw & \Longrightarrow & D_u \subseteq D_{w};
\end{array}
\eqno{(\mathit{ED})}
$$
then we call $\kframe{F}_D$ \defnotion{augmented frame with expanding domains} or, for short, \defnotion{e\nobreakdash-augmented frame}. We say that  $\kframe{F}_D$ satisfies the \defnotion{locally constant domain condition} if, for all $u,w \in W$,
$$
\begin{array}{lcl}
  uRw & \Longrightarrow & D_u = D_{w},
\end{array}
\eqno{(\mathit{LCD})}
$$
and the \defnotion{globally constant domain condition} if, for all $u,w \in W$,
$$
\begin{array}{lcl}
D_u = D_{w}.
\end{array}
\eqno{(\mathit{GCD})}
$$
For our purposes, ${(\mathit{LCD})}$ is sufficient; nevertheless, the frames satisfying ${(\mathit{LCD})}$ we shall consider below, also satisfy ${(\mathit{GCD})}$. If $\kframe{F}_D$ satisfies ${(\mathit{LCD})}$, then we call it \defnotion{augmented frame with constant domains} or, for short, \defnotion{c\nobreakdash-augmented frame}. If $\kframe{F}_D$ satisfies ${(\mathit{GCD})}$ and $\mathcal{D}$ is the global domain of $\kframe{F}_D$, then, following~\cite{GShS}, we also denote it by~$\kframe{F}\odot\mathcal{D}$. In general, when we do not claim the domain function to satisfy any additional conditions, we obtain the class of all augmented frames; they are known as \defnotion{augmented frames with varying domains}, and, to emphasize this, let us also call them \defnotion{v\nobreakdash-augmented frames}.
We will focus on e\nobreakdash-augmented and c\nobreakdash-augmented frames only.\footnote{For more background on the varying domains semantics, we refer the reader to~\cite{FM23}.} For convenience, sometimes we write $\otuple{W,R,D}$ for $\otuple{\kframe{F},D}$ with $\kframe{F}=\otuple{W,R}$.

In this section, we assume that all augmented frames satisfy~$(\mathit{ED})$. 


A \defnotion{predicate Kripke model with expanding domains}, or simply a \defnotion{Kripke model}, is a tuple $\kModel{M} = \langle \kframe{F}_D, I\rangle$, where $\kframe{F}_D = \langle W, R, D \rangle$ is an e\nobreakdash-augmented frame and $I$ is a map, called an \defnotion{interpretation of predicate letters}, assigning to a world $w\in W$ and an $n$-ary predicate letter $P$ an $n$-ary relation $I(w,P)$ on $D_w$; we also write $P^{I,w}$ for $I(w,P)$ and $\langle W, R, D, I\rangle$ for $\langle \kframe{F}_D, I\rangle$.  We note that, if a predicate letter $P$ is nullary (i.e., $P$ is a proposition letter), then $P^{I,w} \subseteq D^0_w = \{\otuple{}\}$, for every $w \in W$.  Conceptually, $P^{I,w} = \varnothing$ corresponds to assigning the truth value ``false'', and $P^{I,w} = \{\otuple{}\}$ the truth value ``true'', to~$P$ at~$w$. For a Kripke model $\kModel{M} = \langle \kframe{F}_D, I\rangle$, we say that $\kModel{M}$ is \defnotion{based on\/ $\kframe{F}_D$} and is \defnotion{based on\/ $\kframe{F}$}.

An \defnotion{assignment} in a model $\kModel{M} = \otuple{W, R, D, I}$ is a map $g$ associating with every variable $x$ an element $g(x)$ of the global domain of the augmented frame $\otuple{W, R, D}$. If $g$ and $h$ are assignments such that $g(y) = h(y)$ whenever $y \ne x$, we write $g \stackrel{x}{=} h$.

The truth of an $\lang{ML}$-formula $\varphi$ at a world $w$ of a model $\kModel{M} = \otuple{W,R,D,I}$ under an assignment $g$ is defined recursively:
\settowidth{\templength}{\mbox{$\kModel{M},w\models^g\varphi'$ and $\kModel{M},w\models^g\varphi''$;}}
\settowidth{\templengtha}{\mbox{$w$}}
\settowidth{\templengthb}{\mbox{$\kModel{M},w\models^{h}\varphi'$, for every assignment $h$ such that}}
\settowidth{\templengthc}{\mbox{$\kModel{M},w\models^g P(x_1,\ldots,x_n)$}}
$$
\begin{array}{lcl}
\kModel{M},w\models^g P(x_1,\ldots,x_n)
  & \leftrightharpoons
  & \parbox{\templengthb}{$\langle g(x_1),\ldots,g(x_n)\rangle \in P^{I, w}$,} \\
\end{array}
$$
\mbox{where $P$ is an $n$-ary predicate letter;}
\settowidth{\templength}{\mbox{$\kModel{M},w\models^g\varphi'$ and $\kModel{M},w\models^g\varphi''$;}}
\settowidth{\templengtha}{\mbox{$w$}}
\settowidth{\templengthb}{\mbox{$\kModel{M},w\models^{g}\varphi'\to\varphi''$}}
\settowidth{\templengthc}{\mbox{$\kModel{M},w\models^g P(x_1,\ldots,x_n)$}}
$$
\begin{array}{lcl}
\parbox{\templengthc}{{}\hfill\parbox{\templengthb}{$\kModel{M},w \not\models^g \bot;$}}
  \\
\parbox{\templengthc}{{}\hfill\parbox{\templengthb}{$\kModel{M},w\models^g\varphi' \wedge \varphi''$}}
  & \leftrightharpoons
  & \parbox[t]{\templength}{$\kModel{M},w\models^g\varphi'$ and $\kModel{M},w\models^g\varphi''$;}
  \\
\parbox{\templengthc}{{}\hfill\parbox{\templengthb}{$\kModel{M},w\models^g\varphi' \vee \varphi''$}}
  & \leftrightharpoons
  & \parbox[t]{\templength}{$\kModel{M},w\models^g\varphi'$\hfill or\hfill $\kModel{M},w\models^g\varphi''$;}
  \\
\parbox{\templengthc}{{}\hfill\parbox{\templengthb}{$\kModel{M},w\models^g\varphi' \to \varphi''$}}
  & \leftrightharpoons
  & \parbox[t]{\templength}{$\kModel{M},w\not\models^g\varphi'$\hfill or\hfill $\kModel{M},w\models^g\varphi''$;}
  \\
\parbox{\templengthc}{{}\hfill\parbox{\templengthb}{$\kModel{M},w\models^g\Box\varphi'$}}
  & \leftrightharpoons
  & \mbox{$\kModel{M},\parbox{\templengtha}{$v$}\models^g\varphi'$, for every $v\in R(w)$;}
  \\
\parbox{\templengthc}{{}\hfill\parbox{\templengthb}{$\kModel{M},w\models^g\forall x\,\varphi'$}}
  & \leftrightharpoons
  & \mbox{$\kModel{M},w\models^{h}\varphi'$, for every assignment $h$ such that}
  \\
  &
  & \mbox{\phantom{$\kModel{M},w\models^{g'}\varphi'$, }$h \stackrel{x}{=} g$ and $h(x)\in D_w$;}
  \\
\parbox{\templengthc}{{}\hfill\parbox{\templengthb}{$\kModel{M},w\models^g\exists x\,\varphi'$}}
  & \leftrightharpoons
  & \mbox{$\kModel{M},w\models^{h}\varphi'$, for some assignment $h$ such that}
  \\
  &
  & \mbox{\phantom{$\kModel{M},w\models^{g'}\varphi'$, }$h \stackrel{x}{=} g$ and $h(x)\in D_w$.}
\end{array}
$$

Let $\kModel{M}$, $\kframe{F}_D$, $\kframe{F}$, and $\Scls{C}$ be a Kripke model, an e-augmented frame, a Kripke frame, and a class of e-augmented frames, respectively, $w$ a world of $\kModel{M}$, and $\varphi$ a formula with free variables $x_1,\ldots,x_n$; then define
\settowidth{\templength}{\mbox{$\kModel{M},w\models^g P(x_1,\ldots,x_n)$}}
\settowidth{\templengtha}{\mbox{$\kModel{M},w\models^{h}\varphi'$, for every assignment $h$ such that}}
\settowidth{\templengthb}{\mbox{$w$}}
\settowidth{\templengthc}{\mbox{$\kframe{F}_D$}}
$$
\begin{array}{rcl}
\parbox{\templength}{{}\hfill$\kModel{M},w\models \varphi$}
  & \leftrightharpoons
  & \parbox[t]{\templengtha}{$\kModel{M},w\models^g \varphi$, for every assignment $g$ such that}
  \\
  &
  & \mbox{\phantom{$\kModel{M},w\models^g \varphi$, }$g(x_1),\ldots,g(x_n)\in D_w$;}
  \\
\parbox{\templength}{{}\hfill$\kModel{M}\models \varphi$}
  & \leftrightharpoons
  & \parbox[t]{\templengtha}{$\kModel{M},\parbox{\templengthb}{$v$}\models^{\phantom{g}} \varphi$, for every world $v$ of $\kModel{M}$;}
  \\
\parbox{\templength}{{}\hfill$\kframe{F}_D\models \varphi$}
  & \leftrightharpoons
  & \parbox[t]{\templengtha}{$\parbox{\templengthc}{$\kModel{M}$}\models \varphi$, for every $\kModel{M}$ based on $\kframe{F}_D$;}
  \\
\parbox{\templength}{{}\hfill$\kframe{F}\models \varphi$}
  & \leftrightharpoons
  & \parbox[t]{\templengtha}{$\parbox{\templengthc}{$\kModel{M}$}\models \varphi$, for every $\kModel{M}$ based on $\kframe{F}$;}
  \\
\parbox{\templength}{{}\hfill$\Scls{C}\models \varphi$}
  & \leftrightharpoons
  & \parbox[t]{\templengtha}{$\parbox{\templengthc}{$\kframe{F}_D$}\models \varphi$, for every $\kframe{F}_D\in\Scls{C}$.}
  \\
\end{array}
$$
If $\mathfrak{S}\models\varphi$, for a structure $\mathfrak{S}$ (a~world, a~model, a~frame, etc.), we say that the formula $\varphi$ is \defnotion{true}, or \defnotion{valid},\footnote{We use ``valid'' only for frames and classes of frames.} in (on, at)~$\mathfrak{S}$; otherwise, $\varphi$ is \defnotion{refuted} in (on, at)~$\mathfrak{S}$.
These notions, and the corresponding notations, can be extended to sets of formulas in a natural way: for a set of formulas $X$, define $\mathfrak{S}\models X$ as $\mathfrak{S}\models\varphi$, for every $\varphi\in X$.

Observe that, given a Kripke model $\kModel{M} = \langle W,R,D,I\rangle$ and $w \in W$, we can define the interpretation $I_w$ by $I_w(P) = I(w,P)$, for every predicate letter~$P$. Then, the tuple $\kModel{M}_w = \langle D_w, I_w \rangle$ is a classical model. So, we can see on the Kripke model $\kModel{M}$ as the set $\{\kModel{M}_w : w\in W\}$ of classical models structured by~$R$.

Let $\kModel{M} = \langle W,R,D,I\rangle$ be a Kripke model, $w \in W$, and $a_1, \ldots, a_n \in D_w$; let also $\varphi(x_1, \ldots, x_n)$ be a formula whose free variables are among $x_1, \ldots, x_n$.  We write $\kModel{M}, w \models \varphi (a_1, \ldots, a_n)$ if $\kModel{M}, w \models^g \varphi (x_1, \ldots, x_n)$, where $g(x_1) = a_1, \ldots, g(x_n) = a_n$.  This notation is unambiguous since the languages we consider lack constants and the truth value of $\varphi(x_1, \ldots, x_n)$ does not depend on the values of variables different from $x_1, \ldots, x_n$.
Given a non-atomic formula $\varphi(x_1, \ldots, x_n)$ with $x_1, \ldots, x_n$ being the list of its free variables,\footnote{We assume that the variables in the list are in a certain order.} define, for every $w\in W$, the $n$-ary relation $\varphi^{I,w}$:
$$
\begin{array}{lcl}
\varphi^{I,w} & = & \{(a_1,\ldots,a_n)\in D_w^n : \kModel{M},w\models \varphi(a_1,\ldots,a_n)\}.
\end{array}
$$
In fact, this definition expands $I$ to the set of all formulas. We write $\varphi^{I,w}(a_1,\ldots,a_n)$ rather than $(a_1,\ldots,a_n)\in\varphi^{I,w}$.

Let $\Scls{C}$ be a class of e-augmented frames. Define the \defnotion{modal predicate logic\/ $\QML \Scls{C}$ of the class\/~$\Scls{C}$} by
$$
\begin{array}{lcl}
\QML \Scls{C} & = & \{\varphi\in\lang{ML} : \Scls{C}\models\varphi\}.
\end{array} 
$$
Let $\scls{C}$ be a class of Kripke frames and $\alpha\in\{\mathit{c},\mathit{e}\}$; define 
\begin{itemize}
\item
$\fin\scls{C}$ be the class of finite Kripke frames of~$\scls{C}$;
\item
$\aug{\alpha}{all}{\scls{C}}$ be the classes of $\alpha$-augmented frames based on Kripke frames of~$\scls{C}$;
\item
$\aug{\alpha}{dfin}{\scls{C}}$ be the classes of $\alpha$-augmented frames based on Kripke frames of~$\scls{C}$, whose local domains are finite;
\item
$\aug{\alpha}{wfin}{\scls{C}}$ be the classes of $\alpha$-augmented frames based on frames of~$\fin\scls{C}$.
\end{itemize} 
In addition, let $\beta\in\{\mathit{all},\mathit{dfin},\mathit{wfin}\}$. Define the logic $\QMLext{\alpha}{\beta}\scls{C}$~by
$$
\begin{array}{lcl}
\QMLext{\alpha}{\beta} \scls{C} & = & \QML \aug{\alpha}{\beta} \scls{C}.
\end{array} 
$$
For a Kripke frame $\kframe{F}$, we write $\QMLext{\alpha}{\beta} \kframe{F}$ rather than $\QMLext{\alpha}{\beta} \{\kframe{F}\}$.
Notice that
$$
\begin{array}{lclcl}
\QMLext{e}{\beta} \scls{C} \subseteq \QMLext{c}{\beta} \scls{C}, 
  && \QMLext{\alpha}{\mathit{all}} \scls{C} \subseteq \QMLext{\alpha}{\mathit{dfin}} \scls{C},
  && \QMLext{\alpha}{\mathit{all}} \scls{C} \subseteq \QMLext{\alpha}{\mathit{wfin}} \scls{C}. 
\end{array} 
$$

For a modal predicate logic $L$, denote by $\ckf L$ the class of Kripke frames validating~$L$. Define 
$$
\begin{array}{lcl}
L_{\mathit{dfin}} & = & \QMLed \ckf L; \\
L_{\mathit{wfin}} & = & \QMLew \ckf L. \\
\end{array}
$$
For sets $X$ and $Y$ of $\lang{ML}$-formulas, define 
\begin{itemize}
\item
$X\oplus Y$ to be the smallest modal predicate logic, containing $X\cup Y$ and closed under Substitution, Modus Ponens, Generalization, and Necessitation; 
\item
$X+Y$ to be the smallest modal predicate logic, containing $X\cup Y$ and closed under Substitution, Modus Ponens, and Generalization. 
\end{itemize}
We write $X\oplus\varphi$ and $X+\varphi$ rather than $X\oplus\{\varphi\}$ and $X+\{\varphi\}$, respectively, if $\varphi$ is an $\lang{ML}$-formula.

Define the Barcan formula $\bm{bf}$ by $\bm{bf} = \forall x\,\Box Q(x)\to\Box\forall x\,Q(x)$, where $Q$ is a unary predicate letter; it is well known and easy to check that $\bm{bf}$ is valid on an e-augmented frame $\kframe{F}_D$ if, and only if, $\kframe{F}_D$ is a c-augmented frame.

For further definitions we need refer to propositional logics. We assume that the reader is familiar with modal propositional logics; for details, consult~\cite{ChZ}. 
Also, we assume the modal propositional language be the propositional fragment of $\lang{M}$, where propositional variables are proposition letters.\footnote{Formally, this is not the case: if $p$ is a propositional variable, then $p$ is a propositional formula; if $p$ is a proposition letter, then $p$ is not an $\lang{ML}$-formula but $p()$ is. So, we identify $p()$ with~$p$.}
We call\footnote{The claim of closure under Necessitation is sometimes replaced with the claim that Necessitation is postulated as an inference rule; this approach is more subtle but we do not need it, even without a loss of generality.} a propositional modal logic~$L$ \defnotion{normal} if it contains $\logic{K}$ and closed under Substitution, Modes Ponens, and Necessitation; we will deal mostly with predicate counterparts of normal modal propositional logics.
For a normal modal propositional logic $L$, define $\logic{Q}L$ and $\logic{Q}L\logic{.bf}$ by
$$
\begin{array}{lcl}
\logic{Q}L & = & L \oplus \logic{QCl}; \\
\logic{Q}L{\logic{.bf}} & = & \logic{Q}L \oplus \bm{bf}.
\end{array}
$$
For convenience, we recall definitions for some special modal predicate logics we will refer~to:
$$
\begin{array}{lclcl}
\logic{QK}     & = & \logic{QCl} & \!\!\oplus\!\! & \Box(p \to q) \to (\Box p\to \Box q); \\
\logic{QT}     & = & \logic{QK}  & \!\!\oplus\!\! & \Box p \to p; \\
\logic{QD}     & = & \logic{QK}  & \!\!\oplus\!\! & \Diamond\top; \\
\logic{QKB}    & = & \logic{QK}  & \!\!\oplus\!\! & p \to \Box\Diamond p; \\
\logic{QKTB}   & = & \logic{QT}  & \!\!\oplus\!\! & \logic{QKB}; \\
\logic{QK4}    & = & \logic{QK}  & \!\!\oplus\!\! & \Box p \to \Box\Box p; \\
\logic{QK4B}   & = & \logic{QK4} & \!\!\oplus\!\! & \logic{QKB}; \\
\logic{QS4}    & = & \logic{QT}  & \!\!\oplus\!\! & \logic{QK4}; \\
\logic{QK5}    & = & \logic{QK}  & \!\!\oplus\!\! & \Diamond\Box p\to\Box p; \\
\logic{QS5}    & = & \logic{QS4} & \!\!\oplus\!\! & \logic{QK5}; \\
\logic{QK45}   & = & \logic{QK4} & \!\!\oplus\!\! & \logic{QK5}; \\
\logic{QKD45}  & = & \logic{QD}  & \!\!\oplus\!\! & \logic{QK45}; \\
\logic{QGL}    & = & \logic{QK4} & \!\!\oplus\!\! & \Box(\Box p\to p)\to \Box p; \\
\logic{QGrz}   & = & \logic{QS4} & \!\!\oplus\!\! & \Box(\Box(p\to \Box p)\to p)\to p; \\
\logic{QwGrz}  & = & \logic{QK4} & \!\!\oplus\!\! & \Box^+(\Box(p\to \Box p)\to p)\to p; \\
\logic{QTriv}  & = & \logic{QK}  & \!\!\oplus\!\! & p\lra\Box p; \\
\logic{QVer}   & = & \logic{QK}  & \!\!\oplus\!\! & \Box\bot, \\
\end{array}
$$
where $\Box^+$ is defined by $\Box^+\varphi = \varphi\wedge \Box\varphi$.
Notice that some of the logics are Kripke complete,\footnote{A modal predicate logic $L$ is \defnotion{Kripke complete} if there exists a class $\Scls{C}$ of e\nobreakdash-augmented frames such that $L=\QML \Scls{C}$.} some are not~\cite{Montagna84,TO01,GShS,Shehtman23}, some of them contain the Barcan formula (for example, $\logic{QS5}$, $\logic{QTriv}$, $\logic{QVer}$), some do not; for details see~\cite{GShS}.

Let us make a remark. Note that, generally speaking, it is possible that, for a modal predicate logic~$L$, the class $\ckf L$ is empty or degenerate but the class $\caf^{\mathit{e}} L$ of e\nobreakdash-augmented frames validating~$L$ is not empty and not degenerate. For example, Kripke frames for $\logic{QK.bf}$ may contain only singletons and clusters not seeing each other, while the class $\caf^{\mathit{e}} \logic{QK.bf}$ consists of all c\nobreakdash-augmented frames; something similar is true for $\logic{QK}\oplus\Box\bm{bf}$, $\logic{QK}\oplus\Box\Box\bm{bf}$, etc.; the logic $\logic{QK}\oplus\forall x\forall y\,(P(x)\leftrightarrow P(y))$ is not validated by a Kripke frame, but is validated by every e\nobreakdash-augmented frame whose local domains are singletons. Therefore, it seems reasonable to deal with~$\caf^{\mathit{e}} L$ rather than~$\ckf L$; in some sense it is really the case. 
%
%
At the same time, the benefit of such an approach is not significant, as for the most interesting logics, replacing the class of Kripke frames with the class of e\nobreakdash-augmented frames does not result in a different logic.
Nevertheless, we will be interested in logics of c\nobreakdash-augmented frames; but we can describe the locally constant domain condition using the Barcan formula, and the difficulty does not arise. Thus, for such logics, the use of Kripke frames instead of e\nobreakdash-augmented frames will not lead to a violation of generality.

\subsection{Recursive inseparability of dyadic fragments}

Here, we make some straightforward observations concerning recursive inseparability of modal logics when their language contains a binary predicate letter.

We start with recursive inseparability of a modal predicate logic and the logic of its e-augmented frames with finite local domains.

\begin{lemma}
\label{lem:ml:insep:bin}
Let $L$ and $L'$ be modal logics such that $L\subseteq L'$, $L\cap\lang{L}=\logic{QCl}$, and $L'\cap\lang{L}=\logic{QCl}_{\mathit{fin}}$. Then $L$ and $L'\oplus\bm{bf}$ are recursively inseparable in a language containing a binary predicate letter and three individual variables.
\end{lemma}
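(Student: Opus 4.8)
The plan is to reduce the statement to the classical case already established in Theorem~\ref{th:Trakhtenbrot:bin:P} (recursive inseparability of $\logic{QCl}$ and $\logic{QCl}_{\mathit{fin}}$ in a language with one binary predicate letter and three variables) by transporting the separating formulas into the modal language via a relativization/embedding. Concretely, I would fix a standard way to simulate classical reasoning inside a modal predicate logic containing $\logic{QCl}$: since $L\cap\lang{L}=\logic{QCl}$ and $L'\cap\lang{L}=\logic{QCl}_{\mathit{fin}}$, any nonmodal $\lang{L}$-formula $\varphi$ (using only $P$ and three variables) already has the property that $\varphi\in L \iff \varphi\in\logic{QCl}$ and $\varphi\in L'\iff\varphi\in\logic{QCl}_{\mathit{fin}}$. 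So the first, trivial, observation is that the family $\{S_0\mathit{Tiling}_n^{\mathbb{X}}\}_{n\in\numN}$ of purely classical formulas from the proof of Theorem~\ref{th:Trakhtenbrot:bin:P} already witnesses recursive inseparability of $L$ and $L'$ — but we must be careful: the statement asks about $L$ and $L'\oplus\bm{bf}$, not $L$ and $L'$.

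The genuine content, then, is handling the Barcan formula. The point is that the separating formula with $n\in\mathbb{Y}$ is refuted in $\logic{QCl}_{\mathit{fin}}$ by a \emph{finite classical} model, which we must turn into a \emph{finite $L'\oplus\bm{bf}$-refutation}, i.e.\ a finite e-augmented (indeed c-augmented, since $\bm{bf}$ forces constant domains) frame based on a finite Kripke frame validating $L'$. Here is where I would use the standard trick: view a classical model as a single-world (or, if $L'$ needs nontrivial frame structure, a suitably shaped small) Kripke model. If $L'$ contains $\logic{QCl}$ and its frame class $\ckf L'$ is nonempty and contains a finite frame — which holds for all the logics in the intended applications ($\logic{QK}$, $\logic{QT}$, $\logic{QD}$, $\logic{QS5}$, etc.) — then we can base a c-augmented frame on such a finite frame with the constant domain being the domain of the classical countermodel, and interpret $P$ identically at every world. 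Since the separating formulas $S_0\mathit{Tiling}_n^{\mathbb{X}}$ are modality-free, their truth in such a model at any world is exactly their truth in the underlying classical model; thus $n\in\mathbb{Y}$ gives a finite c-augmented $L'$-frame refuting $S_0\mathit{Tiling}_n^{\mathbb{X}}$, hence $S_0\mathit{Tiling}_n^{\mathbb{X}}\notin L'\oplus\bm{bf}$. Conversely, $n\in\mathbb{X}$ gives $S_0\mathit{Tiling}_n^{\mathbb{X}}\in\logic{QCl}\subseteq L\subseteq L'\oplus\bm{bf}$ by Lemma~\ref{lem:Trakhtenbrot:lem1:binP} (noting $L$ is closed under Substitution). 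So the map $n\mapsto S_0\mathit{Tiling}_n^{\mathbb{X}}$ is a recursive reduction showing $L$ and $L'\oplus\bm{bf}$ recursively inseparable, and every formula involved uses only the binary letter $P$ and three variables.

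I expect the main subtlety — the one the author likely addresses with more care than I am sketching — to be the precise hypotheses needed on $L$ and $L'$ for the single-world (or small-frame) embedding to actually produce a model of $L'\oplus\bm{bf}$ rather than merely of $\logic{QCl}$. If we only assume $L'\cap\lang{L}=\logic{QCl}_{\mathit{fin}}$, then $L'$ could a priori have an empty or degenerate Kripke frame class, in which case "finite c-augmented $L'$-frame" needs reinterpreting in terms of $\caf^{\mathit{e}} L'$ (as the paragraph on $\logic{QK.bf}$ in the excerpt warns). The clean way out — and what I would do — is to phrase the argument semantically using $\caf^{\mathit{e}}(L'\oplus\bm{bf})$: one shows directly that a finite classical countermodel $\cModel{M}'$ to $S_0\mathit{Tiling}_n^{\mathbb{X}}$, regarded as the constant-domain assignment on the one-point reflexive(-or-appropriate) Kripke frame, yields an e-augmented frame validating $L'\oplus\bm{bf}$ whenever such a one-point or small frame exists for $L'$; for the intended logics this is immediate. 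The combinatorial heavy lifting (the tiling construction, elimination of unary letters, the three-variable alternation) is entirely inherited from Section~\ref{sec:classical} and needs no repetition, so the proof here should be short: cite Lemmas~\ref{lem:Trakhtenbrot:lem1:binP} and~\ref{lem:Trakhtenbrot:lem2:binP}, observe the modality-free formulas behave identically in the flat Kripke models, and invoke closure under Substitution for the positive direction.
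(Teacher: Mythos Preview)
Your core approach matches the paper's---reduce to Theorem~\ref{th:Trakhtenbrot:bin:P} using the modality-free formulas $S_0\mathit{Tiling}_n^{\mathbb{X}}$---but you overcomplicate it considerably. The paper's entire proof is the single sentence ``Immediate from Theorem~\ref{th:Trakhtenbrot:bin:P}'': since the witnessing formulas lie in~$\lang{L}$, the hypotheses $L\cap\lang{L}=\logic{QCl}$ and $L'\cap\lang{L}=\logic{QCl}_{\mathit{fin}}$ are meant to carry the whole argument, with no Kripke models built at all.

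Your worry about the passage from $L'$ to $L'\oplus\bm{bf}$ is genuine in the abstract, and the paper does not spell it out; but your proposed semantic remedy---placing the finite classical countermodel on a one-point $L'$-frame---imports frame-existence hypotheses that are absent from the lemma, as you yourself concede, so it does not close the gap at the stated generality either. In every actual application of the lemma (Propositions~\ref{prop:1:ml:insep:bin}--\ref{prop:5:ml:insep:bin}) the role of $L'$ is played by some $L''_{\mathit{dfin}}$ or $L''_{\mathit{wfin}}$, for which $(L'\oplus\bm{bf})\cap\lang{L}=\logic{QCl}_{\mathit{fin}}$ is immediate from the semantic definitions; the author evidently folds this into ``immediate.'' Your model-building paragraph is therefore unnecessary for the paper's purposes, and the caveat you raise about arbitrary $L'$ is correct but irrelevant in context.
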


\begin{proof}
Immediate from Theorem~\ref{th:Trakhtenbrot:bin:P}.
\end{proof}

\begin{corollary}
\label{cor:lem:ml:insep:bin}
Let $L$ and $L'$ be modal logics such that $L\subseteq L'$, $L\cap\lang{L}=\logic{QCl}$, and $L'\cap\lang{L}=\logic{QCl}_{\mathit{fin}}$. Then $L$ and $L'$ are recursively inseparable in a language containing a binary predicate letter and three individual variables.
\end{corollary}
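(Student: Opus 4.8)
The plan is to derive Corollary~\ref{cor:lem:ml:insep:bin} from Lemma~\ref{lem:ml:insep:bin} by exactly the same bookkeeping argument used in the proof of Corollary~\ref{cor:a:th:Trakhtenbrot:bin:sib:P}: pass from the ``$\oplus\bm{bf}$'' version to the plain version by exploiting an inclusion of the relevant sets of formulas. Concretely, under the hypotheses $L\subseteq L'$, $L\cap\lang{L}=\logic{QCl}$, and $L'\cap\lang{L}=\logic{QCl}_{\mathit{fin}}$, we have $L'\subseteq L'\oplus\bm{bf}$, since $L'\oplus\bm{bf}$ is by definition the smallest modal predicate logic containing $L'\cup\{\bm{bf}\}$ and closed under the standard rules, hence it contains~$L'$. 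So the pair $(L,L')$ sits ``inside'' the pair $(L,L'\oplus\bm{bf})$ in the appropriate sense, and recursive separability propagates downward along such inclusions.

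The key step is the observation, already used repeatedly in Section~\ref{sec:classical}, that if $X\subseteq X'$ and $Y\subseteq Y'$ with $X\subseteq Y$ and $X'\subseteq Y'$, and $X'$ and $Y'$ are recursively separable (say $X'\subseteq Z\subseteq Y'$ for a recursive~$Z$), then $X$ and $Y$ need not be separated by that same~$Z$; the direction we actually need is the contrapositive with the roles arranged as a ``validity versus refutability'' pair. So I would argue: suppose $L$ and $L'$ are recursively separable, i.e.\ there is a recursive set $Z$ of $\lang{ML}$-formulas (in the language with one binary predicate letter and three variables) with $L\subseteq Z$ and $Z\cap(\text{the $L'$-refutable formulas})=\varnothing$, equivalently $L\subseteq Z\subseteq L'$ in the extended sense of recursive separability defined in Subsection~\ref{sec:prelim} (``Recursive separability''). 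Since $L'\subseteq L'\oplus\bm{bf}$, the same $Z$ witnesses $L\subseteq Z\subseteq L'\oplus\bm{bf}$, so $L$ and $L'\oplus\bm{bf}$ would be recursively separable, contradicting Lemma~\ref{lem:ml:insep:bin}. Hence $L$ and $L'$ are recursively inseparable in the stated language.

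There is essentially no obstacle here; the only point requiring a word of care is making sure the notion of recursive separability being invoked is the one for nested sets $X\subseteq Y$ (as set up in the ``Recursive separability'' subsection), and that $L$ and $L'$ genuinely satisfy $L\subseteq L'$, which is a hypothesis, so that talking about ``$L$-validity versus $L'$-refutability'' makes sense. The inclusion $L'\subseteq L'\oplus\bm{bf}$ is immediate from the definition of $\oplus$, so the proof is a one-line reduction.

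\begin{proof}
Suppose, for contradiction, that $L$ and $L'$ are recursively separable in a language with a binary predicate letter and three individual variables; that is, since $L\subseteq L'$, there is a recursive set $Z$ of formulas of this language with $L\subseteq Z\subseteq L'$. By the definition of $\oplus$, the logic $L'\oplus\bm{bf}$ contains $L'$, so $L'\subseteq L'\oplus\bm{bf}$, and therefore $L\subseteq Z\subseteq L'\oplus\bm{bf}$. Thus $L$ and $L'\oplus\bm{bf}$ would be recursively separable in this language, contradicting Lemma~\ref{lem:ml:insep:bin}. Hence $L$ and $L'$ are recursively inseparable in a language containing a binary predicate letter and three individual variables.
\end{proof}
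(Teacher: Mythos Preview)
Your proof is correct and matches the paper's approach: the paper does not give an explicit proof for this corollary, but the argument you supply is exactly the one-line reduction used for the analogous corollaries elsewhere in the paper (e.g., Corollary~\ref{cor1:th:ml:monadic:insep:1} and Corollary~\ref{cor:a:th:Trakhtenbrot:bin:sib:P}), namely that $L'\subseteq L'\oplus\bm{bf}$, so a recursive separator for $L$ and $L'$ would also separate $L$ and $L'\oplus\bm{bf}$, contradicting Lemma~\ref{lem:ml:insep:bin}.
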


Also, as a corollaries, we obtain the following propositions.

\begin{proposition}
\label{prop:1:ml:insep:bin}
Let $L$ and $L'$ be modal logics such that $\logic{QCl}\subseteq L\subseteq L'$ and $\ckf L'\ne\varnothing$. Then $L$ and $L'_{\mathit{dfin}}\oplus\bm{bf}$ are recursively inseparable in a language containing a binary predicate letter and three individual variables.
\end{proposition}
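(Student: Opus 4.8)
The plan is to deduce the statement from Lemma~\ref{lem:ml:insep:bin}, applied to the pair $L$ and $L'_{\mathit{dfin}}$ in place of its ``$L$'' and ``$L'$''; then its conclusion, that $L$ and $L'_{\mathit{dfin}}\oplus\bm{bf}$ are recursively inseparable in a language with a binary predicate letter and three individual variables, is exactly what is claimed. So, having fixed a Kripke frame $\kframe{F}_0\in\ckf L'$ (which exists by hypothesis), I would verify the three hypotheses of the lemma in turn: $L\subseteq L'_{\mathit{dfin}}$; $L\cap\lang{L}=\logic{QCl}$; and $L'_{\mathit{dfin}}\cap\lang{L}=\logic{QCl}_{\mathit{fin}}$.

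The first of these is immediate: every frame of $\ckf L'$ validates $L'$ on all its e-augmented models, so $L'\subseteq\QMLe\ckf L'\subseteq\QMLed\ckf L'=L'_{\mathit{dfin}}$, whence $L\subseteq L'\subseteq L'_{\mathit{dfin}}$. For the other two, the device is to plant an arbitrary classical model $\cModel{N}=\langle\mathcal{D}_0,\mathcal{I}_0\rangle$ on $\kframe{F}_0$: take the Kripke model over $\kframe{F}_0\odot\mathcal{D}_0$ whose interpretation is constantly $\mathcal{I}_0$ at every world, so that the local classical model at each world is $\cModel{N}$ itself. Since the truth value of a modality-free $\lang{L}$-formula at a world depends only on that local model, this Kripke model validates exactly the $\lang{L}$-formulas true in $\cModel{N}$. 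Letting $\cModel{N}$ range over all classical models of all cardinalities $\geqslant 1$ shows $\QMLe\ckf L'\cap\lang{L}\subseteq\logic{QCl}$, hence — using $\logic{QCl}\subseteq L\subseteq\QMLe\ckf L'$ — that $L\cap\lang{L}=\logic{QCl}$. Letting $\cModel{N}$ range over all \emph{finite} models (so that $\kframe{F}_0\odot\mathcal{D}_0$ has finite local domains) shows $L'_{\mathit{dfin}}\cap\lang{L}\subseteq\logic{QCl}_{\mathit{fin}}$; the reverse inclusion $\logic{QCl}_{\mathit{fin}}\subseteq L'_{\mathit{dfin}}$ holds because a finite-model-valid modality-free formula is true at every world of every e-augmented Kripke model with finite local domains. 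With these in hand, Lemma~\ref{lem:ml:insep:bin} finishes the argument.

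The only point that needs care is this fragment bookkeeping: it is routine, but it is where the hypothesis $\ckf L'\ne\varnothing$ really enters (to supply the carrier frame $\kframe{F}_0$), and one must note that singleton and arbitrary finite constant domains over $\kframe{F}_0$ are all admissible. As an alternative that bypasses the lemma, I could argue directly with the first-order formulas $S_0\mathit{Tiling}_n^{\mathbb{X}}$ of Section~\ref{sec:classical}: for $n\in\mathbb{X}$ they lie in $\logic{QCl}\subseteq L$ by Lemma~\ref{lem:Trakhtenbrot:lem1:binP}; for $n\in\mathbb{Y}$, Lemma~\ref{lem:Trakhtenbrot:lem2:binP} gives a finite classical countermodel $\cModel{M}$, and planting $\cModel{M}$ on $\kframe{F}_0$ as above yields a Kripke countermodel whose underlying frame is c-augmented with finite local domains over $\ckf L'$, so $S_0\mathit{Tiling}_n^{\mathbb{X}}\notin\QMLcd\ckf L'\supseteq L'_{\mathit{dfin}}\oplus\bm{bf}$ (the inclusion holding because $\QMLcd\ckf L'$ is a modal predicate logic containing both $L'_{\mathit{dfin}}$ and $\bm{bf}$). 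Since $n\mapsto S_0\mathit{Tiling}_n^{\mathbb{X}}$ is computable and all these formulas use only $P$ and three variables, recursive inseparability of $\mathbb{X}$ and $\mathbb{Y}$ transfers to $L$ and $L'_{\mathit{dfin}}\oplus\bm{bf}$.
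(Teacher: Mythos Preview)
Your proposal is correct and follows essentially the same route as the paper: verify that $L\subseteq L'_{\mathit{dfin}}$, $L\cap\lang{L}=\logic{QCl}$, and $L'_{\mathit{dfin}}\cap\lang{L}=\logic{QCl}_{\mathit{fin}}$, then invoke Lemma~\ref{lem:ml:insep:bin} with $L'_{\mathit{dfin}}$ in the role of~$L'$. The paper's own proof merely asserts these fragment identities (noting $\ckf L\ne\varnothing$ as the reason for the second), whereas you spell out the ``planting'' argument explicitly and also sketch a direct alternative via $S_0\mathit{Tiling}_n^{\mathbb{X}}$; both are fine elaborations of the same idea.
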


\begin{proof}
Observe that then $\ckf L\ne\varnothing$, and therefore $L\cap\lang{L}=\logic{QCl}$. Also, observe that $L'_{dfin}\cap\lang{L}=\logic{QCl}_{\mathit{fin}}$. Thus, we can apply Lemma~\ref{lem:ml:insep:bin} to $L$ and~$L'_{\mathit{dfin}}$.
\end{proof}

\begin{proposition}
\label{prop:2:ml:insep:bin}
Let $L$ and $L'$ be modal logics such that $\logic{QK}\subseteq L\subseteq L'$ and also either $L'\subseteq \logic{QTriv}$ or $L'\subseteq \logic{QVer}$. Then $L$ and $L'_{\mathit{dfin}}\oplus\bm{bf}$ are recursively inseparable in a language containing a binary predicate letter and three individual variables.
\end{proposition}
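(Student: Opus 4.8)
The plan is to derive this as an immediate consequence of Proposition~\ref{prop:1:ml:insep:bin}. Since $\logic{QK}$ already contains $\logic{QCl}$, the assumption $\logic{QK}\subseteq L\subseteq L'$ gives $\logic{QCl}\subseteq L\subseteq L'$, which is the first hypothesis of Proposition~\ref{prop:1:ml:insep:bin}. Hence the whole statement reduces to verifying that $\ckf L'\ne\varnothing$; once this is done, Proposition~\ref{prop:1:ml:insep:bin} yields that $L$ and $L'_{\mathit{dfin}}\oplus\bm{bf}$ are recursively inseparable in a language with a binary predicate letter and three individual variables, which is exactly the claim.

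To check that $\ckf L'\ne\varnothing$, I would exhibit a single-world Kripke frame validating $L'$, according to which of the two cases holds. Suppose $L'\subseteq\logic{QTriv}$ and take $\kframe{F}_\circ=\otuple{\{w\},\{\opair{w}{w}\}}$, the reflexive one-point frame. The set of $\lang{ML}$-formulas valid on $\kframe{F}_\circ$ is a modal predicate logic closed under Substitution, Modus Ponens, Generalization, and Necessitation; it contains $\logic{QCl}$, because on a one-world frame the evaluation of a modality-free formula coincides with its classical evaluation in the local model; it contains the axiom $\Box(p\to q)\to(\Box p\to\Box q)$, valid on every frame; and it contains $p\leftrightarrow\Box p$, because $R(w)=\{w\}$. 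Consequently it contains $\logic{QK}$ and then $\logic{QTriv}=\logic{QK}\oplus(p\leftrightarrow\Box p)$, hence it contains $L'$; that is, $\kframe{F}_\circ\in\ckf L'$. If instead $L'\subseteq\logic{QVer}$, the same argument applied to the irreflexive one-point frame $\kframe{F}_\bullet=\otuple{\{w\},\varnothing}$ works: there $\Box\bot$ holds vacuously since $R(w)=\varnothing$, so the logic of $\kframe{F}_\bullet$ contains $\logic{QVer}=\logic{QK}\oplus\Box\bot$ and therefore $L'$, giving $\kframe{F}_\bullet\in\ckf L'$. In both cases $\ckf L'\ne\varnothing$.

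I do not expect any real obstacle here; the only point deserving a word is that ``validity on a Kripke frame'' is, in the present set-up, validity on every e-augmented frame based on it, so one should note that the e-augmented frames over $\kframe{F}_\circ$ (respectively $\kframe{F}_\bullet$) are exactly the single-world c-augmented frames, on which the expanding-domain condition is automatic and on which a modal formula is true iff its modality-free ``ground part'' is true, modified only by the forced reading of $\Box$ (namely $\Box\varphi\leftrightarrow\varphi$ on $\kframe{F}_\circ$ and $\Box\varphi\leftrightarrow\top$ on $\kframe{F}_\bullet$). With $\ckf L'\ne\varnothing$ established, Proposition~\ref{prop:1:ml:insep:bin} finishes the argument.
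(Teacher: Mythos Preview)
Your proof is correct and follows essentially the same approach as the paper: verify that $\ckf L'\ne\varnothing$ (via the one-point frames for $\logic{QTriv}$ and $\logic{QVer}$) and that $\logic{QCl}\subseteq L$, then invoke Proposition~\ref{prop:1:ml:insep:bin}. The paper's proof is simply a terser version of yours, observing $\ckf\logic{QTriv}\ne\varnothing$ and $\ckf\logic{QVer}\ne\varnothing$ without spelling out the frames explicitly.
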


\begin{proof}
Observe that $\ckf\logic{QTriv}\ne \varnothing$ and $\ckf\logic{QVer}\ne \varnothing$. Also, $\logic{QCl}\subseteq\logic{QK}$. Thus, we can apply Proposition~\ref{prop:1:ml:insep:bin} to $L$ and~$L'$.
\end{proof}

\begin{proposition}
\label{prop:3:ml:insep:bin}
Let $L$ be one of\/ $\logic{QK}$, $\logic{QT}$, $\logic{QD}$, $\logic{QKB}$, $\logic{QKTB}$, $\logic{QK4}$, $\logic{QS4}$, $\logic{QK5}$, $\logic{QS5}$, $\logic{QK45}$, $\logic{QKD45}$, $\logic{QK4B}$, $\logic{QGL}$, $\logic{QGrz}$. Then $L$ and $L_{\mathit{dfin}}\oplus\bm{bf}$ are recursively inseparable in a language containing a binary predicate letter and three individual variables.
\end{proposition}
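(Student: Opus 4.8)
The plan is to deduce Proposition~\ref{prop:3:ml:insep:bin} from Proposition~\ref{prop:1:ml:insep:bin} by taking $L'=L$. For that, two hypotheses of Proposition~\ref{prop:1:ml:insep:bin} must be verified: that $\logic{QCl}\subseteq L$ and that $\ckf L\ne\varnothing$. The first is immediate, since each logic on the list is of the form $\logic{Q}\Lambda=\Lambda\oplus\logic{QCl}$ for a normal modal propositional logic $\Lambda$, so $\logic{QCl}$ is contained in it by definition. The real content is therefore the second hypothesis: for each listed $L$ one has to exhibit a single Kripke frame validating~it.

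Here the key observation is that a one-world Kripke frame $\otuple{\{0\},R_0}$, with $R_0=\varnothing$ or $R_0=\{\opair{0}{0}\}$, has the property that every Kripke model based on it is, after the obvious identification, an ordinary classical model, so it validates $\logic{QCl}$; and since frame-validity is closed under Substitution, Modus Ponens, Generalization, and Necessitation, such a frame validates $\logic{Q}\Lambda$ as soon as it validates the propositional logic~$\Lambda$. It then remains to recall the routine propositional facts: the reflexive point $\otuple{\{0\},\{\opair{0}{0}\}}$ is a reflexive, symmetric, transitive, serial partial order with no infinite strictly ascending chain of distinct worlds, hence validates $\logic{T}$, $\logic{D}$, $\logic{KB}$, $\logic{KTB}$, $\logic{S4}$, $\logic{S5}$, $\logic{KD45}$, $\logic{K4B}$, and $\logic{Grz}$; and the irreflexive point $\otuple{\{0\},\varnothing}$ is (vacuously) transitive, Euclidean, and conversely well-founded, hence validates $\logic{K}$, $\logic{K4}$, $\logic{K5}$, $\logic{K45}$, and the L\"{o}b axiom, i.e. $\logic{GL}$. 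So in every case $\ckf L\ne\varnothing$, and Proposition~\ref{prop:1:ml:insep:bin} gives that $L$ and $L_{\mathit{dfin}}\oplus\bm{bf}$ are recursively inseparable in a language with a binary predicate letter and three individual variables, as claimed.

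The proof is thus essentially a bookkeeping exercise, and the only spot that deserves a moment of care — the closest thing to an obstacle — is $\logic{QGL}$: one must pick the \emph{irreflexive} one-point frame for it, since a reflexive point carries an infinite $R$-chain $0R0R0\ldots$ and therefore refutes the L\"{o}b axiom; dually, $\logic{Grz}$ (and $\logic{T}$, $\logic{D}$, etc.) require the reflexive point. Every other logic on the list is validated by either choice, so no genuine case analysis beyond this remark is needed.
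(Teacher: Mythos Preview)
Your proof is correct and essentially the same as the paper's. The paper packages the argument via the intermediate Proposition~\ref{prop:2:ml:insep:bin}, observing that each listed $L$ satisfies $L\subseteq\logic{QTriv}$ or $L\subseteq\logic{QVer}$ and then invoking $\ckf\logic{QTriv}\ne\varnothing$, $\ckf\logic{QVer}\ne\varnothing$; since $\logic{QTriv}$ and $\logic{QVer}$ are precisely the logics of the reflexive and irreflexive one-point frames, this is exactly your argument with one extra layer of naming.
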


\begin{proof}
Observe that $\logic{QK}\subseteq L$ and also $L\subseteq \logic{QTriv}$ or $L\subseteq \logic{QVer}$; then apply Proposition~\ref{prop:2:ml:insep:bin} taking $L'=L$.
\end{proof}

Now, let us turn to the situation when a modal predicate logic and the logic of its finite Kripke frames are considered; notice that the local domains of finite frames can be infinite.

\begin{proposition}
\label{prop:lem:2:ml:insep:bin}
Let $L$ and $L'$ be modal logics such that $L\subseteq L'$, $L\cap\lang{L}=\logic{QCl}$, and $\fin\ckf L' \ne \varnothing$. Then $L$ and $L'_{\mathit{wfin}}\oplus\bm{bf}$ are recursively inseparable in a language containing a binary predicate letter and three individual variables.
\end{proposition}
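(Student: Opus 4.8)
The plan is to reuse the modality-free first-order formulas $S_0\mathit{Tiling}_n^{\mathbb{X}}$ constructed for Theorem~\ref{th:Trakhtenbrot:bin:P}; recall that each of them contains the binary predicate letter $P$ as its only predicate letter and only three individual variables, hence it is at the same time an $\lang{L}$-formula and an $\lang{ML}$-formula in the required restricted language. I would show that the (obviously computable) map $n\mapsto S_0\mathit{Tiling}_n^{\mathbb{X}}$ witnesses the recursive inseparability, i.e.\ that $n\in\mathbb{X}$ implies $S_0\mathit{Tiling}_n^{\mathbb{X}}\in L$, whereas $n\in\mathbb{Y}$ implies $S_0\mathit{Tiling}_n^{\mathbb{X}}\notin L'_{\mathit{wfin}}\oplus\bm{bf}$. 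Granting these two facts, the result follows: since $L\subseteq L'\subseteq L'_{\mathit{wfin}}\subseteq L'_{\mathit{wfin}}\oplus\bm{bf}$ (the inclusion $L'\subseteq L'_{\mathit{wfin}}$ holds because every frame of $\ckf{L'}$ validates $L'$, hence so do all augmented frames based on the finite ones), a recursive $Z$ with $L\subseteq Z\subseteq L'_{\mathit{wfin}}\oplus\bm{bf}$ would give the recursive set $\{n: S_0\mathit{Tiling}_n^{\mathbb{X}}\in Z\}$ separating $\mathbb{X}$ from $\mathbb{Y}$, contradicting their recursive inseparability.

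The first implication is immediate. By Lemma~\ref{lem:Trakhtenbrot:lem1:binP}, $n\in\mathbb{X}$ gives $S_0\mathit{Tiling}_n^{\mathbb{X}}\in\logic{QCl}$; since $S_0\mathit{Tiling}_n^{\mathbb{X}}$ is a modality-free formula, i.e.\ an $\lang{L}$-formula, the hypothesis $L\cap\lang{L}=\logic{QCl}$ yields $S_0\mathit{Tiling}_n^{\mathbb{X}}\in L$.

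For the second implication, let $n\in\mathbb{Y}$. By Lemma~\ref{lem:Trakhtenbrot:lem2:binP} there is a \emph{finite} classical model $\cModel{M}=\otuple{\mathcal{D},\mathcal{I}}$ with $\cModel{M}\not\models S_0\mathit{Tiling}_n^{\mathbb{X}}$. Using $\fin\ckf{L'}\ne\varnothing$, fix a finite Kripke frame $\kframe{F}_0=\otuple{W_0,R_0}$ validating $L'$ and form the augmented frame $\kframe{F}_0\odot\mathcal{D}$ with the constant finite domain $\mathcal{D}$; it satisfies $(\mathit{GCD})$, hence is a $c$-augmented frame, so it validates $\bm{bf}$, and it is based on a finite frame of $\ckf{L'}$, so it belongs to the class $\aug{e}{wfin}{\ckf{L'}}$ defining $L'_{\mathit{wfin}}$. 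On it put the model $\kModel{M}'=\otuple{W_0,R_0,D,I}$ with $I(w,P')=\mathcal{I}(P')$ at every world $w$ and every predicate letter $P'$, so that $\kModel{M}'_w=\cModel{M}$ for all $w\in W_0$. As $S_0\mathit{Tiling}_n^{\mathbb{X}}$ is modality-free, its truth at a world $w$ of $\kModel{M}'$ coincides with its truth in the classical model $\kModel{M}'_w=\cModel{M}$; hence it is refuted at every world, so $\kframe{F}_0\odot\mathcal{D}\not\models S_0\mathit{Tiling}_n^{\mathbb{X}}$. Now $L'_{\mathit{wfin}}$ is contained in the logic $\QML\{\kframe{F}_0\odot\mathcal{D}\}$ of this single frame, $\bm{bf}$ is also in that logic, and the logic of a class of frames is closed under Substitution, Modus Ponens, Generalization and Necessitation; therefore $L'_{\mathit{wfin}}\oplus\bm{bf}\subseteq\QML\{\kframe{F}_0\odot\mathcal{D}\}$, which does not contain $S_0\mathit{Tiling}_n^{\mathbb{X}}$. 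Thus $S_0\mathit{Tiling}_n^{\mathbb{X}}\notin L'_{\mathit{wfin}}\oplus\bm{bf}$, and the reduction of the first paragraph finishes the argument.

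The only genuinely delicate point — and essentially the only place where the hypotheses do real work beyond bookkeeping — is this countermodel step. Here one must notice that the subscript $\mathit{wfin}$ constrains only the underlying Kripke frame to be finite, not its local domains, so equipping $\kframe{F}_0$ with a constant finite domain is legitimate; this is exactly what allows the finite classical countermodel of Lemma~\ref{lem:Trakhtenbrot:lem2:binP} to be transplanted verbatim, while passing to $\QML\{\kframe{F}_0\odot\mathcal{D}\}$ simultaneously absorbs $L'_{\mathit{wfin}}$ and the Barcan axiom $\bm{bf}$. Everything else — the observation that modality-free formulas are evaluated pointwise in the classical models $\kModel{M}_w$, and the final recursion-theoretic reduction — is routine.
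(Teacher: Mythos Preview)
Your proof is correct, but it takes a more elaborate route than the paper's. The paper observes that since only the underlying Kripke frame is required to be finite (domains may be infinite), the modality-free fragment of $L'_{\mathit{wfin}}\oplus\bm{bf}$ is exactly $\logic{QCl}$, not $\logic{QCl}_{\mathit{fin}}$: given \emph{any} classical countermodel $\cModel{M}$ for an $\lang{L}$-formula, one may equip a finite $\kframe{F}_0\in\ckf L'$ with the constant domain of $\cModel{M}$. Hence both $L\cap\lang{L}$ and $(L'_{\mathit{wfin}}\oplus\bm{bf})\cap\lang{L}$ equal $\logic{QCl}$, and a recursive separator would decide $\logic{QCl}$, contradicting Church's theorem (or Theorem~\ref{th:Trakhtenbrot:bin:P} used only for undecidability). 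This is a one-line argument once the fragment computation is noted.

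Your approach instead reuses the explicit tiling formulas $S_0\mathit{Tiling}_n^{\mathbb{X}}$ and the $\mathbb{X}/\mathbb{Y}$ inseparability, transplanting the \emph{finite} countermodel of Lemma~\ref{lem:Trakhtenbrot:lem2:binP}. That works, but the finiteness of $\mathcal{D}$ is not actually needed here (you even remark that $\mathit{wfin}$ does not constrain domains), so invoking the full Trakhtenbrot machinery is overkill. The paper's argument is what distinguishes this proposition from the $\mathit{dfin}$ case (Proposition~\ref{prop:1:ml:insep:bin}), where finite domains force the $\lang{L}$-fragment down to $\logic{QCl}_{\mathit{fin}}$ and the Trakhtenbrot inseparability is genuinely required. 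Your route obscures this distinction; the paper's makes it transparent.
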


\begin{proof}
Follows from the Church theorem~\cite{Church36,TG87} or Theorem~\ref{th:Trakhtenbrot:bin:P}. Indeed, Theorem~\ref{th:Trakhtenbrot:bin:P} gives us that $\logic{QCl}$ is undecidable in a language containing a binary predicate letter and three individual variables. Suppose that $L$ and $L'_{\mathit{wfin}}\oplus\bm{bf}$ are recursively separable. Then $L\cap\lang{L}$ and $L'_{\mathit{wfin}}\oplus\bm{bf}\cap\lang{L}$ should also be recursively separable. But $L\cap\lang{L} = L'_{\mathit{wfin}}\oplus\bm{bf}\cap\lang{L} = \logic{QCl}$ (since $\fin\ckf L' \ne \varnothing$), and then $\logic{QCl}$ should be decidable, that is a contradiction. Thus, $L$ and $L'_{\mathit{wfin}}\oplus\bm{bf}$ are recursively inseparable.
\end{proof}

\begin{corollary}
\label{cor:lem:2:ml:insep:bin}
Let $L$ and $L'$ be modal logics such that $L\subseteq L'$, $L\cap\lang{L}=\logic{QCl}$, and $\fin\ckf L' \ne \varnothing$. Then $L$ and $L'_{\mathit{wfin}}$ are recursively inseparable in a language containing a binary predicate letter and three individual variables.
\end{corollary}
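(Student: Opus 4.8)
The plan is to obtain Corollary~\ref{cor:lem:2:ml:insep:bin} from Proposition~\ref{prop:lem:2:ml:insep:bin} exactly as Corollary~\ref{cor:lem:ml:insep:bin} is obtained from Lemma~\ref{lem:ml:insep:bin}: by the observation that recursive inseparability of a pair $(X,Y)$ with $X\subseteq Y$ is preserved when one enlarges the larger set. Indeed, if $X\subseteq Y\subseteq Y'$ and some recursive $Z$ witnesses $X\subseteq Z\subseteq Y$, then $Z$ also witnesses $X\subseteq Z\subseteq Y'$; so recursive separability of $(X,Y)$ implies recursive separability of $(X,Y')$, and contrapositively inseparability of $(X,Y')$ yields inseparability of $(X,Y)$. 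I would apply this with $X=L$, $Y=L'_{\mathit{wfin}}$, and $Y'=L'_{\mathit{wfin}}\oplus\bm{bf}$.

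First I would record the two required inclusions $L\subseteq L'_{\mathit{wfin}}\subseteq L'_{\mathit{wfin}}\oplus\bm{bf}$. The right-hand one is immediate from the definition of $\oplus$: $L'_{\mathit{wfin}}=\QMLew\ckf{L'}$ is a logic of a class of e-augmented frames, hence already closed under Substitution, Modus Ponens, Generalization, and Necessitation, so adjoining $\bm{bf}$ only enlarges it. For the left-hand one, since $L\subseteq L'$ by hypothesis it suffices to check $L'\subseteq L'_{\mathit{wfin}}$; unfolding the notation, every frame in $\augew{\ckf{L'}}$ is an e-augmented frame based on some $\kframe{F}\in\fin\ckf{L'}\subseteq\ckf{L'}$, every Kripke model on that e-augmented frame is in particular a Kripke model on $\kframe{F}$, and $\kframe{F}$ validates $L'$, so the e-augmented frame validates $L'$ too; hence $L'\subseteq\QML\augew{\ckf{L'}}=L'_{\mathit{wfin}}$. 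This also makes the statement well-posed under the extended notion of separability of Section~\ref{sec:prelim}, since it exhibits $L$ as a subset of $L'_{\mathit{wfin}}$.

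With the inclusions in place the conclusion is immediate: a recursive $Z$ with $L\subseteq Z\subseteq L'_{\mathit{wfin}}$ would also satisfy $L\subseteq Z\subseteq L'_{\mathit{wfin}}\oplus\bm{bf}$, contradicting Proposition~\ref{prop:lem:2:ml:insep:bin}. I do not expect a real obstacle here, since all the substance is already packed into Proposition~\ref{prop:lem:2:ml:insep:bin} (and thence into Theorem~\ref{th:Trakhtenbrot:bin:P}); the only care needed is the bookkeeping with the definitions of $\ckf{\cdot}$, $\QMLew$, and $\oplus$ so that the two inclusions are genuinely valid. Should one wish to bypass the Barcan variant altogether, an equally short self-contained route is to intersect a hypothetical separator with the modality-free fragment and use $L\cap\lang{L}=\logic{QCl}$ together with $L'_{\mathit{wfin}}\cap\lang{L}=\logic{QCl}$ — the latter obtained by evaluating a modality-free formula in a Kripke model with constant domain and constant interpretation over a finite frame validating $L'$, which exists because $\fin\ckf{L'}\ne\varnothing$ — and then appeal to the undecidability of $\logic{QCl}$ in the three-variable, one-binary-letter language furnished by Theorem~\ref{th:Trakhtenbrot:bin:P}.
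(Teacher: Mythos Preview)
Your proposal is correct and follows precisely the route the paper intends: the corollary is meant to be deduced from Proposition~\ref{prop:lem:2:ml:insep:bin} via the inclusion $L'_{\mathit{wfin}}\subseteq L'_{\mathit{wfin}}\oplus\bm{bf}$, exactly as the analogous Corollary~\ref{cor1:th:ml:monadic:insep:1} is deduced from Theorem~\ref{th:ml:monadic:insep:1}. Your careful verification of $L\subseteq L'\subseteq L'_{\mathit{wfin}}$ and your observation that the alternative ``intersect with $\lang{L}$'' argument reproduces the proof of Proposition~\ref{prop:lem:2:ml:insep:bin} itself are both spot on.
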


This lemma provides us with some other corollaries.

\pagebreak[3]

\begin{samepage}
\begin{proposition}
\label{prop:4:ml:insep:bin}
Let $L$ and $L'$ be modal logics such that $\logic{QK}\subseteq L\subseteq L'$ and also either $L'\subseteq \logic{QTriv}$ or $L'\subseteq \logic{QVer}$. Then $L$ and $L'_{\mathit{wfin}}\oplus\bm{bf}$ are recursively inseparable in a language containing a binary predicate letter and three individual variables.
\end{proposition}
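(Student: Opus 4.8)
The plan is to deduce the statement from Proposition~\ref{prop:lem:2:ml:insep:bin} applied to the pair $L$, $L'$ — in complete analogy with the proof of Proposition~\ref{prop:2:ml:insep:bin}, which derived the ``$\mathit{dfin}$'' version from Proposition~\ref{prop:1:ml:insep:bin}. Concretely, besides the given inclusion $L\subseteq L'$, I must verify the two remaining hypotheses of Proposition~\ref{prop:lem:2:ml:insep:bin}: that $L\cap\lang{L}=\logic{QCl}$, and that $\fin\ckf L'\ne\varnothing$.

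For the classical fragment: since $\logic{QCl}\subseteq\logic{QK}\subseteq L$ we have $\logic{QCl}\subseteq L\cap\lang{L}$, and since $L\subseteq L'$ with $L'\subseteq\logic{QTriv}$ or $L'\subseteq\logic{QVer}$, it remains to observe that $\logic{QTriv}\cap\lang{L}=\logic{QVer}\cap\lang{L}=\logic{QCl}$; then the squeeze $\logic{QCl}\subseteq L\cap\lang{L}\subseteq\logic{QCl}$ closes the matter. This observation is routine: $\logic{QTriv}$ is the predicate logic of the reflexive one-point Kripke frame and $\logic{QVer}$ that of the irreflexive one-point frame (equivalently, of frames with empty accessibility relation). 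On either kind of frame the modal clauses become transparent ($\Box\varphi\equiv\varphi$ in the first case, $\Box\varphi\equiv\top$ in the second), so a modality-free formula holds at a world exactly when it holds in the classical model supplied by that world; since one-point frames admit augmented frames of arbitrary domain, any $\lang{L}$-formula outside $\logic{QCl}$ is refuted on such a frame and hence lies outside $\logic{QTriv}$ and outside $\logic{QVer}$. For the second hypothesis: if $L'\subseteq\logic{QTriv}$ then the reflexive singleton validates $\logic{QTriv}$ and a fortiori $L'$, while if $L'\subseteq\logic{QVer}$ the irreflexive singleton validates $\logic{QVer}$ and hence $L'$; in both cases this is a finite Kripke frame validating $L'$, so $\fin\ckf L'\ne\varnothing$.

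With both hypotheses established, Proposition~\ref{prop:lem:2:ml:insep:bin} immediately gives that $L$ and $L'_{\mathit{wfin}}\oplus\bm{bf}$ are recursively inseparable in a language with a binary predicate letter and three individual variables, which is the assertion. I do not anticipate a real obstacle: all the genuine work (the reduction to the Church theorem / Theorem~\ref{th:Trakhtenbrot:bin:P}) is already encapsulated in Proposition~\ref{prop:lem:2:ml:insep:bin}. The only point needing an explicit line, rather than being swept under ``analogously to Proposition~\ref{prop:2:ml:insep:bin}'', is the identity $\logic{QTriv}\cap\lang{L}=\logic{QVer}\cap\lang{L}=\logic{QCl}$, guaranteeing that passing to these maximal consistent logics does not enlarge the classical fragment.
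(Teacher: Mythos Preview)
Your proof is correct and follows the same approach as the paper: verify the two hypotheses of Proposition~\ref{prop:lem:2:ml:insep:bin} (that $L\cap\lang{L}=\logic{QCl}$ and $\fin\ckf L'\ne\varnothing$) using the singleton frames for $\logic{QTriv}$ and $\logic{QVer}$, then invoke that proposition. The paper's proof is terser---it merely notes $\fin\ckf\logic{QTriv}\ne\varnothing$, $\fin\ckf\logic{QVer}\ne\varnothing$, and $\logic{QCl}\subseteq\logic{QK}$---but your added detail on why the classical fragments of $\logic{QTriv}$ and $\logic{QVer}$ equal $\logic{QCl}$ is exactly what is implicitly needed.
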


\nopagebreak[3]

\begin{proof}
Observe that $\fin\ckf\logic{QTriv}\ne \varnothing$ and $\fin\ckf\logic{QVer}\ne \varnothing$. Also, $\logic{QCl}\subseteq\logic{QK}$. Thus, we can apply Proposition~\ref{prop:lem:2:ml:insep:bin} to $L$ and $L'$.
\end{proof}
\end{samepage}

\pagebreak[3]

\begin{proposition}
\label{prop:5:ml:insep:bin}
Let $L$ be one of\/ $\logic{QK}$, $\logic{QT}$, $\logic{QD}$, $\logic{QKB}$, $\logic{QKTB}$, $\logic{QK4}$, $\logic{QS4}$, $\logic{QK5}$, $\logic{QS5}$, $\logic{QK45}$, $\logic{QKD45}$, $\logic{QK4B}$, $\logic{QGL}$, $\logic{QGrz}$. Then $L$ and $L_{\mathit{wfin}}\oplus\bm{bf}$ are recursively inseparable in a language containing a binary predicate letter and three individual variables.
\end{proposition}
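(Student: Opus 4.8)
The plan is to obtain the statement as an immediate application of Proposition~\ref{prop:4:ml:insep:bin} with $L' = L$. For this it is enough to verify, for each $L$ in the list, the two hypotheses of that proposition: that $\logic{QK}\subseteq L$ and that either $L\subseteq\logic{QTriv}$ or $L\subseteq\logic{QVer}$.

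First I would note that $\logic{QK}\subseteq L$ holds for every logic listed, since each of $\logic{QK}$, $\logic{QT}$, $\logic{QD}$, $\logic{QKB}$, $\logic{QKTB}$, $\logic{QK4}$, $\logic{QS4}$, $\logic{QK5}$, $\logic{QS5}$, $\logic{QK45}$, $\logic{QKD45}$, $\logic{QK4B}$, $\logic{QGL}$, $\logic{QGrz}$ is, by definition, a modal predicate logic axiomatised over $\logic{QK}$ (directly or through $\logic{QT}$, $\logic{QD}$, $\logic{QK4}$, $\logic{QS4}$). For the upper bound I would split into two cases according to which of the two ``maximal consistent'' logics $L$ sits under. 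All the listed logics except $\logic{QGL}$ are valid on the one-element reflexive Kripke frame: this frame is a reflexive, symmetric, transitive, serial, Euclidean finite poset, hence validates $\logic{T},\logic{D},\logic{KB},\logic{K4},\logic{S4},\logic{S5},\logic{K45},\logic{KD45},\logic{K4B},\logic{Grz}$ propositionally; since it also validates $\logic{QCl}$ on the modal-free fragment, it validates $\logic{Q}L_0 = L_0\oplus\logic{QCl}$ for each such propositional $L_0$, i.e.\ $L\subseteq\logic{QTriv}$. For $\logic{QGL}$ instead I would use that $\logic{GL}$ is valid on the one-element irreflexive dead-end frame (there $\Box\bot$, and hence $\logic{Ver}$, is vacuously true, as are $\logic{K4}$ and the L\"ob axiom), whence $\logic{QGL}\subseteq\logic{QVer}$. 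Applying Proposition~\ref{prop:4:ml:insep:bin} with $L'=L$ then yields the claim; note that the inclusion $L\subseteq\logic{QTriv}$ (resp.\ $L\subseteq\logic{QVer}$) together with $\logic{QK}\subseteq L$ also forces $L\cap\lang{L}=\logic{QCl}$, which is the ingredient Proposition~\ref{prop:4:ml:insep:bin} passes on to Proposition~\ref{prop:lem:2:ml:insep:bin}.

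\textbf{Main obstacle.} There is no genuine difficulty here; the statement is a routine bookkeeping corollary. The only point that is not completely mechanical is $\logic{QGL}$, which --- unlike the other thirteen logics --- is \emph{not} below $\logic{QTriv}$ (it is inconsistent with reflexivity), so one must be careful to place it below $\logic{QVer}$ instead, invoking the one-element irreflexive frame rather than the one-element reflexive frame. Everything else is a direct quotation of Proposition~\ref{prop:4:ml:insep:bin}.
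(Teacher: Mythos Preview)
Your proposal is correct and is exactly the paper's own argument: the paper's proof is the one-line ``Observe that $\logic{QK}\subseteq L$ and also $L\subseteq\logic{QTriv}$ or $L\subseteq\logic{QVer}$; then apply Proposition~\ref{prop:4:ml:insep:bin} taking $L'=L$.'' You have simply spelled out the verification of the two inclusions, correctly singling out $\logic{QGL}$ as the one case that must go under $\logic{QVer}$ rather than $\logic{QTriv}$.
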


\begin{proof}
Observe that $\logic{QK}\subseteq L$ and also $L\subseteq \logic{QTriv}$ or $L\subseteq \logic{QVer}$; then apply Proposition~\ref{prop:4:ml:insep:bin} taking $L'=L$.
\end{proof}

These observations show us that the questions about the recursive inseparability of logics $L$ and $L_{\mathit{dfin}}$ or logics $L$ and $L_{\mathit{wfin}}$, for a ``natural'' modal predicate logic~$L$, is trivial due to the Trakhtenbrot theorem.

\subsection{Monadic fragments and three variables}

The monadic fragments of $\logic{QCl}$ and $\logic{QCl}_{\mathit{wfin}}$ coincide and are decidable~\cite[Chapter~21]{BBJ07}. At the same time, the monadic fragments of many modal predicate logics are undecidable, even in languages with a single unary predicate letter and two-three individual variables~\cite{RSh19SL,RShJLC20a,RShJLC21b}. Here, we improve many of the known results concerning languages with three individual variables.

We say that a Kripke frame $\kframe{F}=\otuple{W,R}$ satisfies the \defnotion{Kripke--Hughes--Cresswell condition} (for short, \defnotion{KHC}) if $R(w)$ is infinite for some $w\in W$. We say that a modal predicate logic $L$ is \defnotion{KHC-friendly} if there exists a Kripke frame $\kframe{F}\in\ckf L$ satisfying KHC. The monadic fragments of KHC-friendly modal predicate logics are known to be undecidable~\cite{Kripke62,HC96}. For our purposes, we need a weaker condition. 
We say that a class $\scls{C}$ of Kripke frames satisfies the \defnotion{weak Kripke--Hughes--Cresswell condition} (for short, \defnotion{wKHC}) if, for every $n\in\numN$, there exists a Kripke frame $\otuple{W,R}\in\scls{C}$ with $w \in W$ such that $|R(w)|\geqslant n$. Clearly, if a class of Kripke frames contains a frame satisfying KHC, then it also satisfies wKHC. However, if $\scls{C}$ contains only finite Kripke frames, then it can not satisfy KHC; at the same time, the class of all finite Kripke frames satisfies wKHC.  
Neither Kripke~\cite{Kripke62} nor Hughes and Cresswell~\cite{HC96} consider modal predicate logics determined by classes of finite Kripke frames. 
%
%
Skvortsov investigated such logics in the context of superintuitionistic predicate logics~\cite{Skvortsov05JSL}; later, the algorithmic properties of monadic fragments of modal and superintuitionistic predicate logics, determined by classes of finite Kripke frames, were studied~\cite{RShJLC20a,RShJLC21b}.
We say that a class $\scls{C}$ of Kripke frames is a \defnotion{Skvortsov class} if the class $\fin\scls{C}$ satisfies wKHC.

Let $Q$ be a unary predicate letter and $P$ a binary predicate letter. Let $S_2$ be a formula substitution that substitutes $\Box(Q(x_1)\vee Q(x_2))$ for $P(x_1,x_2)$ in $\lang{L}$-formulas. 

\begin{lemma}
\label{lem:ml:monadic:insep:1}
Let $L$ be a modal predicate logic such that\/ $\logic{QCl}\subseteq L$. Then, for every\/ $\lang{L}$-formula~$\varphi$,
$$
\begin{array}{lcl}
\varphi\in\logic{QCl} 
  & \imply 
  & S_2\varphi \in L. 
\end{array}
$$
\end{lemma}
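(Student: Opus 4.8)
The statement asserts that if an $\lang{L}$-formula $\varphi$ is classically valid, then its $S_2$-image lies in $L$, for any modal predicate logic $L$ extending $\logic{QCl}$. Since $S_2$ is a formula substitution and every modal predicate logic is closed under Substitution by definition, the whole statement should reduce to the single fact that $S_2$ maps $\logic{QCl}$ into $\logic{QCl}$; then $S_2\varphi\in\logic{QCl}\subseteq L$ finishes it. So the real content is: if $\varphi\in\logic{QCl}$ then $S_2\varphi\in\logic{QCl}$. The plan is to prove this semantically. First I would recall that $S_2$ replaces each atom $P(x_1,x_2)$ with $\Box(Q(x_1)\vee Q(x_2))$, so $S_2\varphi$ is an $\lang{ML}$-formula whose only predicate letter (besides those already in $\varphi$, which disappear) is the unary $Q$; I would note that $S_2\varphi\in\logic{QCl}$ means $S_2\varphi$ is true in every classical model, equivalently (since $\logic{QCl}$ as a modal predicate logic is the modal-free theory of all models, and a modal formula is in $\logic{QCl}$ iff it is valid on every \emph{reflexive-point} frame, or more simply iff its ``$\Box$ as identity'' reading is classically valid).

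The cleanest route: observe that $\logic{QCl}$, regarded as a modal predicate logic, contains $\bm{p}\leftrightarrow\Box\bm{p}$-consequences only in the sense that $\logic{QCl}$ is closed under the substitution sending $\Box\psi\mapsto\psi$; more directly, $\logic{QCl}$ (the theory of all classical models) validates a modal formula $\chi$ iff $\chi$ is true in every one-world reflexive Kripke model, which is just a classical model interpreting $\Box$ trivially. Under that reading $\Box(Q(x_1)\vee Q(x_2))$ collapses to $Q(x_1)\vee Q(x_2)$, so $S_2\varphi$ read this way is the $\logic{QCl}$-substitution instance of $\varphi$ obtained by putting $Q(x_1)\vee Q(x_2)$ for $P(x_1,x_2)$. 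Since $\logic{QCl}$ is closed under Substitution (as a first-order logic, substituting a formula with the right free variables for an atom preserves validity), $\varphi\in\logic{QCl}$ gives this instance in $\logic{QCl}$, hence $S_2\varphi\in\logic{QCl}$, hence $S_2\varphi\in L$.

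Concretely I would argue as follows. Let $\cModel{M}=\otuple{\mathcal{D},\mathcal{I}}$ be an arbitrary classical model and $g$ an assignment; build the one-world Kripke model $\kModel{M}$ on the reflexive singleton frame with domain $\mathcal{D}$ and $Q^{I,w}=\mathcal{I}(Q)$. For every subformula $\psi$ of $\varphi$ one shows by induction on $\psi$ that $\cModel{M}\models^g\psi^\ast$ iff $\kModel{M},w\models^g S_2\psi$, where $\psi^\ast$ is $\psi$ with each $P(x_1,x_2)$ replaced by $Q(x_1)\vee Q(x_2)$; the base case uses that $\Box$ at a reflexive single point means ``true here,'' the Boolean and quantifier cases are immediate, and there is no modal case in $\varphi$ itself. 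Since $\varphi\in\logic{QCl}$ and $\varphi^\ast$ is a $\logic{QCl}$-substitution instance of $\varphi$, we get $\cModel{M}\models^g\varphi^\ast$ for all $\cModel{M},g$, whence $\kModel{M},w\models^g S_2\varphi$ for all such one-world models. But $\logic{QCl}$ (as a set of $\lang{ML}$-formulas) is exactly the set of modal formulas true on all one-world reflexive models, so $S_2\varphi\in\logic{QCl}$, and therefore $S_2\varphi\in L$ by $\logic{QCl}\subseteq L$.

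\textbf{Main obstacle.} The only delicate point is getting the right characterization of ``$\chi\in\logic{QCl}$'' for a genuinely modal formula $\chi$: the paper defines $\logic{QCl}$ and $\logic{QCl}_{\mathit{fin}}$ only as classical theories, and $\logic{QK}=\logic{QCl}\oplus(\text{K-axiom})$ suggests $\logic{QCl}$ is being used as a modal predicate logic via the embedding that treats $\lang{L}\subseteq\lang{ML}$ and takes the smallest modal logic over it. I would pin this down by noting that the smallest modal predicate logic containing the classical axioms, closed under the stated rules, is sound and complete with respect to the class of reflexive-singleton augmented frames (equivalently: $\chi\in\logic{QCl}$ iff $\chi$ holds at the point of every such frame), and then the collapse argument above goes through verbatim. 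Everything else is a routine structural induction and an appeal to closure under Substitution.
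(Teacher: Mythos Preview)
Your argument is correct in spirit but takes a detour that the paper avoids with a one-line proof. You try to show $S_2\varphi\in\logic{QCl}$ first and then use $\logic{QCl}\subseteq L$; this forces you to make sense of a modal formula belonging to $\logic{QCl}$, which in the paper is defined only as a set of $\lang{L}$-formulas (no $\Box$). You correctly flag this as the ``main obstacle'' and then spend a page working around it with one-world reflexive models.

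The paper simply swaps the order of the two steps: from $\varphi\in\logic{QCl}$ and $\logic{QCl}\subseteq L$ one gets $\varphi\in L$; since $L$ is a modal predicate logic, it is closed under Substitution, so $S_2\varphi\in L$. That is the entire proof. The point is that Substitution is applied \emph{in $L$}, where modal formulas are perfectly at home, rather than in $\logic{QCl}$, where they are not. No semantic argument, no one-world models, no collapse of $\Box$ is needed; the ``obstacle'' you identify is an artifact of the order you chose and disappears once you move to $L$ before substituting.
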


\begin{proof}
If $\varphi\in\logic{QCl}$, then $\varphi\in L$ (since $\logic{QCl}\subseteq L$), and $S_2\varphi \in L$ by Substitution.
\end{proof}

\begin{lemma}
\label{lem:ml:monadic:insep:2}
Let $L$ be a modal predicate logic such that\/ $\ckf L$ is a wKHC class. Then, for every $\lang{L}$\nobreakdash-formula~$\varphi$, containing no predicate letters except the binary letter~$P$,
$$
\begin{array}{lcl}
\varphi\not\in\logic{SIB}_{\mathit{fin}} 
  & \imply 
  & S_2\varphi \not\in L_{\mathit{dfin}}\oplus\bm{bf}. 
\end{array}
$$
\end{lemma}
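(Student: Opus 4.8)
The plan is to push the refuting model for $\varphi$ through the Kripke simulation of a symmetric irreflexive binary predicate by formulas of the shape $\Box(Q(x_1)\vee Q(x_2))$, in the spirit of~\cite{Kripke62}. Assume $\varphi\not\in\logic{SIB}_{\mathit{fin}}$. Since $\varphi$ contains no predicate letters except~$P$, there is a finite model $\cModel{M}=\otuple{\mathcal{D},\mathcal{I}}$ with $\mathcal{I}(P)$ symmetric and irreflexive and an assignment~$g$ such that $\cModel{M}\not\models^g\varphi$. Put $n=|\mathcal{D}|+\binom{|\mathcal{D}|}{2}$. Because $\ckf L$ is a wKHC class, I can pick a Kripke frame $\kframe{F}=\otuple{W,R}\in\ckf L$ and a world $w_0\in W$ with $|R(w_0)|\geqslant n$. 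I would then turn $\kframe{F}$ into a c-augmented frame $\kframe{F}_D$ by setting $D_w=\mathcal{D}$ for every $w\in W$; this $\kframe{F}_D$ satisfies $(\mathit{GCD})$ (hence $(\mathit{LCD})$, hence $(\mathit{ED})$), has only finite local domains, and is based on a frame of $\ckf L$, so $\kframe{F}_D\in\aug{e}{dfin}{\ckf L}$ and $\kframe{F}_D\models\bm{bf}$.

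Next I would define an interpretation $I$ on $\kframe{F}_D$ giving a model $\kModel{M}=\otuple{\kframe{F}_D,I}$ that refutes $S_2\varphi$ at~$w_0$. Inside $R(w_0)$ fix pairwise distinct worlds $v_a$, one for each $a\in\mathcal{D}$, and $v_{\{a,b\}}$, one for each two-element set $\{a,b\}$ with $\otuple{a,b}\notin\mathcal{I}(P)$; there are at most $n$ of these, so such worlds exist. Let $Q^{I,v_a}=\mathcal{D}\setminus\{a\}$, $Q^{I,v_{\{a,b\}}}=\mathcal{D}\setminus\{a,b\}$, $Q^{I,w}=D_w$ for every remaining world, and let $I$ be arbitrary on all other predicate letters. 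Checking the cases $a=b$, $\otuple{a,b}\in\mathcal{I}(P)$, and $\otuple{a,b}\notin\mathcal{I}(P)$ with $a\ne b$ separately, one gets that for all $a,b\in\mathcal{D}$
$$
\otuple{a,b}\in\mathcal{I}(P)\quad\iff\quad\kModel{M},w_0\models\Box(Q(a)\vee Q(b)),
$$
since at the worlds where $Q^{I,w}=D_w$ the disjunction $Q(x_1)\vee Q(x_2)$ is automatically satisfied, while the distinguished worlds realise exactly the non-edges (and irreflexivity). As $\varphi$ has no modalities, a routine induction on the structure of $\varphi$ — the quantifier steps using that $D_w=\mathcal{D}$ for every $w$ — shows $\cModel{M}\models^h\psi\iff\kModel{M},w_0\models^h S_2\psi$ for every subformula $\psi$ of $\varphi$ and every assignment~$h$. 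Taking $h=g$ and $\psi=\varphi$ gives $\kModel{M},w_0\not\models^g S_2\varphi$, hence $\kframe{F}_D\not\models S_2\varphi$.

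To conclude, I would note that $\kframe{F}_D\models L_{\mathit{dfin}}$ (because $\kframe{F}_D\in\aug{e}{dfin}{\ckf L}$ and $L_{\mathit{dfin}}=\QMLed\ckf L$) and $\kframe{F}_D\models\bm{bf}$, while $\QML\{\kframe{F}_D\}$ is a modal predicate logic closed under Substitution, Modus Ponens, Generalization, and Necessitation; therefore $L_{\mathit{dfin}}\oplus\bm{bf}\subseteq\QML\{\kframe{F}_D\}$, and $S_2\varphi\not\in\QML\{\kframe{F}_D\}$ yields $S_2\varphi\not\in L_{\mathit{dfin}}\oplus\bm{bf}$. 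The only points that will require care are the bookkeeping that the worlds of $R(w_0)$ other than the $v_a$ and $v_{\{a,b\}}$ (and all worlds outside $R(w_0)$) cannot disturb the displayed equivalence, and the observation that replacing a Kripke frame of $\ckf L$ by the constant-domain augmented frame over it does not take us out of $\aug{e}{dfin}{\ckf L}$; everything else is straightforward.
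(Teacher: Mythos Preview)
Your proof is correct and follows essentially the same approach as the paper: take a finite $\bm{sib}$-model refuting $\varphi$, put it as the constant domain over a frame in $\ckf L$ with enough successors at some $w_0$, and interpret $Q$ so that $\Box(Q(a)\vee Q(b))$ holds at $w_0$ exactly on the $P$-edges. The paper indexes the special successors by all ordered pairs $(i,j)\in\mathcal{D}^2$ (asking for $|R(w_0)\setminus\{w_0\}|\geqslant|\mathcal{D}|^2$), while you separate the diagonal witnesses $v_a$ from the non-edge witnesses $v_{\{a,b\}}$ and thereby need only $|\mathcal{D}|+\binom{|\mathcal{D}|}{2}$ successors; this is a minor bookkeeping variation, and your explicit closing argument that $L_{\mathit{dfin}}\oplus\bm{bf}\subseteq\QML\{\kframe{F}_D\}$ is a detail the paper leaves implicit.
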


\begin{proof}
Assume that $\varphi\not\in\logic{SIB}_{\mathit{fin}}$.
Then, there exists a finite classical model $\cModel{M}=\otuple{\mathcal{D},\mathcal{I}}$ such that $\cModel{M}\models\bm{sib}$ and $\cModel{M}\not\models\varphi$. 
Let $\kframe{F}=\otuple{W,R}$ be a Kripke frame and $w_0$ a world such that $|R(w_0)\setminus\{w_0\}|\geqslant |\mathcal{D}|^2$; such a frame exists, since $\ckf L$ is a wKHC class. Then $R(w_0)\setminus\{w_0\}$ contains a subset $W'=\{w_{ij} : i,j\in\mathcal{D}\}$, where $w_{ij}=w_{kl}$ if, and only if, $i=k$ and $j=l$.
Let $\kModel{M}=\otuple{\kframe{F}\odot\mathcal{D},I}$ be a Kripke model such that, for every $w\in W$ and every $a\in\mathcal{D}$,
$$
\begin{array}{lcll}
\kModel{M},w\models Q(a) 
  & \iff 
  & \mbox{there is no $i,j\in\mathcal{D}$ such that $w=w_{ij}$, 
    $a\in\{i,j\}$, and $\cModel{M}\not\models P(i,j)$.}
\end{array}
$$
Notice that such a model exists, since $\cModel{M}\models \bm{sib}$.
Then, for all $a,b\in\mathcal{D}$,
$$
\begin{array}{lcll}
\cModel{M}\models P(a,b) 
  & \iff 
  & \kModel{M},w_0\models \Box(Q(a)\vee Q(b)).
\end{array}
$$
As a result, $\kModel{M},w_0\not\models S_2\varphi$, hence $S_2\varphi \not\in L_{\mathit{dfin}}\oplus\bm{bf}$.
\end{proof}

\begin{theorem}
\label{th:ml:monadic:insep:1}
Let $L$ and $L'$ be modal predicate logics such that\/ $\logic{QCl}\subseteq L\subseteq L'$ and $\ckf L'$ is a wKHC class. Then $L$ and $L'_{\mathit{dfin}}\oplus\bm{bf}$ are recursively inseparable in the language with a single unary predicate letter and three individual variables. 
\end{theorem}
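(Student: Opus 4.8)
The plan is to reduce this statement to the results already established for the binary case, via the substitution $S_2$ that replaces each atom $P(x_1,x_2)$ by the monadic formula $\Box(Q(x_1)\vee Q(x_2))$. The key point is that $S_2$ sends $\lang{L}$-formulas in three variables containing only the binary letter $P$ to $\lang{ML}$-formulas in three variables containing only the single unary letter $Q$, so the language restriction is preserved. I would apply $S_2$ to the formulas $S_1\mathit{MTiling}_n^{\mathbb{X}}$ from Section~\ref{subsec:Trakhtenbrot:theories}, which contain no predicate letters except $P$.

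The argument then splits into the two directions matching Lemmas~\ref{lem:ml:monadic:insep:1} and~\ref{lem:ml:monadic:insep:2}. First, suppose $n\in\mathbb{X}$. By Lemma~\ref{lem:Trakhtenbrot:lem1:sib:binP} we have $S_1\mathit{MTiling}_n^{\mathbb{X}}\in\logic{QCl}$, hence $S_1\mathit{MTiling}_n^{\mathbb{X}}\in\logic{QCl}^{\mathit{bin}}$. Since $\logic{QCl}\subseteq L$, Lemma~\ref{lem:ml:monadic:insep:1} gives $S_2 S_1\mathit{MTiling}_n^{\mathbb{X}}\in L$. Second, suppose $n\in\mathbb{Y}$. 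By Lemma~\ref{lem:Trakhtenbrot:lem2:sib:binP}, $S_1\mathit{MTiling}_n^{\mathbb{X}}\not\in\logic{SIB}_{\mathit{fin}}$; this formula contains no predicate letters except $P$, so Lemma~\ref{lem:ml:monadic:insep:2}, applied with $L'$ in place of $L$ (using that $\ckf L'$ is a wKHC class), yields $S_2 S_1\mathit{MTiling}_n^{\mathbb{X}}\not\in L'_{\mathit{dfin}}\oplus\bm{bf}$. Because $L\subseteq L'$ implies $L\subseteq L'_{\mathit{dfin}}\oplus\bm{bf}$, the sets $L$ and $L'_{\mathit{dfin}}\oplus\bm{bf}$ are disjoint on these formulas in a way that distinguishes $\mathbb{X}$ from $\mathbb{Y}$.

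To finish, I would argue by contradiction: if $L$ and $L'_{\mathit{dfin}}\oplus\bm{bf}$ were recursively separable, there would be a recursive set $Z$ with $L\subseteq Z$ and $Z\cap(L'_{\mathit{dfin}}\oplus\bm{bf})=\varnothing$. Then the recursive map $n\mapsto S_2 S_1\mathit{MTiling}_n^{\mathbb{X}}$ composed with the characteristic function of $Z$ would separate $\mathbb{X}$ from $\mathbb{Y}$: for $n\in\mathbb{X}$ the image lies in $L\subseteq Z$, and for $n\in\mathbb{Y}$ it lies in $L'_{\mathit{dfin}}\oplus\bm{bf}$, hence outside $Z$. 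This contradicts the recursive inseparability of $\mathbb{X}$ and $\mathbb{Y}$. Therefore $L$ and $L'_{\mathit{dfin}}\oplus\bm{bf}$ are recursively inseparable in the stated language.

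The only real content beyond bookkeeping is verifying that $S_2$ respects the three-variable bound and that the target formula truly contains $Q$ as its unique predicate letter — but both are immediate from the form of $S_2$ and the fact that $S_1\mathit{MTiling}_n^{\mathbb{X}}$ uses only $P$. The substantive work has already been done in Lemmas~\ref{lem:ml:monadic:insep:1} and~\ref{lem:ml:monadic:insep:2}, so the proof here is essentially a one-line invocation of those two lemmas together with Lemmas~\ref{lem:Trakhtenbrot:lem1:sib:binP} and~\ref{lem:Trakhtenbrot:lem2:sib:binP}; I expect no genuine obstacle.
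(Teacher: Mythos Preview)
Your approach is essentially the paper's: apply Lemmas~\ref{lem:ml:monadic:insep:1} and~\ref{lem:ml:monadic:insep:2} to the formulas $S_1\mathit{MTiling}_n^{\mathbb{X}}$ (the paper phrases this as a reduction to Theorem~\ref{th:Trakhtenbrot:bin:sib:P}, but that theorem was proved via exactly these formulas, so you have merely unwound one level). The substantive steps are correct.

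There is a slip in your final paragraph, however. Since $L\subseteq L'_{\mathit{dfin}}\oplus\bm{bf}$, the paper's notion of recursive separability for nested sets applies: a separator would be a recursive $Z$ with $L\subseteq Z\subseteq L'_{\mathit{dfin}}\oplus\bm{bf}$, not one with $Z\cap(L'_{\mathit{dfin}}\oplus\bm{bf})=\varnothing$ (which is impossible). Correspondingly, for $n\in\mathbb{Y}$ you showed the formula lies \emph{outside} $L'_{\mathit{dfin}}\oplus\bm{bf}$ (not inside), hence outside $Z$. With this correction the contradiction with the recursive inseparability of $\mathbb{X}$ and $\mathbb{Y}$ goes through exactly as you intend.
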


\begin{proof}
Follows from Theorem~\ref{th:Trakhtenbrot:bin:sib:P}, Lemma~\ref{lem:ml:monadic:insep:1}, and Lemma~\ref{lem:ml:monadic:insep:2}.

Indeed, if $\varphi\in\logic{QCl}^{\mathit{bin}}$, then, by Lemma~\ref{lem:ml:monadic:insep:1}, $S_2\varphi\in L$; if $\varphi\not\in\logic{SIB}_{\mathit{fin}}$, then, by Lemma~\ref{lem:ml:monadic:insep:1}, $S_2\varphi\in L'_{\mathit{dfin}}\oplus\bm{bf}$. Therefore, assuming that $L$ and $L'_{\mathit{dfin}}\oplus\bm{bf}$ are recursively separable, we obtain that $\logic{QCl}^{\mathit{bin}}$ and $\logic{SIB}_{\mathit{fin}}$ should be recursively separable, which contradicts Theorem~\ref{th:Trakhtenbrot:bin:sib:P}.
\end{proof}

\begin{corollary}
\label{cor1:th:ml:monadic:insep:1}
Let $L$ and $L'$ be modal predicate logics such that\/ $\logic{QCl}\subseteq L\subseteq L'$ and $\ckf L'$ is a wKHC class. Then $L$ and $L'_{\mathit{dfin}}$ are recursively inseparable in the language with a single unary predicate letter and three individual variables. 
\end{corollary}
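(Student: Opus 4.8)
The plan is to obtain this corollary directly from Theorem~\ref{th:ml:monadic:insep:1}, using nothing beyond the obvious monotonicity of the logics involved. First I would record the inclusions that make the two recursive-inseparability statements comparable: since $\ckf L'$ consists of Kripke frames validating $L'$, and validity on a Kripke frame carries over to every e\nobreakdash-augmented frame based on it (a model based on $\kframe{F}_D$ is in particular a model based on $\kframe{F}$), every frame in $\aug{e}{dfin}{\ckf L'}$ validates $L'$; hence $L\subseteq L'\subseteq L'_{\mathit{dfin}}$. Together with the trivial $L'_{\mathit{dfin}}\subseteq L'_{\mathit{dfin}}\oplus\bm{bf}$, this shows that both pairs $(L,L'_{\mathit{dfin}})$ and $(L,L'_{\mathit{dfin}}\oplus\bm{bf})$ fall under the extended notion of recursive (in)separability for sets of formulas introduced in Section~\ref{sec:prelim}.

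Next I would argue by contradiction. Suppose $L$ and $L'_{\mathit{dfin}}$ are recursively separable in the language with a single unary predicate letter and three individual variables, i.e., there is a recursive set $Z$ of G\"odel numbers of such formulas with $L\subseteq Z\subseteq L'_{\mathit{dfin}}$. Because $L'_{\mathit{dfin}}\subseteq L'_{\mathit{dfin}}\oplus\bm{bf}$, the very same $Z$ satisfies $L\subseteq Z\subseteq L'_{\mathit{dfin}}\oplus\bm{bf}$, so $L$ and $L'_{\mathit{dfin}}\oplus\bm{bf}$ would be recursively separable in that language, contradicting Theorem~\ref{th:ml:monadic:insep:1}. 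Hence $L$ and $L'_{\mathit{dfin}}$ are recursively inseparable in the language with a single unary predicate letter and three individual variables.

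There is essentially no obstacle to overcome here: the entire content of the result sits in Theorem~\ref{th:ml:monadic:insep:1} (which rests on Theorem~\ref{th:Trakhtenbrot:bin:sib:P}, Lemma~\ref{lem:ml:monadic:insep:1}, and Lemma~\ref{lem:ml:monadic:insep:2}), and the only points one must not skip are that $L\subseteq L'_{\mathit{dfin}}$, so that the assertion is meaningful in the sense of Section~\ref{sec:prelim}, and that deleting the axiom $\bm{bf}$ only enlarges the upper logic, so a recursive separator for the smaller gap is a separator for the larger one. If one prefers a self-contained argument avoiding the definitional route, the same conclusion follows by working directly with the family $n\mapsto S_2(S_1\mathit{MTiling}_n^{\mathbb{X}})$: by Lemma~\ref{lem:ml:monadic:insep:1} this formula lies in $L$ whenever $n\in\mathbb{X}$, and by Lemma~\ref{lem:ml:monadic:insep:2} it lies outside $L'_{\mathit{dfin}}\oplus\bm{bf}$, hence outside $L'_{\mathit{dfin}}$, whenever $n\in\mathbb{Y}$; so any recursive separator of $L$ and $L'_{\mathit{dfin}}$ would recursively separate the recursively inseparable sets $\mathbb{X}$ and $\mathbb{Y}$, which is impossible.
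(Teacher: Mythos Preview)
Your proposal is correct and follows essentially the same approach as the paper: argue by contradiction, using that $L'_{\mathit{dfin}}\subseteq L'_{\mathit{dfin}}\oplus\bm{bf}$ means any recursive separator of $L$ and $L'_{\mathit{dfin}}$ would also separate $L$ and $L'_{\mathit{dfin}}\oplus\bm{bf}$, contradicting Theorem~\ref{th:ml:monadic:insep:1}. You supply more detail than the paper does (in particular the justification that $L'\subseteq L'_{\mathit{dfin}}$, which the paper takes for granted), and your optional self-contained variant via the formulas $S_2 S_1\mathit{MTiling}_n^{\mathbb{X}}$ is a correct unwinding of that theorem's proof.
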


\begin{proof}
If $L$ and $L'_{\mathit{dfin}}$ are recursively separable, then $L$ and $L'_{\mathit{dfin}}\oplus\bm{bf}$ should be recursively separable, since $L'_{\mathit{dfin}}\subseteq L'_{\mathit{dfin}}\oplus\bm{bf}$, which contradicts Theorem~\ref{th:ml:monadic:insep:1}.
\end{proof}

\begin{lemma}
\label{lem:ml:monadic:insep:3}
Let $L$ be a modal predicate logic such that\/ $\ckf L$ is a Skvortsov class. Then, for every $\lang{L}$\nobreakdash-formula~$\varphi$, containing no predicate letters except the binary letter~$P$,
$$
\begin{array}{lcl}
\varphi\not\in\logic{SIB}_{\mathit{fin}} 
  & \imply 
  & S_2\varphi \not\in L_{\mathit{wfin}}\oplus\bm{bf}. 
\end{array}
$$
\end{lemma}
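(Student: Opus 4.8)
The plan is to reprove Lemma~\ref{lem:ml:monadic:insep:2} almost word for word, replacing its appeal to wKHC for $\ckf L$ by an appeal to wKHC for $\fin\ckf L$ — which is precisely the content of the hypothesis that $\ckf L$ is a Skvortsov class — so that the carrier of the countermodel becomes a \emph{finite} Kripke frame of $\ckf L$. Concretely, suppose $\varphi\not\in\logic{SIB}_{\mathit{fin}}$ and fix a finite classical model $\cModel{M}=\otuple{\mathcal{D},\mathcal{I}}$ with $\cModel{M}\models\bm{sib}$ and $\cModel{M}\not\models\varphi$. Since $\fin\ckf L$ satisfies wKHC, I would pick a finite Kripke frame $\kframe{F}=\otuple{W,R}\in\ckf L$ together with a world $w_0\in W$ such that $|R(w_0)\setminus\{w_0\}|\geqslant|\mathcal{D}|^2$, choose a subset $W'=\{w_{ij} : i,j\in\mathcal{D}\}$ of $R(w_0)\setminus\{w_0\}$ with $w_{ij}=w_{kl}$ iff $i=k$ and $j=l$, and build the constant-domain Kripke model $\kModel{M}=\otuple{\kframe{F}\odot\mathcal{D},I}$ by declaring $\kModel{M},w\models Q(a)$ iff there are no $i,j\in\mathcal{D}$ with $w=w_{ij}$, $a\in\{i,j\}$ and $\cModel{M}\not\models P(i,j)$; this is well defined because $\cModel{M}\models\bm{sib}$. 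Exactly as in Lemma~\ref{lem:ml:monadic:insep:2} one then verifies $\cModel{M}\models P(a,b)\iff\kModel{M},w_0\models\Box(Q(a)\vee Q(b))$ for all $a,b\in\mathcal{D}$, and hence $\kModel{M},w_0\not\models S_2\varphi$.

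The only genuinely new verification — and the step I would flag as the place requiring care — is that this countermodel refutes membership in $L_{\mathit{wfin}}\oplus\bm{bf}$ rather than merely in $L\oplus\bm{bf}$. For this I would observe: $\kframe{F}$ is finite and validates $L$, so $\kframe{F}\in\fin\ckf L$; the augmented frame $\kframe{F}\odot\mathcal{D}$ has constant (hence expanding) domains, so it is an $e$-augmented frame based on a frame of $\fin\ckf L$, i.e.\ it lies in $\aug{e}{wfin}{\ckf L}$, and therefore validates $L_{\mathit{wfin}}=\QMLew\ckf L$; and $\kframe{F}\odot\mathcal{D}$ validates $\bm{bf}$ precisely because its domains are constant. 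Since the set of $\lang{ML}$-formulas valid on $\kframe{F}\odot\mathcal{D}$ is a modal predicate logic closed under Substitution, Modus Ponens, Generalization and Necessitation and contains $L_{\mathit{wfin}}\cup\{\bm{bf}\}$, it contains $L_{\mathit{wfin}}\oplus\bm{bf}$; as $\kModel{M}$ is based on $\kframe{F}\odot\mathcal{D}$ and refutes $S_2\varphi$, we get $S_2\varphi\not\in L_{\mathit{wfin}}\oplus\bm{bf}$.

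I do not expect a substantial obstacle here: the coding computation is identical to the one already performed in Lemma~\ref{lem:ml:monadic:insep:2} and uses only symmetry and irreflexivity of the graph refuting $\varphi$, while the frame-class bookkeeping above is routine given the definitions of $\fin$, $\aug{e}{wfin}{\ckf L}$ and $L_{\mathit{wfin}}$. In other words, Lemma~\ref{lem:ml:monadic:insep:3} is simply the finite-Kripke-frame counterpart of Lemma~\ref{lem:ml:monadic:insep:2}, with ``$\ckf L$ is a wKHC class'' weakened to ``$\ckf L$ is a Skvortsov class''; together with Lemma~\ref{lem:ml:monadic:insep:1} and Theorem~\ref{th:Trakhtenbrot:bin:sib:P} it will give recursive inseparability of $L$ and $L_{\mathit{wfin}}\oplus\bm{bf}$ in the monadic language with three variables, by the same argument that derives Theorem~\ref{th:ml:monadic:insep:1} from Lemma~\ref{lem:ml:monadic:insep:2}.
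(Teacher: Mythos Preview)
Your proposal is correct and follows exactly the paper's approach: the paper's proof is a one-line reference back to Lemma~\ref{lem:ml:monadic:insep:2}, noting only that the Kripke frame must now be chosen finite, which is possible precisely because $\ckf L$ is a Skvortsov class. Your write-up simply unpacks that remark (and adds the routine check that $\kframe{F}\odot\mathcal{D}$ validates $L_{\mathit{wfin}}\oplus\bm{bf}$), so there is nothing to add.
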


\begin{proof}
Similar to the proof of Lemma~\ref{lem:ml:monadic:insep:2} with the difference that the corresponding Kripke frame should be finite; such a frame exists since $\ckf L$ is a Skvortsov class.
\end{proof}

\begin{theorem}
\label{th:ml:monadic:insep:2}
Let $L$ and $L'$ be modal predicate logics such that\/ $\logic{QCl}\subseteq L\subseteq L'$ and $\ckf L'$ is a Skvortsov class. Then $L$ and $L'_{\mathit{wfin}}\oplus\bm{bf}$ are recursively inseparable in the language with a single unary predicate letter and three individual variables. 
\end{theorem}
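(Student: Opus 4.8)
The plan is to mirror the proof of Theorem~\ref{th:ml:monadic:insep:1}, simply replacing the use of Lemma~\ref{lem:ml:monadic:insep:2} by Lemma~\ref{lem:ml:monadic:insep:3}. The three ingredients to be combined are: the recursive inseparability of $\logic{QCl}^{\mathit{bin}}$ and $\logic{SIB}_{\mathit{fin}}$ in the language with a binary predicate letter and three variables (Theorem~\ref{th:Trakhtenbrot:bin:sib:P}); Lemma~\ref{lem:ml:monadic:insep:1}, which maps $\logic{QCl}$-valid formulas into $L$ via the substitution $S_2$; and Lemma~\ref{lem:ml:monadic:insep:3}, which maps formulas over $P$ alone that are refuted by a finite $\bm{sib}$-model to $\lang{ML}$-formulas refuted by $L'_{\mathit{wfin}}\oplus\bm{bf}$. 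Note that $S_2$ substitutes $\Box(Q(x_1)\vee Q(x_2))$ for $P(x_1,x_2)$, so it sends an $\lang{L}$-formula over the single binary letter $P$ and three variables to an $\lang{ML}$-formula over the single unary letter $Q$ and the same three variables; this is what secures the claimed language restriction.

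First I would record the two one-sided inclusions. Since $\logic{QCl}\subseteq L$, Lemma~\ref{lem:ml:monadic:insep:1} gives $S_2\varphi\in L$ for every $\varphi\in\logic{QCl}$, in particular for every $\varphi\in\logic{QCl}^{\mathit{bin}}$. Since $\ckf L'$ is a Skvortsov class, Lemma~\ref{lem:ml:monadic:insep:3} gives $S_2\varphi\notin L'_{\mathit{wfin}}\oplus\bm{bf}$ for every $\lang{L}$-formula $\varphi$ over $P$ with three variables such that $\varphi\notin\logic{SIB}_{\mathit{fin}}$. Thus $S_2$ is a total recursive map sending every formula in $\logic{QCl}^{\mathit{bin}}$ into $L$ and sending every $\lang{L}$-formula over $P$ with three variables lying outside $\logic{SIB}_{\mathit{fin}}$ into the complement of $L'_{\mathit{wfin}}\oplus\bm{bf}$.

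The argument then closes by contraposition exactly as for Theorem~\ref{th:ml:monadic:insep:1}. Suppose, for contradiction, that some recursive set $Z$ separates $L$ from $L'_{\mathit{wfin}}\oplus\bm{bf}$, i.e.\ $L\subseteq Z$ and $Z\cap(L'_{\mathit{wfin}}\oplus\bm{bf})=\varnothing$. Then the preimage $S_2^{-1}(Z)$, intersected with the recursive set of $\lang{L}$-formulas over $P$ with three variables, is a recursive set that contains $\logic{QCl}^{\mathit{bin}}$ and is disjoint from $\logic{SIB}_{\mathit{fin}}$, contradicting Theorem~\ref{th:Trakhtenbrot:bin:sib:P}. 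Hence $L$ and $L'_{\mathit{wfin}}\oplus\bm{bf}$ are recursively inseparable, and since the witnessing formulas are all of the form $S_2\varphi$, using only the unary letter $Q$ and three variables, the inseparability already holds in that restricted language.

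There is no genuine obstacle at the level of this theorem: it is a bookkeeping combination of Theorem~\ref{th:Trakhtenbrot:bin:sib:P}, Lemma~\ref{lem:ml:monadic:insep:1}, and Lemma~\ref{lem:ml:monadic:insep:3}. The only delicate point is already absorbed into Lemma~\ref{lem:ml:monadic:insep:3}, namely the need, given a finite $\bm{sib}$-model of domain size $|\mathcal{D}|$ refuting $\varphi$, to find a \emph{finite} Kripke frame in $\ckf L'$ carrying a world of out-degree at least $|\mathcal{D}|^2$; this is exactly where the Skvortsov hypothesis (that $\fin\ckf L'$ satisfies wKHC) enters, in place of the plain wKHC hypothesis used in Theorem~\ref{th:ml:monadic:insep:1} for $L'_{\mathit{dfin}}$. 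Since that lemma would already be established, nothing further is required here.
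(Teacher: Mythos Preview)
Your approach is exactly the paper's: its proof reads simply ``Follows from Theorem~\ref{th:Trakhtenbrot:bin:sib:P}, Lemma~\ref{lem:ml:monadic:insep:1}, and Lemma~\ref{lem:ml:monadic:insep:3},'' and you have correctly identified all three ingredients and how they combine.

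One slip in your explicit unpacking: since $L\subseteq L'\subseteq L'_{\mathit{wfin}}\oplus\bm{bf}$, the paper's convention for recursive separability of nested sets asks for a recursive $Z$ with $L\subseteq Z\subseteq L'_{\mathit{wfin}}\oplus\bm{bf}$, not one with $Z\cap(L'_{\mathit{wfin}}\oplus\bm{bf})=\varnothing$ (which is impossible once $L\subseteq Z$). Correspondingly, the pullback $S_2^{-1}(Z)$ should be \emph{contained in} $\logic{SIB}_{\mathit{fin}}$, not disjoint from it. With those two phrases corrected, your contraposition goes through verbatim.
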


\begin{proof}
Follows from Theorem~\ref{th:Trakhtenbrot:bin:sib:P}, Lemma~\ref{lem:ml:monadic:insep:1}, and Lemma~\ref{lem:ml:monadic:insep:3}.
\end{proof}

\begin{corollary}
\label{cor1:th:ml:monadic:insep:2}
Let $L$ and $L'$ be modal predicate logics such that\/ $\logic{QCl}\subseteq L\subseteq L'$ and $\ckf L'$ is a Skvortsov class. Then $L$ and $L'_{\mathit{wfin}}$ are recursively inseparable in the language with a single unary predicate letter and three individual variables. 
\end{corollary}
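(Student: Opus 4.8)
The plan is to deduce this directly from Theorem~\ref{th:ml:monadic:insep:2} by the same monotonicity argument used in the proof of Corollary~\ref{cor1:th:ml:monadic:insep:1}. First I would check that the statement is well-posed in the sense of the paper's notion of recursive (in)separability for nested sets: since $\ckf L'$ consists of Kripke frames validating $L'$, every $e$-augmented frame based on a finite frame of $\ckf L'$ validates $L'$, so $L'\subseteq\QMLew\ckf L' = L'_{\mathit{wfin}}$; together with $\logic{QCl}\subseteq L\subseteq L'$ this gives $L\subseteq L'_{\mathit{wfin}}$, so ``$L$ and $L'_{\mathit{wfin}}$ are recursively inseparable'' is a meaningful claim.

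Next, suppose for contradiction that $L$ and $L'_{\mathit{wfin}}$ are recursively separable, i.e., there is a recursive set $Z$ with $L\subseteq Z\subseteq L'_{\mathit{wfin}}$. Since $L'_{\mathit{wfin}}$ is closed under Substitution (being the logic of a class of augmented frames), the logic $L'_{\mathit{wfin}}\oplus\bm{bf}$ is defined, and clearly $L'_{\mathit{wfin}}\subseteq L'_{\mathit{wfin}}\oplus\bm{bf}$; hence $L\subseteq Z\subseteq L'_{\mathit{wfin}}\oplus\bm{bf}$, so $L$ and $L'_{\mathit{wfin}}\oplus\bm{bf}$ would be recursively separable. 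This contradicts Theorem~\ref{th:ml:monadic:insep:2}, which applies because $\logic{QCl}\subseteq L\subseteq L'$ and $\ckf L'$ is a Skvortsov class. Therefore $L$ and $L'_{\mathit{wfin}}$ are recursively inseparable.

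The only point requiring any care is that the whole argument takes place in the language with a single unary predicate letter and three individual variables, but this is inherited verbatim from Theorem~\ref{th:ml:monadic:insep:2}, since the hypothetical separating set $Z$ lives in the same restricted language. I do not expect any real obstacle here: the content is entirely in Theorem~\ref{th:ml:monadic:insep:2} (and ultimately in Theorem~\ref{th:Trakhtenbrot:bin:sib:P} together with Lemmas~\ref{lem:ml:monadic:insep:1} and~\ref{lem:ml:monadic:insep:3}), and the corollary is just the standard observation that weakening the ``larger'' logic in a recursively-inseparable pair to a sublogic of it preserves inseparability.
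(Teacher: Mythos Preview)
Your proof is correct and follows exactly the paper's approach: the paper's proof simply says ``Similar to the proof of Corollary~\ref{cor1:th:ml:monadic:insep:1}'', and that proof is precisely the monotonicity argument you give (if $L$ and $L'_{\mathit{wfin}}$ were recursively separable then so would be $L$ and $L'_{\mathit{wfin}}\oplus\bm{bf}$, contradicting Theorem~\ref{th:ml:monadic:insep:2}). Your additional care in checking that $L\subseteq L'_{\mathit{wfin}}$ and that the language restriction is inherited is sound and makes explicit what the paper leaves implicit.
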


\begin{proof}
Similar to the proof of Corollary~\ref{cor1:th:ml:monadic:insep:1}.
\end{proof}

%
Theorem~\ref{th:ml:monadic:insep:2} encompasses all the results outlined in~\cite{RShJLC20a}. Additionally, it extends its scope to include logics such as $\logic{QK5}$, $\logic{QS5}$, $\logic{QK45}$, $\logic{QKD45}$, $\logic{QK4B}$, as well as logics containing formulas that limit the depth or width of Kripke frames. It is worth noting that the methodology employed in Theorem~\ref{th:ml:monadic:insep:2} is not universally applicable to all the mentioned logics.

Let $\bm{bd}_n$ and $\bm{bw}_n$ be the formulas bounding, respectively, the depth and the width of Kripke frames by~$n$, see~\cite[Propositions~3.42--3.44]{ChZ}.

\begin{corollary}
\label{cor3:th:ml:monadic:insep:1&2}
Let $L$ be one of\/ 
$\logic{QK}$, $\logic{QT}$, $\logic{QD}$, $\logic{QKB}$, $\logic{QKTB}$, $\logic{QK4}$, $\logic{QS4}$, $\logic{QK5}$, $\logic{QS5}$, $\logic{QK45}$, $\logic{QKD45}$, $\logic{QK4B}$, $\logic{QGL}$, $\logic{QGrz}$, $\logic{QK4}\oplus\bm{bd}_n$, $\logic{QS4}\oplus\bm{bd}_n$, $\logic{QK4}\oplus\bm{bw}_m$, $\logic{QS4}\oplus\bm{bw}_m$, 
$\logic{QGL}\oplus\bm{bd}_n$, $\logic{QGrz}\oplus\bm{bd}_n$, $\logic{QGL}\oplus\bm{bw}_m$, $\logic{QGrz}\oplus\bm{bw}_m$,
where $n\in\numNpp$ and $m\in\numNp$. 
Then $L$ and $L_{\mathit{wfin}}$ are recursively inseparable in the language with a single unary predicate letter and three individual variables, and the same for $L$ and $L_{\mathit{dfin}}$. 
\end{corollary}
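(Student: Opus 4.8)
The plan is to obtain this corollary as a direct instantiation of Corollary~\ref{cor1:th:ml:monadic:insep:1} and Corollary~\ref{cor1:th:ml:monadic:insep:2} with $L'=L$. Both of those require $\logic{QCl}\subseteq L$, which holds for every logic in the list since each extends $\logic{QK}$, hence $\logic{QCl}$. Corollary~\ref{cor1:th:ml:monadic:insep:2} additionally asks that $\ckf L$ be a Skvortsov class, while Corollary~\ref{cor1:th:ml:monadic:insep:1} asks only that $\ckf L$ be a wKHC class; since $\fin\ckf L\subseteq\ckf L$, the weak Kripke--Hughes--Cresswell condition for the subclass implies it for the whole class, so a Skvortsov class is in particular a wKHC class. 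Thus it suffices to prove, for each listed $L$, that $\ckf L$ is a Skvortsov class: for every $N\in\numN$ there is a \emph{finite} Kripke frame validating $L$ with some world of out-degree at least~$N$. I will freely use the standard fact that a Kripke frame $\kframe{F}$ validates a predicate logic $\logic{Q}L_0$ exactly when $\kframe{F}$, viewed as a propositional frame, validates $L_0$, since $\logic{QCl}$ is valid on every Kripke frame, and that each axiom above corresponds to its usual first-order condition on $R$.

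The bulk of the work is then a short case analysis exhibiting the required finite frames. For $\logic{QK}$ I would take the $N$-star: a root irreflexively seeing $N$ fresh worlds. For $\logic{QT}$, $\logic{QD}$, $\logic{QS4}$, $\logic{QKTB}$ take the reflexive $N$-star (loops everywhere, and for $\logic{QKTB}$ also the symmetric closure), which is reflexive, transitive, symmetric and serial as needed. For $\logic{QKB}$ take the symmetric $N$-star. For $\logic{QK5}$, $\logic{QS5}$, $\logic{QK45}$, $\logic{QKD45}$, $\logic{QK4B}$ take the cluster of size $N+1$ (all worlds mutually related, loops included): this is an equivalence relation, hence simultaneously reflexive, symmetric, transitive, serial and Euclidean, so it validates every one of these logics. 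For $\logic{QK4}$ and $\logic{QGL}$ take the irreflexive $N$-fan (a root irreflexively seeing $N$ fresh endpoints, no further arrows): it is transitive vacuously and, being finite and acyclic, conversely well-founded, so it validates $\logic{QGL}$. For $\logic{QGrz}$ take the reflexive $N$-fan, which is a finite partial order. In each case the root has out-degree $\ge N$, so $\fin\ckf L$ satisfies wKHC.

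For the bounded-depth logics the hypothesis $n\in\numNpp$ is precisely what makes this go through: the $N$-fan (irreflexive for $\logic{QK4}$, $\logic{QGL}$, reflexive for $\logic{QS4}$, $\logic{QGrz}$) has depth $2\le n$, hence validates $\bm{bd}_n$, and therefore validates $\logic{QK4}\oplus\bm{bd}_n$, $\logic{QS4}\oplus\bm{bd}_n$, $\logic{QGL}\oplus\bm{bd}_n$, $\logic{QGrz}\oplus\bm{bd}_n$ respectively, while the root still has out-degree $\ge N$. For the bounded-width logics, $m\in\numNp$, so $m\ge 1$; here I would instead use a transitive chain $w_0 R w_1 R \dots R w_N$ (the transitive closure of a path, with loops added in the $\logic{QS4}$- and $\logic{QGrz}$-cases): this is a linear (pre)order, so its width is $1\le m$ and it validates $\bm{bw}_m$, while $w_0$ has out-degree $\ge N$. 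Letting $N\to\infty$ in every case shows $\ckf L$ is a Skvortsov class, and the two corollaries then yield recursive inseparability of $L$ and $L_{\mathit{wfin}}$, and of $L$ and $L_{\mathit{dfin}}$, in the language with a single unary predicate letter and three individual variables.

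I do not expect any serious obstacle: the corollary is just the concrete instantiation of Theorems~\ref{th:ml:monadic:insep:1}--\ref{th:ml:monadic:insep:2}. The only place demanding a little care is verifying that the exhibited finite frames really meet the depth (resp.\ width) bounds in the sense of~\cite[Propositions~3.42--3.44]{ChZ} — which is exactly where the restrictions $n\ge 2$ and $m\ge 1$ are used — and the routine observation that validity of the predicate logic on a Kripke frame reduces to validity of the underlying propositional frame conditions together with the always-valid $\logic{QCl}$.
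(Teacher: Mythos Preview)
Your proposal is correct and follows exactly the intended route: the paper states this corollary without proof, implicitly relying on Corollaries~\ref{cor1:th:ml:monadic:insep:1} and~\ref{cor1:th:ml:monadic:insep:2} with $L'=L$, and your case-by-case exhibition of finite frames witnessing the Skvortsov condition (fans, clusters, chains as appropriate) is precisely what is needed to fill in the routine verification that the paper omits.
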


\begin{remark}
\label{rem:modal:bot-free}
Notice that Theorems~\ref{th:ml:monadic:insep:1} and~\ref{th:ml:monadic:insep:2}, as well as Corollaries~\ref{cor1:th:ml:monadic:insep:1}, \ref{cor1:th:ml:monadic:insep:2}, and~\ref{cor3:th:ml:monadic:insep:1&2}, remain true if we additionally require that the language does not contain~$\bot$ (and hence, $\neg$ and~$\Diamond$ as well).
\end{remark}

\begin{proof}
Use Theorem~\ref{th:Trakhtenbrot:binP:gen:sib:srb:pos} and the definition of~$S_2$.
\end{proof}

\subsection{Examples}

We give some examples of logics with certain algorithmic properties to draw attention to a number of points that may not be obvious or may not directly follow from the stated results.

Let us start with 
a separating example for logics $L_{\mathit{dfin}}$ and $L_{\mathit{wfin}}$, for some modal predicate logic~$L$; moreover, we shall show that even the monadic fragments with a single unary predicate letter and a single individual variable can be different. To this end, let us take $L=\logic{QS4}$ and use a formula from~\cite{WZ01}. Define
$$
\begin{array}{lcl}
\mathit{InfW}
  & = &
  \Box\exists x\,\neg Q(x) \wedge
  \Box\forall x\,(\neg Q(x)\to\Diamond Q(x)) \wedge
  \Box\forall x\,(Q(x)\to \Box Q(x)).
\end{array}
$$
We can not refute $\neg\mathit{InfW}$ on a finite Kripke frame validating~$\logic{QS4}$. 
Indeed, let $\kModel{M}=\otuple{W,R,D,I}$ be a Kripke model with a reflexive transitive accessibility relation~$R$ such that $\kModel{M},w_0\models\mathit{InfW}$, for some $w_0\in W$. Then there exist $w_1\in R(w_0)$ and $a_1\in D_{w_1}$ such that $\kModel{M},w_1\models \neg Q(a_1)$. Then $\kModel{M},w_1\models \neg Q(a_1)\to \Diamond Q(a_1)$, and hence, $\kModel{M},w_1\models \Diamond Q(a_1)$. Then there exist $w_2\in R(w_1)$ and $a_2\in D_{w_2}$ such that $\kModel{M},w_2\models \Box Q(a_1)\wedge \neg Q(a_2)$. Since $\kModel{M},w_2\models \neg Q(a_2)$, repeating the same steps for $a_2$, we conclude that there exist $w_3\in R(w_2)$ and $a_3\in D_{w_3}$ such that $\kModel{M},w_3\models \Box Q(a_2)\wedge \neg Q(a_3)$. Reasoning in the same way, we come to the conclusion that there are worlds $w_1,w_2,w_3,\ldots{}$ and individuals $a_1,a_2,a_3,\ldots{}$ such that, for every $k\in\numNp$, 
$$
\begin{array}{lcl}
w_kRw_{k+1} 
  & \mbox{and}
  & \kModel{M},w_{k+1}\models \Box Q(a_k)\wedge \neg Q(a_{k+1}).
\end{array}
$$
It should be clear that $w_i\ne w_j$ if $i\ne j$, and hence, $W$ is infinite. Thus, $\neg\mathit{InfW}\in\logic{QS4}_{\mathit{wfin}}$. To show that $\neg\mathit{InfW}\not\in\logic{QS4}_{\mathit{dfin}}$, consider a Kripke model $\kModel{M} = \otuple{\numN,\leqslant,D,I}$ such that $D_k = \{0,\ldots,k\}$, for every $k\in\numN$, and 
$$
\begin{array}{lcl}
\kModel{M},k\models Q(m)
  & \iff
  & m < k,
\end{array}
$$
for every $k\in\numN$ and every $m\in D_k$.
Then, $\kModel{M},0\models \mathit{InfW}$. Since $\kModel{M}$ is based on a reflexive transitive Kripke frame and all its local domains are finite,\footnote{Observe that the global domain of $\kModel{M}$ is infinite.} $\kModel{M}\models\logic{QS4}_{\mathit{dfin}}$. Thus, $\neg\mathit{InfW}\not\in\logic{QS4}_{\mathit{dfin}}$. 

Observe that it is not hard to expand this example to other modal predicate logics. For example, to expand it on $\logic{QK4}$, replace $\Box$ with $\Box^+$, where $\Box^+\varphi = \varphi\wedge \Box\varphi$;
we leave the details to the reader.

Let us give separating examples for Proposition~\ref{prop:1:ml:insep:bin} and Theorem~\ref{th:ml:monadic:insep:1} as well as Proposition~\ref{prop:lem:2:ml:insep:bin} and Theorem~\ref{th:ml:monadic:insep:2}. Dummy examples are logics $\logic{QTriv}$ and $\logic{QVer}$. Since both logics satisfy the conditions of Propositions~\ref{prop:1:ml:insep:bin} and~\ref{prop:lem:2:ml:insep:bin}, the fragments with a single binary predicate letter and three variables of 
$\logic{QTriv}$ and $\logic{QTriv}_{\mathit{dfin}}$, 
$\logic{QTriv}$ and $\logic{QTriv}_{\mathit{wfin}}$, 
$\logic{QVer}$ and $\logic{QVer}_{\mathit{dfin}}$, 
$\logic{QVer}$ and $\logic{QVer}_{\mathit{wfin}}$ are recursively inseparable. Theorems~\ref{th:ml:monadic:insep:1} and~\ref{th:ml:monadic:insep:2} say nothing about the recursive separability of the monadic fragments of the same pairs of logics.
%
%
Note that the monadic fragments of all these logics are decidable (they are equivalent, for each of the logics, to the monadic fragment of~$\logic{QCl}$). Therefore, it is clear that such fragments of logics from the same pairs are recursively separable.
So, let us give less trivial examples.

To this end, let us make an obvious observation, cf.~\cite{MR:2017:LI,RShsubmitted,RShsubmitted2}.

\begin{proposition}
\label{prop:finite:frame}
Let $\kframe{F}$ be a finite Kripke frame. Then the monadic fragments of the logics\/ $\QMLe\kframe{F}$ and\/ $\QMLc\kframe{F}$ are decidable.
\end{proposition}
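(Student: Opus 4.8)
The plan is to show that, for a fixed finite Kripke frame $\kframe{F}=\otuple{W,R}$, the truth of a monadic $\lang{ML}$-formula over either $\QMLe\kframe{F}$ or $\QMLc\kframe{F}$ can be effectively reduced to a decidable problem. The key observation is that when there is only one unary predicate letter~$Q$, each world $w$ of a Kripke model based on~$\kframe{F}$ carries only two kinds of information about an individual~$a\in D_w$: whether $Q(a)$ holds and whether $a$ is ``new'' at $w$ (i.e.\ lies in $D_w$ but not in all of the $D_u$ for $u$ reachable to~$w$, relevant only in the expanding case). So, up to the behaviour the language can detect, an individual is classified by the set of worlds at which it is present together with its $Q$-value at each such world; since $W$ is finite, there are only finitely many such ``individual types''.

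First I would make this precise by a filtration / selection argument: given an $\lang{ML}$-formula $\varphi$ with the single unary letter~$Q$, let $d$ be its quantifier depth (or just the number of variables plus one). I would prove that if $\varphi$ is refuted in some model $\kModel{M}=\otuple{\kframe{F}_D,I}$ (with $\kframe{F}_D$ an e- or c-augmented frame over $\kframe{F}$), then it is refuted in a model $\kModel{M}'$ over the same $\kframe{F}$ in which each local domain $D'_w$ has size bounded by a computable function of $|W|$ and $d$: keep, for each world, at most $d+1$ representatives of each ``type'' occurring there, and restrict the interpretation accordingly. The truth-value of $\varphi$ under any assignment is preserved because a standard back-and-forth / Ehrenfeucht–Fraïssé style argument on the quantifier rounds lets the ``duplicator'' always answer an $\exists$-move in $\kModel{M}$ by a surviving representative of the same type in $\kModel{M}'$, and the expanding-domain condition is inherited since we delete individuals uniformly across the worlds where they live (and symmetrically for the constant-domain case, where every $D_w$ equals the whole global domain, so we just shrink that global domain).

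Once the finite-model property with a computable bound on domain size is established, decidability is immediate: there are, up to isomorphism, only finitely many Kripke models over the fixed finite frame $\kframe{F}$ with local domains bounded by that computable size, one can enumerate all of them, and for each check whether $\varphi$ is true (this check is decidable since the model is finite and the semantics of $\Box$, $\wedge$, $\to$, $\forall$, $\exists$ are all effective on finite structures). Hence $\varphi\in\QMLe\kframe{F}$ (resp.\ $\QMLc\kframe{F}$) iff it holds in all these finitely many models, which is a decidable condition. I would run this argument for $\QMLe\kframe{F}$ first and then note that the constant-domain case is the same, only simpler, since $(\mathit{GCD})$ removes the bookkeeping about when an individual appears.

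The main obstacle I expect is the bound on the number of relevant individual types and the correctness of the selection: one must check that the class of ``types'' is genuinely finite and computable from $\kframe{F}$ (in the expanding case a type is essentially a pair consisting of an up-set $U\subseteq W$ of worlds — where the individual is present — and a function $U\to\{0,1\}$ recording the $Q$-value, subject to the monotonicity forced by interpretations being relations on $D_w$; there is no such monotonicity constraint on $Q$ itself, so the count is at most $\sum_{U}2^{|U|}$, which is finite), and that deleting all but $d+1$ representatives of each type at every world cannot change the truth value of a depth-$d$ formula. That last point is the heart of the proof and is where I would spend the most care, phrasing it as a lemma: \emph{if two models over $\kframe{F}$ agree on which types occur at each world and, for each type and each world, either both have $\geqslant d+1$ representatives or have the same number $\leqslant d+1$, then they validate the same monadic formulas of quantifier depth $\leqslant d$.} Everything else is routine.
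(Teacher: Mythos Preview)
Your approach is correct and shares the paper's core insight: classify individuals by their ``type'' --- the set of worlds at which they exist together with their predicate values there --- and observe that over a finite frame there are only finitely many types. The paper, however, executes this more simply: rather than selecting $d+1$ representatives per type and invoking an Ehrenfeucht--Fra\"iss\'e argument, it takes the straightforward quotient by the type-equivalence relation~$\sim$, sending each individual to its equivalence class. Because the language is monadic and lacks equality, two individuals of the same type are genuinely indistinguishable by \emph{any} formula, so a single representative per type suffices and no game/selection argument is needed; the resulting bound $2^{|W|\cdot(n+1)}$ on the global domain (with $n$ the number of unary letters in~$\varphi$) is independent of the formula's quantifier depth. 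Your route works too, but the quotient is both shorter and yields a uniform bound over all formulas in a fixed finite signature. A small side remark: you phrase the argument for a single letter~$Q$; the general monadic case with letters $P_1,\ldots,P_n$ is handled identically, the type simply recording the values for all~$n$ letters at each world.
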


\begin{proof}
We give a sketch of a proof; for more details, see~\cite{MR:2017:LI,RShsubmitted,RShsubmitted2}.

Let $\kframe{F} = \otuple{W,R}$. Let $\varphi$ be an $\lang{ML}$-formula containing no predicate letters except unary letters $P_1,\ldots,P_n$ such that $\kModel{M},w_0\not\models\varphi$, for some Kripke model $\kModel{M}=\otuple{W,R,D,I}$ and world $w_0\in W$. Define the equivalence relation $\sim$ on~$D^+$ by 
$$
\begin{array}{lcl}
a\sim b 
  & \bydef 
  & \left\{
    \begin{array}{rcl}
    \{w\in W : a\in D_w\} & \!\!=\!\! & \{w\in W : b\in D_w\}; 
    \smallskip\\
    \{w\in W : \kModel{M},w\models P(a)\} & \!\!=\!\! & \{w\in W : \kModel{M},w\models P(b)\},
    \end{array}
    \right.
\end{array}
$$
and then define
$$
\begin{array}{rcl}
\bar{a}
  & =
  & \{b\in D^+ : b\sim a\};
  \smallskip\\
D'_w
  & = 
  & \{\bar{b} : b\in D_w\};
  \smallskip\\
I'(w,P_k)
  & = 
  & \{\otuple{\bar{b}} : \otuple{b}\in I(w,P_k)\};
  \smallskip\\
\kModel{M}'
  & = 
  & \otuple{W,R,D',I'},
\end{array}
$$
where $a\in D^+$, $w\in W$, and $k\in\{1,\ldots,n\}$. 
Then it is not hard to prove that $\kModel{M}',w_0\not\models\varphi$.

Observe that the domain of $\kModel{M}'$ contains at most $2^{|W|\cdot(n+1)}$ elements. Thus, to check whether $\varphi\in \QMLe\kframe{F}$, it is sufficient to check whether $\varphi$ is true in all such models; it is similar for~$\QMLc\kframe{F}$.
\end{proof}

So, due to Proposition~\ref{prop:finite:frame}, we may replace the logics $\logic{QTriv}$ and $\logic{QTriv}$ above with $\QMLe\kframe{F}$ or $\QMLc\kframe{F}$, where $\kframe{F}$ is a finite Kripke frame.

Let us turn to logics not definable by a finite Kripke frame.

Consider, for each $n\in\numN$, the logics $\logic{QAlt}_n$ and $\logic{QTAlt}_n$ defined by
$$
\begin{array}{rcl}
\logic{QAlt}_n
  & = 
  & \logic{QK}\oplus\bm{alt}_n; \\
\logic{QTAlt}_n
  & = 
  & \logic{QT}\hfill\oplus\bm{alt}_n,
\end{array}
$$
where
$$
\begin{array}{lcl}
\bm{alt}_n
  & = 
  & \displaystyle
    \neg\bigwedge\limits_{\mathclap{i=0}}^n \Diamond\Big(p_i\wedge\bigwedge\limits_{\mathclap{j\ne i}}\neg p_j\Big).
\end{array}
$$
The class $\ckf{\logic{QAlt}_n}$ consists of \defnotion{$n$-alternative} Kripke frames, i.e., where each world in a frame sees at most $n$ worlds; the same for $\ckf{\logic{QTAlt}_n}$, where the frames are both $n$-alternative and reflexive. 
It is known that $\logic{QAlt}_n$ and $\logic{QTAlt}_n$ are Kripke complete~\cite{ShehtmanShkkatov20,ShShFOMTL}, therefore, we obtain the following statement from Proposition~\ref{prop:finite:frame}, cf.~\cite{MR:2017:LI,RShsubmitted,RShsubmitted2}.

\begin{proposition}
\label{prop:QAlt-n:decidability}
The monadic fragments of\/ $\logic{QAlt}_n$, $\logic{QTAlt}_n$, $\logic{QAlt}_n\logic{.bf}$, and\/ $\logic{QTAlt}_n\logic{.bf}$ are decidable.
\end{proposition}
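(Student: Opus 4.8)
The plan is to reduce the decidability of the monadic fragments of $\logic{QAlt}_n$, $\logic{QTAlt}_n$, $\logic{QAlt}_n\logic{.bf}$, and $\logic{QTAlt}_n\logic{.bf}$ to Proposition~\ref{prop:finite:frame} via the known Kripke completeness results for these logics. Since $\logic{QAlt}_n$ and $\logic{QTAlt}_n$ are Kripke complete by~\cite{ShehtmanShkkatov20,ShShFOMTL}, we have $\logic{QAlt}_n = \QMLe{\ckf{\logic{QAlt}_n}}$ and $\logic{QTAlt}_n = \QMLe{\ckf{\logic{QTAlt}_n}}$, where $\ckf{\logic{QAlt}_n}$ is the class of $n$-alternative Kripke frames and $\ckf{\logic{QTAlt}_n}$ is the class of reflexive $n$-alternative Kripke frames; the $.bf$-versions are validated by the corresponding classes of c-augmented frames over these Kripke frames.

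First I would observe that a monadic formula $\varphi$ containing unary letters $P_1,\ldots,P_k$ and with modal depth $d$ can only ``probe'' finitely many worlds from the world at which it is evaluated: in an $n$-alternative frame, the sub-frame generated by any world within $R$-distance $d$ contains at most $1+n+n^2+\cdots+n^d$ worlds. So if $\varphi\notin\logic{QAlt}_n$, then $\varphi$ is refuted on some model based on an $n$-alternative frame, and by passing to the generated subframe of radius $\md\varphi$ (the truth of $\varphi$ at $w_0$ depends only on worlds $R$-reachable from $w_0$ in at most $\md\varphi$ steps) we obtain a refuting model based on a \emph{finite} $n$-alternative frame $\kframe{F}$, hence $\varphi\notin\QMLe\kframe{F}$. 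Conversely, any finite $n$-alternative $\kframe{F}$ validates $\logic{QAlt}_n$, so $\varphi\in\logic{QAlt}_n$ iff $\varphi\in\QMLe\kframe{F}$ for every finite $n$-alternative frame $\kframe{F}$ of size at most $1+n+\cdots+n^{\md\varphi}$. Since there are only finitely many such frames (up to isomorphism) and, by Proposition~\ref{prop:finite:frame}, the monadic fragment of each $\QMLe\kframe{F}$ is decidable, the monadic fragment of $\logic{QAlt}_n$ is decidable. The argument for $\logic{QTAlt}_n$ is identical, restricting to reflexive frames; for $\logic{QAlt}_n\logic{.bf}$ and $\logic{QTAlt}_n\logic{.bf}$ one uses the same generated-subframe reduction together with the second part of Proposition~\ref{prop:finite:frame} concerning $\QMLc\kframe{F}$, noting that the Barcan formula corresponds exactly to passing from e-augmented to c-augmented frames.

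The main obstacle is making precise the ``generated subframe of bounded radius'' step: one must check that restricting a Kripke model to the worlds $R$-reachable from $w_0$ in at most $\md\varphi$ steps preserves the truth value of $\varphi$ at $w_0$ (a routine induction on the structure of $\varphi$, using that $\Box$ and $\Diamond$ only look one step ahead and that quantifiers do not change the world), and that the resulting finite frame still lies in the relevant class — $n$-alternativeness and reflexivity are inherited by generated subframes, so this is immediate. I expect the whole proof to be short; the substance is entirely carried by Proposition~\ref{prop:finite:frame} and the cited completeness theorems.

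\begin{proof}
By~\cite{ShehtmanShkkatov20,ShShFOMTL}, the logics $\logic{QAlt}_n$ and $\logic{QTAlt}_n$ are Kripke complete, so $\logic{QAlt}_n = \QMLe{\ckf{\logic{QAlt}_n}}$ and $\logic{QTAlt}_n = \QMLe{\ckf{\logic{QTAlt}_n}}$; moreover $\logic{QAlt}_n\logic{.bf}$ and $\logic{QTAlt}_n\logic{.bf}$ are the logics of the classes of c\nobreakdash-augmented frames based on the same Kripke frames.

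Consider a monadic $\lang{ML}$-formula $\varphi$ with $\md\varphi = d$. If $\varphi\notin\logic{QAlt}_n$, then $\kModel{M},w_0\not\models\varphi$ for some Kripke model $\kModel{M}=\otuple{W,R,D,I}$ based on an $n$-alternative Kripke frame. Let $W_0$ be the set of worlds $R$-reachable from $w_0$ in at most $d$ steps, and let $\kModel{M}'$ be the restriction of $\kModel{M}$ to $W_0$. A straightforward induction on the structure of formulas of modal depth at most $d$ shows that the truth value at $w_0$ is preserved, so $\kModel{M}',w_0\not\models\varphi$. Since the frame of $\kModel{M}'$ is $n$-alternative, $|W_0|\leqslant 1+n+\cdots+n^d$; denote this bound by $N_{n,d}$. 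Thus $\varphi$ is refuted on some model based on an $n$-alternative Kripke frame with at most $N_{n,d}$ worlds, i.e., $\varphi\notin\QMLe\kframe{F}$ for some such finite frame $\kframe{F}$. Conversely, every finite $n$-alternative Kripke frame validates $\logic{QAlt}_n$, so $\varphi\notin\QMLe\kframe{F}$ implies $\varphi\notin\logic{QAlt}_n$. Therefore
$$
\begin{array}{lcl}
\varphi\in\logic{QAlt}_n
  & \iff
  & \varphi\in\QMLe\kframe{F}\ \mbox{for every $n$-alternative Kripke frame $\kframe{F}$ with at most $N_{n,d}$ worlds.}
\end{array}
$$
There are only finitely many such frames up to isomorphism, and by Proposition~\ref{prop:finite:frame} the monadic fragment of $\QMLe\kframe{F}$ is decidable for each of them; hence the monadic fragment of $\logic{QAlt}_n$ is decidable.

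For $\logic{QTAlt}_n$ the argument is the same, restricting throughout to reflexive $n$-alternative Kripke frames; reflexivity is inherited by $\kModel{M}'$. For $\logic{QAlt}_n\logic{.bf}$ and $\logic{QTAlt}_n\logic{.bf}$ one repeats the reduction with c\nobreakdash-augmented frames in place of e\nobreakdash-augmented ones (the Barcan formula $\bm{bf}$ is valid on an e\nobreakdash-augmented frame exactly when the frame is c\nobreakdash-augmented), and appeals to the part of Proposition~\ref{prop:finite:frame} concerning $\QMLc\kframe{F}$.
\end{proof}
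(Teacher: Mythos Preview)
Your proposal is correct and follows essentially the same approach as the paper: both use the Kripke completeness of $\logic{QAlt}_n$ and $\logic{QTAlt}_n$ from~\cite{ShehtmanShkkatov20,ShShFOMTL}, truncate a refuting model to the worlds reachable from the refuting world within modal-depth many $R$-steps to obtain a finite $n$-alternative frame of bounded size, and then appeal to Proposition~\ref{prop:finite:frame}. The only cosmetic difference is that the paper phrases the final check as validity on ``the $n$-alternative Kripke frame of depth $m+1$'' while you enumerate all $n$-alternative frames up to the size bound; both formulations yield the same decision procedure.
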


\begin{proof}
We give just a sketch of the proof for $\logic{QAlt}_n$; for more details, see~\cite{MR:2017:LI,RShsubmitted,RShsubmitted2}.

Indeed, let $\varphi$ be an $\lang{ML}$-formula containing no predicate letters except unary letters $P_1,\ldots,P_n$ such that $\kModel{M},w_0\not\models\varphi$, for some Kripke model $\kModel{M}=\otuple{W,R,D,I}$ based on an $n$-alternative Kripke frame $\kframe{F}=\otuple{W,R}$ and some world $w_0\in W$. 
Let $m$ be the modal depth\footnote{Largest number of nested modalities in the formula.} of~$\varphi$. 
Delete all worlds from $\kModel{M}$ except
$$
\{w\in W : \mbox{$w_0R^k w$, for some $k\leqslant m$}\},
$$
and denote the resulting Kripke model by~$\kModel{M}'$. It is not hard to see that $\kModel{M}'$ is based on an $n$-alternative Kripke frame of depth at most $m+1$, therefore, the number of worlds in $\kModel{M}'$ does not exceed the number
$$
1+n+n^2+\ldots+n^{m+1},
$$
and $\kModel{M},w_0\not\models\varphi$. Thus, to check whether $\varphi\in \logic{QAlt}_n$, it is sufficient to check whether $\varphi$ is valid on the $n$-alternative Kripke frame of depth $m+1$, which is possible by Proposition~\ref{prop:finite:frame}. 
\end{proof}

Notice that, by the proof, 
$$
\begin{array}{lclcl}
\logic{QAlt}_n & = & \QMLew\ckf{\logic{QAlt}_n}
               & = & \QMLe \ckf{\logic{QAlt}_n}; 
\smallskip\\
\logic{QAlt}_n\logic{.bf} & = & \QMLcw\ckf{\logic{QAlt}_n}
                          & = & \QMLc \ckf{\logic{QAlt}_n},
\end{array}
$$
and, in fact, we proved the dicidability of the monadic fragments of $\QMLe\ckf{\logic{QAlt}_n}$ and $\QMLc\ckf{\logic{QAlt}_n}$; the decidability of the corresponding fragments of $\logic{QAlt}_n$ and $\logic{QAlt}_n\logic{.bf}$ then follows from the Kripke completeness of the logics~\cite{ShehtmanShkkatov20,ShShFOMTL}. Without Kripke completeness, we could talk about the recursive separability of $\logic{QAlt}_n$ and $\QMLc\ckf{\logic{QAlt}_n}$. Let us generalize this observation.

A modal predicate logic $L$ is called \defnotion{subframe} if the class $\ckf L$ is closed under taking subframes\footnote{A Kripke frame $\otuple{W',R'}$ is a \defnotion{subframe} of a Kripke frame $\otuple{W,R}$ if $W'\subseteq W$ and $R' = R\upharpoonright W'$.} of its Kripke frames. For example, $\logic{QK}$, $\logic{QT}$, $\logic{QK4}$, $\logic{QS4}$, $\logic{QS5}$ are subframe logics and $\logic{QD}$, $\logic{QKD4}$ are not.  

\begin{proposition}
Let $L$ be a subframe modal predicate logic such that\/ 
$\logic{QK}\subseteq L$,
$L\subseteq \logic{QTriv}$ or\/ $L\subseteq \logic{QVer}$,
and the class\/ $\fin\ckf{L}$ is recursive.
Then the monadic fragments of\/ $\QMLe\ckf(L\oplus\bm{alt}_n)$ and\/ $\QMLc\ckf(L\oplus\bm{alt}_n)$ are decidable.
\end{proposition}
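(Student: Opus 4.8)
The plan is to reduce, for a monadic $\lang{ML}$-formula, the validity problem over $\ckf(L\oplus\bm{alt}_n)$ to a finite search, combining the subframe property of~$L$ with the domain-collapsing construction from the proof of Proposition~\ref{prop:finite:frame}. First one notes that $\ckf(L\oplus\bm{alt}_n)$ is exactly the class of $n$-alternative Kripke frames in $\ckf L$: frame-validity is preserved by Substitution, Modus Ponens, Generalization and Necessitation, so $\kframe{F}\models L\oplus\bm{alt}_n$ iff $\kframe{F}\models L$ and $\kframe{F}\models\bm{alt}_n$, and $\bm{alt}_n$ is valid on $\kframe{F}=\otuple{W,R}$ precisely when $|R(w)|\leqslant n$ for every $w\in W$ (the usual argument for the propositional formula $\bm{alt}_n$ applies verbatim, the nullary letters being free to be made true at one chosen successor and false at the others). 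The hypothesis $L\subseteq\logic{QTriv}$ or $L\subseteq\logic{QVer}$ ensures this class is nonempty, so the logics under consideration are consistent.

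Next, suppose $\varphi$ is monadic with unary letters among $P_1,\dots,P_\ell$ and modal depth~$m$, and $\kModel{M},w_0\not\models\varphi$ for some e-augmented (resp.\ c-augmented) Kripke model $\kModel{M}=\otuple{W,R,D,I}$ whose underlying frame $\kframe{F}$ lies in $\ckf(L\oplus\bm{alt}_n)$. As in the proof of Proposition~\ref{prop:QAlt-n:decidability}, I would pass to the restriction of $\kModel{M}$ to $\{w\in W : w_0R^kw \text{ for some } k\leqslant m\}$; the value of $\varphi$ at $w_0$ is unchanged, the underlying frame $\kframe{F}'$ is a subframe of $\kframe{F}$, hence $\kframe{F}'\in\ckf L$ because $L$ is a subframe logic, while $n$-alternativity is trivially inherited, so $\kframe{F}'\in\ckf(L\oplus\bm{alt}_n)$; the expanding- (resp.\ constant-) domain condition survives since only worlds are deleted; and $\kframe{F}'$ has at most $1+n+\ldots+n^{m+1}$ worlds. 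Then I would apply the equivalence $\sim$ from the proof of Proposition~\ref{prop:finite:frame} to collapse each local domain modulo the finitely many possible data (the set of worlds where an element occurs, together with, for each $P_i$, the set of worlds where $P_i$ holds of the element), obtaining a model on the same frame $\kframe{F}'$, still e- (resp.\ c-) augmented, still refuting $\varphi$, and whose global domain has at most $2^{|W'|\cdot(\ell+1)}$ elements. Here it is essential that $\varphi$ is monadic, so that this collapse both preserves the truth of $\varphi$ and bounds the domain.

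Finally this yields the decision procedure: on input monadic $\varphi$ of modal depth~$m$ with $\ell$ unary letters, enumerate (up to isomorphism) all $n$-alternative Kripke frames with at most $1+n+\ldots+n^{m+1}$ worlds; for each such $\kframe{F}'$ decide whether $\kframe{F}'\in\fin\ckf L$, which is possible since $\fin\ckf L$ is recursive and which, by the first paragraph, is equivalent to $\kframe{F}'\in\ckf(L\oplus\bm{alt}_n)$; and for each frame passing this test run over all e-augmented (resp.\ c-augmented) models on it whose global domain has at most $2^{|W'|\cdot(\ell+1)}$ elements — finitely many up to isomorphism — checking whether one refutes $\varphi$. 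Then $\varphi\in\QMLe\ckf(L\oplus\bm{alt}_n)$ (resp.\ $\varphi\in\QMLc\ckf(L\oplus\bm{alt}_n)$) iff no refutation is found; the nontrivial implication is the reduction of the previous paragraph, the converse being immediate since every tested frame belongs to $\ckf(L\oplus\bm{alt}_n)$. The main obstacle — and the only place the subframe hypothesis is truly needed — is the passage to the bounded-depth frame: without closure of $\ckf L$ under subframes there would be no guarantee that the small frame $\kframe{F}'$ still validates $L$, and hence no way to confine the search to frames that are simultaneously small, recursively recognizable, and genuinely in the class; the domain bound, by contrast, is routine given Proposition~\ref{prop:finite:frame}.
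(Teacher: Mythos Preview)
Your proposal is correct and follows essentially the same approach as the paper: reduce to a bounded-depth subframe as in Proposition~\ref{prop:QAlt-n:decidability}, use the subframe hypothesis on~$L$ to ensure the reduced frame still lies in~$\ckf L$, then apply the domain-collapsing of Proposition~\ref{prop:finite:frame}, with the recursiveness of $\fin\ckf L$ guaranteeing the frame-membership test is effective. The paper's own proof is a two-line pointer to exactly this argument; you have simply spelled out the details it leaves implicit.
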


\begin{proof}
Similar to Proposition~\ref{prop:QAlt-n:decidability}. 
%
%
The only difference is that we also have to check that the submodel corresponding to $\kModel{M}'$ in the proof of Proposition~\ref{prop:QAlt-n:decidability} also validates~$L$. This is possible since~$L$ is a subframe logic and $\fin\ckf{L}$ is a recursive class.
\end{proof}

\begin{corollary}
Let $L$ be a subframe modal predicate logic such that\/ 
$\logic{QK}\subseteq L$,
$L\subseteq \logic{QTriv}$ or\/ $L\subseteq \logic{QVer}$,
and the class\/ $\fin\ckf{L}$ is recursive.
Then the monadic fragments of\/ $L\oplus\bm{alt}_n$ and\/ $\QMLc\ckf(L\oplus\bm{alt}_n)$ are recursively separable.
\end{corollary}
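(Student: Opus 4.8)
The plan is to read the statement off the preceding Proposition together with one soundness remark. Write $X$ for the set of (G\"{o}del numbers of) monadic $\lang{ML}$-formulas belonging to $L\oplus\bm{alt}_n$ and $Y$ for the set of monadic $\lang{ML}$-formulas belonging to $\QMLc\ckf(L\oplus\bm{alt}_n)$. By the extended notion of recursive separability it suffices, once we know $X\subseteq Y$, to exhibit a recursive set $Z$ with $X\subseteq Z\subseteq Y$; and $Z=Y$ will serve.

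First I would verify $L\oplus\bm{alt}_n\subseteq\QMLc\ckf(L\oplus\bm{alt}_n)$, which yields $X\subseteq Y$. This is just soundness of Kripke-frame validity with respect to augmented frames: if $\varphi\in L\oplus\bm{alt}_n$ and $\kframe{F}\in\ckf(L\oplus\bm{alt}_n)$, then $\kframe{F}\models\varphi$, and validity of $\varphi$ on a Kripke frame $\kframe{F}$ means that $\varphi$ is true in every Kripke model based on $\kframe{F}$, in particular in every model based on a c-augmented frame $\kframe{F}\odot\mathcal{D}$; hence $\varphi\in\QML\aug{c}{all}{\ckf(L\oplus\bm{alt}_n)}=\QMLc\ckf(L\oplus\bm{alt}_n)$, so $X\subseteq Y$. (The strict inclusion $X\subset Y$ required by the definition causes no trouble: for instance $\bm{bf}$ is a monadic formula lying in $\QMLc\ckf(L\oplus\bm{alt}_n)$, and it fails to lie in $L\oplus\bm{alt}_n$ for every $L$ here that does not already contain it; for the remaining $L$ the statement is understood in the evident degenerate way.)

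Second, I would invoke the preceding Proposition: its hypotheses on $L$ --- namely $\logic{QK}\subseteq L$, $L\subseteq\logic{QTriv}$ or $L\subseteq\logic{QVer}$, $\fin\ckf L$ recursive, and $L$ subframe --- are exactly those assumed in the corollary, and it delivers decidability of the monadic fragment of $\QMLc\ckf(L\oplus\bm{alt}_n)$, i.e. recursiveness of the set $Y$. Taking $Z=Y$ then gives a recursive set with $X\subseteq Z\subseteq Y$, so $X$ and $Y$ are recursively separable. No step here is delicate; the only point calling for a line of justification is the soundness chain producing $X\subseteq Y$, so that is where I expect the (very mild) work to be.
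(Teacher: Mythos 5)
Your proposal is correct and matches the paper's intended argument: the corollary is stated without proof precisely because it follows immediately from the preceding Proposition (decidability, hence recursiveness, of the monadic fragment of $\QMLc\ckf(L\oplus\bm{alt}_n)$) together with the soundness inclusion $L\oplus\bm{alt}_n\subseteq\QMLc\ckf(L\oplus\bm{alt}_n)$, taking the decidable fragment itself as the separating set.
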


\begin{corollary}
Let $L$ be a one of\/ $\logic{QK}$, $\logic{QT}$, $\logic{QKB}$, $\logic{QKTB}$, $\logic{QK4}$, $\logic{QS4}$, $\logic{QK5}$, $\logic{QS5}$, $\logic{QK45}$, $\logic{QK4B}$, $\logic{QGL}$, $\logic{QGrz}$, $\logic{QwGrz}$, $\logic{QK4}\oplus\bm{bd}_m$, $\logic{QS4}\oplus\bm{bd}_m$, $\logic{QGL}\oplus\bm{bd}_m$, $\logic{QGrz}\oplus\bm{bd}_m$, $\logic{QwGrz}\oplus\bm{bd}_m$, where $m\in\numN^+$.
Then the monadic fragments of\/ $L\oplus\bm{alt}_n$ and\/ $\QMLc\ckf(L\oplus\bm{alt}_n)$ are recursively separable.
\end{corollary}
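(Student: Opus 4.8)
This corollary is a direct specialisation of the preceding one: that result asserts the recursive separability of the monadic fragments of $L\oplus\bm{alt}_n$ and $\QMLc\ckf(L\oplus\bm{alt}_n)$ for every subframe modal predicate logic $L$ with $\logic{QK}\subseteq L$, with $L\subseteq\logic{QTriv}$ or $L\subseteq\logic{QVer}$, and with $\fin\ckf L$ recursive. So the plan is simply to check that each $L$ in the displayed list meets these four hypotheses, and then quote the preceding corollary. The inclusion $\logic{QK}\subseteq L$ is immediate: every listed logic is obtained from $\logic{QK}$, or from one of $\logic{QT}$, $\logic{QK4}$, $\logic{QS4}$ (which all contain $\logic{QK}$), by adjoining further axioms.

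For the dichotomy $L\subseteq\logic{QTriv}$ or $L\subseteq\logic{QVer}$ I would proceed by a routine computation on the two one-world frames. Every logic in the list except the ones built over $\logic{QGL}$ is valid on the one-element reflexive frame (on which $\Box\varphi\leftrightarrow\varphi$ holds identically); in particular the Grzegorczyk and weak-Grzegorczyk axioms and the depth bounds $\bm{bd}_m$ with $m\geqslant 1$ all hold there, so, since $\logic{QTriv}$ is determined by that frame, $L\subseteq\logic{QTriv}$ for all of them. For $L\in\{\logic{QGL},\,\logic{QGL}\oplus\bm{bd}_m\}$ one uses instead the one-element irreflexive frame (on which $\Box\varphi\leftrightarrow\top$), where the L\"{o}b axiom, the $\logic{K4}$ axiom and $\bm{bd}_m$ are valid; since $\logic{QVer}$ is determined by that frame, $L\subseteq\logic{QVer}$. (Alternatively, this is the predicate lift of Makinson's theorem that every consistent normal propositional modal logic lies below $\logic{Triv}$ or below $\logic{Ver}$, using $L=L_0\oplus\logic{QCl}$ and the monotonicity of the operation $\logic{Q}L_0$.)

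For recursiveness of $\fin\ckf L$: each listed $L$ has the form $\logic{QK}\oplus\Gamma$ with $\Gamma$ a finite set of \emph{propositional} modal formulas, and a Kripke frame validates $L$ exactly when it validates $\Gamma$ --- indeed every e\nobreakdash-augmented frame validates $\logic{QK}$, and frame validity is preserved by Substitution, Modus Ponens, Generalization and Necessitation. Whether a \emph{finite} frame $\kframe{F}$ validates the propositional set $\Gamma$ is decidable, since only finitely many valuations of the proposition letters of $\Gamma$ need be tested at each world; hence $\fin\ckf L$ is recursive (this holds even for $\logic{QGL}$, although that logic is Kripke incomplete). Finally, each of these $L$ is a subframe logic: the first-order conditions cut out on the underlying Kripke frames by the axioms of $L$ --- reflexivity, symmetry, transitivity, Euclideanness, irreflexivity together with converse well-foundedness (for $\logic{QGL}$), antisymmetry (for $\logic{QGrz}$), weak transitivity (for $\logic{QwGrz}$), depth bounded by $m$, and conjunctions of these --- are all inherited by $\otuple{W',R\upharpoonright W'}$ for every $W'\subseteq W$, so $\ckf L$ is closed under subframes. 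With the four hypotheses verified the preceding corollary applies and gives the claim. The only part that calls for a little care is the case split for the $\logic{GL}$-, $\logic{Grz}$-, $\logic{wGrz}$- and bounded-depth logics, both in the $\logic{QTriv}$/$\logic{QVer}$ dichotomy and in the subframe check, where the relevant frame conditions are slightly less standard than the basic Lemmon--Scott ones; I expect that to be the main --- though still shallow --- obstacle.
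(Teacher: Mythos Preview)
Your proposal is correct and follows exactly the approach the paper intends: the paper gives no explicit proof for this corollary, treating it as an immediate instance of the preceding one, and your plan of verifying the four hypotheses for each listed logic is precisely what is needed. One minor terminological point: your descriptions of the frame conditions for $\logic{QGrz}$ and $\logic{QwGrz}$ are slightly off (the Grzegorczyk axiom over $\logic{S4}$ gives Noetherian partial orders, not merely antisymmetry, and $\logic{QwGrz}$ extends $\logic{QK4}$ so its frames are already transitive, not ``weakly transitive''), but since all that matters here is closure under subframes, and the relevant Noetherian-type conditions are indeed inherited by subframes, this does not affect the argument.
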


Now, let us pay attention to the fact that the logics under consideration are not necessarily in~$\Sigma^0_1$ or~$\Pi^0_1$. 
Of course, by Propositions~\ref{prop:Sigma01} and~\ref{prop:Pi01}, each of two recursively inseparable logics (or its fragment) with which we deal is hard in some of the classes; however, this does not necessarily imply membership within the class. We give some examples.
By Theorem~\ref{th:ml:monadic:insep:1}, the monadic fragments of $\QMLe\logic{QGL}$ and $\QMLed\logic{QGL}$ are recursively inseparable; and, by Theorem~\ref{th:ml:monadic:insep:2}, the same for the monadic fragments of $\QMLe\logic{QGL}$ and $\QMLew\logic{QGL}$. The monadic fragments of $\QMLew\logic{QGL}$ and $\QMLcw\logic{QGL}$ are $\Pi^0_1$-complete by~\cite[Corollary~4.3]{RShsubmitted}. But the monadic fragments of $\QMLe\logic{QGL}$ and $\QMLc\logic{QGL}$ are $\Pi^1_1$-hard~\cite{RybIGPL22} (in particular, $\logic{QGL}\ne \QMLe\logic{QGL}$). 
The same applies to $\logic{QGrz}$, $\logic{QwGrz}$, and some closely related logics considered in~\cite{RybIGPL22}.

Let us give examples where the conditions of Theorems~\ref{th:ml:monadic:insep:1} and~\ref{th:ml:monadic:insep:2} are not satisfied for logics $L$ and $L'$ but the monadic fragments of $L$ and $L'_{\mathit{dfin}}$ as well as the monadic fragments of $L$ and $L'_{\mathit{wfin}}$ are recursively inseparable. In~\cite{MNPERSON.1146721}, for any degree of unsolvability $A$, a linearly approximable propositional logic $\logic{L}_A$ with the variable-free fragment belonging to the degree is constructed. By construction, the logic contains $\bm{alt}_2$ (and $\Box\bm{alt}_1$); therefore, $\logic{QL}_A$ does not satisfy the conditions of Theorems~\ref{th:ml:monadic:insep:1} and~\ref{th:ml:monadic:insep:2} (if we take $L=L'=\logic{L}_A$). However, the monadic fragments of $\logic{QL}_A$ and $\QMLcd\logic{QL}_A$ as well as the monadic fragments of $\logic{QL}_A$ and $\QMLcw\logic{QL}_A$ are not recursively separable, since even the propositional variable-free fragments of the logics in the pairs are the same and undecidable.

Finally, we pay attention to very poor logical systems. As an example let us consider the modal predicate logic $\logic{QCl}^{\Box}$ defined by
$$
\begin{array}{lcl}
\logic{QCl}^{\Box} & = & \logic{QCl} + \Box\top\to\Box\top,
\end{array}
$$
i.e., $\logic{QCl}^{\Box}$ is, in fact, the classical logic whose language is enriched with~$\Box$ without any condition on~$\Box$. Clearly, $\logic{QCl}^\Box$ is contained in every modal predicate logic containing~$\logic{QCl}$. At first glance, it seems inappropriate to talk about the logics $\logic{QCl}^\Box_{\mathit{dfin}}$ and $\logic{QCl}^\Box_{\mathit{wfin}}$, since, by the definitions, they coincide with $\logic{QK}_{\mathit{dfin}}$ and $\logic{QK}_{\mathit{wfin}}$, respectively; but it seems appropriate to consider $\logic{QCl}^\Box_{\mathit{fin}}$ defined by
$$
\begin{array}{lcl}
\logic{QCl}^\Box_{\mathit{fin}} & = & \logic{QCl}_{\mathit{fin}} + \Box\top\to\Box\top.
\end{array}
$$
Note that, at least, $\logic{QCl}^\Box_{\mathit{fin}}\subseteq \logic{QK}_{\mathit{dfin}}\oplus\bm{bf}$, since $\logic{QK}_{\mathit{dfin}}\oplus\bm{bf}\cap \lang{L} = \logic{QCl}_{\mathit{fin}}$.\footnote{The logics are different. For example, $\Box\top\in\logic{QK}_{\mathit{dfin}}\oplus\bm{bf}$ but $\Box\top\not\in\logic{QCl}^\Box_{\mathit{fin}}$. We leave the details to the reader.} Then as a corollary we obtain the following proposition. 

\begin{proposition}
Logics\/ $\logic{QCl}^\Box$ and\/ $\logic{QCl}^\Box_{\mathit{fin}}$ are recursively inseparable in a language containing a single unary predicate letter and three individual variables.
\end{proposition}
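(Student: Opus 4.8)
The plan is to obtain the statement as a direct corollary of Theorem~\ref{th:ml:monadic:insep:1}, in the manner the surrounding text anticipates. First I would record that the pair is comparable, so that recursive (in)separability is meaningful for it: the class of all finite models is contained in the class of all models, hence $\logic{QCl}\subseteq\logic{QCl}_{\mathit{fin}}$; since $\logic{QCl}^\Box$ and $\logic{QCl}^\Box_{\mathit{fin}}$ are the $+$\nobreakdash-closures over the same extra axiom $\Box\top\to\Box\top$ of, respectively, $\logic{QCl}$ and $\logic{QCl}_{\mathit{fin}}$, monotonicity of the closure operator yields $\logic{QCl}^\Box\subseteq\logic{QCl}^\Box_{\mathit{fin}}$.

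Next I would spell out the chain of inclusions into a single ``large'' logic. As noted just before the proposition, $\logic{QK}_{\mathit{dfin}}\oplus\bm{bf}\cap\lang{L}=\logic{QCl}_{\mathit{fin}}$ (a modality\nobreakdash-free formula is valid on all c\nobreakdash-augmented frames with finite local domains iff it is valid on all finite classical models, a one\nobreakdash-world frame realising each such model). Consequently $\logic{QK}_{\mathit{dfin}}\oplus\bm{bf}$ contains $\logic{QCl}_{\mathit{fin}}$ as well as the propositional tautology $\Box\top\to\Box\top$, and being closed under Substitution, Modus Ponens, and Generalization it contains the smallest such closure, namely $\logic{QCl}^\Box_{\mathit{fin}}$; so $\logic{QCl}^\Box\subseteq\logic{QCl}^\Box_{\mathit{fin}}\subseteq\logic{QK}_{\mathit{dfin}}\oplus\bm{bf}$.

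Then I would apply Theorem~\ref{th:ml:monadic:insep:1} with $L=\logic{QCl}^\Box$ and $L'=\logic{QK}$. Its hypotheses hold: $\logic{QCl}\subseteq\logic{QCl}^\Box\subseteq\logic{QK}$, and $\ckf\logic{QK}$, being the class of all Kripke frames, is a wKHC class (for every $n$ it contains a frame with a world seeing $n$ distinct worlds). Hence $\logic{QCl}^\Box$ and $L'_{\mathit{dfin}}\oplus\bm{bf}=\logic{QK}_{\mathit{dfin}}\oplus\bm{bf}$ are recursively inseparable in the language with a single unary predicate letter and three individual variables. Finally, a contradiction argument closes the proof: if $\logic{QCl}^\Box$ and $\logic{QCl}^\Box_{\mathit{fin}}$ were recursively separable, there would be a recursive $Z$ with $\logic{QCl}^\Box\subseteq Z\subseteq\logic{QCl}^\Box_{\mathit{fin}}$, and then, via $\logic{QCl}^\Box_{\mathit{fin}}\subseteq\logic{QK}_{\mathit{dfin}}\oplus\bm{bf}$, the same $Z$ would separate $\logic{QCl}^\Box$ from $\logic{QK}_{\mathit{dfin}}\oplus\bm{bf}$, contradicting the previous sentence.

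The main (indeed, the only) point requiring care — the nearest thing to an obstacle — is the bookkeeping around the operators $+$ and $\oplus$ and the inclusion $\logic{QCl}^\Box_{\mathit{fin}}\subseteq\logic{QK}_{\mathit{dfin}}\oplus\bm{bf}$: one must use only the inclusion and not equality (by the accompanying footnote $\Box\top$ lies in the latter but not in the former, since $\logic{QCl}^\Box_{\mathit{fin}}$ is not closed under Necessitation), which is exactly what the separator argument needs. Everything else is a verbatim appeal to Theorem~\ref{th:ml:monadic:insep:1}, whose own proof already routes the undecidability through the translation $S_2$ and Theorem~\ref{th:Trakhtenbrot:bin:sib:P}.
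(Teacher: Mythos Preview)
Your proof is correct and follows essentially the same route as the paper. The only cosmetic difference is packaging: the paper does not cite Theorem~\ref{th:ml:monadic:insep:1} but instead unfolds its proof in place, arguing directly that $\varphi\in\logic{QCl}^{\mathit{bin}}$ gives $S_2\varphi\in\logic{QCl}^\Box$ by Substitution, while $\varphi\notin\logic{SIB}_{\mathit{fin}}$ gives $S_2\varphi\notin\logic{QCl}^\Box_{\mathit{fin}}$ via the inclusion $\logic{QCl}^\Box_{\mathit{fin}}\subseteq\logic{QK}_{\mathit{dfin}}$ and Lemma~\ref{lem:ml:monadic:insep:2}, then appeals to Theorem~\ref{th:Trakhtenbrot:bin:sib:P}; your appeal to Theorem~\ref{th:ml:monadic:insep:1} encapsulates precisely these three steps.
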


\begin{proof}
Let $\varphi$ be an $\lang{L}$-formula containing no predicate letters except a binary letter~$P$ and containing just three individual variables.

If $\varphi\in\logic{QCl}^{\mathit{bin}}$, then $S_2\varphi\in \logic{QCl}^\Box$ by Substitution.

Suppose $\varphi\not\in\logic{SIB}_{\mathit{fin}}$. We are to prove that $S_2\varphi\not\in\logic{QCl}^\Box_{\mathit{fin}}$. Assume, for the sake of contradiction, that $S_2\varphi\in\logic{QCl}^\Box_{\mathit{fin}}$. Since $\logic{QCl}^\Box_{\mathit{fin}}\subseteq \logic{QK}_{\mathit{dfin}}$, we conclude that $S_2\varphi\in\logic{QK}_{\mathit{dfin}}$, which contradicts Lemma~\ref{lem:ml:monadic:insep:2}. Thus, $S_2\varphi\not\in\logic{QCl}^\Box_{\mathit{fin}}$.

Then, the statement follows by Theorem~\ref{th:Trakhtenbrot:bin:sib:P}.
\end{proof}

Notice that a similar statement can be proved for the logics $\logic{QCl} + \Box\top$ and $\logic{QCl}_{\mathit{fin}} + \Box\top$ as well as $\logic{QCl}\oplus \top$ and $\logic{QCl}_{\mathit{fin}}\oplus \top$, and many others. We leave the details to the reader.

\subsection{Monadic fragments and two variables}

It is known that the monadic fragments of modal logics can be undecidable even if the language contains only two individual variables~\cite{KKZ05} and a single unary predicate letter~\cite{RSh19SL}. For the logics of classes of finite frames, a similar result is proved with the use of three individual variables~\cite{RShJLC20a}. Using the ideas presented in~\cite{KKZ05} we shall show that, for many modal predicate logics, the fragment of a logic with a single unary predicate letter and two individual variables and the one of the logic of its finite Kripke frames are recursively inseparable. To this end, we can not use the results on theories like $\logic{SIB}$ and $\logic{SIB}_{\mathit{fin}}$ directly, since they are decidable in languages with fewer than three individual variables; therefore, we are going to describe the special $T_n$-tiling, for each $n\in\numN$, by $\lang{ML}$-formulas containing a single unary predicate letter and two individual variables. To simulate the special $T_n$-tiling in an appropriate way, let us slightly modify the constructions proposed in~\cite{KKZ05}.

Below, we shall use the formulas and denotations introduced in Section~\ref{subsec:Trakhtenbrot:theories}. We define a formula similar to $\mathit{Tiling}'_n$ using $\Box$ and just two individual variables. Let $C$ be a new unary predicate letter. Define (cf.~\cite{KKZ05}) the following $\lang{ML}$-formulas:
$$
\begin{array}{lcl}
\mathit{TC}^{\Box}_3
  & =
  & \Box\forall x\forall y\,(V'_n(x,y) \wedge \exists x\,(C(x)\wedge H'_n(y,x)) \to 
    \forall y\,(H'_n(x,y) \to \forall x\,(C(x)\to V'_n(y,x))));
  \smallskip\\
\mathit{TC}^{\Box}_5
  & =
  & \forall x\,\Diamond C(x);
  \smallskip\\
\mathit{TC}^{\Box}_6
  & =
  & \forall x\forall y\,(V'_n(x,y)\to \Box V'_n(x,y));
  \smallskip\\
\mathit{TC}^{\Box}_7
  & =
  & \forall x\forall y\,(H'_n(x,y)\to \Box H'_n(x,y));
  \smallskip\\
\mathit{TC}^{\Box}_8
  & =
  & \forall x\forall y\,(\Diamond V'_n(x,y)\to V'_n(x,y)).
\end{array}
$$
Letter $C$ allows us to replace $\mathit{TC}'_3$, which contains three individual variables, with $\mathit{TC}^{\Box}_3$ that contains two individual variables; we need $\mathit{TC}^{\Box}_5$--$\mathit{TC}^{\Box}_8$ to ensure that $\mathit{TC}^{\Box}_3$ works properly. Let, for convenience, 
$$
\begin{array}{cccc}
\mathit{TC}^{\Box}_0 = \mathit{TC}_0,
  & \mathit{TC}^{\Box}_1 = \mathit{TC}'_1,
  & \mathit{TC}^{\Box}_2 = \mathit{TC}'_2,
  & \mathit{TC}^{\Box}_4 = \mathit{TC}_4.
\end{array}
$$
Then, define 
$$
\begin{array}{rcl}
\mathit{Tiling}^\Box_n 
  & = 
  & \displaystyle
    \bigwedge\limits_{\mathclap{i=0}}^{8}\mathit{TC}^\Box_i 
    \wedge \Box^+\mathit{DSR}
    \wedge \Box^+\mathit{DSU}.   
\end{array}
$$
Finally, we define a new, modal, modification of $\mathit{Tiling}_n^{\mathbb{X}}$ by
$$
\begin{array}{lcl}
\mathit{M}^\Box\mathit{Tiling}_n^{\mathbb{X}} 
  & = 
  & \mathit{Tiling}^\Box_n \to \exists x\,P_1(x).
  \smallskip\\
\end{array}
$$

\begin{lemma}
\label{lem:1:tiling:QK}
If $n\in\mathbb{X}$, then\/ $\mathit{M}^\Box\mathit{Tiling}_n^{\mathbb{X}}\in\logic{QK}$.
\end{lemma}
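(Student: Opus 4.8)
The plan is to argue semantically, using that $\logic{QK}$ is complete with respect to the class of all e-augmented frames; so it suffices to show that $\mathit{M}^\Box\mathit{Tiling}_n^{\mathbb{X}}$ is valid on every e-augmented frame. Fix a Kripke model $\kModel{M}=\otuple{W,R,D,I}$ and $w_0\in W$ with $\kModel{M},w_0\models\mathit{Tiling}^\Box_n$; I must show $\kModel{M},w_0\models\exists x\,P_1(x)$. Pass to the classical model $\kModel{N}=\kModel{M}_{w_0}=\otuple{D_{w_0},I_{w_0}}$. The conjuncts $\mathit{TC}^\Box_0=\mathit{TC}_0$, $\mathit{TC}^\Box_1=\mathit{TC}'_1$, $\mathit{TC}^\Box_2=\mathit{TC}'_2$, $\mathit{TC}^\Box_4=\mathit{TC}_4$ are modality-free, hence, being true at $w_0$, true in $\kModel{N}$; and $\mathit{DSR}$, $\mathit{DSU}$ hold in $\kModel{N}$ as the left conjuncts of $\Box^+\mathit{DSR}$, $\Box^+\mathit{DSU}$. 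What remains is to extract from the genuinely modal conjuncts $\mathit{TC}^\Box_3$ and $\mathit{TC}^\Box_5$--$\mathit{TC}^\Box_8$ enough of the confluence property $\mathit{TC}'_3$ to run (a reorganized form of) the grid construction from the proof of Lemma~\ref{lem:Trakhtenbrot:lem1:sib} inside $\kModel{N}$, and then to invoke Proposition~\ref{prop:fn} together with~$(\ref{eq:fn})$.

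The heart of the matter is the following claim about $\kModel{N}$ --- a statement with four variables, which is exactly why the modal device of~\cite{KKZ05} is needed: for all $a,a',d,e\in D_{w_0}$, if $\kModel{N}\models V'_n(a,a')\wedge H'_n(a,d)\wedge H'_n(a',e)$, then $\kModel{N}\models V'_n(d,e)$. To prove it I would pick, by $\mathit{TC}^\Box_5$, a world $v\in R(w_0)$ with $\kModel{M},v\models C(e)$; transfer $V'_n(a,a')$ forward by $\mathit{TC}^\Box_6$ and $H'_n(a,d)$, $H'_n(a',e)$ forward by $\mathit{TC}^\Box_7$, so that all three hold at $v$ (note $D_{w_0}\subseteq D_v$ by the expanding-domains condition, so $a,a',d,e$ are available at $v$); apply at $v$ the instance of $\mathit{TC}^\Box_3$ with $x,y$ instantiated to $a,a'$ and with its existential subformula $\exists x\,(C(x)\wedge H'_n(y,x))$ witnessed by $e$, obtaining $\kModel{M},v\models\forall x_3\,(C(x_3)\to V'_n(d,x_3))$; specialize $x_3$ to $e$, giving $\kModel{M},v\models V'_n(d,e)$; and reflect this back along $R$ to $w_0$ by $\mathit{TC}^\Box_8$, so $\kModel{M},w_0\models V'_n(d,e)$, i.e.\ $\kModel{N}\models V'_n(d,e)$. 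If other elements happen to be $C$-marked at $v$ this only yields more $V'_n$-edges of $\kModel{N}$, which does no harm; in particular no functionality of $H'_n$ or $V'_n$ is ever needed.

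Granting the claim, I would build the grid $(a_i^j)_{i,j\in\numN}$ of elements of $D_{w_0}$ as in the proof of Lemma~\ref{lem:Trakhtenbrot:lem1}, but row by row: take $a_0^0$ with $P_0(a_0^0)$ from $\mathit{TC}_4$; extend the first column upward by $\mathit{TC}'_2$ and the first row rightward by $\mathit{TC}'_1$; and, given $a_i^j,a_{i+1}^j,a_i^{j+1}$ with $\kModel{N}\models H'_n(a_i^j,a_{i+1}^j)\wedge V'_n(a_i^j,a_i^{j+1})$, choose $a_{i+1}^{j+1}$ by $\mathit{TC}'_1$ with $\kModel{N}\models H'_n(a_i^{j+1},a_{i+1}^{j+1})$ and apply the claim (with $a=a_i^j$, $a'=a_i^{j+1}$, $d=a_{i+1}^j$, $e=a_{i+1}^{j+1}$) to get $\kModel{N}\models V'_n(a_{i+1}^j,a_{i+1}^{j+1})$. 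By $\mathit{TC}_0$ each $a_i^j$ carries a unique $P_m$; put $f(i,j)=t^n_m$. The conditions $H'_n(a_i^j,a_{i+1}^j)$ and $V'_n(a_i^j,a_i^{j+1})$ force $(1)$ and $(2)$, so $f$ is a $T_n$-tiling with $f(0,0)=t^n_0=t_0$; by Proposition~\ref{prop:fn}, $f=f_n$; and since $n\in\mathbb{X}$, $(\ref{eq:fn})$ yields $m\in\numN$ with $f_n(0,m)=t_1=t^n_1$, so $\kModel{N}\models P_1(a_0^m)$ and hence $\kModel{M},w_0\models\exists x\,P_1(x)$. The main obstacle is the claim: keeping straight which modality-free facts are asserted at $w_0$ and which at the auxiliary world $v$, and verifying that the ``directed'' shape of $\mathit{TC}^\Box_3$ together with $\mathit{TC}^\Box_5$--$\mathit{TC}^\Box_8$ really does pin down the one edge $V'_n(d,e)$ we need; everything else is a routine transcription of the classical argument.
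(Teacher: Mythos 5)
Your proposal is correct and follows essentially the same route as the paper: its proof likewise argues semantically on Kripke models, establishes exactly your square-completion claim as a sublemma (picking a successor world where $C$ marks the new corner, pushing the $H'_n$- and $V'_n$-facts forward via $\mathit{TC}^\Box_6$ and $\mathit{TC}^\Box_7$, applying the matrix of $\mathit{TC}^\Box_3$ there, and reflecting the resulting $V'_n$-edge back via $\mathit{TC}^\Box_8$), and then builds the grid of elements of $D_{w_0}$ and concludes with Proposition~\ref{prop:fn} and~(\ref{eq:fn}). The only cosmetic differences are that you fill in the grid row by row while the paper expands it square by square, and that you invoke explicitly the Kripke completeness of $\logic{QK}$ that the paper uses tacitly.
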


\begin{proof}
Let $\kModel{M},w_0\models\mathit{Tiling}_n$ for a Kripke model $\kModel{M}=\langle W,R,D,I\rangle$ and a world $w_0\in W$. We show that then $\kModel{M},w_0\models \exists x\,P_1(x)$.
To this end, for all $i,j\in\numN$ we pick out an element $a_{i}^{j}\in D_{w_0}$ so that, for all $i,j\in\numN$,
$$
\begin{array}{lcl}
\kModel{M},w_0\models H'_n(a_i^j,a_{i+1}^j) 
  & \mbox{and} 
  & \kModel{M},w_0\models V'_n(a_i^j,a_i^{j+1}).
\end{array}
$$

Since $\kModel{M},w_0\models \mathit{TC}^\Box_4$, there exists $a_0^0\in D_{w_0}$ such that $\kModel{M},w_0\models P_0(a_0^0)$. 

Let $k$ be a number from $\numN$. Suppose that for all $i,j\in \{0,\ldots,k\}$, the element $a_i^j$ is defined; we have to define, for all $i,j\in \{0,\ldots,k\}$, the elements $a_{k+1}^j$, $a_i^{k+1}$, and $a_{k+1}^{k+1}$.

We start with $a_{0}^{k+1}$. Due to $\mathit{TC}^\Box_2$, there exists $b\in D_{w_0}$ such that $\kModel{M},w_0\models V'_n(a_0^k,b)$. Then take $a_{0}^{k+1}=b$. To define other elements, we first prove an auxiliary statement, cf.~\cite{KKZ05}.

\begin{sublemma}
\label{sublem:kkz:modal}
Let\/ $\kModel{M},w_0\models H'_n(a,c)\wedge V'_n(a,e)\wedge H'_n(e,b)$, for some $a,c,e,b\in D_{w_0}$. Then\/ $\kModel{M},w_0\models V'_n(c,b)$.
\end{sublemma}

\begin{proof}
Using $\mathit{TC}^\Box_5$, we obtain that $\kModel{M},w_0\models \Diamond C(b)$, and hence, there exists a world $w\in R(w_0)$ such that,
\begin{equation}
\label{eq:star:1}
\kModel{M},w\models C(b).
\end{equation} 
Since  $\kModel{M},w_0\models H'_n(a,c)\wedge V'_n(a,e)\wedge H'_n(e,b)$ and $w_0Rw$, we obtain, by $\mathit{TC}^\Box_6$ and $\mathit{TC}^\Box_7$, that
\begin{equation}
\label{eq:star:2}
\kModel{M},w\models H'_n(a,c)\wedge V'_n(a,e)\wedge H'_n(e,b).
\end{equation} 
Now, let us use the formula $\mathit{TC}^\Box_3$. It follows from $(\ref{eq:star:1})$ and $(\ref{eq:star:2})$ that $\kModel{M},w\models C(b)\wedge H'_n(e,b)$, and thus, $\kModel{M},w\models \exists x\,(C(x)\wedge H'_n(e,x))$; then, applying $(\ref{eq:star:2})$ again, we conclude that
\begin{equation}
\label{eq:star:3}
\kModel{M},w\models V'_n(a,e)\wedge \exists x\,(C(x)\wedge H'_n(e,x)).
\end{equation} 
By $\mathit{TC}^\Box_3$, from $(\ref{eq:star:3})$ we obtain that
$$
\kModel{M},w\models \forall y\,(H'_n(a,y)\to \forall x\,(C(x)\wedge V'_n(y,x)),
$$
and hence,
$$
\kModel{M},w\models H'_n(a,c)\to \forall x\,(C(x)\wedge V'_n(c,x)),
$$
that, by $(\ref{eq:star:2})$, gives us
$$
\kModel{M},w\models \forall x\,(C(x)\wedge V'_n(c,x)),
$$
and, by $(\ref{eq:star:1})$, 
\begin{equation}
\label{eq:star:4}
\kModel{M},w\models V'_n(c,b).
\end{equation} 
Since $w_0Rw$, we obtain $\kModel{M},w_0\models \Diamond V'_n(c,b)$ from $(\ref{eq:star:4})$. Then, by $\mathit{TC}^\Box_8$, we can conclude that $\kModel{M},w_0\models V'_n(c,b)$.
\end{proof}

Suppose that $a_0^{k+1},\ldots,a_i^{k+1}$ are already defined for some $i\in\{0,\ldots,k\}$; we have to define $a_{i+1}^{k+1}$. Due to $\mathit{TC}^\Box_1$, there exists $b\in D_{w_0}$ such that $\kModel{M},w_0\models H'_n(a_i^{k+1},b)$. Since also $\kModel{M},w_0\models H'_n(a_i^k,a_{i+1}^k)$ and $\kModel{M},w_0\models V'_n(a_i^k,a_{i}^{k+1})$, we obtain, by Sublemma~\ref{sublem:kkz:modal}, that $\kModel{M},w_0\models H'_n(a_{i+1}^{k+1},b)$, and we may take $a_{i+1}^{k+1}=b$.

Next step, let us define $a_{k+1}^{k+1}$. By $\mathit{TC}^\Box_1$, there exists $b\in D_{w_0}$ such that $\kModel{M},w_0\models H'_n(a_k^{k+1},b)$; take $a_{k+1}^{k+1}=b$. 

Suppose that $a_{k+1}^{k+1},\ldots,a_{k+1}^{j+1}$ are already defined for some $j\in\{0,\ldots,k\}$; we have to define $a_{k+1}^{j}$. By $\mathit{TC}^\Box_1$, there exists $c\in D_{w_0}$ such that $\kModel{M},w_0\models H'_n(a_k^{j},c)$. Then, 
$$
\kModel{M},w_0\models H'_n(a_k^j,c)\wedge V'_n(a_k^j,a_k^{j+1})\wedge H'_n(a_k^{j+1},a_{k+1}^{j+1}),
$$
and by Sublemma~\ref{sublem:kkz:modal}, $\kModel{M},w_0\models V'_n(c,a_{k+1}^{j+1})$. Hence, we may take $a_{k+1}^{k+1}=c$. 

For all $i,j\in \numN$, due to $\mathit{TC}^\Box_0$, there exists a unique $m\in\{0,\ldots,k_n\}$ such that $\kModel{M},w_0\models P_m(a_i^j)$. Then, put $f(i,j)=t^n_m$.
Clearly, $f\colon\numN\times\numN\to T_n$ is a $T_n$-tiling, since
$$
\begin{array}{lcl}
\kModel{M},w_0\models H_n(a_i^j,a_{i+1}^j) & \Longrightarrow & \rightsq f(i,j) = \leftsq f(i+1,j);
\smallskip\\
\kModel{M},w_0\models \hfill V_n(a_i^j,a_i^{j+1}) & \Longrightarrow & \upsq f(i,j) = \downsq f(i,j+1).
\end{array}
$$

Notice that $f(0,0)=t^n_0=t_0$, since $\kModel{M},w_0\models P_0(a_0^0)$. Then, by Proposition~\ref{prop:fn}, $f$ is the special $T_n$\nobreakdash-tiling~$f_n$. Since $n\in\mathbb{X}$, by $(\ref{eq:fn})$ we obtain that there exists $m\in\numN$ such that $f_n(0,m)=t_1=t^n_1$. This means that $\kModel{M},w_0\models P_1(a_0^m)$, and hence, $\kModel{M},w_0\models \exists x\,P_1(x)$.

Thus, $\mathit{M}^\Box\mathit{Tiling}_n^{\mathbb{X}}\in\logic{QK}$.
\end{proof}

\Rem{
\begin{lemma}
\label{lem:2:tiling:QK-wKHC}
Let $L$ be a modal predicate logic such that\/ $\ckf L$ is a wKHC class.
Then, $n\in\mathbb{Y}$ implies\/ $\mathit{M}^\Box\mathit{Tiling}_n^{\mathbb{X}}\not\in \QMLcd \ckf L$.
\end{lemma}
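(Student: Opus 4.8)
The plan is to imitate the finite-model construction of Lemma~\ref{lem:Trakhtenbrot:lem2:sib} and then spread it over the worlds of a suitable Kripke frame, in the spirit of the two-variable trick of~\cite{KKZ05}. Since $n\in\mathbb{Y}$ and $\mathbb{X}\cap\mathbb{Y}=\varnothing$, we have $n\notin\mathbb{X}$; hence the proof of Lemma~\ref{lem:Trakhtenbrot:lem2:sib} supplies a \emph{finite} classical model $\cModel{M}_0=\langle\mathcal{D}_0,\mathcal{I}_0\rangle$ with $\cModel{M}_0\models\mathit{Tiling}'_n$ and with $P_1$ interpreted by the empty relation, so that $\cModel{M}_0\not\models\exists x\,P_1(x)$. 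The extra structural fact I will use is that in $\cModel{M}_0$ the relations $H'_n$ and $V'_n$ are graphs of two commuting total functions $h,v\colon\mathcal{D}_0\to\mathcal{D}_0$: totality is $\mathit{TC}'_1$ and $\mathit{TC}'_2$; single-valuedness holds because $\mathit{RD}$ (respectively $\mathit{UD}$) together with $\mathit{DSR}$ (respectively $\mathit{DSU}$) forces the $R$-index (respectively $U$-index) of the second argument of an $H'_n$-edge (respectively $V'_n$-edge) to be the cyclic successor of that of the first, which singles out exactly one $P$-edge of the wrapped grid of Lemma~\ref{lem:Trakhtenbrot:lem2:sib}; and $v\circ h=h\circ v$ is $\mathit{TC}'_3$ read through single-valuedness.

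Next I would build the Kripke model. Since $\ckf L$ is a wKHC class, fix $\kframe{F}_0=\langle W_0,R_0\rangle\in\ckf L$ and $w_0\in W_0$ with $|R_0(w_0)|\geqslant|\mathcal{D}_0|$, together with an injection $a\mapsto w_a$ of $\mathcal{D}_0$ into $R_0(w_0)$. Let $\kModel{M}=\langle\kframe{F}_0\odot\mathcal{D}_0,I\rangle$ be the constant-domain Kripke model in which, at \emph{every} world $w\in W_0$, the letters $P$, $P_0,\dots,P_{k_n}$, $R_1,\dots,R_4$, $U_1,\dots,U_4$ are interpreted exactly as in $\cModel{M}_0$, while $C^{I,w}=\{a\}$ if $w=w_a$ for some $a\in\mathcal{D}_0$ and $C^{I,w}=\varnothing$ otherwise. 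Then any modality-free formula built from $P,P_k,R_i,U_j$ has, at each world of $\kModel{M}$ and under each assignment into $\mathcal{D}_0$, the truth value it has in $\cModel{M}_0$; in particular $\mathit{TC}_0$, $\mathit{TC}'_1$, $\mathit{TC}'_2$, $\mathit{TC}_4$, $\mathit{DSR}$, $\mathit{DSU}$ and $\mathit{TC}'_3$ are true at every world, and $\exists x\,P_1(x)$ is false at every world.

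It remains to verify $\kModel{M},w_0\models\mathit{Tiling}^{\Box}_n$. The conjuncts $\mathit{TC}^{\Box}_0=\mathit{TC}_0$, $\mathit{TC}^{\Box}_1=\mathit{TC}'_1$, $\mathit{TC}^{\Box}_2=\mathit{TC}'_2$, $\mathit{TC}^{\Box}_4=\mathit{TC}_4$, $\Box^+\mathit{DSR}$ and $\Box^+\mathit{DSU}$ hold by the previous paragraph; $\mathit{TC}^{\Box}_5=\forall x\,\Diamond C(x)$ holds because $\Diamond C(a)$ is witnessed by $w_a$; and $\mathit{TC}^{\Box}_6$, $\mathit{TC}^{\Box}_7$, $\mathit{TC}^{\Box}_8$ hold because $H'_n$ and $V'_n$ have one and the same extension at $w_0$ and at all its $R_0$-successors. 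The load-bearing conjunct is $\mathit{TC}^{\Box}_3=\Box(\dots)$: at an arbitrary $v\in R_0(w_0)$ the set $C^{I,v}$ is empty or a singleton $\{a\}$; if it is empty, the premise $V'_n(x,y)\wedge\exists x\,(C(x)\wedge H'_n(y,x))$ fails and the implication is vacuous; if it is $\{a\}$, the premise forces $v(x)=y$ and $h(y)=a$, while the consequent $\forall y\,(H'_n(x,y)\to\forall x\,(C(x)\to V'_n(y,x)))$ amounts, after renaming the shadowed variables and using single-valuedness, to $v(h(x))=a$, which holds since $v(h(x))=h(v(x))=h(y)=a$. Thus $\kModel{M},w_0\models\mathit{Tiling}^{\Box}_n$ but $\kModel{M},w_0\not\models\exists x\,P_1(x)$, i.e.\ $\kModel{M},w_0\not\models\mathit{M}^\Box\mathit{Tiling}_n^{\mathbb{X}}$. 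Since $\kframe{F}_0\odot\mathcal{D}_0$ is a c-augmented frame with finite constant domain based on $\kframe{F}_0\in\ckf L$, it lies in $\augcd\ckf L$, whence $\mathit{M}^\Box\mathit{Tiling}_n^{\mathbb{X}}\notin\QMLcd\ckf L$.

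The main obstacle is precisely this verification of $\mathit{TC}^{\Box}_3$, which hinges on the single-valuedness of $H'_n$ and $V'_n$ in the wrapped grid of Lemma~\ref{lem:Trakhtenbrot:lem2:sib}, the place where the $R_i$-colourings and $U_j$-colourings introduced there pay off. Once that is settled the auxiliary conjuncts $\mathit{TC}^{\Box}_5$ through $\mathit{TC}^{\Box}_8$ come essentially for free, since the non-$C$ part of $\kModel{M}$ is literally constant across worlds; this is also exactly what makes Lemma~\ref{lem:1:tiling:QK} and the present lemma combine into the intended recursive-inseparability statement.
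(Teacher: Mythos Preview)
Your proof is correct and matches the paper's construction: take the finite wrapped-grid model of Lemma~\ref{lem:Trakhtenbrot:lem2:sib}, spread it as a constant-domain model over a frame in $\ckf L$ with $|R(w_0)|\geqslant|\mathcal{D}_0|$, and let only $C$ vary across worlds, with $C^{I,w_a}=\{a\}$. The paper leaves the verification as ``routine''; your explicit check of $\mathit{TC}^\Box_3$ via single-valuedness of $H'_n,V'_n$ and commutativity $h\circ v=v\circ h$ is exactly the point that makes the two-variable confluence condition go through, and is the right way to see it.
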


\begin{proof}
Let $n\in\mathbb{Y}$.
Then, by Lemma~\ref{lem:Trakhtenbrot:lem2:sib}, $\mathit{MTiling}_n^{\mathbb{X}}\not\in\logic{QCl}_{\mathit{fin}}\uplus\bm{sib}$. Hence,
there exists a finite classical model $\cModel{M}=\otuple{\mathcal{D},\mathcal{I}}$ such that $\cModel{M}\models\bm{sib}$ and $\cModel{M}\not\models\mathit{MTiling}_n^{\mathbb{X}}$; we may assume that $\cModel{M}$ is the model defined in the proof of Lemma~\ref{lem:Trakhtenbrot:lem2:sib}, in particular,
$$
\begin{array}{lcl}
\mathcal{D} & = & \{0,\ldots,r+4\}\times\{0,\ldots,r+4\}, \\
\end{array}
$$
for a suitable~$r\in\numN$.

Let $\kframe{F}=\otuple{W,R}$ be a Kripke frame and $w_0$ a world such that $|R(w_0)\setminus\{w_0\}|\geqslant |\mathcal{D}|$; such a frame exists, since $\ckf L$ is a wKHC class. Then $R(w_0)\setminus\{w_0\}$ contains a subset $W'=\{w_{a} : a\in\mathcal{D}\}$, where $w_{a}=w_{b}$ if, and only if, $a=b$.
Let $\kModel{M}=\otuple{\kframe{F}\odot\mathcal{D},I}$ be a Kripke model such that, for every $w\in W$ and every predicate letter~$E$,
$$
\begin{array}{lcl}
I(w,E) 
  & = 
  & \left\{ 
    \begin{array}{rl}
      \mathcal{I}(E) & \mbox{if $E\ne C$;} \\
      \{\otuple{a}\} & \mbox{if $E=C$ and $w=w_a$.} \\
    \end{array}
    \right.
\end{array}
$$
Then it is not hard to show by routine verification that $\kModel{M},w_0\not\models \mathit{M}^\Box\mathit{Tiling}_n^{\mathbb{X}}$.
\end{proof}

\begin{lemma}
\label{lem:2:tiling:QK-S}
Let $L$ be a modal predicate logic such that\/ $\ckf L$ is a Skvortsov class.
Then, $n\in\mathbb{Y}$ implies\/ $\mathit{M}^\Box\mathit{Tiling}_n^{\mathbb{X}}\not\in \QMLcw \ckf L$.
\end{lemma}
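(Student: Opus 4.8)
The plan is to reuse, essentially verbatim, the argument for the wKHC version of this lemma (the one concluding $\mathit{M}^\Box\mathit{Tiling}_n^{\mathbb{X}}\notin\QMLcd\ckf L$), the sole difference being that the ambient Kripke frame must now be chosen \emph{finite}; this is exactly what a Skvortsov class buys us, since then $\fin\ckf L$ satisfies wKHC. So assume $n\in\mathbb{Y}$. By Lemma~\ref{lem:Trakhtenbrot:lem2:sib} (and Corollary~\ref{cor:lem:Trakhtenbrot:lem2:sib}) there is a finite classical model $\cModel{M}=\otuple{\mathcal{D},\mathcal{I}}$ with $\cModel{M}\models\bm{sib}$, $\cModel{M}\models\mathit{Tiling}'_n$ and $\cModel{M}\not\models\exists x\,P_1(x)$; I would take $\cModel{M}$ to be precisely the grid model constructed in the proof of Lemma~\ref{lem:Trakhtenbrot:lem2:sib}, so that on $\mathcal{D}$ the relations $H'_n$ and $V'_n$ are the ``horizontal'' and ``vertical'' successor functions.

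Next, since $\fin\ckf L$ satisfies wKHC, I would fix a finite Kripke frame $\kframe{F}=\otuple{W,R}\in\ckf L$ and a world $w_0$ with $|R(w_0)\setminus\{w_0\}|\geqslant|\mathcal{D}|$, choose an injection $a\mapsto w_a$ of $\mathcal{D}$ into $R(w_0)\setminus\{w_0\}$, and build the constant-domain Kripke model $\kModel{M}=\otuple{\kframe{F}\odot\mathcal{D},I}$ in which every predicate letter other than $C$ is interpreted at every world as in $\cModel{M}$, while $C$ is true of $a$ exactly at $w_a$ (this is consistent because $\cModel{M}\models\bm{sib}$). Observe that $\kframe{F}\odot\mathcal{D}$ is a c-augmented frame based on a finite Kripke frame of $\ckf L$, hence lies in the class of frames defining $\QMLcw \ckf L$; thus it suffices to show $\kModel{M},w_0\not\models\mathit{M}^\Box\mathit{Tiling}_n^{\mathbb{X}}$.

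It then remains to verify $\kModel{M},w_0\models\mathit{Tiling}^\Box_n$ and $\kModel{M},w_0\not\models\exists x\,P_1(x)$. The second is immediate from constancy of $I$ on $P_1$ and $\cModel{M}\not\models\exists x\,P_1(x)$. For the first I would go through the conjuncts: $\mathit{TC}^\Box_0$, $\mathit{TC}^\Box_1$, $\mathit{TC}^\Box_2$, $\mathit{TC}^\Box_4$, $\Box^+\mathit{DSR}$ and $\Box^+\mathit{DSU}$ hold at $w_0$ (and at every $R$-successor) because the letters occurring in them are interpreted constantly and the corresponding first-order sentences are conjuncts of $\cModel{M}\models\mathit{Tiling}'_n$; $\mathit{TC}^\Box_5=\forall x\,\Diamond C(x)$ holds at $w_0$ since each $a$ has the $C$-world $w_a$ among the $R$-successors of $w_0$; and $\mathit{TC}^\Box_6$, $\mathit{TC}^\Box_7$, $\mathit{TC}^\Box_8$ hold because $H'_n(x,y)$ and $V'_n(x,y)$ are quantifier-free over constantly-interpreted letters, so the truth value of $H'_n(a,b)$ and of $V'_n(a,b)$ does not depend on the world (for $\mathit{TC}^\Box_8$ one also uses $R(w_0)\neq\varnothing$).

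The only step that is not bookkeeping, and hence the main obstacle, is $\mathit{TC}^\Box_3$. To check it I would fix $w\in R(w_0)$ and split cases. If $C^{I,w}=\varnothing$, the subformula $\exists x\,(C(x)\wedge H'_n(y,x))$ is false at $w$, so the antecedent fails and the implication holds vacuously (this covers $w=w_0$ itself as well). If $w=w_e$ for some $e\in\mathcal{D}$, then the only individual satisfying $C$ at $w$ is $e$, and, since all the $H'_n,V'_n$-atoms are world-independent, the body of $\mathit{TC}^\Box_3$ reduces at $w$ to the first-order assertion that in $\cModel{M}$, $V'_n(a,b)\wedge H'_n(b,e)\wedge H'_n(a,d)$ implies $V'_n(d,e)$; this I would derive from $\cModel{M}\models\mathit{TC}'_3$ (from $V'_n(a,b)\wedge H'_n(b,e)$ one gets a $d'$ with $H'_n(a,d')\wedge V'_n(d',e)$) together with functionality of $H'_n$ in the grid model $\cModel{M}$ (which gives $d=d'$). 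Care is needed with the reuse of the bound variables $x,y$ inside $\mathit{TC}^\Box_3$ and with justifying functionality of $H'_n$ (and $V'_n$) in $\cModel{M}$ — it comes from $\mathit{TC}'_1$ together with the directed-shift conditions $\mathit{DSR}$, $\mathit{DSU}$ present in $\mathit{Tiling}'_n$, which single out a unique horizontal successor. Having established $\kModel{M},w_0\not\models\mathit{M}^\Box\mathit{Tiling}_n^{\mathbb{X}}$ on a c-augmented frame based on a finite Kripke frame of $\ckf L$, we conclude $\mathit{M}^\Box\mathit{Tiling}_n^{\mathbb{X}}\notin\QMLcw \ckf L$.
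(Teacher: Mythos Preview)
Your proposal is correct and follows exactly the paper's approach: the paper's proof is literally ``Similarly to the proof of Lemma~\ref{lem:2:tiling:QK-wKHC} with the only difference that the corresponding Kripke frame should be finite; such a frame exists, since $\ckf L$ is a Skvortsov class,'' and the wKHC lemma in turn builds the constant-domain model $\kframe{F}\odot\mathcal{D}$ with all letters other than $C$ interpreted constantly as in $\cModel{M}$ and $C$ true of $a$ only at $w_a$, leaving the check $\kModel{M},w_0\not\models\mathit{M}^\Box\mathit{Tiling}_n^{\mathbb{X}}$ as ``routine verification.'' One minor remark: your justification of functionality of $H'_n$ as coming from $\mathit{TC}'_1\wedge\mathit{DSR}$ is not quite right (those give existence and a constraint on $R$-labels, not uniqueness); functionality really comes from the concrete grid structure of the model from Lemma~\ref{lem:Trakhtenbrot:lem2:sib}, which you have already fixed, so the argument is fine.
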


\begin{proof}
Similarly to the proof of Lemma~\ref{lem:2:tiling:QK-wKHC} with the only difference that the corresponding Kripke frame should be finite; such a frame exists, since $\ckf L$ is a Skvortsov class.
A~Skvortsov class is also a wKHC class.
\end{proof}
}


The next aim is to eliminate almost all unary predicate letters by simulating them with formulas containing the binary letter~$P$ and two individual variables. Let us make a relativization. Let $G$ be a new unary predicate letter and, as before,
$$
\begin{array}{lclclcl}
\forall_Gx\,\varphi & = & \forall x\,(G(x)\to\varphi); 
\smallskip\\
\exists_Gx\,\varphi & = & \exists x\,(G(x)\hfill\wedge\hfill\varphi). 
\end{array}
$$
Also, for an $\lang{ML}$-formula~$\varphi$, denote by $\varphi_G$ the formula obtained from~$\varphi$ by replacing each quantifier $\forall x$ or $\exists x$ with $\forall_G x$ or $\exists_G x$, respectively. 

Define
$$
\begin{array}{lclclcl}
\mathit{M}^\Box_G\mathit{Tiling}_n^{\mathbb{X}} 
  & = 
  & \exists x\,G(x)\wedge \forall x\,(G(x)\to \Box G(x)) \to (\mathit{M}^\Box\mathit{Tiling}_n^{\mathbb{X}})_G. 
\end{array}
$$

\begin{lemma}
\label{lem:relativization:G:K}
If $n\in\mathbb{X}$, then $\mathit{M}^\Box_G\mathit{Tiling}_n^{\mathbb{X}} \in \logic{QK}$. 
\end{lemma}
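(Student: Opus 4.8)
The plan is to reduce this to Lemma~\ref{lem:1:tiling:QK} by the relativization argument already used for Lemma~\ref{lem:relativization:G}, the only novelty being that the extra conjunct $\forall x\,(G(x)\to\Box G(x))$ in the antecedent is precisely what makes relativization to $G$ commute with~$\Box$. Since $\logic{QK}$ is Kripke complete with respect to the class of all augmented frames with expanding domains, it suffices to prove that $\mathit{M}^\Box_G\mathit{Tiling}_n^{\mathbb{X}}$ is true at every world of every such model. So I would fix a Kripke model $\kModel{M}=\langle W,R,D,I\rangle$ and a world $w_0\in W$ with $\kModel{M},w_0\models\exists x\,G(x)\wedge\forall x\,(G(x)\to\Box G(x))$, and assume in addition $\kModel{M},w_0\models(\mathit{Tiling}^\Box_n)_G$ (otherwise $\mathit{M}^\Box_G\mathit{Tiling}_n^{\mathbb{X}}$ holds at $w_0$ vacuously); the goal is then to find $a$ with $\kModel{M},w_0\models G(a)\wedge P_1(a)$.

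The first step is to manufacture a Kripke model to which Lemma~\ref{lem:1:tiling:QK} applies. Note that every modality in $\mathit{Tiling}^\Box_n$ occurs directly in front of a modality-free subformula, so $\mathit{M}^\Box\mathit{Tiling}_n^{\mathbb{X}}$ has modal depth~$1$; hence its truth at $w_0$ depends only on $w_0$, on the worlds of $R(w_0)$, and on the interpretation at those worlds, and all subformulas that are ever evaluated at a world of $R(w_0)$ are modality-free. Accordingly, let $\kframe{F}^*=\langle\{w_0\}\cup R(w_0),\,R^*\rangle$ with $R^*=\{\langle w_0,v\rangle:v\in R(w_0)\}$, put $D^*_w=\{a\in D_w:\kModel{M},w\models G(a)\}$, and let $I^*(w,E)$ be the restriction of $I(w,E)$ to $D^*_w$. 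Using the hypothesis, $\kModel{M}^*=\langle\kframe{F}^*,D^*,I^*\rangle$ is a legitimate Kripke model with expanding domains: $D^*_{w_0}\neq\varnothing$ by $\exists x\,G(x)$, and if $a\in D^*_{w_0}$ and $v\in R(w_0)$ then $\kModel{M},w_0\models G(a)\to\Box G(a)$ gives $\kModel{M},v\models G(a)$ while $a\in D_{w_0}\subseteq D_v$ by $(\mathit{ED})$ in~$\kModel{M}$, so $a\in D^*_v$; this yields both non-emptiness of the local domains and $(\mathit{ED})$ for~$\kModel{M}^*$.

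The second step is the relativization claim: for every subformula $\psi$ of $\mathit{M}^\Box\mathit{Tiling}_n^{\mathbb{X}}$, every world $w$ of $\kModel{M}^*$ at which $\psi$ is (in the sense above) evaluated, and every assignment $g$ with values in~$D^*_w$,
\[
\kModel{M}^*,w\models^g\psi\quad\iff\quad\kModel{M},w\models^g\psi_G .
\]
This is proved by induction on~$\psi$. The atomic and Boolean cases are immediate from the definitions of $I^*$ and of $\psi\mapsto\psi_G$. For $\forall x\,\chi$ or $\exists x\,\chi$ at a world $w$, the assignments into $D^*_w$ are exactly the assignments $h$ into $D_w$ with $\kModel{M},w\models G(h(x))$, which is precisely the range of the relativized quantifier $\forall_G x$ or $\exists_G x$ at~$w$; together with the induction hypothesis this gives the equivalence. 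For $\Box\chi$ (which occurs only at $w_0$, with $\chi$ modality-free), we have $R^*(w_0)=R(w_0)$ and every $g$ into $D^*_{w_0}$ is also into $D^*_v$ for each $v\in R(w_0)$, so the claim for $\chi$ at the worlds of $R(w_0)$ gives the claim for $\Box\chi$ at~$w_0$; $\Diamond\chi$ is dual. (Alternatively, one may skip the explicit model and simply re-run the proof of Lemma~\ref{lem:1:tiling:QK} verbatim, reading every quantifier as its $G$-relativization and every use of $\exists x\,G(x)$, $\forall x\,(G(x)\to\Box G(x))$ where needed; the construction of $\kModel{M}^*$ is the more transparent packaging.)

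Finally, applying the claim to the sentence $\mathit{Tiling}^\Box_n$ at $w_0$ yields $\kModel{M}^*,w_0\models\mathit{Tiling}^\Box_n$. By Lemma~\ref{lem:1:tiling:QK} (using $n\in\mathbb{X}$) the formula $\mathit{M}^\Box\mathit{Tiling}_n^{\mathbb{X}}$ lies in $\logic{QK}$, hence it is true at $w_0$ in~$\kModel{M}^*$; combining this with the previous sentence forces $\kModel{M}^*,w_0\models\exists x\,P_1(x)$. A last application of the claim, to $\exists x\,P_1(x)$ at $w_0$, gives $\kModel{M},w_0\models\exists_G x\,P_1(x)$, i.e.\ some $a$ with $\kModel{M},w_0\models G(a)\wedge P_1(a)$, which is what we wanted. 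The step I expect to require the most care is the construction of $\kModel{M}^*$ and the verification that it is a genuine expanding-domain Kripke model: this is exactly where the hypothesis $\forall x\,(G(x)\to\Box G(x))$ is indispensable, whereas the relativization induction afterwards is routine bookkeeping.
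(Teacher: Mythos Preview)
Your proof is correct and follows essentially the same approach as the paper: both restrict the model to $\{w_0\}\cup R(w_0)$ with domains cut down to the $G$-elements, then invoke Lemma~\ref{lem:1:tiling:QK}. The paper frames it contrapositively and leaves the relativization induction to the reader, whereas you argue directly and spell out the verification that $\kModel{M}^*$ has expanding domains (correctly isolating $\forall x\,(G(x)\to\Box G(x))$ as the ingredient that makes this work); your restriction $R^*=\{\langle w_0,v\rangle:v\in R(w_0)\}$ differs slightly from the paper's $R'=R\upharpoonright W'$, but since the formula has modal depth~$1$ this is immaterial.
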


\begin{proof}
The statement should be clear; we just give a sketch of the proof.

Let $n\in\mathbb{X}$. Suppose that $\mathit{M}^\Box_G\mathit{Tiling}_n^{\mathbb{X}} \not\in \logic{QK}$. Consequently, there exist a model $\kmodel{M}=\otuple{W,R,D,I}$ and a world $w\in W$ such that $\kmodel{M},w\not\models \mathit{M}^\Box_G\mathit{Tiling}_n^{\mathbb{X}}$. Consider the model $\kmodel{M}'=\otuple{W',R',D',I'}$ defined by
$$
\begin{array}{rcll}
W' & = & \{w\}\cup R(w); \\
R' & = & R \upharpoonright W'; \\
D'_v & = & \{a\in D_v : \kmodel{M},v\models G(a)\}, & \mbox{where $v\in W'$;} \\
I'(v,E) & = & I(v,E) \upharpoonright D'_v, & \mbox{where $v\in W'$ and $E$ is a predicate letter.}
\end{array}
$$
Then it is not hard to check that $\kmodel{M}',w\not\models \mathit{M}^\Box\mathit{Tiling}_n^{\mathbb{X}}$; we leave the details to the reader. This contradicts Lemma~\ref{lem:1:tiling:QK}, and therefore, $\mathit{M}^\Box_G\mathit{Tiling}_n^{\mathbb{X}} \in \logic{QK}$.
\end{proof}

\Rem{
\begin{lemma}
\label{lem:relativization:G:wKHC}
Let $L$ be a modal predicate logic such that\/ $\ckf L$ is a wKHC class. Then, $n\in\mathbb{Y}$ implies
$\mathit{M}^\Box_G\mathit{Tiling}_n^{\mathbb{X}} \not\in \QMLcd \ckf L$. 
\end{lemma}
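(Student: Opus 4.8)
The plan is, for $n\in\mathbb{Y}$, to build an explicit Kripke model based on a c\nobreakdash-augmented frame $\kframe{F}\odot\mathcal{D}$ with $\kframe{F}\in\ckf L$ and $\mathcal{D}$ finite which refutes $\mathit{M}^\Box_G\mathit{Tiling}_n^{\mathbb{X}}$ at a suitable world; this exhibits $\mathit{M}^\Box_G\mathit{Tiling}_n^{\mathbb{X}}\notin\QMLcd\ckf L$. The model is obtained by gluing the finite classical model supplied by the proof of Lemma~\ref{lem:Trakhtenbrot:lem2:sib} to one fresh world per individual (to interpret the letter $C$), in the spirit of~\cite{KKZ05}, and then making the relativizing letter $G$ true everywhere so that, exactly as in the proof of Lemma~\ref{lem:relativization:G:K}, the relativized formula $(\mathit{M}^\Box\mathit{Tiling}_n^{\mathbb{X}})_G$ behaves like $\mathit{M}^\Box\mathit{Tiling}_n^{\mathbb{X}}$.

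First I would recall that, since $n\in\mathbb{Y}$, the proof of Lemma~\ref{lem:Trakhtenbrot:lem2:sib} produces a finite classical model $\cModel{M}=\otuple{\mathcal{D},\mathcal{I}}$ with $\mathcal{D}=\{0,\ldots,r+4\}\times\{0,\ldots,r+4\}$ such that $\cModel{M}\models\bm{sib}$, $\cModel{M}\models\mathit{Tiling}'_n$ and $\cModel{M}\not\models\exists x\,P_1(x)$; moreover, by Remark~\ref{rem:lem:Trakhtenbrot:lem2:sib} every $P$\nobreakdash-cycle in $\cModel{M}$ is even, and by inspection of that model the relations $H'_n$ and $V'_n$ are total functions on $\mathcal{D}$ (the $\mathit{RD}$\nobreakdash-clause forces the first coordinate to advance by one modulo~$4$, so among the at most four $P$\nobreakdash-neighbours of a point there is exactly one candidate, which $\mathit{TC}'_1$ then realizes; similarly for $V'_n$). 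Using wKHC of $\ckf L$, I would choose $\kframe{F}=\otuple{W,R}\in\ckf L$ and $w_0\in W$ with $|R(w_0)\setminus\{w_0\}|\geqslant|\mathcal{D}|$, fix an injection $a\mapsto w_a$ of $\mathcal{D}$ into $R(w_0)\setminus\{w_0\}$, and define $\kModel{M}=\otuple{\kframe{F}\odot\mathcal{D},I}$ by $I(w,E)=\mathcal{I}(E)$ for every world $w$ and every predicate letter $E\notin\{C,G\}$, $I(w,G)=\{\otuple{a}:a\in\mathcal{D}\}$ for every $w$, and $I(w,C)=\{\otuple{a}\}$ if $w=w_a$ for some $a\in\mathcal{D}$, and $I(w,C)=\varnothing$ otherwise. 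Since $\mathcal{D}$ is finite and $\kframe{F}\in\ckf L$, the augmented frame $\kframe{F}\odot\mathcal{D}$ lies in $\aug{c}{dfin}{\ckf L}$.

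Next I would verify $\kModel{M},w_0\not\models\mathit{M}^\Box_G\mathit{Tiling}_n^{\mathbb{X}}$. As $G$ is true of every individual at every world and the domain is constant, a routine induction on formulas gives $\kModel{M},v\models\psi_G\iff\kModel{M},v\models\psi$ for every world $v$ and every $\lang{ML}$\nobreakdash-formula $\psi$; in particular the premise $\exists x\,G(x)\wedge\forall x\,(G(x)\to\Box G(x))$ holds at $w_0$, so it suffices to prove $\kModel{M},w_0\models\mathit{Tiling}^\Box_n$ and $\kModel{M},w_0\not\models\exists x\,P_1(x)$. The latter holds since $I(w_0,P_1)=\mathcal{I}(P_1)$ and $\cModel{M}\not\models\exists x\,P_1(x)$. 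For $\mathit{Tiling}^\Box_n$: the modality\nobreakdash-free conjuncts $\mathit{TC}^{\Box}_0=\mathit{TC}_0$, $\mathit{TC}^{\Box}_1=\mathit{TC}'_1$, $\mathit{TC}^{\Box}_2=\mathit{TC}'_2$, $\mathit{TC}^{\Box}_4=\mathit{TC}_4$ hold at $w_0$ because $P_i,R_i,U_j$ are interpreted at $w_0$ exactly as in $\cModel{M}\models\mathit{Tiling}'_n$; since those interpretations are the same at every world, $\Box^+\mathit{DSR}$, $\Box^+\mathit{DSU}$, $\mathit{TC}^{\Box}_6$ and $\mathit{TC}^{\Box}_7$ (``$\varphi\to\Box\varphi$'' for $V'_n$, $H'_n$) and $\mathit{TC}^{\Box}_8$ (``$\Diamond\varphi\to\varphi$'' for $V'_n$) reduce to truth of the corresponding classical formulas in $\cModel{M}$ and hence hold; and $\mathit{TC}^{\Box}_5=\forall x\,\Diamond C(x)$ holds because for each $a\in\mathcal{D}$ the world $w_a\in R(w_0)$ satisfies $C(a)$.

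The crux is $\mathit{TC}^{\Box}_3$, which I would check at every $v\in R(w_0)$. If $v$ is not one of the $w_a$ (in particular if $v=w_0$), then $C$ is empty at $v$, the antecedent conjunct $\exists x\,(C(x)\wedge H'_n(y,x))$ fails, and the implication is vacuous. If $v=w_c$, then any value satisfying $C$ at $v$ must be $c$, so, reading the remaining atoms through the constant interpretation, the clause reduces to the purely classical implication: if $\cModel{M}\models V'_n(a,b)\wedge H'_n(b,c)\wedge H'_n(a,d)$ then $\cModel{M}\models V'_n(d,c)$. This follows from $\cModel{M}\models\mathit{TC}'_3$ together with functionality of $H'_n$: the right\nobreakdash-to\nobreakdash-left direction of $\mathit{TC}'_3$ applied with the values $a,c$ and witness $z:=b$ yields some $z_0$ with $\cModel{M}\models H'_n(a,z_0)\wedge V'_n(z_0,c)$, and $z_0=d$ since $H'_n$ is a function. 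Hence $\kModel{M},w_0\models\mathit{Tiling}^\Box_n$, so $\kModel{M}$ refutes $\mathit{M}^\Box_G\mathit{Tiling}_n^{\mathbb{X}}$ and therefore $\mathit{M}^\Box_G\mathit{Tiling}_n^{\mathbb{X}}\notin\QMLcd\ckf L$. I expect the main obstacle to be precisely the bookkeeping for $\mathit{TC}^{\Box}_3$: keeping track of the single marked world $w_c$, seeing that it isolates the ``diagonal'' corner of the grid square at $c$, and pinning down the functionality (and resulting commutation) of $H'_n$ and $V'_n$ in the model of Lemma~\ref{lem:Trakhtenbrot:lem2:sib} precisely enough to use it.
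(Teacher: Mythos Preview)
Your proposal is correct and follows essentially the same approach as the paper: the paper's proof (which is terse, amounting to ``expand the model from Lemma~\ref{lem:2:tiling:QK-wKHC} by making $G$ true everywhere'') builds precisely the same Kripke model $\kframe{F}\odot\mathcal{D}$ with worldwise-constant interpretations inherited from $\cModel{M}$ and with $C$ singling out one individual per designated successor world, then appeals to ``routine verification'' where you have spelled out the details for $\mathit{TC}^\Box_3$ via functionality of $H'_n$ in the model of Lemma~\ref{lem:Trakhtenbrot:lem2:sib}.
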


\begin{proof}
Follows from Lemma~\ref{lem:2:tiling:QK-wKHC}. In particular, it is sufficient to expand model $\kmodel{M}=\otuple{\kframe{F}\odot\mathcal{D},I}$ defined in the proof of Lemma~\ref{lem:2:tiling:QK-wKHC} with the condition that $\kmodel{M},w\models G(a)$, for every its world $w$ and every $a\in\mathcal{D}$; then it should be clear that $\mathit{M}^\Box_G\mathit{Tiling}_n^{\mathbb{X}}$ is refuted in the resulting model at~$w_0$.
\end{proof}

\begin{lemma}
\label{lem:relativization:G:S}
Let $L$ be a modal predicate logic such that\/ $\ckf L$ is a Skvortsov class. Then, $n\in\mathbb{Y}$ implies
$\mathit{M}^\Box_G\mathit{Tiling}_n^{\mathbb{X}} \not\in \QMLcw \ckf L$. 
\end{lemma}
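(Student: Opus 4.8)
The plan is to derive this exactly as Lemma~\ref{lem:relativization:G:wKHC} is derived from Lemma~\ref{lem:2:tiling:QK-wKHC}: from the unrelativized statement for $\mathit{M}^\Box\mathit{Tiling}_n^{\mathbb{X}}$ (Lemma~\ref{lem:2:tiling:QK-S}) together with the observation that switching $G$ on everywhere makes the guard $\exists x\,G(x)\wedge\forall x\,(G(x)\to\Box G(x))$ true and collapses the relativization $(\cdot)_G$ to the identity. To keep the argument self-contained I would re-run the construction directly. Fix $n\in\mathbb{Y}$. Since $n\notin\mathbb{X}$, the finite classical model $\cModel{M}=\otuple{\mathcal{D},\mathcal{I}}$ built in the proof of Lemma~\ref{lem:Trakhtenbrot:lem2:sib} satisfies $\cModel{M}\models\bm{sib}$, $\cModel{M}\models\mathit{Tiling}'_n$ and $\cModel{M}\not\models\exists x\,P_1(x)$; recall also that in $\cModel{M}$ the relations $H'_n$ and $V'_n$ are partial functions and every $P$-cycle has even length.

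Because $\ckf L$ is a Skvortsov class, $\fin\ckf L$ satisfies wKHC, so there is a \emph{finite} Kripke frame $\kframe{F}=\otuple{W,R}\in\ckf L$ with a world $w_0$ and an injection $a\mapsto w_a$ of $\mathcal{D}$ into $R(w_0)\setminus\{w_0\}$; this is the only place where finiteness of the frame, rather than mere wKHC, is used. On $\kframe{F}\odot\mathcal{D}$ I would take the model $\kModel{M}=\otuple{\kframe{F}\odot\mathcal{D},I}$ with $I(w,E)=\mathcal{I}(E)$ for every world $w$ and every letter $E\notin\{C,G\}$, with $I(w_a,C)=\{\otuple{a}\}$ and $I(w,C)=\varnothing$ otherwise, and with $I(w,G)=\mathcal{D}$ for every $w$. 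Since the tiling letters $P,P_i,R_i,U_j$ are interpreted constantly along $R$, the conjuncts $\mathit{TC}^\Box_0,\mathit{TC}^\Box_1,\mathit{TC}^\Box_2,\mathit{TC}^\Box_4$, $\Box^+\mathit{DSR}$, $\Box^+\mathit{DSU}$ and $\mathit{TC}^\Box_6,\mathit{TC}^\Box_7,\mathit{TC}^\Box_8$ hold at $w_0$ simply because the matching classical conditions hold in $\cModel{M}$; $\mathit{TC}^\Box_5$ holds since each $a$ is seen with $C$ at $w_a$; and $\mathit{TC}^\Box_3$ holds because, read at $w_b$, the conjunct $\exists x\,(C(x)\wedge H'_n(y,x))$ forces $x=b$, so the two-variable conclusion reduces to an instance of the three-variable $\mathit{TC}'_3$ validated by $\cModel{M}$. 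Hence $\kModel{M},w_0\models\mathit{Tiling}^\Box_n$ while $\kModel{M},w_0\not\models\exists x\,P_1(x)$, i.e.\ $\kModel{M},w_0\not\models\mathit{M}^\Box\mathit{Tiling}_n^{\mathbb{X}}$. Since $G$ is globally true, the guard of $\mathit{M}^\Box_G\mathit{Tiling}_n^{\mathbb{X}}$ holds at $w_0$ and a trivial induction on subformulas identifies the truth of $\psi$ with that of $\psi_G$ at every world, so $\kModel{M},w_0\not\models\mathit{M}^\Box_G\mathit{Tiling}_n^{\mathbb{X}}$; as $\kframe{F}$ is finite, $\kframe{F}\odot\mathcal{D}\in\aug{c}{wfin}{\ckf L}$, and therefore $\mathit{M}^\Box_G\mathit{Tiling}_n^{\mathbb{X}}\notin\QMLcw\ckf L$.

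The only step that is not bookkeeping is the verification that $\mathit{TC}^\Box_3$, backed by the auxiliary conditions $\mathit{TC}^\Box_5$–$\mathit{TC}^\Box_8$, really forces the composition square of the tiling with only two individual variables: this is the soundness counterpart of Sublemma~\ref{sublem:kkz:modal} (used in the proof of Lemma~\ref{lem:1:tiling:QK}), and it hinges precisely on $H'_n,V'_n$ being functions in $\cModel{M}$ and on the worlds $w_a$ being pairwise distinct — the device imported from~\cite{KKZ05}. By contrast, the $G$-relativization contributes no difficulty: with constant domains and $G$ globally true there is nothing to reconcile with the Barcan formula, so the passage from $\mathit{M}^\Box\mathit{Tiling}_n^{\mathbb{X}}$ to $\mathit{M}^\Box_G\mathit{Tiling}_n^{\mathbb{X}}$ is handled just as in Lemma~\ref{lem:relativization:G:K}.
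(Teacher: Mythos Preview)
Your proposal is correct and follows essentially the same route as the paper: the paper's proof is the one-line ``Follows similarly from Lemma~\ref{lem:2:tiling:QK-S}'', which in turn refers back to the construction of Lemma~\ref{lem:2:tiling:QK-wKHC} with a finite frame and $G$ made globally true---precisely what you have spelled out. Your aside that the verification of $\mathit{TC}^\Box_3$ ``hinges precisely on $H'_n,V'_n$ being functions'' is not quite accurate (the reduction to $\mathit{TC}'_3$ at $w_b$ goes through regardless, since $\cModel{M}\models\mathit{TC}'_3$), but this does not affect the argument.
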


\begin{proof}
Follows similarly from Lemma~\ref{lem:2:tiling:QK-S}.
\end{proof}
}

Let us define a function~$S_3$, which is similar to~$S_1$. Define $S_3\mathit{M}^\Box_G\mathit{Tiling}_n^{\mathbb{X}}$ as the formula obtained from $\mathit{M}^\Box_G\mathit{Tiling}_n^{\mathbb{X}}$ by replacing 
\begin{itemize}
\item
each occurrence of $P_m(x)$ and $P_m(y)$ with $\mathit{tile}'_m(x)$ and $\mathit{tile}'_m(y)$, respectively, where $m\in\{0,\ldots,k_n\}$;
\item 
then each occurrence of $R_i(x)$ and $R_i(y)$ with $\mathit{tile}'_{k_n+i}(x)$ and $\mathit{tile}'_{k_n+i}(y)$, respectively, where $1\leqslant i\leqslant 4$;
\item
and then each occurrence of $U_j(x)$ and $U_j(y)$ with $\mathit{tile}'_{k_n+j+4}(x)$ and $\mathit{tile}'_{k_n+j+4}(y)$, respectively, where $1\leqslant j\leqslant 4$. 
\end{itemize}
Notice that these replacements are formula substitutions.\footnote{See footnote~\ref{footnote:1}.}

Observe that, for every $n\in\numN$, formula $S_3\mathit{M}^\Box_G\mathit{Tiling}_n^{\mathbb{X}}$ contains two individual variables and does not contain predicate letters except $P$, $C$, and~$G$.

\begin{lemma}
\label{lem:relativization:G:K:S3}
If $n\in\mathbb{X}$, then $S_3\mathit{M}^\Box_G\mathit{Tiling}_n^{\mathbb{X}} \in \logic{QK}$. 
\end{lemma}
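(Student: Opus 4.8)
The plan is to obtain this lemma from Lemma~\ref{lem:relativization:G:K} in exactly the way Lemma~\ref{lem:Trakhtenbrot:lem1:sib:binP} was obtained from Lemma~\ref{lem:Trakhtenbrot:lem1:sib}. First I would recall that the hypothesis $n\in\mathbb{X}$ already gives $\mathit{M}^\Box_G\mathit{Tiling}_n^{\mathbb{X}}\in\logic{QK}$ by Lemma~\ref{lem:relativization:G:K}. It then remains to observe that $S_3$ acts on this formula as a finite composition of formula substitutions: it uniformly replaces each atom $P_m(x),P_m(y)$ by $\mathit{tile}'_m(x),\mathit{tile}'_m(y)$, each $R_i(x),R_i(y)$ by $\mathit{tile}'_{k_n+i}(x),\mathit{tile}'_{k_n+i}(y)$, and each $U_j(x),U_j(y)$ by $\mathit{tile}'_{k_n+j+4}(x),\mathit{tile}'_{k_n+j+4}(y)$, leaving $P$, $C$, and $G$ untouched. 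Since $\logic{QK}$ is a modal predicate logic and hence closed under Substitution, applying $S_3$ to a theorem of $\logic{QK}$ again yields a theorem of $\logic{QK}$, which is precisely the assertion $S_3\mathit{M}^\Box_G\mathit{Tiling}_n^{\mathbb{X}}\in\logic{QK}$. By Remark~\ref{rem:tile:x&y:2} the substituted formulas use only the variables $x,y$ and only the letters $P,G$, so the composition is well defined and introduces nothing outside the intended two-variable, $\{P,C,G\}$-language.

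The only point that is not completely mechanical is that the replacements performed by $S_3$ are \emph{indirect} substitutions in the sense of footnote~\ref{footnote:1}: since the bound variables in $\mathit{tile}'_m$ alternate between $x$ and $y$, the formulas $\mathit{tile}'_m(x)$ and $\mathit{tile}'_m(y)$ are congruent to, rather than literal instances of a single schema applied to, the atoms they replace. I would handle this either by invoking the standard fact that closure under Substitution is insensitive to renaming of bound variables in the substituted formula, or, more carefully, by factoring $S_3$ through a genuine schematic substitution of fresh unary predicate letters for $P_m,R_i,U_j$ followed by substitution of those letters by formulas in one free variable, and then checking that the resulting $\lang{ML}$-formula is congruent to $S_3\mathit{M}^\Box_G\mathit{Tiling}_n^{\mathbb{X}}$ and that congruent formulas lie in $\logic{QK}$ simultaneously.

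I do not expect any real obstacle: the substantive content of this lemma has been fully absorbed into Lemma~\ref{lem:relativization:G:K}, and what is left is the same ``substitution-closed'' bookkeeping already used for Lemmas~\ref{lem:Trakhtenbrot:lem1:binP} and~\ref{lem:Trakhtenbrot:lem1:sib:binP}. Accordingly the proof can be stated in one line: it follows from Lemma~\ref{lem:relativization:G:K}, taking into account that $\logic{QK}$ is closed under Substitution.
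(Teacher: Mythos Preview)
Your proposal is correct and matches the paper's approach exactly: the paper's proof is the one-liner ``Immediately follows from Lemma~\ref{lem:relativization:G:K}, since $\logic{QK}$ is closed under Substitution.'' Your additional discussion of the indirect-substitution subtlety is accurate and echoes the paper's own footnote, but is not needed for the argument.
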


\begin{proof}
Immediately follows from Lemma~\ref{lem:relativization:G:K}, since $\logic{QK}$ is closed under Substitution.
\end{proof}

\begin{lemma}
\label{lem:relativization:G:wKHC:S3}
Let $L$ be a modal predicate logic such that\/ $\ckf L$ is a wKHC class. Then, $n\in\mathbb{Y}$ implies 
$S_3\mathit{M}^\Box_G\mathit{Tiling}_n^{\mathbb{X}} \not\in \QMLcd \ckf L$. 
\end{lemma}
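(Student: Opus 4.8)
The plan is to exhibit, for each $n\in\mathbb{Y}$, a $c$-augmented frame with finite local domains, based on a Kripke frame validating~$L$, on which $S_3\mathit{M}^\Box_G\mathit{Tiling}_n^{\mathbb{X}}$ is refuted; this is the strategy of Lemma~\ref{lem:Trakhtenbrot:lem2:sib:binP}, transported to the modal setting via the $C$\nobreakdash-trick of~\cite{KKZ05}. First I would take the finite classical model $\cModel{M}=\otuple{\mathcal{D},\mathcal{I}}$ of the proof of Lemma~\ref{lem:Trakhtenbrot:lem2:sib}, extended (as in Corollary~\ref{cor:G:lem:Trakhtenbrot:lem2:sib}) so that $\forall x\,G(x)$ holds in it; recall $\cModel{M}\models\bm{sib}$, $\cModel{M}\models\mathit{Tiling}'_n$, $\cModel{M}\not\models\exists x\,P_1(x)$, and all its $P$-cycles are even (Remark~\ref{rem:lem:Trakhtenbrot:lem2:sib}). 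Then I would attach to every $a\in\mathcal{D}$ the triangle gadget and the $P$\nobreakdash-path gadget exactly as in the proof of Lemma~\ref{lem:Trakhtenbrot:lem2:sib:binP}, obtaining a finite classical model $\cModel{M}'=\otuple{\mathcal{D}',\mathcal{I}'}$ with $\mathcal{I}'(G)=\mathcal{D}$, $\mathcal{I}'(P)\cap(\mathcal{D}\times\mathcal{D})=\mathcal{I}(P)$, and, for $a\in\mathcal{D}$, the correspondences $\cModel{M}'\models\mathit{grid}'(a)$ iff $a\in\mathcal{D}$, $\cModel{M}'\models\mathit{tile}'_i(a)$ iff $\cModel{M}\models P_i(a)$, $\cModel{M}'\models\mathit{tile}'_{k_n+j}(a)$ iff $\cModel{M}\models R_j(a)$, and $\cModel{M}'\models\mathit{tile}'_{k_n+j+4}(a)$ iff $\cModel{M}\models U_j(a)$.

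Next I would turn this into a Kripke model. Since $\ckf L$ is a wKHC class, choose $\kframe{F}=\otuple{W,R}\in\ckf L$ and $w_0\in W$ with $|R(w_0)\setminus\{w_0\}|\geqslant|\mathcal{D}'|$, fix an injection $a\mapsto w_a$ of $\mathcal{D}'$ into $R(w_0)\setminus\{w_0\}$, and let $\kModel{M}=\otuple{\kframe{F}\odot\mathcal{D}',I}$ be the Kripke model (constant domain $\mathcal{D}'$) in which $P$ and $G$ are interpreted at every world as in $\cModel{M}'$, while $C$ is interpreted by $I(w_a,C)=\{\otuple{a}\}$ and $I(w,C)=\varnothing$ for $w\notin\{w_a:a\in\mathcal{D}'\}$. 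Since $P$ and $G$ are world-independent, any subformula free of $C$ and $\Box$ has, at each world of $\kModel{M}$ under an assignment into $\mathcal{D}'$, the same truth value as in $\cModel{M}'$; in particular the $S_3$-substituted versions of $H'_n$ and $V'_n$ agree on $\mathcal{D}$-tuples with $H'_n$, $V'_n$ evaluated in $\cModel{M}$, and $\mathit{grid}'(a)$ holds (at every world) iff $a\in\mathcal{D}$. Note also that $\kframe{F}\odot\mathcal{D}'$ is a $c$-augmented frame with finite local domains based on $\kframe{F}\in\ckf L$, hence lies in $\aug{c}{dfin}{\ckf L}$, so it suffices to refute the formula on $\kModel{M}$ at $w_0$.

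It then remains to check $\kModel{M},w_0\not\models S_3\mathit{M}^\Box_G\mathit{Tiling}_n^{\mathbb{X}}$. The outer premise $\exists x\,G(x)\wedge\forall x\,(G(x)\to\Box G(x))$ holds at $w_0$ because $\mathcal{I}'(G)=\mathcal{D}\neq\varnothing$ is world-independent. The conclusion fails: on one hand $\exists x\,(G(x)\wedge\mathit{tile}'_1(x))$ is false at $w_0$, since no $a\in\mathcal{D}$ has $\cModel{M}\models P_1(a)$ (as $n\notin\mathbb{X}$); on the other hand the $S_3$-transform of $(\mathit{Tiling}^\Box_n)_G$ holds at $w_0$. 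For the latter, the conjuncts $\mathit{TC}^{\Box}_0$, $\mathit{TC}^{\Box}_1$, $\mathit{TC}^{\Box}_2$, $\mathit{TC}^{\Box}_4$, $\Box^+\mathit{DSR}$, $\Box^+\mathit{DSU}$ are $C$-free with world-independent matrices, so they reduce through the correspondence to $\cModel{M}\models\mathit{Tiling}'_n$; $\mathit{TC}^{\Box}_5=\forall_G x\,\Diamond C(x)$ holds because each $a\in\mathcal{D}$ is witnessed by $C$ at $w_a\in R(w_0)$; and $\mathit{TC}^{\Box}_6$, $\mathit{TC}^{\Box}_7$, $\mathit{TC}^{\Box}_8$ hold since the $S_3$-substituted $H'_n$, $V'_n$ are world-independent.

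The substantive conjunct is $\mathit{TC}^{\Box}_3$. Unwinding its relativized, $S_3$-substituted form at a world $w_c$ with $c\in\mathcal{D}$ — using that $C$ pins down the single element $c$ there, while at the remaining accessible worlds $C$ is empty or holds of a non-$G$ element and the matrix is vacuously true — it reduces to the requirement that, for all $a,b,d\in\mathcal{D}$, $\cModel{M}\models V'_n(a,b)\wedge H'_n(b,c)\wedge H'_n(a,d)$ implies $\cModel{M}\models V'_n(d,c)$. This holds because in the explicit model $\cModel{M}$ each element has a unique $H'_n$-successor and a unique $V'_n$-successor — the $R_i$- and $U_j$-colorings force $d$ to be the right neighbour of $a$ and $b$ the upper neighbour of $a$ — so the implication is exactly the commutativity of the grid underlying $\cModel{M}$. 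This verification of $\mathit{TC}^{\Box}_3$ — the two-variable $C$-encoding of~\cite{KKZ05}, here checked against a concrete grid rather than used to reconstruct one as in Lemma~\ref{lem:1:tiling:QK} — is the step I expect to require the most care; the remaining conjuncts are a routine transfer of the classical construction of Section~\ref{subsec:Trakhtenbrot:theories}.
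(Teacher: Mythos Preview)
Your argument is correct and follows the same route as the paper's, but you are carrying one piece of unnecessary baggage: the triangle gadget $c_0^a,c_1^a,c_2^a$ from Lemma~\ref{lem:Trakhtenbrot:lem2:sib:binP}. That gadget existed solely to simulate $G$ by the three-variable formula $\mathit{grid}'$ in the classical setting of~$S_1$. Here $S_3$ does \emph{not} replace~$G$ --- it survives as an honest predicate letter in $S_3\mathit{M}^\Box_G\mathit{Tiling}_n^{\mathbb{X}}$ --- so one can (and the paper does) interpret $G$ directly as~$\mathcal{D}$ and omit the triangles altogether; the paper's proof even flags this (``the difference is that we do not need to simulate~$G$''). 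With that simplification one also needs only $|\mathcal{D}|$ rather than $|\mathcal{D}'|$ many $C$-witnessing worlds, since the $G$-relativized $\mathit{TC}^\Box_5$ only asks for $\Diamond C(a)$ with $a\in\mathcal{D}$. Your extra gadgets are harmless --- in your $\cModel{M}'$ one has $G(a)\Leftrightarrow a\in\mathcal{D}\Leftrightarrow\cModel{M}'\models\mathit{grid}'(a)$, so the $\mathit{tile}'_k$ correspondences of Lemma~\ref{lem:Trakhtenbrot:lem2:sib:binP} transfer unchanged --- and your verification of $\mathit{TC}^\Box_3$ via uniqueness of $H'_n$- and $V'_n$-successors in the explicit grid is sound; the paper simply presents the leaner version of the same construction and leaves the routine check to the reader.
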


\begin{proof}
We are going to apply an argumentation similar to that used in the proof of Lemma~\ref{lem:Trakhtenbrot:lem2:sib:binP}. In fact, the difference is that we do not need to simulate $G$.

Let $n\in\mathbb{Y}$.
Then, by Lemma~\ref{lem:Trakhtenbrot:lem2:sib}, $\mathit{MTiling}_n^{\mathbb{X}}\not\in\logic{QCl}_{\mathit{fin}}\uplus\bm{sib}$. Hence,
there exists a finite classical model $\cModel{M}=\otuple{\mathcal{D},\mathcal{I}}$ such that $\cModel{M}\models\bm{sib}$ and $\cModel{M}\not\models\mathit{MTiling}_n^{\mathbb{X}}$; we may assume that $\cModel{M}$ is the model defined in the proof of Lemma~\ref{lem:Trakhtenbrot:lem2:sib}, in particular,
$$
\begin{array}{lcl}
\mathcal{D} & = & \{0,\ldots,r+4\}\times\{0,\ldots,r+4\}, \\
\end{array}
$$
for a suitable~$r\in\numN$.

For every $a\in \mathcal{D}$, take the elements
$e_0^a,\ldots,e_{k_n+10}^a$ and $e_P^a, e_R^a, e_U^a$; let
$$
\begin{array}{lcl}
\mathcal{D}' 
  & = 
  & \mathcal{D} \cup \{e_0^a,\ldots,e_{k_n+10}^a, e_P^a, e_R^a, e_U^a : a\in\mathcal{D}\}.
\end{array}
$$

Let $\kframe{F}=\otuple{W,R}$ be a Kripke frame from $\ckf L$ with a world $w_0$ such that $|R(w_0)\setminus\{w_0\}|\geqslant |\mathcal{D}|$; such a frame exists, since $\ckf L$ is a wKHC class. Then $R(w_0)\setminus\{w_0\}$ contains a subset $W'=\{w_{a} : a\in\mathcal{D}\}$, where $w_{a}=w_{b}$ if, and only if, $a=b$.
Let $\kModel{M}=\otuple{\kframe{F}\odot\mathcal{D}',I}$ be a Kripke model such that, for every $w\in W$ and every $a\in\mathcal{D}'$,
$$
\begin{array}{lcl}
\kmodel{M},w\models G(a) & \iff & a\in \mathcal{D}; \\
\kmodel{M},w\models C(a) & \iff & w = w_a,
\end{array}
$$
and, for every $w\in W$, $I(w,P)$ is the symmetric closure of the relation
$$
\begin{array}{l}
\mathcal{I}(P) \cup 
  \{\otuple{a,e_0^a},\otuple{e_0^a,e_1^a},\ldots,\otuple{e_{k_n+9}^a,e_{k_n+10}^a} : a\in\mathcal{D}\}
  \\
  \phantom{\mathcal{I}(P)} \cup 
  \{\otuple{e_m^a,e_P^a} : \mbox{$m\in\{0,\ldots,k_n\}$ and $\cModel{M}\models P_m(a)$} \}
  \\
  \phantom{\mathcal{I}(P)} \cup
  \{\otuple{e_{k_n+i}^a,e_R^a} : \mbox{$i\in\{1,2,3,4\}$ and $\cModel{M}\models R_i(a)$} \}
  \\
  \phantom{\mathcal{I}(P)} \cup 
  \{\otuple{e_{k_n+j+4}^a,e_U^a} : \mbox{$j\in\{1,2,3,4\}$ and $\cModel{M}\models U_j(a)$} \}.
\end{array}
$$
see Figure~\ref{fig:16}.
Then it should be clear that $\kModel{M},w_0\not\models S_3\mathit{M}^\Box_G\mathit{Tiling}_n^{\mathbb{X}}$; we leave the details to the reader.

Thus, $S_3\mathit{M}^\Box_G\mathit{Tiling}_n^{\mathbb{X}} \not\in \QMLcd \ckf L$. 
\end{proof}

\begin{figure}
\centering
\begin{tikzpicture}[scale=3.2, rectnode/.style={rectangle, thick, draw=black!60, dashed, rounded corners = 2pt}]

\coordinate (c1) at (-1.4,0.30); 
\coordinate (c2) at (-0.9,-0.3); 
\coordinate (c3) at (0.2,-0.05); 
\coordinate (c4) at (-0.4,0.5); 

\coordinate (c12) at ($0.5*(c1)+0.5*(c2)$);
\coordinate (c23) at ($0.5*(c2)+0.5*(c3)$);
\coordinate (c34) at ($0.5*(c3)+0.5*(c4)$);
\coordinate (c41) at ($0.5*(c4)+0.5*(c1)$);

\coordinate (vecX1) at ($(c3)-(c2)$);
\coordinate (vecX2) at ($(c4)-(c1)$);
\coordinate (vecY1) at ($(c1)-(c2)$);
\coordinate (vecY2) at ($(c4)-(c3)$);

\coordinate (c1r) at (c4);
\coordinate (c2r) at (c3);
\coordinate (c3r) at ($(c3)+0.96*(vecX1)$);
\coordinate (c4r) at ($(c4)+0.96*(vecX2)$);

\coordinate (c1u) at ($(c1)+0.84*(vecY1)$);
\coordinate (c2u) at (c1);
\coordinate (c3u) at (c4);
\coordinate (c4u) at ($(c4)+0.84*(vecY2)$);

\coordinate (c12r) at ($0.5*(c1r)+0.5*(c2r)$);
\coordinate (c23r) at ($0.5*(c2r)+0.5*(c3r)$);
\coordinate (c34r) at ($0.5*(c3r)+0.5*(c4r)$);
\coordinate (c41r) at ($0.5*(c4r)+0.5*(c1r)$);

\coordinate (c12u) at ($0.5*(c1u)+0.5*(c2u)$);
\coordinate (c23u) at ($0.5*(c2u)+0.5*(c3u)$);
\coordinate (c34u) at ($0.5*(c3u)+0.5*(c4u)$);
\coordinate (c41u) at ($0.5*(c4u)+0.5*(c1u)$);

\draw [white, opacity = 0, name path = dg 1] (c1)--(c3);
\draw [white, opacity = 0, name path = dg 2] (c2)--(c4);
\draw [name intersections = {of = dg 1 and dg 2, by = {c}}];
\draw [white, opacity = 0, name path = dg 3] (c12)--(c34);
\draw [white, opacity = 0, name path = dg 4] (c23)--(c41);
\draw [name intersections = {of = dg 3 and dg 4, by = {d}}];

\draw [white, opacity = 0, name path = dgr 1] (c1r)--(c3r);
\draw [white, opacity = 0, name path = dgr 2] (c2r)--(c4r);
\draw [name intersections = {of = dgr 1 and dgr 2, by = {cr}}];
\draw [white, opacity = 0, name path = dgr 3] (c12r)--(c34r);
\draw [white, opacity = 0, name path = dgr 4] (c23r)--(c41r);
\draw [name intersections = {of = dgr 3 and dgr 4, by = {dr}}];

\draw [white, opacity = 0, name path = dgu 1] (c1u)--(c3u);
\draw [white, opacity = 0, name path = dgu 2] (c2u)--(c4u);
\draw [name intersections = {of = dgu 1 and dgu 2, by = {cu}}];
\draw [white, opacity = 0, name path = dgu 3] (c12u)--(c34u);
\draw [white, opacity = 0, name path = dgu 4] (c23u)--(c41u);
\draw [name intersections = {of = dgu 3 and dgu 4, by = {du}}];

\coordinate (diffR) at ($(cr)-(c)$);
\coordinate (diffU) at ($(cu)-(c)$);
\coordinate (corR) at (0,0.012);
\coordinate (corU) at (0,0.032);

\coordinate (em)  at ($(c)+1*(0,0.29)$);
\coordinate (e2)  at ($(c)+2*(0,0.29)$);
\coordinate (e1)  at ($(c)+3*(0,0.29)$);
\coordinate (e0)  at ($(c)+4*(0,0.29)$);
\coordinate (e0') at ($(e0)+ (0,0.21)$);
\coordinate (e0'') at ($(e0')+ (0,0.21)$);

\coordinate (em') at ($(em)+(0.2,0.048)$);
\coordinate (e2') at ($(e2)+(0.2,0.048)$);
\coordinate (e1') at ($(e1)+(0.2,0.048)$);
\coordinate (e3') at ($(e0)+(0.2,0.048)$);

\coordinate (ekn) at ($(c)-1*(0,0.29)$);
\coordinate (ek2) at ($(c)-2*(-0.28*0.25,0.285*0.85)$);
\coordinate (ek1) at ($(c)-2*(+0.26*0.25,0.325*0.85)$);

\coordinate (rem)  at ($(cr)+1*(0,0.29)-1*(corR)$);
\coordinate (re2)  at ($(cr)+2*(0,0.29)-2*(corR)$);
\coordinate (re1)  at ($(cr)+3*(0,0.29)-3*(corR)$);
\coordinate (re0)  at ($(cr)+4*(0,0.29)-4*(corR)$);
\coordinate (re0') at ($(re0)+ (0,0.21)-1*(corR)$);
\coordinate (re0'') at ($(re0')+ (0,0.21)-1*(corR)$);

\coordinate (rem') at ($(rem)+0.96*(0.2,0.048)$);
\coordinate (re2') at ($(re2)+0.96*(0.2,0.048)$);
\coordinate (re1') at ($(re1)+0.96*(0.2,0.048)$);
\coordinate (re3') at ($(re0)+0.96*(0.2,0.048)$);

\coordinate (rekn) at ($(cr)-1*(0,0.29)+1*(corR)$);
\coordinate (rek2) at ($(cr)-2*(-0.28*0.25,0.285*0.85)+1.7*(corR)$);
\coordinate (rek1) at ($(cr)-2*(+0.26*0.25,0.325*0.85)+1.7*(corR)$);

\coordinate (uem)  at ($(cu)+1*(0,0.29)-1*(corU)$);
\coordinate (ue2)  at ($(cu)+2*(0,0.29)-2*(corU)$);
\coordinate (ue1)  at ($(cu)+3*(0,0.29)-3*(corU)$);
\coordinate (ue0)  at ($(cu)+4*(0,0.29)-4*(corU)$);
\coordinate (ue0') at ($(ue0)+ (0,0.21)-1*(corU)$);
\coordinate (ue0'') at ($(ue0')+ (0,0.21)-1*(corU)$);

\coordinate (uem') at ($(uem)+0.96*(0.2,0.040)$);
\coordinate (ue2') at ($(ue2)+0.96*(0.2,0.040)$);
\coordinate (ue1') at ($(ue1)+0.96*(0.2,0.040)$);
\coordinate (ue3') at ($(ue0)+0.96*(0.2,0.040)$);

\coordinate (uekn) at ($(cu)-1*(0,0.29)+1*(corU)$);
\coordinate (uek2) at ($(cu)-2*(-0.28*0.25,0.285*0.85)+1.7*(corU)$);
\coordinate (uek1) at ($(cu)-2*(+0.26*0.25,0.320*0.85)+1.7*(corU)$);

\Rem{
\shade [ball color=black] (ek2) circle [radius = 1.5pt];
\draw  [>=latex, <->, shorten >= 3.5pt, shorten <= 3.5pt, color=black] (ekn)--(ek2);
\shade [ball color=black] (ekn) circle [radius = 1.5pt];
\draw  [>=latex, <->, shorten >= 3pt, shorten <= 3pt, color=black] (c)--(ekn);
\draw  [>=latex, <->, shorten >= 3.5pt, shorten <= 3.5pt, color=black] (ek2)--(ek1);
\draw  [>=latex, <->, shorten >= 3.5pt, shorten <= 3.5pt, color=black] (ekn)--(ek1);
\shade [ball color=black] (ek1) circle [radius = 1.5pt];

\begin{scope}[color=black!42]
\shade [ball color=black!84] (rek2) circle [radius = 0.96*1.5pt];
\filldraw [color=white, opacity = 0.5] (rek2) circle [radius = 0.96*1.55pt];
\draw  [>=latex, <->, shorten >= 3.5pt, shorten <= 3.5pt] (rekn)--(rek2);
\shade [ball color=black!84] (rekn) circle [radius = 0.96*1.5pt];
\filldraw [color=white, opacity = 0.5] (rekn) circle [radius = 0.96*1.55pt];
\draw  [>=latex, <->, shorten >= 3pt, shorten <= 3pt] (cr)--(rekn);
\draw  [>=latex, <->, shorten >= 3.5pt, shorten <= 3.5pt] (rek2)--(rek1);
\draw  [>=latex, <->, shorten >= 3.5pt, shorten <= 3.5pt] (rekn)--(rek1);
\shade [ball color=black!84] (rek1) circle [radius = 0.96*1.5pt];
\filldraw [color=white, opacity = 0.5] (rek1) circle [radius = 0.96*1.55pt];
\shade [ball color=black!84] (uek2) circle [radius = 0.90*1.5pt];
\filldraw [color=white, opacity = 0.5] (uek2) circle [radius = 0.90*1.55pt];
\draw  [>=latex, <->, shorten >= 3.5pt, shorten <= 3.5pt] (uekn)--(uek2);
\shade [ball color=black!84] (uekn) circle [radius = 0.90*1.5pt];
\filldraw [color=white, opacity = 0.5] (uekn) circle [radius = 0.90*1.55pt];
\draw  [>=latex, <->, shorten >= 3pt, shorten <= 3pt] (cu)--(uekn);
\draw  [>=latex, <->, shorten >= 3.5pt, shorten <= 3.5pt] (uek2)--(uek1);
\draw  [>=latex, <->, shorten >= 3.5pt, shorten <= 3.5pt] (uekn)--(uek1);
\shade [ball color=black!84] (uek1) circle [radius = 0.90*1.5pt];
\filldraw [color=white, opacity = 0.5] (uek1) circle [radius = 0.90*1.55pt];
\end{scope}
}

\begin{scope}[color=black!42]
\drawtileflattmslanted{(c1)}{(c2)}{(c3)}{(c4)}
\end{scope}

\begin{scope}[color=black!25]
\drawtileflattmslanted{(c1r)}{(c2r)}{(c3r)}{(c4r)}
\drawtileflattmslanted{(c1u)}{(c2u)}{(c3u)}{(c4u)}
\end{scope}

\begin{scope}[>=latex, <->, shorten >= -7.5pt, shorten <= 4pt, color=black!32]
\draw [] (cr)--($(c34r)+(cr)-(dr)$);
\draw [] (cr)--($(c41r)+(cr)-(dr)$);
\draw [] (cu)--($(c34u)+(cu)-(du)$);
\draw [] (cu)--($(c41u)+(cu)-(du)$);
\end{scope}

\shade [ball color=black!64] (cr) circle [radius = 0.96*1.5pt];
\filldraw [color=white, opacity = 0.5] (cr) circle [radius = 0.96*1.55pt];
\shade [ball color=black!64] (cu) circle [radius = 0.90*1.5pt];
\filldraw [color=white, opacity = 0.5] (cu) circle [radius = 0.90*1.55pt];

\begin{scope}[>=latex, <->, shorten >= 3pt, shorten <= -7.5pt, color=black!32]
\draw [] ($(c12u)+(cu)-(du)$)--(cu);
\draw [] ($(c23r)+(cr)-(dr)$)--(cr);
\end{scope}

\begin{scope}[>=latex, <->, shorten >= 3pt, shorten <= 2pt, color=black!84]
\draw [] (c)--(cr); 
\draw [] (c)--(cu); 
\end{scope}

\shade [ball color=black!64] (c) circle [radius = 1.5pt];

\draw  [>=latex, <->, shorten >= 3pt, shorten <= 3pt, color=black, densely dashed] (c)--(em);
\shade [ball color=black] (em') circle [radius = 1.5pt];
\draw  [>=latex, <->, shorten >= 3.5pt, shorten <= 3.5pt, color=black] (em)--(em');
\shade [ball color=black] (em) circle [radius = 1.5pt];
\draw  [>=latex, <->, shorten >= 3pt, shorten <= 3pt, color=black, densely dashed] (em)--(e2);
\shade [ball color=black] (e2') circle [radius = 1.5pt];
\draw  [>=latex, <->, shorten >= 3.5pt, shorten <= 3.5pt, color=black] (e2)--(e2');
\shade [ball color=black] (e2) circle [radius = 1.5pt];
\draw  [>=latex, <->, shorten >= 3pt, shorten <= 3pt, color=black, densely dashed] (e2)--(e1);
\shade [ball color=black] (e1') circle [radius = 1.5pt];
\draw  [>=latex, <->, shorten >= 3.5pt, shorten <= 3.5pt, color=black] (e1)--(e1');
\shade [ball color=black] (e1) circle [radius = 1.5pt];
\draw  [>=latex, <->, shorten >= 3pt, shorten <= 3pt, color=black, densely dashed] (e1)--(e0);
\shade [ball color=black] (e0) circle [radius = 1.5pt];
\draw  [>=latex, <->, shorten >= 3pt, shorten <= 3pt, color=black] (e0)--(e0');
\shade [ball color=black] (e0') circle [radius = 1.5pt];

\begin{scope}[color=black!42]
\draw  [>=latex, <->, shorten >= 3pt, shorten <= 3pt, densely dashed] (cr)--(rem);
\shade [ball color=black!84] (rem') circle [radius = 0.96*1.5pt];
\filldraw [color=white, opacity = 0.5] (rem') circle [radius = 0.96*1.55pt];
\draw  [>=latex, <->, shorten >= 3.5pt, shorten <= 3.5pt] (rem)--(rem');
\shade [ball color=black!84] (rem) circle [radius = 0.96*1.5pt];
\filldraw [color=white, opacity = 0.5] (rem) circle [radius = 0.96*1.55pt];
\draw  [>=latex, <->, shorten >= 3pt, shorten <= 3pt, densely dashed] (rem)--(re2);
\shade [ball color=black!84] (re2') circle [radius = 0.96*1.5pt];
\filldraw [color=white, opacity = 0.5] (re2') circle [radius = 0.96*1.55pt];
\draw  [>=latex, <->, shorten >= 3.5pt, shorten <= 3.5pt] (re2)--(re2');
\shade [ball color=black!84] (re2) circle [radius = 0.96*1.5pt];
\filldraw [color=white, opacity = 0.5] (re2) circle [radius = 0.96*1.55pt];
\draw  [>=latex, <->, shorten >= 3pt, shorten <= 3pt, densely dashed] (re2)--(re1);
\shade [ball color=black!84] (re1') circle [radius = 0.96*1.5pt];
\filldraw [color=white, opacity = 0.5] (re1') circle [radius = 0.96*1.55pt];
\draw  [>=latex, <->, shorten >= 3.5pt, shorten <= 3.5pt] (re1)--(re1');
\shade [ball color=black!84] (re1) circle [radius = 0.96*1.5pt];
\filldraw [color=white, opacity = 0.5] (re1) circle [radius = 0.96*1.55pt];
\draw  [>=latex, <->, shorten >= 3pt, shorten <= 3pt, densely dashed] (re1)--(re0);
\shade [ball color=black!84] (re0) circle [radius = 0.96*1.5pt];
\filldraw [color=white, opacity = 0.5] (re0) circle [radius = 0.96*1.55pt];
\draw  [>=latex, <->, shorten >= 3pt, shorten <= 3pt] (re0)--(re0');
\shade [ball color=black!84] (re0') circle [radius = 0.96*1.5pt];
\filldraw [color=white, opacity = 0.5] (re0') circle [radius = 0.96*1.55pt];
%
\draw  [>=latex, <->, shorten >= 3pt, shorten <= 3pt, densely dashed] (cu)--(uem);
\shade [ball color=black!84] (uem') circle [radius = 0.90*1.5pt];
\filldraw [color=white, opacity = 0.5] (uem') circle [radius = 0.90*1.55pt];
\draw  [>=latex, <->, shorten >= 3.5pt, shorten <= 3.5pt] (uem)--(uem');
\shade [ball color=black!84] (uem) circle [radius = 0.90*1.5pt];
\filldraw [color=white, opacity = 0.5] (uem) circle [radius = 0.90*1.55pt];
\draw  [>=latex, <->, shorten >= 3pt, shorten <= 3pt, densely dashed] (uem)--(ue2);
\shade [ball color=black!84] (ue2') circle [radius = 0.90*1.5pt];
\filldraw [color=white, opacity = 0.5] (ue2') circle [radius = 0.90*1.55pt];
\draw  [>=latex, <->, shorten >= 3.5pt, shorten <= 3.5pt] (ue2)--(ue2');
\shade [ball color=black!84] (ue2) circle [radius = 0.90*1.5pt];
\filldraw [color=white, opacity = 0.5] (ue2) circle [radius = 0.90*1.55pt];
\draw  [>=latex, <->, shorten >= 3pt, shorten <= 3pt, densely dashed] (ue2)--(ue1);
\shade [ball color=black!84] (ue1') circle [radius = 0.90*1.5pt];
\filldraw [color=white, opacity = 0.5] (ue1') circle [radius = 0.90*1.55pt];
\draw  [>=latex, <->, shorten >= 3.5pt, shorten <= 3.5pt] (ue1)--(ue1');
\shade [ball color=black!84] (ue1) circle [radius = 0.90*1.5pt];
\filldraw [color=white, opacity = 0.5] (ue1) circle [radius = 0.90*1.55pt];
\draw  [>=latex, <->, shorten >= 3pt, shorten <= 3pt, densely dashed] (ue1)--(ue0);
\shade [ball color=black!84] (ue0) circle [radius = 0.90*1.5pt];
\filldraw [color=white, opacity = 0.5] (ue0) circle [radius = 0.90*1.55pt];
\draw  [>=latex, <->, shorten >= 3pt, shorten <= 3pt] (ue0)--(ue0');
\shade [ball color=black!84] (ue0') circle [radius = 0.90*1.5pt];
\filldraw [color=white, opacity = 0.5] (ue0') circle [radius = 0.90*1.55pt];

\end{scope}

\begin{scope}[>=latex, <->, shorten >= 3pt, shorten <= -7.5pt, color=black!84]
\draw [] ($(c12)+(c)-(d)$)--(c);
\draw [] ($(c23)+(c)-(d)$)--(c);
\end{scope}

\node [right, color=black] at ($(c)  +( 0.05,-0.03)$) {$a$} ;
\node [right, color=black] at ($(em) +(-0.01,-0.08)$) {$e_m^a$} ;
\node [right, color=black] at ($(e2) +(-0.01,-0.08)$) {$e_{k_n+i}^a$} ;
\node [right, color=black] at ($(e1) +(-0.01,-0.08)$) {$e_{k_n+j+4}^a$} ;
\node [right, color=black] at ($(e0) +(-0.01,+0.06)$) {$e_{k_n+9}^a$} ;
\node [right, color=black] at ($(e0')+(-0.01,+0.06)$) {$e_{k_n+10}^a$} ;
\node [right, color=black] at ($(em') +(0.02,-0.01)$) {$e_P^a$} ;
\node [right, color=black] at ($(e2') +(0.02,-0.01)$) {$e_R^a$} ;
\node [right, color=black] at ($(e1') +(0.02,-0.01)$) {$e_U^a$} ;



\end{tikzpicture}
\caption{Simulation of $P_m(a)$, $R_i(a)$, and $U_j(a)$}
\label{fig:16}
\end{figure}
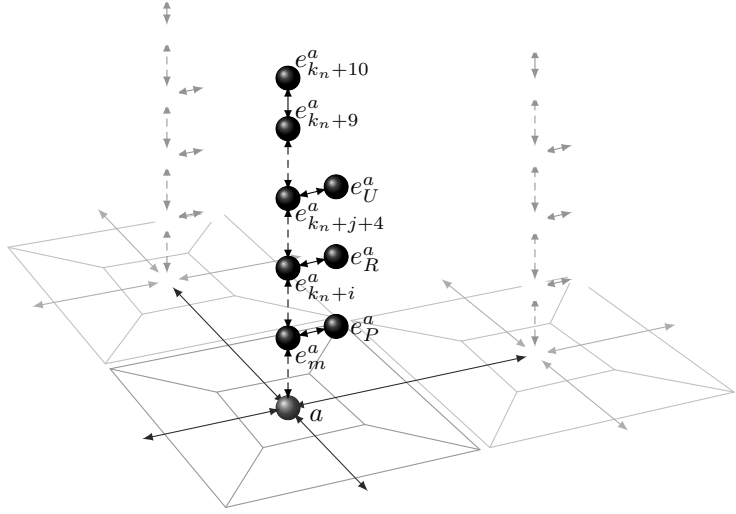

\begin{lemma}
\label{lem:relativization:G:S:S3}
Let $L$ be a modal predicate logic such that\/ $\ckf L$ is a Skvortsov class. Then $n\in\mathbb{Y}$ implies $S_3\mathit{M}^\Box_G\mathit{Tiling}_n^{\mathbb{X}} \not\in \QMLcw \ckf L$. 
\end{lemma}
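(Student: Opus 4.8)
The plan is to repeat, almost verbatim, the argument of Lemma~\ref{lem:relativization:G:wKHC:S3}, with the single modification that the Kripke frame underlying the countermodel is chosen to be finite. Concretely, given $n\in\mathbb{Y}$, I would first invoke Lemma~\ref{lem:Trakhtenbrot:lem2:sib} to obtain the finite classical $\bm{sib}$-model $\cModel{M}=\otuple{\mathcal{D},\mathcal{I}}$ with $\mathcal{D}=\{0,\ldots,r+4\}\times\{0,\ldots,r+4\}$ and $\cModel{M}\not\models\mathit{MTiling}_n^{\mathbb{X}}$, and then enlarge its domain to $\mathcal{D}'$ by adjoining, for every $a\in\mathcal{D}$, the auxiliary elements $e_0^a,\ldots,e_{k_n+10}^a$ and $e_P^a,e_R^a,e_U^a$, exactly as in the proof of Lemma~\ref{lem:relativization:G:wKHC:S3}.

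The only place where the wKHC hypothesis entered that proof was in selecting a Kripke frame $\kframe{F}=\otuple{W,R}$ together with a world $w_0$ such that $|R(w_0)\setminus\{w_0\}|\geqslant|\mathcal{D}|$. Since $\ckf L$ is a Skvortsov class, the class $\fin\ckf L$ satisfies wKHC, so such a frame can be found inside $\fin\ckf L$; that is, we may take $\kframe{F}$ to be \emph{finite}. With $\kframe{F}$ so chosen, I would form exactly the same Kripke model $\kModel{M}=\otuple{\kframe{F}\odot\mathcal{D}',I}$: interpret $G$ so that it is true precisely on $\mathcal{D}$ at every world, interpret $C$ at $w_a$ to pick out the single element $a$, and let $I(w,P)$ be the symmetric closure of $\mathcal{I}(P)$ augmented with the $e$-chains (see Figure~\ref{fig:16}). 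The verification that $\kModel{M},w_0\not\models S_3\mathit{M}^\Box_G\mathit{Tiling}_n^{\mathbb{X}}$ then goes through exactly as before: the formulas $\mathit{TC}^\Box_5$--$\mathit{TC}^\Box_8$ hold at $w_0$ because each $w_a$ lies in $R(w_0)$ and because $V'_n$ and $H'_n$ have a world-independent truth value in $\kModel{M}$; the grid-relativization works because $G$ is globally true on $\mathcal{D}$; and the substitutions $\mathit{tile}'_m$, $\mathit{tile}'_{k_n+i}$, $\mathit{tile}'_{k_n+j+4}$ correctly recover $P_m$, $R_i$, $U_j$ on $\mathcal{D}$ by way of the $e$-chains, just as in Lemma~\ref{lem:Trakhtenbrot:lem2:sib:binP}.

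Finally, since $\kframe{F}$ is finite and belongs to $\ckf L$, the c\nobreakdash-augmented frame $\kframe{F}\odot\mathcal{D}'$ belongs to $\aug{c}{wfin}{\ckf L}$; hence $\kModel{M}$, being based on it, witnesses $S_3\mathit{M}^\Box_G\mathit{Tiling}_n^{\mathbb{X}}\notin\QML\aug{c}{wfin}{\ckf L}=\QMLcw\ckf L$, as required. The hard part is not any new idea but an audit: one must check that nothing in the earlier verification of $\kModel{M},w_0\not\models S_3\mathit{M}^\Box_G\mathit{Tiling}_n^{\mathbb{X}}$ secretly relied on an infinite supply of successor worlds of $w_0$. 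A quick inspection shows it used only the $|\mathcal{D}|$ worlds $w_a$ (for instantiating $\mathit{TC}^\Box_5$ and for carrying $\mathit{TC}^\Box_3$ back and forth via $\mathit{TC}^\Box_6$--$\mathit{TC}^\Box_8$ in Sublemma~\ref{sublem:kkz:modal}), so the passage to a finite frame is harmless; I would spell out this point in full and leave the remaining routine checks to the reader.
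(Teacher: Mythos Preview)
Your proposal is correct and takes exactly the same approach as the paper: the paper's proof simply says ``Similar to the proof of Lemma~\ref{lem:relativization:G:wKHC:S3} with the difference that the corresponding Kripke frame should be finite; such a frame exists since $\ckf L$ is a Skvortsov class.'' Your write-up is in fact more detailed than the paper's, including the explicit audit that only $|\mathcal{D}|$ successor worlds are actually used, which is a helpful observation but not strictly necessary given the paper's conventions.
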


\begin{proof}
Similar to the proof of Lemma~\ref{lem:relativization:G:wKHC:S3} with the difference that the corresponding Kripke frame should be finite; such a frame exists since $\ckf L$ is a Skvortsov class.
\end{proof}

As corollaries of the lemmas, we obtain the following theorems.

\begin{theorem}
\label{th:ml:insep:PCG:dfin}
Let $L$ be a modal predicate logic such that\/ $\ckf L$ is a wKHC class. Then $\logic{QK}$ and $L_{\mathit{dfin}}\oplus\bm{bf}$ are recursively inseparable in the language with a single binary predicate letter, two unary predicate letters, and two individual variables. 
\end{theorem}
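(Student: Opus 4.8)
The plan is to turn the constructions already assembled into an effective reduction witnessing the claimed recursive inseparability. For each $n\in\numN$ the formula $S_3\mathit{M}^\Box_G\mathit{Tiling}_n^{\mathbb{X}}$ is produced uniformly from~$n$, and, as observed just before the theorem, it contains only the two individual variables $x$ and~$y$, the binary predicate letter~$P$, and the two unary predicate letters $C$ and~$G$. Hence $n\mapsto S_3\mathit{M}^\Box_G\mathit{Tiling}_n^{\mathbb{X}}$ is a computable map from $\numN$ into exactly the fragment of $\lang{ML}$ named in the statement. It therefore suffices to show that this map sends $\mathbb{X}$ into $\logic{QK}$ and $\mathbb{Y}$ into the complement of $L_{\mathit{dfin}}\oplus\bm{bf}$; recursive inseparability of $\logic{QK}$ and $L_{\mathit{dfin}}\oplus\bm{bf}$ (within this fragment) will then follow from the recursive inseparability of the fixed sets $\mathbb{X}$ and~$\mathbb{Y}$, because a recursive set separating $\logic{QK}$ from $L_{\mathit{dfin}}\oplus\bm{bf}$ in the fragment would pull back along the computable map to a recursive set containing $\mathbb{X}$ and disjoint from~$\mathbb{Y}$. (Note also that $\logic{QK}\subseteq L_{\mathit{dfin}}$, since $\logic{QK}$ is valid on every e-augmented frame and $L_{\mathit{dfin}}=\QML\aug{e}{dfin}{\ckf L}$, so the pair $\logic{QK}$, $L_{\mathit{dfin}}\oplus\bm{bf}$ is of the form to which the inseparability notion applies.)

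The first half is immediate from Lemma~\ref{lem:relativization:G:K:S3}: if $n\in\mathbb{X}$, then $S_3\mathit{M}^\Box_G\mathit{Tiling}_n^{\mathbb{X}}\in\logic{QK}$. For the second half I would first record the syntactic inclusion $L_{\mathit{dfin}}\oplus\bm{bf}\subseteq\QMLcd\ckf L$. Indeed $L_{\mathit{dfin}}=\QMLed\ckf L=\QML\aug{e}{dfin}{\ckf L}$, and since every c-augmented frame is an e-augmented frame we have $\aug{c}{dfin}{\ckf L}\subseteq\aug{e}{dfin}{\ckf L}$, whence $L_{\mathit{dfin}}\subseteq\QML\aug{c}{dfin}{\ckf L}=\QMLcd\ckf L$. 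Moreover $\bm{bf}$ is valid on every c-augmented frame (it characterizes the locally constant domain condition among e-augmented frames), so $\bm{bf}\in\QMLcd\ckf L$; and $\QMLcd\ckf L$ is a modal predicate logic closed under Substitution, Modus Ponens, Generalization, and Necessitation, so it contains the smallest such logic over $L_{\mathit{dfin}}\cup\{\bm{bf}\}$, namely $L_{\mathit{dfin}}\oplus\bm{bf}$. Now Lemma~\ref{lem:relativization:G:wKHC:S3}, which is exactly where the hypothesis that $\ckf L$ is a wKHC class is used, gives $S_3\mathit{M}^\Box_G\mathit{Tiling}_n^{\mathbb{X}}\notin\QMLcd\ckf L$ for $n\in\mathbb{Y}$; combined with the inclusion just proved, $S_3\mathit{M}^\Box_G\mathit{Tiling}_n^{\mathbb{X}}\notin L_{\mathit{dfin}}\oplus\bm{bf}$ for $n\in\mathbb{Y}$, as required. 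Putting the two halves together and carrying out the pull-back argument of the previous paragraph completes the proof.

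I do not expect a genuine obstacle at this point: the delicate work has already been discharged in Lemma~\ref{lem:relativization:G:wKHC:S3} and the lemmas feeding it — the modal encoding of $\mathit{TC}'_3$ by $\mathit{TC}^\Box_3$ via the auxiliary letter~$C$ placed on a world of large out-degree (supplied by wKHC), and the simulation of the unary letters $P_m,R_i,U_j$ by the $\mathit{tile}'$-formulas over the single binary letter~$P$ on the expanded symmetric countermodel of Lemma~\ref{lem:Trakhtenbrot:lem2:sib}. The only things left are bookkeeping: verifying that $S_3\mathit{M}^\Box_G\mathit{Tiling}_n^{\mathbb{X}}$ indeed stays within the language with two variables, one binary letter~$P$, and two unary letters $C,G$, and the routine containment $L_{\mathit{dfin}}\oplus\bm{bf}\subseteq\QMLcd\ckf L$ that lets one transfer ``$\notin\QMLcd\ckf L$'' to ``$\notin L_{\mathit{dfin}}\oplus\bm{bf}$''. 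If one also wants the Barcan-free version, one would add the one-line remark $L_{\mathit{dfin}}\subseteq L_{\mathit{dfin}}\oplus\bm{bf}$, exactly as in Corollary~\ref{cor1:th:ml:monadic:insep:1}.
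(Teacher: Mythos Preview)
Your proposal is correct and takes essentially the same route as the paper: the paper's proof is the one-liner ``Follows from Lemmas~\ref{lem:relativization:G:K:S3} and~\ref{lem:relativization:G:wKHC:S3}'', and you have simply spelled out the routine bookkeeping (the syntactic fragment check, the inclusion $L_{\mathit{dfin}}\oplus\bm{bf}\subseteq\QMLcd\ckf L$, and the pull-back along $n\mapsto S_3\mathit{M}^\Box_G\mathit{Tiling}_n^{\mathbb{X}}$) that the paper leaves implicit.
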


\begin{proof}
Follows from Lemmas~\ref{lem:relativization:G:K:S3} and~\ref{lem:relativization:G:wKHC:S3}.
\end{proof}

\begin{theorem}
\label{th:ml:insep:PCG:wfin}
Let $L$ be a modal predicate logic such that\/ $\ckf L$ is a Skvortsov class. Then $\logic{QK}$ and $L_{\mathit{wfin}}\oplus\bm{bf}$ are recursively inseparable in the language with a single binary predicate letter, two unary predicate letters, and two individual variables. 
\end{theorem}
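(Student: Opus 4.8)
The plan is to read the statement off Lemmas~\ref{lem:relativization:G:K:S3} and~\ref{lem:relativization:G:S:S3}, combined with the recursive inseparability of the fixed sets $\mathbb{X}$ and $\mathbb{Y}$. First I would record the elementary containments that make the claim meaningful and let the two lemmas interact. A Skvortsov class is in particular nonempty (for each $n$ it contains a finite frame with a world seeing at least $n$ worlds), so $\ckf L\ne\varnothing$ and the class of $\mathit{e}$-augmented frames based on $\fin\ckf L$ is nonempty; since $\logic{QK}$ is valid on every $\mathit{e}$-augmented frame, $\logic{QK}\subseteq\QMLew\ckf L=L_{\mathit{wfin}}\subseteq L_{\mathit{wfin}}\oplus\bm{bf}$, which is what we need for ``recursively inseparable'' to be applicable to the pair $\logic{QK}$, $L_{\mathit{wfin}}\oplus\bm{bf}$. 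On the other side, the Barcan formula is valid on every $\mathit{c}$-augmented frame, and $\QMLew\ckf L\subseteq\QMLcw\ckf L$; hence $\QMLcw\ckf L$ is a modal predicate logic (closed under Substitution, Modus Ponens, Generalization, and Necessitation, being the logic of a class of frames) containing $L_{\mathit{wfin}}\cup\{\bm{bf}\}$, and therefore $L_{\mathit{wfin}}\oplus\bm{bf}\subseteq\QMLcw\ckf L$.

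Next I would assemble the reduction. For each $n\in\numN$ the formula $S_3\mathit{M}^\Box_G\mathit{Tiling}_n^{\mathbb{X}}$ is produced from $n$ by an explicit effective procedure (construct $T_n$ from $M_0$, write down $\mathit{Tiling}^\Box_n$, relativize to $G$, and apply the substitution $S_3$), so $n\mapsto S_3\mathit{M}^\Box_G\mathit{Tiling}_n^{\mathbb{X}}$ is a total recursive function; moreover, as noted just after the definition of $S_3$, each of these formulas contains only the binary letter $P$, the unary letters $C$ and $G$, and the two variables $x,y$. By Lemma~\ref{lem:relativization:G:K:S3}, $n\in\mathbb{X}$ implies $S_3\mathit{M}^\Box_G\mathit{Tiling}_n^{\mathbb{X}}\in\logic{QK}$. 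By Lemma~\ref{lem:relativization:G:S:S3}, using that $\ckf L$ is a Skvortsov class, $n\in\mathbb{Y}$ implies $S_3\mathit{M}^\Box_G\mathit{Tiling}_n^{\mathbb{X}}\notin\QMLcw\ckf L$, and hence, by the containment from the previous paragraph, $S_3\mathit{M}^\Box_G\mathit{Tiling}_n^{\mathbb{X}}\notin L_{\mathit{wfin}}\oplus\bm{bf}$.

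Finally I would argue by contradiction in the standard way. If $\logic{QK}$ and $L_{\mathit{wfin}}\oplus\bm{bf}$ were recursively separable in the language with one binary, two unary predicate letters and two variables, there would be a recursive set $Z$ of formulas of that language with every validity of $\logic{QK}$ in that language lying in $Z$ and $Z$ disjoint from $L_{\mathit{wfin}}\oplus\bm{bf}$; pulling $Z$ back along the recursive map $n\mapsto S_3\mathit{M}^\Box_G\mathit{Tiling}_n^{\mathbb{X}}$ (whose values lie in that language) would produce a recursive set separating $\mathbb{X}$ from $\mathbb{Y}$, contradicting their recursive inseparability. Since the two lemmas and the inseparability of $\mathbb{X},\mathbb{Y}$ are already in hand, essentially the only new work is the bookkeeping establishing $L_{\mathit{wfin}}\oplus\bm{bf}\subseteq\QMLcw\ckf L$ and the confirmation of the predicate/variable count; I expect the former — correctly matching the Barcan-closed logic of finite frames against the constant-domain logic of those frames — to be the one point that needs a line of care, everything else being routine.
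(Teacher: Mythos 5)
Your proof is correct and follows the paper's own route exactly: the paper proves this theorem by citing Lemmas~\ref{lem:relativization:G:K:S3} and~\ref{lem:relativization:G:S:S3}, and your argument simply spells out the implicit bookkeeping (effectiveness of $n\mapsto S_3\mathit{M}^\Box_G\mathit{Tiling}_n^{\mathbb{X}}$, the language count, and the containment $L_{\mathit{wfin}}\oplus\bm{bf}\subseteq\QMLcw\ckf L$ needed to transfer the refutation from Lemma~\ref{lem:relativization:G:S:S3}), all of which is accurate.
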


\begin{proof}
Follows from Lemmas~\ref{lem:relativization:G:K:S3} and~\ref{lem:relativization:G:S:S3}.
\end{proof}

To eliminate the binary letter $P$, we need some additional condition, similar to the condition proposed in~\cite[Theorem~3]{KKZ05}. We say that a Kripke frame $\kframe{F}=\otuple{W,R}$ satisfies the \defnotion{Kontchakov--Kurucz--Zakharyaschev condition} (for short, \defnotion{KKZ}) if there are $w\in W$ and two infinite disjoint subsets, $V$ and $U$, of $W$ such that $V\subseteq R(w)$ and $U\subseteq R(v)$, for every $v \in V$.
We say that a modal predicate logic $L$ is \defnotion{KKZ-friendly} if there exists a Kripke frame $\kframe{F}\in\ckf L$ satisfying KKZ. It is known that the monadic fragments~--- with an infinite supply of unary predicate letters~--- of KKZ-friendly modal predicate logics are undecidable in the language with two individual variables~\cite{KKZ05}. For our purposes, we need a weaker condition. We say that a class $\scls{C}$ of Kripke frames satisfies the \defnotion{weak Kontchakov--Kurucz--Zakharyaschev condition} (for short, \defnotion{wKKZ}) if, for every $n\in\numN$, there exist a Kripke frame $\otuple{W,R}\in\scls{C}$, a world $w \in W$, and a subset~$V$ of~$R(w)$ such that $|V|\geqslant n$ and, for every $v\in V$, $|R(v)\setminus V|\geqslant n$.
Clearly, if a class $\scls{C}$ of Kripke frames contains a frame that satisfies KKZ, then it also satisfies KHC, wKHC, and wKKZ. But if $\scls{C}$ contains only finite Kripke frames, then it can not satisfy KKZ; at the same time, the class of all finite Kripke frames satisfies wKKZ. If $\scls{C}$ satisfies wKKZ, then it also satisfies wKHC. Nevertheless, the class of all frames of depth\footnote{The largest number of worlds in chains in a (transitive) Kripke frame.} two satisfies KHC and wKHC but it does not satisfy both KKZ and wKKZ. 

Let us expand $S_2$ on $\lang{ML}$-formulas, i.e., let $S_2$ be a formula substitution that substitutes $\Box(Q(x_1)\vee Q(x_2))$ for $P(x_1,x_2)$ in $\lang{ML}$-formulas.

\begin{lemma}
\label{lem:relativization:G:K:S3:S2}
If $n\in\mathbb{X}$, then $S_2S_3\mathit{M}^\Box_G\mathit{Tiling}_n^{\mathbb{X}} \in \logic{QK}$. 
\end{lemma}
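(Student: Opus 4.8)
The plan is to obtain this as an immediate corollary of Lemma~\ref{lem:relativization:G:K:S3}, which already supplies, for $n\in\mathbb{X}$, that $S_3\mathit{M}^\Box_G\mathit{Tiling}_n^{\mathbb{X}}\in\logic{QK}$. The only new ingredient is the substitution $S_2$, which replaces every atomic subformula of the form $P(x_1,x_2)$ by the $\lang{ML}$-formula $\Box(Q(x_1)\vee Q(x_2))$, leaving the (unary) letters $C$ and $G$ untouched. Since $\logic{QK}$ is, by definition, a modal predicate logic and hence closed under Substitution (including predicate substitution in the sense of~\cite{GShS}), applying $S_2$ to a theorem of $\logic{QK}$ again yields a theorem of $\logic{QK}$; therefore $S_2S_3\mathit{M}^\Box_G\mathit{Tiling}_n^{\mathbb{X}}\in\logic{QK}$. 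This is exactly the pattern already used for $S_0$ in Lemma~\ref{lem:Trakhtenbrot:lem1:binP} and for $S_1$ in Lemma~\ref{lem:Trakhtenbrot:lem1:sib:binP}, and for $S_3$ in Lemma~\ref{lem:relativization:G:K:S3} itself.

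Before invoking closure under Substitution I would record the routine check that $S_2$ really is a legitimate (indirect) substitution: the formula $\Box(Q(x_1)\vee Q(x_2))$ being plugged in has precisely $x_1,x_2$ as its free variables, and $S_3\mathit{M}^\Box_G\mathit{Tiling}_n^{\mathbb{X}}$ uses only the two individual variables $x$ and $y$, so substituting for $P(x,y)$ or $P(y,x)$ produces $\Box(Q(x)\vee Q(y))$ or $\Box(Q(y)\vee Q(x))$ with no variable capture, and the output is a well-formed $\lang{ML}$-formula whose only predicate letters are $Q$, $C$, and $G$ and which still contains just two individual variables. One should also note that $S_2$ is applied \emph{after} the earlier substitutions and the relativization to $G$ that are folded into $S_3\mathit{M}^\Box_G\mathit{Tiling}_n^{\mathbb{X}}$; a composition of substitutions is again a substitution, so $\logic{QK}$ absorbs the whole thing in a single step.

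I do not expect a genuine obstacle here: the content of the lemma is entirely ``closure under Substitution'', and the interesting work has already been done in Lemma~\ref{lem:1:tiling:QK} (soundness of $\mathit{Tiling}^\Box_n$ over $\logic{QK}$ via Sublemma~\ref{sublem:kkz:modal}) and in the relativization step of Lemma~\ref{lem:relativization:G:K}. An alternative, more pedestrian route would be to re-run the model-theoretic argument of Lemma~\ref{lem:1:tiling:QK} with $P(a,b)$ read off from $\Box(Q(a)\vee Q(b))$ as in the proof of Lemma~\ref{lem:ml:monadic:insep:2}, but that merely re-proves what closure under Substitution gives for free, so I would not take it. The only thing worth a sentence of care is the substitution bookkeeping just described; everything else is a one-line appeal to Lemma~\ref{lem:relativization:G:K:S3}.
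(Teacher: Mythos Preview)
Your proposal is correct and matches the paper's own proof exactly: the paper's proof is a single line stating that the result ``immediately follows from Lemma~\ref{lem:relativization:G:K:S3}, since $\logic{QK}$ is closed under Substitution.'' Your additional bookkeeping about variable capture and the well-formedness of the substitution is accurate but more than the paper bothers to record.
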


\begin{proof}
Immediately follows from Lemma~\ref{lem:relativization:G:K:S3}, since $\logic{QK}$ is closed under Substitution.
\end{proof}

\begin{lemma}
\label{lem:relativization:G:wKHC:S3:S2}
Let $L$ be a modal predicate logic such that\/ $\ckf L$ is a wKKZ class. Then, $n\in\mathbb{Y}$ implies 
$S_2S_3\mathit{M}^\Box_G\mathit{Tiling}_n^{\mathbb{X}} \not\in \QMLcd \ckf L$. 
\end{lemma}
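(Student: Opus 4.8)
The plan is to follow the proof of Lemma~\ref{lem:relativization:G:wKHC:S3} essentially verbatim, while additionally eliminating the binary letter~$P$ by the substitution~$S_2$, in the spirit of Lemma~\ref{lem:ml:monadic:insep:2} and of the two-variable coding of~\cite{KKZ05}. Fix $n\in\mathbb{Y}$. As in the proof of Lemma~\ref{lem:relativization:G:wKHC:S3}, start from the finite classical model $\cModel{M}=\otuple{\mathcal{D},\mathcal{I}}$ supplied by Lemma~\ref{lem:Trakhtenbrot:lem2:sib} with $\cModel{M}\models\bm{sib}$ and $\cModel{M}\not\models\mathit{MTiling}_n^{\mathbb{X}}$, enlarge its domain to $\mathcal{D}'=\mathcal{D}\cup\{e_0^a,\ldots,e_{k_n+10}^a,e_P^a,e_R^a,e_U^a:a\in\mathcal{D}\}$, and let $\mathcal{E}$ denote the symmetric irreflexive relation on~$\mathcal{D}'$ built there (the symmetric closure of $\mathcal{I}(P)$ together with the edges $\otuple{a,e_0^a},\otuple{e_0^a,e_1^a},\ldots,\otuple{e_{k_n+9}^a,e_{k_n+10}^a}$ and the $\otuple{e_m^a,e_P^a}$, $\otuple{e_{k_n+i}^a,e_R^a}$, $\otuple{e_{k_n+j+4}^a,e_U^a}$ edges). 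Recall that $S_3\mathit{M}^\Box_G\mathit{Tiling}_n^{\mathbb{X}}$ contains no predicate letters except $P$, $C$, $G$, so $S_2S_3\mathit{M}^\Box_G\mathit{Tiling}_n^{\mathbb{X}}$ contains no predicate letters except $Q$, $C$, $G$, and two individual variables.

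Next I would apply wKKZ with a threshold at least $|\mathcal{D}|+|\mathcal{D}'|^2$: this yields a Kripke frame $\kframe{F}=\otuple{W,R}\in\ckf L$, a world $w_0$, a subset $V\subseteq R(w_0)$ of the required size, and, for every $v\in V$, a set $R(v)\setminus V$ of the required size. Inside~$V$ I would choose pairwise distinct worlds $v_b$, one for each $b\in\mathcal{D}$ (the ``$C$-marker'' worlds), and a family of distinct worlds $u^{w_0}_{cd}$, one for each pair $c,d\in\mathcal{D}'$ with $\otuple{c,d}\notin\mathcal{E}$ or $c=d$ (the ``$Q$-encoders for the $P$-atoms evaluated at~$w_0$''); for each~$v_b$ I would choose, inside the fresh reservoir $R(v_b)\setminus V$, a family $u^{b}_{cd}$ of ``$Q$-encoders for the $P$-atoms evaluated at~$v_b$'', one per such pair. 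Then define the c\nobreakdash-augmented model $\kModel{M}=\otuple{\kframe{F}\odot\mathcal{D}',I}$ by interpreting $G$ as $\mathcal{D}$ at every world; putting $I(v_b,C)=\{b\}$ and $I(w,C)=\varnothing$ for every other world~$w$; and putting $I(u,Q)=\mathcal{D}'\setminus\{c,d\}$ at each designated encoder $u=u^{\ast}_{cd}$, while $I(w,Q)=\mathcal{D}'$ at every world~$w$ that is not a designated encoder.

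The core verification is that, at~$w_0$ and at each~$v_b$, and for all $a,b\in\mathcal{D}'$,
$$
\kModel{M},w\models\Box(Q(a)\vee Q(b)) \iff \otuple{a,b}\in\mathcal{E},
$$
which holds because the only accessible worlds at which $Q(a)\vee Q(b)$ can fail are the designated encoders seen from~$w$, and by construction an encoder for~$\{c,d\}$ is reachable from~$w_0$ (resp.\ from~$v_b$) exactly when $\otuple{c,d}\notin\mathcal{E}$, while all other accessible worlds carry $Q=\mathcal{D}'$. Consequently the $S_2$-image of every $P$-atom occurring in $S_3\mathit{M}^\Box_G\mathit{Tiling}_n^{\mathbb{X}}$ is evaluated in~$\kModel{M}$ exactly as the intended relation~$\mathcal{E}$, so $\kModel{M}$ behaves, as far as $S_2S_3\mathit{M}^\Box_G\mathit{Tiling}_n^{\mathbb{X}}$ is concerned, precisely like the model built in the proof of Lemma~\ref{lem:relativization:G:wKHC:S3} with $P$ read off from~$\mathcal{E}$; in particular $\kModel{M},w_0\not\models S_2S_3\mathit{M}^\Box_G\mathit{Tiling}_n^{\mathbb{X}}$. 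Since $\kframe{F}\in\ckf L$ and all local domains of $\kframe{F}\odot\mathcal{D}'$ equal the finite set~$\mathcal{D}'$, this witnesses $S_2S_3\mathit{M}^\Box_G\mathit{Tiling}_n^{\mathbb{X}}\not\in\QMLcd\ckf L$.

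The hard part will be the bookkeeping that keeps the $Q$-encoding layers and the $C$-markers from interfering: a designated encoder $u^{b}_{cd}$ living below~$v_b$ must not corrupt the simulation of~$P$ at~$w_0$, and the encoders attached to different source worlds must be assigned pairs consistently so that no world is required to carry two incompatible $Q$-interpretations. Placing the marker worlds inside~$V$ and each family of encoders $u^{b}_{cd}$ inside the reservoir $R(v_b)\setminus V$ disjoint from~$V$ — exactly the extra separation that wKKZ, as opposed to wKHC, provides — together with the blanket choice $Q=\mathcal{D}'$ at every non-designated world, is what makes this go through, following the pattern of~\cite{KKZ05}; along the way one must also check routinely that $\mathit{TC}^\Box_5$ is honoured (for each grid element~$b$, $\Diamond C(b)$ holds at~$w_0$ via its marker $v_b\in R(w_0)$), that $\mathit{TC}^\Box_6$--$\mathit{TC}^\Box_8$ behave as in Lemma~\ref{lem:1:tiling:QK}, and that the Barcan formula causes no difficulty since $\kModel{M}$ is globally constant-domain over the finite set~$\mathcal{D}'$.
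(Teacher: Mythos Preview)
Your plan is the paper's plan: start from the finite $\bm{sib}$-model of Lemma~\ref{lem:Trakhtenbrot:lem2:sib}, enlarge the domain to $\mathcal{D}'$ as in Lemma~\ref{lem:relativization:G:wKHC:S3}, invoke wKKZ, mark the worlds $\bar w_a$ for~$C$, and encode the binary $P$ via $\Box(Q\vee Q)$ using designated ``pair-witness'' worlds. The one substantive divergence is in \emph{how many} encoder families you lay down and with what threshold.

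The paper takes the wKKZ threshold $2|\mathcal{D}'|^3$ and, crucially, places a family $U(v)=\{u^v_{bc}:b,c\in\mathcal{D}'\}\subseteq\bar R(v)$ for \emph{every} $v\in\{\bar w_0\}\cup V$, subject to the single global consistency rule $u^v_{bc}=u^{v'}_{de}\Rightarrow\{b,c\}=\{d,e\}$ (so a world may serve several sources, but always for the same unordered pair). This is exactly what the paper leans on to assert
\[
\kModel{M},w\models P(b,c)\ \iff\ \bar{\kModel{M}},\bar w\models\Box(Q(b)\vee Q(c))
\]
at every $\bar w\in\{\bar w_0\}\cup V$, not only at $\bar w_0$ and at the $C$-markers. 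You instead use threshold $|\mathcal{D}|+|\mathcal{D}'|^2$ and place encoders only for $w_0$ and for the markers~$v_b$.

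That narrower placement is the point where your sketch may break. The conjunct $\mathit{TC}^\Box_3$ is indeed harmless at worlds where $C$ is empty, as you note; but $\mathit{TC}^\Box_6$, $\mathit{TC}^\Box_7$, and the $\Box$-part of $\Box^+\mathit{DSR}$, $\Box^+\mathit{DSU}$ force you to evaluate the $S_2S_3$-image of $V'_n$, $H'_n$, $\mathit{DSR}$, $\mathit{DSU}$ at \emph{every} world of $\bar R(\bar w_0)$, in particular at your own $u^{w_0}_{cd}$-encoders sitting inside~$V$. At such a world the relation $\{(b,c):\bar{\kModel{M}},\bar w\models\Box(Q(b)\vee Q(c))\}$ can strictly contain~$\mathcal{E}$, and since the $\mathit{tile}'_k$-formulas are not monotone in~$P$ (they contain the clauses $\neg\pi_{k-1}$, $\neg\pi_{k-2}$ inside $\mathit{path}_k$), the value of $S_2S_3V'_n(a,e)$ can flip there---which would falsify $\mathit{TC}^\Box_6$ at $\bar w_0$ and destroy the refutation. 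The paper's fuller encoder placement over all of $\{\bar w_0\}\cup V$, together with the global consistency constraint and the larger threshold, is precisely what it uses to sidestep this.
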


\begin{proof}
Let $\kModel{M}=\otuple{\kframe{F}\odot\mathcal{D}',I}$ be the model constructed in the proof of Lemma~\ref{lem:relativization:G:wKHC:S3}. Since $\ckf L$ is a wKKZ class, it contains a Kripke frame $\bar{\kframe{F}}=\otuple{\bar{W},\bar{R}}$ such that there exist $\bar{w}_0 \in W$ and a subset~$V$ of~$\bar{R}(\bar{w}_0)$ such that $|V|\geqslant 2|\mathcal{D}'|^3$ and, for every $v\in V$, $|\bar{R}(v)\setminus V|\geqslant 2|\mathcal{D}'|^3$. 

Recall that $\mathcal{D}'$ contains $\mathcal{D}$ and $\mathcal{D}=\{0,\ldots,r+4\}\times\{0,\ldots,r+4\}$.
Let $V'=\{\bar{w}_{a}\in V : a\in\mathcal{D}\}$, where $\bar{w}_{a}=\bar{w}_{b}$ if, and only if, $a=b$. For every $v\in\{\bar{w}_0\}\cup V$, choose a set
$$
\begin{array}{lcll}
U(v) & = & \{u^{v}_{bc}\in \bar{R}(v) : b,c\in\mathcal{D}'\},
\end{array}
$$
so that, for all $v,v'\in\{\bar{w}_0\}\cup V$ and all $b,c,d,e\in\mathcal{D}'$,
$$
\begin{array}{lcl}
  u^v_{bc} = u^{v'}_{de} & \imply & \{b,c\} = \{d,e\};
\end{array}
$$  
notice that it is possible, since the sets $V$ and $\bar{R}(v)\setminus V$, for every $v\in V$, are quite big.
Next, define a model $\bar{\kModel{M}} = \otuple{\bar{\kframe{F}}\odot\mathcal{D}',\bar{I}}$ so that, for every $w\in\bar{W}$ and every $b\in \mathcal{D}'$,
$$
\begin{array}{lcl}
\bar{\kModel{M}},w\models G(b) 
  & \iff 
  & b\in\mathcal{D}; 
  \smallskip \\
\bar{\kModel{M}},w\models C(b) 
  & \iff 
  & \mbox{for some $a\in\mathcal{D}$, both $w=\bar{w}_a$ and $b=a$}; 
  \smallskip \\
\bar{\kModel{M}},w\not\models Q(b) 
  & \iff 
  & \mbox{for some $v\in\{\bar{w}_0\}\cup V$ and some $c\in\mathcal{D}'$,}
  \\
  &
  & \mbox{both $w\in\{u^v_{bc},u^v_{cb}\}$ and $\kModel{M},w_0\not\models P(b,c)$,} 
\end{array}
$$
where $\kModel{M}$ and $w_0$ are defined in the proof of Lemma~\ref{lem:relativization:G:wKHC:S3}.

Let us compare $\bar{\kModel{M}}$ and $\kModel{M}$. Observe that, for every $a\in\mathcal{D}\cup\{0\}$,
$$
\begin{array}{lcl}
\{{w}\in    {W} :     {\kModel{M}},{w}\models C(a)\} & = & \{    {w}_a\}; \\
\{{w}\in\bar{W} : \bar{\kModel{M}},{w}\models C(a)\} & = & \{\bar{w}_a\};
\end{array}
$$
also, for every $w\in \{w_0\}\cup R(w_0)$, every $\bar{w}\in \{\bar{w}_0\}\cup R(\bar{w}_0)$, and all $b,c\in\mathcal{D}'$,
$$
\begin{array}{lcl}
\kModel{M},{w}\models G(b) 
  & \iff 
  & \bar{\kModel{M}},\bar{w}\models G(b); 
  \smallskip \\
\kModel{M},{w}\models P(b,c) 
  & \iff 
  & \bar{\kModel{M}},\bar{w}\models \Box(Q(b)\vee Q(c)). 
\end{array}
$$
Since $\kModel{M},{w}_0\not\models S_3\mathit{M}^\Box_G\mathit{Tiling}_n^{\mathbb{X}}$, we readily obtain that $\bar{\kModel{M}},\bar{w}_0\not\models S_2S_3\mathit{M}^\Box_G\mathit{Tiling}_n^{\mathbb{X}}$. 

Thus, $S_2S_3\mathit{M}^\Box_G\mathit{Tiling}_n^{\mathbb{X}} \not\in \QMLcd \ckf L$. 
\end{proof}

\begin{lemma}
\label{lem:relativization:G:S:S3:S2}
Let $L$ be a modal predicate logic such that\/ $\fin \ckf L$ is a wKKZ class. Then $n\in\mathbb{Y}$ implies $S_2S_3\mathit{M}^\Box_G\mathit{Tiling}_n^{\mathbb{X}} \not\in \QMLcw \ckf L$. 
\end{lemma}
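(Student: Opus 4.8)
The plan is to run, essentially verbatim, the argument of Lemma~\ref{lem:relativization:G:wKHC:S3:S2}, replacing the appeal to a wKKZ Kripke frame by an appeal to a \emph{finite} wKKZ Kripke frame; such a frame is available precisely because $\fin\ckf L$ is assumed to be a wKKZ class.

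First I would fix $n\in\mathbb{Y}$ and recall the finite classical model $\cModel{M}=\otuple{\mathcal{D},\mathcal{I}}$ of the proof of Lemma~\ref{lem:Trakhtenbrot:lem2:sib}, together with the enlarged, still finite, set $\mathcal{D}'=\mathcal{D}\cup\{e^a_0,\ldots,e^a_{k_n+10},e^a_P,e^a_R,e^a_U : a\in\mathcal{D}\}$ and the Kripke model $\kModel{M}=\otuple{\kframe{F}\odot\mathcal{D}',I}$ with world $w_0$ built in the proof of Lemma~\ref{lem:relativization:G:wKHC:S3}, for which $\kModel{M},w_0\not\models S_3\mathit{M}^\Box_G\mathit{Tiling}_n^{\mathbb{X}}$. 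The key point is that $|\mathcal{D}'|$ is a finite number depending only on $n$, so, since $\fin\ckf L$ is a wKKZ class, it contains a \emph{finite} Kripke frame $\bar{\kframe{F}}=\otuple{\bar W,\bar R}$ together with $\bar w_0\in\bar W$ and a subset $V\subseteq\bar R(\bar w_0)$ with $|V|\geqslant 2|\mathcal{D}'|^3$ and $|\bar R(v)\setminus V|\geqslant 2|\mathcal{D}'|^3$ for every $v\in V$; this is the only combinatorial resource needed.

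Then I would reproduce word for word the model $\bar{\kModel{M}}=\otuple{\bar{\kframe{F}}\odot\mathcal{D}',\bar I}$ of Lemma~\ref{lem:relativization:G:wKHC:S3:S2}: choose $V'=\{\bar w_a:a\in\mathcal{D}\}\subseteq V$, choose for each $v\in\{\bar w_0\}\cup V$ a set $U(v)=\{u^v_{bc}:b,c\in\mathcal{D}'\}\subseteq\bar R(v)$ with $u^v_{bc}=u^{v'}_{de}$ only when $\{b,c\}=\{d,e\}$, and define $\bar I$ on $G$, $C$, and $Q$ by the same clauses. The same comparison of $\bar{\kModel{M}}$ with $\kModel{M}$ --- in particular $\kModel{M},w\models P(b,c)\iff\bar{\kModel{M}},\bar w\models\Box(Q(b)\vee Q(c))$ along the relevant worlds, plus agreement of the $G$- and $C$-extensions --- then gives $\bar{\kModel{M}},\bar w_0\not\models S_2S_3\mathit{M}^\Box_G\mathit{Tiling}_n^{\mathbb{X}}$ exactly as in that lemma.

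The new and essential observation is that $\bar{\kModel{M}}$ is based on the finite Kripke frame $\bar{\kframe{F}}\in\ckf L$ and on the constant, finite global domain $\mathcal{D}'$, so its underlying c-augmented frame $\bar{\kframe{F}}\odot\mathcal{D}'$ belongs to $\aug{c}{wfin}{\ckf L}$; hence $S_2S_3\mathit{M}^\Box_G\mathit{Tiling}_n^{\mathbb{X}}\notin\QMLcw\ckf L$. I do not expect a genuine obstacle here: everything difficult was already absorbed into Lemmas~\ref{lem:relativization:G:wKHC:S3} and~\ref{lem:relativization:G:wKHC:S3:S2}, and the only thing to check is that each object entering the construction --- $\mathcal{D}'$, the subsets $V'$ and $U(v)$, and the tokens $u^v_{bc}$ --- stays finite, which holds because $\mathcal{D}$ is finite, only finitely many elements are added per point of $\mathcal{D}$, and $\bar W$, hence $\{\bar w_0\}\cup V$, is finite.
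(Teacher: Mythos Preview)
Your proposal is correct and matches the paper's approach exactly: the paper's proof simply says the argument is ``similar to the proof of Lemma~\ref{lem:relativization:G:wKHC:S3:S2} with the difference that the corresponding Kripke frame should be finite; such a frame exists since $\fin \ckf L$ is a wKKZ class.'' You have merely unpacked what that one sentence entails, and the finiteness bookkeeping you add at the end is precisely the only new content.
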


\begin{proof}
Similar to the proof of Lemma~\ref{lem:relativization:G:wKHC:S3:S2} with the difference that the corresponding Kripke frame should be finite; such a frame exists since $\fin \ckf L$ is a wKKZ class.
\end{proof}

As corollaries of the lemmas, we obtain the following theorems.

\begin{theorem}
\label{th:ml:insep:QCG:dfin}
Let $L$ be a modal predicate logic such that\/ $\ckf L$ is a wKKZ class. Then\/ $\logic{QK}$ and $L_{\mathit{dfin}}\oplus\bm{bf}$ are recursively inseparable in the language with three unary predicate letters and two individual variables. 
\end{theorem}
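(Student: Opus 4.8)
The plan is to obtain Theorem~\ref{th:ml:insep:QCG:dfin} as an immediate corollary of Lemma~\ref{lem:relativization:G:K:S3:S2} and Lemma~\ref{lem:relativization:G:wKHC:S3:S2}, exactly along the lines by which Theorem~\ref{th:ml:insep:PCG:dfin} was deduced from its two preceding lemmas. The two-variable, binary-letter-free reduction $n\mapsto S_2S_3\mathit{M}^\Box_G\mathit{Tiling}_n^{\mathbb{X}}$ has already been engineered so that, for $n\in\mathbb{X}$, the resulting formula lies in $\logic{QK}$, and, for $n\in\mathbb{Y}$ and $L$ with $\ckf L$ a wKKZ class, it lies outside $\QMLcd\ckf L$; what remains is purely bookkeeping about where $\logic{QK}$ and $L_{\mathit{dfin}}\oplus\bm{bf}$ sit relative to these two sides, plus the observation that this formula belongs to the prescribed fragment with three unary predicate letters ($C$, $G$, $Q$) and two individual variables.

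First I would record the routine inclusions. Since every c\nobreakdash-augmented frame with finite local domains is in particular an e\nobreakdash-augmented frame with finite local domains, $\QMLed\ckf L\subseteq\QMLcd\ckf L$; and $\bm{bf}$ is valid on every c\nobreakdash-augmented frame, so $\bm{bf}\in\QMLcd\ckf L$; as $\QMLcd\ckf L$ is a modal predicate logic (closed under Substitution, Modus Ponens, Generalization, Necessitation), it follows that $L_{\mathit{dfin}}\oplus\bm{bf}=(\QMLed\ckf L)\oplus\bm{bf}\subseteq\QMLcd\ckf L$. On the other side, $\logic{QK}$ is valid on all e\nobreakdash-augmented frames, hence $\logic{QK}\subseteq\QMLed\ckf L=L_{\mathit{dfin}}\subseteq L_{\mathit{dfin}}\oplus\bm{bf}$, so the generalised notion of recursive (in)separability applies to the pair $\logic{QK}\subseteq L_{\mathit{dfin}}\oplus\bm{bf}$. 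I would also note that the tile set $T_n$ is produced effectively from $n$ and $M_0$, the formula $\mathit{M}^\Box\mathit{Tiling}_n^{\mathbb{X}}$ is assembled from $T_n$ by a fixed recipe, and the relativization together with the substitutions $S_3$ and $S_2$ is effective, so the map $n\mapsto S_2S_3\mathit{M}^\Box_G\mathit{Tiling}_n^{\mathbb{X}}$ is total recursive.

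The argument then is by contradiction: assume a recursive set $Z$ with $\logic{QK}\subseteq Z\subseteq L_{\mathit{dfin}}\oplus\bm{bf}$, and set $\mathcal Z=\{n\in\numN : S_2S_3\mathit{M}^\Box_G\mathit{Tiling}_n^{\mathbb{X}}\in Z\}$, which is recursive by the two effectivity remarks. For $n\in\mathbb{X}$, Lemma~\ref{lem:relativization:G:K:S3:S2} puts the formula in $\logic{QK}\subseteq Z$, so $\mathbb{X}\subseteq\mathcal Z$; for $n\in\mathbb{Y}$, Lemma~\ref{lem:relativization:G:wKHC:S3:S2} puts it outside $\QMLcd\ckf L\supseteq Z$, so $\mathbb{Y}\cap\mathcal Z=\varnothing$. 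Then $\mathcal Z$ recursively separates $\mathbb{X}$ and $\mathbb{Y}$, contradicting their recursive inseparability. There is no real obstacle remaining at this stage: all the genuine difficulty --- the $\Box$\nobreakdash-free-to-$\Box$ encoding with two variables resting on the hidden symmetric-irreflexive tiling (Lemma~\ref{lem:1:tiling:QK}), the two-variable simulation of $P_m$, $R_i$, $U_j$ and of $G$ (Lemmas~\ref{lem:relativization:G:K:S3} and~\ref{lem:relativization:G:wKHC:S3}), and the elimination of the binary letter via $S_2$ under the wKKZ hypothesis --- is already packaged into the cited lemmas, and the closest thing to a subtle point here is the one-line reduction of $L_{\mathit{dfin}}\oplus\bm{bf}$ to $\QMLcd\ckf L$ through the Barcan formula.
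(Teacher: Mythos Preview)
Your proposal is correct and follows exactly the paper's approach: the paper's proof is the one-liner ``Follows from Lemmas~\ref{lem:relativization:G:K:S3:S2} and~\ref{lem:relativization:G:wKHC:S3:S2}'', and you have simply spelled out the routine bookkeeping (the inclusions $\logic{QK}\subseteq L_{\mathit{dfin}}\oplus\bm{bf}\subseteq\QMLcd\ckf L$, the fragment membership, and the standard separator-by-contradiction argument) that the paper leaves implicit.
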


\begin{proof}
Follows from Lemmas~\ref{lem:relativization:G:K:S3:S2} and~\ref{lem:relativization:G:wKHC:S3:S2}.
\end{proof}

\begin{theorem}
\label{th:ml:insep:QCG:wfin}
Let $L$ be a modal predicate logic such that\/ $\fin\ckf L$ is a wKKZ class. Then $\logic{QK}$ and $L_{\mathit{wfin}}\oplus\bm{bf}$ are recursively inseparable in the language with three unary predicate letters and two individual variables. 
\end{theorem}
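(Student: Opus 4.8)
The plan is to reduce the recursive inseparability of $\mathbb{X}$ and $\mathbb{Y}$ to that of the pair $\logic{QK}$ and $L_{\mathit{wfin}}\oplus\bm{bf}$ through the (evidently computable) map $n\mapsto S_2S_3\mathit{M}^\Box_G\mathit{Tiling}_n^{\mathbb{X}}$, mirroring the proof of Theorem~\ref{th:ml:insep:QCG:dfin} but replacing its countermodel construction with the finite one. The ``positive'' half is already available: Lemma~\ref{lem:relativization:G:K:S3:S2} gives $n\in\mathbb{X}\imply S_2S_3\mathit{M}^\Box_G\mathit{Tiling}_n^{\mathbb{X}}\in\logic{QK}$. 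For the ``negative'' half I would invoke Lemma~\ref{lem:relativization:G:S:S3:S2}: since $\fin\ckf L$ is a wKKZ class, for $n\in\mathbb{Y}$ there is a \emph{finite} Kripke frame in $\ckf L$ refuting the formula, so $S_2S_3\mathit{M}^\Box_G\mathit{Tiling}_n^{\mathbb{X}}\not\in\QMLcw\ckf L$.

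The only link that needs a sentence is the passage from $\QMLcw\ckf L$ to $L_{\mathit{wfin}}\oplus\bm{bf}$. Here I would observe that $L_{\mathit{wfin}}=\QMLew\ckf L\subseteq\QMLcw\ckf L$ (validity is demanded on the smaller class of c\nobreakdash-augmented frames), that $\bm{bf}$ belongs to $\QMLcw\ckf L$ because it is valid on every c\nobreakdash-augmented frame, and that $\QMLcw\ckf L$ is closed under Substitution, Modus Ponens, Generalization, and Necessitation; hence $L_{\mathit{wfin}}\oplus\bm{bf}\subseteq\QMLcw\ckf L$, and so $S_2S_3\mathit{M}^\Box_G\mathit{Tiling}_n^{\mathbb{X}}\not\in\QMLcw\ckf L$ yields $S_2S_3\mathit{M}^\Box_G\mathit{Tiling}_n^{\mathbb{X}}\not\in L_{\mathit{wfin}}\oplus\bm{bf}$. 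Putting the halves together: if some recursive $Z$ satisfied $\logic{QK}\subseteq Z\subseteq L_{\mathit{wfin}}\oplus\bm{bf}$, then $\{n:S_2S_3\mathit{M}^\Box_G\mathit{Tiling}_n^{\mathbb{X}}\in Z\}$ would be a recursive set containing $\mathbb{X}$ and disjoint from $\mathbb{Y}$, contradicting their recursive inseparability.

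It also remains to count syntactic resources: $S_3\mathit{M}^\Box_G\mathit{Tiling}_n^{\mathbb{X}}$ uses only the predicate letters $P$, $C$, $G$ and two individual variables, and $S_2$ turns $P(x_1,x_2)$ into $\Box(Q(x_1)\vee Q(x_2))$, so $S_2S_3\mathit{M}^\Box_G\mathit{Tiling}_n^{\mathbb{X}}$ contains exactly the three unary letters $Q$, $C$, $G$ and still just two individual variables, as claimed. I do not expect a genuine obstacle at the level of this theorem: the substantive work --- arranging enough worlds in a finite wKKZ frame so that two nested occurrences of $\Box(Q(\cdot)\vee Q(\cdot))$ faithfully encode first $P$ and then the unary tiling letters, while $\bm{bf}$ forces constant domains --- is carried out in Lemmas~\ref{lem:relativization:G:wKHC:S3:S2} and~\ref{lem:relativization:G:S:S3:S2}; the inclusion $L_{\mathit{wfin}}\oplus\bm{bf}\subseteq\QMLcw\ckf L$ is the only point that demands a line of care.
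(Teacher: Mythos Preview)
Your proposal is correct and follows exactly the paper's approach: the paper's proof reads simply ``Follows from Lemmas~\ref{lem:relativization:G:K:S3:S2} and~\ref{lem:relativization:G:S:S3:S2}'', and you have unpacked precisely the reduction and the implicit inclusion $L_{\mathit{wfin}}\oplus\bm{bf}\subseteq\QMLcw\ckf L$ that this one-liner relies on. Your syntactic-resource count is also right and matches the paper's earlier remark that $S_3\mathit{M}^\Box_G\mathit{Tiling}_n^{\mathbb{X}}$ uses only $P$, $C$, $G$ and two variables.
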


\begin{proof}
Follows from Lemmas~\ref{lem:relativization:G:K:S3:S2} and~\ref{lem:relativization:G:S:S3:S2}.
\end{proof}

Theorems~\ref{th:ml:insep:QCG:dfin} and~\ref{th:ml:insep:QCG:wfin} provide us with the following corollary on particular logics.

\begin{corollary}
\label{cor:th:ml:insep:QCG:dfin+wfin}
Let $L$ be one of\/ $\logic{QK}$, $\logic{QT}$, $\logic{QKB}$, $\logic{QKTB}$, $\logic{QK4}$, $\logic{QS4}$, $\logic{QK5}$, $\logic{QS5}$, $\logic{QK45}$, $\logic{QK4B}$, $\logic{QGL}$, $\logic{QGrz}$, $\logic{QwGrz}$, $\logic{QK4}\oplus\bm{bd}_m$, $\logic{QS4}\oplus\bm{bd}_m$, $\logic{QGL}\oplus\bm{bd}_m$, $\logic{QGrz}\oplus\bm{bd}_m$, $\logic{QwGrz}\oplus\bm{bd}_m$, where $m\geqslant 3$. Then $L$ and $L_{\mathit{dfin}}$ as well as $L$ and $L_{\mathit{wfin}}$ are recursively inseparable in the language with three unary predicate letters and two individual variables. 
\end{corollary}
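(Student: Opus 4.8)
The plan is to reduce the corollary to Theorems~\ref{th:ml:insep:QCG:dfin} and~\ref{th:ml:insep:QCG:wfin}. First I would record two soft facts valid for every logic $L$ on the list. On the one hand $\logic{QK}\subseteq L$, since each of these logics is obtained from $\logic{QK}$ or $\logic{QK4}$ by adding further axioms. On the other hand, since $\ckf L$ consists of Kripke frames validating $L$ and validity on a Kripke frame means validity on every model built over any $e$-augmented frame with that underlying Kripke frame (finite local domains included), one gets $L\subseteq L_{\mathit{dfin}}$ and $L\subseteq L_{\mathit{wfin}}$, so that the extended notion of recursive (in)separability applies to the pairs $(L,L_{\mathit{dfin}})$ and $(L,L_{\mathit{wfin}})$. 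Given these, the corollary follows once we know that $\ckf L$ is a wKKZ class (and, since we shall exhibit finite frames, that $\fin\ckf L$ is one too): if some recursive $Z$ satisfied $L\subseteq Z\subseteq L_{\mathit{dfin}}$, then $\logic{QK}\subseteq Z\subseteq L_{\mathit{dfin}}\oplus\bm{bf}$, contradicting Theorem~\ref{th:ml:insep:QCG:dfin}; symmetrically for $L_{\mathit{wfin}}$ via Theorem~\ref{th:ml:insep:QCG:wfin}.

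So the real content is to produce, for each $L$ and each $n\in\numN$, a finite Kripke frame $\otuple{W,R}\in\ckf L$ with a world $w$ and a set $V\subseteq R(w)$ such that $|V|\geqslant n$ and $|R(v)\setminus V|\geqslant n$ for every $v\in V$. My candidate is a three-layer frame: a root $w$; a middle layer $V=\{v_1,\dots,v_n\}$ with $w\mathrel{R}v_i$; and, above each $v_i$, a fresh $n$-element layer $U_i=\{u_{i,1},\dots,u_{i,n}\}$ with $v_i\mathrel{R}u_{i,k}$, the three layers being pairwise disjoint. For $\logic{QK}$ this works as is; for the transitive logics ($\logic{QK4}$, $\logic{QS4}$, $\logic{QGL}$, $\logic{QGrz}$, $\logic{QwGrz}$, and their $\bm{bd}_m$-variants with $m\geqslant 3$) I would take its transitive closure; for the reflexive ones ($\logic{QT}$, $\logic{QS4}$, $\logic{QKTB}$, $\logic{QGrz}$, $\logic{QwGrz}$ and their $\bm{bd}_m$-variants) also add all loops; for the symmetric ones ($\logic{QKB}$, $\logic{QKTB}$) take the symmetric closure; and for the Euclidean-type logics ($\logic{QK5}$, $\logic{QS5}$, $\logic{QK45}$, $\logic{QK4B}$) I would instead use a single cluster $C$ with $2n$ worlds, all seeing all (and themselves), with $w\in C$ and $V$ any $n$-element subset of $C\setminus\{w\}$. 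In every case $V\subseteq R(w)$, $|V|=n$, and $R(v)\setminus V$ contains the $n$ worlds $U_i$ (respectively, the $n$ worlds of $C\setminus V$). The hypothesis $m\geqslant 3$ enters exactly here: a transitive frame carrying such a $V$ must contain a $3$-chain $w\mathrel{R}v_i\mathrel{R}u_{i,k}$ of distinct worlds, hence has depth at least $3$, so $\bm{bd}_m$ has to allow depth $3$.

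The main obstacle will be the remaining, somewhat tedious verification that each chosen frame actually validates its logic: the transitive closure of the irreflexive three-layer frame is a finite strict partial order, hence transitive and converse well-founded, hence validates $\logic{QGL}$; the reflexive three-layer frame is a finite partial order, hence validates $\logic{QGrz}$ and therefore $\logic{QwGrz}$; the reflexive, transitive and symmetric closures satisfy the obvious relational conditions; the cluster is an equivalence frame, so it validates $\logic{QK5}$, $\logic{QS5}$, $\logic{QK45}$, $\logic{QK4B}$ simultaneously; and in all the transitive cases the three-layer frame has depth $3$, so the $\bm{bd}_m$-axioms with $m\geqslant 3$ are respected. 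None of these checks is deep, but they are the only logic-specific part; once they are in place, the general argument of the first paragraph yields the recursive inseparability of $L$ with $L_{\mathit{dfin}}$ and of $L$ with $L_{\mathit{wfin}}$ in the language with three unary predicate letters and two individual variables, which is the statement of Corollary~\ref{cor:th:ml:insep:QCG:dfin+wfin}.
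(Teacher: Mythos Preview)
Your proposal is correct and follows precisely the route intended by the paper: the corollary is stated without proof as an immediate consequence of Theorems~\ref{th:ml:insep:QCG:dfin} and~\ref{th:ml:insep:QCG:wfin}, and you have supplied exactly the verification the paper leaves implicit---namely, that for each listed $L$ the class $\fin\ckf L$ (and hence $\ckf L$) satisfies wKKZ, witnessed by the finite three-layer frames or clusters you describe, together with the sandwich argument $\logic{QK}\subseteq L\subseteq L_{\mathit{dfin}}\subseteq L_{\mathit{dfin}}\oplus\bm{bf}$.
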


Now, we are going to eliminate all predicate letters except for a single unary one. To this end, we propose some other relativizations and substitutions. Also, we need classes of Kripke frames to satisfy a slightly stronger condition than wKKZ. 
%
%
We will not explicitly formulate this condition. Instead, we will describe the logics whose classes of Kripke frames satisfy it. However, our description will allow readers to adjust the resulting construction, including both formulas and additional requirements for classes of frames. Additionally, some of the technical statements below will involve subframe modal predicate logics. Note that the class of Kripke frames of such logic satisfies wKHC if, and only if, it is a Skvortsov class.

First, let us define a modification $S'_3$ of $S_3$. Let $S'_3\mathit{M}^\Box_G\mathit{Tiling}_n^{\mathbb{X}}$ be a formula obtained from $\mathit{M}^\Box_G\mathit{Tiling}_n^{\mathbb{X}}$ by applying~$S_3$ to it and then replacing every occurrence of $C(x)$ and $C(y)$ with $\mathit{tile}'_{k_n+9}(x)$ and $\mathit{tile}'_{k_n+9}(y)$, respectively.

Let $p$ be a proposition letter. Define $\Box_p$ by 
$$
\begin{array}{lcl}
\Box_p\varphi & = & \Box(p\to\varphi).
\end{array}
$$ 
For an $\lang{ML}$\nobreakdash-formula $\varphi$, let $\varphi_p$ be the formula obtained from $\varphi$ by replacing every occurrence of $\Box$ with~$\Box_p$.

\begin{lemma}
\label{lem:relativization:p:K}
If $n\in\mathbb{X}$, then $p\to (S'_3\mathit{M}^\Box_G\mathit{Tiling}_n^{\mathbb{X}})_p \in \logic{QK}$. 
\end{lemma}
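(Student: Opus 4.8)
The statement is the ``relativization-by-a-proposition-letter'' analogue of Lemma~\ref{lem:relativization:G:K:S3}, and the natural route is to reduce it to that lemma. The plan is to take a Kripke model refuting the consequent and carve out of it a model (on a generated subframe) that refutes $S'_3\mathit{M}^\Box\mathit{Tiling}_n^{\mathbb{X}}$, contradicting Lemma~\ref{lem:relativization:G:K:S3} (together with the fact that $C$ can be simulated, which is what the passage from $S_3$ to $S'_3$ is for). First I would recall that $\Box_p\varphi = \Box(p\to\varphi)$, so that the only worlds that matter for the truth of a $\Box_p$-formula at $w$ are those accessible worlds where $p$ holds; hence, if one restricts attention to the submodel consisting of $p$-worlds, the $\Box_p$-modality behaves exactly like $\Box$. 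This is the mechanism that makes the relativization sound.

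The key steps, in order. (1) Assume $n\in\mathbb{X}$ and suppose, for contradiction, that $p\to (S'_3\mathit{M}^\Box_G\mathit{Tiling}_n^{\mathbb{X}})_p \notin \logic{QK}$; fix a Kripke model $\kmodel{M}=\otuple{W,R,D,I}$ and a world $w_0$ with $\kmodel{M},w_0\models p$ but $\kmodel{M},w_0\not\models (S'_3\mathit{M}^\Box_G\mathit{Tiling}_n^{\mathbb{X}})_p$. (2) Form the submodel $\kmodel{M}'=\otuple{W',R',D',I'}$ whose worlds are $\{w_0\}\cup\{v : w_0 R^k v$ for some $k$, reachable only through $p$-worlds$\}$ — in practice, the generated subframe of $w_0$ restricted to worlds satisfying $p$ (keeping $w_0$ itself), with $R'$, $D'$, $I'$ the obvious restrictions. (3) Prove, by induction on the structure of an $\lang{ML}$-formula $\psi$ built from the vocabulary occurring in $S'_3\mathit{M}^\Box_G\mathit{Tiling}_n^{\mathbb{X}}$, that for every $v\in W'$ and every assignment $g$ into the appropriate domains, $\kmodel{M},v\models^g \psi_p$ iff $\kmodel{M}',v\models^g \psi$; the modal case is exactly where $\Box_p$ collapses to $\Box$ on the $p$-generated submodel, and the quantifier cases go through because domains are only restricted, not enlarged (one uses $\logic{QK}$'s expanding-domain semantics). (4) Conclude $\kmodel{M}',w_0\not\models S'_3\mathit{M}^\Box_G\mathit{Tiling}_n^{\mathbb{X}}$, hence $\kmodel{M}',w_0\not\models S_3\mathit{M}^\Box_G\mathit{Tiling}_n^{\mathbb{X}}$ once one checks that $\mathit{tile}'_{k_n+9}$ correctly simulates $C$ in $\kmodel{M}'$ (or, more simply, observe that $S'_3$ is a substitution instance applied on top of $S_3$, so refuting $S'_3\mathit{M}^\Box_G\mathit{Tiling}_n^{\mathbb{X}}$ yields a model refuting $S_3\mathit{M}^\Box_G\mathit{Tiling}_n^{\mathbb{X}}$, with $C$ interpreted as $\mathit{tile}'_{k_n+9}$); this contradicts Lemma~\ref{lem:relativization:G:K:S3}. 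Since $\logic{QK}$ is closed under substitution, one may also just invoke that a substitution instance of a $\logic{QK}$-theorem is a $\logic{QK}$-theorem to handle the $S_3'$-vs-$S_3$ passage cleanly.

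The main obstacle will be the bookkeeping in step (3): one must be careful that the relativization $\varphi\mapsto\varphi_p$ interacts correctly with the already-present relativization $\varphi\mapsto\varphi_G$ inside $\mathit{M}^\Box_G\mathit{Tiling}_n^{\mathbb{X}}$, and that restricting to $p$-worlds does not damage the witnesses needed for the ``serial''-style conditions $\mathit{TC}^\Box_1$, $\mathit{TC}^\Box_2$, $\mathit{TC}^\Box_5$ — but this is precisely why $w_0$ itself is retained and why the premise $p$ of the whole formula is asserted at $w_0$: all the existential content of $\mathit{Tiling}^\Box_n$ that matters in the proof of Lemma~\ref{lem:1:tiling:QK} is evaluated at $w_0$, and $\Diamond$-type assertions such as $\mathit{TC}^\Box_5 = \forall x\,\Diamond C(x)$ are guaranteed to be witnessed by $p$-worlds because of how $C$ was placed in the refuting model back in Lemma~\ref{lem:relativization:G:wKHC:S3}. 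Given these observations the verification is routine, so I would present the bijection/induction at the level of ``it is not hard to check'' and leave the remaining details to the reader, as the surrounding lemmas do.
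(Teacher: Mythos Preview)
Your proposal is correct and matches the paper's own proof: assume a refuting Kripke model, restrict to the $p$-worlds, and obtain a model refuting $S'_3\mathit{M}^\Box_G\mathit{Tiling}_n^{\mathbb{X}}$, contradicting its membership in $\logic{QK}$ (which the paper gets directly from Lemma~\ref{lem:relativization:G:K} by closure under substitution, rather than via Lemma~\ref{lem:relativization:G:K:S3}). Your ``main obstacle'' paragraph is unnecessary and slightly confused, though: the formula-induction in step~(3) already transfers the truth value of the \emph{whole} formula to the restricted model, so there is nothing to check about witnesses for individual subformulas such as $\mathit{TC}^\Box_5$, and the appeal to Lemma~\ref{lem:relativization:G:wKHC:S3} (which treats the case $n\in\mathbb{Y}$, not $n\in\mathbb{X}$) is irrelevant here.
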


\begin{proof}
We give just a sketch of the proof.

Let $n\in\mathbb{X}$. Then, by Lemma~\ref{lem:relativization:G:K}, $\mathit{M}^\Box_G\mathit{Tiling}_n^{\mathbb{X}} \in \logic{QK}$. Then, $S'_3\mathit{M}^\Box_G\mathit{Tiling}_n^{\mathbb{X}} \in \logic{QK}$, since $\logic{QK}$ is closed under Substitution. 

Suppose that $p\to (S'_3\mathit{M}^\Box_G\mathit{Tiling}_n^{\mathbb{X}})_p \not\in \logic{QK}$. Then there exists a model that refutes the formula. 
If we remove all the worlds at which $p$ is not true from the model, then we obtain a model refuting $S'_3\mathit{M}^\Box_G\mathit{Tiling}_n^{\mathbb{X}}$, which gives us a contradiction. 
%
%
Thus, $p\to (S'_3\mathit{M}^\Box_G\mathit{Tiling}_n^{\mathbb{X}})_p \in \logic{QK}$. 
\end{proof}

\begin{lemma}
\label{lem:relativization:p:wKHC}
Let $L$ be a modal predicate logic such that\/ $\ckf L$ is a wKHC class. Then $n\in\mathbb{Y}$ implies $p\to (S'_3\mathit{M}^\Box_G\mathit{Tiling}_n^{\mathbb{X}})_p \not\in \QMLcd \ckf L$. 
\end{lemma}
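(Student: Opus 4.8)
The statement is the ``refutation half'' of the two-variable monadic result, to be obtained by the same recipe already used twice above (Lemmas~\ref{lem:Trakhtenbrot:lem2:sib:binP} and~\ref{lem:relativization:G:wKHC:S3}): produce a concrete Kripke model validating~$L$ in which the formula $p\to (S'_3\mathit{M}^\Box_G\mathit{Tiling}_n^{\mathbb{X}})_p$ is refuted. Fix $n\in\mathbb{Y}$. By Lemma~\ref{lem:Trakhtenbrot:lem2:sib}, $\mathit{MTiling}_n^{\mathbb{X}}\not\in\logic{QCl}_{\mathit{fin}}\uplus\bm{sib}$, so we have the finite classical $\bm{sib}$-model $\cModel{M}=\otuple{\mathcal{D},\mathcal{I}}$ of the proof of Lemma~\ref{lem:Trakhtenbrot:lem2:sib} with $\mathcal{D}=\{0,\ldots,r+4\}\times\{0,\ldots,r+4\}$, together with its $P_k$, $R_k$, $U_k$ interpretations. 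As in the proof of Lemma~\ref{lem:relativization:G:wKHC:S3} we first pass to the enlarged classical domain $\mathcal{D}'=\mathcal{D}\cup\{e_0^a,\ldots,e_{k_n+10}^a,e_P^a,e_R^a,e_U^a : a\in\mathcal{D}\}$ which lets $\mathit{tile}'_k$ and $\mathit{grid}'$ be simulated by $P$ alone, and in which additionally the role of $C(a)$ is now played by $\mathit{tile}'_{k_n+9}(a)$ (this is precisely the point of using $S'_3$ instead of $S_3$): arrange the $P$-edges $\otuple{e_{k_n+9}^a,e_P^a}$ exactly for those $a\in\mathcal{D}$ which in the target model are to satisfy $C$.

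\textbf{Building the Kripke model.} Since $\ckf L$ is a wKHC class, pick a frame $\kframe{F}=\otuple{W,R}\in\ckf L$ and a world $w_0$ with $|R(w_0)\setminus\{w_0\}|\geqslant |\mathcal{D}'|$ (we only need enough distinct successors of the root to hold one ``column element'' of $\mathcal{D}'$ each; the sets $V,U$ of the full KKZ condition are not required here, only wKHC, because the binary letter $P$ is still available). Choose a subset $W'=\{w_a : a\in\mathcal{D}'\}\subseteq R(w_0)\setminus\{w_0\}$ with $w_a=w_b$ iff $a=b$. Now take $\kModel{M}=\otuple{\kframe{F}\odot\mathcal{D}',I}$ in which, for every world $w$: the proposition letter $p$ is declared true everywhere (this is what makes $\Box_p$ behave like~$\Box$, so that $(\cdot)_p$ reduces to the original formula); $G$ is interpreted as $\mathcal{D}$ at every world; $P$ is interpreted at every world as the symmetric closure of $\mathcal{I}(P)$ together with the chains $\otuple{a,e_0^a},\ldots,\otuple{e_{k_n+9}^a,e_{k_n+10}^a}$ and the edges $\otuple{e_m^a,e_P^a}$, $\otuple{e_{k_n+i}^a,e_R^a}$, $\otuple{e_{k_n+j+4}^a,e_U^a}$ exactly as in Lemma~\ref{lem:relativization:G:wKHC:S3}, plus $\otuple{e_{k_n+9}^a,e_P^a}$ whenever $a\in\mathcal{D}$ is meant to carry $C$ (I would just put all $a\in\mathcal{D}$ in this set, or mirror whatever choice the original modal proof made); and $C$ itself is interpreted so that $C(a)$ holds at $w$ iff $w=w_a$ for some $a\in\mathcal{D}$ — but note that via $S'_3$ every occurrence of $C$ in the target formula has already been rewritten to $\mathit{tile}'_{k_n+9}$, so the only thing that matters is that $\cModel{M}',w\models\mathit{tile}'_{k_n+9}(a)$ behaves as $C(a)$ should; this is arranged by the added $\otimes$-edges into $e_P^a$ and has to be checked exactly as the $\mathit{tile}'_m$ equivalences were checked in the proof of Lemma~\ref{lem:Trakhtenbrot:lem2:sib:binP}.

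\textbf{Verification and the main obstacle.} One then checks, by the same routine-but-lengthy bookkeeping as in the cited lemmas, that at~$w_0$: (i) $p$ and $\exists x\,G(x)\wedge\forall x\,(G(x)\to\Box G(x))$ hold (the latter since $G$ is globally $\mathcal{D}$ and $p$ is global); (ii) the premise $\mathit{Tiling}^\Box_n$ of $\mathit{M}^\Box\mathit{Tiling}_n^{\mathbb{X}}$, after relativization by $G$ and the substitutions of $S'_3$, is satisfied — here the crucial $\mathit{TC}^\Box_3,\mathit{TC}^\Box_5$--$\mathit{TC}^\Box_8$ are verified using that $\Diamond C(b)$ at $w_0$ witnesses the successor $w_b$ and that $C$, $H'_n$, $V'_n$ are world-independent on $\mathcal{D}$; and (iii) $\exists x\,P_1(x)$ fails at $w_0$, because $n\not\in\mathbb{X}$ means no tile of type $t_1$ occurs in the special $T'_n$-tiling, so $\cModel{M}\not\models\exists x\,P_1(x)$ and hence no element of $\mathcal{D}$ satisfies $\mathit{tile}'_1$. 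Therefore $\kModel{M},w_0\not\models p\to (S'_3\mathit{M}^\Box_G\mathit{Tiling}_n^{\mathbb{X}})_p$, and since $\kModel{M}$ is based on $\kframe{F}\in\ckf L$ with finite local domains, $p\to (S'_3\mathit{M}^\Box_G\mathit{Tiling}_n^{\mathbb{X}})_p\not\in\QMLcd\ckf L$, as required. The main difficulty is not the frame-theoretic part (wKHC already suffices, as above) but the combined correctness of three layers of rewriting at once: the $G$-relativization, the $S'_3$-substitution of $\mathit{tile}'$ for $P_m,R_i,U_j$ \emph{and} for $C$, and the $\Box_p$-relativization; in particular one must make sure the $C$-elimination does not interfere with $\mathit{TC}^\Box_3$'s use of $\Diamond C$ and with $\mathit{TC}^\Box_5$, i.e.\ that $\mathit{tile}'_{k_n+9}(a)$ genuinely reproduces the intended per-world behaviour of $C(a)$ under $\Box_p$; I would isolate this as a short sublemma (``$\cModel{M}',w\models\mathit{tile}'_{k_n+9}(a)\iff\kModel{M},w\models C(a)$ for $w\in\{w_0\}\cup R(w_0)$, $a\in\mathcal{D}'$'') and then say the rest follows ``as in the proof of Lemma~\ref{lem:relativization:G:wKHC:S3}'', leaving the details to the reader in the usual style of the paper.
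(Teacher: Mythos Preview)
There is a genuine gap: your model makes the interpretation of $P$ the same at every world (and, correspondingly, $p$ true everywhere). This forces $\mathit{tile}'_{k_n+9}(a)$ to have the same truth value at every world, so the substituted $C$ becomes world-independent. But the whole KKZ mechanism behind $\mathit{TC}^\Box_3$ relies on $C$ singling out \emph{one} element per world: at the successor $w_a$ one needs $C(a)$ and $\neg C(b)$ for $b\ne a$. With your choice ``just put all $a\in\mathcal{D}$ in this set'', the relativized $\mathit{TC}^\Box_3$ is simply false in your model: take any $a,e$ with $V'_n(a,e)$ and any right neighbour $b$ of $e$; since $C(b)$ holds, the antecedent is satisfied, and the consequent then demands $V'_n(c,x')$ for the right neighbour $c$ of $a$ and \emph{every} $x'\in\mathcal{D}$, which fails. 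Hence the premise of $(\mathit{M}^\Box\mathit{Tiling}_n^{\mathbb{X}})_G$ is not satisfied at~$w_0$, and the formula is not refuted.

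The paper's proof fixes exactly this: it enlarges $\mathcal{D}'$ by fresh elements $e_C^a$ (and extends the chains to $e_{k_n+11}^a$), takes $W'=\{w_a:a\in\mathcal{D}\}$, and crucially lets the interpretation of $P$ \emph{vary} with the world --- at $w_0$ no $e_C$-edge is present, while at $w_a$ one adds $\otuple{e_{k_n+9}^a,e_C^a}$ for that single~$a$ only; at worlds outside $\{w_0\}\cup W'$ the relation $P$ is empty. Correspondingly $p$ is declared true only on $\{w_0\}\cup W'$, so $\Box_p$ ignores the junk worlds. This world-dependent $P$ is precisely what makes the substituted $C$ behave as ``$C(a)$ iff we are at $w_a$'', which is what $\mathit{TC}^\Box_3$ and $\mathit{TC}^\Box_5$ need. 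Your sublemma ``$\mathit{tile}'_{k_n+9}(a)$ iff $C(a)$'' is the right thing to isolate, but it cannot hold in a model where $P$ is world-constant.
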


\begin{proof}
We are going to apply an argumentation similar to that used in the proof of Lemma~\ref{lem:relativization:G:wKHC:S3}; we do not need to simulate $G$ again, but we do need to simulate~$C$ as well.

Let $n\in\mathbb{Y}$.
Then, by Lemma~\ref{lem:Trakhtenbrot:lem2:sib}, $\mathit{MTiling}_n^{\mathbb{X}}\not\in\logic{QCl}_{\mathit{fin}}\uplus\bm{sib}$. Hence,
there exists a finite classical model $\cModel{M}=\otuple{\mathcal{D},\mathcal{I}}$ such that $\cModel{M}\models\bm{sib}$ and $\cModel{M}\not\models\mathit{MTiling}_n^{\mathbb{X}}$; we may assume that $\cModel{M}$ is the model defined in the proof of Lemma~\ref{lem:Trakhtenbrot:lem2:sib}, in particular,
$$
\begin{array}{lcl}
\mathcal{D} & = & \{0,\ldots,r+4\}\times\{0,\ldots,r+4\}, \\
\end{array}
$$
for a suitable~$r\in\numN$.

For every $a\in \mathcal{D}$, take the elements
$e_0^a,\ldots,e_{k_n+11}^a$ and $e_P^a, e_R^a, e_U^a, e_C^a$; let
$$
\begin{array}{lcl}
\mathcal{D}' 
  & = 
  & \mathcal{D} \cup \{e_0^a,\ldots,e_{k_n+11}^a, e_P^a, e_R^a, e_U^a, e_C^a : a\in\mathcal{D}\}.
\end{array}
$$

Let $\kframe{F}=\otuple{W,R}$ be a Kripke frame of $\ckf L$ and $w_0$ a world of $W$ such that $|R(w_0)\setminus\{w_0\}|\geqslant |\mathcal{D}|$; such frame exists since $\ckf L$ is a wKHC class. Then $R(w_0)\setminus\{w_0\}$ contains a subset $W'=\{w_{a} : a\in\mathcal{D}\}$, where $w_{a}=w_{b}$ if, and only if, $a=b$.
Let $\kModel{M}=\otuple{\kframe{F}\odot\mathcal{D}',I}$ be a Kripke model such that
\begin{itemize}
\item
for every $w\in W$ and every $a\in\mathcal{D}'$,
$$
\begin{array}{lcl}
\kmodel{M},w\models p    & \iff & w\in\{w_0\}\cup W'; \\
\kmodel{M},w\models G(a) & \iff & \mbox{$a\in \mathcal{D}$ and $\kmodel{M},w\models p$;} \\
\end{array}
$$
\item 
$I(w_0,P)$ is the symmetric closure of the relation
$$
\begin{array}{l}
\mathcal{I}(P) \cup 
  \{\otuple{a,e_0^a},\otuple{e_0^a,e_1^a},\ldots,\otuple{e_{k_n+10}^a,e_{k_n+11}^a} : a\in\mathcal{D}\}
  \\
  \phantom{\mathcal{I}(P)} \cup 
  \{\otuple{e_m^a,e_P^a} : \mbox{$m\in\{0,\ldots,k_n\}$ and $\cModel{M}\models P_m(a)$} \}
  \\
  \phantom{\mathcal{I}(P)} \cup
  \{\otuple{e_{k_n+i}^a,e_R^a} : \mbox{$i\in\{1,2,3,4\}$ and $\cModel{M}\models R_i(a)$} \}
  \\
  \phantom{\mathcal{I}(P)} \cup 
  \{\otuple{e_{k_n+j+4}^a,e_U^a} : \mbox{$j\in\{1,2,3,4\}$ and $\cModel{M}\models U_j(a)$} \};
\end{array}
$$
\item 
for every $a\in\mathcal{D}$,
$$
\begin{array}{lcl}
I(w_a,P) & = & I(w_0,P) \cup \{\otuple{e_{k_n+9}^a,e_C^a},\otuple{e_C^a,e_{k_n+9}^a}\};
\end{array}
$$
\item 
for every $w\in W\setminus(W'\cup\{w_0\})$,
$$
\begin{array}{lcl}
I(w,P) & = & \varnothing; 
\end{array}
$$
\end{itemize}
see Figure~\ref{fig:17} for a part of $I(w_a,P)$.
Then it is not hard to show by routine verification that $\kModel{M},w_0\not\models p\to (S'_3\mathit{M}^\Box_G\mathit{Tiling}_n^{\mathbb{X}})_p$; we leave the details to the reader.

Thus, $p\to (S'_3\mathit{M}^\Box_G\mathit{Tiling}_n^{\mathbb{X}})_p \not\in \QMLcd \ckf L$. 
\end{proof}

\begin{figure}
\centering
\begin{tikzpicture}[scale=3.2, rectnode/.style={rectangle, thick, draw=black!60, dashed, rounded corners = 2pt}]

\coordinate (c1) at (-1.4,0.30); 
\coordinate (c2) at (-0.9,-0.3); 
\coordinate (c3) at (0.2,-0.05); 
\coordinate (c4) at (-0.4,0.5); 

\coordinate (c12) at ($0.5*(c1)+0.5*(c2)$);
\coordinate (c23) at ($0.5*(c2)+0.5*(c3)$);
\coordinate (c34) at ($0.5*(c3)+0.5*(c4)$);
\coordinate (c41) at ($0.5*(c4)+0.5*(c1)$);

\coordinate (vecX1) at ($(c3)-(c2)$);
\coordinate (vecX2) at ($(c4)-(c1)$);
\coordinate (vecY1) at ($(c1)-(c2)$);
\coordinate (vecY2) at ($(c4)-(c3)$);

\coordinate (c1r) at (c4);
\coordinate (c2r) at (c3);
\coordinate (c3r) at ($(c3)+0.96*(vecX1)$);
\coordinate (c4r) at ($(c4)+0.96*(vecX2)$);

\coordinate (c1u) at ($(c1)+0.84*(vecY1)$);
\coordinate (c2u) at (c1);
\coordinate (c3u) at (c4);
\coordinate (c4u) at ($(c4)+0.84*(vecY2)$);

\coordinate (c12r) at ($0.5*(c1r)+0.5*(c2r)$);
\coordinate (c23r) at ($0.5*(c2r)+0.5*(c3r)$);
\coordinate (c34r) at ($0.5*(c3r)+0.5*(c4r)$);
\coordinate (c41r) at ($0.5*(c4r)+0.5*(c1r)$);

\coordinate (c12u) at ($0.5*(c1u)+0.5*(c2u)$);
\coordinate (c23u) at ($0.5*(c2u)+0.5*(c3u)$);
\coordinate (c34u) at ($0.5*(c3u)+0.5*(c4u)$);
\coordinate (c41u) at ($0.5*(c4u)+0.5*(c1u)$);

\draw [white, opacity = 0, name path = dg 1] (c1)--(c3);
\draw [white, opacity = 0, name path = dg 2] (c2)--(c4);
\draw [name intersections = {of = dg 1 and dg 2, by = {c}}];
\draw [white, opacity = 0, name path = dg 3] (c12)--(c34);
\draw [white, opacity = 0, name path = dg 4] (c23)--(c41);
\draw [name intersections = {of = dg 3 and dg 4, by = {d}}];

\draw [white, opacity = 0, name path = dgr 1] (c1r)--(c3r);
\draw [white, opacity = 0, name path = dgr 2] (c2r)--(c4r);
\draw [name intersections = {of = dgr 1 and dgr 2, by = {cr}}];
\draw [white, opacity = 0, name path = dgr 3] (c12r)--(c34r);
\draw [white, opacity = 0, name path = dgr 4] (c23r)--(c41r);
\draw [name intersections = {of = dgr 3 and dgr 4, by = {dr}}];

\draw [white, opacity = 0, name path = dgu 1] (c1u)--(c3u);
\draw [white, opacity = 0, name path = dgu 2] (c2u)--(c4u);
\draw [name intersections = {of = dgu 1 and dgu 2, by = {cu}}];
\draw [white, opacity = 0, name path = dgu 3] (c12u)--(c34u);
\draw [white, opacity = 0, name path = dgu 4] (c23u)--(c41u);
\draw [name intersections = {of = dgu 3 and dgu 4, by = {du}}];

\coordinate (diffR) at ($(cr)-(c)$);
\coordinate (diffU) at ($(cu)-(c)$);
\coordinate (corR) at (0,0.012);
\coordinate (corU) at (0,0.032);

\coordinate (em)  at ($(c)+1*(0,0.29)$);
\coordinate (e2)  at ($(c)+2*(0,0.29)$);
\coordinate (e1)  at ($(c)+3*(0,0.29)$);
\coordinate (e0)  at ($(c)+4*(0,0.29)$);
\coordinate (e0') at ($(e0)+ (0,0.21)$);
\coordinate (e0'') at ($(e0')+ (0,0.21)$);

\coordinate (em') at ($(em)+(0.2,0.048)$);
\coordinate (e2') at ($(e2)+(0.2,0.048)$);
\coordinate (e1') at ($(e1)+(0.2,0.048)$);
\coordinate (e3') at ($(e0)+(0.2,0.048)$);

\coordinate (ekn) at ($(c)-1*(0,0.29)$);
\coordinate (ek2) at ($(c)-2*(-0.28*0.25,0.285*0.85)$);
\coordinate (ek1) at ($(c)-2*(+0.26*0.25,0.325*0.85)$);

\coordinate (rem)  at ($(cr)+1*(0,0.29)-1*(corR)$);
\coordinate (re2)  at ($(cr)+2*(0,0.29)-2*(corR)$);
\coordinate (re1)  at ($(cr)+3*(0,0.29)-3*(corR)$);
\coordinate (re0)  at ($(cr)+4*(0,0.29)-4*(corR)$);
\coordinate (re0') at ($(re0)+ (0,0.21)-1*(corR)$);
\coordinate (re0'') at ($(re0')+ (0,0.21)-1*(corR)$);

\coordinate (rem') at ($(rem)+0.96*(0.2,0.048)$);
\coordinate (re2') at ($(re2)+0.96*(0.2,0.048)$);
\coordinate (re1') at ($(re1)+0.96*(0.2,0.048)$);
\coordinate (re3') at ($(re0)+0.96*(0.2,0.048)$);

\coordinate (rekn) at ($(cr)-1*(0,0.29)+1*(corR)$);
\coordinate (rek2) at ($(cr)-2*(-0.28*0.25,0.285*0.85)+1.7*(corR)$);
\coordinate (rek1) at ($(cr)-2*(+0.26*0.25,0.325*0.85)+1.7*(corR)$);

\coordinate (uem)  at ($(cu)+1*(0,0.29)-1*(corU)$);
\coordinate (ue2)  at ($(cu)+2*(0,0.29)-2*(corU)$);
\coordinate (ue1)  at ($(cu)+3*(0,0.29)-3*(corU)$);
\coordinate (ue0)  at ($(cu)+4*(0,0.29)-4*(corU)$);
\coordinate (ue0') at ($(ue0)+ (0,0.21)-1*(corU)$);
\coordinate (ue0'') at ($(ue0')+ (0,0.21)-1*(corU)$);

\coordinate (uem') at ($(uem)+0.96*(0.2,0.040)$);
\coordinate (ue2') at ($(ue2)+0.96*(0.2,0.040)$);
\coordinate (ue1') at ($(ue1)+0.96*(0.2,0.040)$);
\coordinate (ue3') at ($(ue0)+0.96*(0.2,0.040)$);

\coordinate (uekn) at ($(cu)-1*(0,0.29)+1*(corU)$);
\coordinate (uek2) at ($(cu)-2*(-0.28*0.25,0.285*0.85)+1.7*(corU)$);
\coordinate (uek1) at ($(cu)-2*(+0.26*0.25,0.320*0.85)+1.7*(corU)$);

\Rem{
\shade [ball color=black] (ek2) circle [radius = 1.5pt];
\draw  [>=latex, <->, shorten >= 3.5pt, shorten <= 3.5pt, color=black] (ekn)--(ek2);
\shade [ball color=black] (ekn) circle [radius = 1.5pt];
\draw  [>=latex, <->, shorten >= 3pt, shorten <= 3pt, color=black] (c)--(ekn);
\draw  [>=latex, <->, shorten >= 3.5pt, shorten <= 3.5pt, color=black] (ek2)--(ek1);
\draw  [>=latex, <->, shorten >= 3.5pt, shorten <= 3.5pt, color=black] (ekn)--(ek1);
\shade [ball color=black] (ek1) circle [radius = 1.5pt];

\begin{scope}[color=black!42]
\shade [ball color=black!84] (rek2) circle [radius = 0.96*1.5pt];
\filldraw [color=white, opacity = 0.5] (rek2) circle [radius = 0.96*1.55pt];
\draw  [>=latex, <->, shorten >= 3.5pt, shorten <= 3.5pt] (rekn)--(rek2);
\shade [ball color=black!84] (rekn) circle [radius = 0.96*1.5pt];
\filldraw [color=white, opacity = 0.5] (rekn) circle [radius = 0.96*1.55pt];
\draw  [>=latex, <->, shorten >= 3pt, shorten <= 3pt] (cr)--(rekn);
\draw  [>=latex, <->, shorten >= 3.5pt, shorten <= 3.5pt] (rek2)--(rek1);
\draw  [>=latex, <->, shorten >= 3.5pt, shorten <= 3.5pt] (rekn)--(rek1);
\shade [ball color=black!84] (rek1) circle [radius = 0.96*1.5pt];
\filldraw [color=white, opacity = 0.5] (rek1) circle [radius = 0.96*1.55pt];
\shade [ball color=black!84] (uek2) circle [radius = 0.90*1.5pt];
\filldraw [color=white, opacity = 0.5] (uek2) circle [radius = 0.90*1.55pt];
\draw  [>=latex, <->, shorten >= 3.5pt, shorten <= 3.5pt] (uekn)--(uek2);
\shade [ball color=black!84] (uekn) circle [radius = 0.90*1.5pt];
\filldraw [color=white, opacity = 0.5] (uekn) circle [radius = 0.90*1.55pt];
\draw  [>=latex, <->, shorten >= 3pt, shorten <= 3pt] (cu)--(uekn);
\draw  [>=latex, <->, shorten >= 3.5pt, shorten <= 3.5pt] (uek2)--(uek1);
\draw  [>=latex, <->, shorten >= 3.5pt, shorten <= 3.5pt] (uekn)--(uek1);
\shade [ball color=black!84] (uek1) circle [radius = 0.90*1.5pt];
\filldraw [color=white, opacity = 0.5] (uek1) circle [radius = 0.90*1.55pt];
\end{scope}
}

\begin{scope}[color=black!42]
\drawtileflattmslanted{(c1)}{(c2)}{(c3)}{(c4)}
\end{scope}

\begin{scope}[color=black!25]
\drawtileflattmslanted{(c1r)}{(c2r)}{(c3r)}{(c4r)}
\drawtileflattmslanted{(c1u)}{(c2u)}{(c3u)}{(c4u)}
\end{scope}

\begin{scope}[>=latex, <->, shorten >= -7.5pt, shorten <= 4pt, color=black!32]
\draw [] (cr)--($(c34r)+(cr)-(dr)$);
\draw [] (cr)--($(c41r)+(cr)-(dr)$);
\draw [] (cu)--($(c34u)+(cu)-(du)$);
\draw [] (cu)--($(c41u)+(cu)-(du)$);
\end{scope}

\shade [ball color=black!64] (cr) circle [radius = 0.96*1.5pt];
\filldraw [color=white, opacity = 0.5] (cr) circle [radius = 0.96*1.55pt];
\shade [ball color=black!64] (cu) circle [radius = 0.90*1.5pt];
\filldraw [color=white, opacity = 0.5] (cu) circle [radius = 0.90*1.55pt];

\begin{scope}[>=latex, <->, shorten >= 3pt, shorten <= -7.5pt, color=black!32]
\draw [] ($(c12u)+(cu)-(du)$)--(cu);
\draw [] ($(c23r)+(cr)-(dr)$)--(cr);
\end{scope}

\begin{scope}[>=latex, <->, shorten >= 3pt, shorten <= 2pt, color=black!84]
\draw [] (c)--(cr); 
\draw [] (c)--(cu); 
\end{scope}

\shade [ball color=black!64] (c) circle [radius = 1.5pt];

\draw  [>=latex, <->, shorten >= 3pt, shorten <= 3pt, color=black, densely dashed] (c)--(em);
\shade [ball color=black] (em') circle [radius = 1.5pt];
\draw  [>=latex, <->, shorten >= 3.5pt, shorten <= 3.5pt, color=black] (em)--(em');
\shade [ball color=black] (em) circle [radius = 1.5pt];
\draw  [>=latex, <->, shorten >= 3pt, shorten <= 3pt, color=black, densely dashed] (em)--(e2);
\shade [ball color=black] (e2') circle [radius = 1.5pt];
\draw  [>=latex, <->, shorten >= 3.5pt, shorten <= 3.5pt, color=black] (e2)--(e2');
\shade [ball color=black] (e2) circle [radius = 1.5pt];
\draw  [>=latex, <->, shorten >= 3pt, shorten <= 3pt, color=black, densely dashed] (e2)--(e1);
\shade [ball color=black] (e1') circle [radius = 1.5pt];
\draw  [>=latex, <->, shorten >= 3.5pt, shorten <= 3.5pt, color=black] (e1)--(e1');
\shade [ball color=black] (e1) circle [radius = 1.5pt];
\draw  [>=latex, <->, shorten >= 3pt, shorten <= 3pt, color=black, densely dashed] (e1)--(e0);
\shade [ball color=black] (e3') circle [radius = 1.5pt];
\draw  [>=latex, <->, shorten >= 3.5pt, shorten <= 3.5pt, color=black] (e0)--(e3');
\shade [ball color=black] (e0) circle [radius = 1.5pt];
\draw  [>=latex, <->, shorten >= 3pt, shorten <= 3pt, color=black] (e0)--(e0');
\shade [ball color=black] (e0') circle [radius = 1.5pt];
\draw  [>=latex, <->, shorten >= 3pt, shorten <= 3pt, color=black] (e0')--(e0'');
\shade [ball color=black] (e0'') circle [radius = 1.5pt];

\begin{scope}[color=black!42]
\draw  [>=latex, <->, shorten >= 3pt, shorten <= 3pt, densely dashed] (cr)--(rem);
\shade [ball color=black!84] (rem') circle [radius = 0.96*1.5pt];
\filldraw [color=white, opacity = 0.5] (rem') circle [radius = 0.96*1.55pt];
\draw  [>=latex, <->, shorten >= 3.5pt, shorten <= 3.5pt] (rem)--(rem');
\shade [ball color=black!84] (rem) circle [radius = 0.96*1.5pt];
\filldraw [color=white, opacity = 0.5] (rem) circle [radius = 0.96*1.55pt];
\draw  [>=latex, <->, shorten >= 3pt, shorten <= 3pt, densely dashed] (rem)--(re2);
\shade [ball color=black!84] (re2') circle [radius = 0.96*1.5pt];
\filldraw [color=white, opacity = 0.5] (re2') circle [radius = 0.96*1.55pt];
\draw  [>=latex, <->, shorten >= 3.5pt, shorten <= 3.5pt] (re2)--(re2');
\shade [ball color=black!84] (re2) circle [radius = 0.96*1.5pt];
\filldraw [color=white, opacity = 0.5] (re2) circle [radius = 0.96*1.55pt];
\draw  [>=latex, <->, shorten >= 3pt, shorten <= 3pt, densely dashed] (re2)--(re1);
\shade [ball color=black!84] (re1') circle [radius = 0.96*1.5pt];
\filldraw [color=white, opacity = 0.5] (re1') circle [radius = 0.96*1.55pt];
\draw  [>=latex, <->, shorten >= 3.5pt, shorten <= 3.5pt] (re1)--(re1');
\shade [ball color=black!84] (re1) circle [radius = 0.96*1.5pt];
\filldraw [color=white, opacity = 0.5] (re1) circle [radius = 0.96*1.55pt];
\draw  [>=latex, <->, shorten >= 3pt, shorten <= 3pt, densely dashed] (re1)--(re0);
\shade [ball color=black!84] (re0) circle [radius = 0.96*1.5pt];
\filldraw [color=white, opacity = 0.5] (re0) circle [radius = 0.96*1.55pt];
\draw  [>=latex, <->, shorten >= 3pt, shorten <= 3pt] (re0)--(re0');
\shade [ball color=black!84] (re0') circle [radius = 0.96*1.5pt];
\filldraw [color=white, opacity = 0.5] (re0') circle [radius = 0.96*1.55pt];
\draw  [>=latex, <->, shorten >= 3pt, shorten <= 3pt] (re0')--(re0'');
\shade [ball color=black!84] (re0'') circle [radius = 0.96*1.5pt];
\filldraw [color=white, opacity = 0.5] (re0'') circle [radius = 0.96*1.55pt];
\shade [ball color=black!84] (re3') circle [radius = 0.96*1.5pt];
\filldraw [color=white, opacity = 0.5] (re3') circle [radius = 0.96*1.55pt];

\draw  [>=latex, <->, shorten >= 3pt, shorten <= 3pt, densely dashed] (cu)--(uem);
\shade [ball color=black!84] (uem') circle [radius = 0.90*1.5pt];
\filldraw [color=white, opacity = 0.5] (uem') circle [radius = 0.90*1.55pt];
\draw  [>=latex, <->, shorten >= 3.5pt, shorten <= 3.5pt] (uem)--(uem');
\shade [ball color=black!84] (uem) circle [radius = 0.90*1.5pt];
\filldraw [color=white, opacity = 0.5] (uem) circle [radius = 0.90*1.55pt];
\draw  [>=latex, <->, shorten >= 3pt, shorten <= 3pt, densely dashed] (uem)--(ue2);
\shade [ball color=black!84] (ue2') circle [radius = 0.90*1.5pt];
\filldraw [color=white, opacity = 0.5] (ue2') circle [radius = 0.90*1.55pt];
\draw  [>=latex, <->, shorten >= 3.5pt, shorten <= 3.5pt] (ue2)--(ue2');
\shade [ball color=black!84] (ue2) circle [radius = 0.90*1.5pt];
\filldraw [color=white, opacity = 0.5] (ue2) circle [radius = 0.90*1.55pt];
\draw  [>=latex, <->, shorten >= 3pt, shorten <= 3pt, densely dashed] (ue2)--(ue1);
\shade [ball color=black!84] (ue1') circle [radius = 0.90*1.5pt];
\filldraw [color=white, opacity = 0.5] (ue1') circle [radius = 0.90*1.55pt];
\draw  [>=latex, <->, shorten >= 3.5pt, shorten <= 3.5pt] (ue1)--(ue1');
\shade [ball color=black!84] (ue1) circle [radius = 0.90*1.5pt];
\filldraw [color=white, opacity = 0.5] (ue1) circle [radius = 0.90*1.55pt];
\draw  [>=latex, <->, shorten >= 3pt, shorten <= 3pt, densely dashed] (ue1)--(ue0);
\shade [ball color=black!84] (ue0) circle [radius = 0.90*1.5pt];
\filldraw [color=white, opacity = 0.5] (ue0) circle [radius = 0.90*1.55pt];
\draw  [>=latex, <->, shorten >= 3pt, shorten <= 3pt] (ue0)--(ue0');
\shade [ball color=black!84] (ue0') circle [radius = 0.90*1.5pt];
\filldraw [color=white, opacity = 0.5] (ue0') circle [radius = 0.90*1.55pt];
\draw  [>=latex, <->, shorten >= 3pt, shorten <= 3pt] (ue0')--(ue0'');
\shade [ball color=black!84] (ue0'') circle [radius = 0.90*1.5pt];
\filldraw [color=white, opacity = 0.5] (ue0'') circle [radius = 0.90*1.55pt];
\shade [ball color=black!84] (ue3') circle [radius = 0.90*1.5pt];
\filldraw [color=white, opacity = 0.5] (ue3') circle [radius = 0.90*1.55pt];

\end{scope}

\begin{scope}[>=latex, <->, shorten >= 3pt, shorten <= -7.5pt, color=black!84]
\draw [] ($(c12)+(c)-(d)$)--(c);
\draw [] ($(c23)+(c)-(d)$)--(c);
\end{scope}

\node [right, color=black] at ($(c)  +( 0.05,-0.03)$) {$a$} ;
\node [right, color=black] at ($(em) +(-0.01,-0.08)$) {$e_m^a$} ;
\node [right, color=black] at ($(e2) +(-0.01,-0.08)$) {$e_{k_n+i}^a$} ;
\node [right, color=black] at ($(e1) +(-0.01,-0.08)$) {$e_{k_n+j+4}^a$} ;
\node [right, color=black] at ($(e0) +(-0.01,-0.08)$) {$e_{k_n+9}^a$} ;
\node [right, color=black] at ($(e0')+(-0.01,+0.06)$) {$e_{k_n+10}^a$} ;
\node [right, color=black] at ($(e0'')+(-0.01,+0.06)$) {$e_{k_n+11}^a$} ;
\node [right, color=black] at ($(em') +(0.02,-0.01)$) {$e_P^a$} ;
\node [right, color=black] at ($(e2') +(0.02,-0.01)$) {$e_R^a$} ;
\node [right, color=black] at ($(e1') +(0.02,-0.01)$) {$e_U^a$} ;
\node [right, color=black] at ($(e3') +(0.02,-0.01)$) {$e_C^a$} ;

\node [right, color=black] at ($(cu)  +( 0.05,-0.03)$) {$b$} ;
\node [right, color=black] at ($(cr)  +( 0.05,-0.03)$) {$c$} ;
\node [right, color=black] at ($(ue0'')+(-0.01,+0.06)$) {$e_{k_n+11}^b$} ;
\node [right, color=black] at ($(re0'')+(-0.01,+0.06)$) {$e_{k_n+11}^c$} ;
\node [right, color=black] at ($(ue3') +(-0.01,+0.06)$) {$e_C^b$} ;
\node [right, color=black] at ($(re3') +(-0.01,+0.06)$) {$e_C^c$} ;


\end{tikzpicture}
\caption{Simulation of $P_m(a)$, $R_i(a)$, $U_j(a)$, $C(a)$, $\neg C(b)$, and $\neg C(c)$}
\label{fig:17}
\end{figure}
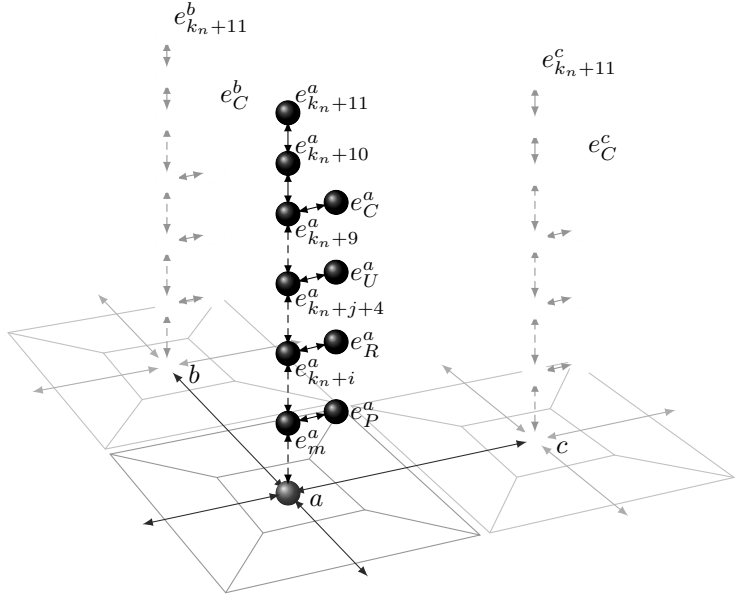

%

Let $q$ and $r$ be two proposition letters different from~$p$.
Let $S_4\mathit{M}^\Box_G\mathit{Tiling}_n^{\mathbb{X}}$ be a formula obtained from $p\to (S'_3\mathit{M}^\Box_G\mathit{Tiling}_n^{\mathbb{X}})_p$ by substituting $\Box(q\to Q(x_1)\vee Q(x_2))$ and $\Box(r\to Q(x_1))$ for $P(x_1,x_2)$ and $G(x_1)$, respectively.

\begin{lemma}
\label{lem:relativization:S4:K}
If $n\in\mathbb{X}$, then $S_4\mathit{M}^\Box_G\mathit{Tiling}_n^{\mathbb{X}} \in \logic{QK}$. 
\end{lemma}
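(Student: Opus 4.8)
The plan is to derive the statement as an immediate corollary of Lemma~\ref{lem:relativization:p:K} together with the closure of $\logic{QK}$ under Substitution, in exactly the same way that Lemma~\ref{lem:relativization:G:K:S3:S2} was derived from Lemma~\ref{lem:relativization:G:K:S3}. First I would invoke Lemma~\ref{lem:relativization:p:K}: since $n\in\mathbb{X}$, we already know $p\to (S'_3\mathit{M}^\Box_G\mathit{Tiling}_n^{\mathbb{X}})_p\in\logic{QK}$. So the whole task reduces to comparing this formula with $S_4\mathit{M}^\Box_G\mathit{Tiling}_n^{\mathbb{X}}$.

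Next I would observe that, by the definition of $S_4$, the formula $S_4\mathit{M}^\Box_G\mathit{Tiling}_n^{\mathbb{X}}$ is obtained from $p\to (S'_3\mathit{M}^\Box_G\mathit{Tiling}_n^{\mathbb{X}})_p$ by a simultaneous predicate substitution, namely by replacing every occurrence of the binary atom $P(x_1,x_2)$ with $\Box(q\to Q(x_1)\vee Q(x_2))$ and every occurrence of the unary atom $G(x_1)$ with $\Box(r\to Q(x_1))$, where $q$, $r$ are proposition letters distinct from $p$ and $Q$ is the unary predicate letter fixed earlier. Because $\logic{QK}$ is a modal predicate logic and hence closed under Substitution, applying this substitution to the $\logic{QK}$-theorem above again produces a $\logic{QK}$-theorem; that theorem is precisely $S_4\mathit{M}^\Box_G\mathit{Tiling}_n^{\mathbb{X}}$.

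The one point that genuinely needs checking~--- and the only place where anything could go wrong~--- is that the two replacements constitute a bona fide predicate substitution in the sense of~\cite{GShS}: they must respect the arities of $P$ and $G$ and must not cause bound-variable capture. Both conditions hold trivially: $p$, $q$, $r$ are proposition letters, $Q$ is a unary letter, and the substituted formulas $\Box(q\to Q(x_1)\vee Q(x_2))$ and $\Box(r\to Q(x_1))$ have exactly $x_1,x_2$ (respectively $x_1$) as their free individual variables, matching the argument places of the atoms they replace. Hence there is no real obstacle, and the proof is the short reduction just described; the substantive work for this two-variable, single-unary-letter setting sits entirely on the side of the finite-frame logics, i.e.\ in the companion lemma playing the role of Lemma~\ref{lem:relativization:p:wKHC}.
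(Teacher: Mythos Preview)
Your proposal is correct and is exactly the intended argument: invoke Lemma~\ref{lem:relativization:p:K} to get $p\to (S'_3\mathit{M}^\Box_G\mathit{Tiling}_n^{\mathbb{X}})_p\in\logic{QK}$, then observe that $S_4$ is a predicate substitution and use closure of $\logic{QK}$ under Substitution. The paper's one-line proof cites Lemma~\ref{lem:relativization:p:wKHC}, but that is evidently a slip for Lemma~\ref{lem:relativization:p:K}; your citation is the right one.
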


\begin{proof}
Follows from Lemma~\ref{lem:relativization:p:wKHC}, since $\logic{QK}$ is closed under Substitution.
\end{proof}

\begin{lemma}
\label{lem:relativization:S4:wKHC}
If $n\in\mathbb{Y}$, then
$S_4\mathit{M}^\Box_G\mathit{Tiling}_n^{\mathbb{X}} \not\in L_{\mathit{wfin}}\oplus\bm{bd}_3\oplus\bm{bf}$ for $L\in\{\logic{QGL},\logic{QGrz}\}$. 
\end{lemma}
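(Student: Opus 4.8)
The plan is to replay the proof of Lemma~\ref{lem:relativization:p:wKHC} on a concrete finite Kripke frame of depth~$3$ that validates $\logic{QGL}$ (respectively $\logic{QGrz}$), letting the replacement of the binary letter~$P$ and the unary letter~$G$ by $\Box(q\to Q(x_1)\vee Q(x_2))$ and $\Box(r\to Q(x_1))$ cost one extra modal layer. The reason depth~$3$ suffices is that $S_4\mathit{M}^\Box_G\mathit{Tiling}_n^{\mathbb{X}}$ has modal depth~$2$: every $\Box$ in $p\to(S'_3\mathit{M}^\Box_G\mathit{Tiling}_n^{\mathbb{X}})_p$ is an unnested $\Box_p$ — the $\Box$'s inside $\mathit{Tiling}^\Box_n$ do not nest with one another, and the $\mathit{tile}'$-formulas are modality-free — and $S_4$ adds exactly one further $\Box$ beneath each such $\Box_p$.

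Fix $n\in\mathbb{Y}$. From the proof of Lemma~\ref{lem:Trakhtenbrot:lem2:sib} recall the finite classical model $\cModel{M}=\otuple{\mathcal{D},\mathcal{I}}$ with $\cModel{M}\models\bm{sib}$ and $\cModel{M}\not\models\mathit{MTiling}_n^{\mathbb{X}}$; from the proof of Lemma~\ref{lem:relativization:p:wKHC} recall the enlarged carrier $\mathcal{D}'\supseteq\mathcal{D}$ with the fresh elements $e_0^a,\ldots,e_{k_n+11}^a,e_P^a,e_R^a,e_U^a,e_C^a$, the symmetric irreflexive relation $I_0$ on $\mathcal{D}'$ — there written $I(w_0,P)$ — and its enlargements $I_a=I_0\cup\{\otuple{e_{k_n+9}^a,e_C^a},\otuple{e_C^a,e_{k_n+9}^a}\}$ — there $I(w_a,P)$ — noting $I_0\subseteq I_a$ and that the extra pair $\{e_{k_n+9}^a,e_C^a\}$ is a non-edge of $I_0$. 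I would take $\kframe{F}=\otuple{W,R}$ to be the depth-$3$ fan with a root $w_0$; at level~$2$, the worlds $w_a$ ($a\in\mathcal{D}$), an $r$-world $v^0$, and $q$-worlds $u^0_{bc}$ for $(b,c)\in\mathcal{D}'\times\mathcal{D}'$; above each $w_a$ at level~$3$, an $r$-world $v^a$ and $q$-worlds $u^a_{bc}$; with $w_0$ immediately below all level-$2$ worlds, $w_a$ immediately below $v^a$ and the $u^a_{bc}$'s, and $R$ the transitive closure (for $\logic{QGL}$) or the reflexive transitive closure (for $\logic{QGrz}$) of this cover relation. Then $\kframe{F}$ is a finite transitive frame of depth~$3$, irreflexive (hence a $\logic{GL}$-frame) in the first case and a partial order (hence a $\logic{Grz}$-frame) in the second, so $\kframe{F}\in\fin\ckf L$ and $\kframe{F}\models\bm{bd}_3$. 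Let $\kModel{M}=\otuple{\kframe{F}\odot\mathcal{D}',I}$ be the constant-domain model — so $\bm{bf}$ holds — in which $p$ is true exactly at $w_0$ and the $w_a$'s, $r$ is true exactly at the $v$-worlds, $q$ is true exactly at the $u$-worlds, $Q(b)$ holds at every $v$-world iff $b\in\mathcal{D}$, and $Q$ at a $q$-world $u^0_{bc}$ (respectively $u^a_{bc}$) is all of $\mathcal{D}'$ when $\otuple{b,c}\in I_0$ (respectively $\otuple{b,c}\in I_a$) and is $\mathcal{D}'\setminus\{b,c\}$ otherwise, following Kripke~\cite{Kripke62} as in the proof of Lemma~\ref{lem:ml:monadic:insep:2}; the value of $Q$ at the $p$-worlds is irrelevant.

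The verification reduces to the two identities, for every $p$-world $w$ (put $I_{w_0}:=I_0$ and $I_{w_a}:=I_a$) and all $b,c\in\mathcal{D}'$:
\[
\begin{array}{lcl}
\kModel{M},w\models\Box(r\to Q(b)) & \iff & b\in\mathcal{D}; \\
\kModel{M},w\models\Box(q\to Q(b)\vee Q(c)) & \iff & \otuple{b,c}\in I_w.
\end{array}
\]
The first holds because $w$ sees at least one $v$-world and all $v$-worlds agree on $Q$. For the second one argues as in the proof of Lemma~\ref{lem:ml:monadic:insep:2}: if $\otuple{b,c}\notin I_w$ then the $q$-world $u_{bc}$ immediately above $w$ — governed by $I_w$ itself — refutes $Q(b)\vee Q(c)$; and if $\otuple{b,c}\in I_w$, then $\{b,c\}$ is an edge of every relation governing a $q$-world reachable from $w$ (here, when $w=w_0$, one uses $I_0\subseteq I_a$, so that the level-$3$ $q$-worlds $u^a_{bc}$, which $w_0$ also reaches, do no harm), and a routine case-split shows no reachable $q$-world refutes $Q(b)\vee Q(c)$. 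Hence the simulated $P$ and $G$ at the $p$-worlds of $\kModel{M}$ interpret exactly as $P$ and $G$ do in the model of Lemma~\ref{lem:relativization:p:wKHC}; and since the $\Box_p$-modalities of $p\to(S'_3\mathit{M}^\Box_G\mathit{Tiling}_n^{\mathbb{X}})_p$ traverse precisely the $p$-reachable worlds, the routine subformula induction of that proof gives $\kModel{M},w_0\not\models S_4\mathit{M}^\Box_G\mathit{Tiling}_n^{\mathbb{X}}$. Since $\kModel{M}$ is based on a finite depth-$3$ frame of $\ckf L$ and has constant domains, $L_{\mathit{wfin}}\oplus\bm{bd}_3\oplus\bm{bf}$ is contained in the logic $\kModel{M}$ validates, so $S_4\mathit{M}^\Box_G\mathit{Tiling}_n^{\mathbb{X}}\notin L_{\mathit{wfin}}\oplus\bm{bd}_3\oplus\bm{bf}$.

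The main obstacle is precisely this coherence point: transitivity forces the root $w_0$ to see the level-$3$ $q$-worlds $u^a_{bc}$ created for every $w_a$, and these carry the strictly larger relations $I_a$; one must be sure this produces no spurious simulated $P$-edge at $w_0$, which is where the monotonicity $I_0\subseteq I_a$ together with the fact that $I_a$ exceeds $I_0$ only by the non-$I_0$-edge $\{e_{k_n+9}^a,e_C^a\}$ is used — so the extra $q$-worlds can only witness the truth, never the falsity, of a disjunction $Q(b)\vee Q(c)$ with $\otuple{b,c}\in I_0$, while a dedicated witness $u^0_{bc}$ survives for every genuine non-edge. The remaining points — that $\kframe{F}$ validates $L$ and $\bm{bd}_3$, the modal-depth count, and the subformula induction — are routine or already carried out in the proofs of Lemmas~\ref{lem:1:tiling:QK} and~\ref{lem:relativization:p:wKHC}.
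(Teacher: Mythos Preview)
Your proof is correct and follows essentially the same approach as the paper's: build a finite depth-$3$ transitive tree with the $p$-worlds at the top two levels and leaf worlds carrying $q$ (to simulate~$P$ via $\Box(q\to Q(x_1)\vee Q(x_2))$) and $r$ (to simulate~$G$ via $\Box(r\to Q(x_1))$), then verify the two displayed equivalences and transfer the refutation from Lemma~\ref{lem:relativization:p:wKHC}. The only differences are cosmetic: the paper uses a \emph{single} shared $r$-world $u_G$ (seen by all $\bar w_a$) rather than your per-world copies $v^0,v^a$, and it does \emph{not} give the root its own dedicated level-$2$ leaves $u^0_{bc},v^0$ --- the root relies entirely on the level-$3$ worlds reached by transitivity, so the paper implicitly uses the observation that the extra edge of $I_a$ varies with~$a$ (hence $\bigcap_a I_a = I_0$) in place of your explicit witnesses $u^0_{bc}$. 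Your variant is slightly larger but makes the coherence check you flag more transparent; the paper's is leaner but leaves that check to the reader.
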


\begin{proof}
Let $L=\logic{QGL}$.
Let $n\in\mathbb{Y}$. 
Let $\kModel{M}=\otuple{\kframe{F}\odot\mathcal{D}',I}$ be the model constructed in the proof of Lemma~\ref{lem:relativization:p:wKHC}. 
Recall that $\mathcal{D}'$ contains $\mathcal{D}$ and $\mathcal{D}=\{0,\ldots,r+4\}\times\{0,\ldots,r+4\}$.
Define
$$
\begin{array}{lcl}
W_0     & = & \{\bar{w}_0\}; \smallskip\\
W_1     & = & \{\bar{w}_a : a\in \mathcal{D}\}; \smallskip\\
W_2^P   & = & \{u^a_{bc} : \mbox{$a\in \mathcal{D}$, $b,c\in\mathcal{D}'$}\}; \smallskip\\
W_2^G   & = & \{u_G\}; \smallskip\\
\bar{W} & = & W_0 \cup W_1 \cup W_2^P \cup W_2^G.
\end{array}
$$
Let $\bar{R}$ be the transitive closure of the following relation on~$W$:
$$
\begin{array}{c}
\displaystyle
(W_0 \times W_1) 
  \cup \{\otuple{\bar{w}_a,u^a_{bc}} : \mbox{$a\in\mathcal{D}$, $b,c\in\mathcal{D}'$}\}
  \cup (W_1\times W_2^G).
\end{array}
$$
Let $\bar{\kModel{M}}=\otuple{\bar{\kframe{F}}\odot\mathcal{D}',\bar{I}}$ be a model such that, for every $w\in \bar{W}$, every $a\in\mathcal{D}$, and all $b,c,d\in\mathcal{D}'$,
$$
\begin{array}{lcl}
\bar{\kModel{M}},w\models p      & \iff & w\in W_0\cup W_1; \smallskip\\
\bar{\kModel{M}},w\models q      & \iff & w\in W_2^P;       \smallskip\\
\bar{\kModel{M}},w\models r      & \iff & w=u_G;            \smallskip\\
\bar{\kModel{M}},u_G\models Q(d) & \iff & d\in\mathcal{D};  \smallskip\\
\bar{\kModel{M}},u^a_{bc}\not\models Q(d) 
  & \iff 
  & \mbox{$d\in\{b,c\}$ and $\kModel{M},w_a\not\models P(b,c)$.}
  \smallskip\\
\end{array}
$$

Let us compare $\bar{\kModel{M}}$ and $\kModel{M}$. 
Observe that
$$
\begin{array}{lcl}
\{{w}\in    {W} :     {\kModel{M}},{w}\models p\} & = & \{w_0\}\cup \{{w}_a : a \in\mathcal{D}\}; \\
\{{w}\in\bar{W} : \bar{\kModel{M}},{w}\models p\} & = & \{\bar{w}_0\}\cup \{\bar{w}_a : a \in\mathcal{D}\};
\end{array}
$$
also, for every $a\in \mathcal{D}\cup\{0\}$ and all $b,c\in\mathcal{D}'$,
$$
\begin{array}{lcl}
\kModel{M},{w}_a\models G(b) 
  & \iff 
  & \bar{\kModel{M}},\bar{w}_a\models \Box (r\to Q(b)); 
  \smallskip \\
\kModel{M},{w}_a\models P(b,c) 
  & \iff 
  & \bar{\kModel{M}},\bar{w}_a\models \Box(q\to Q(b)\vee Q(c)). 
\end{array}
$$
Since $\kModel{M},{w}_0\not\models p\to (S'_3\mathit{M}^\Box_G\mathit{Tiling}_n^{\mathbb{X}})_p$, we readily obtain that $\bar{\kModel{M}},\bar{w}_0\not\models S_4\mathit{M}^\Box_G\mathit{Tiling}_n^{\mathbb{X}}$. 
Thus, $S_4\mathit{M}^\Box_G\mathit{Tiling}_n^{\mathbb{X}} \not\in \logic{QGL}_{\mathit{wfin}}\oplus\bm{bd}_3\oplus\bm{bf}$. 

Let $L=\logic{QGrz}$.
Then just take the model obtained from $\bar{\kModel{M}}$ by replacing $\bar{R}$ with its reflexive closure; we leave the details to the reader. 
\end{proof}

%

Let us eliminate $p$, $q$, and $r$. There are many ways to do this with formulas that do not contain predicate letters different from~$Q$. We show two of them, leading to slightly different results.

Let $S_5$ be a formula substitution defined by 
$$
\begin{array}{lcl}
S_5(p) 
  & = 
  & \Diamond \forall x\, Q(x)\wedge 
    \Diamond \forall x\, \neg Q(x); \\
S_5(q) 
  & = 
  & \exists x\,Q(x)\wedge 
    \Diamond \forall x\,\neg Q(x)\wedge 
    \neg \Diamond \forall x\, Q(x); \\ 
S_5(r) 
  & = 
  & \exists x\,\neg Q(x)\wedge 
    \Diamond \forall x\, Q(x)\wedge 
    \neg \Diamond \forall x\,\neg Q(x).
\end{array}
$$

\begin{lemma}
\label{lem:relativization:S4:S5:K}
If $n\in\mathbb{X}$, then $S_5 S_4\mathit{M}^\Box_G\mathit{Tiling}_n^{\mathbb{X}} \in \logic{QK}$. 
\end{lemma}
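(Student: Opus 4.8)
The plan is to derive this statement as an immediate consequence of Lemma~\ref{lem:relativization:S4:K} together with the fact that $\logic{QK}$ is closed under Substitution. By Lemma~\ref{lem:relativization:S4:K}, the assumption $n\in\mathbb{X}$ already gives $S_4\mathit{M}^\Box_G\mathit{Tiling}_n^{\mathbb{X}}\in\logic{QK}$. The map $S_5$ is, by definition, a formula substitution: it replaces the proposition letters $p$, $q$, $r$ by the $\lang{ML}$-formulas $S_5(p)$, $S_5(q)$, $S_5(r)$. Hence $S_5 S_4\mathit{M}^\Box_G\mathit{Tiling}_n^{\mathbb{X}}$ is obtained from a theorem of $\logic{QK}$ by a substitution, and since $\logic{QK}$ is closed under Substitution we conclude $S_5 S_4\mathit{M}^\Box_G\mathit{Tiling}_n^{\mathbb{X}}\in\logic{QK}$.

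First I would note that no variable-capture issues arise here: each of $S_5(p)$, $S_5(q)$, $S_5(r)$ is a closed $\lang{ML}$-formula (every quantifier in it binds the single variable $x$ actually occurring, and the formulas contain only the unary letter $Q$, which is substituted for the nullary letters $p$, $q$, $r$). Thus $S_5$ is a legitimate instance of predicate substitution in the sense used throughout Section~\ref{sec:modal}, exactly as in the passage from Lemma~\ref{lem:relativization:G:K:S3} via $S_2$ and the passage from Lemma~\ref{lem:relativization:p:K} via $S_4$.

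There is no real obstacle: the argument is the same one-line substitution-closure step already used in Lemmas~\ref{lem:relativization:G:K:S3}, \ref{lem:relativization:G:K:S3:S2}, \ref{lem:relativization:p:K}, and~\ref{lem:relativization:S4:K}. The only point worth a sentence of care is the verification that $S_5$ is indeed a substitution and not merely a syntactic rewrite, which follows directly from the shape of $S_5(p)$, $S_5(q)$, $S_5(r)$ displayed above. Consequently the proof will consist essentially of the phrase ``Follows from Lemma~\ref{lem:relativization:S4:K}, since $\logic{QK}$ is closed under Substitution.''
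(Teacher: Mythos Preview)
Your proposal is correct and matches the paper's proof exactly: the paper's entire proof is the single sentence ``Follows from Lemma~\ref{lem:relativization:S4:K}, since $\logic{QK}$ is closed under Substitution.'' Your additional remarks about variable capture and the shape of $S_5$ are sound but not needed for the argument.
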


\begin{proof}
Follows from Lemma~\ref{lem:relativization:S4:K}, since $\logic{QK}$ is closed under Substitution.
\end{proof}

\begin{lemma}
\label{lem:relativization:S4:S5:QGL:QGrz}
If $n\in\mathbb{Y}$, then
$S_5 S_4\mathit{M}^\Box_G\mathit{Tiling}_n^{\mathbb{X}} \not\in L_{\mathit{wfin}}\oplus\bm{bd}_4\oplus\bm{bf}$ for $L\in\{\logic{QGL},\logic{QGrz}\}$. 
\end{lemma}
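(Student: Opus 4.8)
The plan is to recycle, with a one-layer decoration, the countermodel produced in the proof of Lemma~\ref{lem:relativization:S4:wKHC}. For $n\in\mathbb{Y}$ we already have a finite model $\bar{\kModel{M}}=\otuple{\bar{\kframe{F}}\odot\mathcal{D}',\bar{I}}$, based on a transitive (ir)reflexive frame of depth~$3$ lying in $\ckf L$ ($L\in\{\logic{QGL},\logic{QGrz}\}$) and with constant domains, such that $\bar{\kModel{M}},\bar{w}_0\not\models S_4\mathit{M}^\Box_G\mathit{Tiling}_n^{\mathbb{X}}$, together with its partition $\bar W=W_0\cup W_1\cup W_2^P\cup W_2^G$ and the valuations $p\mapsto W_0\cup W_1$, $q\mapsto W_2^P$, $r\mapsto\{u_G\}$. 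Passing from $S_4\mathit{M}^\Box_G\mathit{Tiling}_n^{\mathbb{X}}$ to $S_5 S_4\mathit{M}^\Box_G\mathit{Tiling}_n^{\mathbb{X}}$ only replaces the proposition letters $p,q,r$ by the formulas $S_5(p),S_5(q),S_5(r)$, each of which speaks exclusively about $Q$ and about immediate successors on which $Q$ is total ($\forall x\,Q(x)$) or empty ($\forall x\,\neg Q(x)$). So the whole task is: attach ``trivial'' successor worlds to $\bar{\kModel{M}}$ so that $S_5(p),S_5(q),S_5(r)$ acquire, at every old world, exactly the truth values that $p,q,r$ had.

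Concretely I would proceed as follows. First, fix the (so far unconstrained) interpretation of $Q$ at the worlds of $W_0\cup W_1$ to be $\mathcal{D}$ — a nonempty proper subset of $\mathcal{D}'$ — so that there neither $\forall x\,Q(x)$ nor $\forall x\,\neg Q(x)$ holds; this is harmless, since $Q$ at those worlds is never queried inside $S_4\mathit{M}^\Box_G\mathit{Tiling}_n^{\mathbb{X}}$ (all $Q$-atoms sit inside the boxes $\Box(q\to Q(x_1)\vee Q(x_2))$ and $\Box(r\to Q(x_1))$, guarded by $q\to$ resp.\ $r\to$, and $q,r$ are false off $W_2^P\cup\{u_G\}$). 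Next, to each world $w$ of $\bar{\kModel{M}}$ adjoin new end worlds $t^w_\top$ and/or $t^w_\bot$ — reflexive singletons if $L=\logic{QGrz}$, irreflexive if $L=\logic{QGL}$ — with $Q$ total at $t^w_\top$ and $Q$ empty at $t^w_\bot$, including $t^w_\top$ precisely when $w$ must satisfy $\Diamond\forall x\,Q(x)$ and $t^w_\bot$ precisely when $w$ must satisfy $\Diamond\forall x\,\neg Q(x)$: both for $w\in W_0\cup W_1$, only $t^w_\bot$ for $w\in W_2^P$, only $t^w_\top$ for $w=u_G$. After taking the (reflexive-)transitive closure one obtains a finite model $\bar{\bar{\kModel{M}}}$ whose frame is still transitive and (ir)reflexive — hence in $\ckf L$ — has constant domains, so validates $\bm{bf}$, and has depth at most~$4$, since the depth-$3$ chains of $\bar{\kModel{M}}$ are prolonged by a single $t$-leaf; thus $\bm{bd}_4$ is valid on it. A routine case check then yields $\bar{\bar{\kModel{M}}},w\models S_5(\pi)\iff\bar{\kModel{M}},w\models\pi$ for every \emph{old} world $w$ and $\pi\in\{p,q,r\}$, while $S_5(p),S_5(q),S_5(r)$ are all false at every new world. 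Since $Q$ is unchanged on old worlds and the new worlds carry none of $p,q,r$, the boxes $\Box_p$ and $\Box(q\to\cdots),\Box(r\to\cdots)$ behave on $\bar{\bar{\kModel{M}}}$ exactly as on $\bar{\kModel{M}}$, so the substitution lemma gives $\bar{\bar{\kModel{M}}},\bar{w}_0\not\models S_5 S_4\mathit{M}^\Box_G\mathit{Tiling}_n^{\mathbb{X}}$; as $\bar{\bar{\kModel{M}}}$ validates $L_{\mathit{wfin}}\cup\{\bm{bd}_4,\bm{bf}\}$, this means $S_5 S_4\mathit{M}^\Box_G\mathit{Tiling}_n^{\mathbb{X}}\notin L_{\mathit{wfin}}\oplus\bm{bd}_4\oplus\bm{bf}$.

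The step I expect to be the main obstacle — and the one I would write out carefully — is verifying that $S_5(p),S_5(q),S_5(r)$ come out with precisely the right truth values, in particular in the reflexive $\logic{QGrz}$ case, where every new end world and every old leaf of $\bar{\kModel{M}}$ sees itself, so one must check that no ``accidental'' successor defeats the negative conjuncts $\neg\Diamond\forall x\,Q(x)$ of $S_5(q)$ or $\neg\Diamond\forall x\,\neg Q(x)$ of $S_5(r)$. The only real worry is a world $u^a_{bc}\in W_2^P$ at which $Q$ happens to be total (which, by the construction in the proof of Lemma~\ref{lem:relativization:p:wKHC}, occurs exactly when $\kModel{M},w_a\models P(b,c)$) and which, being reflexive, then satisfies $\Diamond\forall x\,Q(x)$ and so falsifies $S_5(q)$ there; but this is harmless, because such a world trivially satisfies $Q(b')\vee Q(c')$ for \emph{all} $b',c'$ and hence can never witness a failure of any encoding box $\Box(q\to Q(x_1)\vee Q(x_2))$, so the value of $S_4\mathit{M}^\Box_G\mathit{Tiling}_n^{\mathbb{X}}$ at $\bar w_0$ is unchanged. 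The rest is bookkeeping parallel to the proofs of Lemmas~\ref{lem:relativization:S4:wKHC} and~\ref{lem:relativization:S4:S5:K}.
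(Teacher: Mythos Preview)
Your approach is essentially the paper's: add terminal worlds where $Q$ is total or empty so that $S_5(p),S_5(q),S_5(r)$ acquire the intended values (the paper uses two shared worlds $v^+,v^-$ placed above $W_2^G$ and $W_2^P$ respectively, rather than per-world leaves, but this is cosmetic). You even go further than the paper's sketch in noticing the reflexive-case subtlety at worlds $u^a_{bc}$ with $Q$ total.

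However, your harmlessness argument is incomplete and in fact wrong. You only treat the failure of $S_5(q)$ at such a world; you miss that $S_5(p)$ is \emph{verified} there as well, since $u^a_{bc}$ reflexively sees itself (where $\forall x\,Q(x)$ holds) and also sees its $t_\bot$-leaf (where $\forall x\,\neg Q(x)$ holds). This drags $u^a_{bc}$ into the scope of every $\Box_{S_5(p)}$ in the formula, in particular into the conjunct $\Box_{S_5(p)}(\mathit{DSR})_G$ of the antecedent. At $u^a_{bc}$ the substituted $P$ and $G$ are vacuously total (since $S_5(q)$ and $S_5(r)$ are false at every successor), hence every $\pi_j$ is true, every $\mathit{path}_k$ is false, and every substituted $\mathit{tile}'_m$ is false; so $(\mathit{DSR})_G$ fails there, the antecedent $(\mathit{Tiling}^\Box_n)_G$ fails at $\bar w_0$, and the whole implication comes out \emph{true} at $\bar w_0$ rather than refuted.

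The repair is easy: arrange that $Q$ is never total at any $u^a_{bc}$. Either include $u^a_{bc}$ only for pairs with $\neg P(b,c)$ at $w_a$ --- these are the only witnesses the $P$-encoding $\Box(q\to Q(x_1)\vee Q(x_2))$ ever needs, and since $P$ is irreflexive the encoding is unaffected --- or, when $P(b,c)$ holds, let $Q$ omit one fixed element of $\mathcal{D}'\setminus\mathcal{D}$. With either adjustment $S_5(p),S_5(q),S_5(r)$ agree with $p,q,r$ at every old world, and the rest of your argument goes through verbatim.
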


\begin{proof}
Let $n\in\mathbb{Y}$ and let $L=\logic{QGL}$. We modify the model $\bar{\kModel{M}}=\otuple{\bar{\kframe{F}}\odot\mathcal{D}',\bar{I}}$ defined in the proof of Lemma~\ref{lem:relativization:S4:wKHC} and based on the Kripke frame $\bar{\kframe{F}} = \otuple{\bar{W},\bar{R}}$. Let
$$
\begin{array}{lcl}
W_3 & = & \{v^-,v^+\}; \smallskip\\
\tilde{W} & = & \bar{W} \cup W_3; \smallskip\\
\end{array}
$$
let also $\tilde{R}$ be the transitive closure of the relation
$$
\begin{array}{c}
\bar{R} \cup (W_2^P \times \{v^-\}) \cup (W_2^G \times \{v^+\}).
\end{array}
$$
Let $\tilde{\kframe{F}}=\otuple{\tilde{W},\tilde{R}}$ and let $\tilde{\kModel{M}} = \otuple{\tilde{\kframe{F}}\odot\mathcal{D}',\tilde{I}}$ be a model such that, for every $w\in \bar{W}$ and every $d\in\mathcal{D}'$,
$$
\begin{array}{lcl}
\tilde{\kModel{M}},w\models Q(d) & \iff & \bar{M},w\models Q(d) \\
\end{array}
$$
and also
$$
\begin{array}{lcl}
\tilde{\kModel{M}},v^-\models \forall x\,\neg Q(x) 
  & \mbox{and} 
  & \tilde{\kModel{M}},v^+\models \forall x\,Q(x). 
\end{array}
$$
Then it is not hard to see that
$$
\begin{array}{lcl}
\{w \in \tilde{W} : \tilde{\kModel{M}},w \models S_5(p)\} 
  & = 
  &  \{w \in \bar{W} : \bar{\kModel{M}},w \models p\}; \\
\{w \in \tilde{W} : \tilde{\kModel{M}},w \models S_5(q)\} 
  & = 
  &  \{w \in \bar{W} : \bar{\kModel{M}},w \models q\}; \\
\{w \in \tilde{W} : \tilde{\kModel{M}},w \models S_5(r)\} 
  & = 
  &  \{w \in \bar{W} : \bar{\kModel{M}},w \models r\}. \\
\end{array}
$$
Since $\bar{\kModel{M}},\bar{w}_0\not\models S_4\mathit{M}^\Box_G\mathit{Tiling}_n^{\mathbb{X}}$, we readily obtain that $\tilde{\kModel{M}},\bar{w}_0\not\models S_5 S_4\mathit{M}^\Box_G\mathit{Tiling}_n^{\mathbb{X}}$. 
Thus, $S_5 S_4\mathit{M}^\Box_G\mathit{Tiling}_n^{\mathbb{X}} \not\in \logic{QGL}_{\mathit{wfin}}\oplus\bm{bd}_4\oplus\bm{bf}$. 

Let $L=\logic{QGrz}$.
Then just take the model obtained from $\tilde{\kModel{M}}$ by replacing $\tilde{R}$ with its reflexive closure; we leave the details to the reader. 
\end{proof}

\begin{lemma}
\label{lem:relativization:S4:S5:QGL:QGrz:dfin}
If $n\in\mathbb{Y}$, then
$S_5 S_4\mathit{M}^\Box_G\mathit{Tiling}_n^{\mathbb{X}} \not\in L_{\mathit{dfin}}\oplus\bm{bd}_4\oplus\bm{bf}$ for $L\in\{\logic{QGL},\logic{QGrz}\}$. 
\end{lemma}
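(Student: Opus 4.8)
The plan is to reuse, essentially verbatim, the countermodel produced in the proof of Lemma~\ref{lem:relativization:S4:S5:QGL:QGrz}, observing that it already has finite domains. Recall that for $n\in\mathbb{Y}$ and $L\in\{\logic{QGL},\logic{QGrz}\}$ that proof constructs a Kripke model $\tilde{\kModel{M}}=\otuple{\tilde{\kframe{F}}\odot\mathcal{D}',\tilde{I}}$ (for $\logic{QGrz}$, the variant built over the reflexive closure of $\tilde{R}$) together with a world $\bar{w}_0$ such that $\tilde{\kModel{M}},\bar{w}_0\not\models S_5 S_4\mathit{M}^\Box_G\mathit{Tiling}_n^{\mathbb{X}}$, where $\tilde{\kframe{F}}=\otuple{\tilde{W},\tilde{R}}$ is a finite Kripke frame of depth at most~$4$ validating $L$. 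What I would do is verify that this same model is also a $\mathit{dfin}$-model over $\ckf L$, i.e., that the augmented frame $\tilde{\kframe{F}}\odot\mathcal{D}'$ lies in $\aug{c}{dfin}{\ckf L}$, and then draw the same conclusion for $L_{\mathit{dfin}}\oplus\bm{bd}_4\oplus\bm{bf}$ that the wfin-lemma draws for $L_{\mathit{wfin}}\oplus\bm{bd}_4\oplus\bm{bf}$.

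The only point to check is that the global domain $\mathcal{D}'$ of $\tilde{\kModel{M}}$ is finite. This follows by tracing the construction back: $\mathcal{D}'$ is the global domain introduced in the proof of Lemma~\ref{lem:relativization:p:wKHC}, obtained from the finite classical model $\cModel{M}$ of Lemma~\ref{lem:Trakhtenbrot:lem2:sib} --- whose domain is $\mathcal{D}=\{0,\ldots,r+4\}\times\{0,\ldots,r+4\}$ --- by adjoining, for each $a\in\mathcal{D}$, the fixed finite block of fresh elements $e_0^a,\ldots,e_{k_n+11}^a,e_P^a,e_R^a,e_U^a,e_C^a$; similarly $\tilde{W}$ (and hence $\tilde{\kframe{F}}$) is finite. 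Consequently every local domain of $\tilde{\kframe{F}}\odot\mathcal{D}'$ equals the finite set $\mathcal{D}'$, so $\tilde{\kframe{F}}\odot\mathcal{D}'$ is a $c$-augmented (in particular $e$-augmented) frame based on the Kripke frame $\tilde{\kframe{F}}\in\ckf L$ with finite local domains, i.e., $\tilde{\kframe{F}}\odot\mathcal{D}'\in\aug{c}{dfin}{\ckf L}$.

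From this it is immediate that the one-frame class $\{\tilde{\kframe{F}}\odot\mathcal{D}'\}$ validates $L_{\mathit{dfin}}=\QMLed\ckf L$, validates $\bm{bf}$ (being a class of $c$-augmented frames), and validates $\bm{bd}_4$ (since $\tilde{\kframe{F}}$ has depth at most~$4$); hence the modal predicate logic of this class, which is closed under Substitution, Modus Ponens, Generalization, and Necessitation, contains $L_{\mathit{dfin}}\oplus\bm{bd}_4\oplus\bm{bf}$. Since $S_5 S_4\mathit{M}^\Box_G\mathit{Tiling}_n^{\mathbb{X}}$ is refuted in $\tilde{\kModel{M}}$ at $\bar{w}_0$, it is not valid on $\tilde{\kframe{F}}\odot\mathcal{D}'$, and therefore it cannot be a theorem of $L_{\mathit{dfin}}\oplus\bm{bd}_4\oplus\bm{bf}$. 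I do not expect any genuine obstacle here: all the real work --- the two-variable modal simulation of the special $T_n$-tiling, the elimination of $P$, $C$, $G$, and of $p,q,r$ --- was already carried out in Lemmas~\ref{lem:relativization:S4:S5:QGL:QGrz}, \ref{lem:relativization:S4:wKHC}, and~\ref{lem:relativization:p:wKHC}, whose countermodels were deliberately arranged to be finite in \emph{both} senses (finite frame and finite domains) simultaneously; the present lemma merely has to point out the second kind of finiteness.
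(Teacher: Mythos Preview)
Your proof is correct and follows essentially the same approach as the paper's: reuse the countermodel $\tilde{\kModel{M}}$ from Lemma~\ref{lem:relativization:S4:S5:QGL:QGrz}, observing that it already has finite local domains. The paper's one-line proof simply invokes the fact that $\logic{QGL}$ and $\logic{QGrz}$ are subframe logics, but the substance is the same---the explicit finite frame $\tilde{\kframe{F}}$ with finite constant domain $\mathcal{D}'$ lies in $\aug{c}{dfin}{\ckf L}$ just as well as in $\aug{c}{wfin}{\ckf L}$, exactly as you spell out.
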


\begin{proof}
Both $\logic{QGL}$ and $\logic{QGrz}$ are subframe logics; therefore, the statement follows from Lemma~\ref{lem:relativization:S4:S5:QGL:QGrz}.
\end{proof}

\begin{theorem}
\label{th:ml:insep:QCG:wfin:S5}
Logics\/ $\logic{QK}$ and\/ $\logic{QGL}_{\mathit{wfin}}\oplus\bm{bd}_4\oplus\bm{bf}$ as well as\/ $\logic{QK}$ and\/ $\logic{QGrz}_{\mathit{wfin}}\oplus\bm{bd}_4\oplus\bm{bf}$ are recursively inseparable in the language with a single unary predicate letter and two individual variables.
\end{theorem}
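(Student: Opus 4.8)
The plan is to obtain the theorem as a routine consequence of Lemmas~\ref{lem:relativization:S4:S5:K} and~\ref{lem:relativization:S4:S5:QGL:QGrz}, in exactly the way the earlier inseparability results were obtained from their pairs of ``positive'' and ``negative'' lemmas. Write $L$ for either $\logic{QGL}$ or $\logic{QGrz}$. First I would note that the map $n\mapsto S_5 S_4\mathit{M}^\Box_G\mathit{Tiling}_n^{\mathbb{X}}$ is computable: the tile set $T_n$, hence $\mathit{Tiling}^\Box_n$ and $\mathit{M}^\Box_G\mathit{Tiling}_n^{\mathbb{X}}$, is produced uniformly in~$n$ (Proposition~\ref{prop:fn}), and each of the relativizations and of the substitutions $S_3$, $S'_3$, $S_4$, $S_5$ is given by an effective syntactic procedure. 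Lemma~\ref{lem:relativization:S4:S5:K} says this map sends every $n\in\mathbb{X}$ into $\logic{QK}$, and Lemma~\ref{lem:relativization:S4:S5:QGL:QGrz} says it sends every $n\in\mathbb{Y}$ into the complement of $L_{\mathit{wfin}}\oplus\bm{bd}_4\oplus\bm{bf}$.

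Next I would collect two elementary observations. The first is that $S_5 S_4\mathit{M}^\Box_G\mathit{Tiling}_n^{\mathbb{X}}$ lies in the stated language: running through the construction, $S_3$ and $S'_3$ replace the unary letters $P_m$, $R_i$, $U_j$, $C$ by the formulas $\mathit{tile}'_k$, which by Remark~\ref{rem:tile:x&y:2} involve only $P$, $G$ and the variables $x,y$; $S_4$ replaces $P(x_1,x_2)$ and $G(x_1)$ by $\Box(q\to Q(x_1)\vee Q(x_2))$ and $\Box(r\to Q(x_1))$; and $S_5$ replaces the proposition letters $p,q,r$ by $\lang{ML}$-formulas whose only predicate letter is $Q$ and whose only variable is $x$. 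Hence the final formula contains the single unary predicate letter $Q$ and only the variables $x$ and $y$. The second observation is that $L_{\mathit{wfin}}=\QMLew\ckf L$ is the logic of a class of augmented Kripke frames, hence contains $\logic{QK}$, while $L_{\mathit{wfin}}\oplus\bm{bd}_4\oplus\bm{bf}$ is a strictly larger set of formulas (it contains $\bm{bf}\notin\logic{QK}$); so $\logic{QK}\subset L_{\mathit{wfin}}\oplus\bm{bd}_4\oplus\bm{bf}$, which is the situation $X\subset Y$ for which recursive inseparability was defined.

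Then I would finish by contradiction. If some recursive set $Z$ satisfied $\logic{QK}\subseteq Z\subseteq L_{\mathit{wfin}}\oplus\bm{bd}_4\oplus\bm{bf}$, then $Z'=\{n\in\numN: S_5 S_4\mathit{M}^\Box_G\mathit{Tiling}_n^{\mathbb{X}}\in Z\}$ would be recursive, and by the two lemmas it would satisfy $\mathbb{X}\subseteq Z'$ and $\mathbb{Y}\cap Z'=\varnothing$, thereby recursively separating $\mathbb{X}$ and $\mathbb{Y}$ --- impossible. Carrying the argument out once for $L=\logic{QGL}$ and once for $L=\logic{QGrz}$ (the negative lemma covers both at once) yields the theorem.

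The genuine difficulty of the result is entirely absorbed into the lemmas already established above: the modal two-variable simulation of the ``hidden'' tiling through the witness letter $C$ together with Sublemma~\ref{sublem:kkz:modal}, the cascade of substitutions $S_3\to S'_3\to S_4\to S_5$ eliminating $P$, $G$, $C$ and then $p,q,r$ in favour of $Q$, and above all the finite-depth countermodels for $\logic{QGL}$ and $\logic{QGrz}$ built by stacking the layers $W_0,W_1,W_2^P,W_2^G$ and then $W_3$. For the theorem itself the only thing that can go wrong is bookkeeping, and I expect the point to verify most carefully is precisely that $S_5$ reintroduces neither a third individual variable nor an extra predicate letter, together with the claim that the layered countermodel has depth exactly $4$, so that $\bm{bd}_4$ --- and not a smaller $\bm{bd}_k$ --- is the correct axiom to adjoin.
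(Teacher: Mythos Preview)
Your proposal is correct and follows exactly the approach of the paper: the paper's proof is the single line ``Follows from Lemmas~\ref{lem:relativization:S4:S5:K} and~\ref{lem:relativization:S4:S5:QGL:QGrz}'', and you have simply unpacked that line, spelling out the computability of the reduction, the fact that the resulting formula lies in the target language, and the standard contradiction via a hypothetical recursive separator of $\mathbb{X}$ and~$\mathbb{Y}$.
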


\begin{proof}
Follows from Lemmas~\ref{lem:relativization:S4:S5:K} and~\ref{lem:relativization:S4:S5:QGL:QGrz}.
\end{proof}

\begin{theorem}
\label{th:ml:insep:QCG:wfin:S5:dfin}
Logics\/ $\logic{QK}$ and\/ $\logic{QGL}_{\mathit{dfin}}\oplus\bm{bd}_4\oplus\bm{bf}$ as well as\/ $\logic{QK}$ and\/ $\logic{QGrz}_{\mathit{dfin}}\oplus\bm{bd}_4\oplus\bm{bf}$ are recursively inseparable in the language with a single unary predicate letter and two individual variables.
\end{theorem}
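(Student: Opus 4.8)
The plan is to obtain Theorem~\ref{th:ml:insep:QCG:wfin:S5:dfin} as an immediate consequence of Lemmas~\ref{lem:relativization:S4:S5:K} and~\ref{lem:relativization:S4:S5:QGL:QGrz:dfin}, exactly mirroring the derivation of Theorem~\ref{th:ml:insep:QCG:wfin:S5} from Lemmas~\ref{lem:relativization:S4:S5:K} and~\ref{lem:relativization:S4:S5:QGL:QGrz}. Fix $L\in\{\logic{QGL},\logic{QGrz}\}$. First I would record two preliminary facts. The map $n\mapsto S_5 S_4\mathit{M}^\Box_G\mathit{Tiling}_n^{\mathbb{X}}$ is computable: the tile set $T'_n$ depends effectively on $n$ (Section~\ref{subsec:tiling_problems}), the formula $\mathit{M}^\Box_G\mathit{Tiling}_n^{\mathbb{X}}$ is assembled from it by a fixed recipe, and the relativizations together with the substitutions $S'_3$, $S_4$, $S_5$ are all effective. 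Moreover, by construction and Remark~\ref{rem:tile:x&y:2}, every formula $S_5 S_4\mathit{M}^\Box_G\mathit{Tiling}_n^{\mathbb{X}}$ lies in the modal predicate language whose only predicate letter is the unary letter $Q$ and whose only individual variables are $x$ and $y$, which is what makes the claim about a single unary predicate letter and two variables meaningful.

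With these facts in hand, I would run the standard reduction. Suppose, towards a contradiction, that $\logic{QK}$ and $L_{\mathit{dfin}}\oplus\bm{bd}_4\oplus\bm{bf}$ are recursively separable in the fragment with a single unary predicate letter and two individual variables; since $\logic{QK}\subseteq L_{\mathit{dfin}}\oplus\bm{bd}_4\oplus\bm{bf}$ (every $\logic{QK}$-formula is valid on all Kripke frames, hence on all the augmented frames in question), this means there is a recursive set $Z$ of formulas of that fragment with $\logic{QK}\subseteq Z\subseteq L_{\mathit{dfin}}\oplus\bm{bd}_4\oplus\bm{bf}$. Put $\mathbb{Z}=\{n\in\numN : S_5 S_4\mathit{M}^\Box_G\mathit{Tiling}_n^{\mathbb{X}}\in Z\}$; this set is recursive because $Z$ is recursive and the map above is computable. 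By Lemma~\ref{lem:relativization:S4:S5:K}, if $n\in\mathbb{X}$ then $S_5 S_4\mathit{M}^\Box_G\mathit{Tiling}_n^{\mathbb{X}}\in\logic{QK}\subseteq Z$, so $\mathbb{X}\subseteq\mathbb{Z}$; by Lemma~\ref{lem:relativization:S4:S5:QGL:QGrz:dfin}, if $n\in\mathbb{Y}$ then $S_5 S_4\mathit{M}^\Box_G\mathit{Tiling}_n^{\mathbb{X}}\notin L_{\mathit{dfin}}\oplus\bm{bd}_4\oplus\bm{bf}$, hence it is not in $Z$, so $\mathbb{Z}\cap\mathbb{Y}=\varnothing$. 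Thus $\mathbb{Z}$ is a recursive set separating $\mathbb{X}$ and $\mathbb{Y}$, contradicting their recursive inseparability. The same argument applies verbatim to both $L=\logic{QGL}$ and $L=\logic{QGrz}$, since Lemma~\ref{lem:relativization:S4:S5:QGL:QGrz:dfin} covers the two cases simultaneously.

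I do not expect a genuine obstacle at this level: all the real work is upstream, in the chain of substitutions $S_2,S_3,S'_3,S_4,S_5$ and the successive relativizations that squeeze the construction down to one unary predicate letter, two variables, and Kripke depth four. In particular, the delicate point sits inside Lemma~\ref{lem:relativization:S4:S5:QGL:QGrz} (and, for the $\mathit{dfin}$ version, its subframe reduction in Lemma~\ref{lem:relativization:S4:S5:QGL:QGrz:dfin}): one must check that adjoining the two extra worlds $v^-,v^+$ keeps the countermodel of depth at most four and still based on a $\logic{QGL}$- (respectively $\logic{QGrz}$-) frame, while the $Q$-formulas produced by $S_5$ continue to encode the proposition letters $p,q,r$ correctly. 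For the present theorem this is exactly what those lemmas assert, so it may simply be invoked; the only thing worth double-checking when writing the final text is that the finite, finite-domain countermodel furnished by Lemma~\ref{lem:relativization:S4:S5:QGL:QGrz:dfin} indeed validates $\bm{bf}$ (its local domains are all equal to $\mathcal{D}'$) and $\bm{bd}_4$ (its underlying frame has chains of length at most four), so that it lies in $L_{\mathit{dfin}}\oplus\bm{bd}_4\oplus\bm{bf}$-refuting position rather than merely in $L_{\mathit{dfin}}$-refuting position.
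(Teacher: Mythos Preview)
Your proposal is correct and follows exactly the paper's approach: the paper's proof is the single line ``Follows from Lemmas~\ref{lem:relativization:S4:S5:K} and~\ref{lem:relativization:S4:S5:QGL:QGrz:dfin},'' and you have spelled out the standard reduction that this line abbreviates. Your additional checks (computability of the map, the formula landing in the one-unary-letter two-variable fragment, and the countermodel validating $\bm{bf}$ and $\bm{bd}_4$) are all sound and simply make explicit what the paper leaves implicit.
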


\begin{proof}
Follows from Lemmas~\ref{lem:relativization:S4:S5:K} and~\ref{lem:relativization:S4:S5:QGL:QGrz:dfin}.
\end{proof}

\begin{corollary}
\label{cor:1:th:ml:insep:QCG:wfin:S5}
Let\/ $\logic{QK}\subseteq L\subseteq L'$ and also either $L'\subseteq \logic{QGL}\oplus\bm{bd}_4\oplus\bm{bf}$ or\/ $L'\subseteq \logic{QGrz}\oplus\bm{bd}_4\oplus\bm{bf}$. Then $L$ and $L'_{\mathit{wfin}}$ as well as $L$ and $L'_{\mathit{dfin}}$ are recursively inseparable in the language with a single unary predicate letter and two individual variables. 
\end{corollary}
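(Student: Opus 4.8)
The plan is to derive the corollary from Theorems~\ref{th:ml:insep:QCG:wfin:S5} and~\ref{th:ml:insep:QCG:wfin:S5:dfin} by a monotonicity argument of the same shape as the earlier corollaries of this section (cf.\ Corollary~\ref{cor1:th:ml:monadic:insep:1}). I would treat the $\logic{QGL}$-alternative explicitly, the $\logic{QGrz}$-alternative being word-for-word the same with $\logic{QGrz}$ in place of $\logic{QGL}$, and carry the $\mathit{wfin}$ and $\mathit{dfin}$ claims in parallel. First note that, since $L\subseteq L'$ and every $L'$-formula is valid on every $e$-augmented frame based on a Kripke frame from $\ckf{L'}$ (in particular on those based on finite ones), we have $L\subseteq L'\subseteq L'_{\mathit{wfin}}$ and $L\subseteq L'\subseteq L'_{\mathit{dfin}}$, so the ``subset'' version of recursive (in)separability is the one in force for both pairs.

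The argument then rests on two ingredients. The lower one is trivial: $\logic{QK}\subseteq L$ by hypothesis. The upper one is the inclusion $L'_{\mathit{wfin}}\subseteq\logic{QGL}_{\mathit{wfin}}\oplus\bm{bd}_4\oplus\bm{bf}$, and likewise $L'_{\mathit{dfin}}\subseteq\logic{QGL}_{\mathit{dfin}}\oplus\bm{bd}_4\oplus\bm{bf}$, which I would extract from $L'\subseteq\logic{QGL}\oplus\bm{bd}_4\oplus\bm{bf}$. Granting these, the conclusion is immediate: if some recursive $Z$ satisfied $L\subseteq Z\subseteq L'_{\mathit{wfin}}$, then $\logic{QK}\subseteq L\subseteq Z\subseteq L'_{\mathit{wfin}}\subseteq\logic{QGL}_{\mathit{wfin}}\oplus\bm{bd}_4\oplus\bm{bf}$, so $Z$ would recursively separate $\logic{QK}$ and $\logic{QGL}_{\mathit{wfin}}\oplus\bm{bd}_4\oplus\bm{bf}$, contradicting Theorem~\ref{th:ml:insep:QCG:wfin:S5}; the $\mathit{dfin}$ statement contradicts Theorem~\ref{th:ml:insep:QCG:wfin:S5:dfin} in the same way, and the $\logic{QGrz}$ case is verbatim.

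I expect the main obstacle to be precisely that upper inclusion. It is not a pure contravariance-of-$\ckf{\cdot}$ argument: adding $\bm{bf}$ drastically shrinks the Kripke-frame class of a logic (so $\ckf{L'}$ can be much larger than $\ckf{\logic{QGL}\oplus\bm{bd}_4\oplus\bm{bf}}$), and a formal chain through $\ckf{\cdot}$ and $\QMLew$ goes the wrong direction. The robust route, which I would take instead, is to argue directly with the witness sequence $\varphi_n=S_5 S_4\mathit{M}^\Box_G\mathit{Tiling}_n^{\mathbb{X}}$: by Lemma~\ref{lem:relativization:S4:S5:K}, $n\in\mathbb{X}$ gives $\varphi_n\in\logic{QK}\subseteq L$; and for $n\in\mathbb{Y}$, Lemmas~\ref{lem:relativization:S4:S5:QGL:QGrz} and~\ref{lem:relativization:S4:S5:QGL:QGrz:dfin} exhibit finite countermodels $\tilde{\kModel{M}}$ of $\varphi_n$ built over finite Kripke frames validating $\logic{QGL}\oplus\bm{bd}_4$ (resp.\ $\logic{QGrz}\oplus\bm{bd}_4$). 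The delicate point is to check that these underlying frames lie in $\ckf{L'}$ — they validate $\logic{QGL}\oplus\bm{bd}_4$, whereas the hypothesis only bounds $L'$ above by $\logic{QGL}\oplus\bm{bd}_4\oplus\bm{bf}$ — so that $\tilde{\kModel{M}}$ witnesses $\varphi_n\notin L'_{\mathit{wfin}}$ and $\varphi_n\notin L'_{\mathit{dfin}}$; once this is pinned down, the recursive map $n\mapsto\varphi_n$ turns any recursive separator of $L$ and $L'_{\mathit{wfin}}$ (resp.\ $L'_{\mathit{dfin}}$) into one of $\mathbb{X}$ and $\mathbb{Y}$, which is impossible, and the corollary follows.
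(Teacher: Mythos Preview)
Your plan is the right one, and you correctly isolate the crux: whether the frame $\tilde{\kframe{F}}$ underlying the countermodels of Lemmas~\ref{lem:relativization:S4:S5:QGL:QGrz} and~\ref{lem:relativization:S4:S5:QGL:QGrz:dfin} lies in $\ckf L'$. The paper gives no explicit proof; it presumably intends the monotonicity reduction to Theorems~\ref{th:ml:insep:QCG:wfin:S5} and~\ref{th:ml:insep:QCG:wfin:S5:dfin} that you sketch first, and you correctly observe that the needed inclusion $L'_{\mathit{wfin}}\subseteq\logic{QGL}_{\mathit{wfin}}\oplus\bm{bd}_4\oplus\bm{bf}$ does not come from $\ckf$\nobreakdash-contravariance, because adding~$\bm{bf}$ shrinks the Kripke\nobreakdash-frame class drastically.

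The gap you flag as ``delicate'' is genuine, however, and cannot be closed in the generality stated. The frame $\tilde{\kframe{F}}$ is transitive and irreflexive of depth~$4$; as a Kripke frame it validates $\logic{QGL}\oplus\bm{bd}_4$, but not~$\bm{bf}$ (a Kripke frame validates $\bm{bf}$ exactly when every $R$\nobreakdash-arrow lies inside a cluster, which for a $\logic{GL}$\nobreakdash-frame forces $R=\varnothing$). So the hypothesis $L'\subseteq\logic{QGL}\oplus\bm{bd}_4\oplus\bm{bf}$ does not give $\tilde{\kframe{F}}\in\ckf L'$. Concretely, take $L=L'=\logic{QGL}\oplus\bm{bd}_4\oplus\bm{bf}$: then $\ckf L'$ contains only frames with empty accessibility relation, $L'_{\mathit{wfin}}$ collapses to $\logic{QVer}$, and its fragment with one unary letter and two variables is decidable~--- so the conclusion actually fails there, and neither your witness argument nor the monotonicity argument can be completed. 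What \emph{does} go through, and what suffices for every application the paper makes (Corollary~\ref{cor:2:th:ml:insep:QCG:wfin:S5}), is the hypothesis $L'\subseteq\logic{QGL}\oplus\bm{bd}_4$ (resp.\ $L'\subseteq\logic{QGrz}\oplus\bm{bd}_4$), dropping the~$\oplus\bm{bf}$: then $\tilde{\kframe{F}}\models\logic{QGL}\oplus\bm{bd}_4\supseteq L'$, hence $\tilde{\kframe{F}}\in\ckf L'$, and your witness argument completes verbatim for both $L'_{\mathit{wfin}}$ and~$L'_{\mathit{dfin}}$. The extra~$\oplus\bm{bf}$ in the corollary's hypothesis appears to be an overreach inherited from the theorems' conclusions, where it merely records that the countermodel is c\nobreakdash-augmented.
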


\begin{corollary}
\label{cor:2:th:ml:insep:QCG:wfin:S5}
Let $L$ be one of $\logic{QK}$, $\logic{QT}$, $\logic{QD}$, $\logic{QK4}$, $\logic{QD4}$, $\logic{QS4}$, $\logic{QGL}$, $\logic{QGrz}$, $\logic{QK4}\oplus\bm{bd}_m$, $\logic{QS4}\oplus\bm{bd}_m$, $\logic{QGL}\oplus\bm{bd}_m$, $\logic{QGrz}\oplus\bm{bd}_m$, where $m\geqslant 4$. Then $L$ and $L_{\mathit{wfin}}$ as well as $L$ and $L_{\mathit{dfin}}$ are recursively inseparable in the language with a single unary predicate letter and two individual variables. 
\end{corollary}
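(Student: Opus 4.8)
The plan is to obtain the statement as a direct instance of Corollary~\ref{cor:1:th:ml:insep:QCG:wfin:S5}, taking $L'=L$. With this choice the hypothesis $\logic{QK}\subseteq L\subseteq L'$ collapses to $\logic{QK}\subseteq L$, which holds by definition for every logic in the list, since each of them is obtained from $\logic{QK}$ by adjoining further axioms (closed, of course, under the relevant rules). Hence the whole task reduces to checking, for each listed~$L$, that either $L\subseteq\logic{QGL}\oplus\bm{bd}_4\oplus\bm{bf}$ or $L\subseteq\logic{QGrz}\oplus\bm{bd}_4\oplus\bm{bf}$; once this is verified, Corollary~\ref{cor:1:th:ml:insep:QCG:wfin:S5} immediately gives the recursive inseparability of $L$ and $L_{\mathit{wfin}}$, and of $L$ and $L_{\mathit{dfin}}$, in the language with a single unary predicate letter and two individual variables.

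For the logics without a depth bound I would use the two chains of inclusions
\[
\logic{QK}\subseteq\logic{QK4}\subseteq\logic{QGL}\subseteq\logic{QGL}\oplus\bm{bd}_4\oplus\bm{bf}
\]
and
\[
\logic{QD}\subseteq\logic{QT}\subseteq\logic{QS4}\subseteq\logic{QGrz}\subseteq\logic{QGrz}\oplus\bm{bd}_4\oplus\bm{bf},\qquad \logic{QD4}\subseteq\logic{QS4}.
\]
Here the inclusions $\logic{QD}\subseteq\logic{QT}$ and $\logic{QD4}\subseteq\logic{QS4}$ use that $\Diamond\top$ is derivable from $\Box p\to p$ (substitute $\bot$ for~$p$), i.e.\ reflexivity entails seriality. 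This places $\logic{QK}$, $\logic{QK4}$, $\logic{QGL}$ inside $\logic{QGL}\oplus\bm{bd}_4\oplus\bm{bf}$ and $\logic{QT}$, $\logic{QD}$, $\logic{QD4}$, $\logic{QS4}$, $\logic{QGrz}$ inside $\logic{QGrz}\oplus\bm{bd}_4\oplus\bm{bf}$.

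For the depth-bounded logics the extra ingredient is the standard fact that, over $\logic{K4}$, the formula $\bm{bd}_4$ implies $\bm{bd}_m$ for every $m\geqslant 4$ (see \cite[Propositions~3.42--3.44]{ChZ}), so that $\bm{bd}_m\in\logic{QK4}\oplus\bm{bd}_4$. Combining this with the inclusions above I would conclude, for all $m\geqslant 4$,
\[
\logic{QK4}\oplus\bm{bd}_m\subseteq\logic{QGL}\oplus\bm{bd}_4,\qquad \logic{QGL}\oplus\bm{bd}_m\subseteq\logic{QGL}\oplus\bm{bd}_4,
\]
\[
\logic{QS4}\oplus\bm{bd}_m\subseteq\logic{QGrz}\oplus\bm{bd}_4,\qquad \logic{QGrz}\oplus\bm{bd}_m\subseteq\logic{QGrz}\oplus\bm{bd}_4,
\]
and each right-hand side is contained in the corresponding logic with $\bm{bf}$ adjoined. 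This exhausts the list, and the corollary follows. The argument is purely a bookkeeping of inclusions, so there is no real obstacle; the only points needing a moment's care are the implication $\bm{bd}_4\to\bm{bd}_m$ over $\logic{K4}$ — needed precisely so that a weaker depth bound does not push us outside the two reference logics — and the observation that reflexivity implies seriality, used for $\logic{QD}$ and $\logic{QD4}$.
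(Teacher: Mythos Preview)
Your proof is correct and follows exactly the intended route: the paper gives no explicit proof of this corollary, leaving it as an immediate consequence of Corollary~\ref{cor:1:th:ml:insep:QCG:wfin:S5} with $L'=L$, and your verification of the required inclusions is precisely the bookkeeping one has to do.
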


Let $S'_5$ be a formula substitution defined by 
$$
\begin{array}{lcl}
S'_5(p) 
  & = 
  & \Diamond\Diamond\top \wedge \Diamond\Box\bot; \\
S'_5(q) 
  & = 
  & \Box\bot; \\ 
S'_5(r) 
  & = 
  & \Diamond \top \wedge \neg \Diamond\Box\bot.
\end{array}
$$

\begin{lemma}
\label{lem:relativization:S4:S'5:K}
If $n\in\mathbb{X}$, then $S'_5 S_4\mathit{M}^\Box_G\mathit{Tiling}_n^{\mathbb{X}} \in \logic{QK}$. 
\end{lemma}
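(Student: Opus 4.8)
The plan is to follow the same recipe used for every previous $S$-transformation in this section: reduce the claim to the previous lemma in the chain (here Lemma~\ref{lem:relativization:S4:K}) by exhibiting, from a hypothetical countermodel of $S'_5S_4\mathit{M}^\Box_G\mathit{Tiling}_n^{\mathbb{X}}$, a countermodel of $S_4\mathit{M}^\Box_G\mathit{Tiling}_n^{\mathbb{X}}$, and then invoking closure of $\logic{QK}$ under Substitution. Concretely, first I would observe that $S'_5$ is a propositional substitution: it replaces the proposition letters $p$, $q$, $r$ by the fixed modal sentences $\Diamond\Diamond\top\wedge\Diamond\Box\bot$, $\Box\bot$, and $\Diamond\top\wedge\neg\Diamond\Box\bot$, respectively, and leaves $Q$ (and the individual variables) untouched. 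Since $\logic{QK}$ is closed under Substitution and, by Lemma~\ref{lem:relativization:S4:K}, $S_4\mathit{M}^\Box_G\mathit{Tiling}_n^{\mathbb{X}}\in\logic{QK}$ whenever $n\in\mathbb{X}$, applying $S'_5$ to that theorem immediately yields $S'_5S_4\mathit{M}^\Box_G\mathit{Tiling}_n^{\mathbb{X}}\in\logic{QK}$. This is the entire content of the lemma; the proof will be one or two lines, exactly parallel to the proofs of Lemmas~\ref{lem:relativization:S4:K}, \ref{lem:relativization:G:K:S3:S2}, and~\ref{lem:relativization:S4:S5:K}.

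So the body of the proof should simply read: \emph{Follows from Lemma~\ref{lem:relativization:S4:K}, since $\logic{QK}$ is closed under Substitution.} If one wants slightly more, one can add the remark that $S'_5$ acts only on the proposition letters $p,q,r$ occurring in $S_4\mathit{M}^\Box_G\mathit{Tiling}_n^{\mathbb{X}}$ and hence is a legitimate (simultaneous) substitution instance.

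There is essentially no obstacle in this particular statement — it is the ``easy half'' of the separating pair, and the real work is deferred to the companion lemma (the analogue of Lemma~\ref{lem:relativization:S4:S5:QGL:QGrz} for $S'_5$), where one must construct a finite Kripke model of bounded depth validating $\logic{QGL}$ or $\logic{QGrz}$ on which $S'_5(p)$, $S'_5(q)$, $S'_5(r)$ carve out exactly the three zones $W_0\cup W_1$, $W_2^P$, $W_2^G$ of the model from Lemma~\ref{lem:relativization:S4:wKHC}. In that dual lemma the point will be that $\Box\bot$ marks the terminal worlds, $\Diamond\Box\bot$ distinguishes worlds one step above a terminal cluster, and $\Diamond\Diamond\top$ certifies depth at least three; getting the depth bound and the $\logic{GL}$/$\logic{Grz}$ validity to match up is the delicate part there. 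For the present lemma, however, nothing of that sort is needed: the proof is purely formal via Substitution.

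\begin{proof}
Follows from Lemma~\ref{lem:relativization:S4:K}, since $\logic{QK}$ is closed under Substitution.
\end{proof}
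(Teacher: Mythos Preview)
Your proof is correct and matches the paper's own proof verbatim: ``Follows from Lemma~\ref{lem:relativization:S4:K}, since $\logic{QK}$ is closed under Substitution.'' The surrounding commentary about $S'_5$ being a propositional substitution is accurate but unnecessary for the formal argument.
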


\begin{proof}
Follows from Lemma~\ref{lem:relativization:S4:K}, since $\logic{QK}$ is closed under Substitution.
\end{proof}

\begin{lemma}
\label{lem:relativization:S4:S'5:QGL:QGrz}
If $n\in\mathbb{Y}$, then
$S'_5 S_4\mathit{M}^\Box_G\mathit{Tiling}_n^{\mathbb{X}} \not\in \logic{QwGrz}_{\mathit{wfin}}\oplus\bm{bd}_3\oplus\bm{bf}$. 
\end{lemma}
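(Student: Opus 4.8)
The plan is to reuse the model built in the proof of Lemma~\ref{lem:relativization:S4:S5:QGL:QGrz}, but to get rid of the proposition letters $p$, $q$, $r$ by the purely modal formulas $S'_5(p)$, $S'_5(q)$, $S'_5(r)$, which talk about reachability of dead ends instead of about an auxiliary valuation. The reason $\logic{QwGrz}$ appears here (rather than $\logic{QGL}$ or $\logic{QGrz}$) is that the required countermodel must contain \emph{both} irreflexive points (so that $\Box\bot$, and hence $S'_5(q)$, is true somewhere) \emph{and} a reflexive point (so that $\Diamond\top\wedge\neg\Diamond\Box\bot$, i.e.\ $S'_5(r)$, is true somewhere); and $\logic{QwGrz}=\logic{QK4}\oplus\Box^+(\Box(p\to\Box p)\to p)\to p$ is exactly the logic whose finite Kripke frames are the finite transitive frames without proper clusters, each world being either a reflexive or an irreflexive singleton.

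Concretely, for $n\in\mathbb{Y}$ I would recall the finite transitive model $\bar{\kModel{M}}=\langle\bar{\kframe{F}}\odot\mathcal{D}',\bar{I}\rangle$ from the proof of Lemma~\ref{lem:relativization:S4:wKHC}, with $\bar{\kframe{F}}=\langle\bar W,\bar R\rangle$, $\bar W=W_0\cup W_1\cup W_2^P\cup W_2^G$, $W_2^G=\{u_G\}$, and $\bar{\kModel{M}},\bar w_0\not\models S_4\mathit{M}^\Box_G\mathit{Tiling}_n^{\mathbb{X}}$; there $p$ is true exactly on $W_0\cup W_1$, $q$ exactly on $W_2^P$, $r$ exactly on $\{u_G\}$, the points $\bar w_0$, the $\bar w_a\in W_1$ and the $u^a_{bc}\in W_2^P$ are irreflexive, $u_G$ is terminal and irreflexive, and $\bar{\kframe{F}}$ has depth $3$. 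I then pass to $\tilde{\kframe{F}}=\langle\bar W,\tilde R\rangle$ with $\tilde R=\bar R\cup\{\langle u_G,u_G\rangle\}$, i.e.\ I add a single reflexive loop at $u_G$. One checks routinely that $\tilde{\kframe{F}}$ is still finite and transitive, still has depth $3$, and still has only trivial clusters, so $\tilde{\kframe{F}}\in\fin\ckf{\logic{QwGrz}}$ and $\tilde{\kframe{F}}\models\bm{bd}_3$; taking the constant domain $\mathcal{D}'$ and keeping the interpretation of $Q$ unchanged gives a model $\tilde{\kModel{M}}=\langle\tilde{\kframe{F}}\odot\mathcal{D}',\tilde I\rangle$ validating $\logic{QwGrz}_{\mathit{wfin}}\oplus\bm{bd}_3\oplus\bm{bf}$.

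Next I would verify that in $\tilde{\kModel{M}}$ the three substituents have exactly the intended extensions: $S'_5(q)=\Box\bot$ holds at a world iff that world is a $\tilde R$-dead end, i.e.\ iff it lies in $W_2^P$ (the loop makes $u_G$ non-terminal); $S'_5(r)=\Diamond\top\wedge\neg\Diamond\Box\bot$ holds iff a world has a successor but sees no dead end, i.e.\ iff it is $u_G$; and $S'_5(p)=\Diamond\Diamond\top\wedge\Diamond\Box\bot$ holds iff a world sees a point that has a successor and also sees a dead end, i.e.\ iff it lies in $W_0\cup W_1$. Hence the extensions of $S'_5(p)$, $S'_5(q)$, $S'_5(r)$ in $\tilde{\kModel{M}}$ coincide with the extensions of $p$, $q$, $r$ in $\bar{\kModel{M}}$. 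Finally I conclude by a substitution argument: let $\hat{\kModel{M}}$ carry over the same valuation of $p,q,r,Q$ as $\bar{\kModel{M}}$ onto the frame $\tilde{\kframe{F}}$. Adding the loop at $u_G$ does not change the value of $S_4\mathit{M}^\Box_G\mathit{Tiling}_n^{\mathbb{X}}$ at $\bar w_0$, because every modality of that formula is evaluated only at worlds of $W_0\cup W_1$: the $p$-relativized modalities inherited from $\mathit{M}^\Box_G\mathit{Tiling}_n^{\mathbb{X}}$ (which has modal depth $1$, so they are never nested and, from $\bar w_0$, inspect only the $p$-worlds $W_0\cup W_1$), together with the innermost, modality-free boxes $\Box(q\to Q\vee Q)$ and $\Box(r\to Q)$ introduced for $P$ and $G$; since $u_G\notin W_0\cup W_1$, no modality ever inspects $R(u_G)$. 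So $\hat{\kModel{M}},\bar w_0\not\models S_4\mathit{M}^\Box_G\mathit{Tiling}_n^{\mathbb{X}}$, and because the extensions match and $Q$ is unchanged, the standard substitution lemma yields $\tilde{\kModel{M}},\bar w_0\models S'_5 S_4\mathit{M}^\Box_G\mathit{Tiling}_n^{\mathbb{X}}$ iff $\hat{\kModel{M}},\bar w_0\models S_4\mathit{M}^\Box_G\mathit{Tiling}_n^{\mathbb{X}}$, which is false; thus $S'_5 S_4\mathit{M}^\Box_G\mathit{Tiling}_n^{\mathbb{X}}\notin\logic{QwGrz}_{\mathit{wfin}}\oplus\bm{bd}_3\oplus\bm{bf}$.

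The main obstacle is the bookkeeping in the last two steps: making precise that $S'_5(p)$, $S'_5(q)$, $S'_5(r)$ behave as claimed at \emph{every} world of $\bar W$, and that the added loop at $u_G$ is genuinely invisible to $S_4\mathit{M}^\Box_G\mathit{Tiling}_n^{\mathbb{X}}$ evaluated at $\bar w_0$. Both reduce to a careful traversal of the modal structure of $S_4\mathit{M}^\Box_G\mathit{Tiling}_n^{\mathbb{X}}$ (which elsewhere in the paper is left to the reader), but no new idea beyond the one used for Lemma~\ref{lem:relativization:S4:S5:QGL:QGrz} is needed.
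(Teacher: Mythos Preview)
Your proposal is correct and follows essentially the same approach as the paper: add a reflexive loop at $u_G$ to the model $\bar{\kModel{M}}$ from Lemma~\ref{lem:relativization:S4:wKHC}, verify that the extensions of $S'_5(p),S'_5(q),S'_5(r)$ in the modified model coincide with those of $p,q,r$, and conclude. Your write-up is in fact more detailed than the paper's, which merely states the modification, asserts the coincidence of extensions, and leaves the rest to the reader; your explicit check that the loop at $u_G$ is invisible to the evaluation at $\bar w_0$ and your motivation for why $\logic{QwGrz}$ (rather than $\logic{QGL}$ or $\logic{QGrz}$) is needed are both welcome additions.
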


\begin{proof}
Let $n\in\mathbb{Y}$. Just modify the model $\bar{\kModel{M}}=\otuple{\bar{\kframe{F}}\odot\mathcal{D}',\bar{I}}$ defined in the proof of Lemma~\ref{lem:relativization:S4:wKHC} and based on the Kripke frame $\bar{\kframe{F}} = \otuple{\bar{W},\bar{R}}$ by replacing $\bar{R}$ with
$$
\begin{array}{c}
\bar{R}\cup \{\otuple{u_G,u_G}\}.
\end{array}
$$
Let $\tilde{\kModel{M}}$ be the resulting model.
Then it should be clear that
$$
\begin{array}{lcl}
\{w \in \bar{W} : \tilde{\kModel{M}},w \models S'_5(p)\} 
  & = 
  &  \{w \in \bar{W} : \bar{\kModel{M}},w \models p\}; \\
\{w \in \bar{W} : \tilde{\kModel{M}},w \models S'_5(q)\} 
  & = 
  &  \{w \in \bar{W} : \bar{\kModel{M}},w \models q\}; \\
\{w \in \bar{W} : \tilde{\kModel{M}},w \models S'_5(r)\} 
  & = 
  &  \{w \in \bar{W} : \bar{\kModel{M}},w \models r\}. \\
\end{array}
$$
Since $\bar{\kModel{M}},\bar{w}_0\not\models S_4\mathit{M}^\Box_G\mathit{Tiling}_n^{\mathbb{X}}$, we readily obtain that $\tilde{\kModel{M}},\bar{w}_0\not\models S'_5 S_4\mathit{M}^\Box_G\mathit{Tiling}_n^{\mathbb{X}}$. 
Thus, $S'_5 S_4\mathit{M}^\Box_G\mathit{Tiling}_n^{\mathbb{X}} \not\in \logic{QwGrz}_{\mathit{wfin}}\oplus\bm{bd}_3\oplus\bm{bf}$. 
\end{proof}

\begin{lemma}
\label{lem:relativization:S4:S'5:QGL:QGrz:dfin}
If $n\in\mathbb{Y}$, then
$S'_5 S_4\mathit{M}^\Box_G\mathit{Tiling}_n^{\mathbb{X}} \not\in \logic{QwGrz}_{\mathit{dfin}}\oplus\bm{bd}_3\oplus\bm{bf}$. 
\end{lemma}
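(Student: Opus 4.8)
The plan is to reduce this to the already-proved $\mathit{wfin}$ statement (Lemma~\ref{lem:relativization:S4:S'5:QGL:QGrz}) in exactly the way the $\mathit{dfin}$ versions for $\logic{QGL}$ and $\logic{QGrz}$ (Lemma~\ref{lem:relativization:S4:S5:QGL:QGrz:dfin}) were reduced to their $\mathit{wfin}$ counterparts, namely by invoking that $\logic{QwGrz}$ is a \emph{subframe} modal predicate logic. First I would recall that $\logic{wGrz}$ is a subframe logic: the class $\ckf\logic{QwGrz}$ of Kripke frames validating $\logic{QwGrz}$ consists of the transitive Noetherian frames without proper clusters, and each of these three properties is plainly inherited by subframes; in particular, every finite member of $\ckf\logic{QwGrz}$ is again in $\ckf\logic{QwGrz}$, and every finite transitive frame without proper clusters validates $\logic{QwGrz}$.

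Now fix $n\in\mathbb{Y}$. Lemma~\ref{lem:relativization:S4:S'5:QGL:QGrz} already produces a model $\tilde{\kModel{M}}$ refuting $S'_5 S_4\mathit{M}^\Box_G\mathit{Tiling}_n^{\mathbb{X}}$, and by the construction there $\tilde{\kModel{M}}$ is based on a \emph{finite} Kripke frame $\tilde{\kframe{F}}$ --- the finite frame $\bar{\kframe{F}}$ of Lemma~\ref{lem:relativization:S4:wKHC} with a reflexive loop adjoined at its top world $u_G$ --- it has depth at most $3$, it has constant domains, and its single local domain $\mathcal{D}'$ is finite. The frame $\tilde{\kframe{F}}$ has no proper clusters (the added loop only turns the top dead end into a degenerate reflexive cluster), it is finite, hence transitive and Noetherian, so $\tilde{\kframe{F}}\in\ckf\logic{QwGrz}$; it validates $\bm{bd}_3$ since its depth is $3$, and it validates $\bm{bf}$ since $\tilde{\kModel{M}}$ has constant domains. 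Thus $\tilde{\kModel{M}}$ is a Kripke model based on a frame of $\ckf\logic{QwGrz}$ with finite local domains that refutes $S'_5 S_4\mathit{M}^\Box_G\mathit{Tiling}_n^{\mathbb{X}}$, whence $S'_5 S_4\mathit{M}^\Box_G\mathit{Tiling}_n^{\mathbb{X}}\not\in\logic{QwGrz}_{\mathit{dfin}}\oplus\bm{bd}_3\oplus\bm{bf}$.

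I do not expect a genuine obstacle: the whole content is the observation that the $\mathit{wfin}$-countermodel built in Lemma~\ref{lem:relativization:S4:S'5:QGL:QGrz} already has finite local domains and lies over a frame of $\ckf\logic{QwGrz}$, so it serves unchanged as a $\mathit{dfin}$-countermodel, the subframe property of $\logic{QwGrz}$ being exactly what guarantees membership of the finite frame $\tilde{\kframe{F}}$ in $\ckf\logic{QwGrz}$. The only point that needs a sentence of care --- and the closest thing to a subtlety --- is checking that adjoining the reflexive loop at $u_G$ keeps $\tilde{\kframe{F}}$ inside $\ckf\logic{QwGrz}$ and does not raise its depth past $3$; this is immediate, since a reflexive dead end is not a proper cluster, does not lengthen any chain of clusters, and does not break transitivity.
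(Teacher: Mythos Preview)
Your proposal is correct and follows essentially the same route as the paper: the paper's proof is the single line ``The statement follows from Lemma~\ref{lem:relativization:S4:S'5:QGL:QGrz}, since $\logic{QwGrz}$ is a subframe logic,'' and you have unpacked this by observing that the finite-frame countermodel $\tilde{\kModel{M}}$ from the $\mathit{wfin}$ lemma already has finite constant domains and sits over a frame in $\ckf\logic{QwGrz}$, hence serves unchanged as a $\mathit{dfin}$ countermodel. Your extra checks (depth, transitivity, absence of proper clusters after adjoining the loop at~$u_G$) are harmless but not strictly needed, since membership of $\tilde{\kframe{F}}$ in $\ckf\logic{QwGrz}$ and validity of $\bm{bd}_3$ are already implicit in the conclusion of Lemma~\ref{lem:relativization:S4:S'5:QGL:QGrz}.
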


\begin{proof}
The statement follows from Lemma~\ref{lem:relativization:S4:S'5:QGL:QGrz}, since $\logic{QwGrz}$ is a subframe logic.
\end{proof}

\begin{theorem}
\label{th:ml:insep:QCG:wfin:S'5}
Logics\/ $\logic{QK}$ and\/ $\logic{QwGrz}_{\mathit{wfin}}\oplus\bm{bd}_3\oplus\bm{bf}$ are recursively inseparable in the language with a single unary predicate letter and two individual variables.
\end{theorem}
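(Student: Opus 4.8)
The plan is to obtain the theorem by the same bookkeeping used for Theorem~\ref{th:ml:insep:QCG:wfin:S5}, now feeding in the two lemmas that immediately precede the statement. Recall that Lemma~\ref{lem:relativization:S4:S'5:K} gives $S'_5 S_4\mathit{M}^\Box_G\mathit{Tiling}_n^{\mathbb{X}} \in \logic{QK}$ whenever $n\in\mathbb{X}$, while Lemma~\ref{lem:relativization:S4:S'5:QGL:QGrz} gives $S'_5 S_4\mathit{M}^\Box_G\mathit{Tiling}_n^{\mathbb{X}} \notin \logic{QwGrz}_{\mathit{wfin}}\oplus\bm{bd}_3\oplus\bm{bf}$ whenever $n\in\mathbb{Y}$. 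Since $\logic{QK}$ is contained in every logic of a class of (e\nobreakdash-augmented) frames, in particular $\logic{QK}\subseteq \logic{QwGrz}_{\mathit{wfin}}\oplus\bm{bd}_3\oplus\bm{bf}$, so by the paper's convention recursive inseparability here means that there is no recursive set $Z$ with $\logic{QK}\subseteq Z\subseteq \logic{QwGrz}_{\mathit{wfin}}\oplus\bm{bd}_3\oplus\bm{bf}$.

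First I would observe that the map sending $n$ to the Gödel number of $S'_5 S_4\mathit{M}^\Box_G\mathit{Tiling}_n^{\mathbb{X}}$ is total recursive, because the tile set $T_n$, the formulas $\mathit{Tiling}^\Box_n$ and $\mathit{M}^\Box\mathit{Tiling}_n^{\mathbb{X}}$, the relativizations, and the substitutions $S_4$ and $S'_5$ are all uniformly effectively presented in~$n$. Then, arguing for contradiction, I would assume a recursive separator $Z$ exists and set $Z' = \{\,n\in\numN : \text{the G\"{o}del number of } S'_5 S_4\mathit{M}^\Box_G\mathit{Tiling}_n^{\mathbb{X}} \text{ lies in } Z\,\}$, which is recursive. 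By Lemma~\ref{lem:relativization:S4:S'5:K}, $n\in\mathbb{X}$ forces the formula into $\logic{QK}\subseteq Z$, hence $\mathbb{X}\subseteq Z'$; by Lemma~\ref{lem:relativization:S4:S'5:QGL:QGrz}, $n\in\mathbb{Y}$ forces the formula outside $\logic{QwGrz}_{\mathit{wfin}}\oplus\bm{bd}_3\oplus\bm{bf}\supseteq Z$, hence $Z'\cap\mathbb{Y}=\varnothing$. So $Z'$ recursively separates $\mathbb{X}$ and $\mathbb{Y}$, contradicting their recursive inseparability. Finally I would note that the formulas $S'_5 S_4\mathit{M}^\Box_G\mathit{Tiling}_n^{\mathbb{X}}$ already contain only the single unary predicate letter $Q$ and the two variables $x,y$ (all the auxiliary letters $P$, $C$, $G$, $p$, $q$, $r$ having been eliminated by $S_4$ and $S'_5$), which yields the stated language bound.

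At this level the proof is a one-line appeal, so the real content is hidden in the two lemmas established just above. If I had to reconstruct them, the main obstacle would be Lemma~\ref{lem:relativization:S4:S'5:QGL:QGrz}: verifying that the substitution $S'_5$, which replaces the auxiliary proposition letters $p$, $q$, $r$ by the $Q$-and-$\Box$ formulas $\Diamond\Diamond\top\wedge\Diamond\Box\bot$, $\Box\bot$, and $\Diamond\top\wedge\neg\Diamond\Box\bot$, genuinely reproduces the intended labelling of the worlds of the countermodel $\bar{\kModel{M}}$ from Lemma~\ref{lem:relativization:S4:wKHC} once the reflexive loop at $u_G$ is added, and that the modified frame still has depth at most $3$ and validates $\logic{QwGrz}$, so that it is admissible as a finite $\logic{QwGrz}$\nobreakdash-frame and the refutation indeed certifies non-membership in $\logic{QwGrz}_{\mathit{wfin}}\oplus\bm{bd}_3\oplus\bm{bf}$ (the passage to $L_{\mathit{dfin}}$, not needed here, being where the subframe property of $\logic{QwGrz}$ would enter). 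The remaining ingredients — closure of $\logic{QK}$ under Substitution for the $n\in\mathbb{X}$ side, and the uniform effectiveness of the whole construction — are routine.
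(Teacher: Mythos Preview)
Your proposal is correct and follows exactly the paper's approach: the paper's proof is the one-liner ``Follows from Lemmas~\ref{lem:relativization:S4:S'5:K} and~\ref{lem:relativization:S4:S'5:QGL:QGrz}'', and you have simply unpacked that appeal into the standard separator-transfer argument, together with the observation that the formulas lie in the required language.
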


\begin{proof}
Follows from Lemmas~\ref{lem:relativization:S4:S'5:K} and~\ref{lem:relativization:S4:S'5:QGL:QGrz}.
\end{proof}

\begin{theorem}
\label{th:ml:insep:QCG:wfin:S'5:dfin}
Logics\/ $\logic{QK}$ and\/ $\logic{QwGrz}_{\mathit{dfin}}\oplus\bm{bd}_3\oplus\bm{bf}$ are recursively inseparable in the language with a single unary predicate letter and two individual variables.
\end{theorem}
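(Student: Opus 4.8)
The plan is to obtain the theorem exactly as Theorems~\ref{th:ml:monadic:insep:1} and~\ref{th:ml:insep:QCG:wfin:S5:dfin} were obtained: package Lemma~\ref{lem:relativization:S4:S'5:K} and Lemma~\ref{lem:relativization:S4:S'5:QGL:QGrz:dfin} into the standard reduction argument. The two lemmas together assert that the map $n\mapsto S'_5 S_4\mathit{M}^\Box_G\mathit{Tiling}_n^{\mathbb{X}}$ sends every $n\in\mathbb{X}$ into $\logic{QK}$ and every $n\in\mathbb{Y}$ out of $\logic{QwGrz}_{\mathit{dfin}}\oplus\bm{bd}_3\oplus\bm{bf}$, and since $\logic{QK}$ is contained in the latter logic, this is precisely the input needed to transfer the recursive inseparability of $\mathbb{X}$ and $\mathbb{Y}$ to the pair of logics.

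Concretely, I would proceed in three short steps. First, I would record the inclusion $\logic{QK}\subseteq\logic{QwGrz}_{\mathit{dfin}}\oplus\bm{bd}_3\oplus\bm{bf}$: any e-augmented frame whose underlying Kripke frame validates $\logic{QwGrz}$ validates $\logic{QK}$, so $\logic{QK}\subseteq\logic{QwGrz}_{\mathit{dfin}}$, and adjoining $\bm{bd}_3$ and $\bm{bf}$ only enlarges the logic; hence the pair is comparable and ``recursive (in)separability'' is meaningful for it. Second, I would observe that $n\mapsto S'_5 S_4\mathit{M}^\Box_G\mathit{Tiling}_n^{\mathbb{X}}$ is computable, being a fixed finite composition of the effectively described substitutions $S'_5$ and $S_4$ (and the relativizations built into $\mathit{M}^\Box_G\mathit{Tiling}_n^{\mathbb{X}}$) applied to $\mathit{Tiling}_n^{\mathbb{X}}$, which depends computably on $T_n$ and hence on $n$. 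Third, I would suppose toward a contradiction that a recursive set $Z$ satisfies $\logic{QK}\subseteq Z\subseteq\logic{QwGrz}_{\mathit{dfin}}\oplus\bm{bd}_3\oplus\bm{bf}$; then $\{n : S'_5 S_4\mathit{M}^\Box_G\mathit{Tiling}_n^{\mathbb{X}}\in Z\}$ is recursive, it contains $\mathbb{X}$ by Lemma~\ref{lem:relativization:S4:S'5:K}, and it is disjoint from $\mathbb{Y}$ by Lemma~\ref{lem:relativization:S4:S'5:QGL:QGrz:dfin}, so it recursively separates $\mathbb{X}$ and $\mathbb{Y}$~--- a contradiction. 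Since all the formulas involved use a single unary predicate letter and two individual variables, the claim follows in that language.

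At the level of this theorem there is therefore essentially no obstacle; the genuine difficulty lies upstream, in the lemmas we may cite. The most delicate upstream step is Lemma~\ref{lem:relativization:S4:S'5:QGL:QGrz}: one must exhibit a finite Kripke model of depth at most three that refutes the target formula after the three auxiliary proposition letters $p,q,r$ have been replaced by the predicate-letter-free modal formulas of $S'_5$ (this is where the single reflexive world $u_G$ does its job), and then pull the refutation down from $\logic{QwGrz}_{\mathit{wfin}}$ to $\logic{QwGrz}_{\mathit{dfin}}$ using that $\logic{QwGrz}$ is a subframe logic. Equally delicate is the whole reduction chain leading from $S_3$ through $S'_3$ and $S_4$ to $S'_5$, which has to eliminate the binary letter $P$ as well as $C$, $G$ and the proposition letters while keeping two individual variables and depth three throughout; but all of this has already been carried out in the preceding lemmas, so here one only assembles the pieces.
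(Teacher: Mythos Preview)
Your proposal is correct and matches the paper's own proof, which simply says the theorem follows from Lemmas~\ref{lem:relativization:S4:S'5:K} and~\ref{lem:relativization:S4:S'5:QGL:QGrz:dfin}. The three-step packaging you spell out (inclusion, computability, contrapositive via a putative recursive separator) is exactly the intended content of that one-line reference.
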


\begin{proof}
Follows from Lemmas~\ref{lem:relativization:S4:S'5:K} and~\ref{lem:relativization:S4:S'5:QGL:QGrz:dfin}.
\end{proof}

\begin{corollary}
\label{cor:1:th:ml:insep:QCG:wfin:S'5}
Let\/ $\logic{QK}\subseteq L\subseteq L'\subseteq \logic{QwGrz}\oplus\bm{bd}_3\oplus\bm{bf}$. Then $L$ and $L'_{\mathit{wfin}}$ as well as $L$ and $L'_{\mathit{dfin}}$ are recursively inseparable in the language with a single unary predicate letter and two individual variables. 
\end{corollary}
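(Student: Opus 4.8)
The plan is to deduce the statement from Theorems~\ref{th:ml:insep:QCG:wfin:S'5} and~\ref{th:ml:insep:QCG:wfin:S'5:dfin} together with the ``double sandwich'' principle for recursive inseparability: if $A_0\subseteq A\subseteq B\subseteq B_0$ and $A_0,B_0$ are recursively inseparable, then so are $A,B$, since any recursive $Z$ with $A\subseteq Z\subseteq B$ would satisfy $A_0\subseteq Z\subseteq B_0$. I would apply this with $A_0=\logic{QK}$, $A=L$, $B=L'_{\mathit{wfin}}$, $B_0=\logic{QwGrz}_{\mathit{wfin}}\oplus\bm{bd}_3\oplus\bm{bf}$ for the first assertion, and with $L'_{\mathit{dfin}}$ and $\logic{QwGrz}_{\mathit{dfin}}\oplus\bm{bd}_3\oplus\bm{bf}$ in place of $B,B_0$ for the second. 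The hypotheses give $\logic{QK}\subseteq L$ at once; $L\subseteq L'\subseteq L'_{\mathit{wfin}}$ holds because a modal predicate logic is contained in the logic of its (finite) Kripke frames; and $\logic{QK}\subseteq\logic{QwGrz}_{\mathit{wfin}}\oplus\bm{bd}_3\oplus\bm{bf}$ because the class of finite Kripke frames of $\logic{QwGrz}$ is a subclass of all Kripke frames, so $\logic{QK}=\QML(\text{all frames})\subseteq\QMLew\ckf{\logic{QwGrz}}=\logic{QwGrz}_{\mathit{wfin}}$, and adding $\bm{bd}_3,\bm{bf}$ only enlarges. The concrete reduction witnessing inseparability is the one already supplied by Lemmas~\ref{lem:relativization:S4:S'5:K}, \ref{lem:relativization:S4:S'5:QGL:QGrz} and~\ref{lem:relativization:S4:S'5:QGL:QGrz:dfin}: for $n\in\mathbb{X}$ the formula $S'_5 S_4\mathit{M}^\Box_G\mathit{Tiling}_n^{\mathbb{X}}$ lies in $\logic{QK}\subseteq L$, and for $n\in\mathbb{Y}$ it lies outside $\logic{QwGrz}_{\mathit{wfin}}\oplus\bm{bd}_3\oplus\bm{bf}$ (resp.\ outside $\logic{QwGrz}_{\mathit{dfin}}\oplus\bm{bd}_3\oplus\bm{bf}$); these formulas use a single unary predicate letter and two variables and depend recursively on~$n$, and $\mathbb{X},\mathbb{Y}$ are recursively inseparable.

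The only inclusion that is not immediate is $B\subseteq B_0$, i.e.\ $L'_{\mathit{wfin}}\subseteq\logic{QwGrz}_{\mathit{wfin}}\oplus\bm{bd}_3\oplus\bm{bf}$ (and the $\mathit{dfin}$ analogue). I would argue it in two steps. First, from $L'\subseteq\logic{QwGrz}\oplus\bm{bd}_3\oplus\bm{bf}$ one gets $\ckf{L'}\supseteq\ckf{\logic{QwGrz}\oplus\bm{bd}_3\oplus\bm{bf}}$, hence $\aug{e}{wfin}{\ckf{L'}}\supseteq\aug{e}{wfin}{\ckf{\logic{QwGrz}\oplus\bm{bd}_3\oplus\bm{bf}}}$, and therefore $L'_{\mathit{wfin}}=\QML\aug{e}{wfin}{\ckf{L'}}\subseteq\QML\aug{e}{wfin}{\ckf{\logic{QwGrz}\oplus\bm{bd}_3\oplus\bm{bf}}}=(\logic{QwGrz}\oplus\bm{bd}_3\oplus\bm{bf})_{\mathit{wfin}}$. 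Second, I would identify $(\logic{QwGrz}\oplus\bm{bd}_3\oplus\bm{bf})_{\mathit{wfin}}$ with $\logic{QwGrz}_{\mathit{wfin}}\oplus\bm{bd}_3\oplus\bm{bf}$: adding $\bm{bd}_3$ and $\bm{bf}$ commutes with passing to (finite) Kripke frames, because $\bm{bd}_3$ expresses bounded depth and $\bm{bf}$ the locally constant domain condition, so a finite Kripke frame read with constant domains validates $\logic{QwGrz}\oplus\bm{bd}_3\oplus\bm{bf}$ precisely when it validates $\logic{QwGrz}$ and $\bm{bd}_3$ — this is exactly the situation described in the remarks on the Barcan formula before the catalogue of special modal predicate logics. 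Replacing $\aug{e}{wfin}{\cdot}$ by $\aug{e}{dfin}{\cdot}$ throughout gives $L'_{\mathit{dfin}}\subseteq\logic{QwGrz}_{\mathit{dfin}}\oplus\bm{bd}_3\oplus\bm{bf}$ in the same way.

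With these inclusions in hand the sandwich principle closes the argument: Theorem~\ref{th:ml:insep:QCG:wfin:S'5} says $\logic{QK}$ and $\logic{QwGrz}_{\mathit{wfin}}\oplus\bm{bd}_3\oplus\bm{bf}$ are recursively inseparable in the language with one unary predicate letter and two variables, so $L$ and $L'_{\mathit{wfin}}$ are too; Theorem~\ref{th:ml:insep:QCG:wfin:S'5:dfin} and the parallel chain of inclusions give the same for $L$ and $L'_{\mathit{dfin}}$. I expect the only point that needs genuine care — as opposed to routine monotonicity — to be the bookkeeping in the second paragraph around $\bm{bf}$ and constant domains, namely making the identification $(\logic{QwGrz}\oplus\bm{bd}_3\oplus\bm{bf})_{\mathit{wfin}}=\logic{QwGrz}_{\mathit{wfin}}\oplus\bm{bd}_3\oplus\bm{bf}$ precise; everything else follows directly from the lemmas and theorems already established.
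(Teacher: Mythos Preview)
Your sandwich strategy is exactly what the paper (implicitly) intends---the corollary carries no proof, and this is the pattern of Corollaries~\ref{cor:a:th:Trakhtenbrot:bin:sib:P} and~\ref{cor1:th:ml:monadic:insep:1}---and you correctly isolate the one nontrivial step. But the identification $(\logic{QwGrz}\oplus\bm{bd}_3\oplus\bm{bf})_{\mathit{wfin}}=\logic{QwGrz}_{\mathit{wfin}}\oplus\bm{bd}_3\oplus\bm{bf}$ on which you rely is false, and precisely for the reason the paper's own caveat (the paragraph on $\ckf L$ versus $\caf^{\mathit{e}} L$ just before the catalogue of named logics) warns about. A Kripke frame $\kframe{F}$ validates $\bm{bf}$ only when \emph{every} e\nobreakdash-augmented frame on it is already c\nobreakdash-augmented; for transitive frames this forces $uRv\Rightarrow vRu$ on distinct points, and since nontrivial clusters are not $\logic{wGrz}$-frames, one obtains $\ckf(\logic{QwGrz}\oplus\bm{bd}_3\oplus\bm{bf})=\{\otuple{W,R}:R\subseteq\mathrm{id}_W\}$, i.e., disjoint unions of (reflexive or irreflexive) singletons. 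Hence $(\logic{QwGrz}\oplus\bm{bd}_3\oplus\bm{bf})_{\mathit{wfin}}$ is essentially $\logic{QTriv}\cap\logic{QVer}$, which strictly contains $\logic{QwGrz}_{\mathit{wfin}}\oplus\bm{bd}_3\oplus\bm{bf}$: for instance $\Box p\to(p\vee\Box\bot)$ lies in the former but is refuted on the irreflexive two-chain with constant domain, and that augmented frame validates the latter. Your chain $L'_{\mathit{wfin}}\subseteq(\logic{QwGrz}\oplus\bm{bd}_3\oplus\bm{bf})_{\mathit{wfin}}\subseteq\logic{QwGrz}_{\mathit{wfin}}\oplus\bm{bd}_3\oplus\bm{bf}$ therefore breaks at the second inclusion.

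In fact this shows the statement itself is delicate at the boundary: for $L'=\logic{QwGrz}\oplus\bm{bd}_3\oplus\bm{bf}$ the monadic fragment of $L'_{\mathit{wfin}}$ is decidable (by the argument of Proposition~\ref{prop:finite:frame}), so $L$ and $L'_{\mathit{wfin}}$ are recursively \emph{separable} there. The argument does go through cleanly under the hypothesis $L'\subseteq\logic{QwGrz}\oplus\bm{bd}_3$ (dropping $\bm{bf}$ from the upper bound on~$L'$), since then the countermodel frame $\tilde{\kframe{F}}$ of Lemma~\ref{lem:relativization:S4:S'5:QGL:QGrz} lies in $\ckf L'$ directly and no identification is needed; this already covers all logics listed in Corollary~\ref{cor:2:th:ml:insep:QCG:wfin:S'5}.
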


\begin{corollary}
\label{cor:2:th:ml:insep:QCG:wfin:S'5}
Let $L$ be one of $\logic{QK}$, $\logic{QK4}$, $\logic{QwGrz}$, $\logic{QK4}\oplus\bm{bd}_m$, $\logic{QwGrz}\oplus\bm{bd}_m$, where $m\geqslant 3$. Then $L$ and $L_{\mathit{wfin}}$ as well as $L$ and $L_{\mathit{dfin}}$ are recursively inseparable in the language with a single unary predicate letter and two individual variables. 
\end{corollary}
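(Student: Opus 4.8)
The final statement to prove is Corollary~\ref{cor:2:th:ml:insep:QCG:wfin:S'5}: for $L$ one of $\logic{QK}$, $\logic{QK4}$, $\logic{QwGrz}$, $\logic{QK4}\oplus\bm{bd}_m$, $\logic{QwGrz}\oplus\bm{bd}_m$ with $m\geqslant 3$, the logics $L$ and $L_{\mathit{wfin}}$ — and also $L$ and $L_{\mathit{dfin}}$ — are recursively inseparable in the language with a single unary predicate letter and two individual variables.

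The plan is to deduce this directly from Corollary~\ref{cor:1:th:ml:insep:QCG:wfin:S'5} by checking, for each logic $L$ in the list, that $\logic{QK}\subseteq L\subseteq L'\subseteq\logic{QwGrz}\oplus\bm{bd}_3\oplus\bm{bf}$ holds with $L'=L$. So the first step is to verify the chain of inclusions $\logic{QK}\subseteq L$ for every listed $L$: this is immediate since $\logic{QK}$ is the minimal normal modal predicate logic containing $\logic{QCl}$ and each of the listed logics is obtained by adding axioms to $\logic{QK}$ (recall $\logic{QwGrz}=\logic{QK4}\oplus\mwgrzp(\Box)$, hence contains $\logic{QK4}$, hence contains $\logic{QK}$). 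The second step is to verify $L\subseteq\logic{QwGrz}\oplus\bm{bd}_3\oplus\bm{bf}$ for each listed $L$. Taking $L'=L$ in the application of Corollary~\ref{cor:1:th:ml:insep:QCG:wfin:S'5}, I would argue: $\logic{QK}\subseteq\logic{QwGrz}\oplus\bm{bd}_3\oplus\bm{bf}$ trivially; $\logic{QK4}\subseteq\logic{QwGrz}\oplus\bm{bd}_3\oplus\bm{bf}$ since $\logic{QK4}\subseteq\logic{QwGrz}$ by definition; $\logic{QwGrz}\subseteq\logic{QwGrz}\oplus\bm{bd}_3\oplus\bm{bf}$ trivially. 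For the parametrized families, $\logic{QK4}\oplus\bm{bd}_m\subseteq\logic{QwGrz}\oplus\bm{bd}_m$ (adding the same depth axiom to a larger logic), and one needs the monotonicity $\logic{QwGrz}\oplus\bm{bd}_m\subseteq\logic{QwGrz}\oplus\bm{bd}_3$ for $m\geqslant 3$, which holds because $\bm{bd}_3\to\bm{bd}_m$ is valid on all Kripke frames when $m\geqslant 3$ (bounding depth by a smaller number is a stronger condition; see~\cite[Propositions~3.42--3.44]{ChZ}), so $\bm{bd}_3$ is derivable wherever $\bm{bd}_m$ is postulated with $m\geqslant 3$, i.e.\ $\logic{QwGrz}\oplus\bm{bd}_m\subseteq\logic{QwGrz}\oplus\bm{bd}_3$; likewise $\logic{QK4}\oplus\bm{bd}_m\subseteq\logic{QwGrz}\oplus\bm{bd}_3$. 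Hence each listed $L$ satisfies $\logic{QK}\subseteq L\subseteq L\subseteq\logic{QwGrz}\oplus\bm{bd}_3\oplus\bm{bf}$.

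With these inclusions in hand, the conclusion is immediate: applying Corollary~\ref{cor:1:th:ml:insep:QCG:wfin:S'5} with $L'=L$ yields that $L$ and $L_{\mathit{wfin}}$, as well as $L$ and $L_{\mathit{dfin}}$, are recursively inseparable in the language with a single unary predicate letter and two individual variables. I would write the proof in two or three lines: fix $L$ in the list, observe the displayed inclusion chain holds, invoke the corollary. The only subtlety worth spelling out — and I expect this to be the sole potential obstacle, though a mild one — is the direction of the $\bm{bd}_m$ monotonicity: one must be careful that $\bm{bd}_3$ is the \emph{stronger} axiom (frames of depth at most $3$ form a subclass of frames of depth at most $m$ for $m\geqslant 3$), so that $\logic{QwGrz}\oplus\bm{bd}_m$ is \emph{contained in} $\logic{QwGrz}\oplus\bm{bd}_3$, not the other way around; this matches exactly the hypothesis $L'\subseteq\logic{QwGrz}\oplus\bm{bd}_3\oplus\bm{bf}$ of Corollary~\ref{cor:1:th:ml:insep:QCG:wfin:S'5} and the constraint $m\geqslant 3$ in the statement. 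Everything else is a routine unwinding of definitions.
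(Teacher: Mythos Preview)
Your approach is correct and matches the paper's: the corollary follows immediately from Corollary~\ref{cor:1:th:ml:insep:QCG:wfin:S'5} by taking $L'=L$ and checking the inclusion chain, and the paper leaves it at that. One small slip to fix: you write ``$\bm{bd}_3$ is derivable wherever $\bm{bd}_m$ is postulated'', but that is the wrong direction (and would yield the opposite inclusion); what you mean, and what you correctly state both before and after, is that $\bm{bd}_m$ is derivable wherever $\bm{bd}_3$ is postulated, since $\bm{bd}_3$ is the stronger axiom --- this is what gives $\logic{QwGrz}\oplus\bm{bd}_m\subseteq\logic{QwGrz}\oplus\bm{bd}_3$ for $m\geqslant 3$.
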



%

\section{Superintuitionistic predicate logics}
\label{sec:int}
\setcounter{equation}{0}

\subsection{Syntax and semantics}

%
The intuitionistic predicate language is identical to the language~$\lang{L}$ defined above for both classical and superclassical logics and theories.

An \defnotion{intuitionistic Kripke frame} is a Kripke frame $\kframe{F} = \otuple{W,R}$ where $R$ is a partial order~--- i.e., a reflexive, transitive, and antisymmetric binary relation~--- on~$W$.  An \defnotion{intuitionistic augmented frame} is an e-augmented frame $\kframe{F}_D = \otuple{\kframe{F}, D}$ such that $\kframe{F}$ is an intuitionistic Kripke frame.  An \defnotion{intuitionistic predicate Kripke model} is a predicate Kripke model $\mathfrak{M} = \otuple{W,R,D,I}$ where $\otuple{W,R,D}$ is an
intuitionistic augmented frame and the interpretation~$I$ satisfies the \defnotion{heredity condition}: for all $w, w' \in W$ and every predicate letter~$P$,
$$
\begin{array}{lcl}
  wRw' & \Longrightarrow & I(w,P) \subseteq I(w',P).
\end{array}
$$
The truth of an $\lang{L}$-formula $\varphi$ at a world $w$ of an intuitionistic model $\kModel{M} = \otuple{W,R,D,I}$ under an assignment $g$ is defined recursively:
\settowidth{\templength}{\mbox{$\kModel{M},w\imodels^g\varphi'$ and $\kModel{M},w\imodels^g\varphi''$;}}
\settowidth{\templengtha}{\mbox{$w$}}
\settowidth{\templengthb}{\mbox{$\kModel{M},w\imodels^{h}\varphi'$, for every assignment $h$ such that}}
\settowidth{\templengthc}{\mbox{$\kModel{M},w\imodels^g P(x_1,\ldots,x_n)$}}
\settowidth{\templengthd}{\mbox{$\kModel{M},\parbox{\templengtha}{$v$}\imodels^{h}\varphi'$, for every $v\in R(w)$ and every assignment;;;}}
$$
\begin{array}{lcl}
\kModel{M},w\imodels^g P(x_1,\ldots,x_n)
  & \leftrightharpoons
  & \parbox{\templengthd}{$\langle g(x_1),\ldots,g(x_n)\rangle \in P^{I, w}$,} \\
\end{array}
$$
\mbox{where $P$ is an $n$-ary predicate letter;}
\settowidth{\templength}{\mbox{$\kModel{M},w\imodels^g\varphi'$ and $\kModel{M},w\imodels^g\varphi''$;}}
\settowidth{\templengtha}{\mbox{$w$}}
\settowidth{\templengthb}{\mbox{$\kModel{M},w\imodels^{g}\varphi'\to\varphi''$}}
\settowidth{\templengthc}{\mbox{$\kModel{M},w\imodels^g P(x_1,\ldots,x_n)$}}
$$
\begin{array}{lcl}
\parbox{\templengthc}{{}\hfill\parbox{\templengthb}{$\kModel{M},w \not\imodels^g \bot;$}}
  \\
\parbox{\templengthc}{{}\hfill\parbox{\templengthb}{$\kModel{M},w\imodels^g\varphi' \wedge \varphi''$}}
  & \leftrightharpoons
  & \parbox[t]{\templength}{$\kModel{M},w\imodels^g\varphi'$ and $\kModel{M},w\imodels^g\varphi''$;}
  \\
\parbox{\templengthc}{{}\hfill\parbox{\templengthb}{$\kModel{M},w\imodels^g\varphi' \vee \varphi''$}}
  & \leftrightharpoons
  & \parbox[t]{\templength}{$\kModel{M},w\imodels^g\varphi'$\hfill or\hfill $\kModel{M},w\imodels^g\varphi''$;}
  \\
\parbox{\templengthc}{{}\hfill\parbox{\templengthb}{$\kModel{M},w\imodels^g\varphi' \to \varphi''$}}
  & \leftrightharpoons
  & \parbox[t]{\templengthd}{$\kModel{M},\parbox{\templengtha}{$v$}\imodels^g\varphi'$ implies $\kModel{M},v\imodels^g\varphi''$, for every $v\in R(w)$;}
  \\
\parbox{\templengthc}{{}\hfill\parbox{\templengthb}{$\kModel{M},w\imodels^g\forall x\,\varphi'$}}
  & \leftrightharpoons
  & \parbox{\templengthd}{$\kModel{M},\parbox{\templengtha}{$v$}\imodels^{h}\varphi'$, for every $v\in R(w)$ and every assignment}
  \\
  &
  & \mbox{\phantom{$\kModel{M},w\imodels^{g'}\varphi'$, }$h$ such that $h \stackrel{x}{=} g$ and $h(x)\in D_w$;}
  \\
\parbox{\templengthc}{{}\hfill\parbox{\templengthb}{$\kModel{M},w\imodels^g\exists x\,\varphi'$}}
  & \leftrightharpoons
  & \parbox{\templengthd}{$\kModel{M},w\imodels^{h}\varphi'$, for some assignment $h$ such that $h \stackrel{x}{=} g$}
  \\
  &
  & \mbox{\phantom{$\kModel{M},w\imodels^{g'}\varphi'$, }and $h(x)\in D_w$.}
\end{array}
$$

Let $\kModel{M}$, $\kframe{F}_D$, $\kframe{F}$, and $\Scls{C}$ be an intuitionistic Kripke model, an intuitionistic augmented frame, an intuitionistic Kripke frame, and a class of intuitionistic augmented frames, respectively, $w$ a world of $\kModel{M}$, and $\varphi$ a formula with free variables $x_1,\ldots,x_n$; then define
\settowidth{\templength}{\mbox{$\kModel{M},w\imodels^g P(x_1,\ldots,x_n)$}}
\settowidth{\templengtha}{\mbox{$\kModel{M},w\imodels^{h}\varphi'$, for every assignment $h$ such that}}
\settowidth{\templengthb}{\mbox{$w$}}
\settowidth{\templengthc}{\mbox{$\kframe{F}_D$}}
$$
\begin{array}{rcl}
\Rem{
\parbox{\templength}{{}\hfill$\kModel{M},w\imodels \varphi$}
  & \leftrightharpoons
  & \parbox[t]{\templengthd}{$\kModel{M},w\imodels^g \varphi$, for every assignment $g$ such that}
  \\
  &
  & \mbox{\phantom{$\kModel{M},w\imodels^g \varphi$, }$g(x_1),\ldots,g(x_n)\in D_w$;}
  \\
} 
\parbox{\templength}{{}\hfill$\kModel{M}\imodels \varphi$}
  & \leftrightharpoons
  & \parbox[t]{\templengthd}{$\kModel{M},w\imodels^g \varphi$, for every world $w$ of $\kModel{M}$ and every~$g$}
  \\
  &
  & \mbox{\phantom{$\kModel{M},w\imodels^g \varphi$, }such that $g(x_1),\ldots,g(x_n)\in D_w$;}
  \\
\parbox{\templength}{{}\hfill$\kframe{F}_D\imodels \varphi$}
  & \leftrightharpoons
  & \parbox[t]{\templengthd}{$\parbox{\templengthc}{$\kModel{M}$}\imodels \varphi$, for every intuitionistic model $\kModel{M}$ based on $\kframe{F}_D$;}
  \\
\parbox{\templength}{{}\hfill$\kframe{F}\imodels \varphi$}
  & \leftrightharpoons
  & \parbox[t]{\templengthd}{$\parbox{\templengthc}{$\kModel{M}$}\imodels \varphi$, for every intuitionistic model $\kModel{M}$ based on $\kframe{F}$;}
  \\
\parbox{\templength}{{}\hfill$\Scls{C}\imodels \varphi$}
  & \leftrightharpoons
  & \parbox[t]{\templengtha}{$\parbox{\templengthc}{$\kframe{F}_D$}\imodels \varphi$, for every $\kframe{F}_D\in\Scls{C}$.}
  \\
\end{array}
$$
If $\mathfrak{S}\imodels\varphi$, for a structure $\mathfrak{S}$ (a~model, a~frame, etc.), we say that the formula $\varphi$ is \defnotion{true}, or \defnotion{valid}, in (on, at)~$\mathfrak{S}$; otherwise, $\varphi$ is \defnotion{refuted} in (on, at)~$\mathfrak{S}$.
These notions, and the corresponding notations, can be extended to sets of formulas in a natural way: for a set of formulas $X$, define $\mathfrak{S}\imodels X$ as $\mathfrak{S}\imodels\varphi$, for every $\varphi\in X$.


The intuitionistic predicate logic $\logic{QInt}$ is the set of $\lang{L}$-formulas valid on every intuitionistic Kripke frame; notice that $\logic{QInt}$ can also be defined through a Hilbert-style calculus with a finite set of axioms~\cite{GShS,vanDalen}. A \defnotion{superintuitionistic predicate logic} is a set of $\lang{L}$-formulas that includes $\logic{QInt}$ and is closed under Modus Ponens, Substitution, and Generalization. If $L$ is a superintuitionistic predicate logic and $\Gamma$ is a set of $\lang{L}$-formulas, then $L + \Gamma$ denotes the smallest superintuitionistic logic containing $L \cup \Gamma$. If $L$ is a propositional superintuitionistic logic, then define $\logic{Q}L$ by $\logic{Q}L = \logic{QInt} + L$. 

Let $\Scls{C}$ be a class of intuitionistic augmented frames. Define the \defnotion{superintuitionistic predicate logic\/ $\QSIL \Scls{C}$ of the class\/~$\Scls{C}$} by
$$
\begin{array}{lcl}
\QSIL \Scls{C} & = & \{\varphi\in\lang{L} : \Scls{C}\imodels\varphi\}.
\end{array} 
$$
For $\alpha\in\{\mathit{c},\mathit{e}\}$ and $\beta\in\{\mathit{all},\mathit{dfin},\mathit{wfin}\}$, define the logic $\QSILext{\alpha}{\beta}\scls{C}$~by
$$
\begin{array}{lcl}
\QSILext{\alpha}{\beta} \scls{C} & = & \QSIL \aug{\alpha}{\beta} \scls{C}.
\end{array} 
$$
For an intuitionistic Kripke frame $\kframe{F}$, we write $\QSILext{\alpha}{\beta} \kframe{F}$ rather than $\QSILext{\alpha}{\beta} \{\kframe{F}\}$.
It should be clear that
$$
\begin{array}{lclcl}
\QSILext{e}{\beta} \scls{C} \subseteq \QSILext{c}{\beta} \scls{C}, 
  && \QSILext{\alpha}{\mathit{all}} \scls{C} \subseteq \QSILext{\alpha}{\mathit{dfin}} \scls{C},
  && \QSILext{\alpha}{\mathit{all}} \scls{C} \subseteq \QSILext{\alpha}{\mathit{wfin}} \scls{C}. 
\end{array} 
$$

For a superintuitionistic predicate logic $L$, denote by $\ckf L$ the class of intuitionistic Kripke frames validating~$L$.
Define 
$$
\begin{array}{lcl}
L_{\mathit{dfin}} & = & \QSILed \ckf L; \\
L_{\mathit{wfin}} & = & \QSILew \ckf L. \\
\end{array}
$$

Let $Q$ be a unary predicate letter and $q$ a proposition letter; the formula $\bm{cd} = \forall x\, (Q(x) \dis q) \imp \forall x\, Q(x) \dis q$ is valid on an intuitionistic augmented frame $\kframe{F}_D$ if, and only if, $\kframe{F}_D$ satisfies $(\mathit{LCD})$. 
Recall also that $\logic{QKC} = \logic{QInt} + \neg p\vee \neg\neg p$.



\subsection{Positive monadic fragments and three variables}

Let us construct an embedding of the classical predicate logic and some classical theories into the positive fragments of $\logic{QInt}$ and some extensions of $\logic{QInt}$. In view of Theorem~\ref{th:Trakhtenbrot:binP:gen:sib:srb:pos}, to define the embedding, we shall consider the positive fragment of~$\lang{L}$ with a single binary predicate letter~$P$ and three individual variables $x$, $y$, and~$z$. Denote by $K$ the Kolmogorov translation~\cite{Kolmogorov:1925}\footnote{For our purposes, an another translation can be chosen; see~\cite{FerreiraOliva:2010}.} defined for formulas of the fragment as follows (we do not need the first clause for positive formulas):
$$
\begin{array}{lcll}
K(\bot)
  & = 
  & \bot; 
  \\
K(P(x_1,x_2)) 
  & = 
  & \neg\neg P(x_1,x_2), & \mbox{where $x_1,x_2\in\{x,y,z\}$}; 
  \\
K(\varphi'\wedge\varphi'')
  & = 
  & \neg\neg(K(\varphi')\wedge K(\varphi'')); 
  \\
K(\varphi'\vee\varphi'')
  & = 
  & \neg\neg(K(\varphi')\vee K(\varphi'')); 
  \\
K(\varphi'\to\varphi'')
  & = 
  & \neg\neg(K(\varphi')\to K(\varphi'')); 
  \\
K(\forall x\,\varphi')
  & = 
  & \neg\neg \forall x\,K(\varphi'); 
  \\
K(\exists x\,\varphi')
  & = 
  & \neg\neg \exists x\,K(\varphi'). 
  \\
\end{array}
$$
It is follows from~\cite{Kolmogorov:1925} that, for every formula~$\varphi$ of the fragment,
\begin{equation}
\label{eq:kolmogorov:1}
\begin{array}{rcl}
\varphi \in \logic{QCl} & \iff & K(\varphi) \in \logic{QInt}. 
\end{array}
\end{equation}
Next, let us eliminate $\bot$ by replacing it with a proposition letter~$p$. To this end, for an $\lang{L}$\nobreakdash-formula $\psi$, define $\psi^p$ to be the formula obtained from $\psi$ by replacing every occurrence of $\bot$ with~$p$; for an $\lang{L}$\nobreakdash-formula~$\varphi$ with $x$, $y$, and $z$ as the only its variables, let
$$
\begin{array}{lcl}
\varphi^+_p 
  & = 
  & \displaystyle
    \forall x\forall y\forall z\,\bigwedge\limits_{\mathclap{\psi\in\sub\varphi}}(p\to\psi^p) 
    \to \varphi^p.    
\end{array}
$$
It follows from~\cite[Proposition~10.1]{RShsubmitted2} that if~$\varphi$ does not contain~$p$, then
\begin{equation}
\label{eq:positivization:1}
\begin{array}{rcl}
\varphi \in \logic{QInt} & \iff & \varphi^+_p \in \logic{QInt}.
\end{array}
\end{equation}

Let $S_6$ be a formula substitution defined by $S_6(P(x_1,x_2))=(Q(x_1)\wedge Q(x_2)\to p)\vee q$, where $Q$ is a unary predicate letter and $q$ a proposition letter different from~$p$.

\begin{lemma}
\label{lem:1:QInt:positive:3var}
If $n\in\mathbb{X}$, then $S_6((K((S_1 \textit{MTiling}^{\mathbb{X}}_n)^+))^+_p) \in \logic{QInt}$.
\end{lemma}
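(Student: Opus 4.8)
The plan is to chase the chain of equivalences already established in the excerpt, composing the translations $K$, the $(\cdot)^+$ positivization of $\bot$, the $(\cdot)^+_p$ device, and the substitution $S_6$, each of which has been shown (or can be shown by citation) to preserve membership in the relevant logic. Concretely, suppose $n\in\mathbb{X}$. By Lemma~\ref{lem:Trakhtenbrot:lem1:sib:binP}, $S_1\mathit{MTiling}_n^{\mathbb{X}}\in\logic{QCl}$, and since $S_1\mathit{MTiling}_n^{\mathbb{X}}$ contains no predicate letters except the binary $P$, it lies in $\logic{QCl}^{\mathit{bin}}\subseteq\logic{QCl}$. By Lemma~\ref{lem:false} together with Corollary~\ref{cor3:lem:false}, the positive form $(S_1\mathit{MTiling}_n^{\mathbb{X}})^+$ is classically valid as well (any classical model either refutes $\mathit{false}$, in which case Lemma~\ref{lem:false} gives the equivalence, or validates it and hence validates every positive $P$-only formula trivially); so $(S_1\mathit{MTiling}_n^{\mathbb{X}})^+\in\logic{QCl}$. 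This formula involves only $P$ and the three variables $x,y,z$, so the Kolmogorov translation $K$ applies, and by~$(\ref{eq:kolmogorov:1})$ we get $K((S_1\mathit{MTiling}_n^{\mathbb{X}})^+)\in\logic{QInt}$.

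From there I would apply~$(\ref{eq:positivization:1})$: since $K((S_1\mathit{MTiling}_n^{\mathbb{X}})^+)$ does not contain the fresh proposition letter $p$, we have $(K((S_1\mathit{MTiling}_n^{\mathbb{X}})^+))^+_p\in\logic{QInt}$. Finally, $S_6$ is a predicate substitution replacing the binary atom $P(x_1,x_2)$ with the $\langle\!\wedge\text{-}\vee\text{-}\to\!\rangle$-formula $(Q(x_1)\wedge Q(x_2)\to p)\vee q$; since $\logic{QInt}$ is closed under Substitution (it is a superintuitionistic predicate logic), $S_6((K((S_1\mathit{MTiling}_n^{\mathbb{X}})^+))^+_p)\in\logic{QInt}$, which is exactly the claim. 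The structure of the argument is therefore purely ``apply one preservation lemma after another,'' with the only genuine content being the verification that each intermediate formula satisfies the syntactic hypothesis of the next lemma (no $\bot$, no $p$, only $P$ and three variables, etc.).

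The main obstacle I anticipate is not the logical core but the bookkeeping around the syntactic side-conditions — in particular making sure that the occurrences of $\bot$ in $S_1\mathit{MTiling}_n^{\mathbb{X}}$ are handled consistently by the two different ``$+$'' operations (the one eliminating $\bot$ via the formula $\mathit{false}=\forall x\forall y\,P(x,y)$ from Section~\ref{sec:3:subsec:positive}, versus the $(\cdot)^p$/$(\cdot)^+_p$ operation of Section~\ref{sec:int} that replaces $\bot$ by the proposition letter $p$), and that after the first of these the formula is genuinely positive (so that $K$ is applied to a positive formula, for which the $K(\bot)$ clause is vacuous). A related subtlety is confirming that $S_6$ respects the variable bound: since $S_6$ substitutes for the atom $P(x_1,x_2)$ a formula whose only individual variables are among $x_1,x_2$ (plus the proposition letters $p,q$), the substitution does not introduce new variables, so the result still lives in the fragment with a single unary letter $Q$, proposition letters, and the original variables. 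I would note explicitly, as the paper does elsewhere via footnote~\ref{footnote:1}, that this is an indirect substitution and that congruent copies of subformulas are what get plugged in; once these routine checks are in place, each implication is immediate from the cited results and no computation is required.
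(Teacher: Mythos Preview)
Your proposal is correct and follows essentially the same chain of implications as the paper's own proof: Lemma~\ref{lem:Trakhtenbrot:lem1:sib:binP} $\Rightarrow$ Corollary~\ref{cor3:lem:false} $\Rightarrow$ (\ref{eq:kolmogorov:1}) $\Rightarrow$ (\ref{eq:positivization:1}) $\Rightarrow$ closure of $\logic{QInt}$ under Substitution. Your extra remarks about the syntactic side-conditions (positivity before applying $K$, absence of $p$ before applying $(\cdot)^+_p$, variable discipline for $S_6$) are accurate and match the implicit checks the paper relies on.
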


\begin{proof}
Observe that
\settowidth{\templength}{\mbox{$\logic{QInt}$}}
$$
\begin{array}{lclll}
n\in\mathbb{X}
  & \imply
  & S_1 \textit{MTiling}^{\mathbb{X}}_n \hfill\in \parbox{\templength}{$\logic{QCl}$}
  && \mbox{(by Lemma~\ref{lem:Trakhtenbrot:lem1:sib:binP})}
  \smallskip\\
  & \imply
  & (S_1 \textit{MTiling}^{\mathbb{X}}_n)^+ \hfill\in \parbox{\templength}{$\logic{QCl}$}
  && \mbox{(by Corollary~\ref{cor3:lem:false})}
  \smallskip\\
  & \imply
  & K((S_1 \textit{MTiling}^{\mathbb{X}}_n)^+) \hfill\in \logic{QInt}
  && \mbox{(by~(\ref{eq:kolmogorov:1}))}
  \smallskip\\
  & \imply
  & (K((S_1 \textit{MTiling}^{\mathbb{X}}_n)^+))^+_p \hfill\in \logic{QInt}
  && \mbox{(by~(\ref{eq:positivization:1}))}
  \smallskip\\
  & \imply
  & S_6((K((S_1 \textit{MTiling}^{\mathbb{X}}_n)^+))^+_p) \hfill\in \logic{QInt}
  && \mbox{(since $S_6$ is a substitution).}
  \smallskip\\
\end{array}
$$
\end{proof}

We say that a class $\scls{C}$ of intuitionistic Kripke frames satisfies the \defnotion{special weak Kripke--Hughes--Cresswell condition} (for short, \defnotion{swKHC}) if, for every $n\in\numN$, there exists a Kripke frame $\otuple{W,R}\in\scls{C}$ with $w \in W$ such that $R(w)$ contains an $R$-antichain with $n$~elements. Obviously, if a class of Kripke frames satisfies swKHC, then it also satisfies wKHC. However, if, for some $k\in\numNp$, a class $\scls{C}$ contains only Kripke frames of width\footnote{The greatest cardinality of antichains in the frame.} at most~$k$, then it can not satisfy swKHC; at the same time, even the class of all linear Kripke frames (i.e., of width~$1$) satisfies wKHC.  

\begin{lemma}
\label{lem:2:QInt:positive:3var}
Let $L$ be a superintuitionistic predicate logic such that\/ $\ckf L$ is an swKHC class. Then, $n\in\mathbb{Y}$ implies 
$S_6((K((S_1 \textit{MTiling}^{\mathbb{X}}_n)^+))^+_p) \not\in \QSILcd \ckf L$. 
\end{lemma}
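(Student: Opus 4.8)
The plan is to transfer the modal countermodel construction of Lemma~\ref{lem:ml:monadic:insep:2} to the intuitionistic setting, routed through the Kolmogorov translation $K$ and the positivization $(\cdot)^+_p$. Since $n\in\mathbb{Y}$, Lemma~\ref{lem:Trakhtenbrot:lem2:sib:binP}~--- through the finite model built in the proof of Lemma~\ref{lem:Trakhtenbrot:lem2:sib}~--- supplies a finite classical model $\cModel{M}'=\otuple{\mathcal{D},\mathcal{I}}$ with $\cModel{M}'\models\bm{sib}$ and $\cModel{M}'\not\models S_1\textit{MTiling}^{\mathbb{X}}_n$; by Corollary~\ref{cor3:lem:false}, also $\cModel{M}'\not\models(S_1\textit{MTiling}^{\mathbb{X}}_n)^+$. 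Because $\ckf L$ is an swKHC class, I fix a Kripke frame $\kframe{F}=\otuple{W,R}\in\ckf L$ and a world $w_0\in W$ carrying an $R$-antichain $A\subseteq R(w_0)$ of size at least $|\mathcal{D}|^2$ (the trivial case $|\mathcal{D}|\le 1$, where $\mathcal{I}(P)=\varnothing$, is handled separately), together with an injective labelling $(i,j)\mapsto a_{ij}\in A$ for $(i,j)\in\mathcal{D}\times\mathcal{D}$.

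Next I would build the intuitionistic Kripke model $\kModel{M}=\otuple{\kframe{F}\odot\mathcal{D},I}$; it is based on a constant-domain augmented frame with finite domain over a frame of $\ckf L$, hence admissible for $\QSILcd\ckf L$. I take the proposition letters $p$ and $q$ false at every world, so the universally quantified conjunction of the implications $p\to\psi^p$ (over subformulas $\psi$) that forms the antecedent of $(\cdot)^+_p$ is hereditarily true; consequently, at $w_0$ the formula $S_6((K((S_1\textit{MTiling}^{\mathbb{X}}_n)^+))^+_p)$ collapses to $S_6(K((S_1\textit{MTiling}^{\mathbb{X}}_n)^+))$ with $p$ playing the role of $\bot$, and the substituted atom $S_6(P(a,b))=(Q(a)\wedge Q(b)\to p)\vee q$ becomes $\kModel{M}$-equivalent to $\neg(Q(a)\wedge Q(b))$. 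I then interpret the single unary letter $Q$ so that $a_{ij}$ is a witness world for the pair $(i,j)$: tentatively, $Q(c)$ holds at $u$ iff $u\ge a_{ij}$ for some $a_{ij}$ with $c\in\{i,j\}$ and $\neg\mathcal{I}(P)(i,j)$~--- an upward-closed condition, so heredity holds and $Q$ is empty at $w_0$ (consistent by irreflexivity of $\mathcal{I}(P)$). The intended effect, exactly as in Lemma~\ref{lem:ml:monadic:insep:2} with $\Box$ replaced by the double negations of $K$, is that the value at $w_0$ of the Kolmogorov atom $\neg\neg S_6(P(a,b))$ equals $\mathcal{I}(P)(a,b)$ for every pair $a,b$; pushing this through all the double negations of $K$~--- which read at $w_0$ turn the intuitionistic semantics of each $K$-subformula into the classical semantics of the matching subformula of $(S_1\textit{MTiling}^{\mathbb{X}}_n)^+$ over $\cModel{M}'$~--- would give $\kModel{M},w_0\not\imodels K((S_1\textit{MTiling}^{\mathbb{X}}_n)^+)$, hence $\kModel{M},w_0\not\imodels S_6((K((S_1\textit{MTiling}^{\mathbb{X}}_n)^+))^+_p)$, i.e.\ $S_6((K((S_1\textit{MTiling}^{\mathbb{X}}_n)^+))^+_p)\notin\QSILcd\ckf L$. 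This is the semantic counterpart of the soundness facts $(\ref{eq:kolmogorov:1})$ and $(\ref{eq:positivization:1})$, carried out directly because we need a refutation.

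The main obstacle is precisely this last verification: making the Kolmogorov double negations collapse to the classical value at $w_0$ over an arbitrary swKHC frame, whose antichain $A$ need not be maximal and which may carry further worlds~--- in particular common upper bounds of antichain elements, where heredity forces the $Q$-profile to be large and possibly to contain $\mathcal{I}(P)$-edges, which would corrupt the effective interpretation of $P$. I expect this to require refining the naive interpretation above: distinguishing worlds not only by $Q$ but also by the auxiliary letters $p$, $q$ so that such common-upper-bound worlds are neutralized (using the disjunct $q$, while keeping $q$ false on $\{w_0\}\cup A$ and on any world lying above all of $A$, so that the double negations read off at $w_0$ still record only the witnessing behaviour on $A$), and carefully exploiting the symmetry and irreflexivity furnished by $\cModel{M}'\models\bm{sib}$ to maintain the invariant that the $Q$-profile of every relevant world is $\mathcal{I}(P)$-independent. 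Once that invariant is secured, the inductive claim ``value at $w_0$ of each $K$-subformula $=$ classical value over $\cModel{M}'$'' goes through routinely, with the remaining worlds of $R(w_0)$ as harmless here as the non-designated worlds are in the proof of Lemma~\ref{lem:ml:monadic:insep:2}.
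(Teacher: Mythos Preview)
Your overall strategy is correct --- encode the finite $\bm{sib}$-model on an antichain of an swKHC frame, with a single constant finite domain --- but the execution has a genuine gap that you yourself flag without actually closing.

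The trouble is your initial choice to make both $p$ and $q$ false everywhere. With $p\equiv\bot$ and $q\equiv\bot$, the Kolmogorov $\neg\neg$'s do \emph{not} collapse to classical values at $w_0$: evaluating $\neg\neg\psi$ at $w_0$ quantifies over all $w\in R(w_0)$, and you have no control there. Your proposed repair --- make $q$ true at the ``bad'' common upper bounds while keeping $q$ false on $\{w_0\}\cup A$ and on worlds lying above all of $A$ --- is not even hereditary (a world above two antichain elements may lie below one above all of them), and in any case the antichain elements are precisely the worlds where the substituted atom must fail, so keeping $q$ false there does nothing for the inductive step. The appeal to ``symmetry and irreflexivity'' to keep $Q$-profiles ``$\mathcal{I}(P)$-independent'' is too vague to be checked.

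The paper's fix is essentially the opposite of yours: set $q$ true at \emph{every} world $w$ with $w_0\notin R(w)$ --- in the cone of $w_0$ this means everywhere except $w_0$ itself, by antisymmetry --- and set $p$ true exactly at worlds that see no antichain element; then take $Q(a)$ true where $p$ holds or where the world literally is $w_{ab}$ (or $w_{ba}$) with $\cModel{M}\not\models P(a,b)$. The disjunct $q$ now makes every substituted Kolmogorov atom, and by a straightforward induction every $S_6((K\psi)^p)$, true at all worlds of $R(w_0)\setminus\{w_0\}$ (Sublemma~\ref{sublem:1:lem:2:QInt:positive:3var}). Hence in the inductive step for $\to$ and $\forall$, any witness world $w'$ refuting a $K$-subformula must satisfy $w_0\in R(w')$, so $w'=w_0$ by antisymmetry, and the equivalence with the classical value over $\cModel{M}$ (Sublemma~\ref{sublem:2:lem:2:QInt:positive:3var}) goes through cleanly. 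Meanwhile $p$ does not stand in for $\bot$; its role is to absorb the region strictly above the antichain, where $\forall x\,Q(x)$ then holds and every positive $S_6(\psi^p)$ is true --- this is what secures the antecedent of $(\cdot)^+_p$ at $w_0$ (Sublemma~\ref{sublem:3:lem:2:QInt:positive:3var}). Your sketch is missing exactly these two assignments for $p$ and $q$ and the resulting collapse-to-$w_0$ argument.
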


\begin{proof}
Let $n\in\mathbb{Y}$. By Lemma~\ref{lem:Trakhtenbrot:lem2:sib:binP}, there exists a finite model $\cModel{M}=\otuple{\mathcal{D},\mathcal{I}}$ such that $\cModel{M}\models\bm{sib}$ and $\cModel{M}\not\models S_1 \textit{MTiling}^{\mathbb{X}}_n$. Then, by Corollary~\ref{cor1:lem:false}, $\cModel{M}\not\models (S_1 \textit{MTiling}^{\mathbb{X}}_n)^+$.

Since $\ckf L$ is an swKHC class, it contains an intuitionistic Kripke frame $\kframe{F}=\otuple{W,R}$ with a world $w_0$ such that $R(w_0)$ contains an $R$-antichain with $|\mathcal{D}|^2$ elements. Let 
$$
\begin{array}{lcl}
W' & = & \{w_{ab} : a,b \in \mathcal{D}\}
\end{array}
$$
be a subset of the antichain with $w_{ab}\ne w_{cd}$ whenever $\{a,b\}\ne\{c,d\}$. Let $\kModel{M} = \otuple{\kframe{F}\odot\mathcal{D},I}$ be an intuitionistic model such that
$$
\begin{array}{lcl}
\kModel{M},w\imodels p & \iff & R(w)\cap W' = \varnothing; \smallskip\\ 
\kModel{M},w\imodels q & \iff & w_0\not\in R(w); \smallskip\\ 
\kModel{M},w\imodels Q(a) 
  & \iff 
  & \mbox{either $\kModel{M},w\imodels p$} 
  \\ 
  &
  & \mbox{or, for some $b\in\mathcal{D}$, both $w\in\{w_{ab},w_{ba}\}$ and $\cModel{M}\not\models P(a,b)$.}
\end{array}
$$

\begin{sublemma}
\label{sublem:1:lem:2:QInt:positive:3var}
Let $\varphi$ be a positive formula that contains no predicate letters different from~$P$. Then $\kModel{M},w\imodels S_6((K(\varphi))^p)$, for every $w\in W$ such that $w_0\not\in R(w)$.
\end{sublemma}

\begin{proof}
By induction on~$\varphi$. Let $w$ be a world of $\kModel{F}$ such that $w_0\not\in R(w)$. If $\varphi=P(x,y)$, then $S_6((K(\varphi))^p) = (((Q(x)\wedge Q(y)\to p)\vee q)\to p)\to p$. Assume that $\kModel{M},w\not\imodels^g S_6((K(\varphi))^p)$, for some assignment~$g$. Then there exists $w'\in R(w)$ such that
$$
\begin{array}{lcl}
\kModel{M},w'\imodels^g ((Q(x)\wedge Q(y)\to p)\vee q)\to p 
  & \mbox{and} 
  & \kModel{M},w'\not\imodels^g p,
\end{array}
$$
which imply $\kModel{M},w'\not\imodels^g (Q(x)\wedge Q(y)\to p)\vee q$, that is impossible, since $\kModel{M},w'\imodels^g q$ by the definition of~$\kModel{M}$. Thus, $\kModel{M},w\imodels S_6((K(\varphi))^p)$. The induction step is straightforward and is left to the reader.
\end{proof}

\begin{sublemma}
\label{sublem:2:lem:2:QInt:positive:3var}
Let $\varphi$ be a positive formula that contains no predicate letters different from~$P$. Then, for every assignment~$g$,
$$
\begin{array}{lcl}
\kModel{M},w_0\imodels^g S_6((K(\varphi))^p) & \iff & \cModel{M}\models^g \varphi.
\end{array}
$$
\end{sublemma}

\begin{proof}
By induction on~$\varphi$. 

If $\varphi=P(x,y)$, then $S_6((K(\varphi))^p) = (((Q(x)\wedge Q(y)\to p)\vee q)\to p)\to p$. Observe that, by the definition of~$\kModel{M}$, for all $a,b\in\mathcal{D}$,
$$
\begin{array}{lcl}
\kModel{M},w_0\not\imodels (((Q(a)\wedge Q(b)\to p)\vee q)\to p) \to p
  & \iff
  & \cModel{M}\not\models P(a,b),
\end{array}
$$
that provides us with the equivalence required, for $\varphi=P(x,y)$.

The cases $\varphi=\varphi'\wedge\varphi''$, $\varphi=\varphi'\vee\varphi''$, $\varphi=\varphi'\to\varphi''$, $\varphi=\exists x\,\varphi'$, and $\varphi=\forall x\,\varphi'$ are similar to each other; let us consider some of them.

Let $\varphi=\varphi'\to\varphi''$. Then
$$
\begin{array}{lcl}
\cModel{M}\not\models^g \varphi
  & \imply
  & \mbox{$\cModel{M}\models^g \varphi'$ and $\cModel{M}\not\models^g \varphi''$} 
  \\
  & \imply
  & \mbox{$\kModel{M},w_0\imodels^g S_6((K(\varphi'))^p)$ and $\kModel{M},w_0\not\imodels^g S_6((K(\varphi''))^p)$} 
  \\
  & \imply
  & \kModel{M},w_0\not\imodels^g S_6((K(\varphi'))^p)\to S_6((K(\varphi''))^p) 
  \\
  & \imply
  & \kModel{M},w_0\not\imodels^g S_6((K(\varphi'))^p\to (K(\varphi''))^p) 
  \\
  & \imply
  & \kModel{M},w_0\not\imodels^g (S_6((K(\varphi'))^p\to (K(\varphi''))^p)\to p)\to p 
  \\
  & \imply
  & \kModel{M},w_0\not\imodels^g S_6((K(\varphi'\to\varphi''))^p), 
  \\
  & \mbox{i.e.,}
  & \kModel{M},w_0\not\imodels^g S_6((K(\varphi))^p). 
\end{array}
$$
Suppose that $\kModel{M},w_0\not\imodels^g S_6((K(\varphi))^p)$, i.e., $\kModel{M},w_0\not\imodels^g S_6((K(\varphi'\to\varphi''))^p)$. Then there exists $w\in R(w_0)$ such that 
$$
\begin{array}{lcl}
\kModel{M},w\imodels^g S_6((K(\varphi')\to(K(\varphi'')))^p)\to p 
  & \mbox{and}
  & \kModel{M},w\not\imodels^g p, 
\end{array}
$$
which imply $\kModel{M},w\not\imodels^g S_6((K(\varphi')\to(K(\varphi'')))^p)$. But then there exists $w'\in R(w)$ such that 
$$
\begin{array}{lcl}
\kModel{M},w'\imodels^g S_6((K(\varphi'))^p) 
  & \mbox{and}
  & \kModel{M},w'\not\imodels^g S_6((K(\varphi''))^p). 
\end{array}
$$
It then follows from Sublemma~\ref{sublem:1:lem:2:QInt:positive:3var}, that $w_0\in R(w')$. Since $R$ is antisymmetric, from $w_0Rw'$ and $w'Rw_0$ we obtain $w'=w_0$. Thus, by inductive hypothesis, 
$\cModel{M}\models^g \varphi'$ and $\cModel{M}\not\models^g \varphi''$, and hence, $\cModel{M}\not\models^g \varphi'\to\varphi''$, i.e. $\cModel{M}\not\models^g \varphi$. 

Let $\varphi=\forall x\,\varphi'$. Then
$$
\begin{array}{lcl}
\cModel{M}\not\models^g \varphi
  & \imply
  & \mbox{$\cModel{M}\not\models^h \varphi'$, for some $h$ such that $h\stackrel{x}{=}g$} 
  \\
  & \imply
  & \kModel{M},w_0\not\imodels^h S_6((K(\varphi'))^p) 
  \\
  & \imply
  & \kModel{M},w_0\not\imodels^h (S_6((K(\varphi'))^p)\to p)\to p 
  \\
  & \imply
  & \kModel{M},w_0\not\imodels^g S_6((K(\forall x\,\varphi'))^p), 
  \\
  & \mbox{i.e.,}
  & \kModel{M},w_0\not\imodels^g S_6((K(\varphi))^p). 
\end{array}
$$
Suppose that $\kModel{M},w_0\not\imodels^g S_6((K(\varphi))^p)$, i.e., $\kModel{M},w_0\not\imodels^g \kModel{M},w_0\not\imodels^h (S_6(\forall x\,(K(\varphi'))^p)\to p)\to p$. Then there exists $w\in R(w_0)$ such that 
$$
\begin{array}{lcl}
\kModel{M},w\imodels^g S_6(\forall x\,(K(\varphi'))^p)\to p 
  & \mbox{and} 
  & \kModel{M},w0\not\imodels^g p,
\end{array}
$$
which imply $\kModel{M},w\not\imodels^g S_6(\forall x\,(K(\varphi'))^p)$. But then, for some $w'\in R(w)$ and an assignment~$h$ such that $h\stackrel{x}{=}g$,
$$
\begin{array}{c}
\kModel{M},w'\not\imodels^h S_6((K(\varphi'))^p). 
\end{array}
$$
It then follows from Sublemma~\ref{sublem:1:lem:2:QInt:positive:3var}, that $w_0\in R(w')$, and therefore, $w'=w_0$. Thus, by inductive hypothesis, $\cModel{M}\models^h \varphi'$, and hence, $\cModel{M}\not\models^g \forall x\,\varphi'$, i.e. $\cModel{M}\not\models^g \varphi$. 

All other cases are similar (even simpler) and are left to the reader.
\end{proof}

\begin{sublemma}
\label{sublem:3:lem:2:QInt:positive:3var}
If $\psi\in \sub K((S_1 \textit{MTiling}^{\mathbb{X}}_n)^+)$, then $\kModel{M},w_0\imodels p\to S_6(\psi^p)$.
\end{sublemma}

\begin{proof}
It is sufficient to observe that $\kModel{M},w_0\imodels \forall x\, Q(x)$, for every $w\in W$ such that $\kModel{M},w_0\imodels p$.  
\end{proof}

Let us apply the observations obtained:
$$
\begin{array}{lcll}
\cModel{M}\not\models (S_1 \textit{MTiling}^{\mathbb{X}}_n)^+
 & \imply
 & \kModel{M},w_0\not\imodels S_6((K((S_1 \textit{MTiling}^{\mathbb{X}}_n)^+))^p)
 & \mbox{(by Sublemma~\ref{sublem:2:lem:2:QInt:positive:3var})}
 \\
 & \imply
 & \kModel{M},w_0\not\imodels S_6((K((S_1 \textit{MTiling}^{\mathbb{X}}_n)^+))^+_p)
 & \mbox{(by Sublemma~\ref{sublem:3:lem:2:QInt:positive:3var}).} 
\end{array}
$$
Hence, $S_6((K((S_1 \textit{MTiling}^{\mathbb{X}}_n)^+))^+_p) \not\in \QSILcd \ckf L$. 
\end{proof}

\begin{lemma}
\label{lem:3:QInt:positive:3var}
Let $L$ be a superintuitionistic predicate logic such that\/ $\fin\ckf L$ is an swKHC class. Then, $n\in\mathbb{Y}$ implies 
$S_6((K(S_1 \textit{MTiling}^{\mathbb{X}}_n))^+_p) \not\in \QSILcw \ckf L$. 
\end{lemma}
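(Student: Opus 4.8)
The plan is to run the proof of Lemma~\ref{lem:2:QInt:positive:3var} essentially verbatim, changing only the ambient intuitionistic Kripke frame: instead of drawing an swKHC frame from $\ckf L$, we draw a \emph{finite} one from $\fin\ckf L$, which is permitted precisely because $\fin\ckf L$ is assumed to be an swKHC class. Concretely, suppose $n\in\mathbb{Y}$. By Lemma~\ref{lem:Trakhtenbrot:lem2:sib:binP} together with Corollary~\ref{cor1:lem:false} there is a finite classical model $\cModel{M}=\otuple{\mathcal{D},\mathcal{I}}$ with $\cModel{M}\models\bm{sib}$ and $\cModel{M}\not\models (S_1\textit{MTiling}^{\mathbb{X}}_n)^+$; this is exactly the model produced in the proof of Lemma~\ref{lem:Trakhtenbrot:lem2:sib}, so in particular $\mathcal{D}=\{0,\ldots,r+4\}\times\{0,\ldots,r+4\}$ for a suitable $r\in\numN$.

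\textbf{Building the countermodel.} Since $\fin\ckf L$ is an swKHC class, it contains a \emph{finite} intuitionistic Kripke frame $\kframe{F}=\otuple{W,R}$ with a world $w_0$ such that $R(w_0)$ contains an $R$-antichain of $|\mathcal{D}|^2$ elements; pick a subset $W'=\{w_{ab}:a,b\in\mathcal{D}\}$ of it with $w_{ab}\ne w_{cd}$ whenever $\{a,b\}\ne\{c,d\}$. Define the constant-domain intuitionistic model $\kModel{M}=\otuple{\kframe{F}\odot\mathcal{D},I}$ exactly as in the proof of Lemma~\ref{lem:2:QInt:positive:3var}: $\kModel{M},w\imodels p$ iff $R(w)\cap W'=\varnothing$; $\kModel{M},w\imodels q$ iff $w_0\notin R(w)$; and $\kModel{M},w\imodels Q(a)$ iff either $\kModel{M},w\imodels p$, or for some $b\in\mathcal{D}$ both $w\in\{w_{ab},w_{ba}\}$ and $\cModel{M}\not\models P(a,b)$. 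One checks $\kframe{F}$ is finite and $\mathcal{D}$ is finite, so $\kframe{F}\odot\mathcal{D}$ is a c-augmented frame based on a frame of $\fin\ckf L$, i.e.\ a member of $\aug{c}{wfin}{\ckf L}$; the heredity condition holds by inspection of the definition of $I$.

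\textbf{Transfer of the sublemmas and conclusion.} The three sublemmas from the proof of Lemma~\ref{lem:2:QInt:positive:3var} carry over unchanged, because each uses only that $R$ is a partial order and that $W'$ is an $R$-antichain inside $R(w_0)$ — finiteness of the domains versus finiteness of the frame never enters. Thus: $\kModel{M},w\imodels S_6((K(\varphi))^p)$ for every positive $P$-only $\varphi$ and every $w$ with $w_0\notin R(w)$; $\kModel{M},w_0\imodels^g S_6((K(\varphi))^p)$ iff $\cModel{M}\models^g\varphi$ for every such $\varphi$ and assignment $g$ (here antisymmetry of $R$ is used, as in the $\to$ and $\forall$ cases); and $\kModel{M},w_0\imodels p\to S_6(\psi^p)$ for every $\psi\in\sub K((S_1\textit{MTiling}^{\mathbb{X}}_n)^+)$. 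Combining the last two with $\cModel{M}\not\models (S_1\textit{MTiling}^{\mathbb{X}}_n)^+$ yields $\kModel{M},w_0\not\imodels S_6((K((S_1\textit{MTiling}^{\mathbb{X}}_n)^+))^+_p)$, whence $S_6((K(S_1\textit{MTiling}^{\mathbb{X}}_n))^+_p)\notin\QSILcw\ckf L$.

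\textbf{Main obstacle.} There is no genuinely new difficulty — the argument is a routine adaptation. The only point needing a moment's care is the bookkeeping that ensures the constructed model is a \emph{legitimate} witness for non-membership in $\QSILcw\ckf L$: the underlying frame must be finite, based on a frame validating $L$, and the domain must be finite (for the \texttt{wfin} designation) and constant (for the \texttt{c} designation). All of this is delivered by the swKHC hypothesis on $\fin\ckf L$ (finite frame from $\fin\ckf L$ with a large enough antichain in a cone) together with the choice of the finite constant domain $\mathcal{D}$ coming from $\cModel{M}$.
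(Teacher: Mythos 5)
Your proof is correct and follows exactly the route the paper takes: the paper's own proof of this lemma is a one-line reference to the proof of Lemma~\ref{lem:2:QInt:positive:3var}, noting only that the intuitionistic Kripke frame must now be chosen finite, which the swKHC hypothesis on $\fin\ckf L$ guarantees. Your write-up simply spells out the details of that same adaptation (finite frame from $\fin\ckf L$, the same constant finite domain $\mathcal{D}$, and the observation that the three sublemmas carry over unchanged), so there is nothing to add.
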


\begin{proof}
Similar to the proof of Lemma~\ref{lem:2:QInt:positive:3var} with the difference that the corresponding intuitionistic Kripke frame must be finite; such a frame exists in $\fin\ckf L$, since it is an swKHC class.
\end{proof}

Let us eliminate $p$ and~$q$. To this end, define a substitution~$S_7$ by
$$
\begin{array}{lcl}
S_7(p) & = & \forall x\,Q(x); \\
S_7(q) & = & \forall x\forall y\,((Q(x)\to Q(y))\vee (Q(y)\to Q(x))). \\
\end{array}
$$

\begin{lemma}
\label{lem:1:QInt:positive:3var:S7}
If $n\in\mathbb{X}$, then $S_7 S_6((K(S_1 \textit{MTiling}^{\mathbb{X}}_n))^+_p) \in \logic{QInt}$.
\end{lemma}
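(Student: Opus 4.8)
The plan is to obtain this statement as an immediate consequence of Lemma~\ref{lem:1:QInt:positive:3var} together with the closure of $\logic{QInt}$ under Substitution, exactly in the spirit of the one-line implication chains already used for the analogous ``membership'' lemmas (cf.\ Lemmas~\ref{lem:Trakhtenbrot:lem1:sib:binP}, \ref{lem:relativization:G:K:S3}, and \ref{lem:1:QInt:positive:3var}). So the argument will be very short.

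First I would recall that, by Lemma~\ref{lem:1:QInt:positive:3var}, $n\in\mathbb{X}$ implies that the formula to which $S_7$ is applied already belongs to $\logic{QInt}$ (after matching the notation of the two statements, i.e.\ checking that the composite of $K$, the positivisation $(\cdot)^+_p$, $S_6$ and $S_7$ is literally the formula named here, and that the two proposition letters $p$ and $q$ introduced by $(\cdot)^+_p$ and by $S_6$ are the ones $S_7$ acts on and are distinct, as required in the definition of $S_6$). Then I would observe that $S_7$ is a genuine formula substitution: it sends the proposition letter $p$ to the closed $\lang{L}$-formula $\forall x\,Q(x)$ and the proposition letter $q$ to the closed $\lang{L}$-formula $\forall x\forall y\,((Q(x)\to Q(y))\vee (Q(y)\to Q(x)))$; since both replacement formulas are closed, no variable is captured, so $S_7$ is admissible as a predicate substitution (cf.\ footnote~\ref{footnote:1}). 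Because $\logic{QInt}$ is closed under Substitution, applying $S_7$ to a formula of $\logic{QInt}$ again yields a formula of $\logic{QInt}$, which gives $S_7 S_6((K(S_1 \textit{MTiling}^{\mathbb{X}}_n))^+_p)\in\logic{QInt}$.

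There is essentially no hard step: the only thing to be careful about is the bookkeeping of which proposition and predicate letters are being substituted for what, so that the composed substitutions really produce the formula in the statement and so that no clash occurs when $S_7$ replaces $p$ and $q$ inside quantifier scopes over $x$ and $y$ (this is fine precisely because $S_7(p)$ and $S_7(q)$ are sentences). I would dispatch this in a line or two and leave the detailed unfolding to the reader, as is done for the parallel ``$\in$'' lemmas elsewhere in the paper.
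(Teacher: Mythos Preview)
Your proposal is correct and matches the paper's own proof essentially verbatim: the paper simply writes ``Follows from Lemma~\ref{lem:1:QInt:positive:3var}, since $S_7$ is a substitution.'' Your additional remarks about $S_7(p)$ and $S_7(q)$ being closed formulas (hence no variable capture) are a reasonable elaboration of why the substitution is admissible, but the core argument is identical.
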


\begin{proof}
Follows from Lemma~\ref{lem:1:QInt:positive:3var}, since $S_7$ is a substitution.
\end{proof}

\begin{lemma}
\label{lem:2:QInt:positive:3var:S7}
Let $L$ be a superintuitionistic predicate logic such that\/ $\ckf L$ is an swKHC class. Then, $n\in\mathbb{Y}$ implies 
$S_7 S_6((K(S_1 \textit{MTiling}^{\mathbb{X}}_n))^+_p) \not\in \QSILcd \ckf L$. 
\end{lemma}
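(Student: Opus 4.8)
The plan is to obtain Lemma~\ref{lem:2:QInt:positive:3var:S7} from Lemma~\ref{lem:2:QInt:positive:3var} by checking that the substitution $S_7$ does not destroy the refutation. Fix $n\in\mathbb{Y}$ and let $\kModel{M}=\otuple{\kframe{F}\odot\mathcal{D},I}$, with $\kframe{F}=\otuple{W,R}\in\ckf L$, be the intuitionistic model built in the proof of Lemma~\ref{lem:2:QInt:positive:3var}; recall that $\kModel{M},w_0\not\imodels S_6((K((S_1 \textit{MTiling}^{\mathbb{X}}_n)^+))^+_p)$, that $W'=\{w_{ab}:a,b\in\mathcal{D}\}$ is an $R$-antichain in $R(w_0)$, and that every local domain of $\kModel{M}$ equals the finite set $\mathcal{D}$, so $\kModel{M}\in\aug{c}{dfin}\ckf L$. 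Passing to the submodel generated by $w_0$ (which changes neither the truth of the formula at $w_0$ nor membership in the class), it suffices to produce $\kModel{M}$ in this class with $\kModel{M},w_0\not\imodels S_7 S_6((K((S_1 \textit{MTiling}^{\mathbb{X}}_n)^+))^+_p)$.

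The heart of the matter is to compute, in $\kModel{M}$, the extensions of $S_7(p)=\forall x\,Q(x)$ and of $S_7(q)=\forall x\forall y\,((Q(x)\to Q(y))\vee(Q(y)\to Q(x)))$. For the former, the definition of $I(\cdot,Q)$, persistence of $p$, and $|\mathcal{D}|\geqslant 2$ give that some $Q(c)$ fails at every world $v$ with $R(v)\cap W'\neq\varnothing$, while every $Q(c)$ holds throughout $R(v)$ whenever $R(v)\cap W'=\varnothing$; hence $\forall x\,Q(x)$ and $p$ have the same extension, and in particular $\forall x\,Q(x)$ is refuted at $w_0$. For the latter, one uses $\cModel{M}\models\bm{sib}$ together with the sparsity of the graph $P^{\cModel{M}}$ (the model of Lemma~\ref{lem:Trakhtenbrot:lem2:sib}): given distinct $a,b$ there are $e,f$ with $\cModel{M}\not\models P(a,e)$ and $\cModel{M}\not\models P(b,f)$, and at $w_{ae},w_{bf}\in R(w_0)$ the $Q$-extension is $\{a,e\}$, respectively $\{b,f\}$, which refutes both $Q(a)\to Q(b)$ and $Q(b)\to Q(a)$ at $w_0$; thus $S_7(q)$ is refuted at $w_0$. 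One then checks $S_7(q)$ holds at the remaining worlds of the generated submodel: on $W'$ and above it the $Q$-extension is either all of $\mathcal{D}$ or a fixed two-element set persisting to $\mathcal{D}$, so one of the two implications always holds. Given these two facts, a routine induction following Sublemmas~\ref{sublem:1:lem:2:QInt:positive:3var}--\ref{sublem:3:lem:2:QInt:positive:3var} shows $\kModel{M},w\imodels^g S_7(\eta)\iff\kModel{M},w\imodels^g\eta$ for every subformula $\eta$ of $S_6((K((S_1 \textit{MTiling}^{\mathbb{X}}_n)^+))^p)$, every world $w$ of the generated submodel, and every assignment $g$: the base cases $\eta\in\{\bot,Q(a)\}$ are trivial since $S_7$ fixes $\bot$ and $Q$, the cases $\eta\in\{p,q\}$ are exactly the two computed extensions, and the substituted sentences $\forall x\,Q(x)$ and $S_7(q)$ are closed, so no capture occurs and the variable count stays at three. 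Evaluating at $w_0$ and combining with the analogue of Sublemma~\ref{sublem:3:lem:2:QInt:positive:3var} for $S_7\circ S_6$ yields $\kModel{M},w_0\not\imodels S_7 S_6((K((S_1 \textit{MTiling}^{\mathbb{X}}_n)^+))^+_p)$, i.e.\ this formula is not in $\QSILcd\ckf L$.

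The main obstacle is the behaviour of $S_7(q)$ at worlds lying strictly between $w_0$ and $W'$ in the generated submodel: such a world $v$ satisfies $q$ (since $w_0\notin R(v)$), but if it sees two antichain points it refutes $S_7(q)$ exactly as $w_0$ does, which would break the induction before it reaches $w_0$. I would remove this obstruction by choosing $W'$ so that the down-sets of its members inside $R(w_0)$ meet only at $w_0$ — e.g.\ selecting the antichain among the immediate successors of $w_0$, adjusting the frame from Lemma~\ref{lem:2:QInt:positive:3var} if necessary and verifying the adjusted frame still validates $L$ (exactly the kind of care taken in the modal counterparts such as Lemma~\ref{lem:relativization:S4:S5:QGL:QGrz}). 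Apart from this point, the argument is a direct transcription of the proof of Lemma~\ref{lem:2:QInt:positive:3var}.
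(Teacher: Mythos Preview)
Your proposal follows exactly the paper's approach: the paper's entire proof is the single sentence ``Follows from the proof of Lemma~\ref{lem:2:QInt:positive:3var}, since, for model $\kModel{M}$ constructed in the proof, $\kModel{M}\imodels p\leftrightarrow S_7(p)$ and $\kModel{M}\imodels q\leftrightarrow S_7(q)$,'' and you are unpacking precisely these two equivalences. You have in fact been more scrupulous than the paper here: the obstacle you flag is genuine---at a world $v\in R(w_0)\setminus\{w_0\}$ that sees two antichain elements $w_{aa},w_{bb}$ one has $q$ true but $S_7(q)$ false, so the paper's asserted \emph{global} equivalence $q\leftrightarrow S_7(q)$ is not literally correct for an arbitrary swKHC frame---though in the concrete applications drawn from this lemma (Theorem~\ref{th:QInt:positive:3var}, frames of depth~$2$, or depth~$3$ with a single top world) no such intermediate $v$ exists and both equivalences really are global.
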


\begin{proof}
Follows from the proof of Lemma~\ref{lem:2:QInt:positive:3var}, since, for model $\kModel{M}$ constructed in the proof,
$$
\begin{array}{lcl}
\kModel{M}\imodels p\leftrightarrow S_7(p)
  & \mbox{and}
  & \kModel{M}\imodels q\leftrightarrow S_7(q).
\end{array}
$$
\end{proof}

\begin{lemma}
\label{lem:3:QInt:positive:3var:S7}
Let $L$ be a superintuitionistic predicate logic such that\/ $\fin\ckf L$ is an swKHC class. Then, $n\in\mathbb{Y}$ implies 
$S_7 S_6((K(S_1 \textit{MTiling}^{\mathbb{X}}_n))^+_p) \not\in \QSILcw \ckf L$. 
\end{lemma}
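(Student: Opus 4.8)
The plan is to run exactly the argument used for Lemma~\ref{lem:2:QInt:positive:3var:S7}, changing only the supply of the underlying frame so that it is finite. First I would fix $n\in\mathbb{Y}$. By Lemma~\ref{lem:Trakhtenbrot:lem2:sib:binP} there is a finite classical model $\cModel{M}=\otuple{\mathcal{D},\mathcal{I}}$ with $\cModel{M}\models\bm{sib}$ and $\cModel{M}\not\models S_1\mathit{MTiling}_n^{\mathbb{X}}$, and by Corollary~\ref{cor1:lem:false} also $\cModel{M}\not\models(S_1\mathit{MTiling}_n^{\mathbb{X}})^+$. Then, since $\fin\ckf L$ is an swKHC class, it contains a \emph{finite} intuitionistic Kripke frame $\kframe{F}=\otuple{W,R}$ with a world $w_0$ such that $R(w_0)$ contains an $R$-antichain with $|\mathcal{D}|^2$ elements. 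This is the sole use of the hypothesis and the only deviation from the proof of Lemma~\ref{lem:2:QInt:positive:3var:S7}, where $\ckf L$ being swKHC was enough and the frame could be infinite.

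Next I would reuse verbatim the model construction from the proof of Lemma~\ref{lem:2:QInt:positive:3var}: pick a subset $W'=\{w_{ab}:a,b\in\mathcal{D}\}$ of the antichain with $w_{ab}\ne w_{cd}$ whenever $\{a,b\}\ne\{c,d\}$, and define the intuitionistic model $\kModel{M}=\otuple{\kframe{F}\odot\mathcal{D},I}$ with the same forcing clauses for $p$, $q$, and $Q(a)$. Because $W$ and $\mathcal{D}$ are both finite, $\kframe{F}\odot\mathcal{D}$ is a c-augmented frame with finite local domains based on a frame of $\fin\ckf L$, hence a member of $\aug{c}{wfin}\ckf L$; this is precisely what makes the conclusion land in $\QSILcw\ckf L$ rather than only in $\QSILcd\ckf L$. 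The analogues of Sublemmas~\ref{sublem:1:lem:2:QInt:positive:3var}--\ref{sublem:3:lem:2:QInt:positive:3var} — whose proofs use nothing about $\kframe{F}$ beyond antisymmetry of $R$ and the defining clauses of $\kModel{M}$ — then give word for word that $\kModel{M},w_0\not\imodels S_6((K((S_1\mathit{MTiling}_n^{\mathbb{X}})^+))^+_p)$.

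Finally, as in the proof of Lemma~\ref{lem:2:QInt:positive:3var:S7}, I would record that in this $\kModel{M}$ one has $\kModel{M}\imodels p\leftrightarrow S_7(p)$ and $\kModel{M}\imodels q\leftrightarrow S_7(q)$: the first because $S_7(p)=\forall x\,Q(x)$ is forced exactly at those worlds where $p$ is forced, and the second because $S_7(q)$ expresses linearity of the $Q$-levels, which holds at a world iff $w_0$ is not $R$-reachable from it, i.e.\ iff $q$ is forced there. Consequently $\kModel{M},w_0\not\imodels S_7S_6((K((S_1\mathit{MTiling}_n^{\mathbb{X}})^+))^+_p)$, so this formula is refuted on a member of $\aug{c}{wfin}\ckf L$, whence $S_7S_6((K((S_1\mathit{MTiling}_n^{\mathbb{X}})^+))^+_p)\not\in\QSILcw\ckf L$, as claimed.

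The proof is essentially bookkeeping once Lemma~\ref{lem:2:QInt:positive:3var} is available, so I do not expect a genuine obstacle; the one point I would be careful about is that finiteness is really preserved through the $\odot\mathcal{D}$ construction and through restricting to the antichain subset $W'$, so that the resulting model lies in $\aug{c}{wfin}\ckf L$. Since $|W|$ and $|\mathcal{D}|$ are both finite this is immediate, but it is exactly the hinge that distinguishes the $\mathit{wfin}$ statement from the $\mathit{dfin}$ one and that explains why the hypothesis is placed on $\fin\ckf L$ rather than on $\ckf L$.
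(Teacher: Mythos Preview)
Your proposal is correct and follows exactly the route indicated by the paper: combine the finite-frame modification of Lemma~\ref{lem:3:QInt:positive:3var} (replacing $\ckf L$ by $\fin\ckf L$ when choosing $\kframe{F}$) with the observation from Lemma~\ref{lem:2:QInt:positive:3var:S7} that $\kModel{M}\imodels p\leftrightarrow S_7(p)$ and $\kModel{M}\imodels q\leftrightarrow S_7(q)$ in the constructed model. The paper's own proof is just the one-line pointer ``follows from the proof of Lemma~\ref{lem:3:QInt:positive:3var} (with mentioned modifications of the proof of Lemma~\ref{lem:2:QInt:positive:3var})'', and your write-up is a faithful unpacking of that.
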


\begin{proof}
Similarly, follows from the proof of Lemma~\ref{lem:3:QInt:positive:3var} (with mentioned modifications of the proof of Lemma~\ref{lem:2:QInt:positive:3var}); we leave the details to the reader.
\end{proof}

For every $n\in\numNp$, let $\bm{bd}_n$ be the intuitionistic formula bounding the depth of intuitionistic Kripke frames by~$n$, see~\cite[Proposition~2.38]{ChZ}. Let us make an observation.

\begin{theorem}
\label{th:QInt:positive:3var}
The positive fragments of logics\/ 
$$
\begin{array}{lcl}
\logic{QInt} & \mbox{and\/} & \logic{QInt}_{\mathit{dfin}}\hfill +\bm{bd}_2+\bm{cd}, \\
\logic{QInt} & \mbox{and\/} & \logic{QInt}_{\mathit{wfin}}\hfill +\bm{bd}_2+\bm{cd}, \\
\logic{QInt} & \mbox{and\/} & \logic{QKC}_{\mathit{dfin}} \hfill +\bm{bd}_3+\bm{cd}, \\
\logic{QInt} & \mbox{and\/} & \logic{QKC}_{\mathit{wfin}} \hfill +\bm{bd}_3+\bm{cd}\phantom{,}
\end{array}
$$ 
are recursively inseparable in the language with a single unary predicate letter and three individual variables.
\end{theorem}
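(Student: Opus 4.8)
The plan is to reduce, for each of the four pairs in the statement, the recursive inseparability of $\mathbb{X}$ and $\mathbb{Y}$ to the recursive inseparability of the two positive fragments, using the formula $\varphi_n = S_7 S_6((K(S_1 \textit{MTiling}^{\mathbb{X}}_n))^+_p)$ constructed above. The first thing to record is that $\varphi_n$ is a \emph{positive} formula whose only predicate letter is the unary letter $Q$ and whose only individual variables are $x$, $y$, $z$: tracing the constructions, $S_1 \textit{MTiling}^{\mathbb{X}}_n$ uses only the binary letter $P$ and three variables; the Kolmogorov translation together with the positivization $(\cdot)^+_p$ introduce only the proposition letter $p$ and the prefix $\forall x\forall y\forall z$; and the substitutions $S_6$, $S_7$ replace first $P$ and then $p$, $q$ by positive formulas built from $Q$ alone, reusing $x$, $y$. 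The map $n\mapsto\varphi_n$ is clearly recursive.

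The positive half of each pair is immediate from Lemma~\ref{lem:1:QInt:positive:3var:S7}: if $n\in\mathbb{X}$ then $\varphi_n\in\logic{QInt}$, hence $\varphi_n$ lies in the positive fragment of $\logic{QInt}$ and, since $\logic{QInt}$ is contained in each of the four larger logics, in their positive fragments as well. For the negative half I would pass through an over-approximating logic of a class of $c$-augmented frames. Let $L'\in\{\logic{QInt},\logic{QKC}\}$ and let $k=2$ for $L'=\logic{QInt}$, $k=3$ for $L'=\logic{QKC}$. The class $\aug{c}{dfin}{\ckf(L'+\bm{bd}_k)}$ consists of $c$-augmented frames with finite local domains, of depth $\leqslant k$, whose underlying posets validate $L'$; it is thus a subclass of $\aug{e}{dfin}{\ckf L'}$ and validates $\bm{bd}_k$ and $\bm{cd}$, so its logic is a superintuitionistic logic containing $L'_{\mathit{dfin}}$, $\bm{bd}_k$, and $\bm{cd}$, whence $L'_{\mathit{dfin}}+\bm{bd}_k+\bm{cd}\subseteq\QSILcd\ckf(L'+\bm{bd}_k)$; the same argument with $\mathit{wfin}$ in place of $\mathit{dfin}$ gives $L'_{\mathit{wfin}}+\bm{bd}_k+\bm{cd}\subseteq\QSILcw\ckf(L'+\bm{bd}_k)$.

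It then remains to check that $\ckf(L'+\bm{bd}_k)$, and even its finite part $\fin\ckf(L'+\bm{bd}_k)$, is an swKHC class, so that Lemma~\ref{lem:2:QInt:positive:3var:S7} and Lemma~\ref{lem:3:QInt:positive:3var:S7} apply. For $L'=\logic{QInt}$, $k=2$, take the finite ``fan'' poset $F_m$ consisting of a root below an $m$-element antichain of maximal points: it has depth $2$, so validates $\logic{QInt}+\bm{bd}_2$, and the cone above the root contains an $R$-antichain of size $m$; since this works for every $m$, both $\ckf(\logic{QInt}+\bm{bd}_2)$ and its finite part are swKHC. For $L'=\logic{QKC}$, $k=3$, take $G_m$ obtained from $F_m$ by adjoining a single new point above all of $F_m$: it is a finite weakly directed poset of depth $3$, hence validates $\logic{QKC}+\bm{bd}_3$ (weak directedness is exactly what $\neg p\vee\neg\neg p$ requires of a frame, and the extra depth is unavoidable, as a directed poset carrying a wide antichain needs a point above that antichain), and the root's cone again contains an arbitrarily large antichain. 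Consequently, for $n\in\mathbb{Y}$, Lemma~\ref{lem:2:QInt:positive:3var:S7} (for the $\mathit{dfin}$ cases) and Lemma~\ref{lem:3:QInt:positive:3var:S7} (for the $\mathit{wfin}$ cases) give $\varphi_n\notin\QSILcd\ckf(L'+\bm{bd}_k)$, respectively $\varphi_n\notin\QSILcw\ckf(L'+\bm{bd}_k)$, and therefore $\varphi_n\notin L'_{\mathit{dfin}}+\bm{bd}_k+\bm{cd}$, respectively $\varphi_n\notin L'_{\mathit{wfin}}+\bm{bd}_k+\bm{cd}$; in particular $\varphi_n$ is not in the corresponding positive fragment.

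To finish, note that in every case $\logic{QInt}$ is contained in the larger logic, so the positive fragment of $\logic{QInt}$ is contained in the positive fragment of the larger logic; the recursive function $n\mapsto\varphi_n$ maps $\mathbb{X}$ into the former and $\mathbb{Y}$ into the complement of the latter, so a recursive set separating the two positive fragments would pull back, via $n\mapsto\varphi_n$ and a G\"odel numbering, to a recursive set separating $\mathbb{X}$ and $\mathbb{Y}$ — contradicting their recursive inseparability. The only part that is not pure bookkeeping on top of the already-established lemmas is the middle step: verifying the over-approximation inclusion and, for $\logic{QKC}$, fixing the depth bound at $\bm{bd}_3$ rather than $\bm{bd}_2$ and checking that the frames $G_m$ genuinely validate $\logic{QKC}$.
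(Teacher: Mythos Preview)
Your proof is correct and follows exactly the route the paper intends: the paper's own proof is the single line ``Follows from Lemmas~\ref{lem:1:QInt:positive:3var:S7}, \ref{lem:2:QInt:positive:3var:S7}, and~\ref{lem:3:QInt:positive:3var:S7}'', and you have spelled out precisely the bookkeeping that line leaves implicit --- the over-approximation $L'_{\mathit{dfin}}+\bm{bd}_k+\bm{cd}\subseteq\QSILcd\ckf(L'+\bm{bd}_k)$, the swKHC witnesses $F_m$ and $G_m$, and the final separation argument. Nothing is missing or misplaced.
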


\begin{proof}
Follows from Lemmas~\ref{lem:1:QInt:positive:3var:S7}, \ref{lem:2:QInt:positive:3var:S7}, and~\ref{lem:3:QInt:positive:3var:S7}.
\end{proof}

\begin{corollary}
\label{cor:1:th:QInt:positive:3var}
Let\/ $\logic{QInt}\subseteq L\subseteq L'$ and also either $L'\subseteq \logic{QInt}+\bm{bd}_2+\bm{cd}$ or\/ $L'\subseteq \logic{QKC}+\bm{bd}_3+\bm{cd}$. Then the positive fragments of $L$ and $L'_{\mathit{wfin}}$ as well as $L$ and $L'_{\mathit{dfin}}$ are recursively inseparable in the language with a single unary predicate letter and three individual variables. 
\end{corollary}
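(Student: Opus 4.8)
The plan is to derive this corollary from Theorem~\ref{th:QInt:positive:3var} by a short monotonicity argument, exactly in the spirit of the proofs of Corollaries~\ref{cor:a:th:Trakhtenbrot:bin:sib:P} and~\ref{cor1:th:ml:monadic:insep:1}. First I would record three easy facts. (i) Taking the positive fragment is monotone: if $L_1\subseteq L_2$, then the positive fragment of $L_1$ is contained in that of $L_2$, and ``recursive in the restricted language'' is inherited by sub- and superlanguages in the obvious way. (ii) $L'\subseteq L'_{\mathit{wfin}}$ and $L'\subseteq L'_{\mathit{dfin}}$, since any $e$-augmented frame based on a Kripke frame in $\ckf{L'}$ validates $L'$; together with $\logic{QInt}\subseteq L\subseteq L'$ this guarantees that the positive fragment of $L$ sits inside the positive fragments of $L'_{\mathit{wfin}}$ and of $L'_{\mathit{dfin}}$, so the extended notion of recursive (in)separability really applies to the pairs $(L,L'_{\mathit{wfin}})$ and $(L,L'_{\mathit{dfin}})$. (iii) The operations $L\mapsto L_{\mathit{wfin}}$ and $L\mapsto L_{\mathit{dfin}}$ reverse inclusions: $L_1\subseteq L_2$ gives $\ckf{L_1}\supseteq\ckf{L_2}$, hence $\fin\ckf{L_1}\supseteq\fin\ckf{L_2}$, hence the attached classes $\aug{e}{wfin}{\cdot}$ and $\aug{e}{dfin}{\cdot}$ grow, and a larger class of augmented frames determines a smaller logic, so $(L_1)_{\mathit{wfin}}\subseteq(L_2)_{\mathit{wfin}}$ and $(L_1)_{\mathit{dfin}}\subseteq(L_2)_{\mathit{dfin}}$.

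With this in hand I would argue by contradiction for, say, the pair $L$ and $L'_{\mathit{wfin}}$. Suppose there is a recursive set $Z$, in the language with a single unary predicate letter and three individual variables, separating their positive fragments: the positive fragment of $L$ is contained in $Z$, and $Z$ is contained in the positive fragment of $L'_{\mathit{wfin}}$. From $\logic{QInt}\subseteq L$ and (i) the positive fragment of $\logic{QInt}$ is contained in $Z$. In the first case $L'\subseteq\logic{QInt}+\bm{bd}_2+\bm{cd}$, so by (iii) $L'_{\mathit{wfin}}\subseteq(\logic{QInt}+\bm{bd}_2+\bm{cd})_{\mathit{wfin}}$, i.e.\ the logic $\logic{QInt}_{\mathit{wfin}}+\bm{bd}_2+\bm{cd}$ of Theorem~\ref{th:QInt:positive:3var}; hence $Z$ is contained in the positive fragment of that logic. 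In the second case one replaces $\logic{QInt}+\bm{bd}_2+\bm{cd}$ by $\logic{QKC}+\bm{bd}_3+\bm{cd}$ throughout and lands in $\logic{QKC}_{\mathit{wfin}}+\bm{bd}_3+\bm{cd}$. Either way $Z$ recursively separates the positive fragments of $\logic{QInt}$ and of the relevant logic, in the same restricted language, contradicting Theorem~\ref{th:QInt:positive:3var}. The argument for $L$ and $L'_{\mathit{dfin}}$ is word for word the same, using the $\mathit{dfin}$ lines of that theorem, with $\aug{e}{dfin}{\cdot}$ in place of $\aug{e}{wfin}{\cdot}$ in step~(iii).

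All the substance is already packed into Theorem~\ref{th:QInt:positive:3var}; what is left here is bookkeeping, and I expect no real obstacle. The one point that needs genuine care is step~(iii) combined with correctly reading the logics named in Theorem~\ref{th:QInt:positive:3var}: one must pin down the order in which the subscript $\mathit{wfin}$ (resp.\ $\mathit{dfin}$) and the extra axioms $\bm{bd}_n$, $\bm{cd}$ are applied, and verify that $(\logic{QInt}+\bm{bd}_2+\bm{cd})_{\mathit{wfin}}$ (and its $\mathit{dfin}$ and $\logic{QKC}$/$\bm{bd}_3$ variants) is indeed the logic of that theorem — equivalently, that the finite (resp.\ domain-finite) constant-domain intuitionistic frames of depth at most $n$ used to refute the critical formulas in its proof belong to the class in question. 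A second, even smaller, matter is to keep the language fixed (one unary predicate letter, three individual variables) at every link of the chain, so that recursive separability transfers along it. If one wants to avoid the notational question about ``$_{\mathit{wfin}}$ versus $+$'' altogether, an equally good option is to rerun Lemmas~\ref{lem:1:QInt:positive:3var:S7}--\ref{lem:3:QInt:positive:3var:S7} directly with $L$ in the role of $\logic{QInt}$ and $L'$ in the role of the bounded-depth logic, since those lemmas already isolate exactly the properties of $\ckf{L'}$ (namely being an swKHC class, respectively having an swKHC class of finite frames) that the hypotheses on $L'$ supply.
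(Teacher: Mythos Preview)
Your overall strategy---reduce to Theorem~\ref{th:QInt:positive:3var} via monotonicity, exactly as in Corollaries~\ref{cor:a:th:Trakhtenbrot:bin:sib:P} and~\ref{cor1:th:ml:monadic:insep:1}---is the intended one; the paper gives no separate proof for this corollary, so this is the implicit argument.

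There is, however, a real gap at precisely the point you flagged as delicate. The identification you propose, $(\logic{QInt}+\bm{bd}_2+\bm{cd})_{\mathit{wfin}} = \logic{QInt}_{\mathit{wfin}}+\bm{bd}_2+\bm{cd}$, is false: since a Kripke frame $\kframe{F}$ validates $\bm{cd}$ only when every e\nobreakdash-augmented frame on it is c\nobreakdash-augmented, $\ckf(\logic{QInt}+\bm{cd})$ consists only of antichains, and hence $(\logic{QInt}+\bm{bd}_2+\bm{cd})_{\mathit{wfin}}$ is essentially $\logic{QCl}_{\mathit{fin}}$, which strictly contains $\logic{QInt}_{\mathit{wfin}}+\bm{bd}_2+\bm{cd}$ (e.g., it contains $p\vee\neg p$). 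So your chain $Z\subseteq L'_{\mathit{wfin}}\subseteq(\logic{QInt}+\bm{bd}_2+\bm{cd})_{\mathit{wfin}}$ does not land inside the logic named in Theorem~\ref{th:QInt:positive:3var}. Your fallback---apply Lemmas~\ref{lem:1:QInt:positive:3var:S7}--\ref{lem:3:QInt:positive:3var:S7} directly, because ``the hypotheses on $L'$ supply'' that $\ckf{L'}$ is an swKHC class---has the same flaw: taking $L'=\logic{QInt}+\bm{cd}$ satisfies the hypothesis $L'\subseteq\logic{QInt}+\bm{bd}_2+\bm{cd}$, yet $\ckf{L'}$ is the class of antichains, which is \emph{not} swKHC. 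In fact, for that $L'$ one computes $L'_{\mathit{dfin}}=\QSILed(\text{antichains})$, whose positive monadic three-variable fragment is decidable, so $L$ and $L'_{\mathit{dfin}}$ are recursively \emph{separable}.

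This is not a defect unique to your write-up: the corollary as literally stated inherits the same edge case, a point the paper itself anticipates in its remark (just before the modal definitions) about $\ckf{L}$ degenerating when $\bm{bf}$ (here, $\bm{cd}$) enters. The clean fix is to argue at the level of the countermodel rather than at the level of $\ckf{L'}$: the model built in the proofs of Lemmas~\ref{lem:2:QInt:positive:3var:S7}--\ref{lem:3:QInt:positive:3var:S7} is a finite c\nobreakdash-augmented frame of depth~$\leq 2$ (respectively a finite c\nobreakdash-augmented frame of depth~$\leq 3$ with a greatest world for the $\logic{QKC}$ case), hence validates $\logic{QInt}+\bm{bd}_2+\bm{cd}$ (respectively $\logic{QKC}+\bm{bd}_3+\bm{cd}$) and therefore $L'$. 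To conclude that the formula lies outside $L'_{\mathit{wfin}}$ or $L'_{\mathit{dfin}}$ as \emph{defined via} $\ckf{L'}$, one still needs the underlying Kripke frame to lie in $\ckf{L'}$; this is automatic for the $L'$ of Corollary~\ref{cor:2:th:QInt:positive:3var} (none of which contain $\bm{cd}$), but should be stated as an additional assumption---or one should read $L'_{\mathit{wfin}}$, $L'_{\mathit{dfin}}$ via augmented frames of $L'$ rather than via $\ckf{L'}$---to make the general corollary go through.
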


\begin{corollary}
\label{cor:2:th:QInt:positive:3var}
Let $L$ be one of\/ $\logic{QInt}$, $\logic{QKP}$, $\logic{QKC}$, $\logic{QInt}+\bm{bd}_n$ with $n\geqslant 2$, $\logic{QKC}+\bm{bd}_n$ with $n\geqslant 3$. Then the positive fragments of $L$ and $L_{\mathit{wfin}}$ as well as $L$ and $L_{\mathit{dfin}}$ are recursively inseparable in the language with a single unary predicate letter and three individual variables. 
\end{corollary}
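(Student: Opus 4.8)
The plan is to obtain Corollary~\ref{cor:2:th:QInt:positive:3var} as an immediate instance of Corollary~\ref{cor:1:th:QInt:positive:3var}: for each logic $L$ on the list I would take $L'=L$ there, so the only thing to be checked is that $\logic{QInt}\subseteq L$ (obvious in every case) together with one of the two bounding inclusions $L\subseteq\logic{QInt}+\bm{bd}_2+\bm{cd}$ or $L\subseteq\logic{QKC}+\bm{bd}_3+\bm{cd}$. Once these inclusions are in place, Corollary~\ref{cor:1:th:QInt:positive:3var} gives directly that the positive fragments of $L$ and $L_{\mathit{wfin}}$, and of $L$ and $L_{\mathit{dfin}}$, are recursively inseparable in the language with a single unary predicate letter and three individual variables.

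Most of the verification is pure bookkeeping of inclusions. For $L=\logic{QInt}$ and $L=\logic{QKC}$ the relevant inclusion holds trivially, since adjoining $\bm{bd}_2+\bm{cd}$ (resp.\ $\bm{bd}_3+\bm{cd}$) only enlarges the logic. For $L=\logic{QInt}+\bm{bd}_n$ with $n\geqslant 2$ and $L=\logic{QKC}+\bm{bd}_n$ with $n\geqslant 3$ I would use the familiar fact that $\vdash_{\logic{QInt}}\bm{bd}_k\to\bm{bd}_{k+1}$, so that $\bm{bd}_n\in\logic{QInt}+\bm{bd}_2$ for $n\geqslant 2$ and $\bm{bd}_n\in\logic{QInt}+\bm{bd}_3$ for $n\geqslant 3$; hence $\logic{QInt}+\bm{bd}_n\subseteq\logic{QInt}+\bm{bd}_2\subseteq\logic{QInt}+\bm{bd}_2+\bm{cd}$ and $\logic{QKC}+\bm{bd}_n\subseteq\logic{QKC}+\bm{bd}_3\subseteq\logic{QKC}+\bm{bd}_3+\bm{cd}$, respectively.

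The one step that needs an actual argument --- and the main obstacle --- is the case $L=\logic{QKP}$, where I need $\logic{QKP}\subseteq\logic{QKC}+\bm{bd}_3+\bm{cd}$. Since $\logic{QKP}=\logic{QInt}+\logic{KP}$ and the frame class $\ckf(\logic{QKC}+\bm{bd}_3)$ consists of the directed intuitionistic Kripke frames of depth at most~$3$, it suffices to show that the Kreisel--Putnam axiom is valid on every such frame; then $\logic{QKP}\subseteq\logic{QKC}+\bm{bd}_3\subseteq\logic{QKC}+\bm{bd}_3+\bm{cd}$. I would argue this by inspecting the shape of a rooted directed frame of depth $\leqslant 3$: it is either a chain with at most two worlds, or it has a root~$w$, an $R$-antichain of ``middle'' worlds, and a single maximal world above all of them. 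On such a frame, for a substitution instance $(\neg\varphi\to\psi\vee\chi)\to(\neg\varphi\to\psi)\vee(\neg\varphi\to\chi)$ whose antecedent is true at~$w$: if $\neg\varphi$ is false at~$w$, then $\varphi$ is true at the maximal world, so by heredity $\neg\varphi$ is false throughout the cone of~$w$ and both disjuncts of the consequent hold vacuously; if $\neg\varphi$ is true at~$w$, then $\varphi$ is false throughout that cone, the antecedent forces $\psi\vee\chi$ at~$w$, and the matching disjunct of the consequent holds at~$w$. (That $\logic{KP}$ fails on the ``fork'' --- a root with a two-element $R$-antichain above it --- shows the directedness hypothesis is essential, which is precisely why $\logic{QKP}$ must be routed through $\logic{QKC}+\bm{bd}_3+\bm{cd}$ rather than through $\logic{QInt}+\bm{bd}_2+\bm{cd}$.) With this case settled, all listed logics satisfy the hypotheses of Corollary~\ref{cor:1:th:QInt:positive:3var} with $L'=L$, and the statement follows.
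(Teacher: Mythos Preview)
Your approach—applying Corollary~\ref{cor:1:th:QInt:positive:3var} with $L'=L$ and verifying the bounding inclusions—is exactly what the paper intends, and your main argument for the $\logic{QKP}$ case is correct. Two remarks, though.

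First, the $\logic{QKP}$ case is simpler than you make it: one has $\logic{QKP}\subseteq\logic{QKC}$ outright, with no need for~$\bm{bd}_3$. Syntactically, from $\neg\varphi\vee\neg\neg\varphi$ one derives the KP axiom in $\logic{QInt}$: if $\neg\neg\varphi$, then $\neg\varphi\to\bot$, whence $\neg\varphi\to\psi$; if $\neg\varphi$, the antecedent gives $\psi\vee\chi$, and either disjunct yields the corresponding implication. This also sidesteps the implicit appeal to Kripke completeness of $\logic{KC}+\bm{bd}_3$ in your passage ``it suffices to show that the Kreisel--Putnam axiom is valid on every such frame; then $\logic{QKP}\subseteq\logic{QKC}+\bm{bd}_3$''—a semantic validity argument does not by itself give syntactic membership.

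Second, your parenthetical claim that KP fails on the two-element fork is incorrect: KP is valid on the frame with a root and exactly two incomparable maximal worlds (try to refute it—heredity at the root forces one of $q,r$ globally). What \emph{is} true is that KP fails on a fork with three or more maximal worlds, so $\logic{QKP}\not\subseteq\logic{QInt}+\bm{bd}_2$; your conclusion that $\logic{QKP}$ must be routed through the $\logic{QKC}$ alternative is therefore right, but for a slightly different reason.
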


\subsection{Positive monadic fragments and two variables}

To obtain similar results for the language with two individual variables, we have to modify the formulas describing, for every $n\in \numN$, the special $T_n$-tiling.

We shall use the formulas and denotations introduced in Section~\ref{subsec:Trakhtenbrot:theories}. We define a formula similar to $\mathit{Tiling}'_n$ using just two individual variables. As in the modal case, let us use a new unary predicate letter~$C$. Define (cf.~\cite{KKZ05}) the following $\lang{L}$-formulas:
$$
\begin{array}{lcl}
\mathit{TC}^{\mathit{int}}_3
  & =
  & \forall x\forall y\,(V'_n(x,y) \wedge \exists x\,(C(x)\wedge H'_n(y,x)) \to 
    \forall y\,(H'_n(x,y) \to \forall x\,(C(x)\to V'_n(y,x))));
  \smallskip\\
\mathit{TC}^{\mathit{int}}_5
  & =
  & \forall x\forall y\,(V'_n(x,y)\vee \neg V'_n(x,y)).
\end{array}
$$
Let, for convenience, 
$$
\begin{array}{cccc}
\mathit{TC}^{\mathit{int}}_0 = \mathit{TC}_0,
  & \mathit{TC}^{\mathit{int}}_1 = \mathit{TC}'_1,
  & \mathit{TC}^{\mathit{int}}_2 = \mathit{TC}'_2,
  & \mathit{TC}^{\mathit{int}}_4 = \mathit{TC}_4.
\end{array}
$$
Then, define 
$$
\begin{array}{rcl}
\mathit{Tiling}^{\mathit{int}}_n 
  & = 
  & \displaystyle
    \bigwedge\limits_{\mathclap{i=0}}^{5}\mathit{TC}^{\mathit{int}}_i 
    \wedge \mathit{DSR}
    \wedge \mathit{DSU}.   
\end{array}
$$
Finally, we define a new, intuitionistic, modification of $\mathit{Tiling}_n^{\mathbb{X}}$ by
$$
\begin{array}{lcl}
\mathit{M}^{\mathit{int}}\mathit{Tiling}_n^{\mathbb{X}} 
  & = 
  & (\mathit{Tiling}^{\mathit{int}}_n \to \exists x\,P_1(x) \vee \exists x\,\neg C(x))^+,
  \smallskip\\
\end{array}
$$
where $\varphi^+$ is the formula obtained from $\varphi$ by replacing $\bot$ with $\mathit{false}=\forall x\forall y\,P(x,y)$, see Section~\ref{sec:3:subsec:positive}.

\begin{lemma}
\label{lem:MIntTiling:1}
If $n\in\mathbb{X}$, then $\mathit{M}^{\mathit{int}}\mathit{Tiling}_n^{\mathbb{X}} \in \logic{QInt}$.
\end{lemma}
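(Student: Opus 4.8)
<br>

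The plan is to mimic, essentially verbatim, the proof of Lemma~\ref{lem:1:tiling:QK}, now working inside an arbitrary intuitionistic Kripke model instead of a Kripke model for $\logic{QK}$. Suppose $\kModel{M} = \otuple{W,R,D,I}$ is an intuitionistic model and $w_0 \in W$ with $\kModel{M},w_0 \imodels \mathit{Tiling}^{\mathit{int}}_n$; since $\mathit{M}^{\mathit{int}}\mathit{Tiling}_n^{\mathbb{X}}$ is a $(\cdot)^+$-translation, by Lemma~\ref{lem:false} it suffices to work with the $\bot$-version $\mathit{Tiling}^{\mathit{int}}_n \to \exists x\,P_1(x) \vee \exists x\,\neg C(x)$ and show that under the hypothesis $\kModel{M},w_0 \imodels \exists x\,P_1(x) \vee \exists x\,\neg C(x)$. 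If at $w_0$ the disjunct $\exists x\,\neg C(x)$ already holds we are done, so assume it fails, i.e.\ $\kModel{M},w_0 \imodels \forall x\,\neg\neg C(x)$; more usefully, for the inductive construction below I only need that, at $w_0$ and in its $R$-successors relevant to the argument, the predicate $C$ behaves like a ``total'' predicate. The key point is that $\mathit{TC}^{\mathit{int}}_5 = \forall x\forall y\,(V'_n(x,y) \vee \neg V'_n(x,y))$ makes $V'_n$ decidable, which is what replaces the modal machinery (formulas $\mathit{TC}^\Box_5$--$\mathit{TC}^\Box_8$) used in Lemma~\ref{lem:1:tiling:QK} to move information between worlds.

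The main step is the intuitionistic analogue of Sublemma~\ref{sublem:kkz:modal}: if $\kModel{M},w_0 \imodels H'_n(a,c)\wedge V'_n(a,e)\wedge H'_n(e,b)$ then $\kModel{M},w_0 \imodels V'_n(c,b)$. Here is how I would prove it. By $\mathit{TC}^{\mathit{int}}_5$, either $\kModel{M},w_0 \imodels V'_n(c,b)$ (done) or $\kModel{M},w_0 \imodels \neg V'_n(c,b)$. In the second case, consider $\mathit{TC}^{\mathit{int}}_3$ applied at $w_0$: we have $\kModel{M},w_0 \imodels V'_n(a,e)$ and, since $\kModel{M},w_0 \imodels H'_n(e,b)$ together with the fact that (absent the $\exists x\,\neg C(x)$ disjunct) every element behaves as if $C$ holds at it, we can witness $\exists x\,(C(x)\wedge H'_n(e,x))$ by $b$ --- more precisely, because $H'_n$, $V'_n$ are built from atomic $P$- and $P_i$-, $R_i$-, $U_i$-formulas which are persistent, and because the failure of $\exists x\,\neg C(x)$ forces enough $C$-information, one extracts from $\mathit{TC}^{\mathit{int}}_3$ that $\kModel{M},w_0 \imodels \forall y\,(H'_n(a,y)\to\forall x\,(C(x)\to V'_n(y,x)))$, instantiate $y:=c$, $x:=b$, using $H'_n(a,c)$ and $C(b)$, to get $V'_n(c,b)$, contradicting $\neg V'_n(c,b)$. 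This is the delicate part: one must be careful about exactly which worlds the atomic subformulas are evaluated at and ensure the ``$C$ is total'' assumption is actually available where needed; the cleanest route is probably to first pass to a submodel (or just to $R(w_0)$ restricted appropriately) in which $\forall x\,C(x)$ holds everywhere, exactly as the relativization lemmas downstream will do, and argue there.

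Given this sublemma, the rest is a copy of the proof of Lemma~\ref{lem:1:tiling:QK} (which itself copies Lemma~\ref{lem:Trakhtenbrot:lem1}): using $\mathit{TC}^{\mathit{int}}_4$ pick $a_0^0$ with $P_0(a_0^0)$ at $w_0$; using $\mathit{TC}^{\mathit{int}}_1$, $\mathit{TC}^{\mathit{int}}_2$ and the sublemma, inductively define a grid $(a_i^j)_{i,j\in\numN}$ of elements of $D_{w_0}$ with $\kModel{M},w_0 \imodels H'_n(a_i^j,a_{i+1}^j)$ and $\kModel{M},w_0 \imodels V'_n(a_i^j,a_i^{j+1})$; use $\mathit{TC}^{\mathit{int}}_0$ to colour the grid by a function $f\colon\numN\times\numN\to T_n$, check the tiling conditions $(1)$ and $(2)$ hold, note $f(0,0)=t_0$ so by Proposition~\ref{prop:fn} $f=f_n$, and since $n\in\mathbb{X}$ apply $(\ref{eq:fn})$ to find $m$ with $f_n(0,m)=t_1$, hence $\kModel{M},w_0 \imodels P_1(a_0^m)$, hence $\kModel{M},w_0 \imodels \exists x\,P_1(x)$. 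Therefore $\kModel{M},w_0 \imodels \exists x\,P_1(x)\vee\exists x\,\neg C(x)$ in every intuitionistic model and world, i.e.\ $\mathit{M}^{\mathit{int}}\mathit{Tiling}_n^{\mathbb{X}} \in \logic{QInt}$. I expect the one genuine obstacle to be bookkeeping around $C$: unlike the modal case, intuitionistically the ``escape hatch'' $\exists x\,\neg C(x)$ is built into the consequent rather than handled by modal axioms, and one has to make precise how assuming its failure at $w_0$ licenses treating $C$ as total throughout the relevant part of the model, so that $\mathit{TC}^{\mathit{int}}_3$ can be used the way $\mathit{TC}^\Box_3$ was; I would isolate this in a short preliminary observation and then the induction goes through mechanically.
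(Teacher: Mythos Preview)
Your overall architecture matches the paper's: assume the implication fails at some world $w_0$, so that $(\mathit{Tiling}^{\mathit{int}}_n)^+$ holds while both disjuncts $\exists x\,P_1(x)$ and $\exists x\,(C(x)\to\mathit{false})$ fail at~$w_0$, then reconstruct the special $T_n$-tiling inside $D_{w_0}$ exactly as in Lemma~\ref{lem:1:tiling:QK} and reach a contradiction via Proposition~\ref{prop:fn} and~$(\ref{eq:fn})$. The only real work is the intuitionistic analogue of Sublemma~\ref{sublem:kkz:modal}, and that is where your sketch has a gap.

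From $\kModel{M},w_0\not\imodels\exists x\,(C(x)\to\mathit{false})$ you obtain, for each $b\in D_{w_0}$, only $\kModel{M},w_0\not\imodels C(b)\to\mathit{false}$ (your rewriting of this as $\kModel{M},w_0\imodels\forall x\,\neg\neg C(x)$ is already too strong). In particular you do \emph{not} get $\kModel{M},w_0\imodels C(b)$, so $b$ cannot witness $\exists x\,(C(x)\wedge H'_n(e,x))$ at $w_0$, and your direct application of $\mathit{TC}^{\mathit{int}}_3$ at $w_0$ is unjustified. Your suggested repair---restrict to a submodel in which $\forall x\,C(x)$ holds---does not work either: the failure of $C(b)\to\mathit{false}$ at $w_0$ produces, for each $b$ separately, some $w\in R(w_0)$ with $C(b)$ there, but there is no reason a single world above $w_0$ makes $C$ total on all of~$D_{w_0}$.

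The paper's resolution is to reverse the order in which you invoke $\mathit{TC}^{\mathit{int}}_3$ and $\mathit{TC}^{\mathit{int}}_5$. For the fixed $b$ at hand, $\kModel{M},w_0\not\imodels C(b)\to\mathit{false}$ gives a world $w\in R(w_0)$ with $\kModel{M},w\imodels C(b)$ and $\kModel{M},w\not\imodels\mathit{false}$. By heredity the positive data $(H'_n(a,c))^+$, $(V'_n(a,e))^+$, $(H'_n(e,b))^+$ and the universal formula $(\mathit{TC}^{\mathit{int}}_3)^+$ all persist to~$w$; now $b$ legitimately witnesses the existential premise, and $(\mathit{TC}^{\mathit{int}}_3)^+$ yields $\kModel{M},w\imodels(V'_n(c,b))^+$. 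Only then is $(\mathit{TC}^{\mathit{int}}_5)^+$ used, and it is used at~$w_0$: the disjunct $(\neg V'_n(c,b))^+$ would persist to $w$ and contradict what was just established, so $(V'_n(c,b))^+$ must hold at~$w_0$. Thus $\mathit{TC}^{\mathit{int}}_5$ serves as the ``pull-back'' device (the intuitionistic surrogate for $\mathit{TC}^\Box_8$), while the move to the auxiliary world $w$ plays the role the accessible world played in Sublemma~\ref{sublem:kkz:modal}. One minor point: the paper works with the $(\cdot)^+$-forms throughout rather than invoking Lemma~\ref{lem:false}, which is stated for classical models only.
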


\begin{proof}
Similar to the proof of Lemma~\ref{lem:1:tiling:QK}.

Let $n\in\mathbb{X}$. Suppose that $\mathit{M}^{\mathit{int}}\mathit{Tiling}_n^{\mathbb{X}} \not\in \logic{QInt}$. Then there exist an intuitionistic model $\kModel{M} = \otuple{W,R,D,I}$ and a world $w_0\in W$ such that 
\begin{equation}
\begin{array}{llcl}
\kModel{M},w_0 \imodels (\mathit{Tiling}^{\mathit{int}}_n)^+, 
  & \kModel{M},w_0 \not\imodels \exists x\,P_1(x),
  & \mbox{and}
  & \kModel{M},w_0 \not\imodels \exists x\,(C(x)\to\mathit{false}).
  \smallskip\\
\end{array}
\label{eq:1:lem:MIntTiling:1}
\end{equation}

Next, we are going, for all $i,j\in\numN$, to pick out an element $a_{i}^{j}\in D_{w_0}$ so that, for all $i,j\in\numN$,
$$
\begin{array}{lcl}
\kModel{M},w_0\imodels (H'_n(a_i^j,a_{i+1}^j))^+ 
  & \mbox{and} 
  & \kModel{M},w_0\imodels (V'_n(a_i^j,a_i^{j+1}))^+.
\end{array}
$$

Since $\kModel{M},w_0\imodels \mathit{TC}^{\mathit{int}}_4$, there exists $a_0^0\in D_{w_0}$ such that $\kModel{M},w_0\imodels P_0(a_0^0)$. 

Let $k\in\numN$. Suppose, for all $i,j\in \{0,\ldots,k\}$, the element $a_i^j$ is defined; we have to define, for all $i,j\in \{0,\ldots,k\}$, the elements $a_{k+1}^j$, $a_i^{k+1}$, and $a_{k+1}^{k+1}$.

Due to $(\mathit{TC}^{\mathit{int}}_2)^+$, there exists $b\in D_{w_0}$ such that $\kModel{M},w_0\imodels (V'_n(a_0^k,b))^+$. Then take $a_{0}^{k+1}=b$. To define other elements, let us prove an auxiliary statement, cf.~\cite{KKZ05}.

\begin{sublemma}
\label{sublem:kkz:int}
Let\/ $\kModel{M},w_0\imodels (H'_n(a,c))^+\wedge (V'_n(a,e))^+\wedge (H'_n(e,b))^+$, for some $a,c,e,b\in D_{w_0}$. Then\/ $\kModel{M},w_0\imodels (V'_n(c,b))^+$.
\end{sublemma}

\begin{proof}
Similar to the proof of Lemma~\ref{sublem:kkz:modal}.

Assume that $\kModel{M},w_0 \imodels (H'(a,c))^+ \wedge (V'(a,e))^+ \wedge (H'_n(e,b))^+$. 
Since $\kModel{M},w_0 \not\imodels \exists x\,(C(x)\to \mathit{false})$, there exists $w\in R(w_0)$ such that $\kModel{M},w_0 \imodels C(b)$ and $\kModel{M},w_0 \not\imodels \mathit{false}$. By the heredity condition, $\kModel{M},w \imodels (H'(a,c))^+ \wedge (V'(a,e))^+ \wedge (H'(e,b))^+$. Then we obtain by $(\mathit{TC}^{\mathit{int}}_3)^+$ that $\kModel{M},w \imodels (V'(c,b))^+)$. Finally, $\kModel{M},w_0 \imodels (V'(c,b))^+$ follows by $(\mathit{TC}^{\mathit{int}}_5)^+$.
\end{proof}

Suppose that $a_0^{k+1},\ldots,a_i^{k+1}$ are already defined, for some $i\in\{0,\ldots,k\}$; we have to define $a_{i+1}^{k+1}$. Due to $(\mathit{TC}^{\mathit{int}}_1)^+$, there exists $b\in D_{w_0}$ such that $\kModel{M},w_0\imodels (H'_n(a_i^{k+1},b))^+$. Since also $\kModel{M},w_0\imodels (H'_n(a_i^k,a_{i+1}^k))^+$ and $\kModel{M},w_0\imodels (V'_n(a_i^k,a_{i}^{k+1}))^+$, we obtain, by Sublemma~\ref{sublem:kkz:int}, that $\kModel{M},w_0\imodels (H'_n(a_{i+1}^{k+1},b))^+$, and we can take $a_{i+1}^{k+1}=b$.

To define $a_{k+1}^{k+1}$, observe that, by $(\mathit{TC}^{\mathit{int}}_1)^+$, there exists $b\in D_{w_0}$ such that $\kModel{M},w_0\imodels H'_n(a_k^{k+1},b)$; take $a_{k+1}^{k+1}=b$. 

Suppose that $a_{k+1}^{k+1},\ldots,a_{k+1}^{j+1}$ are already defined for some $j\in\{0,\ldots,k\}$; we have to define $a_{k+1}^{j}$. Due to $(\mathit{TC}^{\mathit{int}}_1)^+$, there exists $c\in D_{w_0}$ such that $\kModel{M},w_0\imodels (H'_n(a_k^{j},c))^+$. Then, 
$$
\kModel{M},w_0\imodels (H'_n(a_k^j,c))^+\wedge (V'_n(a_k^j,a_k^{j+1}))^+\wedge (H'_n(a_k^{j+1},a_{k+1}^{j+1}))^+,
$$
and, by Sublemma~\ref{sublem:kkz:int}, $\kModel{M},w_0\imodels (V'_n(c,a_{k+1}^{j+1}))^+$. Hence, we may take $a_{k+1}^{k+1}=c$. 

For all $i,j\in \numN$, due to $(\mathit{TC}^{\mathit{int}}_0)^+$, there exists a unique $m\in\{0,\ldots,k_n\}$ such that $\kModel{M},w_0\models P_m(a_i^j)$. Then, put $f(i,j)=t^n_m$.
Then $f\colon\numN\times\numN\to T_n$ is a $T_n$-tiling, since
$$
\begin{array}{lcl}
\kModel{M},w_0\imodels (H_n(a_i^j,a_{i+1}^j))^+ & \Longrightarrow & \rightsq f(i,j) = \leftsq f(i+1,j);
\smallskip\\
\kModel{M},w_0\imodels \hfill (V_n(a_i^j,a_i^{j+1}))^+ & \Longrightarrow & \upsq f(i,j) = \downsq f(i,j+1).
\end{array}
$$

Notice that $f(0,0)=t^n_0=t_0$, since $\kModel{M},w_0\models P_0(a_0^0)$. Then, by Proposition~\ref{prop:fn}, $f$ is the special $T_n$-tiling~$f_n$. Since $n\in\mathbb{X}$, by $(\ref{eq:fn})$ we obtain that there exists $m\in\numN$ such that $f_n(0,m)=t_1=t^n_1$. This means that $\kModel{M},w_0\imodels P_1(a_0^m)$, and hence, $\kModel{M},w_0\imodels \exists x\,P_1(x)$. But $\kModel{M},w_0\not\imodels \exists x\,P_1(x)$ by~(\ref{eq:1:lem:MIntTiling:1}), hence, we obtain a contradiction.

Thus, $\mathit{M}^{\mathit{int}}\mathit{Tiling}_n^{\mathbb{X}}\in\logic{QInt}$.
\end{proof}

\begin{lemma}
\label{lem:MIntTiling:2}
Let $L$ be a superintuitionistic predicate logic such that $\ckf L$ is an swKHC class. If $n\in\mathbb{Y}$, then $\mathit{M}^{\mathit{int}}\mathit{Tiling}_n^{\mathbb{X}} \not\in \QSILcd \ckf L$.
\end{lemma}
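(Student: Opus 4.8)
The statement to prove is Lemma~\ref{lem:MIntTiling:2}: if $L$ is a superintuitionistic predicate logic with $\ckf L$ an swKHC class, then $n\in\mathbb{Y}$ implies $\mathit{M}^{\mathit{int}}\mathit{Tiling}_n^{\mathbb{X}} \not\in \QSILcd \ckf L$. The plan is to build, from the finite classical sib-model witnessing $S_1\mathit{MTiling}_n^{\mathbb{X}}\notin\logic{SIB}_{\mathit{fin}}$, an intuitionistic model based on a frame of $\ckf L$ that refutes $\mathit{M}^{\mathit{int}}\mathit{Tiling}_n^{\mathbb{X}}$ at a suitable world. This closely parallels the modal construction in the proof of Lemma~\ref{lem:1:tiling:QK} and its refutation counterpart: the role played there by $\Box$ and the auxiliary accessible worlds carrying $C$ is played here by the intuitionistic $\to$ into a ``false''-like formula, and the antichain supplied by swKHC plays the role of the successor worlds.

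\emph{Step 1: extract the classical countermodel.} Since $n\in\mathbb{Y}$, Lemma~\ref{lem:Trakhtenbrot:lem2:sib} gives a finite classical model $\cModel{M}=\otuple{\mathcal{D},\mathcal{I}}$ with $\cModel{M}\models\bm{sib}$, $\cModel{M}\models\mathit{Tiling}'_n$, and $\cModel{M}\not\models\exists x\,P_1(x)$; recall $\mathcal{D}=\{0,\ldots,r+4\}\times\{0,\ldots,r+4\}$ and that $\cModel{M}$ also interprets the shift letters $R_i,U_j$ as described there. In particular $\cModel{M}$ satisfies $\mathit{TC}_0,\mathit{TC}'_1,\mathit{TC}'_2,\mathit{TC}'_3,\mathit{TC}_4,\mathit{DSR},\mathit{DSU}$.

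\emph{Step 2: build the intuitionistic model.} Since $\ckf L$ is swKHC, pick $\kframe{F}=\otuple{W,R}\in\ckf L$ with a world $w_0$ such that $R(w_0)$ contains an $R$-antichain of size $\geqslant |\mathcal{D}|$; let $W'=\{w_a : a\in\mathcal{D}\}$ be a subset of that antichain with $w_a\ne w_b$ for $a\ne b$. Define an intuitionistic model $\kModel{M}=\otuple{\kframe{F}\odot\mathcal{D},I}$ (one may use $\kframe{F}\odot\mathcal{D}$ since we will only need the interpretation to grow along $R$) by putting, at $w_0$, the interpretation induced by $\cModel{M}$ for $P_i,R_i,U_j$ exactly as in $\mathit{Tiling}'_n$, making every $P_i,R_i,U_j$ true at every world above $w_0$ as well (heredity), interpreting $C$ to be true of all of $\mathcal{D}$ at $w_0$ but making $C(a)$ \emph{false at $w_a$} (so that above $w_0$, but at an antichain point, some element falls out of $C$), and making the ``false'' witness $\mathit{false}=\forall x\forall y\,P(x,y)$ genuinely refuted at $w_0$ while ensuring $P$ holds everywhere needed above the antichain. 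The point is to arrange, for every $w\in R(w_0)$: $\kModel{M},w\not\imodels\exists x\,\neg C(x)$ exactly when $w=w_0$ (this is why $C$ fails only at the proper-successor antichain points), and $\kModel{M},w_0\not\imodels\exists x\,P_1(x)$ (inherited from $\cModel{M}\not\models\exists x\,P_1(x)$ and heredity, once we check no $P_1$-element is created above).

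\emph{Step 3: verify the premise and refute the conclusion.} Check that $\kModel{M},w_0\imodels(\mathit{Tiling}^{\mathit{int}}_n)^+$: the conjuncts $\mathit{TC}^{\mathit{int}}_0,\mathit{TC}^{\mathit{int}}_1,\mathit{TC}^{\mathit{int}}_2,\mathit{TC}^{\mathit{int}}_4,\mathit{DSR},\mathit{DSU}$ are $\Sigma_1$/$\Pi_1$-style statements about $P_i,R_i,U_j$ that hold at $w_0$ by Step~1 and propagate by heredity; $\mathit{TC}^{\mathit{int}}_5$ is the decidability-of-$V'_n$ schema, which holds because $V'_n$ is built from hereditary atoms that are already decided (true or stably false) at $w_0$; and $\mathit{TC}^{\mathit{int}}_3$ holds because at every $v\in R(w_0)$ either $v=w_0$, where the classical validity of $\mathit{TC}'_3$ in $\cModel{M}$ plus heredity gives it, or $v$ is a proper successor where the relevant antecedent involving $C$ can be checked directly. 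Then $\kModel{M},w_0\not\imodels\exists x\,P_1(x)$ and $\kModel{M},w_0\not\imodels\exists x\,\neg C(x)$ (since $\neg C(x)=C(x)\to\mathit{false}$ requires a successor refuting $C(x)$ while $\mathit{false}$ is refuted, and the only such successors lie in $W'$, which are incomparable to $w_0$ and hence not in $R(w_0)$ in the strict sense needed — this is the delicate bookkeeping), so $\kModel{M},w_0\not\imodels\mathit{M}^{\mathit{int}}\mathit{Tiling}_n^{\mathbb{X}}$, giving $\mathit{M}^{\mathit{int}}\mathit{Tiling}_n^{\mathbb{X}}\notin\QSILcd\ckf L$.

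\emph{Main obstacle.} The hard part is the delicate interplay between $C$, the antichain $W'$, and the ``false'' formula: the formula $\mathit{TC}^{\mathit{int}}_3$ was engineered (following~\cite{KKZ05}) so that the $C$-marked world pins down which instance of the commutation square is being forced, and one must verify that the intuitionistic evaluation at $w_0$ genuinely recovers the classical condition $\mathit{TC}'_3$ at $\cModel{M}$ while not accidentally forcing $\exists x\,\neg C(x)$ at $w_0$ — exactly the role of $\mathit{TC}^{\mathit{int}}_5$ and of placing the $C$-failures on a proper-successor antichain rather than at $w_0$ itself. The auxiliary Sublemma~\ref{sublem:kkz:int}-style fact that already appears in the proof of Lemma~\ref{lem:MIntTiling:1} is the other piece one would reuse, but here in the refutation direction one instead needs its contrapositive content to be consistent with the model, which is a routine but careful check of the heredity conditions at the antichain points.
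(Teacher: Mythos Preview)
Your construction has a genuine error: the interpretation of $C$ you describe violates the heredity condition for intuitionistic models. You write that $C$ is ``true of all of $\mathcal{D}$ at $w_0$ but $C(a)$ is false at $w_a$'', i.e.\ an element ``falls out of $C$'' when we move up from $w_0$ to $w_a\in R(w_0)$. In an intuitionistic Kripke model, atomic truth is preserved upwards along $R$, so this is impossible. Relatedly, your explanation of why $\exists x\,\neg C(x)$ fails at $w_0$ says the relevant successors ``lie in $W'$, which are incomparable to $w_0$ and hence not in $R(w_0)$''; but $W'$ was chosen as a subset of $R(w_0)$, so this is simply false.

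The paper's construction goes the other way round, and this direction matters. One sets $C(a)$ \emph{false} at $w_0$ and \emph{true} at $w_a$ (and true at every world $w$ with $R(w)\cap W'=\varnothing$); likewise $P,P_m,R_i,U_j$ agree with $\cModel{M}$ at worlds still seeing $W'$ and become universally true once past the antichain. Heredity then holds because the antichain property guarantees that from $w_a$ one cannot reach any $w_b$ with $b\ne a$. With this interpretation, $(\exists x\,(C(x)\to\mathit{false}))^+$ fails at $w_0$ because for each $a$ one can pass to $w_a$, where $C(a)$ holds while $\mathit{false}$ still fails (since $P$ is irreflexive in $\cModel{M}$). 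The point of $C$ in $\mathit{TC}^{\mathit{int}}_3$ is that at $w_a$ the only element satisfying $C$ is $a$, so the inner $\forall x\,(C(x)\to V'_n(y,x))$ effectively speaks about that single element; together with $\mathit{TC}^{\mathit{int}}_5$ this reproduces the classical $\mathit{TC}'_3$ at $w_0$. So the antichain is used not to make $C$ \emph{drop} but to make it \emph{appear} selectively, one element per world.
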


\begin{proof}
Let $n\in\mathbb{Y}$. By Lemma~\ref{lem:Trakhtenbrot:lem2:sib}, $\mathit{MTiling}_n^{\mathbb{X}}\not\in\logic{QCl}_{\mathit{fin}}\uplus\bm{sib}$. Let $\cModel{M} = \otuple{\mathcal{D},\mathcal{I}}$ be a finite model such that $\cModel{M}\not\models \mathit{MTiling}_n^{\mathbb{X}}$ and $\cModel{M}\models\bm{sib}$; we may assume that $\cModel{M}$ is the model constructed for $n$ in the proof of Lemma~\ref{lem:Trakhtenbrot:lem2:sib}.

Since $\ckf L$ is an swKHC class, there exists an intuitionistic Kripke frame $\kframe{F}=\otuple{W,R}$ in it such that, for some $w_0\in W$, the set $R(w_0)$ contains an antichain with at least $|\mathcal{D}|$ worlds. Let $W'=\{w_a : a\in\mathcal{D}\}$ be a subset of the antichain with $w_a\ne w_b$ whenever $a\ne b$. Let us consider a model $\kModel{M} = \otuple{\kframe{F}\odot\mathcal{D},I}$ such that, for every $w\in W$, every $m\in\{0,\ldots,k_n\}$, all $i,j\in\{1,2,3,4\}$, and all $a,b\in\mathcal{D}$,
\begin{equation}
\begin{array}{lcl}
\kModel{M},w \imodels P(a,b)
  & \iff
  & \mbox{$R(w)\cap W' = \varnothing$ or $\cModel{M}\models P(a,b)$;}
  \\
\kModel{M},w \imodels P_m(a)
  & \iff
  & \mbox{$R(w)\cap W' = \varnothing$ or $\cModel{M}\models P_m(a)$;}
  \\
\kModel{M},w \imodels R_i(a)
  & \iff
  & \mbox{$R(w)\cap W' = \varnothing$ or $\cModel{M}\models R_i(a)$;}
  \\
\kModel{M},w \imodels U_j(a)
  & \iff
  & \mbox{$R(w)\cap W' = \varnothing$ or $\cModel{M}\models U_j(a)$;}
  \\
\kModel{M},w \imodels C\phantom{{}_{i}}(a)
  & \iff
  & \mbox{$R(w)\cap W' = \varnothing$ or $w = w_a$.}
\end{array}
\label{eq:lem:MIntTiling:2}
\end{equation}
Then, it is not hard to show that $\kModel{M},w_0\not\imodels \mathit{M}^{\mathit{int}}\mathit{Tiling}_n^{\mathbb{X}}$; we leave the details to the reader. 

Hence, $\mathit{M}^{\mathit{int}}\mathit{Tiling}_n^{\mathbb{X}} \not\in \QSILcd \ckf L$.
\end{proof}

\begin{lemma}
\label{lem:MIntTiling:3}
Let $L$ be a superintuitionistic predicate logic such that $\fin \ckf L$ is an swKHC class. If $n\in\mathbb{Y}$, then $\mathit{M}^{\mathit{int}}\mathit{Tiling}_n^{\mathbb{X}} \not\in \QSILcw \fin \ckf L$.
\end{lemma}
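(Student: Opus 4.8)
The plan is to adapt, essentially verbatim, the argument of Lemma~\ref{lem:MIntTiling:2}, changing only one thing: the intuitionistic Kripke frame that carries the countermodel is now required to be finite, which is exactly what the hypothesis that $\fin\ckf L$ is an swKHC class supplies. So first I would fix $n\in\mathbb{Y}$ and apply Lemma~\ref{lem:Trakhtenbrot:lem2:sib} to obtain the finite classical model $\cModel{M}=\otuple{\mathcal{D},\mathcal{I}}$ with $\cModel{M}\mmodels\bm{sib}$ and $\cModel{M}\not\mmodels\mathit{MTiling}_n^{\mathbb{X}}$; I would take the concrete model built there, so in particular $\mathcal{D}=\{0,\ldots,r+4\}\times\{0,\ldots,r+4\}$ for a suitable $r\in\numN$.

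Next, since $\fin\ckf L$ is an swKHC class, I would choose a \emph{finite} intuitionistic Kripke frame $\kframe{F}=\otuple{W,R}\in\fin\ckf L$ together with a world $w_0\in W$ such that $R(w_0)$ contains an $R$-antichain with at least $|\mathcal{D}|$ worlds; write $W'=\{w_a:a\in\mathcal{D}\}$ for such an antichain with $w_a\ne w_b$ whenever $a\ne b$. On the c-augmented frame $\kframe{F}\odot\mathcal{D}$ I would define the interpretation $I$ by the very clauses~$(\ref{eq:lem:MIntTiling:2})$ used in the proof of Lemma~\ref{lem:MIntTiling:2}: at a world $w$ with $R(w)\cap W'=\varnothing$ all atoms are forced, and otherwise $P$, the $P_m$, the $R_i$, the $U_j$ inherit their values from $\cModel{M}$ while $C(a)$ is forced only at $w=w_a$. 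Heredity holds because $\{w:R(w)\cap W'=\varnothing\}$ is $R$-upward closed and the worlds $w_a$ are pairwise $R$-incomparable, and such a model exists because $\cModel{M}\mmodels\bm{sib}$.

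Then I would check, exactly as in the proof of Lemma~\ref{lem:MIntTiling:2}, that $\kModel{M},w_0\not\imodels\mathit{M}^{\mathit{int}}\mathit{Tiling}_n^{\mathbb{X}}$: the premise $(\mathit{Tiling}^{\mathit{int}}_n)^{+}$ is forced at $w_0$, using the antichain worlds to witness $\mathit{TC}^{\mathit{int}}_3$ and $\mathit{TC}^{\mathit{int}}_5$ and the two "hidden" directed tilings to set up $\mathit{DSR}$ and $\mathit{DSU}$, while none of $\exists x\,P_1(x)$, $\exists x\,\neg C(x)$, $\mathit{false}$ is forced at $w_0$ — the first because $n\notin\mathbb{X}$ (so by~$(\ref{eq:fn})$ the special $T_n$-tiling carries no $t_1$), the remaining two by construction. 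Since $\kframe{F}$ is finite, $\kModel{M}$ is based on a frame in $\fin\ckf L$, which gives $\mathit{M}^{\mathit{int}}\mathit{Tiling}_n^{\mathbb{X}}\not\in\QSILcw\fin\ckf L$.

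The hard part — which is, however, no harder than in Lemma~\ref{lem:MIntTiling:2}, since it is literally the same verification — is confirming the equivalences relating forcing of $P$, the $P_m$, the $R_i$, the $U_j$ at $w_0$ to satisfaction in $\cModel{M}$, and checking that the auxiliary letter $C$ together with the antichain structure lets the two-variable formula $\mathit{TC}^{\mathit{int}}_3$ simulate the three-variable condition $\mathit{TC}'_3$. Everything else is routine bookkeeping, and the finiteness of $\kframe{F}$ enters only at the very last step.
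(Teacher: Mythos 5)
Your proposal is correct and follows essentially the same route as the paper: the paper's own proof of this lemma simply says to repeat the argument of Lemma~\ref{lem:MIntTiling:2}, choosing the witnessing intuitionistic Kripke frame from $\fin\ckf L$, which is possible precisely because $\fin\ckf L$ is an swKHC class. Your filled-in details (the model from Lemma~\ref{lem:Trakhtenbrot:lem2:sib}, the antichain $W'=\{w_a:a\in\mathcal{D}\}$, the interpretation given by~$(\ref{eq:lem:MIntTiling:2})$, and the final verification at $w_0$) match the construction the paper relies on.
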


\begin{proof}
Similar to the proof of Lemma~\ref{lem:MIntTiling:2} with the difference that the corresponding intuitionistic Kripke frame must be finite; such a frame exists in $\fin\ckf L$, since it is an swKHC class.
\end{proof}

Let us define a function~$S_8$, which is similar to the composition of $S_7$, $S_6$, and~$S'_3$. Define $S_8\mathit{M}^{\mathit{int}}\mathit{Tiling}_n^{\mathbb{X}}$ as the formula obtained from $\exists x\,G(x) \to (\mathit{M}^{\mathit{int}}\mathit{Tiling}_n^{\mathbb{X}})_G$ by replacing 
\begin{itemize}
\item
each occurrence of $P_m(x)$ and $P_m(y)$ with $(\mathit{tile}'_m(x))^+$ and $(\mathit{tile}'_m(y))^+$, respectively, where $m\in\{0,\ldots,k_n\}$;
\item 
then each occurrence of $R_i(x)$ and $R_i(y)$ with $(\mathit{tile}'_{k_n+i}(x))^+$ and $(\mathit{tile}'_{k_n+i}(y))^+$, respectively, where $1\leqslant i\leqslant 4$;
\item
then each occurrence of $U_j(x)$ and $U_j(y)$ with $(\mathit{tile}'_{k_n+j+4}(x))^+$ and $(\mathit{tile}'_{k_n+j+4}(y))^+$, respectively, where $1\leqslant j\leqslant 4$; 
\item
then each occurrence of $C(x)$ and $C(y)$ with $(\mathit{tile}'_{k_n+9}(x))^+$ and $(\mathit{tile}'_{k_n+9}(y))^+$, respectively;
\item
then each occurrence of $P(x_1,x_2)$ with $(Q(x_1)\wedge Q(x_2)\to \forall x\,Q(x))\vee \forall x\,G(x)$, where $x_1,x_2\in \{x,y\}$.
\end{itemize}
Notice that $S_8 \mathit{M}^{\mathit{int}}\mathit{Tiling}_n^{\mathbb{X}}$ is a positive formula that contains just two individual variables ($x$~and~$y$) and just two predicate letters ($Q$~and~$G$) which are unary.

\begin{lemma}
\label{lem:MIntTiling:1:QG:positive}
If $n\in\mathbb{X}$, then $S_8 \mathit{M}^{\mathit{int}}\mathit{Tiling}_n^{\mathbb{X}} \in \logic{QInt}$.
\end{lemma}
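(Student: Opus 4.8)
The plan is to reduce the claim to Lemma~\ref{lem:MIntTiling:1} by splitting $S_8$ into a relativization step and a substitution step, exactly as in the modal setting (compare Lemma~\ref{lem:relativization:G:K:S3}). Since $n\in\mathbb{X}$, Lemma~\ref{lem:MIntTiling:1} gives $\mathit{M}^{\mathit{int}}\mathit{Tiling}_n^{\mathbb{X}}\in\logic{QInt}$. By construction, $S_8\mathit{M}^{\mathit{int}}\mathit{Tiling}_n^{\mathbb{X}}$ is obtained from $\exists x\,G(x)\to(\mathit{M}^{\mathit{int}}\mathit{Tiling}_n^{\mathbb{X}})_G$ by a fixed finite sequence of formula substitutions: replacing $P_m(u)$ by $(\mathit{tile}'_m(u))^+$, replacing $R_i(u)$ and $U_j(u)$ by the appropriate $(\mathit{tile}'_{k_n+\cdot}(u))^+$, replacing $C(u)$ by $(\mathit{tile}'_{k_n+9}(u))^+$, and finally replacing $P(x_1,x_2)$ by $(Q(x_1)\wedge Q(x_2)\to\forall x\,Q(x))\vee\forall x\,G(x)$. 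So it suffices to show that each of these two operations maps $\logic{QInt}$ into itself.

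First I would establish the intuitionistic analogue of Lemma~\ref{lem:relativization:G}: for every closed $\lang{L}$-formula $\varphi$ with no occurrence of~$G$, one has $\varphi\in\logic{QInt}$ if, and only if, $\exists x\,G(x)\to\varphi_G\in\logic{QInt}$. The argument parallels the sketch given for Lemma~\ref{lem:relativization:G}. From an intuitionistic countermodel of $\varphi$ one gets a countermodel of $\exists x\,G(x)\to\varphi_G$ by interpreting $G$ as true of every individual at every world, which respects the heredity condition. Conversely, given an intuitionistic Kripke model $\kModel{M}=\otuple{W,R,D,I}$ and $w_0\in W$ with $\kModel{M},w_0\not\imodels\exists x\,G(x)\to\varphi_G$, one restricts each local domain to $D'_w=\{a\in D_w:\kModel{M},w\imodels G(a)\}$; because $G$ is hereditary and the domains expand, this is again an intuitionistic e-augmented frame, its local domains are nonempty throughout $R(w_0)$ since $\exists x\,G(x)$ is hereditary and holds at $w_0$, and a routine induction on $\varphi$ shows the resulting submodel refutes $\varphi$ at $w_0$. (Finiteness is preserved, though we do not need that here.)

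It then remains to recall that $\logic{QInt}$ is closed under Substitution, indeed under indirect substitutions in the sense of footnote~\ref{footnote:1}, so applying to $\exists x\,G(x)\to(\mathit{M}^{\mathit{int}}\mathit{Tiling}_n^{\mathbb{X}})_G$ the sequence of replacements above --- each of which is a (simultaneous, suitably ordered) formula substitution, the substitute formulas $(\mathit{tile}'_k(x))^+$ and $(\mathit{tile}'_k(y))^+$ being congruent to those they replace by Remark~\ref{rem:tile:x&y:2} --- keeps us inside $\logic{QInt}$. Combining the two steps yields $S_8\mathit{M}^{\mathit{int}}\mathit{Tiling}_n^{\mathbb{X}}\in\logic{QInt}$.

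The main obstacle is essentially bookkeeping: one must check that the prescribed order of replacements in the definition of $S_8$ really produces a legitimate substitution --- in particular that the unary letters are eliminated before $P$ is, so no $\mathit{tile}'$-formula still contains a stray $P$ when $P$ is rewritten --- and that no substitute formula captures a free variable, which holds because $(\mathit{tile}'_k(x))^+$ and $(\mathit{tile}'_k(y))^+$ use only $x$ and~$y$ and because $(Q(x_1)\wedge Q(x_2)\to\forall x\,Q(x))\vee\forall x\,G(x)$ is set up so that its only free variables are $x_1,x_2\in\{x,y\}$. None of this is deep, but it is what has to be verified in order to invoke closure under Substitution cleanly.
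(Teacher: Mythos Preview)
Your proposal is correct and follows essentially the same approach as the paper: first establish the intuitionistic relativization step $\exists x\,G(x)\to(\mathit{M}^{\mathit{int}}\mathit{Tiling}_n^{\mathbb{X}})_G\in\logic{QInt}$ via the domain-restriction countermodel argument (reducing to Lemma~\ref{lem:MIntTiling:1}), and then invoke closure of $\logic{QInt}$ under Substitution for the remaining replacement clauses in the definition of~$S_8$. The paper gives only a brief sketch of exactly this argument; your version spells out the relativization and the substitution bookkeeping in more detail, but there is no genuine difference in strategy.
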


\begin{proof}
It should be clear; we give just a sketch of a proof.

Let $n\in\mathbb{X}$. Suppose that $\exists x\,G(x) \to (\mathit{M}^{\mathit{int}}\mathit{Tiling}_n^{\mathbb{X}})_G\not\in \logic{QInt}$. Then there exist an intuitionistic model~$\kModel{M}=\otuple{W,R,D,I}$ and a world $w\in W$ such that $\kModel{M},w\imodels \exists x\,G(x)$ and $\kModel{M},w\not\imodels (\mathit{M}^{\mathit{int}}\mathit{Tiling}_n^{\mathbb{X}})_G$. Let $\kModel{M}'=\otuple{W',R',D',I'}$ be the model defined by
$$
\begin{array}{rcll}
W' & = & R(w); \\
R' & = & R\upharpoonright W' \\
D'_v & = & \{a \in D_v : \kModel{M},v \imodels G(a)\}, & \mbox{where $v\in W'$}; \\
I'(v,E) & = & I(v,E) \upharpoonright D'_v, & \mbox{where $v\in W'$ and $E$ is a predicate letter}. \\
\end{array}
$$
Then it should be not hard to show that $\kModel{M},w\not\imodels\mathit{M}^{\mathit{int}}\mathit{Tiling}_n^{\mathbb{X}}$, that contradictis to Lemma~\ref{lem:MIntTiling:1}. 

Finally, observe that all 
clauses in the definition of $S_8 \mathit{M}^{\mathit{int}}\mathit{Tiling}_n^{\mathbb{X}}$ define, in fact, formula substitutions; therefore, the resulting formula is in $\logic{QInt}$. 
\end{proof}

We say that a class $\scls{C}$ of intuitionistic Kripke frames satisfies the \defnotion{special weak Kontchakov--Kurucz--Zakharyaschev condition} (for short, \defnotion{swKKZ}) if, for every $n\in\numN$, there exist a Kripke frame $\otuple{W,R}\in\scls{C}$, a world $w \in W$ such that $R(w)$ contains an $R$\nobreakdash-antichain with $n$ elements and, for every $v$ from the antichain, $R(v)$ contains an $R$\nobreakdash-antichain with $n$ elements and satisfies the following property: for all different $v'$ and $v''$ from the antichain in $R(w)$, there is no elements $u'$ and $u''$ in the antichains contained in $R(v')$ and $R(v'')$, respectively, such that $u'Ru''$ or $u''Ru'$.
Clearly, if a class $\scls{C}$ of intuitionistic Kripke frames satisfies swKKZ, then it also satisfies both swKHC and wKKZ. Nevertheless, let us consider the class of all intuitionistic Kripke frames of the following form: a root sees an antichain consisting of worlds seeing the same chain of worlds (variation: the same antichain of worlds); it satisfies both swKHC and wKKZ but does not satisfy swKKZ. 

\begin{lemma}
\label{lem:MIntTiling:2:QG:positive}
Let $L$ be a superintuitionistic predicate logic such that $\ckf L$ is an swKKZ class. If $n\in\mathbb{Y}$, then $S_8 \mathit{M}^{\mathit{int}}\mathit{Tiling}_n^{\mathbb{X}} \not\in \QSILcd \ckf L$.
\end{lemma}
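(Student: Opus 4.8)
The plan is to build a refuting Kripke model by a three‑layer elaboration of the model used in the proof of Lemma~\ref{lem:MIntTiling:2}, fusing two encodings that are each already available: the $\mathit{tile}'$‑encoding of $P_m,R_i,U_j,C$ by $P$‑path formulas, carried out for the modal case in the proof of Lemma~\ref{lem:relativization:p:wKHC} (Figure~\ref{fig:17}), and the $Q$‑encoding of a binary letter used for the intuitionistic three‑variable case in the proof of Lemma~\ref{lem:2:QInt:positive:3var}. The swKKZ hypothesis on $\ckf L$ is precisely what supplies the frame geometry — a root‑like world, an antichain of worlds above it, and an independent antichain above each of those — that lets the two encodings coexist: the $\mathit{tile}'$‑formulas need a genuine $P$‑path structure living inside the $\bar R$‑cone over each antichain world, and the $Q$‑surrogate $(Q(x_1)\wedge Q(x_2)\to\forall x\,Q(x))\vee\forall x\,G(x)$ then re‑expresses those $P$‑edges with the help of the next antichain layer.

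Concretely, I would fix $n\in\mathbb{Y}$ and, by Lemma~\ref{lem:Trakhtenbrot:lem2:sib}, take the finite model $\cModel{M}=\otuple{\mathcal{D},\mathcal{I}}$ with $\mathcal{D}=\{0,\ldots,r+4\}\times\{0,\ldots,r+4\}$ satisfying $\bm{sib}$ and refuting $\mathit{MTiling}_n^{\mathbb{X}}$. As in the proof of Lemma~\ref{lem:relativization:p:wKHC}, for each $a\in\mathcal{D}$ I adjoin fresh elements $e_0^a,\ldots,e_{k_n+11}^a,e_P^a,e_R^a,e_U^a,e_C^a$, obtaining a finite set $\mathcal{D}'\supseteq\mathcal{D}$; the $P$‑chains $a\to e_0^a\to\cdots\to e_{k_n+11}^a$ together with the marker edges ($e_m^a\to e_P^a$ when $\cModel{M}\models P_m(a)$, and analogously for $R_i,U_j$, and $e_{k_n+9}^a\to e_C^a$ inserted only at the world $\bar w_a$) make each $(\mathit{tile}'_m(a))^+$ true at exactly the right worlds, the parities and the ``every $P$‑neighbour is closer to the grid'' clauses of the $\mathit{path}_k$‑formulas being handled verbatim as in that proof.

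Using swKKZ I then pick an intuitionistic Kripke frame $\bar{\kframe{F}}=\otuple{\bar W,\bar R}\in\ckf L$ with a world $\bar w_0$, an $\bar R$‑antichain $\{\bar w_a:a\in\mathcal{D}\}\subseteq\bar R(\bar w_0)$, and for each $a$ an $\bar R$‑antichain $\{u^a_{bc}:b,c\in\mathcal{D}'\}\subseteq\bar R(\bar w_a)$ with $u^a_{bc}=u^a_{cb}$, the cones over distinct $\bar w_a$'s being $\bar R$‑independent, all antichains chosen large enough. On the $c$‑augmented frame $\bar{\kframe{F}}\odot\mathcal{D}'$ I define $\bar{\kModel{M}}$ so that $G$ is interpreted constantly as $\mathcal{D}$ — this simultaneously relativizes the (non‑tile) quantifiers to the grid, supplies the grid markers $\pi_0$ inside the $\mathit{tile}'$‑formulas, and makes $\forall x\,G(x)$ false everywhere since $\mathcal{D}\subsetneq\mathcal{D}'$ — the $P$‑edges of $\cModel{M}$ enriched by the chains and markers are put into the interpretation of $P$ at $\bar w_0$ with the $e_C^a$‑edge added at $\bar w_a$, and at the worlds $u^a_{bc}$ the interpretation of $Q$ is restricted exactly as the interpretation of $Q$ at the antichain worlds in the proof of Lemma~\ref{lem:2:QInt:positive:3var}, with $Q$ forced full on the remaining worlds (the intuitionistic analogue of ``$R(w)\cap W'=\varnothing$''); heredity survives because $Q$ at $\bar w_a$ is the intersection of its restrictions at the $u^a_{bc}$'s while $Q$ is full above every $u^a_{bc}$. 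Following the Sublemmas~\ref{sublem:1:lem:2:QInt:positive:3var}--\ref{sublem:3:lem:2:QInt:positive:3var} of that proof one checks that at $\bar w_0$ the surrogate for $P(b,c)$ behaves as $\kModel{M},w_0$ evaluates $P(b,c)$ in Lemma~\ref{lem:MIntTiling:2}, and likewise for $P_m,R_i,U_j,C$ via the $\mathit{tile}'$‑formulas; since $\cModel{M}$ refutes $\mathit{MTiling}_n^{\mathbb{X}}$, this yields $\bar{\kModel{M}},\bar w_0\not\imodels S_8\mathit{M}^{\mathit{int}}\mathit{Tiling}_n^{\mathbb{X}}$. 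As $\mathcal{D}'$ is finite and $\bar{\kframe{F}}\in\ckf L$, the model is based on a frame in $\aug{c}{dfin}{\ckf L}$, whence $S_8\mathit{M}^{\mathit{int}}\mathit{Tiling}_n^{\mathbb{X}}\notin\QSILcd\ckf L$.

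The main obstacle is the bookkeeping that makes the two simulations cohere: the $\mathit{tile}'$‑formulas demand a literal $P$‑path geometry inside each $\bar R$‑cone, and the surrogate $(Q(x_1)\wedge Q(x_2)\to\forall x\,Q(x))\vee\forall x\,G(x)$ must then reproduce \emph{exactly} that edge set — including the world‑dependent $e_C^a$‑edges — at $\bar w_0$ and at every $\bar w_a$, without destroying heredity or accidentally validating $\forall x\,G(x)$; in particular one must arrange that $(\mathit{tile}'_{k_n+9}(a))^+$, the surrogate for $C(a)$, is refuted at $\bar w_0$ yet satisfied at $\bar w_a$. Each of these verifications is a routine but lengthy induction already spelled out in the proofs of Lemmas~\ref{lem:relativization:p:wKHC}, \ref{lem:2:QInt:positive:3var}, and~\ref{lem:MIntTiling:2}, which I would cite rather than repeat, since $S_8\mathit{M}^{\mathit{int}}\mathit{Tiling}_n^{\mathbb{X}}$ is by construction a positive formula in the two unary letters $Q$ and $G$ and the two variables $x,y$, and no additional argument beyond assembling these pieces is needed.
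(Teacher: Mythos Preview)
Your overall plan is the right one and matches the paper's, but the choice to interpret $G$ \emph{constantly} as~$\mathcal{D}$ breaks the construction. The disjunct $\forall x\,G(x)$ in the $P$--surrogate $(Q(x_1)\wedge Q(x_2)\to\forall x\,Q(x))\vee\forall x\,G(x)$ is not decorative: it is what makes the surrogate trivially true at the second--layer worlds $u^a_{bc}$, and hence makes $\mathit{false}^{+}=\forall x\forall y\,[\text{surrogate--}P](x,y)$ true there, so that \emph{every} $(\,\cdot\,)^{+}$ formula --- in particular each $(\mathit{tile}'_m(d))^{+}$ --- is true at $u^a_{bc}$. The paper achieves this by setting $G$ to be all of $\mathcal{D}'$ at every $w$ with $\bar R(w)\cap\bar W'=\varnothing$ (which includes each $u^a_{bc}$) and equal to $\mathcal{D}$ only at worlds that still see the first antichain~$\bar W'$; see~\eqref{eq:0:lem:MIntTiling:2:QG:positive}.

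With your constant $G$ this fails concretely. Pick any $u^a_{bc}$ with $b,c\in\mathcal{D}'$ such that the intended $P(b,c)$ is absent at $\bar w_a$ (there are many such pairs; $b=c$ already works since the underlying relation is irreflexive). Then $\forall x\,G(x)$ is false at $u^a_{bc}$, and $Q=\{b,c\}$ there, so surrogate--$P(b,c)$ is false; hence $\mathit{false}^{+}$ is false at~$u^a_{bc}$. On the other hand surrogate--$P$ is ``almost full'' at $u^a_{bc}$ (true for every pair $(d,e)$ with $\{d,e\}\nsubseteq\{b,c\}$), so for every $z$ one finds $y\in\mathcal{D}\setminus\{b,c\}$ with surrogate--$P(z,y)\wedge G(y)$, i.e.\ $\pi_1(z)$ is true at $u^a_{bc}$ for all~$z$. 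Consequently $(\neg\pi_1(z))^{+}=\pi_1(z)\to\mathit{false}^{+}$ is \emph{false} at $u^a_{bc}$ for all~$z$, whence $\mathit{path}^{+}_k(z)$ is false for all $z$ and all $k\geqslant 2$, and every $(\mathit{tile}'_m(d))^{+}$ is false at~$u^a_{bc}$. By intuitionistic heredity (and $\bar w_0\,\bar R\,\bar w_a\,\bar R\,u^a_{bc}$), every $(\mathit{tile}'_m(d))^{+}$ is then false at~$\bar w_0$. In particular the substituted $\mathit{TC}_4$, namely $\exists_G x\,(\mathit{tile}'_0(x))^{+}$, is false at $\bar w_0$, the tiling premise is not satisfied there, and your model does not refute $S_8\mathit{M}^{\mathit{int}}\mathit{Tiling}_n^{\mathbb{X}}$ at~$\bar w_0$. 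The remedy is exactly the paper's world--dependent $G$: keep $G=\mathcal{D}$ at worlds that still see~$\bar W'$ (so the grid markers $\pi_0$ work where they matter) but make $G$ full at worlds that no longer see~$\bar W'$, so that $\forall x\,G(x)$ kicks in and trivializes all $(\,\cdot\,)^{+}$ formulas above the first antichain.
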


\begin{proof}
We are going to apply an argumentation similar to that used in the proofs of Lemmas~\ref{lem:relativization:p:wKHC}, \ref{lem:2:QInt:positive:3var}, and~\ref{lem:2:QInt:positive:3var:S7}.

Let $n\in\mathbb{Y}$.
Then, by Lemma~\ref{lem:Trakhtenbrot:lem2:sib}, $\mathit{MTiling}_n^{\mathbb{X}}\not\in\logic{QCl}_{\mathit{fin}}\uplus\bm{sib}$; let $\cModel{M}=\otuple{\mathcal{D},\mathcal{I}}$ be a classical model such that $\cModel{M}\models\bm{sib}$ and $\cModel{M}\not\models\mathit{MTiling}_n^{\mathbb{X}}$; we may assume that $\cModel{M}$ is the model defined in the proof of Lemma~\ref{lem:Trakhtenbrot:lem2:sib}, in particular,
$$
\begin{array}{lcl}
\mathcal{D} & = & \{0,\ldots,r+4\}\times\{0,\ldots,r+4\}, \\
\end{array}
$$
for a suitable~$r\in\numN$. As in the proof of Lemma~\ref{lem:relativization:p:wKHC}, let
$$
\begin{array}{lcl}
\mathcal{D}' 
  & = 
  & \mathcal{D} \cup \{e_0^a,\ldots,e_{k_n+11}^a, e_P^a, e_R^a, e_U^a, e_C^a : a\in\mathcal{D}\}.
\end{array}
$$

Since $\ckf L$ is an swKKZ class, it contains a Kripke frame $\bar{\kframe{F}}=\otuple{\bar{W},\bar{R}}$ with a world $\bar{w}_0 \in \bar{W}$ such that $\bar{R}(\bar{w}_0)$ contains an $\bar{R}$\nobreakdash-antichain with $|\mathcal{D}|^2$ elements and, for every $v$ from the antichain, $\bar{R}(v)$ contains an $\bar{R}$\nobreakdash-antichain with $|\mathcal{D}|^2$ elements, withal there are no elements $u'$ and $u''$ from different the antichains satisfying $u'Ru''$ or $u''Ru'$. Let $\bar{W}'=\{\bar{w}_{a} : a\in\mathcal{D}\}$ be a subset of the antichain contained in $\bar{R}(\bar{w}_0)$ and, for every $a\in\mathcal{D}$, let $U_a = \{u^a_{bc} : b,c\in\mathcal{D}'\}$ be a subset of the antichain contained in $\bar{R}(\bar{w}_a)$; we assume that $\bar{w}_a\ne \bar{w}_b$ whenever $a\ne b$ and also $u^a_{bc}\ne u^a_{de}$ whenever $\{b,c\}\ne\{d,e\}$. Let also
$$
\begin{array}{lcl}
U & = & \displaystyle\bigcup\limits_{\mathclap{a\in\mathcal{D}}} U_a. \\
\end{array}
$$

Let $\bar{\kModel{M}} = \otuple{\bar{\kframe{F}}\odot\mathcal{D}',\bar{I}}$ be a model defined so that 
\begin{itemize}
\item
for every $w\in \bar{W}$ and every $a\in\mathcal{D}'$,
\begin{equation}
\begin{array}{lcl}
\bar{\kmodel{M}},w\imodels G(a) & \iff & \mbox{$\bar{R}(w)\cap \bar{W}' = \varnothing$ or $a\in \mathcal{D}$;} \\
\end{array}
\label{eq:0:lem:MIntTiling:2:QG:positive}
\end{equation}
\item
for every $w\in \bar{W}\setminus \bar{W}'$ such that $\bar{R}(w)\cap \bar{W}'\ne\varnothing$, the relation $\bar{I}(w,P)$ is the symmetric closure of the relation
$$
\begin{array}{l}
\mathcal{I}(P) \cup 
  \{\otuple{a,e_0^a},\otuple{e_0^a,e_1^a},\ldots,\otuple{e_{k_n+10}^a,e_{k_n+11}^a} : a\in\mathcal{D}\}
  \\
  \phantom{\mathcal{I}(P)} \cup 
  \{\otuple{e_m^a,e_P^a} : \mbox{$m\in\{0,\ldots,k_n\}$ and $\cModel{M}\models P_m(a)$} \}
  \\
  \phantom{\mathcal{I}(P)} \cup
  \{\otuple{e_{k_n+i}^a,e_R^a} : \mbox{$i\in\{1,2,3,4\}$ and $\cModel{M}\models R_i(a)$} \}
  \\
  \phantom{\mathcal{I}(P)} \cup 
  \{\otuple{e_{k_n+j+4}^a,e_U^a} : \mbox{$j\in\{1,2,3,4\}$ and $\cModel{M}\models U_j(a)$} \};
\end{array}
$$
\item
for every $a\in\mathcal{D}$,
$$
\begin{array}{lcl}
\bar{I}(\bar{w}_a,P) & = & I(w_0,P) \cup \{\otuple{e_{k_n+9}^a,e_C^a},\otuple{e_C^a,e_{k_n+9}^a}\};
\end{array}
$$
\item
for every $w\in \bar{W}(w)$ such that $\bar{R}(w)\cap \bar{W}' = \varnothing$,
$$
\begin{array}{lcl}
\bar{I}(w,P) & = & \mathcal{D}'\times\mathcal{D}';
\end{array}
$$
\item
for every $a\in \mathcal{D}$ and all $b,c,e\in\mathcal{D}'$,
$$
\begin{array}{lcl}
\bar{\kmodel{M}},u^a_{bc}\imodels Q(e) & \iff & \mbox{$e\in\{b,c\}$ and $\bar{\kmodel{M}},\bar{w}_a\not\imodels P(b,c)$;} \\
\end{array}
$$
\item
for every $w\in \bar{W}\setminus U$ and every $e\in\mathcal{D}'$,
$$
\begin{array}{lcl}
\bar{\kmodel{M}},w\imodels Q(e) & \iff & \mbox{$\bar{R}(w)\cap U = \varnothing$;} \\
\end{array}
$$
\end{itemize}
see Figure~\ref{fig:17} for a part of $\bar{I}(\bar{w}_a,P)$, where $\neg C(b)$ and $\neg C(c)$ now mean that $\kmodel{M},w_a\not\imodels C(b)$ and $\kmodel{M},w_a\not\imodels C(c)$ for $\kmodel{M}$ defined in the proof of Lemma~\ref{lem:MIntTiling:2}.
Notice that, in fact, we are interested in the definitions for $\bar{I}(w,Q)$ and $\bar{I}(w,G)$ only; the definitions for $\bar{I}(w,P)$, where $w\in\bar{W}$, 
are auxiliary.
Nevertheless, let us observe that, by the definition of model~$\bar{\kModel{M}}$, for every $w\in\bar{W}$ and all $c,e\in\mathcal{D}'$,
\begin{equation}
\begin{array}{lcl}
\bar{\kmodel{M}},w\imodels P(c,e) 
  & \iff 
  & \bar{\kmodel{M}},w\imodels (Q(c)\wedge Q(e)\to \forall x\,Q(x))\vee \forall x\,G(x). 
  \\
\end{array}
\label{eq:1:lem:MIntTiling:2:QG:positive}
\end{equation}

Let us compare $\bar{\kModel{M}}$ and $\kModel{M}$, where $\kmodel{M}$ is the model defined in the proof of Lemma~\ref{lem:MIntTiling:2}. Observe that, for every $w\in \bar{W}$, every $m\in\{0,\ldots,k_n\}$, all $i,j\in\{1,2,3,4\}$, and all $a,b\in\mathcal{D}$,
\begin{equation}
\begin{array}{lcl}
\bar{\kModel{M}},w \imodels P(a,b)
  & \iff
  & \mbox{$\bar{R}(w)\cap \bar{W}' = \varnothing$ or $\cModel{M}\models P(a,b)$;}
  \\
\bar{\kModel{M}},w \imodels (\mathit{tile}'_m(a))^+
  & \iff
  & \mbox{$\bar{R}(w)\cap \bar{W}' = \varnothing$ or $\cModel{M}\models P_m(a)$;}
  \\
\bar{\kModel{M}},w \imodels (\mathit{tile}'_{k_n+i}(a))^+
  & \iff
  & \mbox{$\bar{R}(w)\cap \bar{W}' = \varnothing$ or $\cModel{M}\models R_i(a)$;}
  \\
\bar{\kModel{M}},w \imodels (\mathit{tile}'_{k_n+j+4}(a))^+
  & \iff
  & \mbox{$\bar{R}(w)\cap \bar{W}' = \varnothing$ or $\cModel{M}\models U_j(a)$;}
  \\
\bar{\kModel{M}},w \imodels (\mathit{tile}'_{k_n+9}(a))^+
  & \iff
  & \mbox{$\bar{R}(w)\cap \bar{W}' = \varnothing$ or $w = \bar{w}_a$,}
\end{array}
\label{eq:2:lem:MIntTiling:2:QG:positive}
\end{equation}
where $\cModel{M}$ is the model defined in the proof of Lemma~\ref{lem:Trakhtenbrot:lem2:sib}.

Then, by (\ref{eq:lem:MIntTiling:2})--(\ref{eq:2:lem:MIntTiling:2:QG:positive}), $\kModel{M},w_0\not\imodels \mathit{M}^{\mathit{int}}\mathit{Tiling}_n^{\mathbb{X}}$ implies that 
$\bar{\kModel{M}},\bar{w}_0\not\imodels S_8 \mathit{M}^{\mathit{int}}\mathit{Tiling}_n^{\mathbb{X}}$.

Thus, $S_8 \mathit{M}^{\mathit{int}}\mathit{Tiling}_n^{\mathbb{X}} \not\in \QSILcd \ckf L$. 
\end{proof}

\begin{lemma}
\label{lem:MIntTiling:3:QG:positive}
Let $L$ be a superintuitionistic predicate logic such that $\fin \ckf L$ is an swKKZ class. If $n\in\mathbb{Y}$, then $S_8 \mathit{M}^{\mathit{int}}\mathit{Tiling}_n^{\mathbb{X}} \not\in \QSILcw \fin \ckf L$.
\end{lemma}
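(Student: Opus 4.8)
The plan is to obtain Lemma~\ref{lem:MIntTiling:3:QG:positive} by mirroring, essentially line for line, the proof of Lemma~\ref{lem:MIntTiling:2:QG:positive}, with the single modification that the ambient intuitionistic Kripke frame is taken finite. Recall that in the proof of Lemma~\ref{lem:MIntTiling:2:QG:positive} one starts, for $n\in\mathbb{Y}$, from the finite classical model $\cModel{M}=\otuple{\mathcal{D},\mathcal{I}}$ of Lemma~\ref{lem:Trakhtenbrot:lem2:sib} (with $\mathcal{D}=\{0,\ldots,r+4\}\times\{0,\ldots,r+4\}$), enlarges the domain to the finite set $\mathcal{D}'$, and then grafts this data onto a Kripke frame $\bar{\kframe{F}}=\otuple{\bar W,\bar R}\in\ckf L$ supplied by the swKKZ condition, producing an intuitionistic model $\bar{\kModel{M}}=\otuple{\bar{\kframe{F}}\odot\mathcal{D}',\bar I}$ that refutes $S_8\mathit{M}^{\mathit{int}}\mathit{Tiling}_n^{\mathbb{X}}$ at the root $\bar w_0$.

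The steps I would carry out, in order, are as follows. First, use $n\in\mathbb{Y}$ to fix the same finite $\cModel{M}$ and the same finite $\mathcal{D}'$ as before. Second, invoke the swKKZ hypothesis \emph{for the class $\fin\ckf L$} (rather than for $\ckf L$): this yields a \emph{finite} intuitionistic Kripke frame $\bar{\kframe{F}}\in\fin\ckf L$ together with a world $\bar w_0$ whose successor set contains an $\bar R$-antichain of at least $|\mathcal{D}|^2$ worlds, each of which in turn dominates an $\bar R$-antichain of at least $|\mathcal{D}'|^2$ worlds, with the disjointness of successor-antichains demanded by swKKZ. Third, define $\bar{\kModel{M}}=\otuple{\bar{\kframe{F}}\odot\mathcal{D}',\bar I}$ exactly as in Lemma~\ref{lem:MIntTiling:2:QG:positive}: the same valuation of $G$, the same auxiliary $P$-relations at the worlds $\bar w_a$ and at the antichain worlds $u^a_{bc}$, and the same valuation of $Q$; these choices still force the equivalences $(\ref{eq:1:lem:MIntTiling:2:QG:positive})$ and $(\ref{eq:2:lem:MIntTiling:2:QG:positive})$. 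Fourth, since $\cModel{M}\not\models\mathit{MTiling}_n^{\mathbb{X}}$ gives $\kModel{M},w_0\not\imodels\mathit{M}^{\mathit{int}}\mathit{Tiling}_n^{\mathbb{X}}$ for the model $\kModel{M}$ of Lemma~\ref{lem:MIntTiling:2}, the equivalences $(\ref{eq:lem:MIntTiling:2})$--$(\ref{eq:2:lem:MIntTiling:2:QG:positive})$ transfer this refutation to $\bar{\kModel{M}},\bar w_0\not\imodels S_8\mathit{M}^{\mathit{int}}\mathit{Tiling}_n^{\mathbb{X}}$. Since $\bar{\kframe{F}}$ is finite, $\bar{\kframe{F}}\odot\mathcal{D}'$ is a c-augmented frame based on a frame of $\fin\ckf L$, so $\bar{\kModel{M}}$ witnesses $S_8\mathit{M}^{\mathit{int}}\mathit{Tiling}_n^{\mathbb{X}}\not\in\QSILcw\fin\ckf L$.

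The only point deserving care — and thus the main obstacle — is confirming that the finitization disturbs none of the verifications of Lemma~\ref{lem:MIntTiling:2:QG:positive}: one must check that the antichain-based evaluations of $\mathit{TC}^{\mathit{int}}_3$, $\mathit{TC}^{\mathit{int}}_5$ and of the relativizations $(\cdot)_G$, $(\cdot)^+$ ever only use \emph{lower bounds} on the antichain sizes (which a finite frame still meets) and never the infinity of $\bar W$, and that $\bar{\kframe{F}}$ remains in $\ckf L$, which is automatic because it was drawn from $\fin\ckf L$. As this is entirely routine, I would present the proof tersely, in the style of the preceding ``wfin'' lemmas: similar to the proof of Lemma~\ref{lem:MIntTiling:2:QG:positive}, with the difference that the corresponding intuitionistic Kripke frame must be finite, such a frame existing in $\fin\ckf L$ since it is an swKKZ class.
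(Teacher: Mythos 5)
Your proposal is correct and follows essentially the same route as the paper, whose proof is exactly the terse "repeat the proof of Lemma~\ref{lem:MIntTiling:2:QG:positive} with a finite frame, which exists in $\fin\ckf L$ since it is an swKKZ class" argument you end with. The only cosmetic difference is that the paper refers the intermediate comparison to the (finite-frame) model of Lemma~\ref{lem:MIntTiling:3} rather than that of Lemma~\ref{lem:MIntTiling:2}, which does not affect the transfer of the refutation.
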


\begin{proof}
Similar to the proof of Lemma~\ref{lem:MIntTiling:2:QG:positive} with the difference that the corresponding intuitionistic Kripke frame must be finite; such a frame exists in $\fin\ckf L$, since it is an swKKZ class; also, use the model mentioned in the proof of Lemma~\ref{lem:MIntTiling:3} instead of the model from the proof of Lemma~\ref{lem:MIntTiling:2}.
\end{proof}

\begin{theorem}
\label{th:QInt:positive:2var}
The positive fragments of logics\/ 
$$
\begin{array}{lcl}
\logic{QInt} & \mbox{and\/} & \logic{QInt}_{\mathit{dfin}}\hfill +\bm{bd}_3+\bm{cd}, \\ 
\logic{QInt} & \mbox{and\/} & \logic{QInt}_{\mathit{wfin}}\hfill +\bm{bd}_3+\bm{cd}, \\
\logic{QInt} & \mbox{and\/} & \logic{QKC}_{\mathit{dfin}} \hfill +\bm{bd}_4+\bm{cd}, \\
\logic{QInt} & \mbox{and\/} & \logic{QKC}_{\mathit{wfin}} \hfill +\bm{bd}_4+\bm{cd}\phantom{,}
\end{array}
$$ 
are recursively inseparable in the language with two unary predicate letters and two individual variables.
\end{theorem}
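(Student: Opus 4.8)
The plan is to use the formula $S_8\mathit{M}^{\mathit{int}}\mathit{Tiling}_n^{\mathbb{X}}$, which by construction is a positive $\lang{L}$-formula containing only the two unary predicate letters $Q$ and $G$ and only the two individual variables $x$ and $y$, as the distinguishing formula for all four pairs simultaneously; note that $n\mapsto S_8\mathit{M}^{\mathit{int}}\mathit{Tiling}_n^{\mathbb{X}}$ is computable, since $T_n$ is produced effectively and all the substitutions involved are effective. By Lemma~\ref{lem:MIntTiling:1:QG:positive}, $n\in\mathbb{X}$ implies $S_8\mathit{M}^{\mathit{int}}\mathit{Tiling}_n^{\mathbb{X}}\in\logic{QInt}$, and since the formula is positive it then belongs to the positive fragment of $\logic{QInt}$, hence to the positive fragment of each of the four ``upper'' logics (all of which contain $\logic{QInt}$). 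It remains to show that, for each of the four upper logics $L'$, $n\in\mathbb{Y}$ implies $S_8\mathit{M}^{\mathit{int}}\mathit{Tiling}_n^{\mathbb{X}}\notin L'$, a fortiori not in its positive fragment.

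For this I would reduce each $L'$ to a frame-level logic: put $L=\logic{QInt}+\bm{bd}_3$ for $L'\in\{\logic{QInt}_{\mathit{dfin}}+\bm{bd}_3+\bm{cd},\ \logic{QInt}_{\mathit{wfin}}+\bm{bd}_3+\bm{cd}\}$ and $L=\logic{QKC}+\bm{bd}_4$ for $L'\in\{\logic{QKC}_{\mathit{dfin}}+\bm{bd}_4+\bm{cd},\ \logic{QKC}_{\mathit{wfin}}+\bm{bd}_4+\bm{cd}\}$. Two things need checking: (i) $\ckf L$ (for the $\mathit{dfin}$ case) and $\fin\ckf L$ (for the $\mathit{wfin}$ case) are swKKZ classes; (ii) $L'\subseteq\QSILcd\ckf L$ in the $\mathit{dfin}$ case and $L'\subseteq\QSILcw\fin\ckf L$ in the $\mathit{wfin}$ case. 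Granting (i) and (ii), Lemmas~\ref{lem:MIntTiling:2:QG:positive} and~\ref{lem:MIntTiling:3:QG:positive} give $S_8\mathit{M}^{\mathit{int}}\mathit{Tiling}_n^{\mathbb{X}}\notin\QSILcd\ckf L$, resp.\ $\notin\QSILcw\fin\ckf L$, whenever $n\in\mathbb{Y}$, and (ii) transfers this to $L'$.

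For (i) I would exhibit, for each $m\in\numN$, the finite intuitionistic Kripke frame whose worlds are a root $w_0$, an $m$-element antichain $v_1,\dots,v_m$ above $w_0$, and for each $i$ a further $m$-element antichain $u^i_1,\dots,u^i_m$ above $v_i$ (pairwise disjoint in $i$), with $R$ the reflexive--transitive closure of $\{w_0Rv_i,\ v_iRu^i_j\}$; for the $\logic{QKC}$ case one adds a single point $t$ above everything. This frame has depth $3$ (respectively $4$), in the $\logic{QKC}$ case validates $\logic{KC}$ because $t$ is a largest world seen by every world, and directly witnesses swKKZ: $R(w_0)$ contains the antichain $\{v_1,\dots,v_m\}$, each $R(v_i)$ contains the antichain $\{u^i_1,\dots,u^i_m\}$, and for $i\ne l$ no element of $\{u^i_j\}$ is $R$-comparable to an element of $\{u^l_j\}$ (the only world each $u^i_j$ sees besides itself is $t$, which lies in no antichain). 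For (ii), since $\QSILcd\ckf L$ and $\QSILcw\fin\ckf L$ are superintuitionistic predicate logics, it suffices to note that each contains $\bm{bd}_3$ (resp.\ $\bm{bd}_4$) because all frames in $\ckf L$ satisfy the corresponding depth bound, contains $\bm{cd}$ because c-augmented frames satisfy $(\mathit{LCD})$, and contains $\logic{QInt}_{\mathit{dfin}}$ (resp.\ $\logic{QKC}_{\mathit{dfin}}$, $\logic{QInt}_{\mathit{wfin}}$, $\logic{QKC}_{\mathit{wfin}}$) because every c-augmented domain-finite (resp.\ finite-frame-based) frame over a frame of $\ckf L$ is in particular an e-augmented domain-finite (resp.\ finite-frame-based) frame over an intuitionistic ($\logic{KC}$-)frame, so every formula valid on all such e-augmented frames is valid on it.

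Finally, if the positive fragments of $\logic{QInt}$ and of some upper $L'$ were recursively separable by a recursive set $Z$ with $\logic{QInt}^+\subseteq Z\subseteq (L')^+$, then $\{n:S_8\mathit{M}^{\mathit{int}}\mathit{Tiling}_n^{\mathbb{X}}\in Z\}$ would be a recursive set separating $\mathbb{X}$ from $\mathbb{Y}$ (on $\mathbb{X}$ the formula lies in $\logic{QInt}^+\subseteq Z$; on $\mathbb{Y}$ it lies outside $(L')^+\supseteq Z$), contradicting the recursive inseparability of $\mathbb{X}$ and $\mathbb{Y}$. The substantive work is entirely in Lemmas~\ref{lem:MIntTiling:1:QG:positive}--\ref{lem:MIntTiling:3:QG:positive}; the one genuinely delicate point left here is the depth bookkeeping in step (i): the two-variable encoding $S_8$ forces antichains to be nested one level deeper, so depth $3$ is needed and is tight for the $\logic{QInt}$ pairs, while requiring validity of $\logic{KC}$ costs one additional top level and hence yields the weaker bound $\bm{bd}_4$ for the $\logic{QKC}$ pairs.
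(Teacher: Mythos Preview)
Your proposal is correct and follows the paper's own approach exactly: the paper's proof is the one-liner ``Follows from Lemmas~\ref{lem:MIntTiling:1:QG:positive}, \ref{lem:MIntTiling:2:QG:positive}, and~\ref{lem:MIntTiling:3:QG:positive}'', and you have simply spelled out the routine verification that the relevant frame classes satisfy swKKZ and that the four upper logics are contained in the corresponding $\QSILcd$/$\QSILcw$ logics.
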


\begin{proof}
Follows from Lemmas~\ref{lem:MIntTiling:1:QG:positive}, \ref{lem:MIntTiling:2:QG:positive}, and~\ref{lem:MIntTiling:3:QG:positive}.
\end{proof}

\begin{corollary}
\label{cor:1:th:QInt:positive:2var}
Let\/ $\logic{QInt}\subseteq L\subseteq L'$ and also either $L'\subseteq \logic{QInt}+\bm{bd}_3+\bm{cd}$ or\/ $L'\subseteq \logic{QKC}+\bm{bd}_4+\bm{cd}$. Then the positive fragments of $L$ and $L'_{\mathit{wfin}}$ as well as $L$ and $L'_{\mathit{dfin}}$ are recursively inseparable in the language with two unary predicate letters and two individual variables. 
\end{corollary}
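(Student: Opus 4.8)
The plan is to deduce the corollary from Theorem~\ref{th:QInt:positive:2var} by a routine sandwiching argument, using that recursive inseparability is inherited downwards in the inner pair and upwards in the outer pair. First I would record the elementary observation (already implicit in the separability arguments used earlier, e.g.\ in Corollary~\ref{cor:a:th:Trakhtenbrot:bin:sib:P}): if $A_0\subseteq A\subseteq B\subseteq B_0$ are sets of $\lang{L}$-formulas and $A_0,B_0$ are recursively inseparable, then so are $A$ and $B$, since any recursive $Z$ with $A\subseteq Z\subseteq B$ also satisfies $A_0\subseteq Z\subseteq B_0$. I would then keep the positive formulas $S_8\mathit{M}^{\mathit{int}}\mathit{Tiling}_n^{\mathbb{X}}$ from the proof of Theorem~\ref{th:QInt:positive:2var}; they contain only the two unary letters $Q,G$ and the two variables $x,y$, and by Lemma~\ref{lem:MIntTiling:1:QG:positive} one has $n\in\mathbb{X}\Rightarrow S_8\mathit{M}^{\mathit{int}}\mathit{Tiling}_n^{\mathbb{X}}\in\logic{QInt}$.

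Treating the alternative $L'\subseteq\logic{QInt}+\bm{bd}_3+\bm{cd}$ first (the case $L'\subseteq\logic{QKC}+\bm{bd}_4+\bm{cd}$ being identical with $\bm{bd}_4,\logic{QKC}$ in place of $\bm{bd}_3,\logic{QInt}$ on the upper side, and the $\mathit{dfin}$ statements obtained by replacing $\mathit{wfin}$ by $\mathit{dfin}$ throughout), I would set $A_0$ to be the positive fragment of $\logic{QInt}$, $B_0$ the positive fragment of $\logic{QInt}_{\mathit{wfin}}+\bm{bd}_3+\bm{cd}$, $A$ the positive fragment of $L$, and $B$ the positive fragment of $L'_{\mathit{wfin}}$. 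By Theorem~\ref{th:QInt:positive:2var}, $A_0$ and $B_0$ are recursively inseparable, so it suffices to verify $A_0\subseteq A\subseteq B\subseteq B_0$. The inclusion $A_0\subseteq A$ is immediate from $\logic{QInt}\subseteq L$, and $A\subseteq B$ follows from $L\subseteq L'$ together with $L'\subseteq L'_{\mathit{wfin}}$ (every theorem of $L'$ is valid on every augmented frame based on a frame of $\ckf{L'}$, hence on every frame of $\aug{e}{wfin}{\ckf{L'}}$).

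The inclusion $B\subseteq B_0$ is where the real work lies. From $L'\subseteq\logic{QInt}+\bm{bd}_3+\bm{cd}$ one gets $\ckf{L'}\supseteq\ckf{\logic{QInt}+\bm{bd}_3+\bm{cd}}$, hence $L'_{\mathit{wfin}}=\QSILew\ckf{L'}\subseteq\QSILew\ckf{\logic{QInt}+\bm{bd}_3+\bm{cd}}$, and one must then see that the latter logic lies inside $\logic{QInt}_{\mathit{wfin}}+\bm{bd}_3+\bm{cd}$ — i.e.\ that passing to the logic of the finite validating frames commutes in the required direction with adjoining the bounded-depth and constant-domain axioms. It is cleaner to argue semantically and simply re-run Lemma~\ref{lem:MIntTiling:3:QG:positive}: the countermodels there are built on finite intuitionistic frames of depth at most $3$, on which $\bm{bd}_3$ is valid and $\bm{cd}$ makes the expanding-domain semantics coincide with the constant-domain one, so (whenever the finite validating frames of $L'$ still form an swKKZ class) Lemma~\ref{lem:MIntTiling:3:QG:positive} together with $L'_{\mathit{wfin}}=\QSILew\ckf{L'}\subseteq\QSILcw\fin\ckf{L'}$ yields $n\in\mathbb{Y}\Rightarrow S_8\mathit{M}^{\mathit{int}}\mathit{Tiling}_n^{\mathbb{X}}\notin L'_{\mathit{wfin}}$ directly; with Lemma~\ref{lem:MIntTiling:1:QG:positive} this gives the recursive inseparability of $A$ and $B$.

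The main obstacle is precisely this bookkeeping about $\ckf{L'}$ once $L'$ is allowed to contain $\bm{cd}$ (the modal analogue being $\bm{bf}$): one has to be sure that $L'_{\mathit{wfin}}$ (resp.\ $L'_{\mathit{dfin}}$) really sits below the logic on the right-hand side of Theorem~\ref{th:QInt:positive:2var}, i.e.\ that the relevant class of finite intuitionistic Kripke frames is rich enough (swKKZ) for Lemma~\ref{lem:MIntTiling:3:QG:positive} (resp.\ Lemma~\ref{lem:MIntTiling:2:QG:positive}) to apply, while the validity of $\bm{bd}_3$ and $\bm{cd}$ on those frames is respected. Everything else is the formal monotonicity argument, and Corollary~\ref{cor:2:th:QInt:positive:2var} then follows by instantiating $L=L'$ and noting that each of $\logic{QInt},\logic{QKP},\logic{QKC},\logic{QInt}+\bm{bd}_n\,(n\ge 2),\logic{QKC}+\bm{bd}_n\,(n\ge 3)$ satisfies the hypotheses.
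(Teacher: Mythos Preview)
Your sandwich approach is exactly what the paper (implicitly) intends --- the paper gives no separate proof for this corollary, treating it as an immediate consequence of Theorem~\ref{th:QInt:positive:2var} via the monotonicity observation you record. You have also correctly located the only non-trivial step: the inclusion $L'_{\mathit{wfin}}\subseteq \logic{QInt}_{\mathit{wfin}}+\bm{bd}_3+\bm{cd}$ (and its analogues).

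However, your parenthetical caveat ``whenever the finite validating frames of $L'$ still form an swKKZ class'' is not innocuous bookkeeping --- it is an extra hypothesis that the corollary's stated assumptions do \emph{not} guarantee, and without it neither route you propose goes through. Take $L=L'=\logic{QInt}+\bm{cd}$. Then $L'\subseteq\logic{QInt}+\bm{bd}_3+\bm{cd}$, but an intuitionistic Kripke frame validates $\bm{cd}$ (under the paper's definition of $\kframe{F}\imodels\varphi$, which quantifies over \emph{all} e-augmented $\kframe{F}_D$) only if it is discrete. Hence $\ckf L'$ consists of antichains of reflexive singletons, $\fin\ckf L'$ is certainly not swKKZ, and in fact $L'_{\mathit{wfin}}=\QSILew\ckf L'=\logic{QCl}$, whose positive two-variable two-unary-letter fragment is decidable. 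So for this $L'$ the pair $L,\,L'_{\mathit{wfin}}$ is recursively \emph{separable}, and both your pure sandwich (since $\logic{QCl}\not\subseteq\logic{QInt}_{\mathit{wfin}}+\bm{bd}_3+\bm{cd}$) and your re-run of Lemma~\ref{lem:MIntTiling:3:QG:positive} (since its swKKZ premise fails) break down. Your argument therefore proves the corollary only under the additional assumption that $\ckf L'$ (resp.\ $\fin\ckf L'$) is swKKZ --- which does cover all the concrete instances in Corollary~\ref{cor:2:th:QInt:positive:2var}, but not the full generality stated here.
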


\begin{corollary}
\label{cor:2:th:QInt:positive:2var}
Let $L$ be one of\/ $\logic{QInt}$, $\logic{QKP}$, $\logic{QKC}$, $\logic{QInt}+\bm{bd}_n$ with $n\geqslant 3$, $\logic{QKC}+\bm{bd}_n$ with $n\geqslant 4$. Then the positive fragments of $L$ and $L_{\mathit{wfin}}$ as well as $L$ and $L_{\mathit{dfin}}$ are recursively inseparable in the language with two unary predicate letters and two individual variables. 
\end{corollary}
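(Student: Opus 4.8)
The plan is to derive Corollary~\ref{cor:2:th:QInt:positive:2var} from Corollary~\ref{cor:1:th:QInt:positive:2var} by taking, for each logic $L$ on the list, $L'=L$ and checking the hypotheses $\logic{QInt}\subseteq L\subseteq L'$ together with ($L'\subseteq\logic{QInt}+\bm{bd}_3+\bm{cd}$ or $L'\subseteq\logic{QKC}+\bm{bd}_4+\bm{cd}$). The inclusion $\logic{QInt}\subseteq L$ is automatic since every $L$ on the list is superintuitionistic, and $L\subseteq L'$ is trivial when $L'=L$, so only the upper bound needs attention. For $L=\logic{QInt}$ it is immediate, and for $L=\logic{QKC}$ we have $\logic{QKC}\subseteq\logic{QKC}+\bm{bd}_4+\bm{cd}$.

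For the bounded-depth logics I would use that $\bm{bd}_m$ entails $\bm{bd}_n$ whenever $m\leqslant n$ (an intuitionistic Kripke frame of depth at most~$m$ has depth at most~$n$). Hence, for $n\geqslant 3$, $\bm{bd}_n\in\logic{QInt}+\bm{bd}_3$, so $\logic{QInt}+\bm{bd}_n\subseteq\logic{QInt}+\bm{bd}_3\subseteq\logic{QInt}+\bm{bd}_3+\bm{cd}$; similarly, for $n\geqslant 4$, $\logic{QKC}+\bm{bd}_n\subseteq\logic{QKC}+\bm{bd}_4\subseteq\logic{QKC}+\bm{bd}_4+\bm{cd}$. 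Applying Corollary~\ref{cor:1:th:QInt:positive:2var} with $L'=L$ then disposes of $\logic{QInt}$, $\logic{QKC}$, all $\logic{QInt}+\bm{bd}_n$ with $n\geqslant 3$, and all $\logic{QKC}+\bm{bd}_n$ with $n\geqslant 4$ at one stroke.

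The remaining case $L=\logic{QKP}$ does not fit the sandwich: $\logic{QKP}$ is neither a bounded-depth logic nor contained in $\logic{QKC}$ (the Kreisel--Putnam axiom is refuted already on a depth-two three-pronged fork, and $\neg p\vee\neg\neg p\notin\logic{QKP}$). For this logic I would instead invoke directly the lemmas underlying Theorem~\ref{th:QInt:positive:2var}. The formula $S_8\mathit{M}^{\mathit{int}}\mathit{Tiling}_n^{\mathbb{X}}$ is positive, uses only two individual variables and two unary predicate letters, and lies in $\logic{QInt}\subseteq\logic{QKP}$ whenever $n\in\mathbb{X}$ (Lemma~\ref{lem:MIntTiling:1:QG:positive}); and whenever $n\in\mathbb{Y}$ it lies outside $\logic{QKP}_{\mathit{wfin}}$ (resp. $\logic{QKP}_{\mathit{dfin}}$) by Lemma~\ref{lem:MIntTiling:3:QG:positive} (resp. Lemma~\ref{lem:MIntTiling:2:QG:positive}), provided that $\fin\ckf\logic{QKP}$ (resp. $\ckf\logic{QKP}$) is an swKKZ class. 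Since $\mathbb{X}$ and $\mathbb{Y}$ are recursively inseparable, this gives the recursive inseparability of the positive fragments of $\logic{QKP}$ and $\logic{QKP}_{\mathit{wfin}}$, and of $\logic{QKP}$ and $\logic{QKP}_{\mathit{dfin}}$, in the required language.

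So the one genuinely new point is to verify that finite Kreisel--Putnam frames admit arbitrarily large nested antichains satisfying the cross-antichain disjointness, i.e.\ that $\fin\ckf\logic{QKP}$ is an swKKZ class; the class $\ckf\logic{QKP}$ of all Kreisel--Putnam frames is swKKZ essentially for free. I expect this frame-theoretic verification to be the main obstacle. I would search for the witnessing finite frames among Medvedev-type frames (finite Medvedev frames validate $\logic{KP}$ and carry nested antichains of every size) or among suitable amalgams of several small $\logic{KP}$-frames over a common root; the delicate step is engineering the disjointness clause of swKKZ, which raw Medvedev frames do not satisfy and which must be obtained by a mild modification of the construction. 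Once the swKKZ property is in place, the recursive inseparability follows by exactly the combination of ``witness in $L$'' and ``witness not in the finite/domain-finite companion'' used in the proof of Theorem~\ref{th:QInt:positive:2var}.
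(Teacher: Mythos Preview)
Your handling of $\logic{QInt}$, $\logic{QKC}$, and the bounded-depth logics is fine and matches the intended derivation from Corollary~\ref{cor:1:th:QInt:positive:2var}. The problem is your treatment of~$\logic{QKP}$: your claim that $\logic{QKP}\not\subseteq\logic{QKC}$ is false, and your justification confuses the direction of the inclusion. The fact that $\neg p\vee\neg\neg p\notin\logic{QKP}$ shows only that $\logic{QKC}\not\subseteq\logic{QKP}$; it says nothing about the inclusion you need. In fact $\logic{KP}\subseteq\logic{KC}$ holds: in any frame validating $\neg p\vee\neg\neg p$, every world $w$ satisfies either $w\Vdash\neg p$ or $w\Vdash\neg\neg p$; in the first case $w\Vdash\neg p\to q\vee r$ gives $w\Vdash q\vee r$, hence $w\Vdash q$ or $w\Vdash r$, whence one of $\neg p\to q$, $\neg p\to r$ holds at~$w$; in the second case no $v\geqslant w$ satisfies $\neg p$, so both implications hold vacuously. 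Thus the Kreisel--Putnam axiom is valid on every $\logic{KC}$-frame, and $\logic{QKP}\subseteq\logic{QKC}\subseteq\logic{QKC}+\bm{bd}_4+\bm{cd}$.

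With this corrected, $\logic{QKP}$ falls under Corollary~\ref{cor:1:th:QInt:positive:2var} exactly as the other logics do, taking $L'=L=\logic{QKP}$. Your entire detour through the swKKZ condition for Kreisel--Putnam frames --- which you yourself flag as incomplete --- is unnecessary. The paper's intended argument is uniform: for every $L$ on the list, $L$ is a superintuitionistic logic contained in $\logic{QKC}+\bm{bd}_4+\bm{cd}$ (or already in $\logic{QInt}+\bm{bd}_3+\bm{cd}$), and Corollary~\ref{cor:1:th:QInt:positive:2var} applies directly.
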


Finally, let us eliminate predicate letters $Q$ and $G$ simulating them by formulas with a single unary predicate letter~$P'$. To this end, let us consider the following formulas (cf.~\cite{Rybakov06,Rybakov08,RShJLC21b,RSh:2023:SCAN}):
$$
\begin{array}{lcl}
  D_1      & = & \exists x\, P'(x); \smallskip\\
  D_2(x)   & = & \exists x\, P'(x) \imp P'(x); \smallskip\\
  D_3(x)   & = & P'(x) \imp \forall x\, P'(x); 
  \bigskip\\
  A_1^0(x) & = & D_2(x) \imp D_1 \dis D_3(x); \smallskip\\
  A_2^0(x) & = & D_3(x) \imp D_1 \dis D_2(x); \smallskip\\
  B_1^0(x) & = & D_1 \imp D_2(x) \dis D_3(x); \smallskip\\
  B_2^0(x) & = & A_1^0(x) \con A_2^0(x) \con B_1^0(x) \imp  D_1 \dis D_2(x) \dis D_3(x); 
  \bigskip \\
  A_1^1(x) & = & A_1^0(x) \con A_2^0(x) \imp B_1^0(x) \dis B_2^0(x); \smallskip\\
  A_2^1(x) & = & A_1^0(x) \con B_1^0(x) \imp A_2^0(x) \dis B_2^0(x); \smallskip\\
  A_3^1(x) & = & A_1^0(x) \con B_2^0(x) \imp A_2^0(x) \dis B_1^0(x); \smallskip\\
  B_1^1(x) & = & A_2^0(x) \con B_1^0(x) \imp A_1^0(x) \dis B_2^0(x); \smallskip\\
  B_2^1(x) & = & A_2^0(x) \con B_2^0(x) \imp A_1^0(x) \dis B_1^0(x); \smallskip\\
  B_3^1(x) & = & B_1^0(x) \con B_2^0(x) \imp A_1^0(x) \dis A_2^0(x); 
  \bigskip\\
  A_1^2(x) & = & A_1^1(x) \imp B_1^1(x) \dis A_2^1(x) \dis B_2^1(x); \smallskip\\
  A_2^2(x) & = & A_1^1(x) \imp B_1^1(x) \dis A_2^1(x) \dis B_3^1(x); \smallskip\\
  B_1^2(x) & = & B_1^1(x) \imp A_1^1(x) \dis B_2^1(x) \dis A_2^1(x); \smallskip\\
  B_2^2(x) & = & B_1^1(x) \imp A_1^1(x) \dis B_2^1(x) \dis A_3^1(x). \smallskip\\
\end{array}
$$ 
Next, consider the finite intuitionistic frame $\kframe{F}_0 = \otuple{W_0, R_0}$, where
$$
\begin{array}{lcl}
W_0 
  & = 
  &  \{\delta_1,\delta_2,\delta'_2,\delta_3,
       \alpha^0_1,\alpha^0_2,\beta^0_1,\beta^0_2\,
       \alpha^1_1,\alpha^1_2,\alpha^1_3,\beta^1_1,\beta^1_2,\beta^1_3,
       \alpha^2_1,\alpha^2_2,
       \beta^2_1,\beta^2_2
       \}
\end{array}
$$
and $R_0$ is reflexive transitive closure of the relation
\settowidth{\templength}{\mbox{$\langle \alpha^0_0, \alpha^0_0 \rangle$}}
$$
\begin{array}{l}
   \{ 
   \parbox{\templength}{\hfill$\langle \alpha^0_1, \gamma_1  \rangle$}, 
   \parbox{\templength}{\hfill$\langle \alpha^0_1, \gamma_3  \rangle$}, 
   \parbox{\templength}{\hfill$\langle \alpha^0_2, \gamma_1  \rangle$},
   \parbox{\templength}{\hfill$\langle \alpha^0_2, \gamma_2  \rangle$}, 
   \parbox{\templength}{\hfill$\langle \alpha^0_2, \gamma'_2 \rangle$}, 
   \parbox{\templength}{\hfill$\langle \beta^0_1,  \gamma_2  \rangle$}, 
   \parbox{\templength}{\hfill$\langle \beta^0_1,  \gamma'_2 \rangle$}, 
   \parbox{\templength}{\hfill$\langle \beta^0_1,  \gamma_3   \rangle$}, 
   \parbox{\templength}{\hfill$\langle \beta^0_2,  \gamma_1   \rangle$}, 
   \smallskip\\
   \phantom{\{}
   \parbox{\templength}{\hfill$\langle \beta^0_2,  \gamma_2   \rangle$}, 
   \parbox{\templength}{\hfill$\langle \beta^0_2,  \gamma'_2  \rangle$}, 
   \parbox{\templength}{\hfill$\langle \beta^0_2,  \gamma_3   \rangle$}, 
   \parbox{\templength}{\hfill$\langle \beta^1_1,  \alpha^0_1 \rangle$}, 
   \parbox{\templength}{\hfill$\langle \beta^1_1,  \beta^0_2  \rangle$}, 
   \parbox{\templength}{\hfill$\langle \alpha^1_1, \beta^0_1  \rangle$}, 
   \parbox{\templength}{\hfill$\langle \alpha^1_1, \beta^0_2  \rangle$}, 
   \parbox{\templength}{\hfill$\langle \alpha^1_2, \alpha^0_2 \rangle$},
   \parbox{\templength}{\hfill$\langle \alpha^1_2, \alpha^0_2 \rangle$}, 
   \smallskip\\
   \phantom{\{}
   \parbox{\templength}{\hfill$\langle \alpha^1_3, \alpha^0_2 \rangle$}, 
   \parbox{\templength}{\hfill$\langle \alpha^1_3, \beta^0_1  \rangle$}, 
   \parbox{\templength}{\hfill$\langle \beta^1_2, \alpha^0_1  \rangle$},
   \parbox{\templength}{\hfill$\langle \beta^1_2, \alpha^0_1 \rangle$},
   \parbox{\templength}{\hfill$\langle \beta^1_3, \alpha^0_1 \rangle$}, 
   \parbox{\templength}{\hfill$\langle \beta^1_3, \alpha^0_2 \rangle$},
   \parbox{\templength}{\hfill$\langle \alpha^2_1, \beta^1_1  \rangle$},
   \parbox{\templength}{\hfill$\langle \alpha^2_1, \alpha^1_2 \rangle$},
   \parbox{\templength}{\hfill$\langle \alpha^2_1, \beta^1_2  \rangle$},
   \smallskip\\
   \phantom{\{}
   \parbox{\templength}{\hfill$\langle \alpha^2_2, \beta^1_1  \rangle$},
   \parbox{\templength}{\hfill$\langle \alpha^2_2, \alpha^1_2 \rangle$},
   \parbox{\templength}{\hfill$\langle \alpha^2_2, \beta^1_3  \rangle$},
   \parbox{\templength}{\hfill$\langle \beta^2_1, \alpha^1_1  \rangle$},
   \parbox{\templength}{\hfill$\langle \beta^2_1, \beta^1_2  \rangle$},
   \parbox{\templength}{\hfill$\langle \beta^2_1, \alpha^1_2   \rangle$},
   \parbox{\templength}{\hfill$\langle \beta^2_2, \alpha^1_1  \rangle$},
   \parbox{\templength}{\hfill$\langle \beta^2_2, \beta^1_2  \rangle$},
   \parbox{\templength}{\hfill$\langle \beta^2_2, \alpha^1_3   \rangle$}
   \}.
\end{array}
$$
The frame is depicted in Figure~\ref{fig:F-2}; the worlds are presented by white circles, the accessibility relation between the worlds is presented by arrows (we omit the arrows inferrable by reflexivity and transitivity).
%

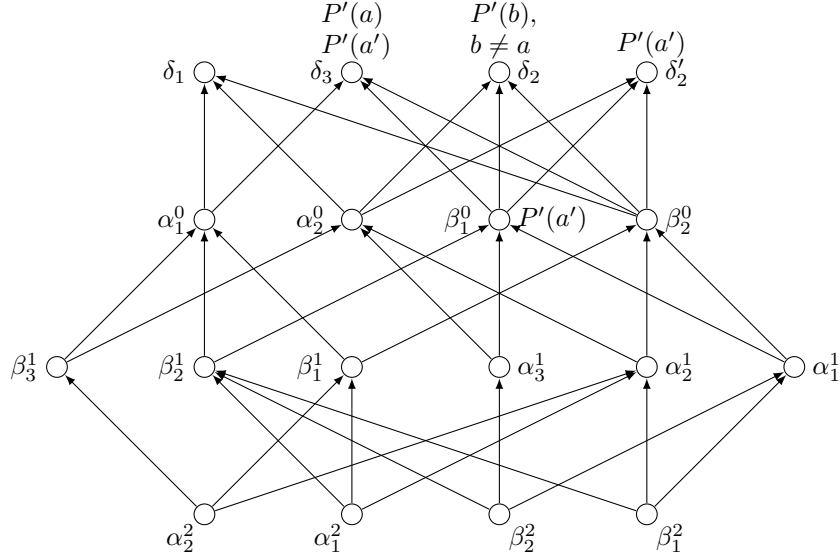
\begin{figure}
  \centering
  \begin{tikzpicture}[scale=1.96]

\coordinate (d1)  at (2, 7);
\coordinate (d3)  at (3, 7);
\coordinate (d2)  at (4, 7);
\coordinate (d2a) at (5, 7);
\coordinate (a01) at (2, 6);
\coordinate (a02) at (3, 6);
\coordinate (b01) at (4, 6);
\coordinate (b02) at (5, 6);
\coordinate (b13) at (1, 5);
\coordinate (b12) at (2, 5);
\coordinate (b11) at (3, 5);
\coordinate (a13) at (4, 5);
\coordinate (a12) at (5, 5);
\coordinate (a11) at (6, 5);
\coordinate (a22) at (2, 4);
\coordinate (a21) at (3, 4);
\coordinate (b22) at (4, 4); 
\coordinate (b21) at (5, 4); 

%
%
%
%
%
%
%

\draw [] (d1)  circle [radius=2.0pt] ;
\draw [] (d3)  circle [radius=2.0pt] ;
\draw [] (d2)  circle [radius=2.0pt] ;
\draw [] (d2a) circle [radius=2.0pt] ;
\draw [] (a01) circle [radius=2.0pt] ;
\draw [] (a02) circle [radius=2.0pt] ;
\draw [] (b01) circle [radius=2.0pt] ;
\draw [] (b02) circle [radius=2.0pt] ;
\draw [] (b13) circle [radius=2.0pt] ;
\draw [] (b12) circle [radius=2.0pt] ;
\draw [] (b11) circle [radius=2.0pt] ;
\draw [] (a13) circle [radius=2.0pt] ;
\draw [] (a12) circle [radius=2.0pt] ;
\draw [] (a11) circle [radius=2.0pt] ;
\draw [] (b22) circle [radius=2.0pt] ;
\draw [] (b21) circle [radius=2.0pt] ;
\draw [] (a22) circle [radius=2.0pt] ;
\draw [] (a21) circle [radius=2.0pt] ;

%
%
%
%
%
%
%
%
\begin{scope}[>=latex, ->, shorten >= 4.25pt, shorten <= 4.25pt]
\draw [ ] (a01) -- (d1);
\draw [ ] (a01) -- (d3);
\draw [ ] (a02) -- (d1);
\draw [ ] (a02) -- (d2);
\draw [ ] (b01) -- (d3);
\draw [ ] (b01) -- (d2);
\draw [ ] (b02) -- (d1);
\draw [ ] (b02) -- (d2);
\draw [ ] (b02) -- (d3);
\draw [ ] (a02) -- (d2a);
\draw [ ] (b01) -- (d2a);
\draw [ ] (b02) -- (d2a);

\draw [ ] (b13) -- (a01);
\draw [ ] (b13) -- (a02);
\draw [ ] (b12) -- (a01);
\draw [ ] (b12) -- (b01);
\draw [ ] (b11) -- (a01);
\draw [ ] (b11) -- (b02);
\draw [ ] (a13) -- (a02);
\draw [ ] (a13) -- (b01);
\draw [ ] (a12) -- (a02);
\draw [ ] (a12) -- (b02);
\draw [ ] (a11) -- (b01);
\draw [ ] (a11) -- (b02);

\draw [ ] (a21) -- (b11);
\draw [ ] (a21) -- (a12);
\draw [ ] (a21) -- (b12);
\draw [ ] (a22) -- (b11);
\draw [ ] (a22) -- (a12);
\draw [ ] (a22) -- (b13);
\draw [ ] (b21) -- (a11);
\draw [ ] (b21) -- (b12);
\draw [ ] (b21) -- (a12);
\draw [ ] (b22) -- (a11);
\draw [ ] (b22) -- (b12);
\draw [ ] (b22) -- (a13);


\end{scope}

\node [      left ] at (d1)  {$\delta_1\mbox{~}$}    ;
\node [      right] at (d2)  {$\mbox{~}\delta_2$}    ;
\node [      right] at (d2a) {$\mbox{~}\delta'_2$}   ;
\node [      left ] at (d3)  {$\delta_3\mbox{~}$}    ;
\node [      left ] at (a01) {$\alpha^0_1\mbox{~}$}  ;
\node [      left ] at (a02) {$\alpha^0_2\mbox{~~}$} ;
\node [      left ] at (b01) {$\beta^0_1\mbox{~~}$}  ;
\node [      right] at (b02) {$\mbox{~}\beta^0_2$}   ;

\node [      right] at (a11) {$\mbox{~}\alpha^1_1$} ;
\node [      right] at (a12) {$\mbox{~}\alpha^1_2$} ;
\node [      right] at (a13) {$\mbox{~}\alpha^1_3$} ;
\node [      left ] at (b11) {$\beta^1_1\mbox{~~}$}  ;
\node [      left ] at (b12) {$\beta^1_2\mbox{~}$}  ;
\node [      left ] at (b13) {$\beta^1_3\mbox{~}$}  ;

\node [below left ] at (a21) {$\alpha^2_1$} ;
\node [below left ] at (a22) {$\alpha^2_2$} ;
\node [below right] at (b21) {$\beta^2_1$}  ;
\node [below right] at (b22) {$\beta^2_2$}  ;

%
%

\node [above      ] at (d3)   {$\mbox{~}\begin{array}{l}P'(a)\\P'(a')\end{array}$}   ;
\node [above      ] at (d2)   {$\mbox{~}\begin{array}{l}P'(b),\\b\ne a\end{array}$} ;
\node [      right] at (b01)  {$\mbox{~}P'(a')$}         ;
\node [above = 2pt] at (d2a)  {$\mbox{~}P'(a')$}         ;

\end{tikzpicture}

\caption{An $a$-suitable model based on frame $\kframe{F}_0$}
  \label{fig:F-2}
\end{figure} 

Let $\mathcal{A}$ be a set such that $|\mathcal{A}|\geqslant 3$ and let $a$ be an element of~$\mathcal{A}$.
We call an intuitionistic model $\kmodel{M} = \otuple{\otuple{W_0, R_0}\odot\mathcal{A}, I}$  \defnotion{$a$\nobreakdash-suitable} if there exists $a' \in \mathcal{A} \setminus \{a\}$ such that, for every $w\in W_0$ and every $b\in\mathcal{A}$,
$$
\begin{array}{lcl}
\kmodel{M},w \imodels P'(b) 
  & \iff & \mbox{either $w=\delta_2$ and $b\ne a$;} \\
  &      & \mbox{or $w\in\{\delta'_2,\beta_1^0\}$ and $b = a'$;} \\
  &      & \mbox{or $w=\delta_3$ and $b \in\{a,a'\}$,}
\end{array} 
$$
see Figure~\ref{fig:F-2}.

\begin{lemma}
\label{lem:frame-F:1}
Let $\kmodel{M} = \otuple{\otuple{W_0, R_0}\odot\mathcal{A}, I}$ be an $a$-suitable model.  Then, for
every $w \in W_0$\/ and all $k,m \in \{1,2\}$,
$$
\begin{array}{lcl}
\kmodel{M},w\not\imodels {A_m^k}(a) & \iff & w R_0 \alpha_m^k; 
\smallskip\\
\kmodel{M},w\not\imodels {B_m^k}(a) & \iff & w R_0 \beta_m^k.
\end{array}
$$
\end{lemma}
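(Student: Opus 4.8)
The plan is to prove Lemma~\ref{lem:frame-F:1} together with the analogous statements for the lower formulas $A_m^0(a),B_m^0(a)$ (with $m\in\{1,2\}$) and $A_m^1(a),B_m^1(a)$ (with $m\in\{1,2,3\}$), by induction on the superscript, so that the lemma is just the last two instances. Throughout I would use the standard fact that, in an intuitionistic Kripke model, an implication $\varphi\imp\psi$ is refuted at a world $v$ iff there is $v'\in R_0(v)$ with $\kmodel{M},v'\imodels\varphi$ and $\kmodel{M},v'\not\imodels\psi$, together with the clauses for $\wedge$ and $\vee$ and the persistence of all the formulas involved. Since every $D_i$, $A_m^k(a)$, $B_m^k(a)$ is persistent, once a formula $\chi$ from an earlier layer has been treated the condition ``$\kmodel{M},v\imodels\chi$'' becomes, by the induction hypothesis, a purely combinatorial condition on the cone $R_0(v)$, so the inductive step reduces to reading cones off the definition of $R_0$.

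First I would compute the extensions of the three building blocks. Using $a$-suitability and heredity one checks directly that, for $w\in W_0$: $\kmodel{M},w\imodels D_1$ iff $w\in\{\delta_2,\delta'_2,\delta_3,\beta^0_1\}$; $\kmodel{M},w\imodels D_2(a)$ iff $w\in\{\delta_1,\delta_3,\alpha^0_1\}$; and $\kmodel{M},w\imodels D_3(a)$ iff $w\in\{\delta_1,\delta_2,\delta'_2,\alpha^0_2\}$. The point is that $P'(a)$ is true, among the worlds validating $D_1$, only at $\delta_3$, that $I(\delta_3,P')=\{a,a'\}\ne\mathcal{A}$ because $|\mathcal{A}|\geqslant 3$, and that the cone $R_0(w)$ of each $w\in W_0$ is read off the description of $R_0$. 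From these three extensions the base case is immediate: for $A^0_1(a)=D_2(a)\imp D_1\dis D_3(a)$ one sees that $D_1\dis D_3(a)$ is false, among the worlds validating $D_2(a)$, exactly at $\alpha^0_1$, so $A^0_1(a)$ is refuted at $w$ iff $\alpha^0_1\in R_0(w)$; $A^0_2(a)$ and $B^0_1(a)$ are treated the same way; and $B^0_2(a)$, whose antecedent is $A^0_1(a)\con A^0_2(a)\con B^0_1(a)$, is handled by observing that the unique world of $W_0$ that sees none of $\alpha^0_1,\alpha^0_2,\beta^0_1$ while refuting $D_1$, $D_2(a)$ and $D_3(a)$ is $\beta^0_2$.

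For the inductive step from superscript $k$ to $k+1$ I would use that, by the induction hypothesis, ``$A^k_i(a)$ is true at $v$'' means ``$\alpha^k_i\notin R_0(v)$'' and ``$B^k_i(a)$ is false at $v$'' means ``$\beta^k_i\in R_0(v)$'' (and symmetrically). Hence refutation of, say, $A^{k+1}_m(a)$ at $w$ amounts to the existence of $v\in R_0(w)$ whose cone contains a prescribed set $Y$ of level-$k$ worlds and avoids a prescribed set $X$ of level-$k$ worlds, with $(X,Y)$ determined by the Boolean shape of $A^{k+1}_m(a)$; it then remains to check, by inspecting $R_0$, that the unique world of $W_0$ with this property is $\alpha^{k+1}_m$, and that the cone of $\alpha^{k+1}_m$ really has it. Concretely, at the first step the six worlds $\alpha^1_1,\alpha^1_2,\alpha^1_3,\beta^1_1,\beta^1_2,\beta^1_3$ correspond bijectively, via ``set of immediate $R_0$-successors'', to the six two-element subsets of $\{\alpha^0_1,\alpha^0_2,\beta^0_1,\beta^0_2\}$, and each formula $A^1_m(a)$, $B^1_m(a)$ picks out the complementary partition; at the second step $\alpha^2_1,\alpha^2_2,\beta^2_1,\beta^2_2$ are the unique worlds seeing, respectively, $\{\beta^1_1,\alpha^1_2,\beta^1_2\}$, $\{\beta^1_1,\alpha^1_2,\beta^1_3\}$, $\{\alpha^1_1,\beta^1_2,\alpha^1_2\}$, $\{\alpha^1_1,\beta^1_2,\alpha^1_3\}$ while missing the relevant one of $\alpha^1_1$, $\beta^1_1$, and no world of level $\leqslant 2$ can see three pairwise incomparable level-$2$ worlds. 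This gives Lemma~\ref{lem:frame-F:1} for $k=1$ and $k=2$.

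The main obstacle is purely bookkeeping: one must keep the cone of every one of the eighteen worlds of $\kframe{F}_0$ straight and run the uniqueness checks at each level without slips. The only genuinely semantic ingredients are the three extension computations of the second paragraph (resting on $|\mathcal{A}|\geqslant 3$ and on heredity) and the refutation clause for implications; beyond that there is no conceptual difficulty. A convenient way to keep the write-up short is to record once, in a small table or alongside Figure~\ref{fig:F-2}, the data $R_0(w)\cap\{\delta_1,\delta_2,\delta'_2,\delta_3\}$ together with the immediate predecessors at the previous level, and then let each inductive step be a one-line appeal to that table.
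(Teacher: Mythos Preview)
Your approach is correct and is precisely the ``routine check'' to which the paper's proof refers: the paper gives no argument beyond that phrase (plus a citation to an earlier paper where the same computation is carried out), and your layer-by-layer computation of the extensions of $D_1,D_2(a),D_3(a)$ followed by induction on the superscript is exactly what that check amounts to. Your computed extensions for the $D_i$'s are correct, and the structural observations about the cones at levels~$1$ and~$2$ match the frame.
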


\begin{proof} 
A routine check. Also, the statement follows directly from~\cite[Lemma~3.9]{RShJLC21b}.
\end{proof}

\begin{lemma}
\label{lem:frame-F:2}
Let $\kmodel{M} = \otuple{\otuple{W_0, R_0}\odot\mathcal{A}, I}$ be an $a$-suitable model.  Then, for
every $b\in\mathcal{A}\setminus\{a\}$, every $w \in W_0$, and every $m \in \{1,2\}$,
$$
\begin{array}{lcl}
\kmodel{M},w\imodels {A_m^2}(b) 
  & \mbox{and\/} 
  & \kmodel{M},w\imodels {B_m^2}(b).
\end{array}
$$
\end{lemma}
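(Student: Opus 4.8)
The plan is to reduce the statement to a finite, level-by-level verification over the eighteen worlds of $\kframe{F}_0$, organised along the recursion that defines the formulas $A_m^k(x)$ and $B_m^k(x)$; the point to exploit is that for $b\ne a$ the valuation of $P'(b)$ differs from that of $P'(a)$ precisely in being true at $\delta_2$ (and, when $b=a'$, also at $\delta'_2$, $\beta_1^0$, $\delta_3$). I would treat the two sub-cases $b=a'$ and $b\in\mathcal{A}\setminus\{a,a'\}$ separately, though by the same method.

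First I would fix the truth sets in $W_0$ of the three building blocks $D_1$, $D_2(b)$, $D_3(b)$. Since $|\mathcal{A}|\geqslant 3$, the element $a$ already witnesses that $\forall x\,P'(x)$ is refuted at every world of $W_0$; hence $D_3(b)$ holds at $w$ iff $P'(b)$ is refuted throughout $R_0(w)$, and $D_2(b)=D_1\imp P'(b)$ holds at $w$ iff every $v\in R_0(w)$ at which $D_1$ holds already satisfies $P'(b)$. From $a$-suitability one reads off that $D_1$ holds exactly at $\{\delta_2,\delta'_2,\delta_3,\beta_1^0\}$, and this determines the (short) tables for $D_2(b)$ and $D_3(b)$. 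The feature to isolate is that these tables differ from those for the marked element $a$ of Lemma~\ref{lem:frame-F:1} exactly by the extra truth of $P'(b)$ at $\delta_2$ — it is the absence of this that makes the refutation sets of $A_m^k(a)$, $B_m^k(a)$ reach all the way down to $\alpha_m^k$, $\beta_m^k$.

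Second I would climb the ladder: from the atomic tables one computes the truth sets of $A_m^0(b)$, $B_m^0(b)$, then of the six level-one formulas $A_m^1(b)$, $B_m^1(b)$, and finally checks that at every world where $A_1^1(b)$ holds at least one of $B_1^1(b)$, $A_2^1(b)$, $B_2^1(b)$ holds, whence $A_1^2(b)$ is valid, and symmetrically for $A_2^2(b)$, $B_1^2(b)$, $B_2^2(b)$. The underlying reason the process terminates is that $\kframe{F}_0$ carries only two ladder levels above the $\delta$-layer, so by level two nothing is left to refute. This is exactly the computation behind \cite[Lemma~3.9]{RShJLC21b} carried out for a non-marked individual, so once the atomic tables are in place one may instead simply invoke that lemma.

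The main obstacle is the bookkeeping: the intermediate truth sets are not small (for $b=a'$ the level-zero formulas are already refuted at most of the frame), so the climb has to be done carefully, keeping track at each step of which disjunct of each consequent is forced at which world, across the four $\alpha/\beta$ branches. The individual checks are elementary once $D_1$, $D_2(b)$, $D_3(b)$ are tabulated; the difficulty is purely in organising them so that the argument does not degenerate into dozens of separate world-by-formula verifications.
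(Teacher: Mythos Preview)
Your proposal is correct and matches the paper's approach: the paper's own proof is literally ``A routine check. Also, the statement follows directly from~\cite[Lemma~3.10]{RShJLC21b},'' and what you outline is precisely that routine check. Two small corrections: the relevant citation is Lemma~3.10 of~\cite{RShJLC21b}, not~3.9 (3.9 underlies the preceding Lemma~\ref{lem:frame-F:1}); and $a$ does not witness the failure of $\forall x\,P'(x)$ at~$\delta_3$ (there $P'(a)$ holds)~--- it is the existence of a third element in $\mathcal{A}\setminus\{a,a'\}$, guaranteed by $|\mathcal{A}|\geqslant 3$, that does, though your conclusion about $D_3(b)$ is unaffected.
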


\begin{proof} 
A routine check. Also, the statement follows directly from~\cite[Lemma~3.10]{RShJLC21b}.
\end{proof}

Now, we are ready to simulate the letters $Q$ and $G$ in $S_8 \mathit{M}^{\mathit{int}}\mathit{Tiling}_n^{\mathbb{X}}$ by positive formulas that contain a single unary predicate letter and two individual variables. Let $S_9$ be a function that replaces in formulas
\begin{itemize}
\item each occurrence of $Q(x)$ or $Q(y)$ with $A^2_1(x)\vee B^2_1(x)$ or $A^2_1(y)\vee B^2_1(y)$, respectively;
\item each occurrence of $G(x)$ or $G(y)$ with $A^2_2(x)\vee B^2_2(x)$ or $A^2_2(y)\vee B^2_2(y)$, respectively.
\end{itemize}

\begin{lemma}
\label{lem:MIntTiling:1:Q:positive}
If $n\in\mathbb{X}$, then $S_9 S_8 \mathit{M}^{\mathit{int}}\mathit{Tiling}_n^{\mathbb{X}} \in \logic{QInt}$.
\end{lemma}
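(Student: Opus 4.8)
The plan is to reduce the statement to Lemma~\ref{lem:MIntTiling:1:QG:positive}, exactly as Lemma~\ref{lem:1:QInt:positive:3var:S7} was reduced to Lemma~\ref{lem:1:QInt:positive:3var}. Lemma~\ref{lem:MIntTiling:1:QG:positive} already gives, for $n\in\mathbb{X}$, that $S_8\mathit{M}^{\mathit{int}}\mathit{Tiling}_n^{\mathbb{X}}\in\logic{QInt}$, and by construction this formula is positive, uses only the two individual variables $x$ and~$y$, and has no predicate letters other than the unary letters $Q$ and~$G$. The function $S_9$ merely replaces every atom $Q(x)$, $Q(y)$, $G(x)$, $G(y)$ by the formulas $A_1^2(x)\vee B_1^2(x)$, $A_1^2(y)\vee B_1^2(y)$, $A_2^2(x)\vee B_2^2(x)$, $A_2^2(y)\vee B_2^2(y)$, respectively; each of these is a positive $\lang{L}$-formula whose only predicate letter is the single unary letter~$P'$ and whose only free variable is the displayed one.

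Hence $S_9$ is the composition of the two predicate substitutions ``substitute $A_1^2(u)\vee B_1^2(u)$ for $Q$'' and ``substitute $A_2^2(u)\vee B_2^2(u)$ for $G$'' — indirect substitutions in the sense of footnote~\ref{footnote:1}. Since $\logic{QInt}$ is a superintuitionistic predicate logic, it is closed under Substitution, so applying $S_9$ to the $\logic{QInt}$-theorem $S_8\mathit{M}^{\mathit{int}}\mathit{Tiling}_n^{\mathbb{X}}$ again yields a $\logic{QInt}$-theorem. That is essentially the entire argument; it is as short as the proofs of Lemmas~\ref{lem:relativization:S4:K} and~\ref{lem:1:QInt:positive:3var:S7}.

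The only point needing a line of care is the variable count and positivity. The formulas $A_m^k(u)$ and $B_m^k(u)$ are built from $D_1$, $D_2(u)$, $D_3(u)$; here $D_1=\exists x\,P'(x)$ is a sentence, while $D_2(u)$ and $D_3(u)$ have $u$ as their only free variable, their bound variable being immaterial up to $\alpha$-conversion. Therefore $S_9S_8\mathit{M}^{\mathit{int}}\mathit{Tiling}_n^{\mathbb{X}}$ can be written with the two individual variables $x$ and~$y$ only, contains no predicate letter except~$P'$, and is positive (all of the $D_i$, $A_m^k$, $B_m^k$ are positive, and $S_8\mathit{M}^{\mathit{int}}\mathit{Tiling}_n^{\mathbb{X}}$ is positive). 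So the ``hard part'' here is just this bookkeeping: the genuine content of the construction around frame~$\kframe{F}_0$ — Lemmas~\ref{lem:frame-F:1} and~\ref{lem:frame-F:2} — will be needed only for the companion statement in the case $n\in\mathbb{Y}$, not for the present lemma.
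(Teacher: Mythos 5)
Your argument is exactly the paper's: the lemma follows from Lemma~\ref{lem:MIntTiling:1:QG:positive} because $S_9 S_8 \mathit{M}^{\mathit{int}}\mathit{Tiling}_n^{\mathbb{X}}$ is a substitution instance of $S_8 \mathit{M}^{\mathit{int}}\mathit{Tiling}_n^{\mathbb{X}}$ and $\logic{QInt}$ is closed under Substitution. The extra bookkeeping on positivity and the two-variable count is fine but not needed for this step; the proposal is correct.
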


\begin{proof}
Follows from Lemma~\ref{lem:MIntTiling:1:QG:positive}, since $S_9 S_8 \mathit{M}^{\mathit{int}}\mathit{Tiling}_n^{\mathbb{X}}$ is a substitution instance of $S_8 \mathit{M}^{\mathit{int}}\mathit{Tiling}_n^{\mathbb{X}}$.
\end{proof}

\begin{lemma}
\label{lem:MIntTiling:2:Q:positive}
Let $L\in\{\logic{QInt}_{\mathit{dfin}} + \bm{bd}_7 + \bm{cd},\logic{QKC}_{\mathit{dfin}} + \bm{bd}_8 + \bm{cd}\}$.
If $n\in\mathbb{Y}$, then $S_9 S_8 \mathit{M}^{\mathit{int}}\mathit{Tiling}_n^{\mathbb{X}} \not\in L$.
%
\end{lemma}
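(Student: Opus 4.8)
The plan is to deduce this from Lemma~\ref{lem:MIntTiling:2:QG:positive} by replacing the abstract swKKZ frame used there with a concrete finite intuitionistic Kripke frame of bounded depth and constant domain, and by simulating the two unary letters $Q$ and $G$ with the single letter $P'$ by means of the frame $\kframe{F}_0$ of Figure~\ref{fig:F-2}. Fix $n\in\mathbb{Y}$ and recall from the proof of Lemma~\ref{lem:MIntTiling:2:QG:positive} the finite $\bm{sib}$-model $\cModel{M}=\otuple{\mathcal{D},\mathcal{I}}$ with $\mathcal{D}=\{0,\ldots,r+4\}\times\{0,\ldots,r+4\}$, the finite set of individuals $\mathcal{D}'$ obtained from it, and the model $\bar{\kModel{M}}$, built over an swKKZ frame, that refutes $S_8\mathit{M}^{\mathit{int}}\mathit{Tiling}_n^{\mathbb{X}}$ at its root $\bar w_0$. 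First I would keep only the ``tiling'' part of the underlying frame, taken as small as possible: a root $\bar w_0$ seeing an antichain $\{\bar w_a : a\in\mathcal{D}\}$, with, below each $\bar w_a$, a fresh antichain $U_a=\{u^a_{bc}:b,c\in\mathcal{D}'\}$ and no $R$-edges between distinct $U_a$'s. This three-level part has depth $3$ and is exactly what forces the refutation of $S_8\mathit{M}^{\mathit{int}}\mathit{Tiling}_n^{\mathbb{X}}$; all the individual-level data (the elements $e_m^a,e_P^a,\ldots$ and the valuations of $P$, $Q$, $G$) already live in the finite constant domain $\mathcal{D}'$, so nothing on this part needs to change.

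Next I would simulate $Q$ and $G$ using the substitution $S_9$ and the frame $\kframe{F}_0$, following the template of~\cite{RShJLC21b}. Concretely, below each minimal world $u^a_{bc}$ of the tiling part I would attach, for every individual $b\in\mathcal{D}'$, a fresh disjoint copy of $\kframe{F}_0$ (or of a suitable generated subframe of it), placing $u^a_{bc}$ as a common immediate predecessor of that copy, and I would set $P'$ on the copy to a $b$-suitable-style valuation whose witness and truth pattern are chosen so that, by Lemmas~\ref{lem:frame-F:1} and~\ref{lem:frame-F:2}, the $S_9$-image $A^2_1(b)\vee B^2_1(b)$ of $Q(b)$ is refuted at $u^a_{bc}$ precisely when $\bar{\kModel{M}},u^a_{bc}\not\imodels Q(b)$, and the $S_9$-image $A^2_2(b)\vee B^2_2(b)$ of $G(b)$ likewise tracks $G(b)$; for $b\notin\mathcal{D}$ the copy is trivially all-true by Lemma~\ref{lem:frame-F:2}. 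Since $Q$ and $G$ are hereditary in $\bar{\kModel{M}}$, it suffices to install the refuting gadgets below the minimal worlds $u^a_{bc}$ and let heredity propagate the remaining (true) values up to the $\bar w_a$ and $\bar w_0$, so no gadget need separate $\bar w_0$ from $\bar w_a$. As $\kframe{F}_0$ has depth $4$, a longest chain of the resulting frame $\kframe{F}^*$ has the form $\bar w_0<\bar w_a<u^a_{bc}<\ldots$ with four further points inside one gadget, so $\kframe{F}^*$ has depth at most $7$; the domain is the fixed finite set $\mathcal{D}'$ at every world, so the resulting model $\kModel{M}^*$ validates $\bm{bd}_7$ and $\bm{cd}$ and has finite local domains.

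Then I would verify, by induction on subformulas as in the proofs of Lemmas~\ref{lem:MIntTiling:1} and~\ref{lem:MIntTiling:2:QG:positive} (reusing Sublemmas~\ref{sublem:kkz:int} and~\ref{sublem:1:lem:2:QInt:positive:3var}), that $\kModel{M}^*$ faithfully mirrors $\bar{\kModel{M}}$ on $S_9S_8\mathit{M}^{\mathit{int}}\mathit{Tiling}_n^{\mathbb{X}}$ at $\bar w_0$; together with Lemma~\ref{lem:MIntTiling:2:QG:positive} this gives $\kModel{M}^*,\bar w_0\not\imodels S_9S_8\mathit{M}^{\mathit{int}}\mathit{Tiling}_n^{\mathbb{X}}$, hence $S_9S_8\mathit{M}^{\mathit{int}}\mathit{Tiling}_n^{\mathbb{X}}\notin\logic{QInt}_{\mathit{dfin}}+\bm{bd}_7+\bm{cd}$. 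For the second logic I would adjoin to $\kframe{F}^*$ a single new world $\top$ with $wR\top$ for every $w$; then every $R(w)$ has $\top$ as greatest element, so the frame validates $\logic{QKC}$, its depth grows to at most $8$, and extending the valuation so that $P'(b)$ holds at $\top$ for all $b$ (forced anyway by heredity, and harmless for Lemmas~\ref{lem:frame-F:1}--\ref{lem:frame-F:2} since a maximal all-true point creates no new refutations at lower worlds) keeps $\bar w_0$ refuting the formula, giving $S_9S_8\mathit{M}^{\mathit{int}}\mathit{Tiling}_n^{\mathbb{X}}\notin\logic{QKC}_{\mathit{dfin}}+\bm{bd}_8+\bm{cd}$.

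The main obstacle will be the careful design of the $\kframe{F}_0$-gadgets: one must choose, for each $u^a_{bc}$ and each $b$, the right generated subframe of $\kframe{F}_0$ and the right $P'$-pattern so that the truth values of $A^2_1(b)\vee B^2_1(b)$ and $A^2_2(b)\vee B^2_2(b)$ at $u^a_{bc}$ come out simultaneously as prescribed for $Q(b)$ and $G(b)$ in $\bar{\kModel{M}}$, while still keeping the gadget's contribution to the depth equal to $4$; and one must check that attaching these gadgets below the $u^a_{bc}$ does not disturb the truth at $\bar w_0$ of the premise $(\mathit{Tiling}^{\mathit{int}}_n)^+$ --- in particular of $(\mathit{TC}^{\mathit{int}}_3)^+$ and $(\mathit{TC}^{\mathit{int}}_5)^+$, whose evaluation quantifies over all successors and hence reaches into the gadgets --- nor of the disjunctive consequent $\exists x\,P_1(x)\vee\exists x\,\neg C(x)$. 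Both points are routine heredity-and-antisymmetry bookkeeping of the kind already carried out in the proofs of Lemmas~\ref{lem:MIntTiling:2}, \ref{lem:relativization:p:wKHC}, and~\ref{lem:MIntTiling:2:QG:positive}, so I would organise the argument to reuse those calculations rather than repeat them.
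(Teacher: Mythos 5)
Your overall skeleton is the right one (and matches the paper's): start from the depth-$3$, finite-constant-domain countermodel to $S_8\mathit{M}^{\mathit{int}}\mathit{Tiling}_n^{\mathbb{X}}$ supplied by Lemma~\ref{lem:MIntTiling:2:QG:positive}, simulate $Q$ and $G$ via $S_9$ using copies of $\kframe{F}_0$ together with Lemmas~\ref{lem:frame-F:1} and~\ref{lem:frame-F:2}, bound the depth by $3+4=7$, and add a top point seeing everything to handle $\logic{QKC}$ with depth $8$. But the way you place and tune the gadgets has a genuine gap, and it concerns~$G$. In the refuting model of Lemma~\ref{lem:MIntTiling:2:QG:positive}, $Q$ is false of every individual at $\bar w_0$ and at every $\bar w_a$, and mixed at the leaves $u^a_{bc}$; but $G$ is true of \emph{every} individual at every leaf, while at $\bar w_0$ and the $\bar w_a$ it is false exactly on the auxiliary individuals of $\mathcal{D}'\setminus\mathcal{D}$. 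Your plan attaches fresh gadgets only below the leaves, chooses them so that $A^2_1(b)\vee B^2_1(b)$ and $A^2_2(b)\vee B^2_2(b)$ track the values of $Q(b)$ and $G(b)$ \emph{at that leaf}, declares the copies for $b\notin\mathcal{D}$ ``trivially all-true'', and asserts that heredity makes any separate treatment of $\bar w_0,\bar w_a$ unnecessary. This works for $Q$ (for each individual there is always a leaf where $Q$ fails, and falsity of the simulating disjunction propagates downward to $\bar w_a$ and $\bar w_0$), but it cannot work for $G$: since every successor of $\bar w_a$ other than $\bar w_a$ itself lies at or below some leaf, any world witnessing the failure of $A^2_2(e)$ or $B^2_2(e)$ is seen from that leaf as well, so refuting $A^2_2(e)\vee B^2_2(e)$ at $\bar w_a$ while keeping it true at every leaf forces you to split the $\alpha$-half and the $\beta$-half of the refutation across \emph{different} leaves for each pair $(\bar w_a,e)$ with $e\notin\mathcal{D}$ --- i.e.\ to give up the ``track the leaf value only'' principle --- and under your stated design (all-true copies for auxiliary individuals) no refutation exists at all. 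The failure is not cosmetic: $\forall x\,G(x)$ sits inside the simulated binary atom $(Q(x_1)\wedge Q(x_2)\to\forall x\,Q(x))\vee\forall x\,G(x)$, so if its $S_9$-image becomes true at $\bar w_0$ and the $\bar w_a$, the simulated $P$ (and with it $\mathit{false}$ and all the $\mathit{tile}'_k$-formulas) becomes true everywhere, the consequent $\exists x\,P_1(x)$ of the tiling implication becomes true, and the formula is no longer refuted anywhere in your model. You also misread Lemma~\ref{lem:frame-F:2}: it says that in an $a$-suitable copy the level-$2$ formulas are true for all individuals \emph{other than}~$a$; it does not let you make the copy for an auxiliary individual ``all-true'' for that individual itself.

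The paper's proof resolves exactly this point by a different gluing: it takes \emph{one} copy $\kframe{F}_0^a$ per individual $a\in\mathcal{D}$ (with the valuation $a$-suitable, so Lemma~\ref{lem:frame-F:2} makes copy $a$ harmless for all other individuals) and hooks \emph{every} original world $w$ directly into copy $a$: to $\otuple{\alpha^2_1,a}$ and $\otuple{\beta^2_1,a}$ exactly when $Q(a)$ fails at $w$, to $\otuple{\alpha^2_2,a}$ and $\otuple{\beta^2_2,a}$ exactly when $G(a)$ fails at $w$, and unconditionally to $\otuple{\alpha^1_1,a}$ and $\otuple{\beta^1_1,a}$. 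The per-world, per-individual hooks at the deepest gadget worlds are what realize the pattern ``$G$ false on auxiliary individuals above, true of everything at the leaves'' (and likewise for $Q$), and the unconditional level-$1$ hooks are what make the converse direction of the key equivalence go through, since by Lemma~\ref{lem:frame-F:1} they force $A^1_1(a)\vee B^1_1(a)$ to fail at every original world, which pins any refuting world for a level-$2$ formula down to $\otuple{\alpha^2_m,a}$ or $\otuple{\beta^2_m,a}$ inside copy~$a$. Your proposal mentions neither the level-$1$ hooks nor any mechanism for making the simulated $G$ differ between the leaves and the worlds $\bar w_0,\bar w_a$; to repair it you would in effect have to reintroduce per-upper-world hooking into shared copies, which is the paper's construction.
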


\begin{proof}
Let $n\in\mathbb{Y}$. Observe that $\ckf (\logic{QInt} + \bm{bd}_3)$ is an swKKZ class; then, by Lemma~\ref{lem:MIntTiling:2:QG:positive}, $S_8 \mathit{M}^{\mathit{int}}\mathit{Tiling}_n^{\mathbb{X}} \not\in \QSILcd \ckf (\logic{QInt} + \bm{bd}_3)$. Therefore, there are a Kripke frame $\kframe{F}=\otuple{W,R}$ of depth at most~$3$, a model $\kModel{M}=\otuple{\kframe{F}\odot\mathcal{D},I}$ with a finite domain~$\mathcal{D}$, and a world~$w_0\in W$ such that $\kModel{M},w_0\not\imodels S_8 \mathit{M}^{\mathit{int}}\mathit{Tiling}_n^{\mathbb{X}}$; without a loss of generality, we may assume that $|\mathcal{D}|\geqslant 3$.

For every $a\in\mathcal{D}$, let us take a copy $\kframe{F}_0^a = \otuple{W_a,R_a}$ of $\kframe{F}_0$ defined as follows:
$$
\begin{array}{lcl}
W_a & = & W_0 \times\{a\}; \\
R_a & = & \{\otuple{\otuple{v,a},\otuple{u,a}} : v R_0 u\}.
\end{array}
$$

Define an intuitionistic Kripke frame $\bar{\kframe{F}} = \otuple{\bar{W},\bar{R}}$ by
$$
\begin{array}{lcl}
\bar{W} & = & \displaystyle W \cup {\bigcup\{W_a : a\in\mathcal{D}\}}\\
\end{array}
$$
and $\bar{R}$ be the reflexive transitive closure of the relation
$$
\begin{array}{lcl}
\displaystyle R \cup  
\bigcup \{R_a : a\in\mathcal{D}\} \cup 
\{\otuple{w,\otuple{\alpha^2_1,a}},\otuple{w,\otuple{\beta^2_1,a}} : \mbox{$w\in W$, $a\in\mathcal{D}$, $\kModel{M},w\not\imodels Q(a)$}\} 
\\
\displaystyle \phantom{R \cup \bigcup \{R_a : a\in\mathcal{D}\}}{} \cup 
\{\otuple{w,\otuple{\alpha^2_2,a}},\otuple{w,\otuple{\beta^2_2,a}} : \mbox{$w\in W$, $a\in\mathcal{D}$, $\kModel{M},w\not\imodels G(a)$}\} 
\\
\displaystyle \phantom{R \cup \bigcup \{R_a : a\in\mathcal{D}\}}{} \cup 
\{\otuple{w,\otuple{\alpha^1_1,a}},\otuple{w,\otuple{\beta^1_1,a}} : \mbox{$w\in W$, $a\in\mathcal{D}$}\}.
\\
\end{array}
$$
Observe that $\kframe{F}_0^a$ is a subframe of $\bar{\kframe{F}}$, for every $a\in\mathcal{D}$.
Let $\bar{\kModel{M}} = \otuple{\bar{\kframe{F}}\odot\mathcal{D},\bar{I}}$ be a model such that 
\begin{itemize}
\item
for every $w\in W$, $\bar{I}(P') = \varnothing$;
\item
for every $a\in\mathcal{D}$, its submodel $\bar{\kModel{M}}_a = \otuple{\kframe{F}_0^a\odot\mathcal{D},\bar{I}_a}$, where $\bar{I}_a(v,P') = \bar{I}(v,P')$ for every $v\in W_a$, is $a$-suitable.
\end{itemize}

Let us make some useful observations about model~$\bar{\kModel{M}}$ (cf.~\cite[Lemma~3.11]{RShJLC21b}).

\begin{sublemma}
\label{sublem:1:lem:MIntTiling:2:Q:positive}
If $w\in W$ and $a\in\mathcal{D}$, then\/ $\bar{\kModel{M}},w\not\imodels A^1_1(a)\vee B^1_1(a)$.
\end{sublemma}

\begin{proof}
Notice that $w\bar{R}\otuple{\alpha^1_1,a}$ and $w\bar{R}\otuple{\beta^1_1,a}$ by the definition of $\bar{\kframe{F}}$; then apply Lemma~\ref{lem:frame-F:1} (and heredity), since $\bar{\kModel{M}}_a$ is $a$\nobreakdash-suitable.
\end{proof}

\begin{sublemma}
\label{sublem:2:lem:MIntTiling:2:Q:positive}
If $\varphi$ is a positive formula whose predicate letters are in $\{Q,G\}$ and individual variables are in $\{x,y\}$, $v\in \bar{W}\setminus W$, and $g$ is an assignment in\/~$\bar{\kModel{M}}$, then\/ $\bar{\kModel{M}},v\imodels^g S_9\varphi$.
\end{sublemma}

\begin{proof}
By Lemmas~\ref{lem:frame-F:1} and~\ref{lem:frame-F:2}, $\bar{\kModel{M}}_a\imodels A^2_k(b)\vee B^2_k(b)$, for every $k\in\{1,2\}$ and all $a,b\in \mathcal{D}$. Then apply induction on~$\varphi$ (taking into account that $\varphi$ is positive).
\end{proof}

\begin{sublemma}
\label{sublem:3:lem:MIntTiling:2:Q:positive}
For every positive formula $\varphi$ whose predicate letters are in $\{Q,G\}$ and individual variables are in $\{x,y\}$, every $w\in W$, and every assignment~$g$ in~$\kModel{M}$,
$$
\begin{array}{lcl}
\kModel{M},w\imodels^g \varphi & \iff & \bar{\kModel{M}},w\imodels^g S_9\varphi.
\end{array}
$$
\end{sublemma}

\begin{proof}
Induction on~$\varphi$.

Let $\varphi$ be $Q(x)$ or $Q(y)$. 

If $\kModel{M},w\not\imodels Q(a)$, for some $w\in W$ and $a\in\mathcal{D}$, then $w\bar{R}\otuple{\alpha^2_1,a}$ and $w\bar{R}\otuple{\beta^2_1,a}$. It follows from Lemma~\ref{lem:frame-F:1} that $\bar{\kModel{M}}_a,\otuple{\alpha^2_1,a}\not\imodels A^2_1(a)$ and $\bar{\kModel{M}}_a,\otuple{\alpha^2_1,a}\not\imodels B^2_1(a)$, since $\bar{\kModel{M}}_a$ is $a$-suitable. Then, by heredity, 
$\bar{\kModel{M}},w\not\imodels A^2_1(a)\vee B^2_1(a)$.

Suppose that $\bar{\kModel{M}},w\not\imodels A^2_1(a)\vee B^2_1(a)$, for some $w\in W$ and $a\in\mathcal{D}$.
Then $\bar{\kModel{M}},w\not\imodels A^2_1(a)$ and $\bar{\kModel{M}},w\not\imodels B^2_1(a)$; hence, there exist $u,v\in \bar{R}(w)$ such that
$$
\begin{array}{llll}
\bar{\kModel{M}},u\imodels A^1_1(a),
  & \bar{\kModel{M}},u\not\imodels B^1_1(a),
  & \bar{\kModel{M}},u\not\imodels A^1_2(a),
  & \bar{\kModel{M}},u\not\imodels B^1_2(a);
  \smallskip\\
\bar{\kModel{M}},v\imodels B^1_1(a),
  & \bar{\kModel{M}},v\not\imodels A^1_1(a),
  & \bar{\kModel{M}},v\not\imodels B^1_2(a),
  & \bar{\kModel{M}},v\not\imodels A^1_2(a).
\end{array}
$$
By Sublemma~\ref{sublem:1:lem:MIntTiling:2:Q:positive}, $u,v\not\in W$. Observe that $\bar{\kModel{M}},u\not\imodels A^2_1(a)$ and $\bar{\kModel{M}},v\not\imodels B^2_1(a)$; hence, by Sublemma~\ref{sublem:2:lem:MIntTiling:2:Q:positive}, $u,v\not\in W_b$ with $b\ne a$. Thus, $u,v\in W_a$. Then, Lemma~\ref{lem:frame-F:1} implies that $u=\otuple{\alpha^2_1,a}$ and $v=\otuple{\beta^2_1,a}$. By the definition of~$\bar{\kModel{M}}$, this is possible only if $\kModel{M},w\not\imodels Q(a)$.

Let $\varphi$ be $G(x)$ or $G(y)$. Then use a similar argumentation.

Let $\varphi=\varphi'\to\varphi''$ with the statement of the sublemma being true for $\varphi'$ and~$\varphi''$.

If $\kModel{M},w\not\imodels^g \varphi$, for some $w\in W$ and some assignment~$g$, then $\kModel{M},w'\imodels^g \varphi'$ and $\kModel{M},w'\not\imodels^g \varphi''$, for some $w'\in R(w)$. Then
$\bar{\kModel{M}},w'\imodels^g S_9\varphi'$ and $\bar{\kModel{M}},w'\not\imodels^g S_9\varphi''$; hence, $\bar{\kModel{M}},w\not\imodels^g S_9\varphi$.

Suppose that $\bar{\kModel{M}},w\not\imodels^g S_9\varphi$, for some $w\in W$ and some assignment~$g$. Then $\bar{\kModel{M}},w'\imodels^g S_9\varphi'$ and $\bar{\kModel{M}},w'\not\imodels^g S_9\varphi''$, for some $w'\in\bar{R}(w)$. By Sublemma~\ref{sublem:2:lem:MIntTiling:2:Q:positive}, $w'\in W$. Then $\kModel{M},w'\imodels^g \varphi'$ and $\kModel{M},w'\not\imodels^g \varphi''$; hence, $\kModel{M},w\not\imodels^g \varphi$.

The cases $\varphi = \varphi'\wedge \varphi''$, $\varphi = \varphi'\vee \varphi''$, $\varphi = \forall z\,\varphi'$, and $\varphi = \exists z\,\varphi'$, where $z\in\{x,y\}$, are similar and left to the reader.
\end{proof}

By Sublemma~\ref{sublem:3:lem:MIntTiling:2:Q:positive}, $\bar{\kModel{M}},w_0\not\imodels S_9 S_8 \mathit{M}^{\mathit{int}}\mathit{Tiling}_n^{\mathbb{X}}$. Observe that $\bar{\kframe{F}}$ is of depth at most~$7$. Thus, $S_9 S_8 \mathit{M}^{\mathit{int}}\mathit{Tiling}_n^{\mathbb{X}} \not\in \logic{QInt}_{\mathit{dfin}} + \bm{bd}_7 + \bm{cd}$.

In order to prove that $S_9 S_8 \mathit{M}^{\mathit{int}}\mathit{Tiling}_n^{\mathbb{X}} \not\in \logic{QKC}_{\mathit{dfin}} + \bm{bd}_8 + \bm{cd}$, just extend $\bar{\kModel{M}}$ with a new world accessible from all the worlds of the model and then make the formula $\forall x\,P'(x)$ being true at this world.
\end{proof}

\begin{lemma}
\label{lem:MIntTiling:3:Q:positive}
Let $L\in\{\logic{QInt}_{\mathit{wfin}} + \bm{bd}_7 + \bm{cd},\logic{QKC}_{\mathit{wfin}} + \bm{bd}_8 + \bm{cd}\}$.
If $n\in\mathbb{Y}$, then $S_9 S_8 \mathit{M}^{\mathit{int}}\mathit{Tiling}_n^{\mathbb{X}} \not\in L$.
%
\end{lemma}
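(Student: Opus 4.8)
The plan is to mimic, essentially verbatim, the proof of Lemma~\ref{lem:MIntTiling:2:Q:positive}, feeding in the \emph{finite} countermodel produced in the proof of Lemma~\ref{lem:MIntTiling:3:QG:positive} instead of the merely finite-domain one used there. First I would observe that $\fin\ckf(\logic{QInt}+\bm{bd}_3)$ is an swKKZ class: a root of depth at most $3$ may branch into arbitrarily wide double antichains without leaving the class and without becoming infinite. Hence, by Lemma~\ref{lem:MIntTiling:3:QG:positive}, the assumption $n\in\mathbb{Y}$ yields $S_8\mathit{M}^{\mathit{int}}\mathit{Tiling}_n^{\mathbb{X}}\notin\QSILcw\fin\ckf(\logic{QInt}+\bm{bd}_3)$. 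Inspecting the explicit model built in that proof, one obtains a \emph{finite} intuitionistic Kripke frame $\kframe{F}=\otuple{W,R}$ of depth at most $3$, a predicate Kripke model $\kModel{M}=\otuple{\kframe{F}\odot\mathcal{D},I}$ with a \emph{finite} domain $\mathcal{D}$, and a world $w_0\in W$ such that $\kModel{M},w_0\not\imodels S_8\mathit{M}^{\mathit{int}}\mathit{Tiling}_n^{\mathbb{X}}$; as before we may assume $|\mathcal{D}|\geqslant 3$.

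Next I would carry out the $Q,G$-simulation of the proof of Lemma~\ref{lem:MIntTiling:2:Q:positive} unchanged: for each $a\in\mathcal{D}$ attach a disjoint copy $\kframe{F}_0^a=\otuple{W_a,R_a}$ of the finite frame $\kframe{F}_0$; add $w\,\bar R\,\otuple{\alpha^2_1,a},\otuple{\beta^2_1,a}$ whenever $\kModel{M},w\not\imodels Q(a)$, add $w\,\bar R\,\otuple{\alpha^2_2,a},\otuple{\beta^2_2,a}$ whenever $\kModel{M},w\not\imodels G(a)$, add $w\,\bar R\,\otuple{\alpha^1_1,a},\otuple{\beta^1_1,a}$ unconditionally, and take reflexive transitive closure; equip each submodel $\bar{\kModel{M}}_a$ with an $a$-suitable valuation of $P'$ and put $\bar I(P')=\varnothing$ on $W$. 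The resulting frame $\bar{\kframe{F}}=\otuple{\bar W,\bar R}$ is \emph{finite}, since $W$ and $\mathcal{D}$ are finite and $\kframe{F}_0$ is finite, and its depth is at most $3+4=7$. Sublemmas analogous to Sublemmas~\ref{sublem:1:lem:MIntTiling:2:Q:positive}--\ref{sublem:3:lem:MIntTiling:2:Q:positive}, whose proofs use only Lemmas~\ref{lem:frame-F:1} and~\ref{lem:frame-F:2} together with $\bar I(P')=\varnothing$ on $W$, then give $\kModel{M},w\imodels^g\varphi$ iff $\bar{\kModel{M}},w\imodels^g S_9\varphi$ for every positive $\{Q,G\}$-formula $\varphi$ in $x,y$, every $w\in W$, and every assignment $g$; in particular $\bar{\kModel{M}},w_0\not\imodels S_9 S_8\mathit{M}^{\mathit{int}}\mathit{Tiling}_n^{\mathbb{X}}$. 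Since $\bar{\kframe{F}}$ is a finite intuitionistic frame of depth $\leqslant 7$ carrying a constant finite domain, the logic of $\bar{\kModel{M}}$ contains $\logic{QInt}_{\mathit{wfin}}+\bm{bd}_7+\bm{cd}$, whence $S_9 S_8\mathit{M}^{\mathit{int}}\mathit{Tiling}_n^{\mathbb{X}}\notin\logic{QInt}_{\mathit{wfin}}+\bm{bd}_7+\bm{cd}$.

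For $L=\logic{QKC}_{\mathit{wfin}}+\bm{bd}_8+\bm{cd}$ I would, exactly as in the proof of Lemma~\ref{lem:MIntTiling:2:Q:positive}, adjoin one fresh world $v$ above every world of $\bar{\kModel{M}}$, with $\forall x\,P'(x)$ forced at $v$. This keeps the frame finite, makes it co-directed (hence a frame for $\logic{QKC}$), raises the depth to at most $8$, and does not disturb the refutation at $w_0$; therefore $S_9 S_8\mathit{M}^{\mathit{int}}\mathit{Tiling}_n^{\mathbb{X}}\notin\logic{QKC}_{\mathit{wfin}}+\bm{bd}_8+\bm{cd}$.

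The only point that genuinely differs from the dfin version is the finiteness bookkeeping: here the whole frame $\bar{\kframe{F}}$ must be finite, which is why we invoke the finite countermodel of Lemma~\ref{lem:MIntTiling:3:QG:positive} rather than the finite-domain countermodel of Lemma~\ref{lem:MIntTiling:2:QG:positive}; once that is in place, gluing on finitely many copies of the fixed finite frame $\kframe{F}_0$ preserves both finiteness and the depth bound automatically. I expect no real obstacle, only the routine re-verification of the three sublemmas, which is identical to the one already carried out for Lemma~\ref{lem:MIntTiling:2:Q:positive}.
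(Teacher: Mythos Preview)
Your proposal is correct and follows essentially the same approach as the paper: invoke Lemma~\ref{lem:MIntTiling:3:QG:positive} on the swKKZ class $\fin\ckf(\logic{QInt}+\bm{bd}_3)$ to obtain a finite frame of depth at most~$3$ with finite domain refuting $S_8\mathit{M}^{\mathit{int}}\mathit{Tiling}_n^{\mathbb{X}}$, then repeat the $\kframe{F}_0$-gluing construction of Lemma~\ref{lem:MIntTiling:2:Q:positive} verbatim, noting that the result stays finite, and handle $\logic{QKC}$ by adjoining a single top world. The paper additionally remarks that finiteness of $\mathcal{D}$ can be justified via Proposition~\ref{prop:finite:frame} as an alternative to inspecting the explicit model, but your direct inspection is equally valid.
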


\begin{proof}
Similarly to the proof of Lemma~\ref{lem:MIntTiling:2:Q:positive}. We give just some comments.

Let $n\in\mathbb{Y}$. Observe that $\fin \ckf (\logic{QInt} + \bm{bd}_3)$ is an swKKZ class; then, by Lemma~\ref{lem:MIntTiling:3:QG:positive}, $S_8 \mathit{M}^{\mathit{int}}\mathit{Tiling}_n^{\mathbb{X}} \not\in \QSILcw \ckf (\logic{QInt} + \bm{bd}_3)$. Therefore, there are a finite Kripke frame $\kframe{F}=\otuple{W,R}$ of depth at most~$3$, a model $\kModel{M}=\otuple{\kframe{F}\odot\mathcal{D},I}$ with a domain~$\mathcal{D}$, and a world~$w_0\in W$ such that $\kModel{M},w_0\not\imodels S_8 \mathit{M}^{\mathit{int}}\mathit{Tiling}_n^{\mathbb{X}}$.

Observe that we may assume $\mathcal{D}$ being finite (and, additionally, with $|\mathcal{D}|\geqslant 3$); we can extract this from Proposition~\ref{prop:finite:frame} or from the proof of Lemma~\ref{lem:MIntTiling:3:QG:positive}. Then, applying the same argumentation as in the proof of Lemma~\ref{lem:MIntTiling:2:Q:positive}, we refute $S_9 S_8 \mathit{M}^{\mathit{int}}\mathit{Tiling}_n^{\mathbb{X}}$ in a finite intuitionistic model of the depth required, and similarly for~$\logic{QKC}$; we leave the details to the reader.
\end{proof}

\begin{theorem}
\label{th:QInt:positive:2var:1unary}
The positive fragments of logics\/ 
$$
\begin{array}{lcl}
\logic{QInt} & \mbox{and\/} & \logic{QInt}_{\mathit{dfin}}\hfill +\bm{bd}_7+\bm{cd}, \\
\logic{QInt} & \mbox{and\/} & \logic{QInt}_{\mathit{wfin}}\hfill+\bm{bd}_7+\bm{cd}, \\
\logic{QInt} & \mbox{and\/} & \logic{QKC}_{\mathit{dfin}}\hfill+\bm{bd}_8+\bm{cd}, \\
\logic{QInt} & \mbox{and\/} & \logic{QKC}_{\mathit{wfin}}\hfill+\bm{bd}_8+\bm{cd}\phantom{,} 
\end{array}
$$
are recursively inseparable in the language with a single unary predicate letter and two individual variables.
\end{theorem}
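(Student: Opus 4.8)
The plan is to deduce the statement directly from Lemmas~\ref{lem:MIntTiling:1:Q:positive}, \ref{lem:MIntTiling:2:Q:positive}, and~\ref{lem:MIntTiling:3:Q:positive}, in exactly the style of the earlier reductions such as the proof of Theorem~\ref{th:ml:monadic:insep:1}. First I would record the bookkeeping that makes the statement meaningful: the map $n\mapsto S_9 S_8\mathit{M}^{\mathit{int}}\mathit{Tiling}_n^{\mathbb{X}}$ is computable, and every formula in its image is \emph{positive}, contains no predicate letter other than the unary letter~$P'$, and uses only the two individual variables $x$ and~$y$. This is inherited stepwise: $\mathit{M}^{\mathit{int}}\mathit{Tiling}_n^{\mathbb{X}}$ is defined with an outer $(\cdot)^+$ and with $\bot$ replaced by $\mathit{false}$, so it is already positive and two-variable; applying $S_8$ keeps it positive and two-variable while reducing its predicate letters to $\{Q,G\}$; and $S_9$ replaces $Q(\cdot)$ and $G(\cdot)$ by the positive formulas $A^2_1\vee B^2_1$ and $A^2_2\vee B^2_2$, which are built solely from $P'$ and involve only the variable displayed in the argument. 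I would also note that each of the four ``larger'' logics in the statement contains $\logic{QInt}$, so the nested form of recursive (in)separability introduced in Section~\ref{sec:prelim} applies to all four pairs.

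Next I would argue by contradiction, uniformly for each pair. Suppose the positive fragment of $\logic{QInt}$ and the positive fragment of, say, $\logic{QInt}_{\mathit{dfin}}+\bm{bd}_7+\bm{cd}$ are recursively separable; then, by definition, there is a recursive set $Z$ containing the positive fragment of $\logic{QInt}$ and disjoint from the complement of the positive fragment of $\logic{QInt}_{\mathit{dfin}}+\bm{bd}_7+\bm{cd}$. The function $n\mapsto 1$ if $S_9 S_8\mathit{M}^{\mathit{int}}\mathit{Tiling}_n^{\mathbb{X}}\in Z$, and $n\mapsto 0$ otherwise, is recursive. By Lemma~\ref{lem:MIntTiling:1:Q:positive} this function returns $1$ whenever $n\in\mathbb{X}$ (the formula is positive and lies in $\logic{QInt}$, hence in $Z$), and by Lemma~\ref{lem:MIntTiling:2:Q:positive} it returns $0$ whenever $n\in\mathbb{Y}$ (the formula is outside $\logic{QInt}_{\mathit{dfin}}+\bm{bd}_7+\bm{cd}$, hence outside $Z$). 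Thus $\mathbb{X}$ and $\mathbb{Y}$ would be recursively separable, contradicting their choice. The three remaining pairs are handled the same way, using Lemma~\ref{lem:MIntTiling:3:Q:positive} in place of Lemma~\ref{lem:MIntTiling:2:Q:positive} for the $\mathit{wfin}$ versions, and reading off the appropriate conjunct ($\logic{QKC}$ with $\bm{bd}_8$) from Lemmas~\ref{lem:MIntTiling:2:Q:positive} and~\ref{lem:MIntTiling:3:Q:positive} for the $\logic{QKC}$ versions.

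I do not expect a genuine obstacle here: the technical weight has already been carried by the three cited lemmas and by the long chain of substitutions ($K$, the positivizations, $S'_3$, $S_6$, $S_7$, $S_8$, $S_9$, and the auxiliary frame $\kframe{F}_0$ with Lemmas~\ref{lem:frame-F:1} and~\ref{lem:frame-F:2}). The only point that warrants a careful sentence rather than a one-line citation is the syntactic claim of the first paragraph — that $S_9 S_8\mathit{M}^{\mathit{int}}\mathit{Tiling}_n^{\mathbb{X}}$ really is positive, over the single unary letter $P'$, and with only $x$ and~$y$ — since the language restriction in the theorem statement rests entirely on it; but this reduces to inspecting the definitions of $S_8$, $S_9$, and of the $D_i$, $A^k_m$, $B^k_m$ formulas, and needs no new idea.
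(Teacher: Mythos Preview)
Your proposal is correct and matches the paper's own proof, which is the one-liner ``Follows from Lemmas~\ref{lem:MIntTiling:1:Q:positive}, \ref{lem:MIntTiling:2:Q:positive}, and~\ref{lem:MIntTiling:3:Q:positive}.'' You have simply spelled out the bookkeeping (positivity, single unary letter~$P'$, two variables) and the standard contradiction argument that the paper leaves to the reader.
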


\begin{proof}
Follows from Lemmas~\ref{lem:MIntTiling:1:Q:positive}, \ref{lem:MIntTiling:2:Q:positive}, and~\ref{lem:MIntTiling:3:Q:positive}.
\end{proof}

\begin{corollary}
\label{cor:1:th:QInt:positive:2var:1unary}
Let\/ $\logic{QInt}\subseteq L\subseteq L'$ and also either $L'\subseteq \logic{QInt}+\bm{bd}_7+\bm{cd}$ or\/ $L'\subseteq \logic{QKC}+\bm{bd}_8+\bm{cd}$. Then the positive fragments of $L$ and $L'_{\mathit{wfin}}$ as well as $L$ and $L'_{\mathit{dfin}}$ are recursively inseparable in the language with a single unary predicate letter and two individual variables. 
\end{corollary}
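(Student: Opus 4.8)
The plan is to deduce the statement from Theorem~\ref{th:QInt:positive:2var:1unary} by a monotonicity (``squeeze'') argument, in exactly the spirit in which the cognate Corollaries~\ref{cor:1:th:QInt:positive:3var} and~\ref{cor:1:th:QInt:positive:2var} follow from their theorems. The separating family will be the one already produced: the formulas $S_9 S_8\,\mathit{M}^{\mathit{int}}\mathit{Tiling}_n^{\mathbb{X}}$ for $n\in\numN$, which are positive, contain a single unary predicate letter, and use only the two individual variables $x$ and $y$; the map $n\mapsto S_9 S_8\,\mathit{M}^{\mathit{int}}\mathit{Tiling}_n^{\mathbb{X}}$ is (the G\"odel number of) a total recursive function.

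First I would record the two endpoints. Since $\logic{QInt}\subseteq L$ and each $S_9 S_8\,\mathit{M}^{\mathit{int}}\mathit{Tiling}_n^{\mathbb{X}}$ is positive, Lemma~\ref{lem:MIntTiling:1:Q:positive} gives that $n\in\mathbb{X}$ implies $S_9 S_8\,\mathit{M}^{\mathit{int}}\mathit{Tiling}_n^{\mathbb{X}}$ belongs to the positive fragment of $L$. For the other endpoint I would use the monotonicity of the maps $L''\mapsto L''_{\mathit{dfin}}$ and $L''\mapsto L''_{\mathit{wfin}}$: if $L''_1\subseteq L''_2$ then $\ckf L''_2\subseteq\ckf L''_1$, hence $\aug{e}{dfin}{\ckf L''_2}\subseteq\aug{e}{dfin}{\ckf L''_1}$ (and likewise for $\mathit{wfin}$), so $(L''_1)_{\mathit{dfin}}\subseteq(L''_2)_{\mathit{dfin}}$ and $(L''_1)_{\mathit{wfin}}\subseteq(L''_2)_{\mathit{wfin}}$. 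Applying this to the hypothesis $L'\subseteq\logic{QInt}+\bm{bd}_7+\bm{cd}$ (resp.\ $L'\subseteq\logic{QKC}+\bm{bd}_8+\bm{cd}$) yields $L'_{\mathit{dfin}}$ and $L'_{\mathit{wfin}}$ contained in the logics $\logic{QInt}_{\mathit{dfin}}+\bm{bd}_7+\bm{cd}$, $\logic{QInt}_{\mathit{wfin}}+\bm{bd}_7+\bm{cd}$ (resp.\ $\logic{QKC}_{\mathit{dfin}}+\bm{bd}_8+\bm{cd}$, $\logic{QKC}_{\mathit{wfin}}+\bm{bd}_8+\bm{cd}$) that appear in Theorem~\ref{th:QInt:positive:2var:1unary}; by Lemmas~\ref{lem:MIntTiling:2:Q:positive} and~\ref{lem:MIntTiling:3:Q:positive}, $n\in\mathbb{Y}$ therefore implies $S_9 S_8\,\mathit{M}^{\mathit{int}}\mathit{Tiling}_n^{\mathbb{X}}\notin L'_{\mathit{dfin}}$ and $\notin L'_{\mathit{wfin}}$, a fortiori outside their positive fragments.

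Then I would close by contradiction: if the positive fragment of $L$ and that of $L'_{\mathit{dfin}}$ were recursively separable, there would be a recursive $Z$ with $(\text{positive }L)\subseteq Z\subseteq(\text{positive }L'_{\mathit{dfin}})$, so $\{\,n : S_9 S_8\,\mathit{M}^{\mathit{int}}\mathit{Tiling}_n^{\mathbb{X}}\in Z\,\}$ would be a recursive set containing $\mathbb{X}$ and disjoint from $\mathbb{Y}$, against the recursive inseparability of $\mathbb{X}$ and $\mathbb{Y}$; the case of $L'_{\mathit{wfin}}$ is identical. Equivalently, via $\logic{QInt}\subseteq L$ and $L'_{\mathit{dfin}}\subseteq\logic{QInt}_{\mathit{dfin}}+\bm{bd}_7+\bm{cd}$ any such $Z$ would squeeze between the positive fragments of $\logic{QInt}$ and of $\logic{QInt}_{\mathit{dfin}}+\bm{bd}_7+\bm{cd}$, contradicting Theorem~\ref{th:QInt:positive:2var:1unary}. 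The only point that is not entirely mechanical is the bookkeeping identifying $(\logic{QInt}+\bm{bd}_7+\bm{cd})_{\mathit{dfin}}$ and its $\mathit{wfin}$ and $\logic{QKC}/\bm{bd}_8$ variants with the logics named in Theorem~\ref{th:QInt:positive:2var:1unary} — that is, checking that adjoining $\bm{bd}_n$ and $\bm{cd}$ commutes appropriately with the passage to finite (or well-founded-domain) Kripke models; this is routine from the definitions of Section~\ref{sec:int} and is precisely the step left implicit for the analogous earlier corollaries, so I expect it to be the sole place requiring a line of care.
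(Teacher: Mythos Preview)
Your approach is exactly the paper's intended one: the corollary is stated without proof and is meant to follow from Theorem~\ref{th:QInt:positive:2var:1unary} by the same squeeze/monotonicity argument you spell out, using the separating family $S_9 S_8\,\mathit{M}^{\mathit{int}}\mathit{Tiling}_n^{\mathbb{X}}$ from Lemmas~\ref{lem:MIntTiling:1:Q:positive}--\ref{lem:MIntTiling:3:Q:positive}. Your identification of the bookkeeping step (commuting $\bm{bd}_n$ and $\bm{cd}$ with the passage to $\mathit{dfin}$/$\mathit{wfin}$) as the only place requiring care is accurate and matches what the paper leaves implicit for the analogous Corollaries~\ref{cor:1:th:QInt:positive:3var} and~\ref{cor:1:th:QInt:positive:2var}.
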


\begin{corollary}
\label{cor:2:th:QInt:positive:2var:1unary}
Let $L$ be one of\/ $\logic{QInt}$, $\logic{QKP}$, $\logic{QKC}$, $\logic{QInt}+\bm{bd}_n$ with $n\geqslant 7$, $\logic{QKC}+\bm{bd}_n$ with $n\geqslant 8$. Then the positive fragments of $L$ and $L_{\mathit{wfin}}$ as well as $L$ and $L_{\mathit{dfin}}$ are recursively inseparable in the language with a single unary predicate letter and two individual variables. 
\end{corollary}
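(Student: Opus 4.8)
The plan is to deduce the statement from Corollary~\ref{cor:1:th:QInt:positive:2var:1unary} by checking, for each logic $L$ in the list, that the hypotheses of that corollary hold with the choice $L'=L$; concretely, one needs $\logic{QInt}\subseteq L$ (immediate, since every logic in the list is a superintuitionistic predicate logic) together with $L\subseteq\logic{QInt}+\bm{bd}_7+\bm{cd}$ or $L\subseteq\logic{QKC}+\bm{bd}_8+\bm{cd}$. For $L=\logic{QInt}$ and $L=\logic{QInt}+\bm{bd}_n$ with $n\geqslant 7$ the first inclusion holds: adding $\bm{cd}$ only enlarges a logic, and for $n\geqslant 7$ the formula $\bm{bd}_n$ is valid on every intuitionistic Kripke frame of depth at most~$7$, so $\bm{bd}_n\in\logic{QInt}+\bm{bd}_7$ and hence $\logic{QInt}+\bm{bd}_n\subseteq\logic{QInt}+\bm{bd}_7\subseteq\logic{QInt}+\bm{bd}_7+\bm{cd}$. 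Symmetrically, for $L=\logic{QKC}$ and $L=\logic{QKC}+\bm{bd}_n$ with $n\geqslant 8$ one has $L\subseteq\logic{QKC}+\bm{bd}_8+\bm{cd}$, using $\bm{bd}_n\in\logic{QKC}+\bm{bd}_8$ when $n\geqslant 8$.

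The only case needing a genuine argument is $L=\logic{QKP}$, where the claim reduces to $\logic{QKP}\subseteq\logic{QKC}$ (and then $\logic{QKP}\subseteq\logic{QKC}\subseteq\logic{QKC}+\bm{bd}_8+\bm{cd}$). For this it is enough to show that the Kreisel--Putnam axiom $(\neg p\to q\vee r)\to((\neg p\to q)\vee(\neg p\to r))$ belongs to $\logic{QKC}$, i.e., is valid on every intuitionistic Kripke frame validating the weak law of the excluded middle. Since $\logic{KC}$ has the finite model property and is Kripke complete with respect to finite directed posets, and a finite rooted directed poset has a greatest element, it suffices to verify this on finite rooted posets with a top point. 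On such a frame, if an instance of the axiom were refuted at a world~$w$, there would be $u,v\geqslant w$ with $u\imodels\neg p$, $u\not\imodels q$, $v\imodels\neg p$, $v\not\imodels r$; by heredity of negated formulas the top point then forces $\neg p$ and hence does not force $p$, whereas refuting the whole implication forces $w\not\imodels\neg p$, so $p$ is forced at some world above $w$ and therefore, again by heredity, at the top --- a contradiction. This yields $\logic{KP}\subseteq\logic{KC}$, hence $\logic{QKP}\subseteq\logic{QKC}$.

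Once the hypotheses of Corollary~\ref{cor:1:th:QInt:positive:2var:1unary} are verified in every case with $L'=L$, the recursive inseparability of the positive fragments of $L$ and $L_{\mathit{wfin}}$, and of $L$ and $L_{\mathit{dfin}}$, follows at once. The one delicate point in this plan is the inclusion $\logic{QKP}\subseteq\logic{QKC}$: one must \emph{not} try to bound $\logic{QKP}$ by some $\logic{QInt}+\bm{bd}_m$, because $\logic{KP}$ is already refuted on the depth-two frame consisting of a root with three pairwise incomparable successors, so $\logic{KP}$ is not a logic of bounded depth; it is precisely the passage through $\logic{QKC}$, whose finite rooted frames carry a greatest element, that makes the Kreisel--Putnam case go through.
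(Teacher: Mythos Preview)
Your approach is correct and matches the paper's: the corollary is stated there without proof, as an immediate application of Corollary~\ref{cor:1:th:QInt:positive:2var:1unary} with $L'=L$, and your case analysis does exactly this. The verification of $\logic{QKP}\subseteq\logic{QKC}$ is extra detail the paper leaves implicit; your argument for it is essentially right, though the clause ``refuting the whole implication forces $w\not\imodels\neg p$'' would read more clearly as: if $w\imodels\neg p$, then from $w\imodels\neg p\to q\vee r$ one gets $w\imodels q$ or $w\imodels r$, whence by persistence $u\imodels q$ or $v\imodels r$, contradicting the choice of $u,v$.
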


\section{Computational complexity}
\label{sec:complexity}
\setcounter{equation}{0}

\subsection{Lower bounds for monadic fragments}
\label{subsec:lbounds:monadic}

Notice that the technique proving the recursive inseparability of fragments of modal and superintuitionistic logics also provides us with lower bounds for computational complexity of the fragments. We can argue similar to Section~\ref{subsec:remarks-on-complexity}. We omit the argumentation and only give the resulting statements.

For modal predicate logics, we obtain the following statements.

\begin{theorem}
\label{th:complexity:modal:1}
Let $L$ be a modal predicate logic that contains\/ $\logic{QK}$ and is contained in at least one of the logics\/ $\logic{QS5}$, $\logic{QGL.3.bf}$, $\logic{QGrz.3.bf}$, $\logic{QGL.bf}\oplus\bm{bd}_2$, $\logic{QGrz.bf}\oplus\bm{bd}_2$. Then the fragment of $L$ in the language with a single unary predicate letter and three individual variables is\/ $\Sigma^0_1$\nobreakdash-hard.
\end{theorem}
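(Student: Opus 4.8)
The plan is to construct a many-one reduction of a $\Sigma^0_1$-complete set to the monadic three-variable fragment of $L$, combining the device of Section~\ref{subsec:remarks-on-complexity} with the monadic simulation $S_2$ used in Theorem~\ref{th:ml:monadic:insep:1}. First I would rerun the tiling construction of Sections~\ref{subsec:tiling_problems} and~\ref{subsec:Trakhtenbrot:theories} with $\mathbb{X}$, $\mathbb{Y}$ replaced by a $\Sigma^0_1$-complete set $\mathbb{A}\subseteq\numN$ and the empty set $\mathbb{B}$, obtaining the machine $M_1$, the tile sets $T'_n$ and the formulas $\mathit{MTiling}_n^{\mathbb{A}}$, $S_1\mathit{MTiling}_n^{\mathbb{A}}$, together with the two facts already isolated there: $n\in\mathbb{A}$ implies $S_1\mathit{MTiling}_n^{\mathbb{A}}\in\logic{QCl}^{\mathit{bin}}$ (the first line of $(\ref{eq:ast:1})$), and $n\notin\mathbb{A}$ implies $S_1\mathit{MTiling}_n^{\mathbb{A}}\notin\logic{SIB}$ (the first line of $(\ref{eq:ast:3})$). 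The reduction will be $n\mapsto S_2 S_1\mathit{MTiling}_n^{\mathbb{A}}$; for each $n$ this is, effectively in $n$, a formula with the single unary predicate letter $Q$ and the three individual variables $x,y,z$, i.e.\ a formula of the fragment in question.

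For $n\in\mathbb{A}$ the argument is exactly Lemma~\ref{lem:ml:monadic:insep:1}: from $S_1\mathit{MTiling}_n^{\mathbb{A}}\in\logic{QCl}^{\mathit{bin}}\subseteq\logic{QCl}\subseteq\logic{QK}\subseteq L$ and closure of $L$ under Substitution we get $S_2 S_1\mathit{MTiling}_n^{\mathbb{A}}\in L$. For $n\notin\mathbb{A}$ I would show $S_2 S_1\mathit{MTiling}_n^{\mathbb{A}}\notin L'$, where $L'$ is whichever of the five listed logics contains $L$; since $L\subseteq L'$ this yields $S_2 S_1\mathit{MTiling}_n^{\mathbb{A}}\notin L$, and the two directions together give $n\in\mathbb{A}\iff S_2 S_1\mathit{MTiling}_n^{\mathbb{A}}\in L$, which is the desired $\Sigma^0_1$-hardness. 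To produce a model of $L'$ refuting $S_2 S_1\mathit{MTiling}_n^{\mathbb{A}}$, start from the countable classical model $\cModel{M}=\otuple{\mathcal{D},\mathcal{I}}$ with $\cModel{M}\models\bm{sib}$ and $\cModel{M}\not\models S_1\mathit{MTiling}_n^{\mathbb{A}}$ furnished by $S_1\mathit{MTiling}_n^{\mathbb{A}}\notin\logic{SIB}$ — this is the grid model of the special $T'_n$-tiling, which is genuinely infinite because $M_1$ does not halt on $\numeral{n}$. Now I would follow the construction in the proof of Lemma~\ref{lem:ml:monadic:insep:2}, with two adjustments: since $\mathcal{D}$ is countably infinite the underlying frame must possess a world of infinite out-degree rather than merely of unboundedly large finite out-degree, and since $\bm{bf}\in L'$ the class $\ckf L'$ may be degenerate in this paper's strong sense of frame validity, so I work instead with a c-augmented frame $\kframe{F}_0\odot\mathcal{D}$, where $\kframe{F}_0$ is a propositional Kripke frame validating the propositional part $L_0$ of $L'$ (so that $L'=\logic{Q}L_0\logic{.bf}$) and has a world $w_0$ with $R(w_0)\setminus\{w_0\}$ infinite. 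Fixing an injection of $\mathcal{D}\times\mathcal{D}$ into $R(w_0)\setminus\{w_0\}$, say $\otuple{i,j}\mapsto w_{ij}$, and letting $Q(a)$ fail at a world $w$ precisely when $w=w_{ij}$ with $a\in\{i,j\}$ and $\cModel{M}\not\models P(i,j)$, one checks — using $\cModel{M}\models\bm{sib}$ — that $\cModel{M}\models P(a,b)$ iff $\kModel{M},w_0\models\Box(Q(a)\vee Q(b))$ for all $a,b\in\mathcal{D}$; by the induction underlying the proof of Lemma~\ref{lem:ml:monadic:insep:2} this gives $\kModel{M},w_0\models S_2\chi$ iff $\cModel{M}\models\chi$ for every modality-free $\chi$ whose only predicate letter is $P$, and in particular $\kModel{M},w_0\not\models S_2 S_1\mathit{MTiling}_n^{\mathbb{A}}$. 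Finally $\kModel{M}$ is based on $\kframe{F}_0\odot\mathcal{D}$, which validates $\bm{bf}$ (constant domains) and validates $\logic{Q}L_0$ because $\kframe{F}_0$ validates $L_0$ and the propositional modal axioms of $L_0$ together with the $\logic{QCl}$-axioms lift to any c-augmented frame built over an $L_0$-frame; hence $\kModel{M}\models L'$ and $S_2 S_1\mathit{MTiling}_n^{\mathbb{A}}\notin L'$.

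The one genuinely new step — and the place where the real work sits — is exhibiting, for each of the five admissible upper bounds, a propositional Kripke frame that validates its propositional part and has a world of infinite out-degree. For $\logic{QS5}$ take a single infinite cluster; for $\logic{QGL.3.bf}$ take $\otuple{\{-1,-2,-3,\dots\},>}$, a transitive, linear, converse well-founded frame whose greatest world sees infinitely many worlds; for $\logic{QGrz.3.bf}$ take its reflexive companion $\otuple{\{-1,-2,-3,\dots\},\geqslant}$; and for $\logic{QGL.bf}\oplus\bm{bd}_2$ and $\logic{QGrz.bf}\oplus\bm{bd}_2$ take a root — irreflexive, resp.\ reflexive — with an infinite antichain of isolated irreflexive, resp.\ reflexive, successors, a frame of depth $2$. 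One then verifies routinely that each such frame validates the corresponding $L_0$ (the depth bound $\bm{bd}_2$ in the last two cases, the $\mathbf{.3}$ and $\logic{GL}$ / $\logic{Grz}$ axioms in the others) and that the c-augmented frame over it validates $\logic{Q}L_0\logic{.bf}=L'$; with this in hand, the rest of the proof is a direct rerun of arguments already established above.
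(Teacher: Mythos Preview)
Your overall strategy is exactly the one the paper has in mind (the paper omits the proof and points to Section~\ref{subsec:remarks-on-complexity} together with the $S_2$ simulation of Lemmas~\ref{lem:ml:monadic:insep:1}--\ref{lem:ml:monadic:insep:2}), and the handling of the $\bm{bf}$ issue via c\nobreakdash-augmented frames over a propositional $L_0$\nobreakdash-frame is the right move. There is, however, a concrete error in the frame choices for $\logic{QGL.3.bf}$ and $\logic{QGrz.3.bf}$.

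The frame $\otuple{\{-1,-2,-3,\ldots\},>}$ is \emph{not} converse well-founded: with $R={>}$ you have $-1\,R\,{-2}\,R\,{-3}\,R\,\ldots$, an infinite $R$\nobreakdash-ascending chain, so the L{\"o}b axiom fails there; the same chain shows that the reflexive companion $\otuple{\{-1,-2,-3,\ldots\},\geqslant}$ is not Noetherian, so $\bm{grz}$ fails there as well. In other words, the ``greatest world sees infinitely many worlds'' part is fine, but precisely because those successors form an infinite descending ladder, the frame falls outside the class of $\logic{GL.3}$ (resp.\ $\logic{Grz.3}$) frames and your countermodel does not validate~$L'$. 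The fix is to add a bottom to the ladder rather than leaving it open: take $\kframe{F}_0=\otuple{\omega+1,\,>}$ (respectively $\otuple{\omega+1,\,\geqslant}$). Then the top world $\omega$ still sees infinitely many worlds $0,1,2,\ldots$, but any $R$\nobreakdash-chain starting anywhere must, after at most one step, enter $\omega$ and become a strictly decreasing sequence of natural numbers, hence finite; so the frame is converse well-founded (resp.\ Noetherian) and linear, and validates $\logic{GL.3}$ (resp.\ $\logic{Grz.3}$). With this correction the rest of your argument goes through unchanged.
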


\begin{theorem}
\label{th:complexity:modal:2}
Let $L$ be a modal predicate logic that contains\/ $\logic{QK}$ and is contained in at least one of the logics\/ $\logic{QS5}$, $\logic{QGL.3}$, $\logic{QGrz.3}$, $\logic{QGL}\oplus\bm{bd}_2$, $\logic{QGrz}\oplus\bm{bd}_2$. Then the fragments of $L_{\mathit{dfin}}$, $L_{\mathit{wfin}}$, $L_{\mathit{dfin}}\oplus\bm{bf}$, and $L_{\mathit{wfin}}\oplus\bm{bf}$ in the language with a single unary predicate letter and three individual variables are\/ $\Pi^0_1$\nobreakdash-hard.
\end{theorem}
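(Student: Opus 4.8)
The plan is to replay the $\Pi^0_1$\nobreakdash-hardness argument of Section~\ref{subsec:remarks-on-complexity} and then transport it into the monadic modal language via the substitution $S_2$, just as Theorems~\ref{th:ml:monadic:insep:1} and~\ref{th:ml:monadic:insep:2} transport the recursive inseparability. So I would fix a $\Sigma^0_1$\nobreakdash-complete set $\mathbb{A}$, put $\mathbb{B}=\varnothing$, take the Turing machine $M_1$ computing $f_{\mathbb{AB}}$ whose non-halting computations are acyclic (the cycle-removing device described before~$(\ref{eq:ast:4})$), form the tile sets $T'_n$ and the pure-$P$ formulas $S_1\mathit{MTiling}_n^{\mathbb{B}}$ in three variables, and recall from the second line of $(\ref{eq:ast:1})$ and the first line of $(\ref{eq:ast:4})$ that
$$
\begin{array}{lcl}
n\in\mathbb{A} & \imply & S_1\mathit{MTiling}_n^{\mathbb{B}}\not\in\logic{SIB}_{\mathit{fin}}; \\
n\not\in\mathbb{A} & \imply & S_1\mathit{MTiling}_n^{\mathbb{B}}\in\logic{QCl}^{\mathit{bin}}_{\mathit{fin}}.
\end{array}
$$
Set $\varphi_n=S_2 S_1\mathit{MTiling}_n^{\mathbb{B}}$; this is a formula with a single unary letter $Q$ and three variables, and $n\mapsto\varphi_n$ is computable. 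It suffices to prove, for each $M$ among $L_{\mathit{dfin}}$, $L_{\mathit{wfin}}$, $L_{\mathit{dfin}}\oplus\bm{bf}$, $L_{\mathit{wfin}}\oplus\bm{bf}$, that $n\in\mathbb{A}\iff\varphi_n\not\in M$; this is a many\nobreakdash-one reduction of the $\Pi^0_1$\nobreakdash-complete set $\numN\setminus\mathbb{A}$ to $M$.

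For the implication $n\in\mathbb{A}\imply\varphi_n\not\in M$ I would first note that $\ckf L$ is a wKHC class and, its finite members being arbitrarily large, a Skvortsov class: since $\logic{QK}\subseteq L$ and $L$ lies below one of $\logic{QS5}$, $\logic{QGL.3}$, $\logic{QGrz.3}$, $\logic{QGL}\oplus\bm{bd}_2$, $\logic{QGrz}\oplus\bm{bd}_2$, the class $\ckf L$ contains $\ckf$ of that logic, which contains arbitrarily large clusters, arbitrarily long finite strict or reflexive chains, or a root above an arbitrarily large antichain — each a finite frame with a world $w$ such that $|R(w)|$ is as large as we please. Then $n\in\mathbb{A}$ gives $S_1\mathit{MTiling}_n^{\mathbb{B}}\not\in\logic{SIB}_{\mathit{fin}}$, so Lemma~\ref{lem:ml:monadic:insep:2} yields $\varphi_n\not\in L_{\mathit{dfin}}\oplus\bm{bf}$ and Lemma~\ref{lem:ml:monadic:insep:3} yields $\varphi_n\not\in L_{\mathit{wfin}}\oplus\bm{bf}$; since $L_{\mathit{dfin}}\subseteq L_{\mathit{dfin}}\oplus\bm{bf}$ and $L_{\mathit{wfin}}\subseteq L_{\mathit{wfin}}\oplus\bm{bf}$, the same non-membership holds for $L_{\mathit{dfin}}$ and $L_{\mathit{wfin}}$.

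For the implication $n\not\in\mathbb{A}\imply\varphi_n\in M$ I would exploit that $S_1\mathit{MTiling}_n^{\mathbb{B}}$ is $\Box$\nobreakdash-free while $S_2$ introduces $\Box$ only inside the substituted atoms, so for any Kripke model $\kModel{M}=\otuple{W,R,D,I}$ and world $w$ the truth of $\varphi_n$ at $w$ amounts to validity of $S_1\mathit{MTiling}_n^{\mathbb{B}}$ in the classical model $\otuple{D_w,R^\ast_w}$ that interprets $P$ by $R^\ast_w=\{(a,b):\kModel{M},w\models\Box(Q(a)\vee Q(b))\}$. In the $\mathit{dfin}$ cases $D_w$ is finite, so by the second displayed implication $\otuple{D_w,R^\ast_w}\models S_1\mathit{MTiling}_n^{\mathbb{B}}$, whence $\varphi_n\in L_{\mathit{dfin}}$ and a fortiori $\varphi_n\in L_{\mathit{dfin}}\oplus\bm{bf}$. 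In the $\mathit{wfin}$ cases $\kframe{F}$ is finite, so $R(w)$ is finite; writing $Z_v=D_w\setminus Q^{I,v}$, a pair is a non-edge of $R^\ast_w$ precisely when it is contained in some $Z_v$, so $R^\ast_w$ is a disjointness graph determined by the finitely many ``$Q$\nobreakdash-types'' $\{v\in R(w):a\in Z_v\}$ — up to twins it is a blow-up of a graph on at most $2^{|R(w)|}$ vertices, hence has only finitely many distinct closed neighbourhoods, whereas the premise of $S_1\mathit{MTiling}_n^{\mathbb{B}}$, because $M_1$ does not halt and its computation is acyclic, forces on the $P$\nobreakdash-graph an infinite induced tiling grid whose points have pairwise distinct neighbourhoods. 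Hence the premise fails in $\otuple{D_w,R^\ast_w}$, the formula $S_1\mathit{MTiling}_n^{\mathbb{B}}$ holds there, and $\varphi_n\in L_{\mathit{wfin}}$, so also $\varphi_n\in L_{\mathit{wfin}}\oplus\bm{bf}$.

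The hard part is exactly the $\mathit{wfin}$ half of the last step: one must make rigorous the claim that the binary relation $S_2$\nobreakdash-simulated over a \emph{finite} Kripke frame is, modulo twins, a finite graph and so cannot host the infinite tiling grid, and — more delicately — that the $\mathit{grid}'$, $\mathit{path}'$ and $\mathit{tile}'$ apparatus built into $S_1$ really pins the $P$\nobreakdash-graph down to an \emph{induced} copy of that grid in every model of the premise, rather than merely to a long $H'_n/V'_n$\nobreakdash-path that extra $P$\nobreakdash-edges could shortcut. Everything else is bookkeeping already done in Section~\ref{subsec:remarks-on-complexity} and the preceding subsections; and $\Pi^0_1$\nobreakdash-completeness, for those of the listed logics whose monadic fragments are already known to be in $\Pi^0_1$ (see, e.g., \cite{RShsubmitted}), follows by combining the hardness proved here with the matching upper bounds.
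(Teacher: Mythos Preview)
Your overall route is exactly the one the paper has in mind: rerun Section~\ref{subsec:remarks-on-complexity} with the pair $(\mathbb{A},\mathbb{B})$ and the acyclic machine~$M_1$, then push everything through~$S_2$ via Lemmas~\ref{lem:ml:monadic:insep:2} and~\ref{lem:ml:monadic:insep:3}. The verification that $\ckf L$ is a Skvortsov class for each of the five target logics is correct, and your treatment of the $\mathit{dfin}$ direction is clean.

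Where you make life unnecessarily hard is the $\mathit{wfin}$ half of the implication $n\notin\mathbb{A}\Rightarrow\varphi_n\in M$. You correctly observe that the simulated binary relation $R^\ast_w$ on~$D_w$ has at most $2^{|R(w)|}$ twin classes, but then you try to argue \emph{directly} that the premise of $S_1\mathit{MTiling}_n^{\mathbb{B}}$ forces infinitely many distinct neighbourhoods, and you worry (rightly, if you go that way) about whether the $\mathit{grid}'$/$\mathit{tile}'$ machinery really pins down an induced grid. None of that is needed. You already have, from~$(\ref{eq:ast:4})$, the black-box fact $S_1\mathit{MTiling}_n^{\mathbb{B}}\in\logic{QCl}^{\mathit{bin}}_{\mathit{fin}}$. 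Now simply quotient $\langle D_w,R^\ast_w\rangle$ by the twin relation: since the language has no equality, the quotient map preserves and reflects truth of every $\lang{L}$\nobreakdash-formula (this is exactly the mechanism behind Proposition~\ref{prop:finite:frame}). The quotient is finite, so $S_1\mathit{MTiling}_n^{\mathbb{B}}$ holds there, hence it holds in $\langle D_w,R^\ast_w\rangle$, hence $\varphi_n$ holds at~$w$. Equivalently: by the proof of Proposition~\ref{prop:finite:frame}, a one-letter monadic formula refuted on a finite frame is already refuted on a model over that frame with finite domain, so the $\mathit{wfin}$ case reduces to the $\mathit{dfin}$ case you have already handled. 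The ``hard part'' you flag dissolves once you use $(\ref{eq:ast:4})$ as a black box instead of reopening~$S_1$.
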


Notice that Theorems~\ref{th:complexity:modal:1} and~\ref{th:complexity:modal:2} remain true if we additionally require that the language does not contain~$\bot$ (and hence, $\neg$ and~$\Diamond$), see Remark~\ref{rem:modal:bot-free}. 

For superintuitionistic predicate logics, we obtain similar statements.

\begin{theorem}
\label{th:complexity:int:1}
Let $L$ be a superintuitionistic predicate logic that contains\/ $\logic{QInt}$ and is contained in either\/ $\logic{QInt}+\bm{bd}_2+\bm{cd}$ or\/ $\logic{QKC}+\bm{bd}_3+\bm{cd}$. Then the positive fragment of $L$ in the language with a single unary predicate letter and three individual variables is\/ $\Sigma^0_1$\nobreakdash-hard.
\end{theorem}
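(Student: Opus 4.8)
The plan is to run the passage from recursive inseparability to $\Sigma^0_1$-hardness exactly as in Section~\ref{subsec:remarks-on-complexity}, but now through the intuitionistic embedding of Section~\ref{sec:int}. First I would fix a $\Sigma^0_1$-complete set $\mathbb{A}\subseteq\numN$ and put $\mathbb{B}=\varnothing$, together with the Turing machine $M_1$ described in Section~\ref{subsec:remarks-on-complexity} (the cycle-removal trick of that section is not needed here, since we only use $\mathit{MTiling}^{\mathbb{A}}_n$). Repeating the tiling construction of Section~\ref{subsec:Trakhtenbrot:theories} for $M_1$ yields, for each $n\in\numN$, a finite tile set $T'_n$ and the formula $\mathit{MTiling}^{\mathbb{A}}_n$; composing the maps of Section~\ref{sec:int} I set
$$
\Phi_n \;=\; S_7\,S_6\bigl((K((S_1\,\mathit{MTiling}^{\mathbb{A}}_n)^+))^+_p\bigr),
$$
a positive $\lang{L}$-formula containing a single unary predicate letter and exactly the three variables $x,y,z$, and the map $n\mapsto\Phi_n$ is computable.

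Next I would establish the two directions of the reduction. If $n\in\mathbb{A}$, then $M_1$ halts in $q_{\mathbb{A}}$ on $\numeral{n}$, so the special $T'_n$-tiling contains a tile of type $t_1$, whence $S_1\,\mathit{MTiling}^{\mathbb{A}}_n\in\logic{QCl}^{\mathit{bin}}$ just as in the proof of Lemma~\ref{lem:Trakhtenbrot:lem1:sib:binP}; applying Corollary~\ref{cor3:lem:false}, then $(\ref{eq:kolmogorov:1})$, then $(\ref{eq:positivization:1})$, and finally closure of $\logic{QInt}$ under Substitution — this is literally the chain of Lemma~\ref{lem:1:QInt:positive:3var:S7} with $\mathbb{X}$ replaced by $\mathbb{A}$ — gives $\Phi_n\in\logic{QInt}\subseteq L$. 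If $n\notin\mathbb{A}$, then $\mathbb{B}=\varnothing$ forces $\neg!M_1(\numeral{n})$, so the computation of $M_1$ on $\numeral{n}$ is infinite and never enters $q_{\mathbb{A}}$; hence, as recorded in $(\ref{eq:ast:3})$, the special $T'_n$-tiling provides a (possibly infinite) classical model $\cModel{M}=\otuple{\mathcal{D},\mathcal{I}}$ with $\cModel{M}\models\bm{sib}$ and $\cModel{M}\not\models S_1\,\mathit{MTiling}^{\mathbb{A}}_n$, hence $\cModel{M}\not\models(S_1\,\mathit{MTiling}^{\mathbb{A}}_n)^+$ by Corollary~\ref{cor1:lem:false}. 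Now I would build the intuitionistic countermodel exactly as in the proof of Lemma~\ref{lem:2:QInt:positive:3var} / Lemma~\ref{lem:2:QInt:positive:3var:S7}, but over a depth-$2$ frame whose root $w_0$ lies immediately below the (now possibly infinite) antichain $\{w_{ab}:a,b\in\mathcal{D}\}$, with constant domain $\mathcal{D}$. Such a model is based on a constant-domain augmented frame of depth $2$, so by soundness it validates $\logic{QInt}+\bm{bd}_2+\bm{cd}$; and the Sublemmas of that proof — which never use finiteness of $\mathcal{D}$ — give $\kModel{M},w_0\not\imodels\Phi_n$. Therefore $\Phi_n\notin\logic{QInt}+\bm{bd}_2+\bm{cd}$, and since $L\subseteq\logic{QInt}+\bm{bd}_2+\bm{cd}$ in the first case, $\Phi_n\notin L$. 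In the case $L\subseteq\logic{QKC}+\bm{bd}_3+\bm{cd}$ I would add one new world $w^\ast$ above every world of the model, with the same domain $\mathcal{D}$ and $Q$ true of all its elements; the resulting frame is directed (so it validates $\logic{KC}$) and of depth $3$, a routine check shows the refutation at $w_0$ is unaffected, and hence $\Phi_n\notin\logic{QKC}+\bm{bd}_3+\bm{cd}\supseteq L$.

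Combining the two directions yields $n\in\mathbb{A}\iff\Phi_n\in L$, and since $\Phi_n$ is positive this is also $n\in\mathbb{A}\iff\Phi_n$ lies in the positive fragment of $L$; as $\mathbb{A}$ is $\Sigma^0_1$-complete and $n\mapsto\Phi_n$ is computable, the positive fragment of $L$ in the language with a single unary predicate letter and three individual variables is $\Sigma^0_1$-hard. The only genuine work, and the main obstacle, is the verification that the $n\notin\mathbb{A}$ countermodel — which is truly infinite, both the underlying classical $\bm{sib}$-model and the intuitionistic model built over it — behaves as required: one must check that the argument of Lemma~\ref{lem:Trakhtenbrot:lem2:sib} no longer relies on the tiling becoming ``flat'' after a halting step (instead the infinite special $T'_n$-tiling itself serves as the model, with the local triangle and $e$-chain gadgets for $\mathit{grid}'$ and $\mathit{tile}'$ attached exactly as before, which is harmless since those gadgets are local), and that the depth-$2$ antichain argument of Lemma~\ref{lem:2:QInt:positive:3var} goes through verbatim with an infinite index set. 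All of this is bookkeeping rather than a new idea.
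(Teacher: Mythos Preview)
Your approach is correct and is precisely the argument the paper intends: the paper omits the proof entirely, saying only ``We can argue similar to Section~\ref{subsec:remarks-on-complexity}'', and your proposal spells out that analogy in the superintuitionistic setting. You correctly identify that the only new point beyond the recursive-inseparability lemmas is that the $n\notin\mathbb{A}$ case requires an infinite $\bm{sib}$-countermodel (built directly from the non-halting special $T'_n$-tiling on $\numN\times\numN$, whose grid graph is bipartite so the even-cycle requirement for $\mathit{grid}'$ still holds) and hence an infinite-domain depth-$2$ intuitionistic model; and you correctly observe that the Sublemmas in the proof of Lemma~\ref{lem:2:QInt:positive:3var} and the equivalences in Lemma~\ref{lem:2:QInt:positive:3var:S7} nowhere use finiteness of~$\mathcal{D}$, and that the top-world trick for $\logic{QKC}$ (making everything $Q$-true at $w^\ast$) preserves the equivalences $p\leftrightarrow S_7(p)$, $q\leftrightarrow S_7(q)$ and the refutation at~$w_0$.
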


\begin{theorem}
Let $L$ be a superintuitionistic predicate logic that contains\/ $\logic{QInt}$ and is contained in either\/ $\logic{QInt}+\bm{bd}_2+\bm{cd}$ or\/ $\logic{QKC}+\bm{bd}_3+\bm{cd}$. Then the positive fragments of $L_{\mathit{dfin}}$, $L_{\mathit{wfin}}$, $L_{\mathit{dfin}}+\bm{cd}$, and $L_{\mathit{wfin}}+\bm{cd}$ in the language with a single unary predicate letter and three individual variables are\/ $\Pi^0_1$\nobreakdash-hard.
\end{theorem}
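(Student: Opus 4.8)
The plan is to replay, in the superintuitionistic setting, the argument of Section~\ref{subsec:remarks-on-complexity} that turns a recursive–inseparability proof into a hardness result, using the embedding of the subsection on positive monadic fragments and three variables in place of the classical one. Concretely, I would discard the recursively inseparable pair $\mathbb{X},\mathbb{Y}$ and work, exactly as in Section~\ref{subsec:remarks-on-complexity}, with a $\Sigma^0_1$-complete set $\mathbb{A}$, the empty set $\mathbb{B}$, and the Turing machine $M_1$ built there whose computations are acyclic outside the artificial $q_{\mathbb{A}}$-loop, so that $(\ref{eq:ast:1})$ and $(\ref{eq:ast:4})$ hold. Running the tiling construction for $M_1$ gives, for each $n\in\numN$, a finite set $T'_n$ of tile types and the positive formula $\varphi_n = S_7 S_6((K(S_1\mathit{MTiling}^{\mathbb{B}}_n))^+_p)$, which, as in the proof of Lemma~\ref{lem:1:QInt:positive:3var:S7}, contains a single unary predicate letter and three individual variables; the map $n\mapsto\varphi_n$ is clearly computable. (For the $\logic{QKC}$-branch one checks, as in Corollary~\ref{cor:2:th:QInt:positive:3var}, that $\ckf(\logic{QKC}+\bm{bd}_3)$ and $\fin\ckf(\logic{QKC}+\bm{bd}_3)$ are swKHC classes, and likewise for $\logic{QInt}+\bm{bd}_2$.)

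The refutation half is immediate. If $n\in\mathbb{A}$ then $M_1(\numeral{n})$ halts in state $q_{\mathbb{A}}$, so the special $T'_n$-tiling contains a tile of type $t_1=t_{q_{\mathbb{A}}\#}$ but, since $\mathbb{B}=\varnothing$, none of type $t_2=t_{q_{\mathbb{B}}\#}$; hence by $(\ref{eq:ast:1})$ and Corollary~\ref{cor1:lem:false} the classical positive formula $(S_1\mathit{MTiling}^{\mathbb{B}}_n)^+$ is refuted on a finite model satisfying $\bm{sib}$. Feeding this finite $\logic{SIB}$-model through the constructions in the proofs of Lemmas~\ref{lem:2:QInt:positive:3var}, \ref{lem:3:QInt:positive:3var} and their $S_7$-variants \ref{lem:2:QInt:positive:3var:S7}, \ref{lem:3:QInt:positive:3var:S7} (with $L$ there instantiated by $\logic{QInt}+\bm{bd}_2$ or $\logic{QKC}+\bm{bd}_3$) produces a finite intuitionistic Kripke countermodel of $\varphi_n$ of depth $2$ (resp.\ $3$) with a globally constant finite domain. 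Such an augmented frame is based on a Kripke frame in $\ckf L$ (as $L$ lies below $\logic{QInt}+\bm{bd}_2+\bm{cd}$ or below $\logic{QKC}+\bm{bd}_3+\bm{cd}$), validates $\bm{cd}$, and is finite with finite local domains; therefore $\varphi_n\notin L_{\mathit{dfin}}$, $\varphi_n\notin L_{\mathit{wfin}}$, $\varphi_n\notin L_{\mathit{dfin}}+\bm{cd}$, and $\varphi_n\notin L_{\mathit{wfin}}+\bm{cd}$.

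For the validity half I would use the acyclicity of $M_1$ exactly as $(\ref{eq:ast:4})$ is used classically. If $n\notin\mathbb{A}$ then $\neg!M_1(\numeral{n})$ and the computation of $M_1$ on $\numeral{n}$ is infinite and repetition-free, so the premise $\mathit{Tiling}'_n$ of $\mathit{MTiling}^{\mathbb{B}}_n$ forces the special $T'_n$-tiling, which uses infinitely many grid cells; as in the proof of Corollary~\ref{cor3:lem:false} this means $(S_1\mathit{MTiling}^{\mathbb{B}}_n)^+$ holds in every finite classical model and, moreover, every classical model of its premise has an infinite domain. Reading the Kolmogorov translation and the substitutions $(\cdot)^+_p$, $S_6$, $S_7$ through the intuitionistic semantics — essentially retracing the proof of Lemma~\ref{lem:1:QInt:positive:3var} — one shows that whenever the premise of $\varphi_n$ is satisfied at the root of an intuitionistic model an honest $T'_n$-tiling, hence the infinite special tiling, can be extracted; but the simulated binary predicate $P$ is coded by the values of $Q$ along an antichain above the root, so representing the $P$-relation of this infinite tiling forces both the domain and the set of worlds to be infinite. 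Hence no intuitionistic model whose Kripke frame is finite, and none whose local domains are finite, refutes $\varphi_n$, so $\varphi_n\in\logic{QInt}_{\mathit{wfin}}\cap\logic{QInt}_{\mathit{dfin}}\subseteq L_{\mathit{wfin}}\cap L_{\mathit{dfin}}$, and likewise $\varphi_n$ lies in the two $+\bm{cd}$ variants. Combining the two halves gives $n\notin\mathbb{A}$ if and only if $\varphi_n$ lies in $L_{\mathit{dfin}}$ (and the same for the other three logics); since $\varphi_n$ is positive and $\numN\setminus\mathbb{A}$ is $\Pi^0_1$-complete, each of the positive fragments in question is $\Pi^0_1$-hard.

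The main obstacle is precisely the semantic bookkeeping sketched for the validity half, and within it the case needed for the $\mathit{wfin}$ logics: ruling out a refuting intuitionistic model whose Kripke frame is finite but whose local domains are infinite. The cleanest route I see is a soundness argument for the whole embedding: from a putative finite-frame model $\kModel{M},w_0\not\imodels\varphi_n$ one unwinds $S_7$, $S_6$, $(\cdot)^+_p$, $K$ world by world to recover a classical model refuting $(S_1\mathit{MTiling}^{\mathbb{B}}_n)^+$ whose premise $\mathit{Tiling}'_n$ is satisfied, and then shows that the $Q$-coding of $P$ over the finitely many worlds of $\kModel{M}$ cannot reproduce the $P$-relation of the infinite special $T'_n$-tiling, contradicting $(\ref{eq:ast:4})$. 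This last step is routine in spirit, given how $S_6$ and $S_7$ act (cf.\ the countermodel analysis inside the proof of Lemma~\ref{lem:2:QInt:positive:3var}), but it is the one that must be checked with care.
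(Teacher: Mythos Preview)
Your overall plan—run Section~\ref{subsec:remarks-on-complexity} with $\mathbb{A},\mathbb{B}$ in place of $\mathbb{X},\mathbb{Y}$ and push the resulting formulas through the embedding of Section~\ref{sec:int}—is exactly what the paper intends, and your treatment of the refutation half (when $n\in\mathbb{A}$) is correct: the finite $\logic{SIB}$-countermodel of Lemma~\ref{lem:Trakhtenbrot:lem2:sib} feeds straight into Lemmas~\ref{lem:2:QInt:positive:3var:S7} and~\ref{lem:3:QInt:positive:3var:S7}.

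The validity half, however, has a real gap, and it is not where you locate it. Your sketch says to ``unwind $S_7$, $S_6$, $(\cdot)^+_p$, $K$ world by world to recover a classical model refuting $(S_1\mathit{MTiling}^{\mathbb{B}}_n)^+$''. Unwinding $S_7$ and $S_6$ is fine (they are substitutions), and $(\cdot)^+_p$ can be handled by the argument behind~$(\ref{eq:positivization:1})$. But unwinding the Kolmogorov translation $K$ is \emph{not} a world-by-world operation: from an intuitionistic model refuting $K(\psi)$ at $w_0$ you cannot in general read off a classical model on $D_{w_0}$ refuting $\psi$. The obvious candidate $\mathcal{I}^\ast(P)=\{(a,b):\kModel{M},w_0\imodels\neg\neg P(a,b)\}$ fails at disjunctions, since $\neg\neg(K(\chi_1)\vee K(\chi_2))$ at $w_0$ does not force $K(\chi_1)$ or $K(\chi_2)$ at $w_0$; and $\psi_n$ is full of disjunctions ($\mathit{TC}_0$, $H_n$, $V_n$, $\mathit{DSR}$, $\mathit{DSU}$). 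Your fallback argument, that ``the $Q$-coding of $P$ over the finitely many worlds of $\kModel{M}$ cannot reproduce the $P$-relation of the infinite special tiling'', describes the \emph{particular} countermodel built in Lemma~\ref{lem:2:QInt:positive:3var}, not an arbitrary one; an adversarial intuitionistic countermodel need not code $P$ via a world-per-pair antichain at all. Note also that you have the difficulty inverted: the $\mathit{wfin}$ case is actually the more tractable one, since on a finite frame one can first pass to finite domains by the quotient argument of Proposition~\ref{prop:finite:frame} (which goes through unchanged intuitionistically) and then evaluate at a maximal world, where intuitionistic and classical semantics coincide—this \emph{does} yield a finite classical countermodel to $\psi_n$ and hence the desired contradiction with~$(\ref{eq:ast:4})$. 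It is the $\mathit{dfin}$ case, where the frame may have no maximal worlds, that needs genuinely new work; a clean route is to argue directly with the $p$-relativised double negation $(\cdot\to p)\to p$ rather than with $\neg\neg$, exploiting the premise of $(\cdot)^+_p$ to manufacture ``$p$-maximal'' worlds, but this is considerably more than the routine check you describe.
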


Of course, the fragments of such logics as $\logic{QInt}$, $\logic{QKP}$, $\logic{QKC}$, $\logic{QK}$, $\logic{QT}$, $\logic{QD}$, $\logic{QKB}$, $\logic{QKTB}$, $\logic{QK4}$, $\logic{QS4}$, $\logic{QK5}$, $\logic{QS5}$, $\logic{QK45}$, $\logic{QKD45}$, $\logic{QK4B}$, $\logic{QGL}$, $\logic{QGrz}$, $\logic{QwGrz}$, etc.\ are $\Sigma^0_1$-complete, since the logics are recursively enumerable. Observe that the classes of their finite Kripke frames are also recursively enumerable, therefore, the monadic fragments of $\logic{QInt}_{\mathit{wfin}}$, $\logic{QKP}_{\mathit{wfin}}$, $\logic{QKC}_{\mathit{wfin}}$, $\logic{QK}_{\mathit{wfin}}$, $\logic{QT}_{\mathit{wfin}}$, $\logic{QD}_{\mathit{wfin}}$, $\logic{QKB}_{\mathit{wfin}}$, $\logic{QKTB}_{\mathit{wfin}}$, $\logic{QK4}_{\mathit{wfin}}$, $\logic{QS4}_{\mathit{wfin}}$, $\logic{QK5}_{\mathit{wfin}}$, $\logic{QS5}_{\mathit{wfin}}$, $\logic{QK45}_{\mathit{wfin}}$, $\logic{QKD45}_{\mathit{wfin}}$, $\logic{QK4B}_{\mathit{wfin}}$, $\logic{QGL}_{\mathit{wfin}}$, $\logic{QGrz}_{\mathit{wfin}}$, $\logic{QwGrz}_{\mathit{wfin}}$, etc.\ are $\Pi^0_1$-complete, see~\cite[Theorem~4.4]{RShsubmitted}.

Similar results are true for the monadic fragments of logics with two individual variables in the language. Notice that we then loose some logics.

\begin{theorem}
\label{th:complexity:modal:3}
Let $L$ be a modal predicate logic that contains\/ $\logic{QK}$ and is contained in at least one of the logics\/ $\logic{QGL.bf}\oplus\bm{bd}_4$, $\logic{QGrz.bf}\oplus\bm{bd}_4$, and $\logic{QwGrz.bf}\oplus\bm{bd}_3$. Then the fragment of $L$ in the language with a single unary predicate letter and two individual variables is\/ $\Sigma^0_1$\nobreakdash-hard.
\end{theorem}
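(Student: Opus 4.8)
The plan is to adapt the method of Section~\ref{subsec:remarks-on-complexity} to the two-variable, single-unary-letter setting developed for Theorems~\ref{th:ml:insep:QCG:wfin:S5}, \ref{th:ml:insep:QCG:wfin:S5:dfin}, \ref{th:ml:insep:QCG:wfin:S'5}, \ref{th:ml:insep:QCG:wfin:S'5:dfin}. As in Section~\ref{subsec:remarks-on-complexity}, fix a $\Sigma^0_1$-complete set $\mathbb{A}\subseteq\numN$, let $\mathbb{B}=\varnothing$, and let $M_1$ be a Turing machine computing $f_{\mathbb{AB}}$ with distinguished halting states $q_{\mathbb{A}},q_{\mathbb{B}}\in F_1$, so that $m\in\mathbb{A}$ iff $q_{\mathbb{A}}!M_1(\numeral m)$ and $m\notin\mathbb{A}$ iff $\neg!M_1(\numeral m)$; for the present theorem the cycle-removal preprocessing of $M_1$ is not needed, since that device was only required for the $\Pi^0_1$ direction. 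Running the constructions of Sections~\ref{subsec:tiling_problems} and~\ref{sec:modal} with $M_1$ in place of $M_0$, we obtain for every $n\in\numN$ a finite tile-type set $T'_n$, its special tiling, and, through the chain of translations and substitutions $\mathit{M}^\Box\mathit{Tiling}_n^{\mathbb{A}}$, $\mathit{M}^\Box_G\mathit{Tiling}_n^{\mathbb{A}}$, $S'_3\mathit{M}^\Box_G\mathit{Tiling}_n^{\mathbb{A}}$, the formula $p\to(S'_3\mathit{M}^\Box_G\mathit{Tiling}_n^{\mathbb{A}})_p$, $S_4\mathit{M}^\Box_G\mathit{Tiling}_n^{\mathbb{A}}$, a formula $\Phi_n$ which is $S_5S_4\mathit{M}^\Box_G\mathit{Tiling}_n^{\mathbb{A}}$ in the cases $L'\in\{\logic{QGL.bf}\oplus\bm{bd}_4,\logic{QGrz.bf}\oplus\bm{bd}_4\}$ and $S'_5S_4\mathit{M}^\Box_G\mathit{Tiling}_n^{\mathbb{A}}$ in the case $L'=\logic{QwGrz.bf}\oplus\bm{bd}_3$. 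In each case $\Phi_n$ contains one unary predicate letter and two individual variables and is computable from~$n$.

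First I would verify that $n\in\mathbb{A}$ implies $\Phi_n\in\logic{QK}$. This is precisely Lemma~\ref{lem:relativization:S4:S5:K} (resp.\ Lemma~\ref{lem:relativization:S4:S'5:K}) with $M_1$ substituted for $M_0$: the proofs of Lemmas~\ref{lem:1:tiling:QK}, \ref{lem:relativization:G:K}, \ref{lem:relativization:p:K} and~\ref{lem:relativization:S4:K} use the hypothesis $n\in\mathbb{X}$ only through the equivalence $n\in\mathbb{X}$ iff $f_n(0,m)=t_1$ for some $m$, and the analogue of Proposition~\ref{prop:fn} for $M_1$ turns this into $n\in\mathbb{A}$ iff $f_n(0,m)=t_1$ for some $m$. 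Since $\logic{QK}\subseteq L$, it follows that $\Phi_n\in L$ whenever $n\in\mathbb{A}$.

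Next I would show that $n\notin\mathbb{A}$ implies $\Phi_n\notin L'$. If $n\notin\mathbb{A}$ then $\neg!M_1(\numeral n)$, so the computation of $M_1$ on $\numeral n$ is infinite and the special $T'_n$-tiling $f_n\colon\numN\times\numN\to T'_n$ (which is still well-defined) contains no tile of type $t_1=t_{q_{\mathbb{A}}\#}$. I would then reproduce the countermodel of Lemma~\ref{lem:relativization:S4:S5:QGL:QGrz} (resp.\ Lemma~\ref{lem:relativization:S4:S'5:QGL:QGrz}), starting from the classical model of $\bm{sib}$ built directly over the \emph{infinite} grid $\numN\times\numN$ (with $f_n$ reading off the $P_k$'s, a fixed period-$4$ colouring of columns and rows giving the $R_i$'s and $U_j$'s, and the grid adjacency as $P$) in place of the finite model of Lemma~\ref{lem:Trakhtenbrot:lem2:sib}; note that for the infinite grid no wrapping cycles are needed, as every point already has a right and an upper neighbour. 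The underlying Kripke frame is exactly the frame described in those proofs: irreflexive transitive (resp.\ with one reflexive point for $\logic{QwGrz}$), of depth $4$ (resp.\ $3$), only now infinitely wide because the grid is infinite, with the constant domain; since every $R$-chain has length at most the depth, the frame is conversely well-founded and therefore validates $\logic{GL}$ (resp.\ $\logic{Grz}$, $\logic{wGrz}$), the depth-bounding formula, and the Barcan formula. Because $f_n$ has no $t_1$-tile, the conclusion $\exists x\,P_1(x)$ of $\mathit{M}^\Box\mathit{Tiling}_n^{\mathbb{A}}$ fails at the root while the tiling-describing premise holds there, so the root of this model refutes $\Phi_n$; as the frame validates every theorem of $L'$, we conclude $\Phi_n\notin L'$, and since $L\subseteq L'$ also $\Phi_n\notin L$.

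The two implications together give $n\in\mathbb{A}$ iff $\Phi_n\in L$, i.e.\ a many-one reduction of a $\Sigma^0_1$-complete set to the fragment of $L$ with a single unary predicate letter and two individual variables, which is therefore $\Sigma^0_1$-hard. I expect the main obstacle to be the second step: checking that the countermodel constructions of Lemmas~\ref{lem:relativization:S4:S5:QGL:QGrz} and~\ref{lem:relativization:S4:S'5:QGL:QGrz} survive the replacement of the finite tiling grid by an infinite one without destroying either the depth bound (hence validity of $\logic{GL}$/$\logic{Grz}$/$\logic{wGrz}$ and of $\bm{bf}$) or the truth at the root of the premise $\mathit{Tiling}^\Box_n$ of $\mathit{M}^\Box\mathit{Tiling}_n^{\mathbb{A}}$.
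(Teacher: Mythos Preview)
Your proposal is correct and matches the approach the paper indicates: Section~\ref{sec:complexity} omits the argument entirely, saying only ``We can argue similar to Section~\ref{subsec:remarks-on-complexity}. We omit the argumentation and only give the resulting statements.'' You have correctly reconstructed what that argument must be---rerunning the $S_5S_4$ (resp.\ $S'_5S_4$) pipeline with the machine $M_1$ for a $\Sigma^0_1$-complete set in place of~$M_0$, and for $n\notin\mathbb{A}$ building the countermodel over the full infinite grid $\numN\times\numN$ rather than the finite wrapped grid of Lemma~\ref{lem:Trakhtenbrot:lem2:sib}. Your anticipated obstacle is not a real problem: the explicit frames of Lemmas~\ref{lem:relativization:S4:S5:QGL:QGrz} and~\ref{lem:relativization:S4:S'5:QGL:QGrz} have depth bounded by $4$ (resp.~$3$) independently of $|\mathcal{D}|$, so they remain conversely well-founded and validate $\logic{GL}$, $\logic{Grz}$, $\logic{wGrz}$, $\bm{bd}_k$, and $\bm{bf}$ even when $\mathcal{D}$ and hence $W_1$, $W_2^P$ become infinite; the verification that $\mathit{Tiling}^\Box_n$ holds at the root goes through unchanged because the infinite grid already has right and upper neighbours everywhere, as you note.
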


\begin{theorem}
\label{th:complexity:modal:4}
Let $L$ be a modal predicate logic that contains\/ $\logic{QK}$ and is contained in at least one of the logics\/ $\logic{QGL.bf}\oplus\bm{bd}_4$, $\logic{QGrz.bf}\oplus\bm{bd}_4$, and $\logic{QwGrz.bf}\oplus\bm{bd}_3$.  Then the fragments of $L_{\mathit{dfin}}$, $L_{\mathit{wfin}}$, $L_{\mathit{dfin}}\oplus\bm{bf}$, and $L_{\mathit{wfin}}\oplus\bm{bf}$ in the language with a single unary predicate letter and two individual variables are\/ $\Pi^0_1$\nobreakdash-hard.
\end{theorem}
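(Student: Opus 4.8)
I would copy, essentially verbatim, the strategy of Section~\ref{subsec:remarks-on-complexity}, but applied to the two‑variable, single‑unary‑letter constructions underlying Theorems~\ref{th:ml:insep:QCG:wfin:S5}--\ref{th:ml:insep:QCG:wfin:S'5:dfin} rather than to the binary‑letter one of Theorem~\ref{th:Trakhtenbrot:binP:gen:sib:srb}. Fix a $\Sigma^0_1$‑complete set $\mathbb{A}$ and $\mathbb{B}=\varnothing$, take a Turing machine $M_1$ computing $f_{\mathbb{AB}}$ with the extra property (obtained by the cycle‑removal device of Section~\ref{subsec:remarks-on-complexity}) that its computations contain no cycles except the artificial ones at $q_{\mathbb{A}}$, and repeat, for $M_1$, the whole chain of constructions $S_5S_4\mathit{M}^\Box_G\mathit{Tiling}^{\mathbb{B}}_n$ (for the targets $\logic{QGL.bf}\oplus\bm{bd}_4$ and $\logic{QGrz.bf}\oplus\bm{bd}_4$) and $S'_5S_4\mathit{M}^\Box_G\mathit{Tiling}^{\mathbb{B}}_n$ (for $\logic{QwGrz.bf}\oplus\bm{bd}_3$), now with the conclusion $\exists x\,P_2(x)$, i.e.\ with the halting state $q_{\mathbb{B}}$ that never occurs. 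Call the resulting formula $\Psi_n$; it is computable from $n$, contains a single unary predicate letter and two individual variables, and the goal is to show that $n\mapsto\Psi_n$ is a many‑one reduction of $\numN\setminus\mathbb{A}$ (which is $\Pi^0_1$‑complete) into each of $L_{\mathit{dfin}}$, $L_{\mathit{wfin}}$, $L_{\mathit{dfin}}\oplus\bm{bf}$, $L_{\mathit{wfin}}\oplus\bm{bf}$.

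For the direction $n\in\mathbb{A}$: then $M_1$ halts in $q_{\mathbb{A}}$ on $\numeral{n}$ and loops that halting configuration, so the special $T'_n$‑tiling stabilises and is ``finite''; by the construction in the proof of Lemma~\ref{lem:Trakhtenbrot:lem2:sib} (adapted to $M_1$) there is a finite classical model with $\bm{sib}$ refuting the underlying $\mathit{MTiling}^{\mathbb{B}}_n$‑analogue (its conclusion $\exists x\,P_2(x)$ fails since $t_2$ never appears), and by the constructions in the proofs of Lemmas~\ref{lem:relativization:S4:S5:QGL:QGrz} and~\ref{lem:relativization:S4:S'5:QGL:QGrz} this lifts to a \emph{finite} Kripke model of depth at most $4$ (resp.\ $3$) with constant domain, validating $\logic{QGL}$ / $\logic{QGrz}$ (resp.\ $\logic{QwGrz}$), and refuting $\Psi_n$. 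Since this countermodel is finite (finite frame, finite domain) and has constant domain, it is simultaneously a $\mathit{wfin}$‑model and a $\mathit{dfin}$‑model validating $\bm{bf}$, so $\Psi_n\notin L'_{\mathit{wfin}}\oplus\bm{bf}$ and $\Psi_n\notin L'_{\mathit{dfin}}\oplus\bm{bf}$ for the relevant $L'$; and since $\logic{QK}\subseteq L\subseteq L'$ implies $L_{\mathit{wfin}}\subseteq L'_{\mathit{wfin}}$ and $L_{\mathit{dfin}}\subseteq L'_{\mathit{dfin}}$, we get $\Psi_n\notin L_{\mathit{wfin}}$, $\Psi_n\notin L_{\mathit{dfin}}$, $\Psi_n\notin L_{\mathit{wfin}}\oplus\bm{bf}$, $\Psi_n\notin L_{\mathit{dfin}}\oplus\bm{bf}$.

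For the direction $n\notin\mathbb{A}$: then $\neg!M_1(\numeral{n})$ and, by the cycle‑freeness of $M_1$, the computation on $\numeral{n}$ is infinite and acyclic, so (exactly as in Section~\ref{subsec:remarks-on-complexity}) every classical model of the premise of the tiling formula has infinite domain — there is no stabilised row and hence no wraparound giving a finite grid. In the two‑variable modal encoding the whole grid is realised inside the local domain $D_{w_0}$ of the base world and the binary letter $P$ is simulated through $\Box$ and the unary letter (via $S_3$/$S_4$), so: (a) on any model with \emph{finite local domains} the premise of $\Psi_n$ is already unsatisfiable, whence $\Psi_n\in\logic{QK}_{\mathit{dfin}}\subseteq L_{\mathit{dfin}}$ (and, $\bm{bf}$ only restricting frames further, $\Psi_n\in L_{\mathit{dfin}}\oplus\bm{bf}$); and (b) on any model over a \emph{finite Kripke frame} the premise is likewise unsatisfiable, because faithfully realising the infinite acyclic tiling through the $P$‑simulation forces infinitely many ``witness'' worlds — the same counting that makes the models in the proofs of Lemmas~\ref{lem:relativization:p:wKHC} and~\ref{lem:relativization:S4:wKHC} have $\sim|\mathcal{D}'|^3$ worlds — whence $\Psi_n\in\logic{QK}_{\mathit{wfin}}\subseteq L_{\mathit{wfin}}$ (and $\in L_{\mathit{wfin}}\oplus\bm{bf}$). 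Combining the two directions, $n\mapsto\Psi_n$ is the desired reduction, and each of the four logics is $\Pi^0_1$‑hard in the language with a single unary predicate letter and two individual variables.

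The main obstacle is item~(b) in the last paragraph: I must show that \emph{every} model (not just the canonical one) whose base world satisfies the premise of $\Psi_n$ has infinitely many worlds when the encoded computation is infinite. The $\mathit{dfin}$ case is immediate from the ``infinite local domain'' observation, but the $\mathit{wfin}$ case requires verifying that the modal scaffolding forced by the premise — the relativised $\forall x\,\Diamond C(x)$‑type conditions and the $\Box(Q(\cdot)\vee Q(\cdot))$‑encoding of the infinitely many $P$‑non‑edges among an infinite domain — genuinely cannot be packed into finitely many worlds; this is exactly the finiteness analysis implicit in Lemmas~\ref{lem:relativization:p:wKHC}, \ref{lem:relativization:S4:wKHC}, and the wKKZ/swKKZ machinery, and carrying it out in the ``for every model'' form is the only nonroutine part.
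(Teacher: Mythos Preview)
Your overall strategy is exactly what the paper intends (it says in Section~\ref{subsec:lbounds:monadic} to ``argue similar to Section~\ref{subsec:remarks-on-complexity}'' and omits the argument), and your treatment of the $n\in\mathbb{A}$ direction and of case~(a) for $n\notin\mathbb{A}$ is right.

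Your identified obstacle~(b), however, has a much simpler resolution than the ``counting worlds'' route you sketch, and that route is in fact the wrong direction. The lemmas you cite (Lemmas~\ref{lem:relativization:p:wKHC} and~\ref{lem:relativization:S4:wKHC}) build \emph{particular} countermodels with many worlds; they say nothing about \emph{arbitrary} countermodels, and there is no obvious reason why, say, the relativised $\forall x\,\Diamond C(x)$-type conditions cannot be witnessed by a single world serving many individuals. So a direct lower bound on $|W|$ from the premise is not available.

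The clean fix is Proposition~\ref{prop:finite:frame}: for a \emph{monadic} formula (and $\Psi_n$ has the single unary letter $Q$), refutability on a finite Kripke frame $\kframe{F}$ is the same as refutability on $\kframe{F}$ with a domain of size at most $2^{|W|\cdot 2}$. Hence, if $\Psi_n\notin L_{\mathit{wfin}}$, there is a finite $\kframe{F}\in\ckf L$ and a model on $\kframe{F}$ with \emph{finite} domain refuting $\Psi_n$; this model has finite local domains, so your argument for~(a) applies verbatim and yields a contradiction with the acyclicity of $M_1$. Thus $\Psi_n\in L_{\mathit{wfin}}$ (and a fortiori $\in L_{\mathit{wfin}}\oplus\bm{bf}$), and no separate $\mathit{wfin}$ analysis is needed.
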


\begin{theorem}
Let $L$ be a superintuitionistic predicate logic that contains\/ $\logic{QInt}$ and is contained in either\/ $\logic{QInt}+\bm{bd}_7+\bm{cd}$ or\/ $\logic{QKC}+\bm{bd}_8+\bm{cd}$. Then the positive fragment of $L$ in the language with a single unary predicate letter and two individual variables is\/ $\Sigma^0_1$\nobreakdash-hard.
\end{theorem}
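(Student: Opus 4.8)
The plan is to treat the statement as the two-variable counterpart of Theorem~\ref{th:complexity:int:1}, obtained by running the construction behind Theorem~\ref{th:QInt:positive:2var:1unary} through the substitution described in Section~\ref{subsec:remarks-on-complexity}. Thus I would first replace $\mathbb{X}$, $\mathbb{Y}$, and $M_0$ by a $\Sigma^0_1$-complete set $\mathbb{A}$, the empty set $\mathbb{B}$, and a Turing machine $M_1$ computing $f_{\mathbb{AB}}$ with a halting state $q_{\mathbb{A}}$; as the logics considered are logics of \emph{all} models rather than of finite ones, the acyclicity normalisation of $M_1$ needed for the $\Pi^0_1$ results may be skipped. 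Rerunning the constructions of Section~\ref{sec:int} with $M_1$ in place of $M_0$ then produces, for each $n\in\numN$, a tile set $T'_n$, the special $T'_n$-tiling $f_n$, and the positive $\lang{L}$-formula $S_9 S_8 \mathit{M}^{\mathit{int}}\mathit{Tiling}_n^{\mathbb{A}}$, which contains only the single unary predicate letter $P'$ and the two individual variables $x,y$; the analogue of $(\ref{eq:fn})$ states that a tile of type $t_1$ occurs in the special $T'_n$-tiling exactly when $n\in\mathbb{A}$.

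The core of the proof would be the equivalence
$$
n\in\mathbb{A}\iff S_9 S_8 \mathit{M}^{\mathit{int}}\mathit{Tiling}_n^{\mathbb{A}}\in L
$$
for every superintuitionistic predicate logic $L$ with $\logic{QInt}\subseteq L\subseteq\logic{QInt}+\bm{bd}_7+\bm{cd}$, and likewise for every $L$ with $\logic{QInt}\subseteq L\subseteq\logic{QKC}+\bm{bd}_8+\bm{cd}$. The implication from left to right is verbatim the proof of Lemma~\ref{lem:MIntTiling:1:Q:positive}, which uses nothing about $\mathbb{A}$ beyond the analogue of $(\ref{eq:fn})$; it yields $S_9 S_8 \mathit{M}^{\mathit{int}}\mathit{Tiling}_n^{\mathbb{A}}\in\logic{QInt}\subseteq L$. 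For the contrapositive of the converse, I would follow the proofs of Lemmas~\ref{lem:MIntTiling:2:QG:positive} and~\ref{lem:MIntTiling:2:Q:positive}, only replacing the finite $\bm{sib}$-model of Lemma~\ref{lem:Trakhtenbrot:lem2:sib} by an infinite one: when $n\notin\mathbb{A}$ the machine $M_1$ does not halt on $\numeral{n}$, so the special $T'_n$-tiling has no tile of type $t_1$, and taking $\mathcal{D}=\numN\times\numN$ with $P(\otuple{i,j},\otuple{i',j'})$ true iff ($|i'-i|=1$ and $j'=j$) or ($i'=i$ and $|j'-j|=1$), $P_k(\otuple{i,j})$ iff $f_n(i,j)=t^n_k$, $R_k(\otuple{i,j})$ iff $|i-k+1|\mathrel{\vdots}4$, and $U_k(\otuple{i,j})$ iff $|j-k+1|\mathrel{\vdots}4$, one obtains a countably infinite classical model $\cModel{M}$ with $\cModel{M}\models\bm{sib}$, $\cModel{M}\models\mathit{Tiling}'_n$, and $\cModel{M}\not\models\exists x\,P_1(x)$, whose $P$-reduct is bipartite, so all its $P$-cycles are even, as Remark~\ref{rem:lem:Trakhtenbrot:lem2:sib} requires. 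Feeding this $\cModel{M}$ into the two constructions mentioned (now over a depth-at-most-$3$ constant-domain frame in $\ckf(\logic{QInt}+\bm{bd}_3)$ with countably infinite antichains, of which there are plenty) produces a Kripke countermodel to $S_9 S_8 \mathit{M}^{\mathit{int}}\mathit{Tiling}_n^{\mathbb{A}}$ over a constant-domain frame of depth at most $7$ (of depth at most $8$ after adjoining a greatest world validating $\forall x\,P'(x)$, for the $\logic{QKC}$ range), which validates $\logic{QInt}+\bm{bd}_7+\bm{cd}$ (resp.\ $\logic{QKC}+\bm{bd}_8+\bm{cd}$), hence $L$; so $S_9 S_8 \mathit{M}^{\mathit{int}}\mathit{Tiling}_n^{\mathbb{A}}\notin L$.

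Granting the equivalence, the map $n\mapsto S_9 S_8 \mathit{M}^{\mathit{int}}\mathit{Tiling}_n^{\mathbb{A}}$ is computable and $\mathbb{A}$ is $\Sigma^0_1$-complete, so it $m$-reduces $\mathbb{A}$ to the positive fragment of $L$ in the language with a single unary predicate letter and two individual variables; that fragment is therefore $\Sigma^0_1$-hard.

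The main obstacle I anticipate is the $n\notin\mathbb{A}$ direction, and within it the verification that the infinite $\numN\times\numN$ model really validates $\mathit{Tiling}'_n$ — above all the commutation axiom $\mathit{TC}'_3$ and the directed-shift axioms $\mathit{DSR},\mathit{DSU}$, which in the finite model of Lemma~\ref{lem:Trakhtenbrot:lem2:sib} were forced by the length-$4$ $P$-cycles — together with the absence of $P$-triangles, on which the $\mathit{grid}'$/$\mathit{triangle}$ part of the simulation depends. Once these essentially routine checks are made, the gadget simulating the auxiliary predicate letters, the relativization $S_8$, and the $\kframe{F}_0$-gadget of Lemma~\ref{lem:MIntTiling:2:Q:positive} coding $Q$ and $G$ by the single letter $P'$ transfer unchanged, since none of those steps used finiteness of $\mathcal{D}$, and the depth bookkeeping is exactly that of Theorem~\ref{th:QInt:positive:2var:1unary}.
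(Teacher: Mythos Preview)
Your proposal is correct and matches the approach the paper intends: the paper itself gives no proof here beyond the remark that one should ``argue similar to Section~\ref{subsec:remarks-on-complexity}'', and you have filled in exactly that argument, rerunning the $S_9S_8$ pipeline with the $\Sigma^0_1$-complete set $\mathbb{A}$ in place of $\mathbb{X}$ and using an infinite $\numN\times\numN$ grid model for the $n\notin\mathbb{A}$ direction (which is precisely the analogue of~(\ref{eq:ast:3}) lifted through Lemmas~\ref{lem:MIntTiling:2:QG:positive} and~\ref{lem:MIntTiling:2:Q:positive}). One small slip: the $\mathit{grid}'/\mathit{triangle}$ mechanism and the even-cycle property of Remark~\ref{rem:lem:Trakhtenbrot:lem2:sib} belong to the three-variable pipeline~$S_1$, not to~$S_8$; in the two-variable construction $G$ is an explicit predicate (later coded by the $\kframe{F}_0$-gadget via~$S_9$) and the $\mathit{tile}'_k$ formulas measure $P$-distance to $G$-elements rather than to triangle-neighbours, so no triangle-freeness check is needed.
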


\begin{theorem}
Let $L$ be a superintuitionistic predicate logic that contains\/ $\logic{QInt}$ and is contained in either\/ $\logic{QInt}+\bm{bd}_7+\bm{cd}$ or\/ $\logic{QKC}+\bm{bd}_8+\bm{cd}$. Then the positive fragments of $L_{\mathit{dfin}}$, $L_{\mathit{wfin}}$, $L_{\mathit{dfin}}+\bm{cd}$, and $L_{\mathit{wfin}}+\bm{cd}$ in the language with a single unary predicate letter and two individual variables are\/ $\Pi^0_1$\nobreakdash-hard.
\end{theorem}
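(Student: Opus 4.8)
The plan is to repeat, almost verbatim, the $\Pi^0_1$-hardness argument of Section~\ref{subsec:remarks-on-complexity}, now carrying it through the intuitionistic, positive, two-variable, one-unary-letter construction behind Theorem~\ref{th:QInt:positive:2var:1unary}. First I would fix, exactly as in Section~\ref{subsec:remarks-on-complexity}, a $\Sigma^0_1$-complete set $\mathbb{A}\subseteq\numN$ and the empty set $\mathbb{B}$, a Turing machine $M_1=\tuple{\Sigma_1,Q_1,q_0,F_1,\delta_1}$ computing $f_{\mathbb{AB}}$ with halting states $q_{\mathbb{A}},q_{\mathbb{B}}$, and --- this is essential --- I would preprocess $M_1$ so that its computations contain no cycles except the artificial loop on~$q_{\mathbb{A}}$, as explained just before~(\ref{eq:ast:4}). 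Redoing the constructions of Sections~\ref{subsec:tiling_problems} and~\ref{subsec:Trakhtenbrot:theories} and the ones behind Lemmas~\ref{lem:MIntTiling:1:Q:positive}--\ref{lem:MIntTiling:3:Q:positive} for $M_1$ yields, for every $n\in\numN$, a finite tile set $T'_n$, the special $T'_n$-tiling, and a positive $\lang{L}$-formula $\Phi_n := S_9 S_8 \mathit{M}^{\mathit{int}}\mathit{Tiling}_n^{\mathbb{B}}$ in the language with the single unary predicate letter $P'$ and the two variables $x$ and~$y$; the superscript~$\mathbb{B}$ means that the conclusion of the implication describing the tiling uses $\exists x\,P_2(x)$ (``$M_1$ halts in~$q_{\mathbb{B}}$''), which, since $\mathbb{B}=\varnothing$, never holds on the special tiling. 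The map $n\mapsto\Phi_n$ is clearly computable.

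Next I would verify the two directions of the reduction. For $n\in\mathbb{A}$: then $M_1$ halts on $\numeral{n}$ in~$q_{\mathbb{A}}$, the special $T'_n$-tiling is eventually periodic, and so --- exactly as in Lemma~\ref{lem:Trakhtenbrot:lem2:sib} --- there is a finite classical model of $\bm{sib}$ refuting $\mathit{MTiling}_n^{\mathbb{B}}$; feeding it into the model constructions of Lemmas~\ref{lem:MIntTiling:2}, \ref{lem:MIntTiling:2:QG:positive}, and~\ref{lem:MIntTiling:2:Q:positive} produces a finite intuitionistic Kripke model of depth at most~$7$ (resp.\ at most~$8$ after the $\logic{QKC}$-extension) with finite, globally constant domains that refutes~$\Phi_n$. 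Since this model validates $\logic{QKC}$, $\bm{bd}_8$, and~$\bm{cd}$, and is based on a $\mathit{dfin}$ (indeed finite) augmented frame, $\Phi_n\notin L_{\mathit{dfin}}$, $\Phi_n\notin L_{\mathit{wfin}}$, and likewise for the $+\bm{cd}$ variants, for every superintuitionistic predicate logic~$L$ with $\logic{QInt}\subseteq L\subseteq\logic{QKC}+\bm{bd}_8+\bm{cd}$. For $n\notin\mathbb{A}$: then $M_1$ runs forever on $\numeral{n}$ without ever repeating a configuration, and I would establish the analogue of~(\ref{eq:ast:4}), namely that the (translated) premise of $\Phi_n$ cannot be satisfied at any world of an intuitionistic Kripke model with finite local domains. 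Granting this, $\Phi_n$ is valid on every such model, so $\Phi_n\in\logic{QInt}_{\mathit{dfin}}\subseteq L_{\mathit{dfin}}\subseteq L_{\mathit{dfin}}+\bm{cd}$; and for the $_{\mathit{wfin}}$ versions I would observe that $\Phi_n$ is monadic in~$P'$, so any finite-frame countermodel collapses, as in the proof of Proposition~\ref{prop:finite:frame}, to a finite-frame finite-domain one, which then contradicts the $_{\mathit{dfin}}$ case. Putting the two directions together, $n\mapsto\Phi_n$ reduces the $\Pi^0_1$-complete set $\numN\setminus\mathbb{A}$ to each of the four positive fragments, which are therefore $\Pi^0_1$-hard; when in addition the relevant class of finite frames is recursively enumerable (e.g.\ for $\logic{QInt}$ and $\logic{QKC}$ themselves) this sharpens to $\Pi^0_1$-completeness.

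The hard part will be the unsatisfiability claim for $n\notin\mathbb{A}$: one must trace, through the layers $K$ (Kolmogorov translation), the positivization $\varphi\mapsto\varphi^+_p$, and the substitutions $S_8$ and $S_9$ (the latter built from the gadget frame~$\kframe{F}_0$ and the notion of an $a$-suitable model), the classical observation that a finite local domain forces, via pigeonhole together with the directed-shift labels $R_i,U_j$ (whose values along the grid are pinned down modulo~$4$) and the confluence condition $\mathit{TC}'_3$, a genuine period of the $\numN\times\numN$ grid, hence an eventual repetition of a configuration of~$M_1$ --- contradicting the acyclicity secured by the preprocessing of~$M_1$. Everything else is a routine re-run of the already established lemmas with $\mathbb{A},\mathbb{B}$ in place of $\mathbb{X},\mathbb{Y}$, in the spirit of Theorem~\ref{th:QInt:positive:2var:1unary}, and I would, as the paper does for the analogous complexity statements, leave these verifications to the reader.
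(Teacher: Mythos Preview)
Your approach is exactly the one the paper intends: it literally says ``We can argue similar to Section~\ref{subsec:remarks-on-complexity}. We omit the argumentation,'' and you are correctly filling this in by rerunning the construction behind Lemmas~\ref{lem:MIntTiling:1:Q:positive}--\ref{lem:MIntTiling:3:Q:positive} with $\mathbb{A},\mathbb{B}$ in place of $\mathbb{X},\mathbb{Y}$ and the acyclic preprocessing of~$M_1$. The two directions of the reduction, and the passage from the $_{\mathit{dfin}}$ to the $_{\mathit{wfin}}$ case via the monadic collapse of Proposition~\ref{prop:finite:frame}, are right.

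One small inaccuracy worth fixing: in your list of layers to unwind for $n\notin\mathbb{A}$ you mention the Kolmogorov translation $K$ and the positivization $\varphi\mapsto\varphi^+_p$. Those belong to the \emph{three}-variable route (the construction feeding $S_6,S_7$ in Lemmas~\ref{lem:1:QInt:positive:3var}--\ref{lem:3:QInt:positive:3var:S7}), not to the two-variable formula at hand. Here $\Phi_n = S_9 S_8\,\mathit{M}^{\mathit{int}}\mathit{Tiling}_n^{\mathbb{B}}$ is built directly from $\mathit{M}^{\mathit{int}}\mathit{Tiling}_n^{\mathbb{B}}$ through the $G$-relativization, then $S_8$ (encoding $P_m,R_i,U_j,C$ by $(\mathit{tile}'_k)^+$ and $P$ by a $Q,G$-formula), then $S_9$ (encoding $Q,G$ via the $A^2_k,B^2_k$ gadget over~$\kframe{F}_0$). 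So the backward trace for the $n\notin\mathbb{A}$ direction should go through the proof of Lemma~\ref{lem:MIntTiling:1} and Sublemma~\ref{sublem:kkz:int} (to recover the grid $a_i^j\in D_{w_0}$ at the world refuting the formula), together with the soundness direction implicit in Lemma~\ref{lem:MIntTiling:1:QG:positive} and Sublemmas~\ref{sublem:1:lem:MIntTiling:2:Q:positive}--\ref{sublem:3:lem:MIntTiling:2:Q:positive}, rather than through $K$ or~$\varphi^+_p$. The pigeonhole-plus-$R_i/U_j$-plus-$\mathit{TC}'_3$ step you isolate as the ``hard part'' is precisely the step the paper also leaves implicit in~(\ref{eq:ast:4}); your sketch of it is at the same level of detail the paper maintains.
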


\subsection{Lower bounds for dyadic fragments}

The situation with lower bounds for computational complexity of dyadic fragments differs from the one for monadic fragments. Let us make some observations.

\begin{proposition}
Let $L$ be a modal predicate logic containing\/ $\logic{QK}$ and contained in at least one of the logics\/ $\logic{QS5}$, $\logic{QGL.3}$, $\logic{QGrz.3}$, $\logic{QGL}\oplus\bm{bd}_2$, $\logic{QGrz}\oplus\bm{bd}_2$. Then the fragments of $L_{\mathit{wfin}}$ and $L_{\mathit{wfin}}\oplus\bm{bf}$ in the language with a single binary predicate letter and three individual variables are both\/ $\Sigma^0_1$\nobreakdash-hard and\/ $\Pi^0_1$\nobreakdash-hard.
\end{proposition}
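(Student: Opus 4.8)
The plan is to establish the two hardness claims separately, since they come from different parts of the development: the $\Sigma^0_1$-hardness will be inherited from the modality-free fragment of $L_{\mathit{wfin}}$, while the $\Pi^0_1$-hardness will be inherited from the monadic fragment via a harmless coding of a unary predicate by the diagonal of the binary one.

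For $\Sigma^0_1$-hardness I would first recall, as in the proof of Proposition~\ref{prop:lem:2:ml:insep:bin}, that $L_{\mathit{wfin}}\cap\lang{L} = (L_{\mathit{wfin}}\oplus\bm{bf})\cap\lang{L} = \logic{QCl}$. The key observation is that, for each of the five logics listed, $\fin\ckf L$ contains a one-world frame (a reflexive singleton when $L$ is below $\logic{QS5}$, $\logic{QGrz.3}$ or $\logic{QGrz}\oplus\bm{bd}_2$; an irreflexive singleton when $L$ is below $\logic{QGL.3}$ or $\logic{QGL}\oplus\bm{bd}_2$), and on a one-world frame the local domain may be an arbitrary, possibly infinite, set; hence a modality-free $\lang{L}$-formula holds throughout all e-augmented (and c-augmented) frames over $\fin\ckf L$ iff it holds in every classical model, i.e.\ belongs to $\logic{QCl}$. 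Consequently, for a modality-free formula $\varphi$ built from a single binary predicate letter and three individual variables, $\varphi\in L_{\mathit{wfin}}$ iff $\varphi\in L_{\mathit{wfin}}\oplus\bm{bf}$ iff $\varphi\in\logic{QCl}^{\mathit{bin}}$; the latter is $\Sigma^0_1$-complete by Corollary~\ref{cor:sib:srb:prop:Sigma01}, so the identity map already reduces $\logic{QCl}^{\mathit{bin}}$ to the dyadic three-variable fragments of $L_{\mathit{wfin}}$ and of $L_{\mathit{wfin}}\oplus\bm{bf}$, which are therefore $\Sigma^0_1$-hard.

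For $\Pi^0_1$-hardness I would invoke Theorem~\ref{th:complexity:modal:2} with $L'=L$: under exactly the present hypothesis on $L$, the fragments of $L_{\mathit{wfin}}$ and $L_{\mathit{wfin}}\oplus\bm{bf}$ in the language with a single unary predicate letter $Q$ and three individual variables are $\Pi^0_1$-hard. To pass from $Q$ to the binary letter $P$, let $\theta$ be the syntactic map replacing every atom $Q(t)$ by $P(t,t)$; it introduces no new variables, so $\theta\varphi$ lies in the dyadic three-variable fragment whenever $\varphi$ lies in the monadic three-variable one. It then remains to check that $\varphi\in L_{\mathit{wfin}}$ iff $\theta\varphi\in L_{\mathit{wfin}}$ (and likewise with $\oplus\bm{bf}$): from a Kripke model over $P$ one reads off a model over $Q$ by $Q^{I',w}=\{a\in D_w:\langle a,a\rangle\in P^{I,w}\}$, and conversely a model over $Q$ is turned into one over $P$ by $P^{I,w}=\{\langle a,a\rangle:a\in Q^{I',w}\}$; in both directions the underlying finite frame — and hence membership of that frame in $\ckf L$ — is untouched, and a routine induction on $\varphi$ gives $\kModel{M},w\models\theta\varphi(\bar a)\iff\kModel{M}',w\models\varphi(\bar a)$. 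Thus $\theta$ is a many-one reduction of the monadic three-variable fragment to the dyadic one, transferring the $\Pi^0_1$-hardness.

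Putting the two parts together yields the proposition. I do not expect a serious obstacle: the substantive machinery — the recursive-inseparability constructions behind Theorem~\ref{th:complexity:modal:2}, the $\Sigma^0_1$-completeness of $\logic{QCl}^{\mathit{bin}}$, and the computation of $L_{\mathit{wfin}}\cap\lang{L}$ — is already in place, and the only new point is the trivial observation that the diagonal of a binary predicate realizes an arbitrary unary relation while leaving the frame intact. The one step needing a little care is verifying that each of the five maximal logics is validated by a one-world Kripke frame, so that $\fin\ckf L$ genuinely admits worlds with infinite domains; this is precisely what makes the modality-free fragment of $L_{\mathit{wfin}}$ equal to $\logic{QCl}$ rather than to $\logic{QCl}_{\mathit{fin}}$, and hence what allows the dyadic fragment to be $\Sigma^0_1$-hard and $\Pi^0_1$-hard at the same time.
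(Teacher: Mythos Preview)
Your proposal is correct and follows essentially the same approach as the paper: $\Sigma^0_1$-hardness comes from the modality-free fragment being $\logic{QCl}$ (the paper just cites the Church/Trakhtenbrot theorem, while you spell out the one-world-frame reason that $L_{\mathit{wfin}}\cap\lang{L}=\logic{QCl}$), and $\Pi^0_1$-hardness comes from Theorem~\ref{th:complexity:modal:2} via the substitution of $P(x,x)$ for $Q(x)$. Your write-up is simply a more explicit unpacking of the two-line proof in the paper.
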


\begin{proof}
Indeed, $\Sigma^0_1$\nobreakdash-hardness follows from the Church Theorem (or from the Trakhtenbrot Theorem, see Theorem~\ref{th:Trakhtenbrot:bin:P}) and $\Pi^0_1$\nobreakdash-hardness follows from Theorem~\ref{th:complexity:modal:2}, since we can simulate $Q(x)$ by~$P(x,x)$.
\end{proof}

\begin{proposition}
Let $L$ be a superintuitionistic predicate logic that contains\/ $\logic{QInt}$ and is contained in either\/ $\logic{QInt}+\bm{bd}_2+\bm{cd}$ or\/ $\logic{QKC}+\bm{bd}_3+\bm{cd}$. Then the positive fragments of $L_{\mathit{wfin}}$ and $L_{\mathit{wfin}}+\bm{cd}$ in the language with a single binary predicate letter and three individual variables are both\/ $\Sigma^0_1$\nobreakdash-hard and\/ $\Pi^0_1$\nobreakdash-hard.
\end{proposition}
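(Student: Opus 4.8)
The statement is the superintuitionistic twin of the modal proposition just above, and my plan is to treat the two hardness claims separately, assembling them from the constructions already available rather than running a fresh tiling reduction. The $\Pi^0_1$-hardness will come straight from the monadic $\Pi^0_1$-hardness result proved above in this section (for the positive fragments of $L_{\mathit{wfin}}$ and $L_{\mathit{wfin}}+\bm{cd}$ with a single \emph{unary} letter $Q$ and three variables, valid in exactly the range $\logic{QInt}\subseteq L\subseteq\logic{QInt}+\bm{bd}_2+\bm{cd}$ or $\subseteq\logic{QKC}+\bm{bd}_3+\bm{cd}$); the $\Sigma^0_1$-hardness will come from pushing the classical tiling formulas $S_1\mathit{MTiling}^{\mathbb{A}}_n$ through the Kolmogorov translation into $\logic{QInt}$, which sits inside $L_{\mathit{wfin}}$.

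For $\Pi^0_1$-hardness I would take the hard positive monadic formulas $\varphi_n$ (over $Q$, three variables) with $n\in B\iff\varphi_n\in L_{\mathit{wfin}}^{+}$ for the relevant co-r.e.\ set $B$, and apply the predicate substitution $Q(x)\mapsto P(x,x)$. The outputs are positive, use only the binary letter $P$, and still have three variables. Forward preservation of membership is immediate since $L_{\mathit{wfin}}$ and $L_{\mathit{wfin}}+\bm{cd}$ are logics of classes of frames, hence closed under substitution; for the converse, note that $P$ does not occur in $\varphi_n$, so a refuting intuitionistic model of $\varphi_n[Q(x)/P(x,x)]$ is turned into a refuting model of $\varphi_n$ just by interpreting $Q$ at each world as the diagonal of $I(w,P)$, which respects heredity because $I(w,P)$ is monotone. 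So the reduction is faithful and the positive binary three-variable fragments are $\Pi^0_1$-hard.

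For $\Sigma^0_1$-hardness I would fix a $\Sigma^0_1$-complete set $\mathbb{A}$, take $\mathbb{B}=\varnothing$ as in Subsection~\ref{subsec:remarks-on-complexity}, so that $n\in\mathbb{A}\iff S_1\mathit{MTiling}^{\mathbb{A}}_n\in\logic{QCl}^{\mathit{bin}}$ by the first line of $(\ref{eq:ast:1})$, the first line of $(\ref{eq:ast:3})$, and $\logic{QCl}^{\mathit{bin}}\subseteq\logic{SIB}$ (cf.\ Corollary~\ref{cor:sib:srb:prop:Sigma01:pos}). Working in the positive $P$-only three-variable fragment, put $\varphi_n=(S_1\mathit{MTiling}^{\mathbb{A}}_n)^{+}$ (so $\bot$ is replaced by $\mathit{false}=\forall x\forall y\,P(x,y)$), then apply the Kolmogorov translation $K$, then the positivisation $(\cdot)^{+}_p$ of $(\ref{eq:positivization:1})$, and finally substitute the proposition letter $p$ by $\mathit{false}$; let $\psi_n$ be the result — a closed positive sentence over $P$ alone with three variables. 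I then claim $n\in\mathbb{A}\iff\psi_n\in L_{\mathit{wfin}}^{+}\iff\psi_n\in(L_{\mathit{wfin}}+\bm{cd})^{+}$. For the forward direction: $n\in\mathbb{A}$ gives $S_1\mathit{MTiling}^{\mathbb{A}}_n\in\logic{QCl}$, hence $\varphi_n\in\logic{QCl}$ by the contrapositive of Corollary~\ref{cor3:lem:false}, hence $K(\varphi_n)\in\logic{QInt}$ by $(\ref{eq:kolmogorov:1})$, hence $(K(\varphi_n))^{+}_p\in\logic{QInt}$ by $(\ref{eq:positivization:1})$, hence $\psi_n\in\logic{QInt}$ by closure under substitution; and $\logic{QInt}\subseteq L_{\mathit{wfin}}\subseteq L_{\mathit{wfin}}+\bm{cd}$ because every finite frame in $\ckf L$ is intuitionistic. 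For the backward direction I would use the single reflexive point $\bullet$: since $L\subseteq\logic{QKC}+\bm{bd}_3+\bm{cd}\subseteq\logic{QCl}$ and $\bm{cd}\in\logic{QCl}$, $\bullet$ validates $L$ (so $\bullet\in\fin\ckf L$), validates $\bm{cd}$, and validates all of $\logic{QCl}\supseteq L_{\mathit{wfin}}$, hence validates $L_{\mathit{wfin}}+\bm{cd}$. If $n\notin\mathbb{A}$ then $S_1\mathit{MTiling}^{\mathbb{A}}_n\notin\logic{QCl}$, so some classical model $\cModel{M}$ refutes it; such $\cModel{M}$ must refute $\mathit{false}$ (otherwise the tiling premise of the implication fails and the formula is true), so by the $\mathbb{A}$-analogue of Corollary~\ref{cor1:lem:false} it refutes $\varphi_n$; since $\cModel{M}\not\models\mathit{false}$ and $K$ is a classical equivalence, it refutes $K(\varphi_n)$, and because the guarding conjuncts $\mathit{false}\to\psi$ of $(\cdot)^{+}_p$ are all true in $\cModel{M}$, unwinding the construction and the substitution $p\mapsto\mathit{false}$ shows $\cModel{M}\not\models\psi_n$ (Lemma~\ref{lem:false}). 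Reading $\cModel{M}$ as an intuitionistic model on $\bullet$ gives $\psi_n\notin L_{\mathit{wfin}}$ and $\psi_n\notin L_{\mathit{wfin}}+\bm{cd}$. As $\mathbb{A}$ is $\Sigma^0_1$-complete, both fragments are $\Sigma^0_1$-hard.

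The hard part will be purely bookkeeping: checking that the double negations inserted by $K$, the guarded implications of $(\cdot)^{+}_p$, and the substitution $p\mapsto\forall x\forall y\,P(x,y)$ compose so that $\psi_n$ behaves like the $\bot$-free $\varphi_n$ both intuitionistically (forward) and classically on a single point (backward), which is exactly what the cited lemmas were designed to supply. The only genuinely new remark is the elementary one that the single classical point lies in $\fin\ckf L$ and validates $\bm{cd}$, which is what lets the one family $\psi_n$ witness hardness for $L_{\mathit{wfin}}$ and for $L_{\mathit{wfin}}+\bm{cd}$ at once; everything else is reused from earlier in the paper.
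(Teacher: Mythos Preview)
Your plan matches the paper's one-sentence sketch: $\Sigma^0_1$-hardness via the Kolmogorov embedding of $\logic{QCl}$ into $\logic{QInt}\subseteq L_{\mathit{wfin}}$ (the paper just says ``Church Theorem \ldots\ with use of the Kolmogorov embedding''), and $\Pi^0_1$-hardness by pushing the monadic $\Pi^0_1$-hard positive formulas through the substitution $Q(x)\mapsto P(x,x)$ (this is the ``simulate $Q(x)$ by $P(x,x)$'' remark from the modal analogue, which the ``Again'' in the paper's proof is pointing back to). You actually supply more than the paper's sketch does: the Kolmogorov translation is not positive, and the paper does not say how to fix that, whereas your chain $(\,\cdot\,)^{+}\to K\to(\,\cdot\,)^{+}_p\to[p\mapsto\mathit{false}]$ together with the single-point classical countermodel handles positivity correctly.

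One slip to fix in the $\Pi^0_1$ paragraph. The ``converse'' you write down---turning a refuting model of $\varphi_n[Q/P(x,x)]$ into a refuting model of $\varphi_n$ by setting $Q$ equal to the diagonal of $P$---is the contrapositive of closure under substitution, i.e.\ the \emph{forward} direction again. The direction you still owe is: from a refuting intuitionistic model of $\varphi_n$ (over~$Q$) on a frame in $\fin\ckf L$, produce a refuting model of $\varphi_n[Q/P(x,x)]$ on the same frame. This is just as easy---interpret $P$ so that its diagonal is $Q$ at every world, e.g.\ $I(w,P)=\{\langle a,a\rangle : a\in I(w,Q)\}$, which is hereditary because $Q$ is---so the repair is immediate, but as written your faithfulness argument has both halves proving the same implication.
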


\begin{proof}
Again, $\Sigma^0_1$\nobreakdash-hardness follows from the Church Theorem (or Theorem~\ref{th:Trakhtenbrot:bin:P}) with use of the Kolmogorov embedding (or some similar) and $\Pi^0_1$\nobreakdash-hardness follows from Theorem~\ref{th:complexity:int:1}.
\end{proof}

\subsection{Remarks on high undecidability}

The technique presented here makes it possible to prove only $\Sigma^0_1$\nobreakdash-hardness or $\Pi^0_1$\nobreakdash-hardness of fragments of logics. However, the monadic fragments of some logics are not contained in either $\Sigma^0_1$ or~$\Pi^0_1$. We give some examples.

For a modal predicate logic $L$, let us define\footnote{In general, it is more appropriate to define Kripke completion of~$L$ as the logic of the class of e\nobreakdash-augmented frames of~$L$; the use of this notion will be such that we will not feel the difference.} its \defnotion{Kripke completion} $L^{\mathit{K}}$ as the logic of the class of Kripke frames validating~$L$,
i.e., $L^{\mathit{K}} = \QMLext{\mathit{e}}{\mathit{all}} \ckf L$.
Let us consider logics $\logic{QGL}^{\mathit{K}}$, $\logic{QGrz}^{\mathit{K}}$, and $\logic{QwGrz}^{\mathit{K}}$. 
It is known~\cite{Montagna84,MR:2001:LI,RybIGPL22} that $\logic{QGL}^{\mathit{K}}\ne \logic{QGL}$, $\logic{QGrz}^{\mathit{K}}\ne \logic{QGrz}$, and $\logic{QwGrz}^{\mathit{K}}\ne \logic{QwGrz}$, i.e., $\logic{QGL}$, $\logic{QGrz}$, and $\logic{QwGrz}$ are Kripke incomplete. Using an estimate of computational complexity, we shall show this along the way, even for the monadic fragments; but our main goal is to draw attention to how high the lower estimates for the computational complexity of $\logic{QGL}^{\mathit{K}}$, $\logic{QGrz}^{\mathit{K}}$, and $\logic{QwGrz}^{\mathit{K}}$~are. 

\Rem{
It is not hard to see that
$$
\begin{array}{lclcl}
\logic{QGL} 
  & \subseteq & \logic{QGL}^{\mathit{K}} 
  & \subseteq & \logic{QGL}_{\mathit{wfin}};
  \\
\logic{QGrz} 
  & \subseteq & \logic{QGrz}^{\mathit{K}} 
  & \subseteq & \logic{QGrz}_{\mathit{wfin}};
  \\
\logic{QwGrz} 
  & \subseteq & \logic{QwGrz}^{\mathit{K}} 
  & \subseteq & \logic{QwGrz}_{\mathit{wfin}}.
  \\
\end{array}
$$
}

Logics $\logic{QGL}$, $\logic{QGrz}$, and $\logic{QwGrz}$ are recursively enumerable; since they are KHC-friendly, their monadic fragments are $\Sigma^0_1$\nobreakdash-hard, and hence $\Sigma^0_1$\nobreakdash-complete.
The monadic fragments of $\logic{QGL}_{\mathit{wfin}}$, $\logic{QGrz}_{\mathit{wfin}}$, and $\logic{QwGrz}_{\mathit{wfin}}$ are $\Pi^0_1$\nobreakdash-complete~\cite[Theorem~4.4]{RShsubmitted}. 
As for the monadic fragments of $\logic{QGL}^{\mathit{K}}$, $\logic{QGrz}^{\mathit{K}}$, and $\logic{QwGrz}^{\mathit{K}}$, they are $\Sigma^0_1$\nobreakdash-hard, since the logics are KHC-friendly. But the fragments are also $\Pi^0_1$\nobreakdash-hard~\cite{MR:2002:LI} and even $\Pi^1_1$\nobreakdash-hard in the language with two unary predicate letters and two individual variables~\cite[Theorem~3]{RybIGPL22}. As a corollary, indeed, the monadic fragments of $\logic{QGL}$, $\logic{QGrz}$, and $\logic{QwGrz}$ are Kripke incomplete.

Thus, the lower bounds for the computational complexity of the monadic fragments of some ``natural'' KHC-friendly modal predicate logics can be much higher than those that can be obtained using the theorems of Section~\ref{subsec:lbounds:monadic}. 
Near examples can be found in~\cite{RSh20AiML,RShJLC21c}; in particular, for $\QMLe \otuple{\numN,\leqslant}$ and $\QMLc \otuple{\numN,\leqslant}$. 


\section{Conclusion}
\label{sec:conclusion}
\setcounter{equation}{0}

\subsection{Remarks and generalizations}

Notice that the methods and the results presented here can be quite easily transferred to some other classes of predicate logics; let us mention some of them here.

The ``classical'' part of the results is expandable to the free logic and its theories. This logic rejects the law $\forall x\,P(x)\to \exists x\,P(x)$ but this feature does not affect the construction described.  Also, we can apply the construction to logics and theories in the languages enriched with some non-elementary tools (for example, the transitive closure operator); in this case sometimes two individual variables are sufficient to obtain similar results~\cite{GOR-1999,MR:2022:DoklMath,MR:2023:LI} and even to prove $\Sigma^1_1$\nobreakdash-hardness of the satisfiability problem.

The ``modal'' part can be easily expanded to polymodal logics, to logics defined via varying domain semantics, and to non-normal modal logics (which are not extensions of logic~$\logic{QK}$). Moreover, in polymodal logics, it is possible to refine some results obtained for their monomodal counterparts. For example, in $\logic{QS5}_2$ with modalities $\Box_1$ and $\Box_2$, we can define the modality $\Box_1\Box_2$; clearly, on Kripke frames it corresponds to the relations being reflexive but not necessarily symmetric or transitive. So, the logic of its finite Kripke frames is undecidable in the language with two individual variables and a single unary predicate letter. In contrast, to prove the undecidability of $\logic{QS5}_{\mathit{wfin}}$, we used either three individual variables or three unary predicate letters. 

The ``intuitionistic'' part can be transferred in a straightforward way on logics with near semantics as well. For example, on predicate counterparts of basic propositional logic and formal propositional logic introduced by A.\,Visser~\cite{Visser81}. A~distinctive feature of the Kripke semantics for them is that it allows irreflexive worlds for the former logic and requires irreflexivity\footnote{More exactly, the Kripke frames for the logic are exactly strict Noetherian orders.} for the latter one. However, constructions similar to the one described here can work for it with slight technical modifications~\cite{RSh19SL}.

Also, observe that the results on recursive inseparability allow us to obtain then undecidability as $\Sigma^0_1$\nobreakdash-hardness or $\Pi^0_1$\nobreakdash-hardness only. As we have seen, some logics have higher complexity, even in languages with two-three individual variables and one-two unary predicate letters~\cite{RSh20AiML,RShJLC21c,RybIGPL22}. So, the results for them, similar to those presented here, may appear too weak and, in some cases, could be improved.

\subsection{Summary of results}

\settowidth{\templengtha}{\parbox{0.85\linewidth}{~}}
\settowidth{\templengthb}{\parbox{0.48\templengtha}{~}}
\settowidth{\templengthc}{\parbox{0.049\templengtha}{~}}
\settowidth{\templengthd}{\parbox{0.049\templengtha}{~}}
\settowidth{\templengthe}{\parbox{0.049\templengtha}{~}}
\settowidth{\templengthf}{\parbox{0.37\templengtha}{~}}

Let us summarize the main results presented here. 
Let, in the tables below, ``bpl'', ``upl'', and ``iv'' mean, respectively, ``binary predicate letters'', ``unary predicate letters'', and ``individual variables''.

Let us begin with superclassical logics and theories. Recall that $\logic{SIB}$ and $\logic{SRB}$ are the theories of the classes of models of, respectively, the symmetric irreflexive and symmetric reflexive binary relation; $T_{\mathit{fin}}$ is the theory of the class of finite models of a theory~$T$.
\begin{center}
\begin{tabular}{|c|c|c|c|c|}
\hline
\multirow{2}{*}
{
\parbox[c][\totalheight+5pt][c]{\templengthb}
       {\centering\bf Theories or logics}
} 
  & \multicolumn{3}{|c|}
    {
      \parbox[c][\totalheight+5pt][c]{0.15\templengtha}
             {\centering\bf Number of}
    }
  & \multirow{2}{*}
    {\parbox[c][\totalheight+5pt][c]{\templengthf}
      {\centering\bf Result obtained}
    }
\\
\cline{2-4}
  & \parbox[c][\totalheight+2pt][c]{\templengthc}
    {\centering\bf bpl}
  & \parbox[c][\totalheight+2pt][c]{\templengthd}
    {\centering\bf upl}
  & \parbox[c][\totalheight+2pt][c]{\templengthe}
    {\centering\bf $\!\!$\phantom{p}iv\phantom{p}$\!\!$}
  & 
\\
\hline
\hline
  \parbox[c][\totalheight+5pt][c]{\templengthb}
  {
    \begin{tabular}{ll}
    & $\logic{QCl}^{\mathit{bin}}\subseteq \Gamma \subseteq \Gamma' \subseteq \logic{SIB}_{\mathit{fin}}$ \\
    {or} 
    & $\logic{QCl}^{\mathit{bin}}\subseteq \Gamma \subseteq \Gamma' \subseteq \logic{SRB}_{\mathit{fin}}$
    \end{tabular}
  }
  & 1
  & ---
  & 3
  & \parbox[c][\totalheight+5pt][c]{\templengthf}
    {\centering
      $\Gamma$ and $\Gamma'$ \\
      are 
      recursively 
      inseparable
    }
  \\
\hline
  \parbox[c][\totalheight+5pt][c]{\templengthb}
  {
    \begin{tabular}{ll}
    & $\logic{QCl}^{\mathit{bin}}\subseteq \Gamma \subseteq \logic{SIB}$ \\
    {or} 
    & $\logic{QCl}^{\mathit{bin}}\subseteq \Gamma \subseteq \logic{SRB}$
    \end{tabular}
  }
  & 1
  & ---
  & 3
  & \parbox[c][\totalheight+5pt][c]{\templengthf}
    {\centering
      $\Gamma$ is $\Sigma^0_1$-hard
    }
  \\
\hline
  \parbox[c][\totalheight+5pt][c]{\templengthb}
  {
    \begin{tabular}{ll}
    & $\logic{QCl}^{\mathit{bin}}_{\mathit{fin}}\subseteq \Gamma \subseteq \logic{SIB}_{\mathit{fin}}$ \\[2pt]
    {or} 
    & $\logic{QCl}^{\mathit{bin}}_{\mathit{fin}}\subseteq \Gamma \subseteq  \logic{SRB}_{\mathit{fin}}$
    \end{tabular}
  }
  & 1
  & ---
  & 3
  & \parbox[c][\totalheight+5pt][c]{\templengthf}
    {\centering
      $\Gamma$ is $\Pi^0_1$-hard
    }
  \\
\hline
\end{tabular}
\end{center}

Next, let us mention results on modal predicate logics.\footnote{See an addition in Section~\ref{sec:note}.} 
\begin{center}
\begin{tabular}{|c|c|c|c|c|}
\hline
\multirow{2}{*}
{
\parbox[c][\totalheight+5pt][c]{\templengthb}
       {\centering\bf Theories or logics}
} 
  & \multicolumn{3}{|c|}
    {
      \parbox[c][\totalheight+5pt][c]{0.15\templengtha}
             {\centering\bf Number of}
    }
  & \multirow{2}{*}
    {\parbox[c][\totalheight+5pt][c]{\templengthf}
      {\centering\bf Result obtained}
    }
\\
\cline{2-4}
  & \parbox[c][\totalheight+2pt][c]{\templengthc}
    {\centering\bf bpl}
  & \parbox[c][\totalheight+2pt][c]{\templengthd}
    {\centering\bf upl}
  & \parbox[c][\totalheight+2pt][c]{\templengthe}
    {\centering\bf $\!\!$\phantom{p}iv\phantom{p}$\!\!$}
  & 
\\
\hline
\hline
  \parbox[c][\totalheight+5pt][c]{\templengthb}
  {
    \begin{tabular}{ll}
    & $\logic{QK} \subseteq L \subseteq \logic{QTriv}$ \\
    {or}
    & $\logic{QK} \subseteq L \subseteq \logic{QVer}$ \\
	\end{tabular}
    \\
    \begin{tabular}{l}
	with $\fin \ckf L$ be recursive 
    \end{tabular}
  }
  & ---
  & $\infty$
  & $\infty$
  & \parbox[c][\totalheight+5pt][c]{\templengthf}
    {\centering
	  \begin{tabular}{ll}
      both & \!\!\!$\QMLe\ckf(L\oplus\bm{alt}_n)$ \\ 
	  and  & \!\!\!$\QMLc\ckf(L\oplus\bm{alt}_n)$ \\ 
	  \end{tabular}
	  \\
	  \begin{tabular}{l}
	  are decidable
	  \end{tabular}
    }
  \\
\hline
\hline
\multicolumn{5}{|c|}
{ 
  \parbox[c][\totalheight+5pt][c]{1.12\templengtha}
  {\centering 
    Let $\logic{QL}$ be one of
    $\logic{QwGrz.bf}\oplus\bm{bd}_3$,
    $\logic{QGL.bf}\oplus\bm{bd}_4$, 
	$\logic{QGrz.bf}\oplus\bm{bd}_4$
  }
} 
\\
\hline
  \parbox[c][\totalheight+5pt][c]{\templengthb}
  {
    \begin{tabular}{ll}
    & $\logic{QK} \subseteq L \subseteq L' \subseteq \logic{QL}_{\mathit{dfin}}$ \\
    {or}
    & $\logic{QK} \subseteq L \subseteq L' \subseteq \logic{QL}_{\mathit{wfin}}$
    \end{tabular}
  }
  & ---
  & 1
  & 2
  & \parbox[c][\totalheight+5pt][c]{\templengthf}
    {\centering
      $L$ and $L'$ \\
      are 
      recursively 
      inseparable
    }
  \\
\hline
  \parbox[c][\totalheight+5pt][c]{\templengthb}
  {
    \begin{tabular}{ll}
    \phantom{or}
    & $\logic{QK} \subseteq L \subseteq \logic{QL}$
    \end{tabular}
  }
  & ---
  & 1
  & 2
  & \parbox[c][\totalheight+5pt][c]{\templengthf}
    {\centering
      $L$ is $\Sigma^0_1$-hard
    }
  \\
\hline
  \parbox[c][\totalheight+5pt][c]{\templengthb}
  {
    \begin{tabular}{ll}
    & $\logic{QK}_{\mathit{dfin}} \subseteq L \subseteq \logic{QL}_{\mathit{dfin}}$ \\
    {or}
    & $\logic{QK}_{\mathit{wfin}} \subseteq L \subseteq \logic{QL}_{\mathit{wfin}}$
    \end{tabular}
  }
  & ---
  & 1
  & 2
  & \parbox[c][\totalheight+5pt][c]{\templengthf}
    {\centering
      $L$ is $\Pi^0_1$-hard
    }
  \\
\hline
\hline
\multicolumn{5}{|c|}
{ 
  \parbox[c][\totalheight+5pt][c]{1.12\templengtha}
  {\centering 
    Let $\logic{QL}$ be one of $\logic{QS5}$, 
    $\logic{QGL.bf}\oplus\bm{bw}_1$, $\logic{QGrz.bf}\oplus\bm{bw}_1$,  
    $\logic{QGL.bf}\oplus\bm{bd}_3$, $\logic{QGrz.bf}\oplus\bm{bd}_3$
  }
} 
\\
\hline
  \parbox[c][\totalheight+5pt][c]{\templengthb}
  {
    \begin{tabular}{ll}
    & $\logic{QK} \subseteq L \subseteq L' \subseteq \logic{QL}_{\mathit{dfin}}$ \\
    {or}
    & $\logic{QK} \subseteq L \subseteq L' \subseteq \logic{QL}_{\mathit{wfin}}$
    \end{tabular}
  }
  & ---
  & 3
  & 2
  & \parbox[c][\totalheight+5pt][c]{\templengthf}
    {\centering
      $L$ and $L'$ \\
      are 
      recursively 
      inseparable
    }
  \\
\hline
  \parbox[c][\totalheight+5pt][c]{\templengthb}
  {
    \begin{tabular}{ll}
    \phantom{or}
    & $\logic{QK} \subseteq L \subseteq \logic{QL}$
    \end{tabular}
  }
  & ---
  & 3
  & 2
  & \parbox[c][\totalheight+5pt][c]{\templengthf}
    {\centering
      $L$ is $\Sigma^0_1$-hard
    }
  \\
\hline
  \parbox[c][\totalheight+5pt][c]{\templengthb}
  {
    \begin{tabular}{ll}
    & $\logic{QK}_{\mathit{dfin}} \subseteq L \subseteq \logic{QL}_{\mathit{dfin}}$ \\
    {or}
    & $\logic{QK}_{\mathit{wfin}} \subseteq L \subseteq \logic{QL}_{\mathit{wfin}}$
    \end{tabular}
  }
  & ---
  & 3
  & 2
  & \parbox[c][\totalheight+5pt][c]{\templengthf}
    {\centering
      $L$ is $\Pi^0_1$-hard
    }
  \\
\hline
\hline
\multicolumn{5}{|c|}
{ 
  \parbox[c][\totalheight+5pt][c]{1.12\templengtha}
  {\centering 
    Let $\logic{QL}$ be one of $\logic{QS5}$, 
    $\logic{QGL.bf}\oplus\bm{bw}_1$, $\logic{QGrz.bf}\oplus\bm{bw}_1$,  
    $\logic{QGL.bf}\oplus\bm{bd}_2$, $\logic{QGrz.bf}\oplus\bm{bd}_2$
  }
} 
\\
\hline
  \parbox[c][\totalheight+5pt][c]{\templengthb}
  {
    \begin{tabular}{ll}
    & $\logic{QCl}^\Box \subseteq L \subseteq L' \subseteq \logic{QL}_{\mathit{dfin}}$ \\
    {or}
    & $\logic{QCl}^\Box \subseteq L \subseteq L' \subseteq \logic{QL}_{\mathit{wfin}}$
    \end{tabular}
  }
  & ---
  & 1
  & 3
  & \parbox[c][\totalheight+5pt][c]{\templengthf}
    {\centering
      $L$ and $L'$ \\
      are 
      recursively 
      inseparable
    }
  \\
\hline
  \parbox[c][\totalheight+5pt][c]{\templengthb}
  {
    \begin{tabular}{ll}
    \phantom{or}
    & $\logic{QK} \subseteq L \subseteq \logic{QL}$
    \end{tabular}
  }
  & ---
  & 1
  & 3
  & \parbox[c][\totalheight+5pt][c]{\templengthf}
    {\centering
      $L$ is $\Sigma^0_1$-hard
    }
  \\
\hline
  \parbox[c][\totalheight+5pt][c]{\templengthb}
  {
    \begin{tabular}{ll}
    & $\logic{QK}_{\mathit{dfin}} \subseteq L \subseteq \logic{QL}_{\mathit{dfin}}$ \\
    {or}
    & $\logic{QK}_{\mathit{wfin}} \subseteq L \subseteq \logic{QL}_{\mathit{wfin}}$
    \end{tabular}
  }
  & ---
  & 1
  & 3
  & \parbox[c][\totalheight+5pt][c]{\templengthf}
    {\centering
      $L$ is $\Pi^0_1$-hard
    }
  \\
\hline
  \parbox[c][\totalheight+5pt][c]{\templengthb}
  {
    \begin{tabular}{ll}
    \phantom{or}
    & $\logic{QK}_{\mathit{wfin}} \subseteq L \subseteq \logic{QL}_{\mathit{wfin}}$
    \end{tabular}
  }
  & 1
  & ---
  & 3
  & \parbox[c][\totalheight+5pt][c]{\templengthf}
    {\centering
      $L$ is $\Sigma^0_1$-hard and $\Pi^0_1$-hard
    }
  \\
\hline
\end{tabular}
\end{center}

For convenience, let us recall that $\logic{QCl}^\Box$ is the classical predicate logic in the language enriched with~$\Box$; $L_{\mathit{dfin}}$ is the logic of the class of e-augmented frames of logic~$L$ with finite local domains; $L_{\mathit{wfin}}$ is the logic of the class of finite Kripke frames of~$L$; $\fin\ckf L$ is the class of finite Kripke frames of~$L$; $\QMLe\ckf L$ and $\QMLc\ckf L$ are the logics of the classes of, respectively, e\nobreakdash-augmented and c\nobreakdash-augmented frames defined on Kripke frames of~$L$.

Finally, let us turn to the class of superintuitionistic predicate logics. The notation is similar to the modal case.
\begin{center}
\begin{tabular}{|c|c|c|c|c|}
\hline
\multirow{2}{*}
{
\parbox[c][\totalheight+5pt][c]{\templengthb}
       {\centering\bf Theories or logics}
} 
  & \multicolumn{3}{|c|}
    {
      \parbox[c][\totalheight+5pt][c]{0.15\templengtha}
             {\centering\bf Number of}
    }
  & \multirow{2}{*}
    {\parbox[c][\totalheight+5pt][c]{\templengthf}
      {\centering\bf Result obtained}
    }
\\
\cline{2-4}
  & \parbox[c][\totalheight+2pt][c]{\templengthc}
    {\centering\bf bpl}
  & \parbox[c][\totalheight+2pt][c]{\templengthd}
    {\centering\bf upl}
  & \parbox[c][\totalheight+2pt][c]{\templengthe}
    {\centering\bf $\!\!$\phantom{p}iv\phantom{p}$\!\!$}
  & 
\\
\hline
\hline
\multicolumn{5}{|c|}
{ 
  \parbox[c][\totalheight+5pt][c]{1.12\templengtha}
  {\centering 
    Let $\logic{QL}$ be 
    $\logic{QInt}+\bm{cd}+\bm{bd}_7$ or
    $\logic{QKC}+\bm{cd}+\bm{bd}_8$ 
  }
} 
\\
\hline
  \parbox[c][\totalheight+5pt][c]{\templengthb}
  {
    \begin{tabular}{ll}
    & $\logic{QInt} \subseteq L \subseteq L' \subseteq \logic{QL}_{\mathit{dfin}}$ \\
    {or}
    & $\logic{QInt} \subseteq L \subseteq L' \subseteq \logic{QL}_{\mathit{wfin}}$
    \end{tabular}
  }
  & ---
  & 1
  & 2
  & \parbox[c][\totalheight+5pt][c]{\templengthf}
    {\centering
      $L$ and $L'$ \\
      are 
      recursively 
      inseparable
    }
  \\
\hline
  \parbox[c][\totalheight+5pt][c]{\templengthb}
  {
    \begin{tabular}{ll}
    \phantom{or}
    & $\logic{QInt} \subseteq L \subseteq \logic{QL}$
    \end{tabular}
  }
  & ---
  & 1
  & 2
  & \parbox[c][\totalheight+5pt][c]{\templengthf}
    {\centering
      $L$ is $\Sigma^0_1$-hard
    }
  \\
\hline
  \parbox[c][\totalheight+5pt][c]{\templengthb}
  {
    \begin{tabular}{ll}
    & $\logic{QInt}_{\mathit{dfin}} \subseteq L \subseteq \logic{QL}_{\mathit{dfin}}$ \\
    {or}
    & $\logic{QInt}_{\mathit{wfin}} \subseteq L \subseteq \logic{QL}_{\mathit{wfin}}$
    \end{tabular}
  }
  & ---
  & 1
  & 2
  & \parbox[c][\totalheight+5pt][c]{\templengthf}
    {\centering
      $L$ is $\Pi^0_1$-hard
    }
  \\
\hline
\hline
\multicolumn{5}{|c|}
{ 
  \parbox[c][\totalheight+5pt][c]{1.12\templengtha}
  {\centering 
    Let $\logic{QL}$ be 
    $\logic{QInt}+\bm{cd}+\bm{bd}_3$ or
    $\logic{QKC}+\bm{cd}+\bm{bd}_4$ 
  }
} 
\\
\hline
  \parbox[c][\totalheight+5pt][c]{\templengthb}
  {
    \begin{tabular}{ll}
    & $\logic{QInt} \subseteq L \subseteq L' \subseteq \logic{QL}_{\mathit{dfin}}$ \\
    {or}
    & $\logic{QInt} \subseteq L \subseteq L' \subseteq \logic{QL}_{\mathit{wfin}}$
    \end{tabular}
  }
  & ---
  & 2
  & 2
  & \parbox[c][\totalheight+5pt][c]{\templengthf}
    {\centering
      $L$ and $L'$ \\
      are 
      recursively 
      inseparable
    }
  \\
\hline
  \parbox[c][\totalheight+5pt][c]{\templengthb}
  {
    \begin{tabular}{ll}
    \phantom{or}
    & $\logic{QInt} \subseteq L \subseteq \logic{QL}$
    \end{tabular}
  }
  & ---
  & 2
  & 2
  & \parbox[c][\totalheight+5pt][c]{\templengthf}
    {\centering
      $L$ is $\Sigma^0_1$-hard
    }
  \\
\hline
  \parbox[c][\totalheight+5pt][c]{\templengthb}
  {
    \begin{tabular}{ll}
    & $\logic{QInt}_{\mathit{dfin}} \subseteq L \subseteq \logic{QL}_{\mathit{dfin}}$ \\
    {or}
    & $\logic{QInt}_{\mathit{wfin}} \subseteq L \subseteq \logic{QL}_{\mathit{wfin}}$
    \end{tabular}
  }
  & ---
  & 2
  & 2
  & \parbox[c][\totalheight+5pt][c]{\templengthf}
    {\centering
      $L$ is $\Pi^0_1$-hard
    }
  \\
\hline
\hline
\multicolumn{5}{|c|}
{ 
  \parbox[c][\totalheight+5pt][c]{1.12\templengtha}
  {\centering 
    Let $\logic{QL}$ be 
    $\logic{QInt}+\bm{cd}+\bm{bd}_2$ or
    $\logic{QKC}+\bm{cd}+\bm{bd}_3$ 
  }
} 
\\
\hline
  \parbox[c][\totalheight+5pt][c]{\templengthb}
  {
    \begin{tabular}{ll}
    & $\logic{QInt} \subseteq L \subseteq L' \subseteq \logic{QL}_{\mathit{dfin}}$ \\
    {or}
    & $\logic{QInt} \subseteq L \subseteq L' \subseteq \logic{QL}_{\mathit{wfin}}$
    \end{tabular}
  }
  & ---
  & 1
  & 3
  & \parbox[c][\totalheight+5pt][c]{\templengthf}
    {\centering
      $L$ and $L'$ \\
      are 
      recursively 
      inseparable
    }
  \\
\hline
  \parbox[c][\totalheight+5pt][c]{\templengthb}
  {
    \begin{tabular}{ll}
    \phantom{or}
    & $\logic{QInt} \subseteq L \subseteq \logic{QL}$
    \end{tabular}
  }
  & ---
  & 1
  & 3
  & \parbox[c][\totalheight+5pt][c]{\templengthf}
    {\centering
      $L$ is $\Sigma^0_1$-hard
    }
  \\
\hline
  \parbox[c][\totalheight+5pt][c]{\templengthb}
  {
    \begin{tabular}{ll}
    & $\logic{QInt}_{\mathit{dfin}} \subseteq L \subseteq \logic{QL}_{\mathit{dfin}}$ \\
    {or}
    & $\logic{QInt}_{\mathit{wfin}} \subseteq L \subseteq \logic{QL}_{\mathit{wfin}}$
    \end{tabular}
  }
  & ---
  & 1
  & 3
  & \parbox[c][\totalheight+5pt][c]{\templengthf}
    {\centering
      $L$ is $\Pi^0_1$-hard
    }
  \\
\hline
  \parbox[c][\totalheight+5pt][c]{\templengthb}
  {
    \begin{tabular}{ll}
    \phantom{or}
    & $\logic{QInt}_{\mathit{wfin}} \subseteq L \subseteq \logic{QL}_{\mathit{wfin}}$
    \end{tabular}
  }
  & 1
  & ---
  & 3
  & \parbox[c][\totalheight+5pt][c]{\templengthf}
    {\centering
      $L$ is $\Sigma^0_1$-hard and $\Pi^0_1$-hard
    }
  \\
\hline
\end{tabular}
\end{center}

Note that the results shown in the tables have clarifications. So, in many cases, the results remain true for languages without the constant~$\bot$, which means without~$\neg$ and, in the modal case, without~$\Diamond$.

\subsection{Future work}

There are several questions related to the algorithmic complexity of fragments of predicate logics that remain unanswered so far. It seems that the solutions to some of them are a matter of technique, but for some of them, apparently, this is not the case.

Let us start with $\logic{QS5}$ and near logics such as $\logic{QK5}$, $\logic{QK45}$, $\logic{QKD45}$, and $\logic{QK4B}$. Our results do not give an answer on the decidability of $\logic{QS5}$ if its language contains only two individual variables and one or two unary predicate letters. However, as we have seen, $\logic{QS5}$ with three individual variables and a single unary predicate letter is undecidable, and the same for two individual variables and three unary predicate letters. It seems that one of three unary predicate letters can be eliminated; but all attempts of the author to do this have so far failed.

\begin{conjecture}
\label{conj:1}
The fragment of\/ $\logic{QS5}$ in the language with two individual variables and two unary predicate letters is algorithmically undecidable.
\end{conjecture}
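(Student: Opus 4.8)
The plan is to piggy-back on the construction already used to prove Theorem~\ref{th:ml:insep:QCG:wfin}. Since a Kripke frame validating $\logic{QS5}$ is a disjoint union of clusters, and $\ckf\logic{QS5}$ contains, for every $m$, a cluster with $m$ elements in which every world sees every other world, both $\ckf\logic{QS5}$ and $\fin\ckf\logic{QS5}$ are wKKZ classes: given a cluster of size $2k$, take $V$ to be any $k$ of its worlds; then $V\subseteq R(w)$ for every $w$ in the cluster and $|R(v)\setminus V|=k$ for every $v\in V$. Hence Lemmas~\ref{lem:relativization:G:K:S3:S2} and~\ref{lem:relativization:G:wKHC:S3:S2} already give that, for $n\in\mathbb{X}$, $S_2S_3\mathit{M}^\Box_G\mathit{Tiling}_n^{\mathbb{X}}\in\logic{QK}$, and for $n\in\mathbb{Y}$, $S_2S_3\mathit{M}^\Box_G\mathit{Tiling}_n^{\mathbb{X}}\notin\logic{QS5}_{\mathit{wfin}}$ (and a fortiori $\notin\logic{QS5}$, since $\logic{QS5}\subseteq\logic{QS5}_{\mathit{wfin}}$), where the formula involves the three unary letters $Q$, $C$, $G$. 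So the only missing step is to collapse these three letters into two, after which the map $n\mapsto F_n$ for the collapsed formula $F_n$ reduces the recursively inseparable pair $(\mathbb{X},\mathbb{Y})$ to $(\logic{QS5},\overline{\logic{QS5}})$, proving Conjecture~\ref{conj:1}.

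First I would try to design one further substitution $S_{10}$ replacing $G$ and $C$ by $\lang{ML}$-formulas built from $Q$ and a single fresh unary letter $Q'$, in the spirit of the passage from $S_8$ to $S_9$ in the intuitionistic part but adapted to $\logic{QS5}$-clusters. The idea is that a $\logic{QS5}$-cluster over a domain $\mathcal{D}'$ may carry, besides the KKZ-worlds $w_0$, $w_a$ ($a\in\mathcal{D}$) and $u^v_{bc}$ ($b,c\in\mathcal{D}'$), a designated ``grid world'' $w_G$ whose $Q'$-pattern is exactly the grid, $\{a\in\mathcal{D}'\colon Q'(a)\text{ at }w_G\}=\mathcal{D}$, recognizable by a short formula $\gamma(x)$ in $Q,Q'$ forcing $Q'$ to behave ``everything-like'' there while $Q,Q'$ jointly distinguish it from the pointer worlds and the edge worlds, together with pointer worlds whose $Q'$-pattern is the appropriate singleton; one would then substitute $G(x)\mapsto\Diamond(\gamma(y)\wedge Q'(x))$ and treat $C$ analogously, keeping $Q$ exclusively for the $\Box(Q\vee Q)$ edge trick. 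Because $\logic{QK}$ and $\logic{QS5}$ are closed under Substitution, the case $n\in\mathbb{X}$ is then immediate from Lemma~\ref{lem:relativization:G:K:S3:S2}; for $n\in\mathbb{Y}$ one would build a finite $\logic{QS5}$-model by packing all of the above worlds into a single cluster over the domain $\mathcal{D}'$ of the model of Lemma~\ref{lem:relativization:G:wKHC:S3}, and verify that $\gamma$, and hence the substituted $G$ and $C$, behave there exactly as $G$ and $C$ did before.

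The hard part — and, judging from the author's remark, the reason previous attempts have failed — is precisely this design of $\gamma$ (and of the recognizer for $C$) inside a flat cluster. In $\logic{QS5}$ the modalities $\Box,\Diamond$ are global over the cluster and modal nesting collapses ($\Box\Diamond\varphi\equiv\Diamond\varphi$), so no amount of modal structure can separate the grid world, the pointer worlds and the edge worlds; all of this separation has to be carried out by the interpretations of the two unary letters alone, using only two individual variables and no equality. One cannot even say directly ``this world interprets $Q'$ as a singleton'' or ``as the grid'', so the recognizers must be built indirectly from inequalities of $Q,Q'$-patterns across the cluster, and they must be robust enough that, whenever the tiling premise (itself rewritten over $Q,Q'$) holds at $w_0$, the intended structure is forced; the delicate point is that the edge worlds' $Q$-patterns are essentially arbitrary, so they must not be mistakable for the structural marker worlds. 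This appears to require a genuinely new combinatorial gadget — the $\logic{QS5}$-analogue of the frame $\kframe{F}_0$ of the intuitionistic construction, but over a depth-one structure with no heredity condition — and it is unclear whether such a gadget exists; establishing or refuting its existence is, in effect, the content of Conjecture~\ref{conj:1}.
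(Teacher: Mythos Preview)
This statement is presented in the paper as a \emph{conjecture}, not a theorem; the paper contains no proof of it. In the concluding Note (Section~\ref{sec:note}) the author reports that, after the paper was prepared, he obtained Theorem~\ref{th:ml:insep:QK:QS5wfin}, which implies Conjecture~\ref{conj:1} --- in fact a stronger statement, with a \emph{single} unary predicate letter and two variables --- but says explicitly that ``the proof is based on similar methods, but requires a different modeling'' and defers it to a separate paper. So there is no proof in the paper to compare against.

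Your proposal is not a proof either, and you are candid about this. You correctly observe that $\ckf\logic{QS5}$ and $\fin\ckf\logic{QS5}$ are wKKZ classes, so Theorem~\ref{th:ml:insep:QCG:wfin} already yields the three-letter version; you then propose a further substitution $S_{10}$ collapsing $G$ and $C$ into formulas over $Q$ and a fresh~$Q'$, and immediately identify the obstruction: in a flat $\logic{QS5}$-cluster modal nesting collapses, so distinguishing the ``grid'', ``pointer'', and ``edge'' worlds must be done entirely by the interpretations of two monadic predicates over two variables, and you do not see how to build the required recognizer~$\gamma$. Your closing sentence concedes that resolving this \emph{is} the content of the conjecture. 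This matches the paper's own assessment --- the author writes that ``all attempts of the author to do this have so far failed'' --- and the eventual resolution announced in the Note requires ``a different modeling'' rather than the substitution-based collapse you sketch. So your plan is the natural first attempt, your diagnosis of where it gets stuck is accurate, and the paper corroborates that this particular route does not go through without a genuinely new idea.
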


As for two variables and a single unary predicate letter, the fragment of $\logic{QS5}$ in this language seems to be decidable. Moreover, it seems to the author that the fragment is finitely approximable, in particular, $\logic{QS5}$, $\logic{QS5}_{\mathit{dfin}}$, and $\logic{QS5}_{\mathit{wfin}}$ do not differ by formulas of the fragment.

\begin{conjecture}
\label{conj:2}
The fragment of\/ $\logic{QS5}$ in the language with two individual variables and a single unary predicate letter is algorithmically decidable.
\end{conjecture}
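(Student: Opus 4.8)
The plan is to prove the conjecture by establishing a \textbf{finite model property with a computable bound} for this fragment, which yields decidability and, at the same time, the finite approximability mentioned after the conjecture, i.e. $\logic{QS5}$, $\logic{QS5}_{\mathit{dfin}}$ and $\logic{QS5}_{\mathit{wfin}}$ do not differ by formulas with a single unary predicate letter and two individual variables. First I would pass to $\logic{S5}$ Kripke semantics: $\logic{QS5}$ is Kripke complete with respect to augmented frames whose accessibility relation is an equivalence, each such frame is a disjoint union of clusters, and a formula refuted on it is refuted on a single cluster; moreover over a cluster the expanding-domain condition collapses to constant domains. So a counter-model for $\varphi$ may be taken to be a constant-domain cluster $\kModel{M} = \otuple{W,\mathcal{D},I}$ with $I(w,P)\subseteq\mathcal{D}$, $\Box$ read as ``at every world of $W$''. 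Each $a\in\mathcal{D}$ then carries a \emph{$P$-history} $\pi_a\colon W\to\{0,1\}$ with $\pi_a(w)=1$ iff $a\in I(w,P)$, and, since the language lacks equality and has only the single predicate $P$, everything $\varphi$ can observe about an element is mediated by its $P$-history and by the at most two pebbles supplied by the two variables.

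Next I would imitate the quotient-by-indistinguishability argument behind Proposition~\ref{prop:finite:frame} and the decidability proofs in \cite{MR:2017:LI,RShsubmitted,RShsubmitted2}, but carry it out on \emph{both} sorts at once. Concretely: (i) \emph{inside a world}, collapse elements whose $P$-histories agree down to a threshold $N=N(|\varphi|)$ (roughly the quantifier-plus-modal rank of $\varphi$) --- a standard two-pebble counting argument, which genuinely uses the absence of equality so that equally-behaving elements are interchangeable, shows this preserves the truth value of $\varphi$ at the pointed world; (ii) \emph{among worlds}, collapse two worlds with the same ``truncated count profile'', i.e. such that for every $P$-history-class $c$ the numbers, capped at $N$, of members of $c$ satisfying $P$ (resp. $\neg P$) coincide --- a duplicate world may be deleted because $\Box$ and $\Diamond$ still see a surviving copy. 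After (ii) the histories are re-read on the smaller $W$, which only coarsens the history-classes, and one checks $\varphi$ is still refuted; the most robust way to verify all of this is via an Ehrenfeucht-style back-and-forth game with two domain-pebbles and a ``current world'' token (a counting-bisimulation in the spirit of the games for graded modalities), proving that Duplicator wins whenever the two profiles, and the histories of the pebbled elements, agree up to the rank of $\varphi$.

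The hard part will be the \emph{mutual dependence of the two reductions}: a priori the number of distinct $P$-histories realized in $\kModel{M}$ is bounded only by $2^{|W|}$, so bounding $|W|$ by a function of the number of history-classes while bounding the number of history-classes by a function of $|W|$ does not close into an absolute bound. I expect to break this by iterating the two collapsings to a common fixpoint and then proving, through the game, that at the fixpoint the \emph{relevant types} --- a world's truncated count profile together with the rank-bounded behaviour of its at most two pebbled elements --- form a finite set that is moreover \emph{effectively enumerable}, and that any consistent set of such types is realizable in an $\logic{S5}$ cluster by an explicit construction; this gives a type-elimination decision procedure even if a transparent small-model bound is harder to extract. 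Pinning down exactly how much of an element's $P$-history survives under alternating $\Box$ and $\exists$ with only two variables, and verifying the realizability step, is precisely the obstacle that keeps the statement a conjecture rather than a theorem; together with Conjecture~\ref{conj:1} it would show that for two-variable monadic $\logic{QS5}$ the boundary between decidability and undecidability falls exactly between one and two unary predicate letters.
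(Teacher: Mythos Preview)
The statement you are trying to prove is a \emph{conjecture}, not a theorem, and the paper does not contain a proof of it. More importantly, the paper's final Section~\ref{sec:note} explicitly records that the conjecture is \emph{false}: after the paper was prepared the author obtained Theorem~\ref{th:ml:insep:QK:QS5wfin}, which shows that $\logic{QK}$ and $\logic{QS5}_{\mathit{wfin}}$ are recursively inseparable already in the language with a single unary predicate letter and two individual variables. In particular the two-variable, one-letter fragment of $\logic{QS5}$ is undecidable, so no proof of Conjecture~\ref{conj:2} can succeed.

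You have in fact put your finger on exactly where your argument must break. The ``mutual dependence of the two reductions'' that you flag --- the number of $P$-histories bounded only by $2^{|W|}$, and $|W|$ bounded only in terms of the number of history-classes --- is not a technical nuisance to be iterated away; it is the mechanism by which the fragment encodes unbounded information. With two variables one can still, via the Kontchakov--Kurucz--Zakharyaschev trick used in Section~\ref{sec:modal}, force enough interaction between the world-sort and the domain-sort that the collapsing steps (i) and (ii) never stabilise at a size independent of the input model. Concretely, the back-and-forth game you sketch fails because Duplicator cannot maintain the invariant once Spoiler alternates $\Box$/$\Diamond$ moves with $\forall$/$\exists$ moves: two elements with the same ``rank-bounded'' history can be separated by a formula that first moves to a world distinguishing them and then re-quantifies, and the rank needed to do this is not bounded by the syntactic rank of $\varphi$ alone. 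The realizability step you leave for last is therefore not the obstacle; the obstacle is that the finite-type space you hope to enumerate does not exist.
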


In both conjectures, $\logic{QS5}$ can be replaced with  $\logic{QK5}$, $\logic{QK45}$, $\logic{QKD45}$ or $\logic{QK4B}$. Also observe that if one of the conjectures is wrong, then so is the other.

Next, let us concern the logics of Kripke frames of bounded depth. What happens if we bound the depth of frames by a quite small number? For example, what can we say about the decidability of the fragment of $\logic{QK}\oplus\bm{bd}_2$ in the language with a single unary predicate letter and two individual variables? What about similar fragments of $\logic{QGL}\oplus\bm{bd}_3$, $\logic{QGrz}\oplus\bm{bd}_3$, $\logic{QInt}\oplus\bm{bd}_6$ or $\logic{QKC}\oplus\bm{bd}_7$? It seems to the author that at least some of them are undecidable.

Another class we would like to talk about is the class of predicate logics defined by frames of bounded width. There are known results showing that such logics can be undecidable~--- and even highly undecidable~--- in languages with one-two unary predicate letters and two-three individual variables. It would be interesting to expand the ideas presented here on such logics in a regular way.

Finally, let us turn to the semantics used. 
To obtain the results presented here, we did not use all v\nobreakdash-augmented frames but only e\nobreakdash-augmented and c\nobreakdash-augmented ones. However, v\nobreakdash-augmented frames allow us to distinguish more subtle properties described by modal predicate formulas. For example, the \defnotion{converse Barcan formula} $\text{\textbf{\textit{cbf}}}=\Box\forall x\,P(x)\to\forall x\,\Box P(x)$ is valid on the class e\nobreakdash-augmented frames (and hence, on the class of c-augmented ones) but it is refuted on a v\nobreakdash-augmented frame containing a world seeing a different world. It seems that the results obtained and the methods used should be transferred to this semantics, however, this requires a separate study.

In general, it is known that Kripke semantics is a rather weak tool for investigating predicate non-classical logics. There are a lot of results showing that predicate counterparts of Kripke complete propositional logics can be (and often are) Kripke incomplete. Of course, the technique presented here uses mainly soundness, not Kripke completeness; nevertheless, it seems reasonable to expect that using other semantics will yield more general results.


\section{Note}
\label{sec:note}
\setcounter{equation}{0}

After the paper was prepared, the author obtained a result from which it follows that Conjecture~\ref{conj:1} is true, but Conjecture~\ref{conj:2} is~not. The result is presented in the following theorem.

\begin{theorem}
\label{th:ml:insep:QK:QS5wfin}
Logics\/ $\logic{QK}$ and\/ $\logic{QS5}_{\mathit{wfin}}$ as well as\/ $\logic{QK}$ and\/ $\logic{QS5}_{\mathit{dfin}}$ are recursively inseparable in the language with a single unary predicate letter and two individual variables.
\end{theorem}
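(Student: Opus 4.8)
The plan is to adapt, almost verbatim, the two‑variable, single‑unary‑predicate machinery of Section~\ref{sec:modal} that handled $\logic{QGL}$ and $\logic{QGrz}$, and to replace only the final depth‑bounding substitutions $S_5$, $S'_5$ — which were sound precisely because those logics refute long chains — by substitutions that are sound over flat $\logic{S5}$ frames. Concretely, I would start from $\mathit{M}^\Box\mathit{Tiling}_n^{\mathbb{X}}$, for which Lemma~\ref{lem:1:tiling:QK} already gives $\mathit{M}^\Box\mathit{Tiling}_n^{\mathbb{X}}\in\logic{QK}$ whenever $n\in\mathbb{X}$, apply the relativization $(\cdot)_G$ and the substitutions $S'_3$ and $\Box_p$ as before, obtaining a formula $\varphi_n$ over the binary letter $P$, the auxiliary unary letters $C,G$, and the proposition letters $p,q,r$, and then apply $S_4$ to kill $P$ and $G$. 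Since every one of these steps is a formula substitution or a submodel‑type relativization, the half ``$n\in\mathbb{X}\Rightarrow{}$(result) $\in\logic{QK}$'' goes through exactly as in Lemmas~\ref{lem:relativization:G:K}, \ref{lem:relativization:p:K}, \ref{lem:relativization:S4:K}. Finally I would introduce a new substitution $S^{\mathrm{S5}}$ replacing $p,q,r$ by $\lang{ML}$‑formulas in the single letter $Q$ that separate the relevant worlds in a flat cluster (rather than by the $\Box\bot$/$\Diamond\Diamond\top$‑type formulas used for $\logic{QGL}$/$\logic{QGrz}$); call the end product $\psi_n$. Again $\logic{QK}$ is closed under this substitution, so $n\in\mathbb{X}\Rightarrow\psi_n\in\logic{QK}$.

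All the genuinely new work is on the countermodel side, i.e.\ the lemma ``$n\in\mathbb{Y}\Rightarrow\psi_n\notin\logic{QS5}_{\mathit{wfin}}$ (and $\psi_n\notin\logic{QS5}_{\mathit{dfin}}$)''. Here I would take the finite classical model $\cModel{M}=\otuple{\mathcal{D},\mathcal{I}}$ built in the proof of Lemma~\ref{lem:Trakhtenbrot:lem2:sib} (so $\cModel{M}\models\bm{sib}$, $\cModel{M}\not\models\mathit{MTiling}_n^{\mathbb{X}}$, all $P$‑cycles even, $\mathcal{D}=\{0,\dots,r+4\}^2$), adjoin the same finite system of gadget elements $e^a_0,\dots,e^a_{k_n+11},e^a_P,e^a_R,e^a_U,e^a_C$ that the $\mathit{tile}'$‑simulation uses, and assemble a \emph{flat} finite $\logic{S5}$ Kripke model $\otuple{W,W\times W,\mathcal{D}',I}$: the accessibility relation $W\times W$ is reflexive, symmetric and transitive, so this is an $\logic{S5}$ frame, and the constant finite domain is in particular expanding. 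The set $W$ is to consist of one ``main'' world per element of $\mathcal{D}'$, carrying the marking underlying $C$; one ``pair'' world $u_{bc}$ for every pair $\{b,c\}$ that is \emph{not} a simulated $P$‑edge, with $Q$‑extension $\mathcal{D}'\setminus\{b,c\}$; and a handful of further worlds realising $S^{\mathrm{S5}}(p)$, $S^{\mathrm{S5}}(q)$, $S^{\mathrm{S5}}(r)$. Since all worlds are mutually accessible, the substitution $P(x_1,x_2)\mapsto\Box(Q(x_1)\vee Q(x_2))$ behaves as the universal modality and recovers exactly the intended $P$‑relation on $\mathcal{D}'$ (no pair‑world isolates a true edge, because $u_{bc}$ is installed only for non‑edges $\{b,c\}$). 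The subtle point is that, for the same reason, the simulated $C$ can no longer be world‑dependent, so I would recover $C$ from $Q$ by a $\Diamond$‑formula over the pair‑worlds; this is compatible with $\mathit{TC}^\Box_5$–$\mathit{TC}^\Box_8$ because those conditions only demand persistence/reflection of the grid‑relations $H'_n,V'_n$, which live entirely on $\mathcal{D}$ and are untouched by the gadget and by the choice of $C$. Checking that $\mathit{Tiling}^\Box_n$ (in particular $\mathit{TC}^\Box_3$ together with $\mathit{TC}^\Box_5$–$\mathit{TC}^\Box_8$) is true at a designated world, while $\exists x\,P_1(x)$ is false there (immediate, since $n\notin\mathbb{X}$ gives no $t_1$‑tile), and that $S^{\mathrm{S5}}\!\circ S_4\circ\cdots$ decodes everything correctly, is then a routine but lengthy verification.

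The main obstacle is precisely this redesign of the countermodel. The earlier two‑variable constructions leaned on the Kontchakov–Kurucz–Zakharyaschev two‑level pattern (a root seeing an antichain of ``element'' worlds, each seeing an antichain of disjoint ``pair'' worlds), and that pattern is unavailable over $\logic{S5}$, where $R(w)=W$ for all $w$; so the whole difficulty is to reorganise the simulation of $C$ (and, more generally, of anything the old proofs made world‑local) so that it survives in a single cluster. Once the two lemmas above are in place, the theorem follows in the standard way: a recursive set $Z$ with $\logic{QK}\subseteq Z\subseteq\logic{QS5}_{\mathit{wfin}}$ would, via $n\mapsto\psi_n$, yield a recursive set separating the recursively inseparable sets $\mathbb{X}$ and $\mathbb{Y}$, a contradiction; and since our countermodel is based on a finite frame with a finite domain, the very same witness refutes $\psi_n$ in $\QMLew\ckf\logic{QS5}=\logic{QS5}_{\mathit{wfin}}$ and in $\QMLed\ckf\logic{QS5}=\logic{QS5}_{\mathit{dfin}}$, giving both recursive‑inseparability statements at once.
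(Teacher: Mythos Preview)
The paper does not prove this theorem: Section~\ref{sec:note} only states it, adding that ``the proof is based on similar methods, but requires a different modeling; the author suggests presenting it in a separate paper.'' So there is nothing to compare against directly---but the phrase ``different modeling'' already signals that a direct adaptation of the Section~\ref{sec:modal} pipeline is not what the author found to work.

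Your plan has a genuine gap precisely where you wave your hands. In an $\logic{S5}$ cluster every world sees every world, so $\Box\psi$ and $\Diamond\psi$ take the same truth value at all worlds. After $S_4$, the simulated $P(x_1,x_2)=\Box(q\to Q(x_1)\vee Q(x_2))$ and $G(x_1)=\Box(r\to Q(x_1))$ are world-independent; hence every $\mathit{tile}'_k$-formula, and in particular the $S'_3$-simulated $C(x)=\mathit{tile}'_{k_n+9}(x)$, is world-independent too. But the KKZ mechanism behind Sublemma~\ref{sublem:kkz:modal} needs $C$ to single out a \emph{different} element at each world $w_a$: if at some $p$-world $C$ holds of two distinct elements, $\mathit{TC}^\Box_3$ forces $V'_n(y',z')$ for both of them and every right-neighbour $y'$, which the grid countermodel refutes. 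Your fix, ``recover $C$ from $Q$ by a $\Diamond$-formula over the pair-worlds'', does not help: a $\Diamond$-formula is again world-independent in a cluster. What would actually be needed is a $C$-simulation built from the local, world-dependent atom $Q(x)$---but then the pair-world pattern for $P$ (where $Q$ is a large co-doubleton) conflicts with the $C$-world pattern (where $Q$ should be a singleton), and on top of that you must still produce closed formulas $S^{\mathrm{S5}}(p),S^{\mathrm{S5}}(q),S^{\mathrm{S5}}(r)$ in one unary letter and two variables that separate all the world-types $w_0$, $w_a$, $u_{bc}$, and the $G$-world. None of this is worked out. Via Lemma~\ref{lem:QS5-Q2var:to:QCl}, the statement is equivalent to Trakhtenbrot for the classical fragment whose only atoms are $P(x,z)$ and $P(y,z)$ (Theorem~\ref{th:Trakhtenbrot:P(x,z):P(y,z)}), strictly narrower than the three-variable fragment of Theorem~\ref{th:Trakhtenbrot:bin:sib:P}; the author's own earlier Conjecture~\ref{conj:2} was that this fragment is \emph{decidable}, which underlines that a genuinely new encoding, not a tweak of $S_5$ or $S'_5$, is required.
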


The proof is based on similar methods, but requires a different modeling; the author suggests presenting it in a separate paper.

In Theorem~\ref{th:ml:insep:QK:QS5wfin}, logic $\logic{QK}$ can be replaced with any logic between $\logic{QK}$ and $\logic{QS5}$; so, let us replace it with $\logic{QS5}$ in order to deduce one interesting corollary.

\begin{theorem}
\label{th:ml:insep:QS5:QS5wfin}
Logics\/ $\logic{QS5}$ and\/ $\logic{QS5}_{\mathit{wfin}}$ as well as\/ $\logic{QK}$ and\/ $\logic{QS5}_{\mathit{dfin}}$ are recursively inseparable in the language with a single unary predicate letter and two individual variables.
\end{theorem}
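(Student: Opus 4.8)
The plan is to deduce Theorem~\ref{th:ml:insep:QS5:QS5wfin} directly from Theorem~\ref{th:ml:insep:QK:QS5wfin}, exploiting that the remark following that theorem already allows $\logic{QK}$ to be replaced by any logic $L$ with $\logic{QK}\subseteq L\subseteq\logic{QS5}$; taking $L=\logic{QS5}$ and spelling out why the separating argument still goes through is the whole content. Concretely, the proof of Theorem~\ref{th:ml:insep:QK:QS5wfin} (deferred to a separate paper) supplies a computable family $(\varphi_n)_{n\in\numN}$ of $\lang{ML}$-formulas, each containing a single unary predicate letter and only the two variables $x,y$, such that $n\in\mathbb{X}$ implies $\varphi_n\in\logic{QK}$ and $n\in\mathbb{Y}$ implies $\varphi_n\notin\logic{QS5}_{\mathit{wfin}}$ and $\varphi_n\notin\logic{QS5}_{\mathit{dfin}}$, where $\mathbb{X}$ and $\mathbb{Y}$ are the disjoint recursively inseparable recursively enumerable sets fixed in Section~\ref{sec:prelim}.

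First I would record the inclusions that make the relevant pairs nested, so that recursive (in)separability is meaningful for them. Since $\logic{QK}\subseteq\logic{QS5}$, any $\varphi_n$ with $n\in\mathbb{X}$ satisfies $\varphi_n\in\logic{QS5}$. Since the frame condition defining $\logic{QS5}$ (being an equivalence relation) is first order, $\logic{QS5}$ is sound with respect to the class of e-augmented frames built over its Kripke frames; hence $\logic{QS5}\subseteq\logic{QS5}_{\mathit{wfin}}$ and $\logic{QS5}\subseteq\logic{QS5}_{\mathit{dfin}}$, and these inclusions are in fact strict, as already follows from Theorem~\ref{th:ml:insep:QK:QS5wfin} together with the recursive enumerability of $\logic{QS5}$ and of the class of finite $\logic{QS5}$-frames. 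Then comes the standard separator argument: if $Z$ were a recursive set with $\logic{QS5}\subseteq Z\subseteq\logic{QS5}_{\mathit{wfin}}$, then $Z_0=\{n\in\numN:\varphi_n\in Z\}$ is recursive, $\mathbb{X}\subseteq Z_0$ (because $n\in\mathbb{X}$ gives $\varphi_n\in\logic{QS5}\subseteq Z$), and $Z_0\cap\mathbb{Y}=\varnothing$ (because $n\in\mathbb{Y}$ gives $\varphi_n\notin\logic{QS5}_{\mathit{wfin}}\supseteq Z$), contradicting the recursive inseparability of $\mathbb{X}$ and $\mathbb{Y}$; the $\logic{QS5}_{\mathit{dfin}}$ case is identical.

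The only genuine obstacle lies entirely in Theorem~\ref{th:ml:insep:QK:QS5wfin}, i.e., in producing the family $(\varphi_n)$ for $\logic{QS5}$. The $\bm{bd}$- and wKKZ-based simulations developed in Section~\ref{sec:modal} do not apply here: the Kripke frames of $\logic{QS5}$ are single equivalence clusters and so cannot carry the unbounded horizontal and vertical $P$-shift structure of the special $T_n$-tiling in the direct way used for $\logic{QwGrz}$, $\logic{QGL}$, $\logic{QGrz}$. A different encoding of the tiling inside the two-variable, single-unary-letter fragment of $\logic{QS5}$ is required, and the author defers it. Once it is in place, Theorem~\ref{th:ml:insep:QS5:QS5wfin} follows by the routine upgrade above, with no extra work.
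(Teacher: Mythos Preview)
Your proposal is correct and takes essentially the same approach as the paper: the paper simply remarks, immediately before stating Theorem~\ref{th:ml:insep:QS5:QS5wfin}, that in Theorem~\ref{th:ml:insep:QK:QS5wfin} the logic $\logic{QK}$ can be replaced by any logic between $\logic{QK}$ and $\logic{QS5}$, and taking $\logic{QS5}$ yields the present statement. You have merely spelled out the routine separator argument that the paper leaves implicit; no additional idea is required beyond the inclusion $\logic{QK}\subseteq\logic{QS5}$ and the existence of the computable family $(\varphi_n)$ supplied by Theorem~\ref{th:ml:insep:QK:QS5wfin}.
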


Note that the atomic formulas $Q(x)$ and $Q(y)$ can be modeled in classical predicate logic by formulas $P(x,z)$ and $P(y,z)$, respectively. The intended meaning of $P(x,z)$ and $P(y,z)$ is that $Q(x)$ and $Q(y)$ are true in a world~$z$. Let us define the following translation $\mathit{ST}$ of the modal formulas built from $Q(x)$ and $Q(y)$ into the classical ones:
$$
\begin{array}{lcll}
\mathit{ST}(\bot) & = & \bot; \\
\mathit{ST}(Q(v)) & = & P(v,z), & \mbox{where $v\in\set{x,y}$}; \\
\mathit{ST}(\varphi\wedge\psi) & = & \mathit{ST}(\varphi)\wedge \mathit{ST}(\psi); \\
\mathit{ST}(\varphi\vee\psi) & = & \mathit{ST}(\varphi)\vee \mathit{ST}(\psi); \\
\mathit{ST}(\varphi\to\psi) & = & \mathit{ST}(\varphi)\to \mathit{ST}(\psi); \\
\mathit{ST}(\Box\varphi) & = & \forall z\,\mathit{ST}(\varphi); \\
\mathit{ST}(\exists v\,\varphi) & = & \exists v\,\mathit{ST}(\varphi), & \mbox{where $v\in\set{x,y}$}; \\
\mathit{ST}(\forall v\,\varphi) & = & \forall v\,\mathit{ST}(\varphi), & \mbox{where $v\in\set{x,y}$}. \\
\end{array}
$$

In fact, $\mathit{ST}$ is the standard translation, see~\cite{Benthem85,ChRyb:2000,GShS}, adapted to~$\logic{QS5}$; so, the following statement should be obvious.

\begin{lemma}
\label{lem:QS5-Q2var:to:QCl}
For every\/ $\lang{ML}$-formula~$\varphi$ with a single unary predicate letter~$Q$ and individual variables $x$ and $y$, the following equivalences hold:
$$
\begin{array}{lcl}
\varphi\in\logic{QS5} & \iff & \mathit{ST}(\varphi) \in \logic{QCl}; \\
\varphi\in\logic{QS5}_{\mathit{wfin}} & \iff & \mathit{ST}(\varphi) \in \logic{QCl}_{\mathit{fin}}. \\
\end{array}
$$
\end{lemma}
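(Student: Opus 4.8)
The plan is to read off Lemma~\ref{lem:QS5-Q2var:to:QCl} from a tight correspondence between classical models for the binary letter~$P$ and $\logic{QS5}$-Kripke models over a \emph{universal} accessibility relation with a constant domain \emph{equal to the set of worlds}. To a non-empty set $\mathcal{D}$ together with a relation $P^{\mathcal{I}}\subseteq\mathcal{D}\times\mathcal{D}$ I associate, on one side, the classical model $\cModel{M}=\otuple{\mathcal{D},\mathcal{I}}$ and, on the other, the Kripke model $\kModel{M}=\otuple{\mathcal{D},\mathcal{D}\times\mathcal{D},D,I}$ with $D_w=\mathcal{D}$ for every $w$ and $I(w,Q)=\{a\in\mathcal{D}:\opair{a}{w}\in P^{\mathcal{I}}\}$. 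Since $\mathcal{D}\times\mathcal{D}$ is an equivalence relation, $\otuple{\mathcal{D},\mathcal{D}\times\mathcal{D}}\odot\mathcal{D}$ validates $\logic{QS5}$, and it is finite precisely when $\mathcal{D}$ is. The core is the straightforward induction on a formula $\psi$ built from $\bot$ and $Q(x),Q(y)$, showing that, for every $b\in\mathcal{D}$ and every assignment $g$,
$$
\kModel{M},b\models^g\psi \iff \cModel{M}\models^{g[z\,\mapsto\, b]}\mathit{ST}(\psi),
$$
where $g[z\mapsto b]$ agrees with $g$ except that it sends $z$ to~$b$. The atomic case $Q(v)\mapsto P(v,z)$ (with $v\in\{x,y\}$) is immediate from the definition of~$I$; the Boolean cases are trivial; the case $\Box\chi\mapsto\forall z\,\mathit{ST}(\chi)$ works because $z$ ranges over $\mathcal{D}$, which \emph{is} the set of worlds; and the cases $\exists x,\exists y,\forall x,\forall y$ work because every local domain equals~$\mathcal{D}$, so the modal and classical quantifier ranges coincide.

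The two left-to-right implications are then immediate. If $\varphi\in\logic{QS5}$, then $\kModel{M}\models\varphi$ for every $\mathcal{D}$ and $P^{\mathcal{I}}$ as above, so by the induction $\cModel{M}\models\mathit{ST}(\varphi)$ for every classical $\cModel{M}$, i.e.\ $\mathit{ST}(\varphi)\in\logic{QCl}$; restricting to finite $\mathcal{D}$ gives likewise $\varphi\in\logic{QS5}_{\mathit{wfin}}\imply\mathit{ST}(\varphi)\in\logic{QCl}_{\mathit{fin}}$, using that a finite equivalence frame belongs to $\ckf\logic{QS5}$.

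For the converse implications I argue by contraposition, and the point is to massage an arbitrary $\logic{QS5}$-countermodel into one of the special shape above. If $\varphi\notin\logic{QS5}$, then --- using Kripke completeness of $\logic{QS5}$ (well known; and since $\bm{bf}\in\logic{QS5}$, an e-augmented frame validating $\logic{QS5}$ is a c-augmented frame whose Kripke frame, validating the S5 axioms, is an equivalence relation) --- $\varphi$ is refuted at a world of a constant-domain model over an equivalence Kripke frame; passing to the cluster of that world yields a refutation in a model $\otuple{W,W\times W,D,I}$ over a \emph{universal} frame, which is finite when $\varphi\notin\logic{QS5}_{\mathit{wfin}}$. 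The obstacle is that $|W|$ and $|D|$ need not coincide, whereas $\mathit{ST}$ forces the classical domain to play both roles; I resolve it by two truth-preserving domain adjustments. First, along any surjection of world-sets between universal frames, pulling $Q$ back is a p-morphism, so truth of \emph{every} $\lang{ML}$-formula is preserved. Second, along any surjection $f$ between the \emph{individual} domains for which the $Q$-interpretation at each world of the source is the $f$-preimage of that at the target, truth of every formula over the single letter~$Q$ is preserved --- an easy induction that uses crucially that the language has no equality. Applying the second with $f$ the quotient of $D$ by the $Q$-type map $a\mapsto(w\mapsto[a\in I(w,Q)])$ makes $|D|\leqslant 2^{|W|}$ (finite if $W$ is); then I enlarge whichever of $W$, $D$ is smaller to the cardinality of the other via a suitable surjection onto the original (using the first adjustment for $W$, the second for $D$). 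The result is a countermodel with $W=D$, still finite in the finite case; reading it through the correspondence above produces a (finite) classical countermodel for $\mathit{ST}(\varphi)$, i.e.\ $\mathit{ST}(\varphi)\notin\logic{QCl}$ (respectively $\notin\logic{QCl}_{\mathit{fin}}$). The only genuinely delicate step is this reconciliation of $|W|$ with $|D|$ while staying finite; everything else is bookkeeping.
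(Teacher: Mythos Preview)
Your proof is correct. The paper does not actually prove this lemma: it simply remarks that $\mathit{ST}$ is the standard translation adapted to $\logic{QS5}$, cites the relevant literature, and declares the statement obvious. You supply the argument in full, and in particular you correctly isolate and handle the one point that is not entirely routine: since $\logic{QCl}$ is one-sorted, the classical domain must serve simultaneously as the set of worlds (range of~$z$) and the set of individuals (range of $x,y$), so an arbitrary $\logic{QS5}$-countermodel with $|W|\ne|D|$ must first be reshaped before it can be read as a classical model. Your two truth-preserving adjustments---p-morphic blow-up of $W$ along a surjection, and the $Q$-type quotient followed by blow-up of $D$---are sound and, crucially, preserve finiteness in the $\logic{QS5}_{\mathit{wfin}}$ case; the quotient step is essentially the filtration appearing in the proof of Proposition~\ref{prop:finite:frame}. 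A mild streamlining would be to take the classical domain to be $W\times D$ with $P\big((a,u),(b,v)\big)$ iff $a\in I(v,Q)$, which bypasses the size-matching altogether, though one still needs your quotient of $D$ to secure finiteness when only $W$ is assumed finite.
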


Then, we obtain a refinement of the Trakhtenbrot Theorem for the fragment of $\lang{L}$ defined by the atomic formulas $P(x,z)$ and~$P(y,z)$.

\begin{theorem}
\label{th:Trakhtenbrot:P(x,z):P(y,z)}
Logics\/ $\logic{QCl}$ and\/ $\logic{QCl}_{\mathit{fin}}$ are recursively inseparable in the language with $P(x,z)$ and $P(y,z)$ as the only atomic formulas.
\end{theorem}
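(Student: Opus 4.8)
The plan is to obtain Theorem~\ref{th:Trakhtenbrot:P(x,z):P(y,z)} as a pull-back of the recursive inseparability of $\logic{QS5}$ and $\logic{QS5}_{\mathit{wfin}}$ in the monadic two-variable modal language (the first clause of Theorem~\ref{th:ml:insep:QS5:QS5wfin}, which in turn rests on Theorem~\ref{th:ml:insep:QK:QS5wfin}) along the standard translation $\mathit{ST}$ defined just above, whose faithfulness for \emph{both} $\logic{QCl}$ and $\logic{QCl}_{\mathit{fin}}$ is recorded in Lemma~\ref{lem:QS5-Q2var:to:QCl}. Write $\lang{F}$ for the fragment of $\lang{L}$ whose only atomic formulas are $P(x,z)$ and $P(y,z)$. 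By inspection of its defining clauses, $\mathit{ST}$ is a total computable function sending every $\lang{ML}$-formula with the single unary letter $Q$ and variables $x,y$ into $\lang{F}$: the only atoms it produces are $P(x,z)$ and $P(y,z)$, and $\Box$ is translated by $\forall z$ reusing the one world variable, which is legitimate for $\logic{QS5}$ since there $\Box$ ranges over the whole model.

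First I would argue by contradiction. Suppose $\logic{QCl}$ and $\logic{QCl}_{\mathit{fin}}$ were recursively separable inside $\lang{F}$, i.e. there is a recursive set $Z$ with $\logic{QCl}\cap\lang{F}\subseteq Z\subseteq\logic{QCl}_{\mathit{fin}}\cap\lang{F}$. Consider the set $Z' = \{\varphi : \mathit{ST}(\varphi)\in Z\}$, where $\varphi$ ranges over the $\lang{ML}$-formulas built from $Q(x)$ and $Q(y)$ using only $x$ and $y$; since $\mathit{ST}$ is computable, $Z$ is recursive, and that syntactic restriction is decidable, $Z'$ is recursive. By the first equivalence of Lemma~\ref{lem:QS5-Q2var:to:QCl}, $\varphi\in\logic{QS5}$ implies $\mathit{ST}(\varphi)\in\logic{QCl}\cap\lang{F}\subseteq Z$, so $\varphi\in Z'$; by the second equivalence, $\varphi\in Z'$ implies $\mathit{ST}(\varphi)\in Z\subseteq\logic{QCl}_{\mathit{fin}}\cap\lang{F}$, so $\varphi\in\logic{QS5}_{\mathit{wfin}}$. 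Hence $\logic{QS5}\subseteq Z'\subseteq\logic{QS5}_{\mathit{wfin}}$ with $Z'$ recursive, contradicting Theorem~\ref{th:ml:insep:QS5:QS5wfin}. Therefore no such $Z$ exists, which is exactly the assertion of the theorem. (Replacing $\logic{QS5}_{\mathit{wfin}}$ by $\logic{QS5}_{\mathit{dfin}}$ throughout, via the evident finite-domain analogue of the second equivalence, yields the corresponding finite-domain statement as well.)

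The genuine content of the argument is entirely upstream: Theorem~\ref{th:ml:insep:QK:QS5wfin}, whose proof the author postpones to a separate paper, is what is hard here; given it, the present theorem is purely formal. The only points needing care are the bookkeeping ones above — that $\mathit{ST}$ really lands in $\lang{F}$, and, crucially, that $\mathit{ST}$ is faithful for validity on all models and for validity on finite models \emph{at the same time}, which is why a single translation transports recursive inseparability and not merely undecidability. As a byproduct, since $\logic{QCl}$ is recursively enumerable and the finite models are effectively enumerable, the fragments $\logic{QCl}\cap\lang{F}$ and $\logic{QCl}_{\mathit{fin}}\cap\lang{F}$ are in fact $\Sigma^0_1$-complete and $\Pi^0_1$-complete, respectively.
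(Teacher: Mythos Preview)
Your proof is correct and follows exactly the paper's approach: the paper's proof is the one-liner ``Immediately follows from Theorem~\ref{th:ml:insep:QS5:QS5wfin} and Lemma~\ref{lem:QS5-Q2var:to:QCl}'', and you have simply unpacked this by spelling out the standard pullback-along-a-computable-faithful-translation argument. Your observation that $\mathit{ST}$ lands in $\lang{F}$ and is computable, together with the two equivalences of Lemma~\ref{lem:QS5-Q2var:to:QCl}, is precisely what makes that one-liner work; the parenthetical about $\logic{QS5}_{\mathit{dfin}}$ and the $\Sigma^0_1$/$\Pi^0_1$ remarks are extras not needed for the theorem as stated.
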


\begin{proof}
Immediately follows from Theorem~\ref{th:ml:insep:QS5:QS5wfin} and Lemma~\ref{lem:QS5-Q2var:to:QCl}.
\end{proof}

At the time of writing, the author does not know whether it is possible to replace $\logic{QCl}_{\mathit{fin}}$ with $\logic{SIB}_{\mathit{fin}}$ or $\logic{SRB}_{\mathit{fin}}$ in Theorem~\ref{th:Trakhtenbrot:P(x,z):P(y,z)}.  


\pagebreak[3]

\addcontentsline{toc}{section}{References} 

\end{document}